\definecolor{grey1}{rgb}{0.5,0.5,0.5}
\definecolor{grau}{rgb}{0.8,0.8,0.8}
\newcommand{\chen}[1]{\color{orange}}
\numberwithin{equation}{section}
\numberwithin{equation}{section}
\newtheorem{theorem}{Theorem}[section]
\newtheorem{lemma}[theorem]{Lemma}
\newtheorem{proposition}{Proposition}[section]
\theoremstyle{remark}
\newtheorem{remark}{Remark}
\newtheorem{assumption}{Assumption}
\newtheorem*{example}{Example}
\newtheorem{result}{Result}[section]
\newcommand{\prob}{{\mathbb{P}}}
\newcommand{\var}{{\mathrm{var}}}
\newcommand{\expect}{\mathbb{E}}
\newcommand{\bea}{\begin{equation}\begin{aligned}}
\newcommand{\eae}{\end{aligned}\end{equation}}
\newcommand{\transpose}{^{\mathrm{T}}}
\newcommand{\bLambda}{{\bm{\Lambda}}}
\newcommand{\calA}{{\mathcal{A}}}
\newcommand{\calE}{{\mathcal{E}}}
\newcommand{\calJ}{{\mathcal{J}}}
\newcommand{\calP}{{\mathcal{P}}}
\newcommand{\calR}{{\mathcal{R}}}
\newcommand{\calS}{{\mathcal{S}}}
\newcommand{\bT}{{\mathbf{T}}}
\newcommand{\bU}{{\mathbf{U}}}
\newcommand{\bP}{{\mathbf{P}}}
\newcommand{\bY}{{\mathbf{Y}}}
\newcommand{\bv}{{\mathbf{v}}}
\newcommand{\bx}{{\mathbf{x}}}
\newcommand{\bX}{{\mathbf{X}}}
\newcommand{\bA}{{\mathbf{A}}}
\newcommand{\bB}{{\mathbf{B}}}
\newcommand{\bD}{{\mathbf{D}}}
\newcommand{\bE}{{\mathbf{E}}}
\newcommand{\bF}{{\mathbf{F}}}
\newcommand{\bG}{{\mathbf{G}}}
\newcommand{\bH}{{\mathbf{H}}}
\newcommand{\bM}{{\mathbf{M}}}
\newcommand{\bR}{{\mathbf{R}}}
\newcommand{\bS}{{\mathbf{S}}}
\newcommand{\bW}{{\mathbf{W}}}
\newcommand{\bt}{{\mathbf{t}}}
\newcommand{\bu}{{\mathbf{u}}}
\newcommand{\bq}{{\mathbf{q}}}
\newcommand{\by}{{\mathbf{y}}}
\newcommand{\be}{{\mathbf{e}}}
\newcommand{\bb}{{\mathbf{b}}}
\newcommand{\bV}{{\mathbf{V}}}
\newcommand{\bSigma}{{\bm{\Sigma}}}
\newcommand{\eye}{{\mathbf{I}}}
\newcommand{\one}{{\mathbf{1}}}
\newcommand{\bPi}{{\bm{\Pi}}}
\newcommand{\bpsi}{{\bm{\psi}}}
\newcommand{\zero}{{\bm{0}}}
\newcommand{\eps}{\epsilon}
\newcommand{\keywords}[1]{\par\addvspace\baselineskip\noindent\enspace\ignorespaces \textbf{Keywords: }#1}
\author{Fangzheng Xie\thanks{Department of Statistics, Indiana University}
\footnotemark[1] \thanks{Correspondence should be addressed to Fangzheng Xie (fxie@iu.edu)}
\and
Yichi Zhang\thanks{Department of Biostatistics and Bioinformatics \& The Fuqua School of Business, Duke University}
}
\title{Higher-Order Entrywise Eigenvectors Analysis of Low-Rank Random Matrices: Bias Correction, Edgeworth Expansion, and Bootstrap}
\begin{document}
\allowdisplaybreaks

\maketitle

\begin{abstract}
Understanding the distributions of spectral estimators in low-rank random matrix models, also known as signal-plus-noise matrix models, is fundamentally important in various statistical learning problems, including network analysis, matrix denoising, and matrix completion. This paper investigates the entrywise eigenvector distributions in a broad range of low-rank signal-plus-noise matrix models by establishing their higher-order accurate stochastic expansions. At a high level, the stochastic expansion states that the eigenvector perturbation approximately decomposes into the sum of a first-order term and a second-order term, where the first-order term in the expansion is a linear function of the noise matrix, and the second-order term is a linear function of the squared noise matrix. Our theoretical finding is used to derive the bias correction procedure for the eigenvectors. We further establish the Edgeworth expansion formula for the studentized entrywise eigenvector statistics. In particular, under mild conditions, we show that Cram\'er's condition on the smoothness of noise distribution is not required, thanks to the self-smoothing effect of the second-order term in the eigenvector stochastic expansion. The Edgeworth expansion result is then applied to justify the higher-order correctness of the residual bootstrap procedure for approximating the distributions of the studentized entrywise eigenvector statistics. 
\end{abstract}

\keywords{Entrywise eigenvector analysis, higher-order stochastic expansion, bias correction, Edgeworth expansion, bootstrap}

\tableofcontents

\section{Introduction}
\label{sec:introduction}

\subsection{Background}
\label{sub:background}
In the contemporary world of data science, low-rank structures are pervasive not only in statistics but also in an enormous variety of disciplines, including network analysis in social science \cite{HOLLAND1983109,nickel2008random,young2007random}, connectomics in neuroscience \cite{eichler2017complete,8570772}, gene expression analysis in biology \cite{10.1093/bioinformatics/btt091,10.1093/bioinformatics/17.suppl_1.S279,10.1214/17-AOAS1123}, compressed sensing in signal processing \cite{1614066,eldar2012compressed}, distance matrix analysis in molecular chemistry and remote sensing \cite{https://doi.org/10.1002/jcc.540140115, 1143830}, face recognition in computer vision \cite{1407873}, collaborative filtering \cite{bennett2007netflix}, and recommendation systems \cite{goldberg1992using}.

Motivated by the urgent need for learning high-dimensional matrix data emerging in various application fields, random matrix models with low expected ranks, also known as signal-plus-noise matrix models \cite{cape2019signal} or the generalized spiked Wigner model \cite{doi:10.1080/01621459.2020.1840990,yau2012universality}, and the accompanying theory and methods, have been developed. A broad range of statistical models can also be represented by the low-rank random matrix models, such as low-rank matrix denoising \cite{chatterjee2015,10.1214/14-AOS1257,SHABALIN201367}, matrix completion \cite{tight_oracle_inequalities,candes2009exact,5466511}, network models \cite{abbe2017community,airoldi2008mixed,HOLLAND1983109,PhysRevE.83.016107,young2007random}, and topic models \cite{blei2003latent,doi:10.1080/01621459.2022.2089574,10.1145/312624.312649}. Theoretical endeavors for analyzing low-rank random matrix models are largely based on both the matrix perturbation theory \cite{10.1214/17-AOS1541,cape2017two,doi:10.1137/0707001,wedin1972perturbation,doi:10.1093/biomet/asv008} and the random matrix theory (RMT), which was originally investigated by the seminal work \cite{PhysRev.98.145}. 
\subsection{Overview}
\label{sub:overview}

Consider the low-rank signal-plus-noise random matrix model $\bA = \bP + \bE$, where $\bP$ is an $n\times n$ symmetric deterministic low-rank signal matrix, $\bE$ is a symmetric random noise matrix whose upper triangular entries are independent mean-zero random variables, and $\bA$ is the noisy version of $\bP$, which is the only accessible data to the practitioners whereas neither $\bP$ nor $\bE$ is observed. This paper investigates the entrywise behavior of the sample eigenvector $\widehat{\bu}_k$ of $\bA$ (associated with the $k$th largest eigenvalue of $\bA$) as an estimator for the unknown population eigenvector $\bu_k$ of $\bP$ (associated with the $k$th largest eigenvalue of $\bP$) when the noise level of $\bE$ is negligible compared to the signal strength encoded in $\bP$. Roughly, one of our main results reads
\[
\widehat{\bu}_k - \bu_k \approx \underbrace{\mbox{linear function of }\bE}_{\text{first-order term}} + \underbrace{\mbox{linear function of }\bE^2}_{\text{second-order term}}.
\]
Although the dependence of $\widehat{\bu}_k$ on $\bA$ is complicated and highly nonlinear, our analysis shows a fine-grained approximation of $\widehat{\bu}_k$ using the sum of a linear function of $\bE$ that plays the role of the first-order term and a linear function of $\bE^2$ that serves as the second-order term. The first-order term 
has been explored recently \cite{10.1214/19-AOS1854,cape2019signal,xie2021entrywise}, while the second-order term, to our best knowledge, has never been studied before. In particular, the second-order term has a rather simple structure because it only contains a linear function of $\bE^2$ and does not involve other  quadratic forms of $\bE$. Under mild conditions, we prove that the approximation error is uniformly small, thereby establishing a higher-order entrywise analysis of $\widehat{\bu}_k - \bu_k$. 

Our higher-order entrywise eigenvector stochastic expansion also leads to sharp results of the entrywise sample eigenvector distribution beyond asymptotic normality. Suppose we are interested in the inference for the $i$th entry of $\widehat{\bu}_k$, namely $\widehat{u}_{ik}$, with $u_{ik}$ being the $i$th entry of $\bu_k$. Let $\widehat{s}_{ik}^2$ be an estimator of $\var(\widehat{u}_{ik})$, the formal definition of which is deferred to Section \ref{sub:entrywise_eigenvector_edgeworth_expansion}. We focus on the sampling distribution of the studentized entrywise eigenvector statistic $T_{ik} = (\widehat{u}_{ik} - u_{ik} - b_{ik})/\widehat{s}_{ik}$, where $b_{ik}$ is the additional second-order bias term. Our work establishes the first provable higher-order accurate distributional approximation to the sampling distribution of $T_{ik}$ via an Edgeworth expansion formula. From the technical perspective, we show that, under mild conditions, the second-order term has a self-smoothing effect on the cumulative distribution function of $T_{ik}$, which helps to establish the Edgeworth expansion without Cram\'er's condition on the smoothness of the noise distribution. The self-smoothing phenomenon is a blessing of the high-dimensional nature of the signal-plus-noise matrix model, which is typically unavailable in classical low-dimensional models. Such a phenomenon is similar to the observation in \cite{10.1214/21-AOS2125} for the sampling distributions of network moment statistics. Figure \ref{fig:Tik_Edgeworth_expansion_ER_n80} below visualizes the comparison among the true cumulative distribution function (CDF) of $T_{11}$, the standard normal CDF, and our Edgeworth expansion approximation in a toy synthetic example with $n = 80$, where the distribution of $\bE$ is purely discrete. It demonstrates the advantage of our result over the coarse normal approximation with a moderately small matrix size.
\begin{figure}[h]
\centerline{
\includegraphics[width = 11cm]{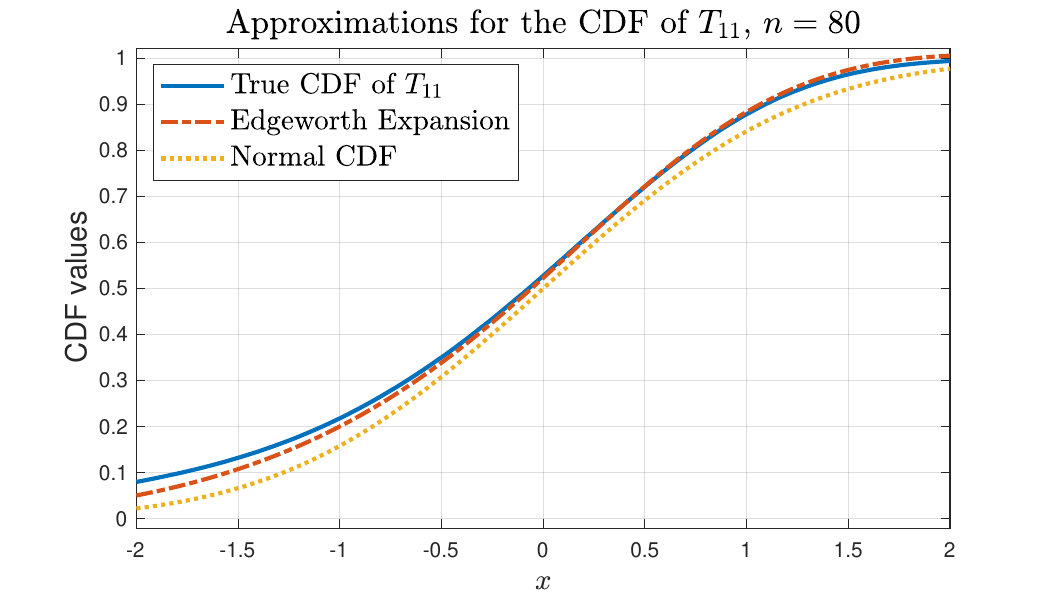}
}
\caption{Comparison among the true CDF of $T_{11}$, the standard normal CDF, and our Edgeworth expansion approximation in a toy synthetic example. Here, $n = 80$, the upper triangular entries of $\bE$ are independently generated as $X$ with $\prob(X = 4) = (1/5)n^{-1/3}$, $\prob(X = -1) = (4/5)n^{-1/3}$, $\prob(X = 0) = 1 - n^{-1/3}$, and $\bP$ is a matrix whose entries are all $n^{-5/12}$. }
\label{fig:Tik_Edgeworth_expansion_ER_n80}
\end{figure}
Our Edgeworth expansion result is further applied to justify the higher-order correctness of the residual bootstrap scheme for approximating the distribution of $T_{ik}$. 

\subsection{Related work}
\label{sub:related_work}
Spectral methods, or more precisely, methods based on spectral decomposition, are ubiquitous for analyzing high-dimensional low-rank matrix and network data. They are conceptually simple and methodologically easy to implement and interpret. Depending on the context, these spectral estimators can be either directly applied to subsequent inferential tasks such as community detection \cite{rohe2011,868688,sussman2012consistent} or goodness-of-fit \cite{10.1214/15-AOS1370} in network models, or ``warm-start'' certain iterative algorithms to obtain refined estimators \cite{MAL-079,doi:10.1073/pnas.1910053116,8811622,10.5555/3122009.3153016,8007083,ImplicitRegularizationMa2020,wu2022statistical,xie2021entrywise,xie2019efficient}. As mentioned in Section \ref{sub:background}, the spectral methods are fundamentally supported by the purely algebraic and deterministic matrix perturbation theory \cite{10.1214/17-AOS1541,cape2017two,doi:10.1137/0707001,wedin1972perturbation,doi:10.1093/biomet/asv008} as well as probabilistic tools such as random perturbation analysis \cite{cape2019signal,OROURKE201826} and random matrix theory \cite{arnold1967asymptotic,arnold1971wigner,bai2010spectral,AIHPB_2008__44_3_447_0,10.1214/009117905000000233,10.1214/20-AOS1960,10.1214/21-AOS2143,bourgade2017eigenvector,https://doi.org/10.1002/cpa.21895,dekel2007eigenvectors,erdHos2013delocalization,ERDOS20121435,erdos2013,doi:10.1080/01621459.2020.1840990,furedi1981eigenvalues,10.1214/009117906000000917,https://doi.org/10.1002/cpa.21450,10.1214/13-AOP855,knowles2017anisotropic,marchenko1967distribution,AIHPB_2013__49_1_64_0,doi:10.1142/S2010326312500153,rudelson2016no,tao2012topics,yau2012universality}. 

Recently, the entrywise eigenvector/singular vector analysis has been attracting research attention in the network analysis community and related fields \cite{10.1214/19-AOS1854,agterberg2021entrywise,athreya2016limit,cape2019signal,cape2017two,pmlr-v83-eldridge18a,lyzinski2014,doi:10.1080/01621459.2020.1751645,tang2018,xie2021entrywise,xie2019efficient}. It provides fine-grained uniform control rather than the on-average guarantee of some popular spectral-method-based matrix learning algorithms. One canonical example is the exact recovery of the community memberships in the stochastic block model and its variants \cite{10.1214/19-AOS1854,sussman2012consistent}. These entrywise eigenvector/singular vector analysis results mainly focus on either the zeroth-order uniform deviation result $\|\widehat{\bu}_k - \bu_k\|_\infty$ \cite{cape2019signal,cape2017two,doi:10.1080/01621459.2020.1751645} or the first-order fluctuation results \cite{10.1214/19-AOS1854}, which typically imply the distributional approximation to $\widehat{u}_{ik} - u_{ik}$ \cite{agterberg2021entrywise,athreya2016limit,cape2019signal,tang2018,xie2021entrywise,xie2019efficient}. Informally, these results establish (or imply) that $(\widehat{u}_{ik} - u_{ik})/s_{ik}$ converges to $\mathrm{N}(0, 1)$ weakly under various contexts, where $s_{ik}^2$ is the asymptotic variance of $\widehat{u}_{ik}$. Nevertheless, none of the above literature addresses the higher-order entrywise eigenvector analysis beyond asymptotic normality. Our work fills this blank in the literature by establishing the first ever higher-order accurate distributional approximation to $\widehat{u}_{ik} - u_{ik}$ through a second-order stochastic expansion and an Edgeworth expansion formula. The Edgeworth expansion result also sharpens the existing Berry-Esseen bounds in \cite{agterberg2021entrywise,xie2021entrywise}. 

In classical low-dimensional models with independent and identically distributed (i.i.d.) random samples, Edgeworth expansion \cite{Edgeworth1905,10.1007/BF02392223,10.2307/2237255} provides a higher-order accurate approximation to the CDF of the studentized sample mean beyond the central limit theorem and the Berry-Esseen bound. Roughly speaking, if $X_1,\ldots,X_n$ are i.i.d. random variables, then the Edgeworth expansion of the studentized sample mean $\bar{X}_n = (1/n)\sum_{i = 1}^nX_i$ takes the form of
\[
\prob\left\{\frac{\sqrt{n}(\bar{X}_n - \expect X_i)}{S_n}\leq x\right\} = \Phi(x) + \frac{(2x^2 + 1)}{6\sqrt{n}}\frac{\expect X_i^3}{\sigma^3} + O(n^{-1}),
\]
where $S_n^2$ is the sample variance and $\sigma^2 = \var(X_i)$. Further expansion with even higher orders is obtainable \cite{hall2013bootstrap} but is less practical because the population cumulants are typically unknown. The Edgeworth expansion is also the fundamental supporting component for justifying the higher-order correctness of the bootstrap approximation to the distribution of the studentized sample mean. For an incomplete list of related references, see \cite{10.2307/25050481,10.1214/aos/1176344134,10.1214/aos/1176342609,davison_hinkley_1997,10.1214/aos/1176345638,10.1214/aop/1176992073,hall2013bootstrap,10.1214/aos/1176345636}. 

Edgeworth expansions for nonlinear statistics are typically more involved. In \cite{10.2307/4615839,10.2307/4615841}, the author studied the Edgeworth expansions of maximum likelihood estimators. More efforts have been devoted to the Edgeworth expansions of $U$-statistics and symmetric statistics \cite{10.1214/aos/1031833676,10.1214/aos/1176350170,Bloznelis2022,10.1214/aos/1176344955,10.1214/aos/1176347994,10.2307/24304972}. 
In the context of high-dimensional and low-rank matrix models, little is known regarding the higher-order distributional approximation of the spectral estimators. 
We also remark that all the cited references above require Cram\'er's condition on the smoothness of the data distribution: $\limsup_{|t|\to\infty}|\expect e^{\mathbbm{i}tX_i}| < 1$. In contrast, our analysis unveils that  Cram\'er's condition is not required under certain third-moment conditions on the noise distribution. As mentioned earlier in Section \ref{sub:overview}, such a self-smoothing effect is unavailable in classical i.i.d. models. In high-dimensional models, to our best knowledge, the self-smoothing effect has only been found in \cite{10.1214/21-AOS2125} for the Edgeworth expansion of network moments statistics. However, the network moments considered in \cite{10.1214/21-AOS2125} are $U$-statistics over networks generated from infinitely exchangeable graphons, and they enjoy convenient symmetric properties. In contrast, the entrywise eigenvector statistics considered in the current work are asymmetric, which disables the direct application of techniques for studying the Edgeworth expansions of $U$-statistics and symmetric statistics and is thus technically more challenging.

\subsection{Organization}
\label{sub:organization}

We set the stage of the signal-plus-noise matrix model and the spectral decomposition framework in Section \ref{sec:model_setup}. Section \ref{sec:eigenvector_expansion} studies the higher-order entrywise eigenvector stochastic expansion.
In Section \ref{sec:applications}, we elaborate on the bias correction procedure for the sample eigenvectors, establish the Edgeworth expansion formula for the studentized entrywise eigenvector statistics, and use it to justify the higher-order correctness of entrywise eigenvector bootstrap procedure. Numerical experiments are presented in Section \ref{sec:numerical_results}, and we conclude the paper with discussion in Section \ref{sec:discussion}.

\subsection{Notations}
\label{sub:notations}
The notation $:=$ assigns definitions of quantities. Let $\mathbbm{i} = \sqrt{-1}$ be the notation for the imaginary unit. Given a positive integer $n$, let $[n]$ denote the set of consecutive integers $\{1,\ldots,n\}$. For any $a,b\in\mathbb{R}$, we use $a\vee b$ to denote $\max(a, b)$. Absolute constants that may vary from line to line but are fixed throughout are denoted by $C, C_1, C_2, c, c_1, c_2$, etc. For any two non-negative real sequences $(a_n)_{n = 1}^\infty$ and $(b_n)_{n = 1}^\infty$, we write $a_n = O(b_n)$ or $a_n\lesssim b_n$ if $a_n\leq Cb_n$, and write $b_n = \Omega(a_n)$ or $b_n\gtrsim a_n$ if $a_n\geq Cb_n$. We also write $a_n = o(b_n)$ or $b_n = \omega(a_n)$ if $\lim_{n\to\infty}a_n/b_n = 0$, and $a_n = \Theta(b_n)$ or $a_n\asymp b_n$ if $a_n = O(b_n)$ and $b_n = O(a_n)$. If $(\bA_n)_{n = 1}^\infty$ is a sequence of real matrices and $(b_n)_{n = 1}^\infty$ is a non-negative real sequence, then we write $\bA_n = O(b_n)$ if $\|\bA_n\|_2 = O(b_n)$ and $\bA_n = o(b_n)$ if $\|\bA_n\|_2 = o(b_n)$, where $\|\cdot\|_2$ denotes the matrix spectral norm (the largest singular value of a matrix). Let the total variation distance between two cumulative distribution functions $F(\cdot)$ and $G(\cdot)$ be denoted by $\|F(x) - G(x)\|_\infty := \sup_{x\in\mathbb{R}}|F(x) - G(x)|$. 

For any positive integers $d$, $n$, $\eye_d$ denotes the $d\times d$ identity matrix, $\zero_{n\times d}$ denotes the $n\times d$ zero matrix, and $\zero_d$ denotes the zero vector in $\mathbb{R}^d$. For positive integers $n, d$ with $n\geq d$, let $\mathbb{O}(n, d) = \{\bU\in\mathbb{R}^{n\times d}:\bU\transpose\bU = \eye_d\}$ denote the collection of all orthonormal $d$-frames in $\mathbb{R}^n$, and we write $\mathbb{O}(d)$ for $\mathbb{O}(d, d)$. 
For an Euclidean vector $\bx = [x_1,\ldots,x_n]\transpose\in\mathbb{R}^n$, the Euclidean norm of $\bx$ is denoted by $\|\bx\|_2 = (\sum_{i = 1}^nx_i^2)^{1/2}$, and the infinity norm of $\bx$ is denoted by $\|\bx\|_\infty = \max_{i\in[n]}|x_i|$. 
Given a general matrix $\bM\in\mathbb{R}^{p_1\times p_2}$, let $\sigma_k(\bM)$ denote the $k$th largest singular value of $\bM$, i.e., $\sigma_1(\bM)\geq\ldots\geq\sigma_{\min(p_1, p_2)}(\bM)$. The spectral norm of $\bM$ is defined to be its largest singular value, denoted by $\|\bM\|_2 = \sigma_1(\bM)$, and the two-to-infinity norm of $\bM$ is defined as $\|\bM\|_{2\to\infty} = \sup\{\|\bM\bx\|_\infty:\bx\in\mathbb{R}^{p_2},\|\bx\|_2 = 1\}$. In particular, if the $(i, j)$th entry of $\bM$ is $M_{ij}$, $(i, j)\in[p_1]\times [p_2]$, then $\|\bM\|_{2\to\infty}$ can be explicitly computed as $\|\bM\|_{2\to\infty} = \max_{i\in[p_1]}(\sum_{j = 1}^{p_2}M_{ij})^{1/2}$. If $\bM\in\mathbb{R}^{n\times n}$ is a square symmetric matrix, then we let $\lambda_k(\bM)$ denote its $k$th largest eigenvalue, i.e., $\lambda_1(\bM)\geq\ldots\geq\lambda_n(\bM)$. 

With a slight abuse of notation, we say that a sequence of random matrices $(\bX_n)_{n = 1}^\infty$ and a sequence of non-negative random variables $(Y_n)_{n = 1}^\infty$ satisfy that $\|\bX_n\|_2$ is upper bounded by  $Y_n$ with high probability (w.h.p.), denoted by $\bX_n = \widetilde{O}_{\prob}(Y_n)$, if for any $c > 0$, there exist constants $K_c > 0$ and $N_c\in\mathbb{N}_+$ that may depend on $c$, such that $P(\|\bX_n\|_2 \leq K_cY_n)\geq 1 - n^{-c}$ for all $n\geq N_c$. It is often the case that $(Y_n)_{n = 1}^\infty$ is a sequence of deterministic positive real numbers. Similarly, a sequence of events $(\calE_n)_{n = 1}^\infty$ is said to occur w.h.p., if for all $c > 0$, there exists a constant $N_c\in\mathbb{N}_+$ that may depend on $c$, such that $\prob(E_n)\geq 1 - n^{-c}$ for all $n\geq N_c$. 
Note that our $\widetilde{O}_{\prob}(\cdot)$ notation is stronger than the standard stochastic $O_{\prob}(\cdot)$ notation. 

\section{Model setup}
\label{sec:model_setup}

Let $\bP$ be an $n\times n$ symmetric matrix that plays the role of the \emph{signal} matrix in the setting below with $\mathrm{rank}(\bP) = d\ll n$. Throughout this work, we assume that $d$ is a fixed constant. Consider the additive perturbation of the form
\begin{align}
\label{eqn:signal_plus_noise_matrix_model}
\bA = \bP + \bE,
\end{align}
where $\bE = [E_{ij}]_{n\times n}$ is referred to as the \emph{noise} matrix whose upper triangular entries $(E_{ij}:1\leq i\leq j\leq n)$ are independent random variables with $\expect E_{ij} = 0$. In practice, neither the signal matrix $\bP$ nor the noise matrix $\bE$ is accessible to the users, as only the noisy data matrix $\bA$ is observed. Model \eqref{eqn:signal_plus_noise_matrix_model} is often referred to as the generalized spiked Wigner model \cite{doi:10.1080/01621459.2020.1840990,yau2012universality} or the signal-plus-noise matrix model \cite{cape2019signal,xie2021entrywise} in the literature. 

Let $\bP$ yield  spectral decomposition $\bP = \bU_\bP\bS_\bP\bU_\bP\transpose$, where $\bU_\bP\in\mathbb{O}(n, d)$ is the $n\times d$ matrix of orthonormal eigenvectors of $\bP$ associated with the nonzero eigenvalues, and $\bS_\bP$ is the diagonal matrix of these eigenvalues sorted in nonincreasing order. Let $\bu_k$ denote the $k$th column of $\bU_\bP$, $k\in [d]$. Let $p, q$ be the numbers of positive and negative eigenvalues of $\bP$, respectively, $\bS_{\bP_+} = \mathrm{diag}\{\lambda_1(\bP),\ldots,\lambda_p(\bP)\}$, $\bS_{\bP_-} = \mathrm{diag}\{\lambda_{n - q + 1}(\bP),\ldots,\lambda_n(\bP)\}$, $\bU_{\bP_+}$ be the eigenvector matrix of $\bP$ associated with $\bS_{\bP_+}$, and $\bU_{\bP_-}$ be the eigenvector matrix of $\bP$ associated with $\bS_{\bP_-}$. Let $\lambda_1\geq\ldots\geq\lambda_d$ be the nonzero eigenvalues of $\bP$ sorted in nonincreasing order, namely, $\bS_\bP = \mathrm{diag}(\lambda_1,\ldots,\lambda_d)$. 

This work establishes a fine-grained higher-order entrywise eigenvector stochastic expansion for the signal-plus-noise matrix model \eqref{eqn:signal_plus_noise_matrix_model}. Because neither $\bP$ nor $\bE$ is observed, it is natural to use the eigenvectors of $\bA$ as the sample versions of their unknown population counterparts. Let $\bA$ yield spectral decomposition
\[
\bA = \sum_{k = 1}^n{\lambda}_k(\bA)\widehat{\bu}_k\widehat{\bu}_k\transpose.
\]
Here, ${\lambda}_1(\bA)\geq\ldots\geq{\lambda}_n(\bA)$ are the eigenvalues and $(\widehat{\bu}_k)_{k = 1}^n$ are associated orthonormal eigenvectors. Let $\bU_{\bA_+} = [\widehat{\bu}_1,\ldots,\widehat{\bu}_p]$, $\bU_{\bA_-} = [\widehat{\bu}_{n - q + 1},\ldots,\widehat{\bu}_n]$, and $\bU_\bA = [\bU_{\bA_+},\bU_{\bA_-}]$. Likewise, let $\bS_{\bA_+} = \mathrm{diag}\{\lambda_1(\bA),\ldots,$ $\lambda_p(\bA)\}$, $\bS_{\bA_-} = \mathrm{diag}\{\lambda_{n - q + 1}(\bA),\ldots,\lambda_n(\bA)\}$, and $\bS_{\bA} = \mathrm{diag}(\bS_{\bA_+}, \bS_{\bA_-})$.  We use $\mathrm{sgn}(\widehat{\bu}_k\transpose\bu_k)$ to denote the sign of $\widehat{\bu}_k\transpose\bu_k$ such that $\widehat{\bu}_k\mathrm{sgn}(\widehat{\bu}_k\transpose\bu_k)\approx\bu_k$ intuitively. Let $\hat{\lambda}_m$ be the $m$th diagonal entry of $\bS_{\bA}$ for $m\in[d]$.
\begin{remark}[Rectangular matrix and symmetric dilation]
\label{remark:symmetric_dilation}
Although the current model \eqref{eqn:signal_plus_noise_matrix_model} is designed for symmetric square signal-plus-noise matrices, it is straightforward to adopt it to asymmetric or rectangular low-rank random matrices by applying the ``symmetric dilation'' trick \cite{10.1214/19-AOS1854,paulsen2002completely}. Specifically, let $\bar{\bP}$ be an $n_1\times n_2$ rank-$d$ matrix with compact singular value decomposition $\bar{\bP} = \bU_{\bar{\bP}}\bS_{\bar{\bP}}\bV_{\bar{\bP}}\transpose$, where $\bU_{\bar{\bP}}\in\mathbb{O}(n_1, d)$, $\bV_{\bar{\bP}}\in\mathbb{O}(n_2, d)$, and $\bar{\bA} = \bar{\bP} + \bar{\bE}$, where the entries of $\bar{\bE} = [\bar{E}_{ij}]_{n_1\times n_2}$ are independent mean-zero random variables. The symmetric dilation trick states that it is sufficient to study the following symmetrized version:
\[
\underbrace{\begin{bmatrix}
\zero_{n_1\times n_1} & \bar{\bA}\\
\bar{\bA}\transpose & \zero_{n_2\times n_2}
\end{bmatrix}}_{\bA} = \underbrace{\begin{bmatrix}
\zero_{n_1\times n_1} & \bar{\bP}\\
\bar{\bP}\transpose & \zero_{n_2\times n_2}
\end{bmatrix}}_{\bP} + \underbrace{\begin{bmatrix}
\zero_{n_1\times n_1} & \bar{\bE}\\
\bar{\bE}\transpose & \zero_{n_2\times n_2}
\end{bmatrix}}_{\bE}
\]
because the spectral decomposition of the signal matrix above has the following form
\begin{align*}
\bP
& = \underbrace{\frac{1}{\sqrt{2}}\begin{bmatrix}
\bU_{\bar{\bP}} & \bU_{\bar{\bP}}\\
\bV_{\bar{\bP}} & -\bV_{\bar{\bP}}
\end{bmatrix}}_{\bU_\bP}
\underbrace{\begin{bmatrix}
\bS_{\bar{\bP}} & \\
 & -\bS_{\bar{\bP}}
\end{bmatrix}}_{\bS_{\bP}}
\underbrace{\frac{1}{\sqrt{2}}\begin{bmatrix}
\bU_{\bar{\bP}} & \bU_{\bar{\bP}}\\
\bV_{\bar{\bP}} & -\bV_{\bar{\bP}}
\end{bmatrix}\transpose}_{\bU_\bP}.
\end{align*}
Then, our current setup and notations for symmetric matrices apply directly.
\end{remark}

Next, we present several assumptions regarding the eigenvalues and eigenvectors of $\bP$. 
\begin{assumption}\label{assumption:Signal_strength}
There exist constants $\beta_\Delta, C_1, C_2, c > 0$ and an $n$-dependent quantity $\rho_n\in (0, 1]$, such that $n\rho_n = \omega(n^c)$ and 
\[
C_1n^{\beta_\Delta}\sqrt{n\rho_n}\leq \min_{k\in[d]}|\lambda_k|\leq \max_{k\in[d]}|\lambda_k|\leq C_2n^{\beta_\Delta}\sqrt{n\rho_n}.
\]
\end{assumption}

\begin{assumption}\label{assumption:Eigenvalue_separation}
There exists a constant $\delta_0 > 0$ such that 
\[
\Delta_n := \min\Big(\min_{k\in [d]}|\lambda_k|, \min_{k\in [d - 1]}|\lambda_k - \lambda_{k + 1}|\Big)\geq \delta_0\min_{k\in [d]}|\lambda_k|.
\]
\end{assumption}

The parameter $\rho_n$ in Assumption \ref{assumption:Signal_strength} serves as the scaling factor for the noise variance, thereby controlling the overall noise level in model \eqref{eqn:signal_plus_noise_matrix_model}. The characteristics of this noise level are detailed in the forthcoming Assumption \ref{assumption:Noise_matrix_distribution}. In particular, it can be shown that $\|\bE\|_2 = \widetilde{O}_{\prob}(\sqrt{n\rho_n})$. The magnitudes of the eigenvalues control the signal level, and Assumption \ref{assumption:Signal_strength} requires that the signal-to-noise ratio (SNR), namely, $\Delta_n/\sqrt{n\rho_n}$, scales at least at the order of a polynomial of $n$. Readers who are familiar with the random graph model may recognize that the signal strength can be connected to the noise level in the following sense: In a random graph model where $\bA$ is the graph adjacency matrix with expected value $\bP$, the signal strength is equivalent to the average expected degree of the graph through $n\rho_n$ and is also connected to the noise level $\|\bE\|_2 = \widetilde{O}_{\prob}(\sqrt{n\rho_n})$. Nevertheless, by setting SNR flexibly through $\beta_{\Delta}$, our Assumption \ref{assumption:Signal_strength} above allows the signal strength to be decoupled from the noise level to accommodate a broader class of low-rank random matrix models. For example, in the low-rank matrix denoising or spiked covariance matrix model, the noise variance scale $\rho_n$ can be independent of the magnitudes of the eigenvalues of $\bP$. Assumption \ref{assumption:Eigenvalue_separation} requires that the nonzero eigenvalues of $\bP$ are simple and well separated. 
It turns out that this eigenvalue separation assumption is critical for us to obtain fine-grained higher-order entrywise eigenvector expansion. 
\par
The next assumption describes the variance and tail behavior of the distributions of $E_{ij}$'s via a collection of moment bounds.
\begin{assumption}\label{assumption:Noise_matrix_distribution}
Let $\eps \in (0, 1/2]$ be a constant and $\rho_n$ be the $n$-dependent quantity given in Assumption \ref{assumption:Signal_strength}. There exists another $n$-dependent quantity $q_n\in [n^\eps, \sqrt{n}]$ and a constant $C > 0$, such that the distribution of $\bE$ satisfies the moment bounds
\begin{align}\label{eqn:noise__moment_condition}
\expect E_{ij}^2\leq C\rho_n,\quad \expect |E_{ij}|^p\leq \frac{C^p(n\rho_n)^{p/2}}{nq_n^{p - 2}}
\end{align}
for $1\leq i,j\leq n$ and $3\leq p\leq (2\log n)^{A_0\log\log n}$, where $A_0 \geq 11$. 
\end{assumption}
Assumption \ref{assumption:Noise_matrix_distribution} is an adaptation of Definition 2.1 in \cite{erdos2013}. As observed there, the moment bound condition in \eqref{eqn:noise__moment_condition} is more flexible than the commonly assumed sub-Gaussian or sub-exponential assumptions for the noise matrix $\bE$. This is because $\var(E_{ij}/\sqrt{n\rho_n}) = O(1/n)$ but $\expect |E_{ij}/\sqrt{n\rho_n}|^p = O(1/q_n^p)$ and note that $q_n\leq \sqrt{n}$. In contrast, in the case of the classical generalized Wigner matrix, $E_{ij}/\sqrt{n\rho_n}$ has a natural scaling of $1/\sqrt{n}$. Below, we provide two examples where Assumption \ref{assumption:Noise_matrix_distribution} is satisfied. 
\begin{example}[Sub-exponential decay $\bE$]
\label{example:sub_exponential_noise}
Suppose $\bE = [E_{ij}]_{n\times n}$ has entrywise sub-exponential decay with
$
\prob(\rho_n^{-1/2}|E_{ij}| > x)\leq C\exp(-x^{1/\theta})
$
for some constants $C, \theta > 0$. This includes the classical sub-Gaussian and sub-exponential random variables (see, e.g., \cite{vershynin2018high}) with $\theta = 1/2$ and $\theta = 1$, respectively. Then Assumption \ref{assumption:Noise_matrix_distribution} is satisfied with $q_n := \sqrt{n}\{\max(1,\theta)^3(2\log n)^{3A_0\log\log n}\}^{-\theta}$; Also see Remark 2.5 in \cite{erdos2013}.
\end{example}

\begin{example}[Random graph]
\label{example:random_graph}
If $A_{ij}\sim\mathrm{Bernoulli}(p_{ij})$ independently for all $i,j\in[n]$, $i\leq j$, where $p_{ij}$ is the $(i, j)$th entry of $\bP$, and $\max_{i, j\in[n]}p_{ij} = O(\rho_n)$, then $\expect E_{ij} = 0$, and there exists a constant $C > 0$, such that 
\[
\expect E_{ij}^2 = p_{ij}(1 - p_{ij})\leq C\rho_n,\quad \expect|E_{ij}|^p\leq \expect E_{ij}^2\leq \frac{C(n\rho_n)^{p/2}}{nq_n^{p - 2}}
\]
with $q_n:=\sqrt{n\rho_n}$ (note that $|E_{ij}|\leq 1$ with probability one).
This model corresponds to the generalized random dot product graph model \cite{rubin2017statistical} where $\bA = [A_{ij}]_{n\times n}$ is the graph adjacency matrix and $\bP$ is the underlying edge probability matrix. 
\end{example}

\section{Higher-order entrywise eigenvector stochastic expansion} 
\label{sec:eigenvector_expansion}

\subsection{Main result}
\label{sub:main_result}
To motivate the development of our main result, we first review the existing zeroth-order entrywise eigenvector deviation and first-order entrywise eigenvector fluctuation results (see, e.g., \cite{10.1214/19-AOS1854,cape2019signal,cape2017two,pmlr-v83-eldridge18a,doi:10.1080/01621459.2020.1840990,xie2021entrywise}). Informally speaking, under Assumptions \ref{assumption:Signal_strength}--\ref{assumption:Noise_matrix_distribution}, for each $i \in [n]$ and $k\in [d]$, the zeroth-order entrywise eigenvector deviation establishes
\begin{align}\label{eqn:zeroth_order_eigenvector_deviation}
\widehat{u}_{ik}
 - u_{ik} = \widetilde{O}_{\prob}\left\{\|\bU_\bP\|_{2\to\infty}\frac{(n\rho_n)^{1/2}(\log n)^{\xi}}{\Delta_n}\right\},
\end{align}
and the first-order entrywise eigenvector fluctuation states that 
\begin{equation}
\label{eqn:first_order_eigenvector_fluctuation}
\begin{aligned}
\widehat{u}_{ik}
 - u_{ik}& = \be_i\transpose\bigg(\eye_n - \bu_k\bu_k\transpose + \sum_{m\in[d]\backslash\{k\}}\frac{\lambda_m\bu_m\bu_m\transpose}{\lambda_k - \lambda_m}\bigg)\frac{\bE\bu_k}{\lambda_k}
\\&\quad+ \widetilde{O}_{\prob}\left[\|\bU_\bP\|_{2\to\infty}\left\{\frac{(n\rho_n)(\log n)^{2\xi}}{\Delta_n^2} + \frac{(\log n)^\xi}{\Delta_n}\right\}\right],
\end{aligned}
\end{equation}
where $\xi$ is any constant with $\xi > 1$, $\widehat{u}_{ik}$ and $u_{ik}$ are the $i$th entries of $\widehat{\bu}_k$ and $\bu_k$, respectively (see Lemma B.4 in the Supplement for a rigorous statement of these results). 
Observe that 
\[
\be_i\transpose\bigg(\eye_n - \bu_k\bu_k\transpose + \sum_{m\in[d]\backslash\{k\}}\frac{\lambda_m\bu_m\bu_m\transpose}{\lambda_k - \lambda_m}\bigg)\frac{\bE\bu_k}{\lambda_k} = O_{\prob}\bigg(\frac{\rho_n^{1/2}}{\Delta_n}\bigg)
\]
by Chebyshev's inequality. Therefore, under the condition that
\[
\rho_n^{-1/2}\max\left\{(\log n)^\xi, \frac{n\rho_n(\log n)^{2\xi}}{\Delta_n}\right\}\|\bU_\bP\|_{2\to\infty} \to 0\quad\mbox{w.h.p.},
\]
the remainder in \eqref{eqn:first_order_eigenvector_fluctuation} is a higher order term. Note that the condition in the preceding display can be satisfied when the eigenvectors are delocalized, \emph{i.e.}, $\|\bU_\bP\|_{2\to\infty} = O(n^{-1/2})$ (see Assumption \ref{assumption:Eigenvector_delocalization} in Section \ref{sub:bias_correction} below; Also see \cite{cape2019signal,zhang2022perturbation}). Indeed, $u_{ik}$ and $\widehat{u}_{ik}$ are nonlinear functionals of $\bP$ and $\bA$, respectively, and the leading term in \eqref{eqn:first_order_eigenvector_fluctuation}, viewed as a linear function of $\bE = \bA - \bP$, is exactly the Fr\'echet derivative of $u_{ik}$ with respect to $\bP$. 

Given the above observation, it is natural to pursue entrywise eigenvector analysis that provides further information beyond the first-order fluctuation \eqref{eqn:first_order_eigenvector_fluctuation}. Theorem \ref{thm:Eigenvector_Expansion} below, our first main result, tackles this task by rigorously establishing a higher-order entrywise eigenvector stochastic expansion. 

\begin{theorem}\label{thm:Eigenvector_Expansion}
Suppose Assumptions \ref{assumption:Signal_strength}--\ref{assumption:Noise_matrix_distribution} hold. Let $\alpha_n:=\|\bU_\bP\transpose\bE\bU_\bP\|_2$.
Assume that there exists a constant $\bar{\xi} > 1$, such that
\begin{equation}\label{thm3.1:con}
\begin{aligned}
\frac{n\rho_n^{1/2}(\log n)^{3\bar{\xi}}\|\bU_\bP\|_{2\to\infty}}{\Delta_n}\to 0.
\end{aligned}
\end{equation}
Then for any fixed $k\in [p]$ and any $\xi \in (1, \bar{\xi}]$, we have
\begin{equation}
\label{eqn:eigenvector_expansion}
\begin{aligned}
\widehat{\bu}_k\mathrm{sgn}(\bu_k\transpose\widehat{\bu}_k) - \bu_k & = 
{\bigg(}\eye_n - \bu_k\bu_k\transpose + \sum_{m\in [d]/\{k\}}\frac{\lambda_m\bu_m\bu_m\transpose}{\lambda_k - \lambda_m}{\bigg)}\frac{\bE\bu_k}{\lambda_k}\\
&\quad + 
{\bigg(}\eye_n - \frac{3}{2}\bu_k\bu_k\transpose + \sum_{m\in [d]/\{k\}}\frac{\lambda_m\bu_m\bu_m\transpose}{\lambda_k - \lambda_m}{\bigg)}\frac{\bE^2\bu_k}{\lambda_k^2}
 + \bt_k,
\end{aligned}
\end{equation}
where 
\begin{align*}
\|\bt_k\|_{\infty} & = \widetilde{O}_{\prob}\left[\|\bU_\bP\|_{2\to\infty}\left\{\frac{(n\rho_n)^{1/2}(\log n)^\xi}{\Delta_n^2}\left(\alpha_n + \frac{n\rho_n}{\Delta_n}\right)^2 + \frac{(n\rho_n)^{3/2}(\log n)^{3\xi}}{\Delta_n^3}\right\}\right]\\
&\quad + \widetilde{O}_{\prob}\left\{\|\bU_\bP\|_{2\to\infty}\frac{n^2\rho_n^{3/2}(\log n)^{3\xi}\|\bU_\bP\|_{2\to\infty}^2}{\Delta_n^3}\right\}.
\end{align*}
\end{theorem}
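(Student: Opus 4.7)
My plan is to work directly from the eigenequation $\bA\widehat{\bu}_k = \widehat{\lambda}_k\widehat{\bu}_k$ rather than from a contour-integral representation of the spectral projector, because the entrywise bookkeeping is more transparent in the former. Decompose $\widehat{\bu}_k = c_k\bu_k + \bw_k$ with $c_k := \widehat{\bu}_k\transpose\bu_k$ and $\bw_k \perp \bu_k$, and set $\bP_\perp := \eye_n - \bU_\bP\bU_\bP\transpose$. Projecting the eigenequation onto each $\bu_j$, $j\in[d]\setminus\{k\}$, and onto the range of $\bP_\perp$ produces the exact fixed-point identity
\begin{equation*}
\bw_k = \widehat{\bG}_k\bE\widehat{\bu}_k,\qquad \widehat{\bG}_k := \sum_{j\in[d]\setminus\{k\}}\frac{\bu_j\bu_j\transpose}{\widehat{\lambda}_k - \lambda_j} + \frac{\bP_\perp}{\widehat{\lambda}_k},
\end{equation*}
while the sign-adjusted error decomposes as $\widehat{\bu}_k\mathrm{sgn}(c_k) - \bu_k = \bw_k - \|\bw_k\|_2^2\bu_k/(1 + |c_k|)$. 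Two nonlinearities must be tracked by Taylor expansion: the resolvent discrepancy $\widehat{\bG}_k - \bG_k = -(\widehat{\lambda}_k - \lambda_k)\bG_k\widehat{\bG}_k$, where $\bG_k := \sum_{j\neq k}\bu_j\bu_j\transpose/(\lambda_k - \lambda_j) + \bP_\perp/\lambda_k$ is the population resolvent, and the normalization scalar $|c_k| - 1 = -\|\bw_k\|_2^2/(1 + |c_k|)$.

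\textbf{Iteration and identification of the $\bE^2$ term.} I would iterate the fixed-point identity twice. The first pass, substituting $\widehat{\bu}_k \mapsto \bu_k$ and $\widehat{\bG}_k \mapsto \bG_k$, yields $\bw_k \approx \bG_k\bE\bu_k$; a short manipulation using $\lambda_m/[\lambda_k(\lambda_k - \lambda_m)] = 1/(\lambda_k - \lambda_m) - 1/\lambda_k$ verifies that this equals the first-order term in \eqref{eqn:eigenvector_expansion}. The second pass, combined with $\widehat{\bG}_k \approx \bG_k - (\widehat{\lambda}_k - \lambda_k)\bG_k^2$, gives
\begin{equation*}
\bw_k = \bG_k\bE\bu_k + \bG_k\bE\bG_k\bE\bu_k - (\widehat{\lambda}_k - \lambda_k)\bG_k^2\bE\bu_k + \text{higher iterates}.
\end{equation*}
The clean $\bE^2$ structure emerges by applying the completeness relation $\eye_n = \sum_{m\in[d]}\bu_m\bu_m\transpose + \bP_\perp$ to the inner $\bG_k$ in $\bG_k\bE\bG_k\bE\bu_k$, which yields
\begin{equation*}
\bG_k\bE\bG_k\bE\bu_k = \frac{\bG_k\bE^2\bu_k}{\lambda_k} - \frac{(\bu_k\transpose\bE\bu_k)\bG_k\bE\bu_k}{\lambda_k} + \sum_{m\neq k}\frac{\lambda_m(\bu_m\transpose\bE\bu_k)\bG_k\bE\bu_m}{\lambda_k(\lambda_k - \lambda_m)}.
\end{equation*}
The leading term $\bG_k\bE^2\bu_k/\lambda_k$ unfolds, by a direct algebraic check, to $(\eye_n - \bu_k\bu_k\transpose + \sum_{m\neq k}\lambda_m\bu_m\bu_m\transpose/(\lambda_k - \lambda_m))\bE^2\bu_k/\lambda_k^2$, supplying a $-\bu_k\bu_k\transpose/\lambda_k^2$ coefficient. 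The normalization correction $-\tfrac{1}{2}\|\bw_k\|_2^2\bu_k$, reduced via $\|\bG_k\bE\bu_k\|_2^2 = \bu_k\transpose\bE\bG_k^2\bE\bu_k$ and a further application of completeness to extract $\bu_k\transpose\bE^2\bu_k/\lambda_k^2$ up to strictly smaller bilinear pieces, contributes an additional $-\bu_k\bu_k\transpose/(2\lambda_k^2)$. The sum reproduces the $-\tfrac{3}{2}\bu_k\bu_k\transpose$ coefficient in the theorem. The scalar-by-vector residuals above, the $(\widehat{\lambda}_k - \lambda_k)\bG_k^2\bE\bu_k$ piece, all higher iterates, and the subleading parts of $\|\bw_k\|_2^2$ are absorbed into $\bt_k$.

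\textbf{Bounding $\bt_k$ and the main obstacle.} Controlling $\|\bt_k\|_\infty$ combines four ingredients: the zeroth- and first-order entrywise bounds \eqref{eqn:zeroth_order_eigenvector_deviation}--\eqref{eqn:first_order_eigenvector_fluctuation} (Lemma B.4 of the Supplement) as base estimates; entrywise sub-exponential concentration under Assumption \ref{assumption:Noise_matrix_distribution} for linear forms such as $\bu_m\transpose\bE\bu_k$ and $(\bP_\perp\bE\bu_k)_i$; the eigenvalue perturbation estimate $|\widehat{\lambda}_k - \lambda_k| = \widetilde{O}_{\prob}(\alpha_n + n\rho_n/\Delta_n)$, derived from the first-order spectral expansion of $\widehat{\lambda}_k$, whose square generates the $(\alpha_n + n\rho_n/\Delta_n)^2$ factor via the next-order Taylor residual $(\widehat{\lambda}_k - \lambda_k)^2\bG_k^3\bE\bu_k$; and entrywise control of the cubic-in-$\bE$ iterate $\bG_k\bE\bG_k\bE\bG_k\bE\bu_k$, which is responsible for the $(n\rho_n)^{3/2}(\log n)^{3\xi}/\Delta_n^3$ pieces, with the extra $\|\bU_\bP\|_{2\to\infty}^2$ factor in the last remainder contribution arising from the two interior $\bU_\bP\bU_\bP\transpose$ projections produced when expanding the inner $\bG_k$'s. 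The main obstacle is the last step: a naive bound $\|\bG_k\bE\bv\|_\infty \leq \|\bG_k\|_{2\to\infty}\|\bE\|_2\|\bv\|_2$ applied iteratively loses a factor of $\sqrt{n}$ per step, so I would use leave-one-out decoupling---replacing $\bE$ by its version $\bE^{(i)}$ with the $i$th row and column zeroed out and controlling the resulting residual separately by Bernstein-type inequalities---to propagate sharp $\ell_\infty$ estimates through repeated multiplication by $\bE$. Condition \eqref{thm3.1:con} is exactly the signal-to-noise requirement that keeps these iterated estimates contractive at the claimed order.
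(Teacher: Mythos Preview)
Your resolvent-iteration route is a genuine alternative to the paper's argument and the algebraic identification of the two main terms is correct: $\bG_k\bE\bu_k$ unfolds to the first-order expression, $\bG_k\bE^2\bu_k/\lambda_k$ supplies the $-\bu_k\bu_k\transpose$ coefficient, and the normalisation adds the extra $-\tfrac12\bu_k\bu_k\transpose$ to give $-\tfrac32$. The paper instead starts from the algebraic decomposition $\bU_{\bA_+}-\bU_{\bP_+}\bW_+^*=\bE\bU_{\bP_+}\bS_{\bP_+}^{-1}\bW_+^*+\bR_1+\bR_2+\bR_3+\bR_4+\bU_{\bP_-}\bS_{\bP_-}\bU_{\bP_-}\transpose\bU_{\bA_+}\bS_{\bA_+}^{-1}$ and expands each $\bR_j$ separately: $\bR_1\approx\bE^2\bU_{\bP_+}\bS_{\bP_+}^{-2}$, $\bR_2\approx-\bU_{\bP_+}\bU_{\bP_+}\transpose(\bE+\bE^2\bS_{\bP_+}^{-1})\bU_{\bP_+}\bS_{\bP_+}^{-1}$, while $\bR_3$ and the $\bU_{\bP_-}$ block are handled through a \emph{contour-integral} expansion of $\bu_m\transpose\widehat{\bu}_k$ (Proposition \ref{prop:eigenvector_angle_expansion}). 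The advantage of the paper's organisation is that each piece already lands in $\bE\bu_k$ or $\bE^2\bu_k$ form, so the ``scalar-by-vector'' quadratics $(\bu_m\transpose\bE\bu_k)\,\bG_k\bE\bu_m$ and $(\widehat\lambda_k-\lambda_k)\bG_k^2\bE\bu_k$ that your iteration generates never appear explicitly; your approach is more elementary (no residue calculus) but forces you to track these extra pieces and show they fit in $\bt_k$.

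The place where your plan would actually stumble is the last step. Once you iterate twice, every residual term is of the form $\bx\transpose\bE^l\by$ or $\be_i\transpose\bE^l\bu_m$ with \emph{deterministic} $\bx,\by,\bu_m$; there is no statistical coupling left to break, so leave-one-out is neither necessary nor the right tool. More seriously, ``Bernstein-type inequalities'' presuppose sub-exponential tails, whereas Assumption \ref{assumption:Noise_matrix_distribution} only imposes moment bounds up to order $(\log n)^{A_0\log\log n}$ with the $q_n$ parameter, and the $\widetilde O_{\prob}$ notation demands failure probability $n^{-c}$ for \emph{every} $c$. The paper obtains exactly these high-probability bounds by invoking the Erd\H{o}s--Knowles--Yau--Yin concentration for generalised Wigner matrices (their Lemma 3.8 and the proof of Lemma 6.5), packaged as Result B.1 and Lemma B.1 in the supplement: $\bx\transpose(\bE^l-\expect\bE^l)\by=\widetilde O_{\prob}\{\sqrt n(n\rho_n)^{l/2}(\log n)^{l\xi}\|\bx\|_\infty\|\by\|_\infty\}$ uniformly in $l\le\log n$. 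Replace your leave-one-out/Bernstein step by this device and the cubic iterate is controlled directly; without it your argument cannot reach the required probability level under Assumption \ref{assumption:Noise_matrix_distribution}.
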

\begin{remark}[Intuition of Theorem \ref{thm:Eigenvector_Expansion}]
\label{rmk:intuition_stochastic_expansion}
To gain insight into expansion \eqref{eqn:eigenvector_expansion}, let us consider the simplified situation where $\Delta_n = \Theta(n\rho_n)$ and $\|\bU_\bP\|_{2\to\infty} = O(n^{-1/2})$, which is common in low-rank random graphs such as the stochastic block model \cite{HOLLAND1983109} and its offspring \cite{airoldi2008mixed,PhysRevE.83.016107,PhysRevX.4.011047,https://doi.org/10.1111/rssb.12245}; See also \cite{cape2019signal,xie2021entrywise}. It can be easily checked that condition \eqref{thm3.1:con} is satisfied for any $\xi > 1$. It can also be shown that 
\begin{align}\label{alphan:bound}
\alpha_n = \|\bU_\bP\transpose\bE\bU_\bP\|_2 = \widetilde{O}_{\prob}\{(\log n)^\xi\}
\end{align} for any $\xi > 1$
(See, for example, Equation (3.19) of Lemma 3.8 in \cite{erdos2013}). 
Thus, we have the following sharp entrywise characterization of the terms appearing in \eqref{eqn:eigenvector_expansion} for any $i\in [n]$:
\begin{align}\nonumber
\be_i\transpose{\bigg(}\eye_n - \bu_k\bu_k\transpose + \sum_{m\in [d]/\{k\}}\frac{\lambda_m\bu_m\bu_m\transpose}{\lambda_k - \lambda_m}{\bigg)}\frac{\bE\bu_k}{\lambda_k} &= O_{\prob}\left(\frac{1}{n\rho_n^{1/2}}\right),
\\\label{thm3.1:rate:2}
\be_i\transpose{\bigg(}\eye_n - \frac{3}{2}\bu_k\bu_k\transpose + \sum_{m\in [d]/\{k\}}\frac{\lambda_m\bu_m\bu_m\transpose}{\lambda_k - \lambda_m}{\bigg)}\frac{\bE^2\bu_k}{\lambda_k^2} &= O_{\prob}\left(\frac{1}{n^{3/2}\rho_n}\right),
\end{align}
with $\|\bt_k\|_\infty  = \widetilde{O}_{\prob}\{(\log n)^{3\xi}n^{-2}\rho_n^{-3/2}\}$, which is implied to be a high-order negligible remainder compared to the first two terms on the right-hand side of \eqref{eqn:eigenvector_expansion}. 
\end{remark}

\subsection{Proof architecture}
\label{sub:proof_sketch}
We now briefly discuss the proof architecture of Theorem \ref{thm:Eigenvector_Expansion}. We use $\mathrm{sgn}(\widehat{\bu}_k\transpose\bu_k)$ to adjust $\widehat{\bu}_k$ for the identifiability of the eigenvector $\bu_k$ up to a sign. Let $\bW^* = \mathrm{diag}\{\mathrm{sgn}(\widehat{\bu}_1\transpose\bu_1),\ldots,\mathrm{sgn}(\widehat{\bu}_d\transpose\bu_d)\}$ and $\bW^*_+ = \mathrm{diag}\{\mathrm{sgn}(\widehat{\bu}_1\transpose\bu_1),\ldots,\mathrm{sgn}(\widehat{\bu}_p\transpose\bu_p)\}$. Without loss of generality, we only need to consider $\bU_{\bA_+}$ and $\bU_{\bP_+}$, because the result for $\bU_{\bA_-}$ and $\bU_{\bP_-}$ can be obtained immediately by observing  $-\bA = -\bP + (-\bE)$. 
We first present a warm-up decomposition of $\bU_{\bA_+} - \bU_{\bP_+}\bW$ due to \cite[$\mathsection$D.1]{xie2021entrywise} motivated by \cite{10.1214/19-AOS1854,cape2019signal}:
\begin{align}\label{eqn:first_order_eigenvector_expansion}
&\bU_{\bA_+} - \bU_{\bP_+}\bW_+^* = \bE\bU_{\bP_+}\bS_{\bP_+}^{-1}\bW_+^* + \bR_{\bU_+} + \bU_{\bP_-}\bS_{\bP_-}\bU_{\bP_-}\transpose\bU_{\bA_+}\bS_{\bA_+}^{-1},
\end{align}
where
\begin{equation}\label{def:RU+}
\begin{aligned}
\bR_{\bU_+} &= \bR_1 + \bR_2 + \bR_3 + \bR_4,\\
\bR_1 &= \bE(\bU_{\bA_+} - \bU_{\bP_+}\bW_+^*)\bS_{\bA_+}^{-1},\\
\bR_2 &= \bU_{\bP_+}(\bS_{\bP_+}\bU_{\bP_+}\transpose\bU_{\bA_+} - \bU_{\bP_+}\transpose\bU_{\bA_+}\bS_{\bA_+})\bS_{\bA_+}^{-1},\\
\bR_3 & = \bU_{\bP_+}(\bU_{\bP_+}\transpose\bU_{\bA_+} - \bW_+^*),\\
\bR_4 &= \bE\bU_{\bP_+}(\bW_+^*\bS_{\bA_+}^{-1} - \bS_{\bP_+}^{-1}\bW_+^*). 
\end{aligned}
\end{equation}
Decomposition \eqref{eqn:first_order_eigenvector_expansion} is purely algebraic and holds for any matrix equation of the form $\bA = \bU_\bP\bS_\bP\bU_\bP\transpose + \bE$ even without the randomness of $\bE$. See Section 2.2 and Section 3.2 of \cite{xie2021entrywise} for the detailed derivation and motivation. In particular, the first term appearing on the right-hand side of \eqref{eqn:first_order_eigenvector_expansion} unveils that the leading term in $\widehat{\bu}_k\mathrm{sgn}(\widehat{\bu}_k\transpose\bu_k) - \bu_k$ should contain $\bE\bu_k/\lambda_k$ and is inspiring for pursuing the higher-order expansion. It has also been observed in \cite{10.1214/19-AOS1854,xie2021entrywise} that decomposition \eqref{eqn:first_order_eigenvector_expansion} paves the way for the sharp first-order entrywise eigenvector analysis for model \eqref{eqn:signal_plus_noise_matrix_model} under challenging low SNR regimes. Nevertheless, the analysis of the terms $\bR_{\bU_+}$ and $\bU_{\bP_-}\bS_{\bP_-}\bU_{\bP_-}\transpose\bU_{\bA_+}\bS_{\bA_+}^{-1}$ in \cite{10.1214/19-AOS1854} and \cite{10.1214/19-AOS1854,xie2021entrywise} are not sufficient to explore the higher-order entrywise expansion of $\widehat{\bu}_k\mathrm{sgn}(\widehat{\bu}_k\transpose\bu_k) - \bu_k$.

The proof architecture of Theorem \ref{thm:Eigenvector_Expansion} is based on a collection of delicate analyses of the terms in $\bR_{\bU_+}$ and the term $\bU_{\bP_-}\bS_{\bP_-}\bU_{\bP_-}\transpose\bU_{\bA_+}\bS_{\bA_+}^{-1}$ in \eqref{eqn:first_order_eigenvector_expansion}.  
For the first term $\bR_1$ in $\bR_{\bU_+}$, decomposition \eqref{eqn:first_order_eigenvector_expansion} suggests that $\bU_{\bA_+} - \bU_{\bP_+}\bW_+^*\approx \bE\bU_{\bP_+}\bS_{\bP_+}^{-1}\bW_+^*$. It is thus natural to conceive that $\bR_1\approx \bE^2\bU_\bP\bS_\bP^{-2}\bW_+^*$ because $\bS_\bA\approx\bS_\bP$ by the Weyl's inequality for eigenvalue concentration and $\bE^2\bU_\bP\bS_\bP^{-2}$ is exactly a quadratic function of $\bE$. The analysis of the second term $\bR_2$ in $\bR_{\bU_+}$ is based on the definition of eigenvalues and eigenvectors, namely, $\bU_{\bP_+}\bS_{\bP_+} = \bP\bU_{\bP_+}$ and $\bU_{\bA_+}\bS_{\bA_+} = \bA\bU_{\bA_+}$. This entails that
\begin{align*}
\bR_2&\approx \bU_{\bP_+}(\bS_{\bP_+}\bU_{\bP_+}\transpose\bU_{\bA_+} - \bU_{\bP_+}\transpose\bU_{\bA_+}\bS_{\bA_+})\bS_{\bP_+}^{-1}\\
& = \bU_{\bP_+}(\bU_{\bP_+}\transpose\bP\bU_{\bA_+} - \bU_{\bP_+}\transpose\bA\bU_{\bA_+})\bS_{\bP_+}^{-1}\\
& \approx \bU_{\bP_+}(\bU_{\bP_+}\transpose\bP\bU_{\bP_+} - \bU_{\bP_+}\transpose\bA\bU_{\bP_+})\bS_{\bP_+}^{-1}\bW_+^*\\
& = -\bU_{\bP_+}\bU_{\bP_+}\transpose\bE\bU_{\bP_+}\bS_{\bP_+}^{-1}\bW_+^*
\end{align*}
because $\bU_{\bA_+} \approx \bU_{\bP_+}\bW_+^*$ heuristically, so that $\bR_2$ can be approximated by a linear function of $\bE$. The fourth term $\bR_4$ can be shown to be negligible compared to the first-order and second-order terms in the expansion \eqref{eqn:eigenvector_expansion}. It remains to work with the  $\bR_3$ in $\bR_{\bU_+}$ and the last term $\bU_{\bP_-}\bS_{\bP_-}\bU_{\bP_-}\transpose\bU_{\bA_+}\bS_{\bA_+}^{-1}$ in \eqref{eqn:first_order_eigenvector_expansion}. The key factor in these two terms, which is also the most challenging one to analyze, is the angles between the sample eigenvectors in $\bU_\bA$ and their population counterparts in $\bU_\bP$. More specifically, one needs to deal with the asymptotic expansion of $\bU_{\bP_+}\transpose 
 \bU_{\bA_+}$ in $\bR_3$ and $\bU_{\bP_-}\transpose 
 \bU_{\bA_+}$ in $\bU_{\bP_-}\bS_{\bP_-}\bU_{\bP_-}\transpose\bU_{\bA_+}\bS_{\bA_+}^{-1}$. Proposition \ref{prop:eigenvector_angle_expansion} below addresses the asymptotic expansion of $\bu_m\transpose\widehat{\bu}_k$ for any $m\in[d]$ and $k\in[p]$, thereby filling the last piece in the proof architecture of Theorem \ref{thm:Eigenvector_Expansion}.
\begin{proposition}\label{prop:eigenvector_angle_expansion}
Suppose the conditions of Theorem \ref{thm:Eigenvector_Expansion} hold. For each fixed $k\in [p]$, $m\in [d]$, $k\neq m$, we have
\begin{align*}
\bu_k\transpose\widehat{\bu}_k - \mathrm{sgn}(\bu_k\transpose\widehat{\bu}_k)
&= -\mathrm{sgn}(\bu_k\transpose\widehat{\bu}_k)\frac{\bu_k\transpose\bE^2\bu_k}{2\lambda_k^{2}} + t_{kk},\\
\mathrm{sgn}(\bu_k\transpose\widehat{\bu}_k)(\bu_m\transpose\widehat{\bu}_k)
&= \frac{\bu_m\transpose\bE\bu_k}{\lambda_k - \lambda_m} + \frac{\bu_m\transpose \bE^2\bu_k}{\lambda_k(\lambda_k - \lambda_m)} + t_{mk},
\end{align*}
where for any $\xi\in(1,\bar{\xi}]$, 
\[
\max_{k\in[p],m\in [d]}|t_{mk}| = \widetilde{O}_{\prob}\left\{\frac{\alpha_n^2}{\Delta_n^2} + \frac{n^2\rho_n^{3/2}(\log n)^{3\xi}\|\bU_\bP\|_{2\to\infty}^2}{\Delta_n^3} + \frac{(n\rho_n)^{3/2}}{\Delta_n^3}\right\}.
\]
\end{proposition}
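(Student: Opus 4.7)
The plan is to derive both asymptotic expansions directly from the defining eigen-equation $\bA\widehat{\bu}_k = \hat{\lambda}_k\widehat{\bu}_k$, coupled with a 2-norm refinement of the first-order expansion implicit in \eqref{eqn:first_order_eigenvector_expansion}--\eqref{eqn:first_order_eigenvector_fluctuation} and a Weyl-type control on $\hat{\lambda}_k - \lambda_k$. Let $s_k := \mathrm{sgn}(\bu_k\transpose\widehat{\bu}_k)$, $\widetilde{\bu}_k := s_k\widehat{\bu}_k$ (so that $\bu_k\transpose\widetilde{\bu}_k\geq 0$), and introduce the first-order influence operator
\[
\bD_k := \eye_n - \bu_k\bu_k\transpose + \sum_{j\in[d]\setminus\{k\}}\frac{\lambda_j\bu_j\bu_j\transpose}{\lambda_k - \lambda_j}.
\]
The refined first-order expansion $\widetilde{\bu}_k - \bu_k = \bD_k\bE\bu_k/\lambda_k + \bR_k$, with $\|\bR_k\|_2$ controlled by a suitable power of $\sqrt{n\rho_n}/\Delta_n$, follows from analyzing the $\bR_1,\ldots,\bR_4$ and $\bU_{\bP_-}\bS_{\bP_-}\bU_{\bP_-}\transpose\bU_{\bA_+}\bS_{\bA_+}^{-1}$ terms in \eqref{eqn:first_order_eigenvector_expansion} along the lines of the proof sketch in Section \ref{sub:proof_sketch}.

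For the off-diagonal statement ($m\neq k$), I would left-multiply $\bA\widetilde{\bu}_k = \hat{\lambda}_k\widetilde{\bu}_k$ by $\bu_m\transpose$ and use $\bu_m\transpose\bP = \lambda_m\bu_m\transpose$ to obtain the exact identity
\[
s_k\bu_m\transpose\widehat{\bu}_k = \frac{\bu_m\transpose\bE\widetilde{\bu}_k}{\hat{\lambda}_k - \lambda_m}.
\]
Substituting $\widetilde{\bu}_k = \bu_k + \bD_k\bE\bu_k/\lambda_k + \bR_k$ in the numerator, Taylor-expanding $1/(\hat{\lambda}_k - \lambda_m)$ around $1/(\lambda_k - \lambda_m)$, and using the algebraic identity
\[
\bu_m\transpose\bE\bD_k\bE\bu_k = \bu_m\transpose\bE^2\bu_k - \bu_m\transpose\bE\bu_k\cdot\bu_k\transpose\bE\bu_k + \sum_{j\in[d]\setminus\{k\}}\frac{\lambda_j\,\bu_m\transpose\bE\bu_j\cdot\bu_j\transpose\bE\bu_k}{\lambda_k - \lambda_j}
\]
produce the target leading term $\bu_m\transpose\bE\bu_k/(\lambda_k - \lambda_m)$ and the second-order term $\bu_m\transpose\bE^2\bu_k/\{\lambda_k(\lambda_k - \lambda_m)\}$. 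Three error families remain: (i) $\bu_m\transpose\bE\bR_k/(\hat{\lambda}_k - \lambda_m)$, bounded by Cauchy--Schwarz using $\|\bE\bu_m\|_2 = \widetilde{O}_{\prob}(\sqrt{n\rho_n})$ and the 2-norm bound on $\bR_k$; (ii) the scalar quadratic-in-$\bE$ corrections from the displayed identity, each $\widetilde{O}_{\prob}(\rho_n/\Delta_n^2)$; and (iii) the Taylor correction $-(\hat{\lambda}_k - \lambda_k)\bu_m\transpose\bE\bu_k/(\lambda_k - \lambda_m)^2$ plus its higher-order Taylor tail, controlled by $|\hat{\lambda}_k - \lambda_k| = \widetilde{O}_{\prob}(\alpha_n + n\rho_n/\Delta_n)$. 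Summing these against the stated bound for $t_{mk}$ matches the three terms inside $\widetilde{O}_{\prob}(\cdot)$.

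For the diagonal statement, I would exploit $\|\widetilde{\bu}_k\|_2 = 1$ and $\bu_k\transpose\widetilde{\bu}_k\geq 0$, which by Parseval's identity in the basis $\bu_1,\ldots,\bu_d$ together with its orthogonal complement give
\[
1 - \bu_k\transpose\widetilde{\bu}_k = \frac{1}{1 + \bu_k\transpose\widetilde{\bu}_k}\left\{\sum_{j\in[d]\setminus\{k\}}(\bu_j\transpose\widetilde{\bu}_k)^2 + \|(\eye_n - \bU_\bP\bU_\bP\transpose)\widetilde{\bu}_k\|_2^2\right\}.
\]
I would substitute the off-diagonal formula just proved for each $\bu_j\transpose\widetilde{\bu}_k$ (only the squared first-order term $\{\bu_j\transpose\bE\bu_k/(\lambda_k - \lambda_j)\}^2$ contributes at the required precision) together with the first-order identity $(\eye_n - \bU_\bP\bU_\bP\transpose)\widetilde{\bu}_k = (\eye_n - \bU_\bP\bU_\bP\transpose)\bE\bu_k/\lambda_k + (\eye_n - \bU_\bP\bU_\bP\transpose)\bR_k$ (since the non-identity parts of $\bD_k$ are annihilated by the orthogonal projection). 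The resulting sum $\sum_{j\neq k}(\bu_j\transpose\bE\bu_k)^2/(\lambda_k - \lambda_j)^2 + \|(\eye_n - \bU_\bP\bU_\bP\transpose)\bE\bu_k\|_2^2/\lambda_k^2$ is then rewritten as $\bu_k\transpose\bE^2\bu_k/\lambda_k^2$ at the cost of two discrepancies, $(\bu_k\transpose\bE\bu_k)^2/\lambda_k^2$ and $\sum_{j\neq k}(\bu_j\transpose\bE\bu_k)^2\{1/\lambda_k^2 - 1/(\lambda_k - \lambda_j)^2\}$, each of which is $\widetilde{O}_{\prob}(\rho_n/\Delta_n^2)$ and absorbed into the $\alpha_n^2/\Delta_n^2$ component of $t_{kk}$; the scalar factor $1/(1 + \bu_k\transpose\widetilde{\bu}_k) = 1/2 + O(1 - \bu_k\transpose\widetilde{\bu}_k)$ contributes only a lower-order multiplicative correction.

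The main technical obstacle will be the sharp, non-asymptotic 2-norm control of $\bR_k$ in the first-order expansion, since this is what ultimately generates the $(n\rho_n)^{3/2}/\Delta_n^3$ and delocalization-sensitive $n^2\rho_n^{3/2}(\log n)^{3\xi}\|\bU_\bP\|_{2\to\infty}^2/\Delta_n^3$ contributions of $t_{mk}$ once paired with Cauchy--Schwarz against $\bu_m\transpose\bE$. Tracking how $\|\bU_\bP\|_{2\to\infty}$ propagates through the quadratic residual arising from $\bR_1$ and $\bU_{\bP_-}\bS_{\bP_-}\bU_{\bP_-}\transpose\bU_{\bA_+}\bS_{\bA_+}^{-1}$ in \eqref{eqn:first_order_eigenvector_expansion} is especially delicate. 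All remaining pieces reduce to routine Hanson--Wright and matrix-Bernstein estimates under Assumption \ref{assumption:Noise_matrix_distribution} already used in the zeroth- and first-order companion lemmas cited in Section \ref{sub:main_result}.
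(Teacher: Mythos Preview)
Your approach is correct but takes a genuinely different route from the paper. The paper (Section~C of the supplement) proves the proposition as a corollary of a bilinear-form expansion for $\bu_m\transpose\widehat{\bu}_k\widehat{\bu}_k\transpose\bu_k$ obtained via Cauchy's residue theorem: it writes $\widehat{\bu}_k\widehat{\bu}_k\transpose$ as a contour integral of the resolvent $\widetilde{\bG}(z)=(\bA-z\eye_n)^{-1}$, reduces this through Woodbury-type identities to the Green function $\bG(z)=(\bE-z\eye_n)^{-1}$, Neumann-expands $\bG(z)=-\sum_{l\geq 0}\bE^l/z^{l+1}$, and bounds the $l\geq 3$ terms via the concentration inequality $\bx\transpose(\bE^l-\expect\bE^l)\by=\widetilde{O}_{\prob}\{\sqrt{n}(n\rho_n)^{l/2}(\log n)^{l\xi}\|\bx\|_\infty\|\by\|_\infty\}$ from \cite{erdos2013}. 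The $\|\bx\|_\infty\|\by\|_\infty$ prefactor with $\bx,\by\in\{\bu_1,\ldots,\bu_d\}$ is exactly what produces the delocalization-sensitive term $n^2\rho_n^{3/2}(\log n)^{3\xi}\|\bU_\bP\|_{2\to\infty}^2/\Delta_n^3$ in the stated remainder. Your eigen-equation iteration never expands beyond $\bE^2$ and relies only on spectral-norm facts such as $\|\bE\|_2=\widetilde{O}_{\prob}(\sqrt{n\rho_n})$, $\|\widetilde{\bu}_k-\bu_k\|_2=\widetilde{O}_{\prob}(\sqrt{n\rho_n}/\Delta_n)$, and the sharper eigenvalue control $|\hat\lambda_k-\lambda_k|=\widetilde{O}_{\prob}(\alpha_n+n\rho_n/\Delta_n)$ from Lemma~\ref{lemma:Remainder_analysis_I}; consequently the $\|\bU_\bP\|_{2\to\infty}^2$ contribution should \emph{not} appear, and your argument would in fact yield the tighter remainder $\widetilde{O}_{\prob}\{\alpha_n^2/\Delta_n^2+(n\rho_n)^{3/2}/\Delta_n^3\}$. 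One caution you should make explicit: the $2$-norm control of $\bR_k$ that you flag as the main obstacle is mildly circular, since $\bu_j\transpose\bR_k=\bu_j\transpose\widetilde{\bu}_k-\bu_j\transpose\bE\bu_k/(\lambda_k-\lambda_j)$ for $j\neq k$ is precisely the quantity being expanded. This is resolved by a two-pass bootstrap: one iteration of the identity $\bu_j\transpose\widetilde{\bu}_k=\bu_j\transpose\bE\widetilde{\bu}_k/(\hat\lambda_k-\lambda_j)$ together with Davis--Kahan gives the crude bound $|\bu_j\transpose\bR_k|=\widetilde{O}_{\prob}(n\rho_n/\Delta_n^2)$, which then feeds into the second pass. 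With that in place, your approach is more elementary and slightly sharper; the resolvent method's advantage is that it is systematic and extends directly to higher-order expansions and more general linear functionals.
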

\begin{remark}
We briefly compare Proposition \ref{prop:eigenvector_angle_expansion} with the similar result in \cite{doi:10.1080/01621459.2020.1840990}. Theorem 2 in \cite{doi:10.1080/01621459.2020.1840990} characterizes the asymptotic normality of general linear functionals of eigenvectors based on stochastic expansions similar to those in Proposition \ref{prop:eigenvector_angle_expansion}. However, Proposition \ref{prop:eigenvector_angle_expansion} differs from Theorem 2 in \cite{doi:10.1080/01621459.2020.1840990} in the following aspects. First of all, in \cite{doi:10.1080/01621459.2020.1840990}, the entries of the noise matrix $\bE$ are required to be bounded with probability one or satisfy the moment bound $\expect |E_{ij}|^l \leq C^{l - 2}\rho_n$ for $l\geq 2$ for some constant $C > 0$. In contrast, Assumption \ref{assumption:Noise_matrix_distribution} allows for a much broader class of noise matrix distributions because the moment bound in \eqref{eqn:noise__moment_condition} is equivalent to $\expect |E_{ij}|^l \leq (C\sqrt{n\rho_n}/q_n)^{l - 2}\rho_n$ for some constant $C > 0$ and $\sqrt{n\rho_n}/q_n$ is allowed to diverge to $+\infty$ as $n\to\infty$. Secondly, the results in \cite{doi:10.1080/01621459.2020.1840990} are obtained with remainders characterized in terms of the classical $O_{\prob}(\cdot)$ notation, whereas the remainders in Proposition \ref{prop:eigenvector_angle_expansion} are obtained under the stronger ``with high probability'' notation $\widetilde{O}_{\prob}(\cdot)$. The high-probability bound for the remainder term is crucial for us to develop the entrywise Edgeworth expansion in Section \ref{sub:entrywise_eigenvector_edgeworth_expansion} while, in contrast, the classical $O_{\prob}(\cdot)$ characterization would be insufficient. The proof of Proposition \ref{prop:eigenvector_angle_expansion} follows the same roadmap as in \cite{doi:10.1080/01621459.2020.1840990}, but we borrow the sharp concentration inequalities in \cite{erdos2013} for terms in the form of $\bx\transpose (\bE^l - \expect\bE^l)\by$ instead of Chebyshev's inequality (which is the treatment in \cite{doi:10.1080/01621459.2020.1840990}) in order to provide stronger $\widetilde{O}_{\prob}(\cdot)$ characterization of the remainders. 
\end{remark}

\section{Applications}
\label{sec:applications}

\subsection{Eigenvector bias correction}
\label{sub:bias_correction}

In this section, we introduce the bias correction procedure for the eigenvectors as an immediate application of Theorem \ref{thm:Eigenvector_Expansion}. 
We focus on the following eigenvector delocalization scenario. 
\begin{assumption}\label{assumption:Eigenvector_delocalization}
There exists a constant $C_\mu\geq 1$ such that $\|\bU_\bP\|_{2\to\infty}\leq C_\mu\sqrt{d/n}$.
\end{assumption}
Assumption \ref{assumption:Eigenvector_delocalization} is also referred to as the incoherent condition in the literature of random matrix theory, compressed sensing, and network analysis \cite{10.1214/19-AOS1854,agterberg2021entrywise,candes2009exact,candes2010power,cape2019signal,cape2017two,10.1215/00127094-3129809,xie2021entrywise}. It requires that the magnitudes of the entries of $\bU_\bP$ cannot be too distinct from each other and they are uniformly spread out. In our current entrywise eigenvector analysis, such eigenvector delocalization implies that the first-order term in the expansion \eqref{eqn:eigenvector_expansion} is a sum of holospoudic independent random variables and converges weakly to the standard normal distribution by the Lindeberg-Feller central limit theorem \cite{chung2001course}. 

Bias correction is a method to construct a ``new'' estimator that removes the second-order bias of an ``old'' estimator. There are several approaches to construct bias-corrected estimators, including the jackknife method \cite{10.1214/aoms/1177729989,10.1093/biomet/43.3-4.353} and the analytic method \cite{https://doi.org/10.1111/j.1468-0262.2004.00482.x}. The analytic method is preferred when a closed-form formula for the second-order bias is available, which is exactly our case in light of Theorem \ref{thm:Eigenvector_Expansion}. We now describe the bias correction procedure in our context. 
The key observation for the eigenvector bias correction is that the second-order term
\[
{\bigg(}\eye_n - \frac{3}{2}\bu_k\bu_k\transpose + \sum_{m\in [d]/\{k\}}\frac{\lambda_m\bu_m\bu_m\transpose}{\lambda_k - \lambda_m}{\bigg)}\frac{\bE^2\bu_k}{\lambda_k^2}
\]
on the right-hand side of the expansion \eqref{eqn:eigenvector_expansion} has a non-zero expected value. Formally, let $\widetilde{\bu}_k$ be any estimator of $\bu_k$ admitting an expansion of the following form 
\begin{equation}\label{tu:expansion}
\widetilde{\bu}_k\mathrm{sgn}(\bu_k\transpose\widetilde{\bu}_k) - \bu_k = \bpsi_k^{\widetilde{\bu}_k}(\bE) + \bq_k^{\widetilde{\bu}_k}(\bE) + \bt_k^{\widetilde{\bu}_k};
\end{equation}
Here, $\bpsi_k^{\widetilde{\bu}_k}(\cdot):L_2(\mathbb{R}^{n\times n})\to L_2(\mathbb{R}^n)$ is a linear function of $\bE$, $\bq_k^{\widetilde{\bu}_k}(\cdot):L_2(\mathbb{R}^{n\times n})\to L_2(\mathbb{R}^n)$ is a quadratic function of $\bE$, $\bt_k^{\widetilde{\bu}_k}\in\mathbb{R}^n$ is the higher-order remainder term, and they satisfy
\begin{align*}
\be_i\transpose\bpsi_k^{\widetilde{\bu}_k}(\bE)& = O_{\prob}\mathrel{\Big(}\frac{\rho_n^{1/2}}{\Delta_n}\mathrel{\Big)},\quad \be_i\transpose\bq_k^{\widetilde{\bu}_k}(\bE) = O_{\prob}\Big(\frac{\sqrt{n}\rho_n}{\Delta_n^2}\Big)\quad
\be_i\transpose\bt_k^{\widetilde{\bu}_k} = o_{\prob}\Big(\frac{\sqrt{n}\rho_n}{\Delta_n^2}\Big)
\end{align*}
for each $i\in[n]$, where we use $L_2(\mathbb{R}^{n\times n})$ and $L_2(\mathbb{R}^n)$ to denote the collection of all $n\times n$ random matrices and $n$-dimensional random vectors with finite second moments. 
The second-order bias of $\widetilde{\bu}_k$ is defined as
\begin{equation}\label{def:bias}
\begin{aligned}
\bb(\widetilde{\bu}_k) = \expect \bq_k^{\widetilde{\bu}_k}(\bE).
\end{aligned}
\end{equation}
When $\widetilde{\bu}_k = \widehat{\bu}_k$ and Assumptions~\ref{assumption:Signal_strength}--\ref{assumption:Eigenvector_delocalization} hold, by Theorem~\ref{thm:Eigenvector_Expansion}, it can be shown that
\begin{align}
\label{eqn:eigenvector_first_order_term}
\be_i\transpose\bpsi_k^{\widehat{\bu}_k}(\bE) & = \be_i\transpose{\bigg(}\eye_n - \bu_k\bu_k\transpose + \sum_{m\in [d]/\{k\}}\frac{\lambda_m\bu_m\bu_m\transpose}{\lambda_k - \lambda_m}{\bigg)}\frac{\bE\bu_k}{\lambda_k} = O_{\prob}\mathrel{\Big(}\frac{\rho_n^{1/2}}{\Delta_n}\mathrel{\Big)},\\
\label{eqn:eigenvector_second_order_term}
\be_i\transpose\bq_k^{\widehat{\bu}_k}(\bE) & = \be_i\transpose{\bigg(}\eye_n - \frac{3}{2}\bu_k\bu_k\transpose + \sum_{m\in [d]/\{k\}}\frac{\lambda_m\bu_m\bu_m\transpose}{\lambda_k - \lambda_m}{\bigg)}\frac{\bE^2\bu_k}{\lambda_k^2} = O_{\prob}\Big(\frac{\sqrt{n}\rho_n}{\Delta_n^2}\Big),
\end{align}
and $\be_i\transpose\bt_k^{\widehat{\bu}_k} = \widetilde{o}_{\prob}\big({\sqrt{n}\rho_n}/{\Delta_n^2}\big)$, provided that $(n\rho_n)^{3/2}/\Delta_n^2\to 0$. 
The key idea of the bias correction for the eigenvectors is to estimate the second-order bias $\bb_k = \bb(\widehat{\bu}_k) = \expect \bq_k^{\widehat{\bu}_k}(\bE)$ using the analytic formula
\[
\bb_k(\widehat{\bu}_k) = {\bigg(}\eye_n - \frac{3}{2}\bu_k\bu_k\transpose + \sum_{m\in [d]/\{k\}}\frac{\lambda_m\bu_m\bu_m\transpose}{\lambda_k - \lambda_m}{\bigg)}\frac{\expect\bE^2\bu_k}{\lambda_k^2}.
\]
To this end, we define
\[
\bD := \expect\bE^2 = \mathrm{diag}\mathrel{\bigg(}\sum_{j = 1}^n\sigma_{1j}^2,\ldots,\sum_{j = 1}^n\sigma_{nj}^2\mathrel{\bigg)},
\]
where $\sigma_{ij}^2:= \var(E_{ij})$.
Let $\widehat{\bP} = \bU_\bA\bS_\bA\bU_\bA\transpose$, $\widehat{p}_{ij} = \be_i\transpose\widehat{\bP}\be_j$, where $\be_i$ is the $i$th standard basis vector in $\mathbb{R}^n$ whose coordinates are zeros except the $i$th one being one. Define 
\[
\widehat{\bD} = \mathrm{diag}\mathrel{\bigg\{}\sum_{j = 1}^n(A_{1j} - \widehat{p}_{1j})^2,\ldots,\sum_{j = 1}^n(A_{nj} - \widehat{p}_{nj})^2\mathrel{\bigg\}}
\]
as the plug-in estimator of $\bD$ and 
\begin{align}\label{eqn:estimated_bias}
\widehat{\bb}_k = {\bigg(}\eye_n - \frac{3}{2}\widehat{\bu}_k\widehat{\bu}_k\transpose + \sum_{m\in [d]/\{k\}}\frac{\widehat{\lambda}_m\widehat{\bu}_m\widehat{\bu}_m\transpose}{\widehat{\lambda}_k - \widehat{\lambda}_m}{\bigg)}\frac{\widehat{\bD}\widehat{\bu}_k}{\widehat{\lambda}_k^2},
\end{align}
as the plug-in estimator of $\bb(\widehat{\bu}_k)$. Theorem \ref{thm:bias_corrected_ASE} shows that the bias-corrected eigenvector, 
\begin{equation}\label{def:biascorr}
\begin{aligned}
\widehat{\bu}_k^{(c)} := \widehat{\bu}_k - \widehat{\bb}_k,
\end{aligned}
\end{equation}
has zero second-order bias. Note that $\bq_k^{\widehat{\bu}_k^{(c)}}(\bE) = \bq_k^{\widehat{\bu}_k}(\bE) - \expect \bq_k^{\widehat{\bu}_k}(\bE)$ is also a quadratic function of $\bE$ because $\expect \bq_k^{\widehat{\bu}_k}(\cdot):L_2(\mathbb{R}^{n\times n})\to L_2(\mathbb{R}^n)$ is quadratic in $\bE$. 
\begin{theorem}\label{thm:bias_corrected_ASE}
Suppose Assumptions \ref{assumption:Signal_strength}--\ref{assumption:Eigenvector_delocalization} hold. Then for any $\xi > 1$, we have
\[
\|\widehat{\bb}_k\mathrm{sgn}(\bu_k\transpose\widehat{\bu}_k) - \bb(\widehat{\bu}_k)\|_{\infty} = \widetilde{O}_{\prob}\left\{\frac{\sqrt{n}\rho_n}{\Delta_n^2}\max\left(\frac{1}{q_n},  \frac{\sqrt{n\rho_n}}{\Delta_n}\right)(\log n)^{\xi}\right\}.
\]
Furthermore, if $(n\rho_n)^{3/2}/\Delta_n^2\to 0$, then the bias-corrected eigenvector $\widehat{\bu}_k^{(c)} = \widehat{\bu}_k - \widehat{\bb}_k$ satisfies $\bb(\widehat{\bu}_k^{(c)}) = \zero_n$. 
\end{theorem}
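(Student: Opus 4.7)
\emph{Proof proposal.} My plan for the first assertion is to propagate plug-in estimation errors through each ingredient defining $\widehat{\bb}_k$ and compare it entrywise to $\bb(\widehat{\bu}_k)$. Introducing the sign-adjustments $s_m := \mathrm{sgn}(\bu_m\transpose\widehat{\bu}_m)$ and noting that the projectors $\widehat{\bu}_m\widehat{\bu}_m\transpose = (s_m\widehat{\bu}_m)(s_m\widehat{\bu}_m)\transpose$ are sign-invariant, I can rewrite $s_k\widehat{\bb}_k$ using $s_m\widehat{\bu}_m$ so that each plug-in ingredient is entrywise close to its population counterpart. For the second assertion, I will insert the expansion \eqref{eqn:eigenvector_expansion} into $\widehat{\bu}_k^{(c)} = \widehat{\bu}_k - \widehat{\bb}_k$ and observe that the quadratic form becomes centered after subtracting the bias.

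\textbf{Step 1: Plug-in error rates.} Weyl's inequality combined with $\|\bE\|_2 = \widetilde{O}_\prob(\sqrt{n\rho_n})$ gives $\max_m|\widehat{\lambda}_m - \lambda_m| = \widetilde{O}_\prob(\sqrt{n\rho_n})$. The first-order entrywise fluctuation \eqref{eqn:first_order_eigenvector_fluctuation}, combined with Assumption \ref{assumption:Eigenvector_delocalization}, yields $\|s_m\widehat{\bu}_m - \bu_m\|_\infty = \widetilde{O}_\prob\{\sqrt{\rho_n}(\log n)^\xi/\Delta_n\}$. For the variance matrix estimate I would write
\[
\widehat{D}_{ii} - D_{ii} = \sum_{j = 1}^n (E_{ij}^2 - \sigma_{ij}^2) + 2\sum_{j = 1}^n E_{ij}(p_{ij} - \widehat{p}_{ij}) + \sum_{j = 1}^n (p_{ij} - \widehat{p}_{ij})^2.
\]
Using the moment bound $\expect E_{ij}^4 \lesssim n\rho_n^2/q_n^2$ from Assumption \ref{assumption:Noise_matrix_distribution}, a Rosenthal/Bernstein-type inequality shows the first sum concentrates at rate $\widetilde{O}_\prob\{n\rho_n(\log n)^\xi/q_n\}$. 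The remaining two sums, expressed through $\widehat{\bP} - \bP$, are controlled by an entrywise bound obtained from the first-order eigenvector expansion, and both contribute at the $\sqrt{n\rho_n}/\Delta_n$ relative scale.

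\textbf{Step 2: Telescoping and zero-bias.} Writing $O_k := \eye_n - (3/2)\bu_k\bu_k\transpose + \sum_{m\neq k}\lambda_m\bu_m\bu_m\transpose/(\lambda_k - \lambda_m)$ and $\widehat{O}_k$ for its plug-in analogue, I would split
\[
s_k\widehat{\bb}_k - \bb(\widehat{\bu}_k) = (\widehat{O}_k - O_k)\frac{\bD\bu_k}{\lambda_k^2} + \widehat{O}_k\left(\frac{\widehat{\bD}\,s_k\widehat{\bu}_k}{\widehat{\lambda}_k^2} - \frac{\bD\bu_k}{\lambda_k^2}\right)
\]
and bound each piece entrywise via Step 1. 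Since $\|\bD\bu_k/\lambda_k^2\|_\infty = O(\sqrt{n}\rho_n/\Delta_n^2)$ under Assumptions \ref{assumption:Signal_strength} and \ref{assumption:Eigenvector_delocalization}, the eigenvalue/eigenvector perturbations contribute at rate $(\sqrt{n\rho_n}/\Delta_n)\cdot(\sqrt{n}\rho_n/\Delta_n^2)$, while the $\widehat{\bD}$ contribution scales as $(1/q_n)\cdot(\sqrt{n}\rho_n/\Delta_n^2)$, giving the stated rate after absorbing the $(\log n)^\xi$ factors. For the second assertion, $\widehat{\bb}_k$ is of smaller order than $\widehat{\bu}_k - \bu_k$, so $\mathrm{sgn}(\bu_k\transpose\widehat{\bu}_k^{(c)}) = s_k$ w.h.p., and \eqref{eqn:eigenvector_expansion} gives
\[
\widehat{\bu}_k^{(c)} s_k - \bu_k = \bpsi_k^{\widehat{\bu}_k}(\bE) + \bigl[\bq_k^{\widehat{\bu}_k}(\bE) - \expect\bq_k^{\widehat{\bu}_k}(\bE)\bigr] + \bt_k^{\widehat{\bu}_k} + \bigl[\bb(\widehat{\bu}_k) - s_k\widehat{\bb}_k\bigr].
\]
The new quadratic term $\bq_k^{\widehat{\bu}_k^{(c)}}(\bE) := \bq_k^{\widehat{\bu}_k}(\bE) - \expect\bq_k^{\widehat{\bu}_k}(\bE)$ is centered by construction, and under $(n\rho_n)^{3/2}/\Delta_n^2\to 0$ the trailing bracket is $o_\prob(\sqrt{n}\rho_n/\Delta_n^2)$ by the first assertion (since both $1/q_n$ and $\sqrt{n\rho_n}/\Delta_n$ vanish), so that it is absorbed into a higher-order remainder and $\bb(\widehat{\bu}_k^{(c)}) = \expect\bq_k^{\widehat{\bu}_k^{(c)}}(\bE) = \zero_n$.

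\textbf{Main obstacle.} The most delicate piece is the cross term $\sum_j E_{ij}(p_{ij} - \widehat{p}_{ij})$ in Step 1. Because $\widehat{p}_{ij}$ is a nonlinear function of the full matrix $\bE$, it is correlated with $E_{ij}$, and a naive Cauchy--Schwarz bound loses the sharp $1/q_n$ and $\sqrt{n\rho_n}/\Delta_n$ factors. I would handle this via a leave-one-out decoupling, replacing $\widehat{\bP}$ in the cross term with a version computed from the matrix obtained by zeroing out the $i$th row and column of $\bE$, so that $E_{ij}$ becomes independent of the replaced quantity; the additional leave-out perturbation is then controlled separately using the same entrywise machinery used in Theorem \ref{thm:Eigenvector_Expansion}.
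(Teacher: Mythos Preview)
Your proposal is essentially the same route as the paper's proof: establish $\|\widehat\bD-\bD\|_\infty$, $\|s_m\widehat\bu_m-\bu_m\|_\infty$, and $|\widehat\lambda_m-\lambda_m|$ at the right rates, then telescope through the bias formula. The paper organizes the telescoping into three explicit claims (for $\widehat\bD\widehat\bu_k/\widehat\lambda_k^2$, for $\widehat\bu_m\widehat\bu_m\transpose\widehat\bD\widehat\bu_k/\widehat\lambda_k^2$, and for the full $m\neq k$ summand), but your two-piece split via $\widehat O_k$ is equivalent.

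The one substantive difference is that your ``main obstacle'' is not an obstacle at all. For the cross term $\sum_j E_{ij}(p_{ij}-\widehat p_{ij})$ the paper simply applies Cauchy--Schwarz:
\[
\Bigl|\sum_{j}E_{ij}(p_{ij}-\widehat p_{ij})\Bigr|\le \Bigl(\sum_j E_{ij}^2\Bigr)^{1/2}\Bigl(\sum_j(p_{ij}-\widehat p_{ij})^2\Bigr)^{1/2}\le \|\bE\|_2\cdot\sqrt{n}\max_{i,j}|p_{ij}-\widehat p_{ij}|,
\]
which is $\widetilde O_\prob\{\sqrt{n}\rho_n(\log n)^\xi\}$ using the entrywise bound \eqref{eqn:pij_zeroth_order_deviation}. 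Because Assumption~\ref{assumption:Noise_matrix_distribution} forces $q_n\le\sqrt{n}$, this is \emph{dominated} by the first term's rate $(n\rho_n)(\log n)^\xi/q_n$, so no sharpness is lost and the final $\|\widehat\bD-\bD\|_\infty=\widetilde O_\prob\{(n\rho_n)(\log n)^\xi/q_n\}$ follows without any decoupling. The leave-one-out argument you propose would work, but it is unnecessary machinery here; the correlation between $E_{ij}$ and $\widehat p_{ij}$ is harmless at the precision required for this theorem.
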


\begin{example}[Random graph inference]
\label{example:GRPDG_bias_correction}
As an illustrative example, we consider the bias correction for the eigenvectors of a random graph model with a low-rank edge probability matrix. Suppose $\bP$ satisfies Assumption \ref{assumption:Signal_strength}, Assumption \ref{assumption:Eigenvalue_separation}, and Assumption \ref{assumption:Eigenvector_delocalization} with $\Delta_n = \Theta(n\rho_n)$. 
Furthermore, suppose $A_{ij}\sim\mathrm{Bernoulli}(p_{ij})$ independently for all $i,j\in [n]$, $i\leq j$, where $p_{ij}$ is the $(i, j)$th entry of $\bP$, and $A_{ij} = A_{ji}$ if $i > j$. The random matrix $\bA = [A_{ij}]_{n\times n}$ is called the adjacency matrix of a generalized random dot product graph. By Theorem \ref{thm:Eigenvector_Expansion}, Theorem \ref{thm:bias_corrected_ASE}, and the concentration bound \eqref{alphan:bound}, we have the following stochastic expansions for $\widehat{\bu}_k$ and $\widehat{\bu}_k^{(c)}$:
\begin{align*}
&\left\|\widehat{\bu}_k\mathrm{sgn}(\widehat{\bu}_k\transpose\bu_k) - \bu_k - \bpsi_k^{\widehat{\bu}_k}(\bE) - \bq_k^{\widehat{\bu}_k}(\bE)\right\|_2 = \widetilde{O}_{\prob}\left\{\frac{(\log n)^{3\xi}}{(n\rho_n)^{3/2}}\right\},\\
&\left\|\widehat{\bu}_k^{(c)}\mathrm{sgn}(\widehat{\bu}_k\transpose\bu_k) - \bu_k - \bpsi_k^{\widehat{\bu}_k}(\bE) - \{\bq_k^{\widehat{\bu}_k}(\bE) - \expect \bq_k^{\widehat{\bu}_k}(\bE)\}\right\|_2 = \widetilde{O}_{\prob}\left\{\frac{(\log n)^{3\xi}}{(n\rho_n)^{3/2}}\right\},
\end{align*}
where $\bpsi_k^{\widehat{\bu}_k}(\bE)$ and $\bq_k^{\widehat{\bu}_k}(\bE)$ are defined in \eqref{eqn:eigenvector_first_order_term} and \eqref{eqn:eigenvector_second_order_term}. 
Because $\max_{i,j\in [n]}|E_{ij}| \leq 1$ almost surely, it follows from Assumption \ref{assumption:Eigenvector_delocalization} that  $\|\bpsi_k^{\widehat{\bu}_k}(\bE)\|_2\leq Cn^{c_1}$ and $\|\bq_k^{\widehat{\bu}_k}(\bE)\|_2\leq Cn^{c_2}$ almost surely for some fixed constants $C, c_1, c_2 > 0$. This entails that
\begin{align*}
&\expect\|\widehat{\bu}_k\mathrm{sgn}(\widehat{\bu}_k\transpose\bu_k) - \bu_k\|_2^2 - \expect\|\widehat{\bu}_k^{(c)}\mathrm{sgn}(\widehat{\bu}_k\transpose\bu_k) - \bu_k\|_2^2
 = \|\expect \bq_k^{\widehat{\bu}_k}(\bE)\|_2^2 + O\left\{\frac{(\log n)^{6\xi}}{(n\rho_n)^{3}}\right\}.
\end{align*}
Under the condition that $\min_{i,j\in [n]}p_{ij}\geq c\rho_n$ for some constant $c > 0$, it is clear that  $\|\expect \bq_k^{\widehat{\bu}_k}(\bE)\|_2 \geq c'/(n\rho_n)$ for some fixed constant $c' \geq 0$. Hence, we conclude with the following comparison between the mean-squared error of eigenvectors for generalized random dot product graphs and their bias-corrected counterpart:
\[
\liminf_{n\to\infty}(n\rho_n)^2\{\expect\|\widehat{\bu}_k\mathrm{sgn}(\widehat{\bu}_k\transpose\bu_k) - \bu_k\|_2^2 - \expect\|\widehat{\bu}_k^{(c)}\mathrm{sgn}(\widehat{\bu}_k\transpose\bu_k) - \bu_k\|_2^2\}\geq 0.
\]

Below, we provide a toy simulation to illustrate the above comparison. Let $d = 1$, $\lambda_1 = n(p^2 + q^2)/2$, $\bu_1 = \lambda_1^{-1/2}[p,\ldots,p,q,\ldots,q]\transpose$, and $\bP = \lambda_1\bu_1\bu_1\transpose$. Suppose $A_{ij}\sim\mathrm{Bernoulli}(p_{ij})$ independently for all $i\leq j$, $i,j\in [n]$, $A_{ij} = A_{ji}$ if $i > j$, where $p_{ij}$ is the $(i, j)$th entry of $\bP$. Here, we let $n$ range over $\{320, 640, 1280, 2560, 5120\}$ and $\rho_n \in \{n^{-1/2}, n^{-1/3}, n^{-1/4}\}$. Given an observed adjacency matrix $\bA$, we compute the leading sample eigenvector $\widehat{\bu}_1$, its bias-corrected version $\widehat{\bu}^{(c)}_1$, and their corresponding estimation errors $\|\widehat{\bu}_1\mathrm{sgn}(\widehat{\bu}_1\transpose\bu_1) - \bu_1\|_2^2$ and $\|\widehat{\bu}_1^{(c)}\mathrm{sgn}(\widehat{\bu}_1\transpose\bu_1) - \bu_1\|_2^2$. The same experiment is repeated for $500$ independent Monte Carlo replicates, and the boxplots of
\[
(n\rho_n)^2\{\|\widehat{\bu}_1\mathrm{sgn}(\widehat{\bu}_1\transpose\bu_1) - \bu_1\|_2^2 - \|\widehat{\bu}_1^{(c)}\mathrm{sgn}(\widehat{\bu}_1\transpose\bu_1) - \bu_1\|_2^2\}
\] 
are visualized in Figure \ref{fig:GRDPG_bias_correction}. It is clear that the bias-corrected eigenvector $\widehat{\bu}_k^{(c)}$ has less higher-order estimation error than the sample eigenvector $\widehat{\bu}_k$. 
\begin{figure}[t]
\centerline{\includegraphics[width = 12cm]{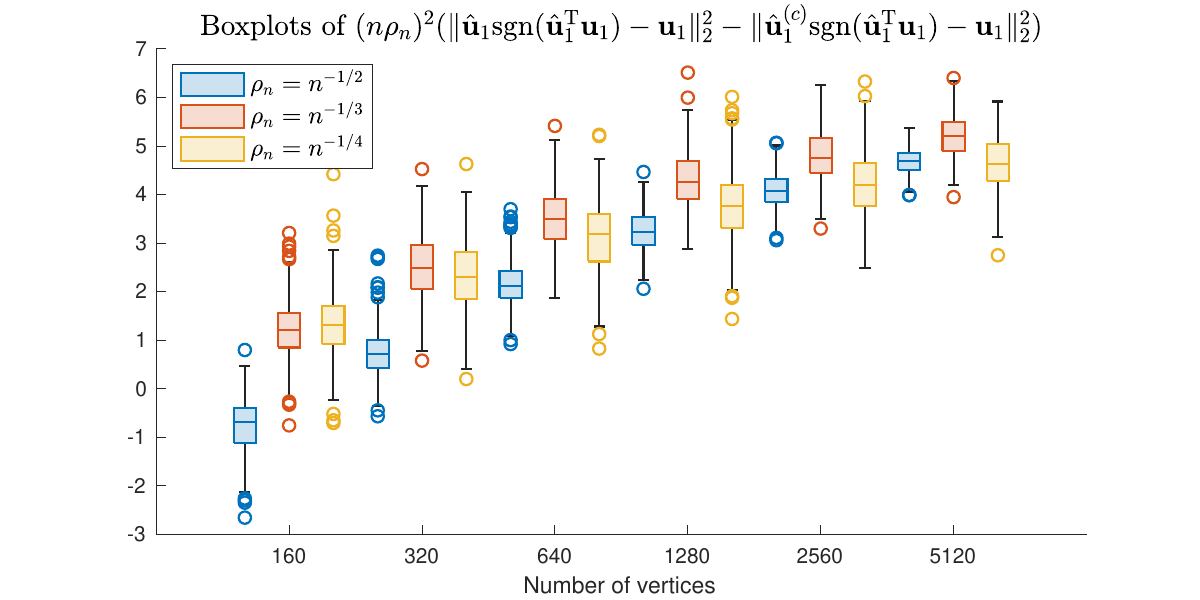}}
\caption{Boxplots of $(n\rho_n)^2\{\|\widehat{\bu}_1\mathrm{sgn}(\widehat{\bu}_1\transpose\bu_1) - \bu_1\|_2^2 - \|\widehat{\bu}_1^{(c)}\mathrm{sgn}(\widehat{\bu}_1\transpose\bu_1) - \bu_1\|_2^2\}$ across $500$ independent Monte Carlo replicates for $n \in \{320,640,1280,2560,5120\}$ and $\rho_n\in\{n^{-1/2},n^{-1/3},n^{-1/4}\}$ for Example \ref{example:GRPDG_bias_correction}. }
\label{fig:GRDPG_bias_correction}
\end{figure}
\end{example}


\subsection{Entrywise Edgeworth expansion for eigenvectors}
\label{sub:entrywise_eigenvector_edgeworth_expansion}

In addition to the bias correction for the eigenvectors discussed in Section \ref{sub:bias_correction}, another interesting application of Theorem \ref{thm:Eigenvector_Expansion} is a fine-grained higher-order approximation to the distributions of the entries of eigenvectors. This goal is achieved through an Edgeworth expansion formula. The Edgeworth expansion can be applied to justify the higher-order correctness of the eigenvector bootstrap for the signal-plus-noise matrix model \eqref{eqn:signal_plus_noise_matrix_model}, as will be seen in Section \ref{sub:entrywise_eigenvector_bootstrap}. This section addresses the Edgeworth expansion for the distributions of the entries of eigenvectors. 

The starting point of the Edgeworth expansion analysis is the second-order eigenvector stochastic expansion formula \eqref{eqn:eigenvector_expansion} in Theorem \ref{thm:Eigenvector_Expansion}. It can be alternatively written in the following entrywise form:
\begin{align*}
\widehat{u}_{ik}\mathrm{sgn}(\bu_k\transpose\widehat{\bu}_k) - u_{ik} -  b_{ik}(\widehat{\bu}_k) & = \frac{\be_i\transpose\bE\bu_k}{\lambda_k} + \frac{\bv_{ik}\transpose\bE\bu_k}{\lambda_k} + \frac{\be_i\transpose(\bE^2 - \expect\bE^2)\bu_k}{\lambda_k^2} + \be_i\transpose\bt_k',
\end{align*}
where $\widehat{u}_{ik}$, $u_{ik}$, $b_{ik}(\widehat{\bu}_k)$ denote the $i$th entries of $\widehat{\bu}_k$, $\bu_k$, $\bb_k(\widehat{\bu}_k)$, respectively, and
\begin{align}\label{def:v}
\bv_{ik} & = -u_{ik}\bu_k + \sum_{m\in [d]/\{k\}}\frac{\lambda_mu_{im}\bu_m}{\lambda_k - \lambda_m},\\\nonumber
\bt_{k}' & = - \frac{3\bu_k\bu_k\transpose(\bE^2 - \expect\bE^2)\bu_k}{2\lambda_k^{2}} + \sum_{m\in [d]/\{k\}}\frac{\lambda_m\bu_m\bu_m\transpose(\bE^2 - \expect\bE^2)\bu_k}{\lambda_k^{2}(\lambda_k - \lambda_m)} + \bt_k.
\end{align}
The bias-corrected eigenvector $\widehat{\bu}_{k}^{(c)}$ (defined in~\eqref{def:biascorr}) also yields the same expansion:
\begin{align}\label{eqn:bias_corrected_eigenvector_expansion}
\widehat{u}_{ik}^{(c)}\mathrm{sgn}(\bu_k\transpose\widehat{\bu}_k) - u_{ik} = \frac{\be_i\transpose\bE\bu_k}{\lambda_k} + \frac{\bv_{ik}\transpose\bE\bu_k}{\lambda_k} + \frac{\be_i\transpose(\bE^2 - \expect\bE^2)\bu_k}{\lambda_k^{2}} + \be_i\transpose\bt_{k}^{(c)},
\end{align}
where $\widehat{u}_{ik}^{(c)}$ denotes the $i$th entry of $\widehat{\bu}_k^{(c)}$. Theorem \ref{thm:Eigenvector_Expansion} and Theorem \ref{thm:bias_corrected_ASE} guarantee that
\begin{align*}
\|\bt_k'\|_\infty\vee\|\bt_{k}^{(c)}\|_\infty & = \widetilde{O}_{\prob}\left[\frac{\sqrt{n}\rho_n}{\Delta_n^2}\max\left\{\frac{1}{q_n},\frac{(n\rho_n)^{1/2}}{\Delta_n},\frac{(n\rho_n)^{3/2}}{\Delta_n^2}\right\}(\log n)^{3\xi}\right],
\end{align*}
after invoking \eqref{alphan:bound}. Therefore, it is sufficient to consider $\widehat{u}_{ik}\mathrm{sgn}(\bu_k\transpose\widehat{\bu}_k) - u_{ik} - b_{ik}(\widehat{\bu}_k)$ without loss of generality. 

In the literature of Edgeworth expansion, researchers are mainly interested in the distribution of the studentized estimator because they can provide insight into the higher-order correctness of the so-called percentile-$t$ bootstrap \cite{hall2013bootstrap}. In our current entrywise eigenvector estimation context, this requires us to find an estimator for the asymptotic variance of $\widehat{u}_{ik}\mathrm{sgn}(\widehat{\bu}_k\transpose\bu_k) - u_{ik} - b_{ik}(\widehat{\bu}_k)$. Specifically, let
\begin{align}
\label{eqn:plug_in_estimate_variance}
s_{ik}^2 = \var\left(\frac{\be_i\transpose\bE\bu_k}{\lambda_k}\right) = \frac{1}{\lambda_k^{2}}\sum_{j = 1}^n\sigma_{ij}^2u_{jk}^2,\quad
\widehat{s}_{ik}^2 = \frac{1}{\widehat{\lambda}_k^2}\sum_{j = 1}^n(A_{ij} - \widehat{p}_{ij})^2\widehat{u}_{jk}^2,
\end{align}
where $\widehat{p}_{ij}$ is the $(i, j)$th entry of $\widehat{\bP} := \bU_\bA\bS_\bA\bU_\bA\transpose$. Here, $s_{ik}^2$ is exactly the asymptotic variance of $\widehat{u}_{ik}^{(c)}\mathrm{sgn}(\widehat{\bu}_k\transpose\bu_k) - u_{ik}$, and $\widehat{s}_{ik}^2$ is a natural plug-in estimator for $s_{ik}^2$.  
We thus consider the studentized entrywise eigenvector statistic  
\[
T_{ik} = \frac{\widehat{u}_{ik}\mathrm{sgn}(\widehat{\bu}_k\transpose\bu_k) - u_{ik} - b_{ik}(\widehat{\bu}_k)}{\widehat{s}_{ik}}
\]
for each $i\in [n]$ and $k\in [p]$. Deriving the Edgeworth expansion of $T_{ik}$ requires not only the aforementioned entrywise expansion formula \eqref{eqn:bias_corrected_eigenvector_expansion} but also an expansion formula for $\widehat{s}_{ik}^2$, the latter of which is formally stated in Lemma \ref{lemma:variance_expansion} below. 
\begin{lemma}\label{lemma:variance_expansion}
Suppose Assumptions \ref{assumption:Signal_strength}--\ref{assumption:Eigenvector_delocalization} hold and $\min_{i\in[n]}s_{ik}^2 = \Theta(\rho_n/\Delta_n^2)$. Then, uniformly over $i\in[n]$,
\begin{align*}
\frac{\widehat{s}_{ik}^2 - s_{ik}^2}{s_{ik}^2}
& = \sum_{j = 1}^n\frac{(E_{ij}^2 - \sigma_{ij}^2)u_{jk}^2}{s_{ik}^2\lambda_k^{2}} + \sum_{a,b = 1}^n\frac{2E_{ia}^2E_{ab}u_{ak}u_{bk}}{\lambda_k^3s_{ik}^2}
\\
&\quad + 
\widetilde{O}_{\prob}\left\{\frac{(\log n)^{2\xi}}{n\rho_n^{1/2}} + \frac{(\log n)^{2\xi}}{\Delta_n} + \frac{n\rho_n(\log n)^{2\xi}}{\Delta_n^2}\right\}.
\end{align*}
\end{lemma}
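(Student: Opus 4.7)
My plan is to Taylor-expand each of the three ingredients $1/\widehat{\lambda}_k^2$, $(A_{ij}-\widehat{p}_{ij})^2$, and $\widehat{u}_{jk}^2$ around their population counterparts and collect the resulting summands by order in $\bE$. Writing $A_{ij}-\widehat{p}_{ij}=E_{ij}-(\widehat{p}_{ij}-p_{ij})$ and expanding the square, writing $\widehat{u}_{jk}^2=u_{jk}^2+2u_{jk}(\widehat{u}_{jk}\mathrm{sgn}(\widehat{\bu}_k\transpose\bu_k)-u_{jk})+(\widehat{u}_{jk}\mathrm{sgn}(\widehat{\bu}_k\transpose\bu_k)-u_{jk})^2$, and using $\widehat{\lambda}_k^{-2}=\lambda_k^{-2}\{1-2(\widehat{\lambda}_k-\lambda_k)/\lambda_k+O((\widehat{\lambda}_k-\lambda_k)^2/\lambda_k^2)\}$ decomposes $\widehat{s}_{ik}^2$ into a finite number of summands which can then be sorted into the two explicit leading terms plus a controlled remainder.

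Two of the summands match the two displayed leading terms in the lemma. First, $\lambda_k^{-2}\sum_j E_{ij}^2 u_{jk}^2=s_{ik}^2+\lambda_k^{-2}\sum_j(E_{ij}^2-\sigma_{ij}^2)u_{jk}^2$, which after dividing by $s_{ik}^2$ is the first explicit term. Second, I would substitute the leading-order eigenvector approximation $\widehat{u}_{jk}\mathrm{sgn}(\widehat{\bu}_k\transpose\bu_k)-u_{jk}=\lambda_k^{-1}(\bE\bu_k)_j+\text{lower order}$ (extracted from Theorem~\ref{thm:Eigenvector_Expansion}) into the mixed piece $\lambda_k^{-2}\sum_j E_{ij}^2\cdot 2u_{jk}(\widehat{u}_{jk}\mathrm{sgn}(\widehat{\bu}_k\transpose\bu_k)-u_{jk})$; relabeling the outer index $j\to a$ and the inner summation index as $b$ yields exactly the displayed cubic form $2\lambda_k^{-3}\sum_{a,b}E_{ia}^2E_{ab}u_{ak}u_{bk}$. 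The contributions from the other spectral directions $\bu_m$ ($m\neq k$) in the first-order eigenvector expansion collapse, after being paired with $E_{ij}^2$ and summed over $j$, into products of $\bu_k\transpose\bE\bu_k$ or $\bu_m\transpose\bE\bu_k$ with an approximate $s_{ik}^2\lambda_k^2$-sum; using \eqref{alphan:bound} these are $\widetilde{O}_\prob((\log n)^\xi/\Delta_n)$ and therefore sit inside the $\Delta_n^{-1}$ remainder rate.

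All remaining summands can be absorbed into the $\widetilde{O}_\prob$ remainder. For the pieces carrying $2E_{ij}(\widehat{p}_{ij}-p_{ij})\widehat{u}_{jk}^2$, $(\widehat{p}_{ij}-p_{ij})^2\widehat{u}_{jk}^2$, the quadratic piece $(\widehat{u}_{jk}\mathrm{sgn}(\widehat{\bu}_k\transpose\bu_k)-u_{jk})^2$, and the eigenvalue-perturbation factor $(\widehat{\lambda}_k-\lambda_k)/\lambda_k$, I would reduce $\widehat{p}_{ij}-p_{ij}$ and $\widehat{\lambda}_k-\lambda_k=\bu_k\transpose\bE\bu_k+\widetilde{O}_\prob(\alpha_n^2/\Delta_n)$ to explicit polynomials in the entries of $\bE$ using Theorem~\ref{thm:Eigenvector_Expansion} and Proposition~\ref{prop:eigenvector_angle_expansion}, and then apply the sharp concentration inequalities from \cite{erdos2013} for polynomial forms $\bx\transpose(\bE^\ell-\expect\bE^\ell)\by$. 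Combined with Assumption~\ref{assumption:Noise_matrix_distribution}, Assumption~\ref{assumption:Eigenvector_delocalization}, and the scaling $s_{ik}^2=\Theta(\rho_n/\Delta_n^2)$, this produces the displayed remainder rate $\{(n\rho_n^{1/2})^{-1}+\Delta_n^{-1}+n\rho_n/\Delta_n^2\}(\log n)^{2\xi}$. Uniformity over $i\in[n]$ follows from a polynomial union bound, which is affordable because $\widetilde{O}_\prob$ has $n^{-c}$ tails for every $c$.

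The main obstacle will be the mixed cross-term $\lambda_k^{-2}\sum_j E_{ij}(\widehat{p}_{ij}-p_{ij})\widehat{u}_{jk}^2$, since the residual $\widehat{p}_{ij}-p_{ij}$ depends on the very same $\bE$ that appears explicitly in the factor $E_{ij}$, and the sum runs over $n$ indices. Substituting the first-order spectral expansion of $\widehat{p}_{ij}-p_{ij}$ turns this piece into a structured cubic polynomial in $\bE$; the delicate bookkeeping is to separate the collision-index combinations whose expectation is non-zero (those producing a single $E_{ab}^3$) from the genuinely mean-zero majority, and hand each class to the appropriate concentration inequality so that both remain within the stated error budget. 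The tight control $\alpha_n=\widetilde{O}_\prob((\log n)^\xi)$ and the delocalization bound $\|\bU_\bP\|_{2\to\infty}=O(n^{-1/2})$ are what keep this contribution in check.
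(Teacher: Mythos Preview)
Your proposal is correct and follows essentially the same approach as the paper: both expand $(A_{ij}-\widehat p_{ij})^2=E_{ij}^2-2E_{ij}(\widehat p_{ij}-p_{ij})+(\widehat p_{ij}-p_{ij})^2$, write $\widehat u_{jk}^2/\widehat\lambda_k^2-u_{jk}^2/\lambda_k^2$ via the first-order eigenvector/eigenvalue expansions, and identify the two displayed leading terms from the $E_{ij}^2$ piece while showing the cross-term $\sum_j E_{ij}(\widehat p_{ij}-p_{ij})\widehat u_{jk}^2/\widehat\lambda_k^2$ lands in the remainder after substituting the spectral expansion of $\widehat p_{ij}-p_{ij}$. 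The only cosmetic difference is that the paper invokes the coarser decomposition $\widehat u_{jk}\mathrm{sgn}(\widehat\bu_k\transpose\bu_k)-u_{jk}=\be_j\transpose\bE\bu_k/\lambda_k+[\bR_\bU]_{jk}$ from Lemma~\ref{lemma:Eigenvector_zero_order_deviation} (with the projection terms already absorbed into the $\bR_\bU$ remainder), so your separate ``other spectral directions collapse'' argument is unnecessary---though it is correct.
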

Now we combine expansion \eqref{eqn:bias_corrected_eigenvector_expansion} with Lemma \ref{lemma:variance_expansion} to sketch the derivation for the Edgeworth expansion of $T_{ik}$. Loosely speaking, the entrywise eigenvector expansion \eqref{eqn:bias_corrected_eigenvector_expansion} and the variance expansion (Lemma \ref{lemma:variance_expansion}) suggest that
\begin{align}
T_{ik} & = \frac{\widehat{u}_{ik}\mathrm{sgn}(\widehat{\bu}_k\transpose\bu_k) - u_{ik} - b_{ik}(\widehat{\bu}_k)}{{s}_{ik}}\left(1 + \frac{\widehat{s}_{ik}^2 - s_{ik}^2}{s_{ik}^2}\right)^{-1/2}\nonumber\\
&\approx \left\{\frac{\be_i\transpose\bE\bu_k}{s_{ik}\lambda_k} + \frac{\bv_{ik}\transpose\bE\bu_k}{s_{ik}\lambda_k} + \frac{\be_i\transpose(\bE^2 - \expect\bE^2)\bu_k}{s_{ik}\lambda_k^{2}}\right\}\left(1 - \frac{\widehat{s}_{ik}^2 - s_{ik}^2}{2s_{ik}^2}\right)\nonumber\\
\label{eqn:Tik_sharp_and_deltaik}
&\approx \frac{\be_i\transpose\bE\bu_k}{s_{ik}\lambda_k} - \frac{1}{2}\frac{\be_i\transpose\bE\bu_k}{s_{ik}\lambda_k}\sum_{j = 1}^n\frac{(E_{ij}^2 - \sigma_{ij}^2)u_{jk}^2}{s_{ik}^2\lambda_k^{2}}\\
\label{eqn:Delta_ik_approximation}
&\quad + \frac{\bv_{ik}\transpose\bE\bu_k}{s_{ik}\lambda_k} + \frac{\be_i\transpose(\bE^2 -\expect\bE^2)\bu_k}{s_{ik}\lambda_k^{2}} - \frac{\be_i\transpose\bE\bu_k}{s_{ik}\lambda_k}\sum_{a, b = 1}^n\frac{E_{ia}^2E_{ab}u_{ak}u_{bk}}{\lambda_k^3s_{ik}^2}.
\\\nonumber
&\approx T_{ik}^\sharp - \frac{1}{2}T_{ik}^\sharp \delta_{ik} + \Delta_{ik},
\end{align}
where we define
\begin{align*}
T_{ik}^\sharp &= \frac{\be_i\transpose\bE\bu_k}{s_{ik}\lambda_k},\quad
\delta_{ik} = \sum_{j = 1}^n\frac{(E_{ij}^2 - \sigma_{ij}^2)u_{jk}^2}{s_{ik}^2\lambda_k^{2}},\\
\Delta_{ik} &= \sum_{a, b\in [n]\backslash\{i\}}\left\{\frac{(E_{ia}/\lambda_k + v_a)u_{bk}}{s_{ik}\lambda_k} - \frac{\be_i\transpose\bE\bu_k E_{ia}^2u_{ak}u_{bk}}{\lambda_k^4s_{ik}^3}\right\}E_{ab}.
\end{align*}
It can be shown that line \eqref{eqn:Delta_ik_approximation} can be well approximated by $\Delta_{ik}$. 
Therefore, it is sufficient to establish the Edgeworth expansion for $T_{ik}^\sharp - (1/2)T_{ik}^\sharp \delta_{ik} + \Delta_{ik}$. As seen in the proof, the sum of the first two terms $T_{ik}^\sharp - (1/2)T_{ik}^\sharp\delta_{ik}$ jointly determines the Edgeworth expansion formula. 

The key to the success of the Edgeworth expansion for $T_{ik}$ is the following critical observation: The conditional distribution of $\Delta_{ik}$ given $\be_i\transpose\bE = [E_{i1},\ldots,E_{in}]$ is approximately Gaussian. This is because $\Delta_{ik}$ can be written as a sum of mean-zero independent random variables $h_{ab}^{(ik)}(\be_i\transpose\bE)E_{ab}$, $a,b\neq i$, given $\be_i\transpose\bE$, where $\{h_{ab}^{(ik)}(\be_i\transpose\bE):a,b\in[n],a,b\neq i\}$ are functions of $\be_i\transpose\bE$ and can be shown to be quite homogeneous. In particular, under the condition $\max_{i,j\in[n]}\expect |E_{ij}|^3 = o(n\rho_n^2/(\Delta_n\sqrt{\log n}))$, the conditional asymptotic normality of $\Delta_{ik}$ given $\be_i\transpose\bE$ has a self-smoothing effect on the CDF of $T_{ik}$, allowing us to circumvent the Cram\'er's condition regarding the smoothness of the distributions of $[E_{ij}]_{n\times n}$. The idea of self-smoothing also appears in the Edgeworth expansion for network moments \cite{10.1214/21-AOS2125}. The difference is that our analysis relies on a collection of concentration inequalities for random variables satisfying Assumption \ref{assumption:Noise_matrix_distribution}, whereas the authors of \cite{10.1214/21-AOS2125} leverage the properties of $U$-statistics, and they are not applicable in our current setup due to the asymmetry nature of the studentized entrywise eigenvector statistic.

We are now in a position to present the formal statement of the entrywise eigenvector Edgeworth expansion in Theorem \ref{thm:edgeworth_expansion} below, which is the main contribution of this section.
\begin{theorem}\label{thm:edgeworth_expansion}
Suppose Assumptions \ref{assumption:Signal_strength}--\ref{assumption:Eigenvector_delocalization} hold. Assume that $\min_{i\in [n]}s_{ik}^2 = \Theta(\rho_n/\Delta_n^2)$, $\beta_\Delta\in[1/2 - \eps, 1/2]$, and $\eps \geq 1/3$. Let $\beta = \max_{i,j\in [n]}\expect|E_{ij}|^3$. Further, assume that either one of the following conditions holds:
\begin{enumerate}[(i)]
  \item $\beta = o(n\rho_n^2/(\Delta_n\sqrt{\log n}))$;
  \item $\beta = \Omega(n\rho_n^2/(\Delta_n\sqrt{\log n}))$ but the Cram\'er's condition holds: There exists some constant $\eta > 0$ such that $\limsup_{|t|\to\infty}|\expect e^{\mathbbm{i}t\rho_nE_{ij}/\beta}| \leq 1 - \eta$. 
\end{enumerate}
Let
\begin{align}\label{eqn:edgeworth_expansion_formula}
G_n^{(ik)}(x) = \Phi(x) + \frac{(2x^2 + 1)}{6}\phi(x)\sum_{j = 1}^n\frac{\expect E_{ij}^3u_{jk}^3}{s_{ik}^3\lambda_k^3},
\end{align}
where $\Phi(x)$ and $\phi(x)$ are the cumulative distribution function and probability density function of $\mathrm{N}(0, 1)$, respectively.
Then
\begin{align*}
&\sup_{x\in \mathbb{R}}\left|\prob\left(T_{ik}\leq x\right) - G_n^{(ik)}(x)\right|\\
&\quad = O\left\{
\frac{(\log n)^{3\xi}}{n\rho_n^{1/2}} + \frac{(\log n)^{3\xi}}{\Delta_n} + \frac{(n\rho_n)(\log n)^{3\xi}}{\Delta_n^2} + \frac{(\log n)^{3\xi\vee4}}{q_n^2} + \frac{(n\rho_n)^2(\log n)^{3\xi}}{\Delta_n^3}
\right\}.
\end{align*}
\end{theorem}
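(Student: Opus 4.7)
The plan is to derive the Edgeworth expansion by (i) reducing $T_{ik}$ to the three-term approximation $T_{ik}^\sharp - \frac{1}{2}T_{ik}^\sharp\delta_{ik} + \Delta_{ik}$ sketched already in the excerpt, (ii) conditioning on the $i$-th row of $\bE$ to leverage the conditional near-Gaussianity of $\Delta_{ik}$ (the self-smoothing step), and (iii) combining the conditional approximation with a Taylor expansion that produces the stated skewness coefficient $(2x^2+1)/6$. Step 1 combines the stochastic expansion \eqref{eqn:bias_corrected_eigenvector_expansion}, the variance expansion Lemma \ref{lemma:variance_expansion}, and the identity $(1+u)^{-1/2} = 1 - u/2 + O(u^2)$ to write $T_{ik} = T_{ik}^\sharp - \frac{1}{2}T_{ik}^\sharp\delta_{ik} + \Delta_{ik} + R_{ik}$ with $R_{ik} = \widetilde{O}_{\prob}(\cdot)$ of order bounded by the terms on the right-hand side of the theorem. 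Passing from this high-probability approximation to a uniform CDF bound is routine via a coarse anti-concentration estimate for $T_{ik}^\ast := T_{ik}^\sharp - \frac{1}{2}T_{ik}^\sharp\delta_{ik} + \Delta_{ik}$ on small intervals, so the task reduces to the CDF of $T_{ik}^\ast$.

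The heart of the argument is Step 2: conditioning on the $\sigma$-field $\mathcal{F}_i = \sigma(E_{ij}: j\in[n])$ generated by the $i$-th row of $\bE$. Given $\mathcal{F}_i$, both $T_{ik}^\sharp$ and $\delta_{ik}$ are measurable, whereas $\Delta_{ik} = \sum_{a,b\in[n]\setminus\{i\}} h_{ab}^{(ik)}(\be_i\transpose\bE)\,E_{ab}$ is a weighted sum of mutually independent mean-zero terms with $\mathcal{F}_i$-measurable coefficients. On a high-probability event $\mathcal{E}_i$ determined by concentration of $\be_i\transpose\bE$, I show that the conditional variance $\tau_{ik}^2 := \var(\Delta_{ik}\mid\mathcal{F}_i)$ has the correct polynomial order and that the conditional Lyapunov ratio $L_{ik} := \tau_{ik}^{-3}\sum_{a,b\neq i}|h_{ab}^{(ik)}|^3\expect|E_{ab}|^3$ is small, controlled ultimately by $\beta = \max_{ij}\expect|E_{ij}|^3$. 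A conditional Berry--Esseen bound then yields, uniformly in $y$ on $\mathcal{E}_i$,
\[
\prob(\Delta_{ik}\leq y\mid\mathcal{F}_i) = \Phi(y/\tau_{ik}) + O(L_{ik}).
\]
Under case (i), the hypothesis $\beta = o(n\rho_n^2/(\Delta_n\sqrt{\log n}))$ is precisely what forces $L_{ik}$ to be small, supplying the self-smoothing that removes any need for Cram\'er's condition on individual $E_{ij}$; under case (ii), a classical Edgeworth expansion for $\Delta_{ik}\mid\mathcal{F}_i$ replaces this step.

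In Step 3, I integrate the conditional bound against $\mathcal{F}_i$: writing $T_{ik}^\ast\leq x$ as $\Delta_{ik}\leq x - T_{ik}^\sharp + \frac{1}{2}T_{ik}^\sharp\delta_{ik}$ yields
\[
\prob(T_{ik}^\ast\leq x) = \expect\Big[\Phi\Big(\frac{x - T_{ik}^\sharp + \frac{1}{2}T_{ik}^\sharp\delta_{ik}}{\tau_{ik}}\Big)\Big] + \text{(small error)}.
\]
After rescaling so the total variance equals $1$, I Taylor-expand $\Phi$ around $x$ through second order in the mean-zero variables $T_{ik}^\sharp$ and $\delta_{ik}$. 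The linear term in $T_{ik}^\sharp$ has zero expectation, the quadratic term $\expect[(T_{ik}^\sharp)^2]\phi'(x)$ is absorbed by the variance normalization, and the cross term delivers the key identity
\[
\expect[T_{ik}^\sharp\delta_{ik}] = \sum_{j=1}^n\frac{\expect E_{ij}^3\,u_{jk}^3}{s_{ik}^3\lambda_k^3}.
\]
Combining this with the classical Edgeworth skewness term $-\phi(x)(x^2-1)/6$ from the expansion of $T_{ik}^\sharp$ alone assembles into $\phi(x)(2x^2+1)/6$, matching \eqref{eqn:edgeworth_expansion_formula}. The five error terms stated in the theorem are then obtained by balancing the Step 1 reduction error, $\prob(\mathcal{E}_i^c)$, the expected Lyapunov contribution $\expect L_{ik}$, and the third-order Taylor remainders, using Assumption \ref{assumption:Noise_matrix_distribution} and the concentration $\alpha_n = \widetilde{O}_{\prob}((\log n)^\xi)$ from \eqref{alphan:bound}.

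The principal obstacle is Step 2. Although the conditional linearity of $\Delta_{ik}$ in $\{E_{ab}\}_{a,b\neq i}$ given $\mathcal{F}_i$ is transparent, proving a sharp conditional Berry--Esseen bound requires showing that the random coefficients $h_{ab}^{(ik)}(\be_i\transpose\bE)$ remain simultaneously ``spread out'' on a high-probability $\mathcal{F}_i$-event, which in turn relies on delicate concentration inequalities for quadratic forms of $\bE$ in the spirit of \cite{erdos2013}, applied after fixing a random row rather than unconditionally. Making the self-smoothing effect precise enough to dispense with Cram\'er's condition under case (i) requires a careful accounting of how $\beta = o(n\rho_n^2/(\Delta_n\sqrt{\log n}))$ forces $L_{ik}$ to beat the potential lattice behavior of the $E_{ij}$, and this quantitative interplay is the technical core of the entire theorem.
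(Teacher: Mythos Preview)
Your Steps 1 and 2 are on the right track and match the paper closely: the reduction to $T_{ik}^\sharp - \tfrac12 T_{ik}^\sharp\delta_{ik} + \Delta_{ik}$ is exactly what the paper does, and the conditional Berry--Esseen for $\Delta_{ik}\mid\mathcal F_i$ appears in the paper as Lemma~\ref{lemma:conditional_Berry_Esseen}, yielding an error $\widetilde O_\prob(1/q_n^2)$.

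The genuine gap is in Step 3. You cannot Taylor-expand $\Phi\big((x-\widetilde T_{ik})/\tau_{ik}\big)$ in the ``mean-zero variable $T_{ik}^\sharp$'' because the scales are the wrong way around: $\tau_{ik}^2=\var(\Delta_{ik}\mid\mathcal F_i)=\Theta(n\rho_n/\Delta_n^2)=o(1)$, while $T_{ik}^\sharp$ has variance~$1$. Hence $(x-\widetilde T_{ik})/\tau_{ik}$ is \emph{not} a small perturbation of $x$; it ranges over all of $\mathbb R$ as $T_{ik}^\sharp$ varies. What you actually obtain after integrating the conditional bound is $\prob(\widetilde T_{ik}+\widetilde\Delta_{ik}\le x)$ with $\widetilde\Delta_{ik}$ conditionally Gaussian of scale $\tau_{ik}$, and the whole difficulty is to extract an Edgeworth expansion from \emph{that}. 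A Taylor expansion of the CDF would only work if the smoother $\Delta_{ik}$ were the dominant term---the opposite of the situation here. The paper resolves this by working in the frequency domain via Esseen's smoothing lemma (Theorem~\ref{thm:esseen_smoothing_lemma}), splitting the characteristic-function integral into three regimes (Lemmas~\ref{lemma:CHF_II}--\ref{lemma:CHF_IV}): at low frequencies one Taylor-expands the characteristic function (this is where your cross-term $\expect[T_{ik}^\sharp\delta_{ik}]=\kappa_n^{(ik)}$ correctly appears), at high frequencies the Gaussian factor $e^{-t^2\sigma_{ik}^2/2}$ from $\widetilde\Delta_{ik}$ kills the integrand, and the intermediate range is the delicate part.

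Relatedly, you misplace where the hypothesis on $\beta$ is used. The conditional Lyapunov ratio $L_{ik}$ controls only the replacement of $\Delta_{ik}$ by a Gaussian and is $O(1/q_n^2)$ regardless of $\beta$ (Lemma~\ref{lemma:conditional_Berry_Esseen}). The condition $\beta=o(n\rho_n^2/(\Delta_n\sqrt{\log n}))$ in case~(i) is instead needed to push the intermediate-frequency analysis (Lemma~\ref{lemma:CHF_intermediate}) up to $|t|\sim\bar\eps\sqrt{n\rho_n}\rho_n/\beta$, high enough that the Gaussian damping from $\widetilde\Delta_{ik}$ takes over; under case~(ii) Cram\'er's condition handles that gap directly (Lemma~\ref{lemma:CHF_III}). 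The self-smoothing is thus a frequency-domain phenomenon---the small Gaussian noise damps the characteristic function at large $|t|$---not a bound on a Berry--Esseen remainder.
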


Theorem \ref{thm:edgeworth_expansion} can be applied to obtain a higher-order accurate distributional approximation to the studentized entrywise eigenvector statistics in a broad collection of concrete models. Below, we exemplify Theorem \ref{thm:edgeworth_expansion} in the context of low-rank matrix denoising.
\begin{example}[Low-rank matrix denoising]
\label{example:matrix_denoising}
The low-rank matrix denoising model includes a broad class of high-dimensional models and has been extensively studied during the past decade 
\cite{BENAYCHGEORGES2011494,BENAYCHGEORGES2012120,BURA20082275,6545395,Capitaine2018,10.1214/08-AOP394,10.3150/19-BEJ1129,10.1214/14-AOS1257,6846297,SHABALIN201367}. It has also been broadly applied in signal processing and image analysis \cite{elad2010sparse,212753}. Formally, suppose $\bM$ is a $p_1\times p_2$ matrix with rank $r$ and compact singular value decomposition (SVD) $\bM = \sum_{k = 1}^r\sigma_k\bu_k\bv_k\transpose$, where $(\bu_k)_{k = 1}^r$ and $(\bv_k)_{k = 1}^r$ are orthonormal singular vectors, and $\bLambda = \mathrm{diag}(\sigma_1,\ldots,\sigma_r)$ with $\sigma_1 > \ldots > \sigma_r > 0$. Let $\bY = [Y_{ij}]_{p_1\times p_2}$ be a $p_1\times p_2$ noise matrix whose entries are independent mean-zero random variables, and one observes the noisy version $\bX = [X_{ij}]_{p_1\times p_2}$ of $\bM$ through the additive model
\begin{align}
\label{eqn:matrix_denoising}
\bX = \bM + \bY.
\end{align}
Neither the signal matrix $\bM$ nor the matrix $\bY$ is accessible to practitioners. Now let $\widehat{\sigma}_1\geq\ldots\geq\widehat{\sigma}_r$ be the $r$-largest singular values of $\bX$, and $(\widehat{\bu}_k)_{k = 1}^r$, $(\widehat{\bv}_k)_{k = 1}^r$ are the associated left and right singular vectors. We
use the sample left singular vectors $(\widehat{\bu}_k)_{k = 1}^r$ to estimate the population left singular vectors $(\bu_k)_{k = 1}^r$.
Denote by $n = p_1 + p_2$ and $\widehat{\bM} = [\widehat{M}_{ij}]_{p_1\times p_2} = \sum_{k = 1}^r\widehat{\sigma}_k\widehat{\bu}_k\widehat{\bv}_k\transpose$. 

\begin{theorem}\label{thm:matrix_denoising}
Under the above setup, assume the following conditions hold:
\begin{enumerate}[(a)]
    \item There exist constants $C, C_1, C_2 > 0$, such that
    $\sigma_1/\sigma_r\leq C$ and $C_1\leq p_1/p_2\leq C_2$.
    \item There exists a constant $\delta_0 > 0$ such that $\min_{k\in[r - 1]}(\sigma_k - \sigma_{k + 1})\geq \delta_0\sigma_r$. 
    \item $(Y_{ij}:i\in[p_1],j\in[p_2])$ are independent mean-zero random variables, and there exist a constant $\beta_\Delta \in (0, 1/2]$ and an $n$-dependent quantity $\rho_n\in(0, 1]$, such that 
    \[
    \var(y_{ij}) =  \rho_n,\;
    \sup_{p\geq 1}p^{-1/2}(\expect|Y_{ij}|^p)^{1/p} = \Theta(\rho_n^{1/2}),\;
    \frac{\sigma_r\sqrt{\log n}}{n\rho_n^{1/2}}\to 0,\; \frac{\sigma_r}{\sqrt{n\rho_n}} = \Theta(n^{\beta_\Delta}).
    \]
    \item $\max_{k\in[r]}(\|\bu_k\|_\infty\vee\|\bv_k\|_\infty) = O(1/\sqrt{n})$. 
\end{enumerate}
For any fixed $k\in\{1,\ldots,r\}$, $i\in \{1,\ldots,p_1\}$, we  define the following quantities, 
\begin{align*}
&
\widehat{\tau}_{ik}^2 = \sum_{j = 1}^{p_2}\frac{(X_{ij} - \widehat{M}_{ij})^2\widehat{v}_{jk}^2}{\widehat{\sigma}_k^2},\quad
b_{ik} =-\frac{\rho_n}{2\sigma_k^2}p_1 u_{ik},\quad
T_{ik} = \frac{\widehat{u}_{ik} - u_{ik} - b_{ik}}{\widehat{\tau}_{ik}},\\
&G_n^{(ik)}(x) = \Phi(x) + \frac{(2x^2 + 1)}{6}\phi(x)\sum_{j = 1}^{p_2}\frac{\expect Y_{ij}^3v_{jk}^3}{\rho_n^{3/2}}.
\end{align*}
Here, $\Phi(x)$ and $\phi(x)$ are the cumulative distribution function and the probability density function of $\mathrm{N}(0, 1)$, respectively. 
Then for any $\xi > 1$,
\begin{equation}
\label{eqn:Edgeworth_expansion_matrix_denoising}
\begin{aligned}
&\sup_{x\in\mathbb{R}}|\prob(T_{ik} \leq x) - G_n^{(ik)}(x)|\\
&\quad = O\left\{
\frac{(\log n)^{3\xi}}{\sigma_r} + \frac{(\log n)^{3\xi}}{n^{2\beta_\Delta}} + \frac{(2\log n)^{3\xi\vee4 + 33\log\log n}}{n} + \frac{(n\rho_n)^{1/2}(\log n)^{3\xi}}{n^{3\beta_\Delta}}
\right\}
\end{aligned}
\end{equation}
\end{theorem}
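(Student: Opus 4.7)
The plan is to reduce Theorem \ref{thm:matrix_denoising} to Theorem \ref{thm:edgeworth_expansion} via the symmetric dilation trick of Remark \ref{remark:symmetric_dilation}. I form the $n\times n$ symmetric dilations $\bA$, $\bP$, $\bE$ of $\bX$, $\bM$, $\bY$ with $n = p_1 + p_2$, so that the nonzero eigenvalues of $\bP$ are $\pm\sigma_1,\ldots,\pm\sigma_r$ with associated orthonormal eigenvectors $(1/\sqrt{2})[\bu_k\transpose, \bv_k\transpose]\transpose$ (for $+\sigma_k$) and $(1/\sqrt{2})[\bu_k\transpose, -\bv_k\transpose]\transpose$ (for $-\sigma_k$). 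The same block pattern is inherited by the sample eigenvectors of $\bA$; in particular, the $i$th coordinate of the dilated sample eigenvector for $i\in[p_1]$ equals $\widehat{u}_{ik}/\sqrt{2}$.

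I then verify Assumptions \ref{assumption:Signal_strength}--\ref{assumption:Eigenvector_delocalization} for the dilated problem. Conditions (a)--(c) give Assumption \ref{assumption:Signal_strength} with the same $\rho_n$ and $\beta_\Delta$; condition (b) combined with the ``cross'' gap $|\sigma_r - (-\sigma_r)| = 2\sigma_r$ yields Assumption \ref{assumption:Eigenvalue_separation}; condition (d) gives Assumption \ref{assumption:Eigenvector_delocalization} for the block-stacked eigenvectors. The sub-Gaussian tail in (c) places $Y_{ij}/\sqrt{\rho_n}$ in the setting of Example \ref{example:sub_exponential_noise} with $\theta = 1/2$, so the dilated noise satisfies Assumption \ref{assumption:Noise_matrix_distribution} with $q_n = \sqrt{n}/(2\log n)^{(3A_0/2)\log\log n}$; taking $A_0 = 11$ produces $q_n^{-2} = (2\log n)^{33\log\log n}/n$, which explains the $(2\log n)^{3\xi\vee 4 + 33\log\log n}/n$ term in \eqref{eqn:Edgeworth_expansion_matrix_denoising}.

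The substantive step is to translate the bias, variance, and Edgeworth polynomial of Theorem \ref{thm:edgeworth_expansion} into the matrix-denoising quantities in the statement. The key observation is that $\expect\bE^2 = \rho_n\,\mathrm{diag}(p_2\eye_{p_1}, p_1\eye_{p_2})$ is block diagonal, so orthogonality of $(\bu_m, \bv_m)$ implies that in the sum $\sum_{m\neq k}\lambda_m\bu_m\bu_m\transpose\expect\bE^2\bu_k/(\lambda_k^2(\lambda_k - \lambda_m))$ only the ``mirror index'' $m$ with $\lambda_m = -\sigma_k$ contributes. A short calculation combining this mirror contribution with the term $-\tfrac{3}{2}\bu_k\bu_k\transpose\expect\bE^2\bu_k/\lambda_k^2$ yields, for $i\in[p_1]$, the following $i$th entry of the dilated bias:
\begin{align*}
\frac{\rho_n p_2 u_{ik}}{\sqrt{2}\sigma_k^2} - \frac{3 n \rho_n u_{ik}}{4\sqrt{2}\sigma_k^2} - \frac{(p_2 - p_1)\rho_n u_{ik}}{4\sqrt{2}\sigma_k^2} = -\frac{\rho_n p_1 u_{ik}}{2\sqrt{2}\sigma_k^2},
\end{align*}
and rescaling by $\sqrt{2}$ to pass from dilated to original coordinates gives $b_{ik} = -\rho_n p_1 u_{ik}/(2\sigma_k^2)$. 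Because the top-left and bottom-right diagonal blocks of both $\bA$ and $\bP$ vanish identically, the plug-in variance $\widehat{s}_{ik}^2$ of \eqref{eqn:plug_in_estimate_variance} collapses to $\widehat{\tau}_{ik}^2/2$ and the population variance to $s_{ik}^2 = \rho_n/(2\sigma_k^2)$; the $\sqrt{2}$ factors cancel in the studentized ratio, so the dilated $T_{ik}$ coincides with the one in the statement, and the same mechanism reduces the third-cumulant sum in the Edgeworth polynomial to $\sum_{j\in[p_2]}\expect Y_{ij}^3 v_{jk}^3/\rho_n^{3/2}$, matching $G_n^{(ik)}$.

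Finally, the Cram\'er-free branch (i) of Theorem \ref{thm:edgeworth_expansion} is in force: the sub-Gaussian tail gives $\beta = \max_{i,j}\expect|Y_{ij}|^3 = O(\rho_n^{3/2})$, and the assumed $\sigma_r\sqrt{\log n}/(n\rho_n^{1/2})\to 0$ is exactly equivalent to $\beta = o(n\rho_n^2/(\sigma_r\sqrt{\log n}))$. Substituting $\Delta_n\asymp\sigma_r$, $\sigma_r^2\asymp n^{2\beta_\Delta}\cdot n\rho_n$, and the $q_n$ above into the rate of Theorem \ref{thm:edgeworth_expansion} then produces \eqref{eqn:Edgeworth_expansion_matrix_denoising}. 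The main obstacle is the clean accounting of the $\sqrt{2}$ factors together with the identification of precisely which cross terms in the bias formula survive the block structure of $\expect\bE^2$; once these algebraic simplifications are in hand, the rest of the argument is a direct specialization of Theorem \ref{thm:edgeworth_expansion}.
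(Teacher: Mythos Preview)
Your proposal is correct and follows essentially the same route as the paper: both apply the symmetric dilation of Remark \ref{remark:symmetric_dilation}, verify Assumptions \ref{assumption:Signal_strength}--\ref{assumption:Eigenvector_delocalization} for the dilated problem with $q_n = \sqrt{n}/(2\log n)^{(3A_0/2)\log\log n}$ from Example \ref{example:sub_exponential_noise}, compute the bias by exploiting the block-diagonal structure of $\expect\bE^2 = \rho_n\,\mathrm{diag}(p_2\eye_{p_1}, p_1\eye_{p_2})$ so that only the mirror eigenvector $(1/\sqrt{2})[\bu_k\transpose,-\bv_k\transpose]\transpose$ survives among the cross terms, match $\widehat{s}_{ik}^2$ with $\widehat{\tau}_{ik}^2/2$, and invoke branch (i) of Theorem \ref{thm:edgeworth_expansion} via $\beta = O(\rho_n^{3/2})$ together with $\sigma_r\sqrt{\log n}/(n\rho_n^{1/2})\to 0$. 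The only point you leave implicit that the paper states explicitly is the choice of $\eps$ so that $\beta_\Delta\geq 1/2-\eps$ and $\eps\geq 1/3$ both hold, which is immediate here since the sub-Gaussian $q_n$ allows any $\eps\in(0,1/2)$.
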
 
In the context above, the entrywise eigenvector Berry-Esseen bound in \cite{agterberg2021entrywise} entails that
\begin{align}\label{eqn:Berry_Esseen_Agterberg}
\left\|\prob\left(\frac{\widehat{u}_{ik} - u_{ik}}{s_{ik}} \leq x\right) - \Phi(x)\right\|_\infty = O\left(\frac{\log n}{n^{\beta_\Delta}} + \sqrt{\frac{\log n}{n}}\right);
\end{align}
See their Corollary 1 there for the details. In contrast, under the condition
\[
\frac{(\log n)^{3\xi}}{\sqrt{n\rho_n}} + \frac{(n\rho_n)^{1/2}(\log n)^{3\xi}}{n^{2\beta_\Delta}} = O(1),
\]
Theorem \ref{thm:matrix_denoising} provides a more refined distribution approximation to the studentized $\widehat{u}_{ik} $ (where the asymptotic variance $s_{ik}^2$ is replaced by a plug-in estimator) through an Edgeworth expansion formula. Note that the higher-order term in the Edgeworth expansion satisfies
\[
\sup_{x\in\mathbb{R}}\left|\frac{(2x^2 + 1)}{6}\phi(x)\sum_{j = 1}^{p_2}\frac{\expect y_{ij}^3v_{jk}^3}{\rho_n^{3/2}}\right| = O\left(\frac{1}{\sqrt{n}}\right).
\]
Theorem \ref{thm:matrix_denoising} then implies the following improved Berry-Esseen bound compared to \eqref{eqn:Berry_Esseen_Agterberg}:
\begin{align*}
\|\prob(T_{ik}\leq x) - \Phi(x)\|_\infty = O\left(\frac{1}{n^{\beta_\Delta}}\right).
\end{align*}
Note that Theorem \ref{thm:matrix_denoising} does not require the commonly imposed Cram\'er's condition on the smoothness or non-lattice property of the distributions of $Y_{ij}$'s, and only the condition $\sigma_r\sqrt{\log n} = o(n\rho_n^{1/2})$ is required.
\end{example}

Theorem \ref{thm:edgeworth_expansion} requires either $\max_{i,j\in [n]}\expect|E_{ij}|^3 = o(n\rho_n^2/(\Delta_n\sqrt{\log n}))$ or the Cram\'er's condition. Unfortunately, random graph models do not satisfy either one. Specifically, for the random graph model considered in Example \ref{example:random_graph} and \ref{example:GRPDG_bias_correction} with $\min_{i,j\in[n]}p_{ij} = \Theta(\rho_n)$, it is clear that Assumption \ref{assumption:Noise_matrix_distribution} is satisfied with $q_n = \sqrt{n\rho_n}$ and $\beta = \Theta(\rho_n) = \Theta(n\rho_n^2/\Delta_n)$ so that neither condition (i) nor condition (ii) in Theorem \ref{thm:edgeworth_expansion} is satisfied; Note that $\bE$ is purely discrete and thus Cram\'er's condition no longer holds. To cover this important class of signal-plus-noise matrix models, we follow the idea of \cite{LAHIRI1993247,10.1214/aos/1176345636} and modify Theorem \ref{thm:Eigenvector_Expansion} by injecting an artificial Gaussian smoother. This modification is formally stated in Theorem \ref{thm:edgeworth_expansion_smoothed} below. 

\begin{theorem}\label{thm:edgeworth_expansion_smoothed}
Suppose Assumptions \ref{assumption:Signal_strength}--\ref{assumption:Eigenvector_delocalization} hold and $\min_{i\in [n]}s_{ik}^2 = \Theta(\rho_n/\Delta_n^2)$. 
Assume that $\beta_\Delta\in[1/2 - \eps, 1/2]$ and $\eps \geq 1/3$.
Let $\beta = \max_{i,j\in [n]}\expect|E_{ij}|^3$ and assume that $\beta = \Omega(n\rho_n^2/(\Delta_n\sqrt{\log n}))$.
Let $z\sim\mathrm{N}(0, 1)$ be independent of $\bA$. 
Then there exists a sufficiently large constant $\tau > 0$, such that for any $\xi > 1$,
\begin{align*}
&\left\|\prob\left(T_{ik} + \tau\sqrt{\frac{\beta^2\log n}{n\rho_n^3}}z\leq x\right) - G_n^{(ik)}(x)\right\|_{\infty}\\
&\quad = O\left\{\frac{(\log n)^{3\xi}}{n\rho_n^{1/2}} + \frac{(\log n)^{3\xi}}{\Delta_n} + \frac{(n\rho_n)(\log n)^{3\xi}}{\Delta_n^2} + \frac{(\log n)^{3\xi}}{q_n^2} + \frac{(n\rho_n)^2(\log n)^{3\xi}}{\Delta_n^3}\right\}.
\end{align*}
\end{theorem}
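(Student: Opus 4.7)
The plan is to reuse the skeleton of the proof of Theorem \ref{thm:edgeworth_expansion} up through the stochastic expansion $T_{ik} \approx T_{ik}^\sharp - (1/2)T_{ik}^\sharp\delta_{ik} + \Delta_{ik}$ obtained via Theorem \ref{thm:Eigenvector_Expansion}, Theorem \ref{thm:bias_corrected_ASE}, and Lemma \ref{lemma:variance_expansion}. All of those remainder estimates are insensitive to smoothness/Cram\'er-type hypotheses on $\bE$, so they carry over verbatim. The only step in the proof of Theorem \ref{thm:edgeworth_expansion} that genuinely breaks down under $\beta = \Omega(n\rho_n^2/(\Delta_n\sqrt{\log n}))$ is the Fourier-domain control of the characteristic function in the medium-frequency regime, which previously relied on the self-smoothing of $\Delta_{ik}$ conditional on $\be_i\transpose\bE$. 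The Gaussian smoother in the statement is designed to inject the missing Fourier damping directly, following the device of \cite{LAHIRI1993247,10.1214/aos/1176345636}.

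Concretely, writing $\sigma_z := \tau\sqrt{\beta^2\log n/(n\rho_n^3)}$ and using independence of $z$ from $\bA$, the characteristic function of the smoothed statistic factorizes as
\[
\expect\bigl[e^{\mathbbm{i}t(T_{ik} + \sigma_z z)}\bigr] = \expect\bigl[e^{\mathbbm{i}t T_{ik}}\bigr]\cdot e^{-t^2\sigma_z^2/2}.
\]
I would then apply Esseen's smoothing inequality with a polynomial truncation $T_n = n^M$ for $M$ large enough that the boundary term $C/T_n$ is negligible relative to the stated rate, reducing the problem to bounding
\[
\int_{-T_n}^{T_n}\frac{|\hat{F}_n(t) - \hat{G}_n(t)|}{|t|}\,dt,
\]
where $\hat{F}_n$ and $\hat{G}_n$ are the Fourier transforms of the CDF of $T_{ik}+\sigma_z z$ and of $G_n^{(ik)}$, respectively.

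The integral is then split at a cutoff $t^\star := c_1 \sigma_z^{-1}\sqrt{\log n} \asymp \sqrt{n\rho_n^3}/\beta$. For $|t|\leq t^\star$, a standard Taylor expansion of $\expect[e^{\mathbbm{i}t T_{ik}}]$ around $t=0$, fed by the cumulants of $T_{ik}^\sharp - (1/2)T_{ik}^\sharp\delta_{ik} + \Delta_{ik}$, matches $\hat{G}_n(t)$ up to an error absorbed into the stated rate; the extra factor $e^{-t^2\sigma_z^2/2}$ corresponds on the distributional side to convolving $G_n^{(ik)}$ with $\mathrm{N}(0,\sigma_z^2)$, and under $\beta = \Omega(n\rho_n^2/(\Delta_n\sqrt{\log n}))$ one has $\sigma_z \lesssim (\log n)^{3\xi}/\Delta_n$ up to constants, so this convolution perturbs $G_n^{(ik)}$ uniformly by at most a quantity already present in the error bound. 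For $|t| \geq t^\star$, the Gaussian factor gives $|\hat{F}_n(t)|\leq e^{-t^2\sigma_z^2/2} \leq n^{-c\tau^2}$, which is summable against $1/|t|$ out to $T_n = n^M$ once $\tau$ is large enough, and $\hat{G}_n(t)$ decays super-polynomially since $G_n^{(ik)}$ is a smooth correction to $\Phi$.

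The main obstacle is the calibration of $\tau$ and of the cutoff $t^\star$: $\tau$ has to be simultaneously small enough that the Gaussian perturbation of $G_n^{(ik)}$ at low frequencies is within the target rate, and large enough that $e^{-t^2\sigma_z^2/2}$ dominates the $1/|t|$ weight on the high-frequency side uniformly up to $T_n = n^M$. This balance is exactly what the specific scaling $\sigma_z^2 \asymp \beta^2\log n/(n\rho_n^3)$ is tuned for, and verifying that the previously used concentration tools from \cite{erdos2013} (particularly those used for bilinear forms $\bx\transpose(\bE^l - \expect \bE^l)\by$) still produce the Taylor-expansion error control in the low-frequency regime, without any appeal to the Cram\'er condition or to the stronger self-smoothing condition (i) of Theorem \ref{thm:edgeworth_expansion}, constitutes the bulk of the remaining technical work.
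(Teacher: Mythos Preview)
Your overall plan matches the paper's: reuse the entire infrastructure of Theorem \ref{thm:edgeworth_expansion} and replace only the medium-frequency step (Lemma \ref{lemma:CHF_III}, the part that needed either small $\beta$ or Cram\'er) by the Gaussian damping factor $e^{-t^2\sigma_z^2/2}$. The paper does exactly this, keeping Lemmas \ref{lemma:CHF_II}, \ref{lemma:CHF_intermediate}, \ref{lemma:CHF_IV} unchanged and using the Gaussian smoother only to bridge the gap $\bar\eps\sqrt{n}\rho_n^{3/2}/\beta\leq|t|\leq M_n(n^{2\beta_\Delta}\log n)^{1/2}$.

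There is, however, a concrete error in your low-frequency accounting that propagates into your ``main obstacle.'' You claim $\sigma_z\lesssim(\log n)^{3\xi}/\Delta_n$ from the hypothesis $\beta=\Omega(n\rho_n^2/(\Delta_n\sqrt{\log n}))$, but that is a \emph{lower} bound on $\beta$ and hence on $\sigma_z$; it cannot give you an upper bound. The only available upper bound on $\beta$ is the moment condition in Assumption \ref{assumption:Noise_matrix_distribution}, which yields $\beta\lesssim\sqrt{n}\rho_n^{3/2}/q_n$ and thus $\sigma_z\lesssim\sqrt{\log n}/q_n$. This is \emph{not} dominated by any term in the target rate (the rate contains $1/q_n^2$, not $1/q_n$), so a first-order Lipschitz convolution estimate $\|G_n^{(ik)}*\phi_{\sigma_z}-G_n^{(ik)}\|_\infty=O(\sigma_z)$ is too crude.

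The fix---and this dissolves the tension you describe in calibrating $\tau$---is that the correct perturbation is $O(\sigma_z^2)$, not $O(\sigma_z)$. On the Fourier side one expands $e^{-t^2\sigma_z^2/2}=1+\theta_n(t)t^2\sigma_z^2/2$ and integrates the extra $t^2\sigma_z^2$ term against $|\mathrm{ch.f.}(t;G_n^{(ik)})|/|t|$, which decays like $e^{-t^2/2}$; the result is $O(\sigma_z^2)=O(\tau^2(\log n)/q_n^2)$, safely inside the stated rate for any fixed $\tau$. Equivalently, on the distribution side the first-order term in the Taylor expansion of $G_n^{(ik)}(x-\sigma_z u)-G_n^{(ik)}(x)$ integrates to zero against the symmetric Gaussian. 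Once you see this, there is no competing smallness constraint on $\tau$: you are free to take $\tau$ large enough to make $e^{-t^2\sigma_z^2/2}\leq n^{-c}$ for all $|t|\geq t^\star$ without any cost at low frequencies.

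One further remark: your two-way split at $t^\star$ puts the entire range $[n^\gamma,t^\star]$ into the ``low-frequency Taylor expansion'' bucket, but $t^\star\asymp\sqrt{n\rho_n^3}/\beta\gtrsim q_n$ is polynomially large. A genuine Taylor expansion of the characteristic function around $t=0$ is only good up to $|t|\leq n^\gamma$ for small $\gamma$ (Lemma \ref{lemma:CHF_local_expansion}). The intermediate range $n^\gamma\leq|t|\leq t^\star$ needs the separate decay argument of Lemma \ref{lemma:CHF_intermediate}, which holds regardless of any condition on $\beta$ and which you should cite rather than subsume under ``standard Taylor expansion.''
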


\begin{remark}
In Theorem \ref{thm:edgeworth_expansion_smoothed}, $z\sim\mathrm{N}(0, 1)$ is artificially injected into the studentized $i$th entry of the sample eigenvector $\widehat{u}_{ik}$ (with the appropriate scaling) to provide a sufficient smoothing effect on the cumulative distribution function of $T_{ik}$. By doing so, the Edgeworth expansion formula provides the correct approximation order with regard to the total variation distance. 
When $\rho_n = \Omega(1)$ and $\beta = \Theta(1)$, the scaling rate $\sqrt{(\log n)/n}$ coincides with the optimal rate in \cite{LAHIRI1993247} in the sense that it does not affect the asymptotic variance of $\widehat{u}_{ik}$ and provides a sufficient smoothing effect to guarantee the validity of the Edgeworth expansion formula. 
The artificial smoother is necessary when neither $\beta$ decays sufficiently fast nor does the Cram\'er's condition hold. This is the case of random graph models as discussed in Examples \ref{example:random_graph} and  \ref{example:GRPDG_bias_correction}. It also motivates the smoothed bootstrap for eigenvectors in low-rank random graphs, which will be introduced in Section \ref{sub:entrywise_eigenvector_bootstrap}. 
\end{remark}

\subsection{Entrywise eigenvector bootstrap}
\label{sub:entrywise_eigenvector_bootstrap}
We now discuss an important consequence of the entrywise eigenvector Edgeworth expansion established in Theorem \ref{thm:edgeworth_expansion}: The higher-order correctness of the entrywise eigenvector bootstrap for signal-plus-noise matrices. As illustrated in Section~\ref{sub:entrywise_eigenvector_edgeworth_expansion} (c.f., Example~\ref{example:matrix_denoising}), $G_n^{(ik)}(n)$ can be a more accurate distribution approximation for $T_{ik}$ than $\Phi(x)$. However, $G_n^{(ik)}(n)$ is not computable in practice because the third-order cumulants $(\expect E^3_{ij})_{i,j\in[n]}$ are unknown. Below, we show that the entrywise eigenvector bootstrap is a fully data-driven remedy and, meanwhile, as accurate as $G_n^{(ik)}(n)$. In general, bootstrap is a method to reconstruct a distribution that closely resembles the original data distribution using only the observed data \cite{efron1994introduction}. In the context of signal-plus-noise matrices \eqref{eqn:signal_plus_noise_matrix_model}, we focus on the case where the noise $E_{ij}$'s are independent and identically distributed sub-exponential random variables and design a residual bootstrap scheme by following the idea of \cite{10.1214/aos/1176345638}. 

Let $(A_{ij} - \widehat{p}_{ij}:i,j\in[n],i\leq j)$ be the residuals after fitting $\bP$ using $\widehat{\bP} = [\widehat{p}_{ij}]_{n\times n} =\bU_\bA\bS_\bA\bU_\bA\transpose$. We centralize the residuals and set $\widehat{E}_{ij} = A_{ij} - \widehat{p}_{ij} - \widehat{\mu}_\bE$, $i,j\in[n]$, $i\leq j$, where
\[
\widehat{\mu}_\bE = \frac{2}{n(n + 1)}\sum_{1\leq i\leq j\leq n}(A_{ij} - \widehat{p}_{ij}).
\]
Let $\widehat{F}_n$ denote the empirical distribution of the centered residuals: 
\[
\widehat{F}_n(\mathrm{d}x) = \frac{2}{n(n + 1)}\sum_{1\leq i\leq j\leq n}\delta_{\widehat{E}_{ij}}(\mathrm{d}x).
\]
The bootstrap distribution of the signal-plus-noise matrix is constructed by resampling the centered residuals from $\widehat{F}_n$. Specifically, let $\bE^* = [E_{ij}^*]_{n\times n}$ be a symmetric $n\times n$ matrix whose upper triangular entries $(E_{ij}^*)_{1\leq i\leq j\leq n}$ are independent and identically distributed from $\widehat{F}_n$,
and we set $\bA^*  = [A_{ij}^*]_{n\times n} = \widehat{\bP} + \bE^*$. 
Let $\sum_{k = 1}^n{\lambda}_k(\bA^*)(\widehat{\bu}_k^*)(\widehat{\bu}_k^*)\transpose$ be the spectral decomposition of $\bA^*$, where $\lambda_1(\bA^*)\geq\ldots\geq\lambda_n(\bA^*)$ are the eigenvalues of $\bA^*$, $(\widehat{\bu}_k^*)_{k = 1}^n$ are the associated orthornomal eigenvectors, and let $\bU_\bA^* = [\widehat{\bu}_1^*,\ldots,\widehat{\bu}_p^*,\widehat{\bu}_{n - q + 1}^*,\ldots,\widehat{\bu}_n^*]$, $\bS_\bA^* = \mathrm{diag}\{\lambda_1(\bA^*),\ldots,\lambda_p(\bA^*), \lambda_{n - q + 1}(\bA^*),\ldots,\lambda_n(\bA^*)\}$. 
Let $\widehat{\bP}^* = \bU_\bA^*\bS_\bA^*(\bU_\bA^*)\transpose$, $\widehat{p}_{ij}^*$ be the $(i, j)$th entry of $\widehat{\bP}^*$, and $\widehat{\lambda}_k^*$ be the $k$th diagonal entry of $\bS_\bA^*$.
Define the bootstrap version of $T_{ik}$ by
\[
T_{ik}^* = \frac{\widehat{u}_{ik}^*\mathrm{sgn}(\widehat{\bu}_k\transpose\widehat{\bu}_k^*) - \widehat{u}_{ik} - \widehat{b}_{ik}}{\widehat{s}_{ik}^*},\quad i\in[n],\;k\in[p],
\]
where $\widehat{u}_{ik}^*$ and $\widehat{b}_{ik}$ are the $i$th entries of $\widehat{\bu}_k^*$ and the estimated bias $\widehat{\bb}_k$ defined in \eqref{eqn:estimated_bias}, respectively, and $(\widehat{s}_{ik}^*)^{2}$ is the bootstrap version of the plug-in variance estimator given by
\[
(\widehat{s}_{ik}^{*})^2 = \sum_{j = 1}^n\frac{(A_{ij}^* - \widehat{p}_{ij}^*)^2\widehat{u}_{jk}^{*2}}{\widehat{\lambda}_k^{*2}}.
\]
Let $\prob$ denote the probability corresponding to $\bA$, $\bE$, and $\prob^*$ denote the conditional probability corresponding to $\bA^*$, $\bE^*$ given $\bA$. 

Theorem \ref{thm:bootstrap_subgaussian} below formally establishes the approximation error bound of the distribution of the studentized entrywise eigenvector bootstrap statistic $T_{ik}^*$. 
\begin{theorem}\label{thm:bootstrap_subgaussian}
Suppose Assumptions \ref{assumption:Signal_strength}--\ref{assumption:Eigenvector_delocalization} hold, and $(E_{ij})_{1\leq i\leq j\leq n}$ are independent and identically distributed sub-exponential random variables such that $\|E_{ij}\|_{\psi_1} = C\rho_n^{1/2}$ for some $C > 0$, where $\|E_{ij}\|_{\psi_1}:=\sup_{p\geq 1}\{p^{-1}(\expect|E_{ij}^p|)^{1/p}\}$. Further assume that $\beta_\Delta\in (0, 1/2]$, $\Delta_n = o(n\rho_n^{1/2}/\sqrt{\log n})$, and $\widehat{\bu}_k\transpose\bu_k > 0$ for all $k\in [p]$. Then for any fixed $i\in[n]$, $k\in[p]$, and any $\xi > 1$, 
\begin{align*}
&\left\|\prob^*(T_{ik}^*\leq x) - \prob(T_{ik}\leq x)\right\|_\infty\\
&\quad = O\left\{
\frac{(\log n)^{3\xi}}{\Delta_n} + \frac{(\log n)^{3\xi}}{n^{2\beta_\Delta}} + \frac{(2\log n)^{3\xi\vee4 + 66\log\log n}}{n} + \frac{(n\rho_n)^{1/2}(\log n)^{3\xi}}{n^{3\beta_\Delta}}
\right\}
\end{align*}
with probability at least $1 - O(n^{-1})$. 
\end{theorem}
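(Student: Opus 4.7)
The plan is to prove Theorem~\ref{thm:bootstrap_subgaussian} by applying Theorem~\ref{thm:edgeworth_expansion} twice and matching the two resulting Edgeworth formulas. Under $\prob$, the sub-exponential hypothesis $\|E_{ij}\|_{\psi_1} = C\rho_n^{1/2}$ gives $\beta := \max_{i,j}\expect|E_{ij}|^3 = O(\rho_n^{3/2})$, so the stipulated scaling $\Delta_n = o(n\rho_n^{1/2}/\sqrt{\log n})$ is exactly what is needed to verify condition (i) of Theorem~\ref{thm:edgeworth_expansion}: $\beta = o(n\rho_n^2/(\Delta_n\sqrt{\log n}))$. Combined with Example~\ref{example:sub_exponential_noise} (which gives $q_n = \sqrt{n}\cdot(2\log n)^{-3A_0\log\log n}$) and the eigenvector delocalization Assumption~\ref{assumption:Eigenvector_delocalization}, this yields
\[
\sup_{x\in\mathbb{R}}|\prob(T_{ik}\leq x) - G_n^{(ik)}(x)| = O(R_n),
\]
where $R_n$ denotes the target bound. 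The goal is thus to show that, conditionally on $\bA$, the analogous estimate $\sup_x|\prob^*(T_{ik}^*\leq x) - \widehat{G}_n^{(ik)}(x)| = O(R_n)$ holds with $\prob$-probability at least $1 - O(n^{-1})$, where $\widehat{G}_n^{(ik)}$ is the plug-in version of the Edgeworth formula \eqref{eqn:edgeworth_expansion_formula} replacing $\bu_k, \lambda_k, s_{ik}, \expect E_{ij}^3$ by their bootstrap/residual counterparts, and then to show $\sup_x|\widehat{G}_n^{(ik)}(x) - G_n^{(ik)}(x)| = O(R_n)$. A triangle inequality then finishes the argument.

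The main technical obstacle is verifying that the bootstrap model $\bA^* = \widehat{\bP} + \bE^*$ satisfies Assumptions~\ref{assumption:Signal_strength}--\ref{assumption:Eigenvector_delocalization} together with condition~(i) of Theorem~\ref{thm:edgeworth_expansion} with high $\prob$-probability, so that Theorem~\ref{thm:edgeworth_expansion} may be applied conditionally on $\bA$. For the signal side, Weyl's inequality and $\|\bE\|_2 = \widetilde{O}_\prob(\sqrt{n\rho_n})$ give $|\widehat{\lambda}_k - \lambda_k|/|\lambda_k| = o_\prob(1)$, so the signal-strength scaling $|\widehat{\lambda}_k|\asymp n^{\beta_\Delta}\sqrt{n\rho_n}$ and the eigengap separation transfer to $\widehat{\bP}$; delocalization of the columns of $\bU_\bA$ follows from the zeroth-order control \eqref{eqn:zeroth_order_eigenvector_deviation} combined with $\|\bU_\bP\|_{2\to\infty} = O(1/\sqrt{n})$. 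For the noise side, the centered residuals satisfy $\widehat{E}_{ij} = E_{ij} - (\widehat{p}_{ij} - p_{ij}) - \widehat{\mu}_\bE$ and can be controlled uniformly in $(i,j)$ via the entrywise expansion of Theorem~\ref{thm:Eigenvector_Expansion}, so the empirical moments $(2/n(n+1))\sum_{i\leq j}|\widehat{E}_{ij}|^p$ concentrate around $\expect|E_{ij}|^p$ by sub-exponential Bernstein-type bounds; this supplies the moment bound \eqref{eqn:noise__moment_condition} for $\bE^*$ with a bootstrap parameter $q_n^*$ of the same order as $q_n$, and also $\beta^* := \max_{i,j}\expect^*|E_{ij}^*|^3 = O(\rho_n^{3/2})$, so condition~(i) is inherited. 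Crucially, even though $\widehat{F}_n$ is purely discrete, ruling out Cram\'er's condition for the bootstrap noise, the self-smoothing mechanism behind Theorem~\ref{thm:edgeworth_expansion}(i) remains in force, which is precisely why the bootstrap attains higher-order correctness here.

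The final step compares $\widehat{G}_n^{(ik)}$ to $G_n^{(ik)}$ ingredient by ingredient. The ratio $\widehat{\lambda}_k/\lambda_k$ is controlled by Weyl's inequality, $\widehat{s}_{ik}^2/s_{ik}^2 - 1$ is controlled by Lemma~\ref{lemma:variance_expansion}, and the entrywise differences $\widehat{u}_{jk} - u_{jk}$ are controlled by the first-order term of Theorem~\ref{thm:Eigenvector_Expansion} together with the sign alignment hypothesis $\widehat{\bu}_k\transpose\bu_k > 0$. The empirical third moment $(2/n(n+1))\sum_{i\leq j}\widehat{E}_{ij}^3$ approximates $\expect E_{ij}^3$ via sub-exponential concentration combined with the residual approximation just discussed, which suffices because the Hermite prefactor $\phi(x)(2x^2+1)/6$ is uniformly bounded in $x$. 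Pushing these rates through \eqref{eqn:edgeworth_expansion_formula} produces $\sup_x|\widehat{G}_n^{(ik)}(x) - G_n^{(ik)}(x)| = O(R_n)$, which closes the triangle inequality. The hardest part of the whole argument is the entrywise—rather than averaged—control of the residuals needed to transfer moment and smoothing properties from $\bE$ to $\bE^*$; spectral-norm bounds on $\widehat{\bP} - \bP$ are insufficient, and one must use the higher-order expansion of Theorem~\ref{thm:Eigenvector_Expansion} uniformly over $(i,j)$ to push through the bootstrap verification with the required $1 - O(n^{-1})$ probability.
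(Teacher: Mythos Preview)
Your proposal is correct and follows essentially the same approach as the paper's proof: verify that the bootstrap model $\bA^* = \widehat{\bP} + \bE^*$ satisfies Assumptions~\ref{assumption:Signal_strength}--\ref{assumption:Eigenvector_delocalization} and condition~(i) of Theorem~\ref{thm:edgeworth_expansion} on a high-probability event, apply Theorem~\ref{thm:edgeworth_expansion} under both $\prob$ and $\prob^*$, and compare the two Edgeworth formulas via a triangle inequality. Two minor remarks: the entrywise control of $\widehat{p}_{ij} - p_{ij}$ needed for the residual moment bounds actually only requires the first-order expansion (as in Lemma~\ref{lemma:Eigenvector_zero_order_deviation} and \eqref{eqn:pij_zeroth_order_deviation}), not the full second-order expansion of Theorem~\ref{thm:Eigenvector_Expansion}; and the concentration of the empirical $p$th moments of the residuals is handled in the paper by a simple Chebyshev/Markov argument uniformly over $p \leq (2\log n)^{11\log\log n}$, rather than a full Bernstein bound, which is what produces the exponent $66\log\log n$ in the target rate.
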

To illustrate Theorem \ref{thm:bootstrap_subgaussian}, let us assume that $\Delta_n = \Theta(n\rho_n)$ for simplicity. Then in the context of Theorem \ref{thm:bootstrap_subgaussian}, we obtain
\begin{align}
\label{eqn:sub_Gaussian_bootstrap_error}
\left\|\prob^*(T_{ik}^*\leq x) - \prob(T_{ik}\leq x)\right\|_\infty = O\left\{\frac{(\log n)^{3\xi}}{n\rho_n} + \frac{(2\log n)^{4 + 33\log\log n}}{n}\right\}.
\end{align}
On the other hand, Theorem \ref{thm:edgeworth_expansion} also immediately entails the following rate-exact Berry-Esseen bound:
\begin{align}
\label{eqn:sub_Gaussian_Berry_Esseen}
\left\|\Phi(x) - \prob(T_{ik}\leq x)\right\|_\infty = O\left\{\frac{1}{\sqrt{n}} + \frac{(\log n)^{3\xi}}{n\rho_n}\right\}.
\end{align}
When $\sqrt{n}\rho_n = \omega\{(\log n)^{3\xi}\}$ for some $\xi \in(1, 4/3]$, the leading term on the right-hand side of \eqref{eqn:sub_Gaussian_Berry_Esseen} is $\Theta(n^{-1/2})$, which also dominates the right-hand side of \eqref{eqn:sub_Gaussian_bootstrap_error}. We thereby establish the higher-order correctness of the bootstrap scheme beyond the normal approximation.

The residual bootstrap scheme does not apply to random graphs, where the noises are independent but not identically distributed. For low-rank random graphs, we follow the idea of the parametric bootstrap scheme developed \cite{levin2019bootstrapping}, which we review here. We follow the same set of notations and definitions above but further assume that $\bA= [A_{ij}]_{n\times n}$ is a random adjacency matrix with edge probability matrix $\bP$, namely, $A_{ij}\sim\mathrm{Bernoulli}(p_{ij})$ independently for all $i\leq j$, and $A_{ij} = A_{ji}$ for all $i > j$, where $p_{ij}$ is the $(i, j)$th entry of $\bP$. Also see Example \ref{example:random_graph} and Example \ref{example:GRPDG_bias_correction}. 
The major difference is that rather than bootstrapping $\bA$ by resampling the residuals, we generate the bootstrap samples of $\bA$ directly from Bernoulli distributions. Specifically,
given $\bA$, let $A_{ij}^*\sim\mathrm{Bernoulli}(\widehat{p}_{ij})$ independently for all $i \leq j$, where $\widehat{p}_{ij}$ is the $(i, j)$th entry of $\widehat{\bP} = \bU_\bA\bS_\bA\bU_\bA\transpose$, and set $A_{ij}^* = A_{ji}^*$ for all $i > j$. We then let $\bA^* = [A_{ij}^*]_{n\times n}$ and define $T_{ik}^*$ the same as before. 

However, rather than considering the bootstrap distribution of $T_{ik}$ directly, we consider the so-called smoothed bootstrap by injecting independent artificial Gaussian smoothers with the appropriate scaling into $T_{ik}$ and $T_{ik}^*$, respectively. The smoothed bootstrap is motivated by Theorem \ref{thm:edgeworth_expansion_smoothed} and is rooted in \cite{LAHIRI1993247}. Theorem \ref{thm:bootstrap_GRDPG} below provides the formal approximation error bound for the smoothed entrywise eigenvector bootstrap for low-rank random graphs.
\begin{theorem}\label{thm:bootstrap_GRDPG}
Let $\bP = \bU_\bP\bS_\bP\bU_\bP\transpose$ be a $n\times n$ rank-$d$ matrix with $\bU_\bP\in\mathbb{O}(n, d)$, $\bS_\bP = \mathrm{diag}(\lambda_1,\ldots,\lambda_d)$,
\[
\min\Big(\min_{k\in [d]}|\lambda_k|, \min_{k\in [d - 1]}|\lambda_k - \lambda_{k + 1}|\Big) = \Theta(n\rho_n),
\]
and $\sqrt{n\rho_n} = \Theta(n^{\beta_\Delta})$ for some $\beta_\Delta > 0$. Assume that $\bU_\bP$ satisfies Assumption \ref{assumption:Eigenvector_delocalization}, and further assume that $\delta_1\rho_n\leq p_{ij}\leq \delta_2\rho_n$ for all $i,j\in[n]$ for some constants $\delta_1,\delta_2 \in (0,1]$. Suppose
$A_{ij}\sim\mathrm{Bernoulli}(p_{ij})$ independently for all $i\leq j$, where $p_{ij}$ is the $(i, j)$th entry of $\bP$, and $A_{ij} = A_{ji}$ for all $i > j$. Let $\widehat{\bu}_k\transpose\bu_k > 0$ and $(z,z^*)\sim\mathrm{N}\{(0,0), \mathrm{diag}(1,1)\}$ be independent of $\bA$ and $\bA^*$. Then there exists a sufficiently large $\tau > 0$, such that for any $\xi > 1$,
\[
\left\|\prob^*\left(T_{ik}^* + \tau\sqrt{\frac{\log n}{n\rho_n}}z^*\leq x\right) - \prob\left(T_{ik} + \tau \sqrt{\frac{\log n}{n\rho_n}}z\leq x\right)\right\|_\infty = \widetilde{O}_{\prob}\left\{\frac{(\log n)^{3\xi}}{n\rho_n}\right\}.
\]
\end{theorem}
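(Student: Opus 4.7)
The plan is to use the smoothed Edgeworth expansion formula $G_n^{(ik)}(x)$ from Theorem \ref{thm:edgeworth_expansion_smoothed} as a pivot: apply that theorem once to the original statistic and once (conditionally) to the bootstrap statistic, and then show that the two Edgeworth expansion formulas are close with high probability. Concretely, I would decompose
\[
\bigl\|\prob^*\bigl(\widetilde{T}_{ik}^*\leq x\bigr) - \prob\bigl(\widetilde{T}_{ik}\leq x\bigr)\bigr\|_\infty \leq \bigl\|\prob(\widetilde{T}_{ik}\leq x) - G_n^{(ik)}(x)\bigr\|_\infty + \bigl\|G_n^{(ik)}(x) - \widehat{G}_n^{(ik)}(x)\bigr\|_\infty + \bigl\|\prob^*(\widetilde{T}_{ik}^*\leq x) - \widehat{G}_n^{(ik)}(x)\bigr\|_\infty,
\]
where $\widetilde{T}_{ik} = T_{ik} + \tau\sqrt{(\log n)/(n\rho_n)}\,z$, $\widetilde{T}_{ik}^* = T_{ik}^* + \tau\sqrt{(\log n)/(n\rho_n)}\,z^*$, and $\widehat{G}_n^{(ik)}$ is the plug-in analogue of $G_n^{(ik)}$ obtained by replacing $(\bU_\bP,\bS_\bP,\bP, \sigma_{ij}^2, \expect E_{ij}^3, s_{ik}^2)$ with their sample/bootstrap counterparts.

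For the first summand I verify the hypotheses of Theorem \ref{thm:edgeworth_expansion_smoothed} in the graph setting of Examples \ref{example:random_graph} and \ref{example:GRPDG_bias_correction}: with $\delta_1\rho_n\le p_{ij}\le\delta_2\rho_n$ one has $\expect E_{ij}^2 = \Theta(\rho_n)$, $|E_{ij}|\le 1$ a.s., and $\beta := \max_{i,j}\expect|E_{ij}|^3 = \Theta(\rho_n)$, so Assumption \ref{assumption:Noise_matrix_distribution} holds with $q_n = \sqrt{n\rho_n}$ and $\beta = \Omega(n\rho_n^2/(\Delta_n\sqrt{\log n}))$. Since $\Delta_n = \Theta(n\rho_n)$, the smoother scaling $\sqrt{\beta^2(\log n)/(n\rho_n^3)}$ in Theorem \ref{thm:edgeworth_expansion_smoothed} reduces to a constant multiple of $\sqrt{(\log n)/(n\rho_n)}$, matching the theorem's statement up to a constant absorbed in $\tau$. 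Moreover $\min_i s_{ik}^2 = \Theta(\rho_n/\Delta_n^2)$ follows from $u_{ik}^2 = O(1/n)$ (Assumption \ref{assumption:Eigenvector_delocalization}) and $\sigma_{ij}^2 = \Theta(\rho_n)$. Each term in the Theorem \ref{thm:edgeworth_expansion_smoothed} error bound is seen to be $O\{(\log n)^{3\xi}/(n\rho_n)\}$ under $\Delta_n = \Theta(n\rho_n)$ and $q_n = \sqrt{n\rho_n}$.

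For the third summand I condition on $\bA$ and view $\bA^* = \widehat{\bP} + \bE^*$ as a fresh signal-plus-noise model. I must show that, on an event of probability $1 - o(1)$, the triple $(\widehat{\bP}, \bE^*, \widehat{s}_{ik}^2)$ satisfies Assumptions \ref{assumption:Signal_strength}--\ref{assumption:Eigenvector_delocalization} and the variance lower bound, so that a conditional version of Theorem \ref{thm:edgeworth_expansion_smoothed} applies with the same rate. Weyl's inequality gives $|\widehat\lambda_k - \lambda_k| = \widetilde{O}_\prob(\sqrt{n\rho_n})$ and hence preserves the eigengap and signal magnitude up to constants. Theorem \ref{thm:Eigenvector_Expansion} (or its row-wise corollary) yields the uniform entrywise bound $\max_{i,j}|\widehat p_{ij}-p_{ij}| = \widetilde{O}_\prob(\sqrt{\rho_n\log n/n})$, so that $\delta_1\rho_n/2 \le \widehat{p}_{ij}\le 2\delta_2\rho_n$ w.h.p., guaranteeing that the conditional noise $\bE^*$ satisfies Assumption \ref{assumption:Noise_matrix_distribution} with parameters matching those of $\bE$ up to constants. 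Delocalization of $\bU_\bA\bW^*$ follows from $\|\bU_\bA\bW^* - \bU_\bP\|_{2\to\infty} = \widetilde{O}_\prob(\rho_n^{1/2}/\Delta_n)$. Applying Theorem \ref{thm:edgeworth_expansion_smoothed} conditionally then bounds the third summand by $\widetilde{O}_\prob\{(\log n)^{3\xi}/(n\rho_n)\}$.

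The middle summand requires controlling $\|G_n^{(ik)}(x) - \widehat{G}_n^{(ik)}(x)\|_\infty$. Since the Gaussian part cancels, it reduces to bounding the difference in the polynomial correction terms. I would first show $|\widehat{s}_{ik}^2/s_{ik}^2 - 1| = \widetilde{O}_\prob\{(\log n)^{2\xi}/(n\rho_n)\}$ via Lemma \ref{lemma:variance_expansion}, and $|\widehat\lambda_k/\lambda_k - 1|$ is of the same or smaller order. The cumulants are explicit Bernoulli polynomials $\expect (E_{ij}^*)^3 = \widehat p_{ij}(1-\widehat p_{ij})(1-2\widehat p_{ij})$ versus $p_{ij}(1-p_{ij})(1-2p_{ij})$, whose difference is controlled by the entrywise perturbation bound on $\widehat{\bP}$. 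Combining these with the uniform bound $\|\widehat{\bu}_k - \bu_k\|_\infty = \widetilde{O}_\prob\{\rho_n^{1/2}(\log n)^\xi/(n^{1/2}\Delta_n)\}$ yields the desired $\widetilde{O}_\prob\{(\log n)^{3\xi}/(n\rho_n)\}$ bound after using $\sup_x(2x^2+1)\phi(x) = O(1)$.

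The main obstacle is the conditional Edgeworth expansion in step 3: every concentration and moment estimate underlying Theorem \ref{thm:edgeworth_expansion_smoothed} must be re-verified for the plug-in signal $\widehat\bP$ with the resampled noise $\bE^*$, uniformly over a high-probability event measurable with respect to $\bA$. A secondary obstacle is propagating the entrywise estimation error on $\widehat\bP$ into the third-cumulant polynomial without inflating the rate; this requires a careful first-order Taylor expansion of $p\mapsto p(1-p)(1-2p)$ combined with the entrywise $\widetilde{O}_\prob(\sqrt{\rho_n\log n/n})$ perturbation of $\widehat{\bP}$, which ultimately is absorbed into the target rate $\widetilde{O}_\prob\{(\log n)^{3\xi}/(n\rho_n)\}$.
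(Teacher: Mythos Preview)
Your proposal is correct and follows essentially the same approach as the paper: the paper also uses the triangle inequality with $G_n^{(ik)}$ and its plug-in counterpart $G_n^{(ik)*}$ as pivots, applies Theorem~\ref{thm:edgeworth_expansion_smoothed} once to the original model and once conditionally to $(\widehat{\bP},\bE^*)$ after verifying Assumptions~\ref{assumption:Signal_strength}--\ref{assumption:Eigenvector_delocalization} on a high-probability event (via Weyl's inequality, the entrywise bound $\max_{i,j}|\widehat p_{ij}-p_{ij}|=\widetilde O_{\prob}\{(\log n)^\xi\sqrt{\rho_n/n}\}$, and the $\ell_\infty$ eigenvector perturbation), and then bounds $\|G_n^{(ik)}-G_n^{(ik)*}\|_\infty$ by the mean-value theorem applied to $p\mapsto p(1-p)(1-2p)$ together with the perturbation bounds on $\widehat{u}_{jk}$, $\widehat s_{ik}$, and $\widehat\lambda_k$. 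The one place you slightly overstate the difficulty is the conditional application: since Theorem~\ref{thm:edgeworth_expansion_smoothed} depends on the model only through Assumptions~\ref{assumption:Signal_strength}--\ref{assumption:Eigenvector_delocalization} and the variance lower bound, you do not need to re-verify each underlying concentration estimate separately---it suffices to check that those assumptions hold for $(\widehat{\bP},\bE^*)$ with the same $q_n=\sqrt{n\rho_n}$ on the high-probability event, which is exactly what the paper does.
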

\section{Numerical results}
\label{sec:numerical_results}

This section presents the results of numerical experiments to illustrate the Edgeworth expansion and bootstrap theory established in Section \ref{sec:applications}. Specifically, Section \ref{sub:higher_order_accuracy_of_edgeworth_expansion} demonstrates the higher-order accuracy of Edgeworth expansion formula, and Section \ref{sub:higher_order_accuracy_of_empirical_edgeworth_expansion_and_bootstrap} compares the approximation quality of the entrywise eigenvector bootstrap and the so-called empirical Edgeworth expansion where the true value of the higher-order term appearing in the Edgeworth expansion formula \eqref{eqn:edgeworth_expansion_formula} is replaced by a plug-in estimate. 

\subsection{Higher-order accuracy of Edgeworth expansion}
\label{sub:higher_order_accuracy_of_edgeworth_expansion}

We consider a weighted two-block stochastic block model (SBM) graph with the block profile matrix and community assignment function given by
\[
\bB = \frac{\Delta_n}{n}\begin{bmatrix}
a & & b\\ b&  & a
\end{bmatrix},\quad
\tau(i) = \left\{\begin{aligned}
&1,&\quad&\mbox{if }i = 1,\ldots,n/2,\\
&2,&\quad&\mbox{if }i = n/2 + 1,\ldots,n,
\end{aligned}\right.
\]
where $\Delta_n$ governs the signal strength of the model, $a, b > 0$ with $a\neq b$, and $n$ is an even integer. The weighted graph adjacency matrix $\bA = [A_{ij}]_{n\times n}$ is generated via the signal-plus-noise structure $A_{ij} = \expect A_{ij} + E_{ij}$, where $(E_{ij}:1\leq i\leq j\leq n)$ are independent and identically distributed (i.i.d.) discrete random variables with 
\begin{align*}
\prob(E_{ij} = 4) = \frac{\rho_n}{5},\quad
\prob(E_{ij} = -1) = \frac{4\rho_n}{5},\quad
\prob(E_{ij} = 0) = 1 - \rho_n.
\end{align*}
We set $\expect A_{ij} = \Delta_na/n$ if $\tau(i) = \tau(j)$ and $\expect A_{ij} = \Delta_nb/n$ if $\tau(i)\neq \tau(j)$, $i\leq j$, $i,j\in [n]$, and $A_{ij} = A_{ji}$ if $i > j$. 
The mean matrix $\bP = \expect\bA$ can be alternatively written as $\bP = \bU_\bP\bS_\bP\bU_\bP\transpose$, where $\bU_\bP = [\bu_1,\bu_2]$, $\bu_1 = [1,\ldots,1]\transpose/\sqrt{n}$, $\bu_2 = [1,\ldots,1,-1,\ldots,-1]\transpose/\sqrt{n}$, $\bS_\bP = \mathrm{diag}\{n\rho_n(a + b)/2, n\rho_n(a - b)/2\}$. 
The number of vertices $n$ ranges in $\{80, 160, 320\}$, and we let $\rho_n = n^{-1/3}$ and the signal-to-noise ratio $\Delta_n/\sqrt{n\rho_n}$ vary across $\{n^{1/6}, n^{1/5}, n^{1/4}\}$. 

One challenge in numerical experiments is obtaining the exact distributions of the studentized entrywise eigenvector statistic $T_{ik} = \{\widehat{u}_{ik}\mathrm{sgn}(\widehat{\bu}_k\transpose\bu_k) - u_{ik} - b_{ik}(\widehat{\bu}_k)\}/\widehat{s}_{ik}$, $i\in [n]$, $k\in [p]$. In general, the closed-form distributions of $T_{ik}$'s are not available, and we resort to Monte Carlo approximations to $F_{T_{ik}}(x) = \prob(T_{ik}\leq x)$. Specifically, we generate $n_{\mathrm{mc}}$ replicates of $\bA$ from the above weighted SBM, compute $T_{ik}^{(t)}$ for each $t\in \{1,2,\ldots,n_{\mathrm{mc}}\}$ (here $t$ is the index of the Monte Carlo replicate), and take the empirical CDF of $(T_{ik}^{(t)})_{t = 1}^{n_{\mathrm{mc}}}$ 
\[
F_{ik}(x) = \frac{1}{n_{\mathrm{mc}}}\sum_{t = 1}^{n_{\mathrm{mc}}}\mathbbm{1}(T_{ik}^{(t)}\leq x)
\]
as a proxy for the true CDF $\prob(T_{ik}\leq x)$. We follow the suggestion in \cite{10.1214/21-AOS2125} and take $n_{\mathrm{mc}} = 10^6 \gg n$. This choice of $n_{\mathrm{mc}}$ ensures that $\|F(x) - \prob(T_{ik}\leq x)\|_\infty = O_{\prob}(n_{\mathrm{mc}}^{-1/2})$ (by Glivenko-Cantelli Theorem, see, for example, Theorem 12.4 in \cite{devroye2013probabilistic}) is negligible compared to the Edgeworth expansion approximation error in Theorem \ref{thm:edgeworth_expansion} but allows for a manageable computation cost of $F_{ik}(x)$. 

The numerical results are visualized in Figure \ref{fig:Binary_WSBM_CDF}. 
Note that in the current context, it is easy to see that, for the entries of the second eigenvector $\widehat{\bu}_2$ and $\bu_2$, the corresponding higher-order term in the Edgeworth expansion formula is zero because $\sum_{j = 1}^n\expect E_{ij}^3u_{j2}^3 = 0$.
Thus, we focus on the entrywise analysis of $\bu_1$. The three columns of Figure \ref{fig:Binary_WSBM_CDF} correspond to different signal-to-noise ratios with $\beta_\Delta\in\{1/6, 1/5, 1/4\}$, respectively. 
The first, third, and fifth rows of Figure \ref{fig:Binary_WSBM_CDF} visualize the comparison among $F_{11}(x)$ (denoted by true CDFs of $T_{ik}$ but computed using the aforementioned Monte Carlo approximations), the normal CDF $\Phi(x)$, and the Edgeworth expansion formula $G_n^{(11)}(x)$'s for $T_{11}$ with $n \in \{80, 160, 320\}$. The second, fourth, and sixth rows of Figure \ref{fig:Binary_WSBM_CDF} show the total variation distances $\|F_{T_{i1}}(x) - \Phi(x)\|_\infty$ and $\|F_{T_{i1}}(x) - G_n^{(i1)}(x)\|_\infty$ with $n \in \{80, 160, 320\}$, respectively, where the horizontal axis labels $i\in[n]$. 
It can be seen that the approximation quality of the Edgeworth expansion formula $G_n^{(i1)}(x)$ is superior to that of the normal CDF $\Phi(x)$ across $n\in\{80,160,320\}$ and $\beta_\Delta\in\{1/6,1/5,1/4\}$. 
\begin{figure}[htbp]
\centerline{
\includegraphics[width = 16cm]{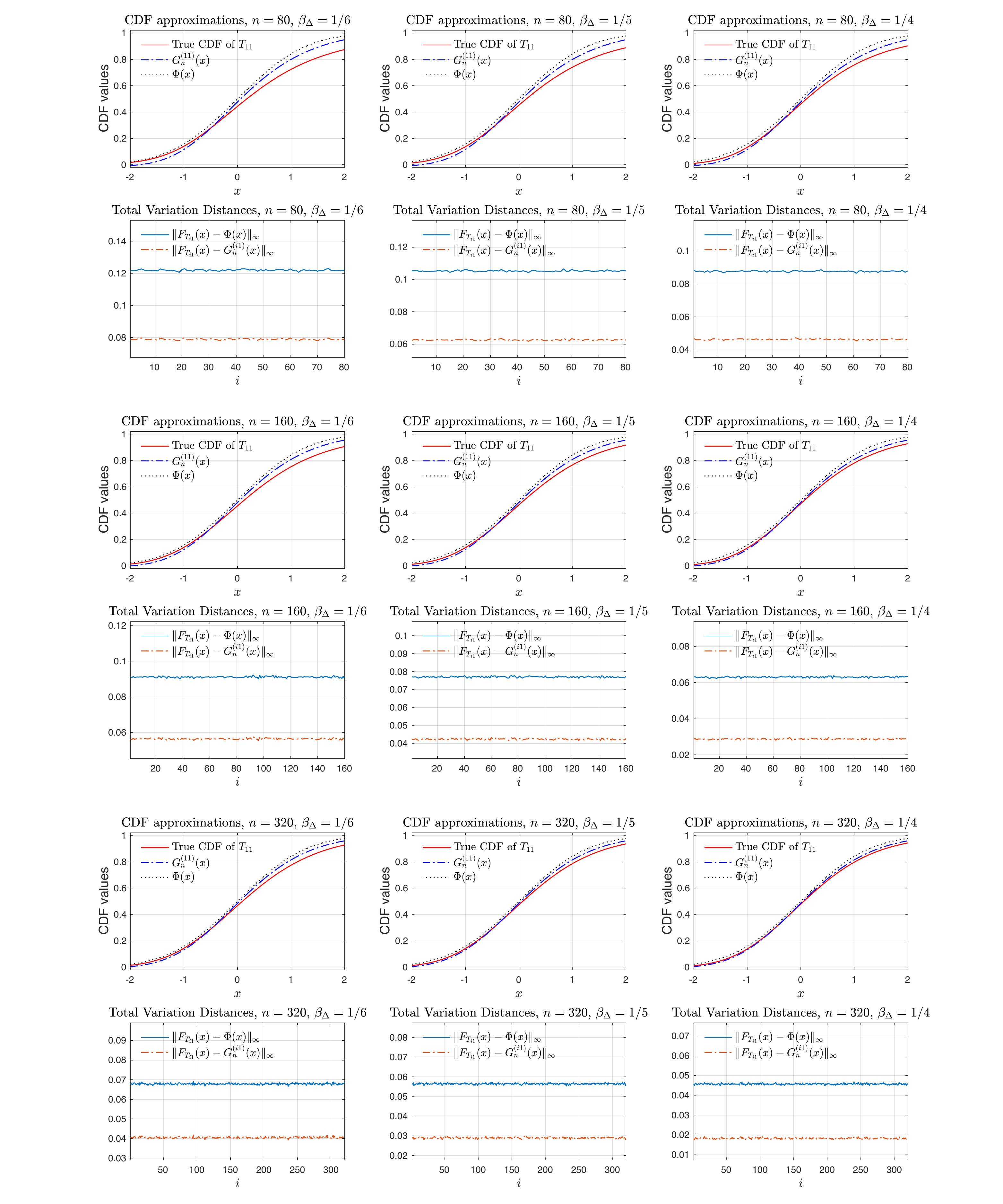}
}
\caption{Simulation results for Section \ref{sub:higher_order_accuracy_of_edgeworth_expansion}: Comparison of the Edgeworth expansion formula $G_n^{(i1)}(x)$ versus the normal CDF $\Phi(x)$ for approximating the true CDF of $T_{i1}$, $i\in [n]$ for $n\in\{80,160,320\}$ and $\Delta_n = n^{\beta_\Delta}(n\rho_n)^{1/2}$ with $\beta_\Delta\in\{1/6,1/5,1/4\}$. }
\label{fig:Binary_WSBM_CDF}
\end{figure}

\subsection{Higher-order accuracy of empirical Edgeworth expansion and bootstrap}
\label{sub:higher_order_accuracy_of_empirical_edgeworth_expansion_and_bootstrap}

Section \ref{sub:higher_order_accuracy_of_edgeworth_expansion} showcases the power of the Edgeworth expansion approximation to the CDF of $T_{ik}$. Nevertheless, the higher-order accuracy of this phenomenon is not directly obtainable in practice because the third-order cumulants $\expect E_{ij}^3u_{jk}^3/(s_{ik}^3\lambda_k^3)$ in the Edgeworth expansion formula \eqref{eqn:edgeworth_expansion_formula} is generally unavailable in real-world problems. This subsection elaborates on a more realistic case when the Edgeworth expansion formula is not directly available to practitioners. Two approximation methods for the CDFs of $T_{ik}$'s are investigated: The entrywise eigenvector bootstrap and the empirical version of the Edgeworth expansion formula. 

We first describe the simulation setup. We set the upper triangular entries $(E_{ij}:i,j\in[n],i\leq j)$ to be independent and identically distributed random variables with probability density function 
\[
f(x) = \frac{1}{\sqrt{\rho_n}}e^{-\rho_n^{-1/2}(x + \rho_n^{1/2})},\quad x\geq -\rho_n^{-1/2}.
\] Namely, $E_{ij}$'s are centered independent $\mathrm{Exp}(\rho_n^{-1/2})$ random variables. We still focus on $T_{i1}$ because the higher-order term in the Edgeworth expansion formula for $T_{i2}$ is zero. The signal matrix $\bP$ is the same as in Section \ref{sub:higher_order_accuracy_of_edgeworth_expansion}. Given a realization of $\bA = \bP + \bE$, where $\bE = [E_{ij}]_{n\times n}$, we consider the following two methods for approximating $\prob(T_{i1}\leq x)$, $i\in [n]$:
\begin{enumerate}[(a)]
    \item Entrywise eigenvector bootstrap using the residual bootstrap scheme for $T_{i1}$ described in Section \ref{sub:entrywise_eigenvector_bootstrap} with $n_{\mathrm{boot}} = 10000$ bootstrap samples. Given bootstrap samples $(T_{i1}^{*(b)})_{b = 1}^{n_{\mathrm{boot}}}$, we use the empirical CDF
    \[
    F_{i1}^*(x) = \frac{1}{n_{\mathrm{boot}}}\sum_{b = 1}^{n_{\mathrm{boot}}}\mathbbm{1}(T_{i1}^{*(b)}\leq x)
    \]
    as a proxy for $F_{T_{i1}^*}(x) = \prob^*(T_{i1}^*\leq x)$. 
    \item Empirical Edgeworth expansion using the formula
    \[
    \widehat{G}_n^{(i1)} = \Phi(x) + \frac{(2x^2 + 1)}{6}\sum_{j = 1}^n\frac{(A_{ij} - \widehat{p}_{ij})^3\widehat{u}_{j1}^3}{\widehat{s}_{i1}^3\widehat{\lambda}_1^3}.
    \]
    In the above formula, $\widehat{p}_{ij}$ is the $(i, j)$th entry of $\widehat{\bP} = \bU_\bA\bS_\bA\bU_\bA\transpose$, $\bU_\bA\bS_\bA\bU_\bA\transpose$ is the truncated spectral decomposition of $\bA$ introduced in Section \ref{sec:model_setup} with $d = 2$, $\widehat{\lambda}_1$ is the second largest of $\bA$ in magnitude, $\widehat{\bu}_1 = [\widehat{u}_{11},\ldots,\widehat{u}_{n1}]\transpose$, and $\widehat{s}_{ik}^2$ is the plug-in estimate of the variance $s_{i1}^2$ defined in \eqref{eqn:plug_in_estimate_variance}. 
\end{enumerate}
We let $n\in\{80, 160, 320\}$, $\Delta_n = (n\rho_n)^{1/2}n^{\beta_\Delta}$ with $\beta_\Delta\in\{1/6, 1/5, 1/4\}$, and set $\rho_n = n^{-1/4}$, so the setup here aligns with that in Section \ref{sub:higher_order_accuracy_of_edgeworth_expansion}.  
The same experiment is repeated $500$ independent times for each pair of $(n, \beta_\Delta)$. The first, third, and fifth rows of Figure \ref{fig:matrix_denoising} present the deviations $|F_{T_{11}}(x) - \Phi(x)|$ (for normal approximation), $|F_{T_{11}}(x) - G_n^{(11)}(x)|$ (for the theoretical Edgeworth expansion), $|F_{T_{11}}(x) - F_{T_{11}^*}(x)|$ (for the bootstrap approximation), and $|F_{T_{11}}(x) - \widehat{G}_n^{(11)}(x)|$ (for the Empirical Edgeworth expansion) as functions of $x\in[-2, 2]$, for $n\in\{80, 160, 320\}$, respectively, using a randomly selected realization of $\bA$. 
The second, fourth, and sixth rows of Figure \ref{fig:matrix_denoising} show
the total variation distances for approximating $F_{T_{i1}}(x)$ (as functions of the entry index $i\in [n]$ as the horizontal axis in these plots) using the above four approaches. The three columns of Figure \ref{fig:matrix_denoising} correspond to different signal-to-noise ratios with $\beta_\Delta\in\{1/6,1/5,1/4\}$, respectively. 
The total variation distances based on the latter two methods are computed for each experiment, and we plot the averages of total variation distances and error bars with one standard deviation (in red shaded regions for the bootstrap approximation and in green shaded regions for the empirical Edgeworth expansion) across $500$ repeated experiments. 
\begin{figure}[htbp]
\centerline{
\includegraphics[width = 17cm]{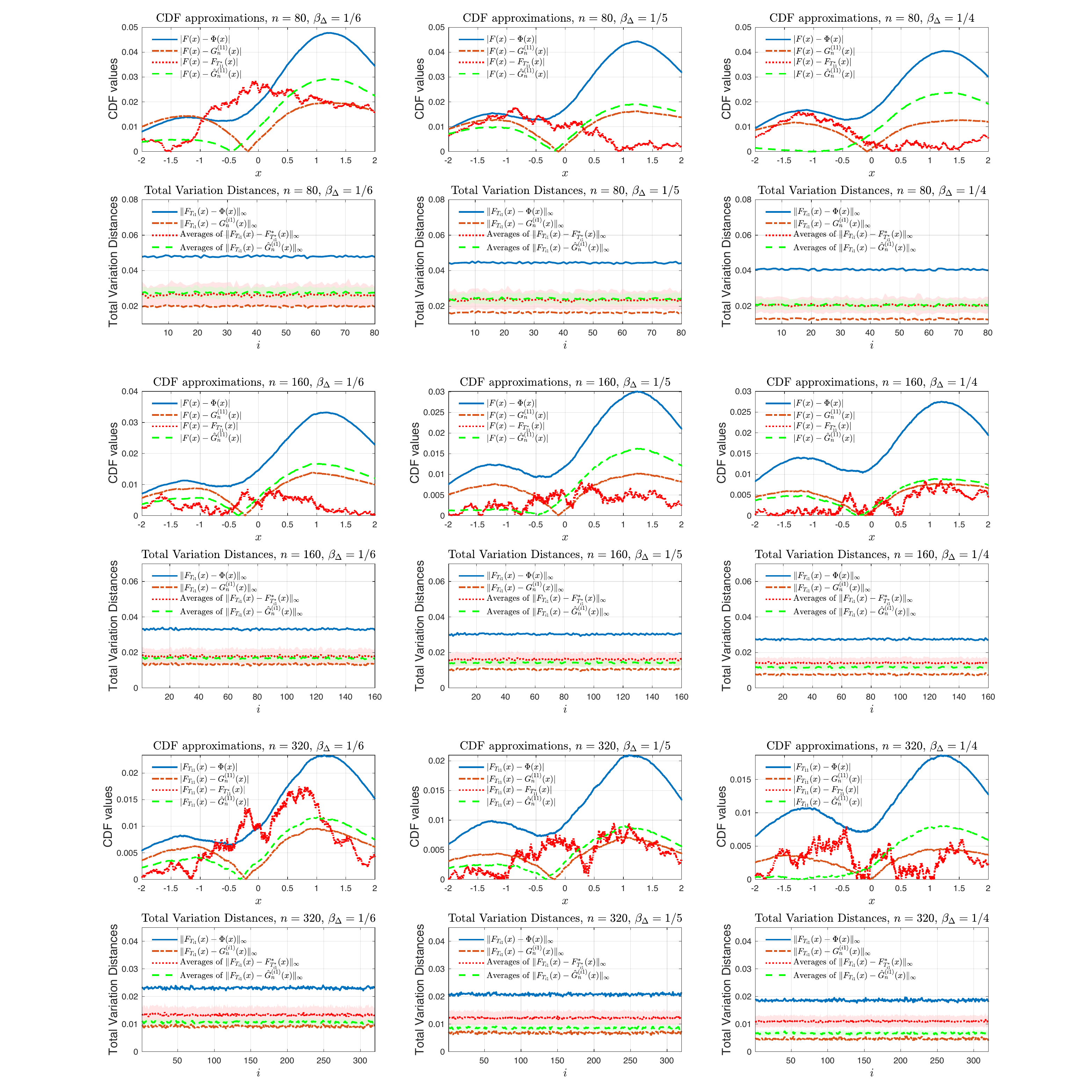}}
\caption{Simulation results for Section \ref{sub:higher_order_accuracy_of_empirical_edgeworth_expansion_and_bootstrap}: CDF approximation errors using the normal approximation (solid lines), the theoretical Edgeworth expansion (dash-dotted lines), the bootstrap approximation (dotted lines), and the empirical Edgeworth expansion Comparison (dashed lines) for the true CDF of $T_{i1}$, $i\in [n]$, where $n$ varies over $\{80,160,320\}$ and $\Delta_n = n^{\beta_\Delta}(n\rho_n)^{1/2}$ with $\beta_\Delta\in\{1/6,1/5,1/4\}$. }
\label{fig:matrix_denoising}
\end{figure}
We observe that the bootstrap approximation has similar accuracy with the empirical Edgeworth expansion when $n$ is small ($n \in \{80, 160\}$), but the empirical Edgeworth expansion slightly outperforms the bootstrap approximation when $n$ is larger with a stronger signal-to-noise ratio ($n = 320$ and $\beta_\Delta = 1/4$). Both methods also significantly outperform the baseline normal approximation with less total variation distances, which is expected by Theorem \ref{thm:edgeworth_expansion} and Theorem \ref{thm:bootstrap_subgaussian}. 

\section{Discussion}
\label{sec:discussion}

In this work, we study the distributions of the entries of the sample eigenvectors in low-rank random matrix models. Specifically, we investigate the higher-order entrywise eigenvector stochastic expansion with uniform error control. The stochastic expansion directly leads to the bias correction for the eigenvectors with reduced estimation mean-squared error. Furthermore, we establish the first-ever provable Edgeworth expansions of the studentized entrywise eigenvector statistics and apply them to justify the higher-order correctness of the bootstrap distributions for the entrywise eigenvector statistics. 

The signal-plus-noise matrix models considered in this work can be applied to deal with rectangular matrices directly through the ``symmetric dilation'' trick introduced in Remark \ref{remark:symmetric_dilation}. Nevertheless, our theory only works when the two dimensions are balanced, namely, $p_1/p_2$ stays bounded away from $0$ and $\infty$. A potential future research direction is to explore the higher-order entrywise singular vector analysis with unbalanced dimensions when $p_1$ and $p_2$ grow at different rates \cite{10.1214/20-AOS1986,10.1214/17-AOS1541}. We conjecture that this may require significant extra work because the entrywise recovery of the underlying signal matrix $\bP$ could be impossible for unbalanced matrices. 

The theory developed in this work crucially relies on the eigenvalue separation condition (Assumption \ref{assumption:Eigenvalue_separation}), under which the nontrivial eigenvectors are all identifiable up to sign flips. This condition can be too strong in some contexts where the estimation of subspaces is of interest rather than a single eigenvector, such as the principal component analysis and multiple graph inference with a common subspace \cite{arroyo2021inference}. An interesting future direction is to extend the current theory to the row-wise subspace analysis for low-rank random matrix models with higher-order stochastic expansion and/or Edgeworth expansions. 

The objective we consider in this paper is the entrywise functionals of sample eigenvectors. Another potential extension is to study sample eigenvalues and general linear functionals of sample eigenvectors. This may require different techniques than those developed in the current work. Specifically, in \cite{doi:10.1080/01621459.2020.1840990}, the authors show that if $\bx\transpose\widehat{\bu}_k$ is a general linear functional of $\widehat{\bu}_k$ and $|\bx\transpose\bu_k|$ is bounded away from $0$, then the first-order term in the stochastic expansion of $\bx\transpose\widehat{\bu}_k\mathrm{sgn}(\widehat{\bu}_k\transpose\bu_k) - \bx\transpose\bu_k$ involves a quadratic form $\bx\transpose\bE^2\by$ for some vectors $\bx,\by\in\mathbb{R}^n$. Also see Proposition \ref{prop:eigenvector_angle_expansion} when $\bx = \bu_k$. Although higher-order stochastic expansion is possible by pushing forward the roadmap developed there, the Edgeworth expansion of the CDF of the studentized version of $\bx\transpose\widehat{\bu}_k$ is enormously different. This is because $\bx\transpose\bE^2\by$ is a martingale rather than a sum of independent random variables, and Edgeworth expansion of martingales is treated differently under a more involved ``test-function'' topology rather than the total variation topology \cite{mykland1989edgeworth,10.1214/aos/1176348649,10.2307/2244676,10.1214/aos/1176324617}. We defer this direction to future research.

\section*{Acknowledgement}
This research was supported in part by Lilly Endowment, Inc., through its support for the Indiana University Pervasive Technology Institute.

\bigskip
\begin{center}
  \begin{Large}
    \textbf{Supplement to ``Higher Order Entrywise Eigenvectors Analysis of Low-Rank Random Matrices: Bias Correction, Edgeworth Expansion, and Bootstrap''}
  \end{Large}
\end{center}
\appendix

\allowdisplaybreaks

\begin{abstract}
This supplementary material contains the proofs of theoretical results in Section \ref{sec:eigenvector_expansion} and Section \ref{sec:applications}.
\end{abstract}

\appendix

\section{Overview of the Supplement}
\label{sec:overview}

Before proceeding to the proofs of the main results in the manuscript, we first provide a brief overview of this supplement. In Section \ref{sec:auxiliary_results}, we establish and collect several auxiliary results that are useful throughout the entire document. These results are mainly based on the concentration properties of quadratic forms of powers of generalized Wigner matrix (\emph{i.e.}, $\bx\transpose\bE^l\by$) established in \cite{erdos2013} and some recent progress in the subspace perturbation analysis of random matrices with low expected ranks studied in \cite{cape2019signal,xie2021entrywise}. Section \ref{sec:technical_tools_from_RMT} is devoted to the proof of Proposition \ref{prop:eigenvector_angle_expansion}, \emph{i.e.}, the expansion of the angles between the sample eigenvectors and the population eigenvectors. This section borrows the random matrix theory tools established in \cite{doi:10.1080/01621459.2020.1840990} via the residual theorem in complex analysis, but our results are obtained with refined remainder characterization and weaker assumptions compared to \cite{doi:10.1080/01621459.2020.1840990}. Section \ref{sec:proof_of_eigenvector_expansion} completes the proof of Theorem \ref{thm:Eigenvector_Expansion}. In Section \ref{sec:proof_of_edgeworth_expansion}, we provide the complete recipe for the proof of Theorem \ref{thm:edgeworth_expansion}. The main technical tools used here are Esseen's smoothing lemma \cite{10.1007/BF02392223} and a collection of highly nontrivial and delicate analyses of the characteristic function of $T_{ik}$. Sections \ref{sec:proof_of_matrix_denoising}, \ref{sec:proof_of_bootstrap_subgaussian}, and \ref{sec:proof_of_smoothed_edgeworth_expansion} collect the proofs of the remaining results. 

\section{Auxiliary Results}
\label{sec:auxiliary_results}

\begin{result}\label{result:Noise_matrix_moment_bound}
Under Assumption \ref{assumption:Noise_matrix_distribution} of the manuscript, for each fixed $l\in \mathbb{N}_+$, any constant $\xi > 1$, and any unit vectors $\bx,\by\in\mathbb{R}^n$, 
\begin{align*}
\bx\transpose\bE^l\by - \expect(\bx\transpose\bE^l\by) &= \widetilde{O}_{\prob}\left\{\sqrt{n}(n\rho_n)^{l/2}(\log n)^{l\xi}\|\bx\|_\infty\|\by\|_\infty\right\},\\
\expect\bx\transpose\bE^l\by &= O\{\sqrt{n}(n\rho_n)^{l/2}\|\by\|_\infty\},\\
\expect\bx\transpose\bE^2\by &= O(n\rho_n).
\end{align*} 
\end{result}
\begin{proof}
The proof of the first assertion is almost the same as that of Lemma 6.5 in \cite{erdos2013}. To invoke the proof there, we set $h_{ij} = E_{ij}/\sqrt{n\rho_n}$ and $\bH = [h_{ij}]_{n\times n}$. The differences are the following: rather than considering $\bx = \by = \one_n/\sqrt{n}$, where $\one_n$ is the $n$-dimensional vector of all ones, we consider general unit vectors $\bx, \by$ so that the upper bound comes with $\|\bx\|_\infty\|\by\|_\infty$, and we do not restrict $\expect h_{ij}^2 = 1/n$ but only need $\expect h_{ij}^2\leq 1/n$. In Definition 2.1 and equation (2.5) in \cite{erdos2013}, we can take $C = 1$ and $q = q_n$. Then, by the proof of Lemma 6.5 there, we see immediately that for any $\xi > 1$, there exists a constant $\nu > 0$, such that with probability at least $1 - \exp\{-\nu(\log n)^{\xi}\}$, 
\[
\frac{1}{n}\left|\bx\transpose\bH^l\by - \expect(\bx\transpose\bH^l\by)\right| = O\left\{\frac{(\log n)^{l\xi}}{\sqrt{n}}\|\bx\|_\infty\|\by\|_\infty\right\},
\]
and hence,
\[
\left|\bx\transpose\bE^l\by - \expect(\bx\transpose\bE^l\by)\right| = O\left\{\sqrt{n}(n\rho_n)^{l/2}(\log n)^{l\xi}\|\bx\|_\infty\|\by\|_\infty\right\}.
\]
For the second assertion, by the proof of Lemma 7.10 in \cite{erdos2013}, we know that 
\[
\max_{i\in[n]}|\expect \be_i\transpose\bE^l\by| = O\{(n\rho_n)^{l/2}\|\by\|_\infty\}.
\]
It follows from Cauchy-Schwarz inequality that
\begin{align*}
\left|\expect\bx\transpose\bE^l\by\right|& = \left|\sum_{i = 1}^nx_i\expect\be_i\transpose\bE^l\by\right|\leq \left(\sum_{i = 1}^nx_i^2\right)^{1/2}\left\{\sum_{i = 1}^n(\expect \be_i\transpose\bE^l\by)^2\right\}^{1/2}\\
& = O\{\sqrt{n}(n\rho_n)^{l/2}\|\by\|_\infty\},
\end{align*}
where $x_i$ is the $i$th entry of $\bx$. The third assertion follows directly from the fact that
\[
\|\expect \bE^2\|_2 = \left\|\mathrm{diag}\left\{\sum_{j = 1}^n\var(E_{1j}),\ldots,\sum_{j = 1}^n\var(E_{nj})\right\}\right\|_2 = O(n\rho_n).
\]
\end{proof}
\begin{result}\label{result:Noise_matrix_concentration}
Under Assumption \ref{assumption:Noise_matrix_distribution} of the manuscript, for unit vectors $\bu,\bv\in\mathbb{R}^n$ and any $\xi > 1$, 
\[
\bu\transpose\bE\bv = \widetilde{O}_{\prob}\{n\rho_n^{1/2}(\log n)^{\xi}\|\bu\|_\infty\|\bv\|_\infty\}
\quad\mbox{and}\quad
\|\bE\|_2 = \widetilde{O}_{\prob}(\sqrt{n\rho_n}).
\] 
\end{result}
\begin{proof}
The first assertion follows directly from Result \ref{result:Noise_matrix_moment_bound} and the second assertion is a consequence of Lemma 4.3 in \cite{erdos2013}.
\end{proof}
\begin{result}\label{result:Noise_matrix_rowwise_concentration}
Under Assumption \ref{assumption:Noise_matrix_distribution} of the manuscript, for unit vector $\bu\in\mathbb{R}^n$, 
\[
|\be_i\transpose\bE\bu| = \widetilde{O}_{\prob}\left\{\|\bu\|_{\infty}(n\rho_n)^{1/2}(\log n)^{\xi}\right\}
\]
\end{result}
\begin{proof}
This follows directly from Equation (3.19) of Lemma 3.8 in \cite{erdos2013}. 
\end{proof}
\begin{result}\label{result:SU_exchange_concentration}
Under Assumption \ref{assumption:Noise_matrix_distribution} of the manuscript, 
\[
\|\bS_\bP\bU_\bP\transpose\bU_\bA - \bU_\bP\transpose\bU_\bA\bS_\bA\|_2 = \widetilde{O}_{\prob}
\left(\alpha_n + \frac{n\rho_n}{\Delta_n}\right)
\]
\end{result}
\begin{proof}
To see this, first note that by Davis-Kahan theorem,
\[
\|\bU_\bA - \bU_\bP\bW^*\|_2\leq \sqrt{d}\max_{k\in [d]}\|\widehat{\bu}_k\mathrm{sgn}(\widehat{\bu}_k\transpose\bu_k) - \bu_k\|_2\leq \frac{\sqrt{d}\|\bE\|_2}{2\Delta_n} = \widetilde{O}_{\prob}\left(\frac{\sqrt{n\rho_n}}{\Delta_n}\right).
\]
Then it follows directly from the decomposition
\begin{align*}
\bS_\bP\bU_\bP\transpose\bU_\bA - \bU_\bP\transpose\bU_\bA\bS_\bA &= \bU_\bP\transpose\bP\bU_\bA - \bU_\bP\transpose\bA\bU_\bA\\
& = -\bU_\bP\transpose\bE\bU_\bP\bW^* - \bU_\bP\transpose\bE(\bU_\bA - \bU_\bP\bW^*),
\end{align*}
that
\[
\|\bS_\bP\bU_\bP\transpose\bU_\bA - \bU_\bP\transpose\bU_\bA\bS_\bA\|_2 =
\widetilde{O}_{\prob}\left(\underbrace{\|\bU_\bP\transpose\bE\bU_\bP\|_2}_{\alpha_n} + \frac{n\rho_n}{\Delta_n}\right).
\]
\end{proof}
\begin{result}
\label{result:concentration}
Suppose Assumption~\ref{assumption:Noise_matrix_distribution} holds. Then
\begin{align}\label{result:concentration:res1}
\max_{(i,j)\in[n]^2}|E_{ij}|  = \widetilde{O}_{\prob}\Bigg(\frac{\sqrt{n\rho_n}}{q_n}\Bigg).
\end{align}
Furthermore, let $l\in \mathbb{N}_+$ be any fixed integer and $\mathcal{E}_m$ be any subset of $\{E_{ij}\mid 1\leq i\leq j\leq n\}$ such that  $\mathcal{E}_m$ contains $m$ identical elements: $\widetilde{E}_1,\dots ,\widetilde{E}_m$, and suppose $m \geq   \eta n$ for a fixed constant $\eta \in(0,1]$.  Let $\{\gamma_i\}_{i = 1}^m$ be a collection of deterministic coefficients. Then we have
\begin{equation}\label{result:concentration:res2}
\begin{aligned}
&\sum_{i = 1}^m \gamma_i\Big\{\widetilde{E}_i^l - \expect\big(\widetilde{E}_i^l\big)\Big\}\\
&\quad = \widetilde{O}_{\prob}\Bigg\{(\log m)^{\xi}\Bigg(\frac{\max_{i\in[m]}|\gamma_i|}{q_n} + \sqrt{\frac{1}{m}\sum_{i = 1}^m|\gamma_i|^2}\Bigg) \frac{m^{1/2}n^{(l-1)/2}\rho_n^{l/2}}{q_n^{l - 1}}\Bigg\}
\\
&\quad = \widetilde{O}_{\prob}\Bigg\{(\log n)^{\xi}{\max_{i\in[m]}|\gamma_i|}\frac{m^{1/2}n^{(l-1)/2}\rho_n^{l/2}}{q_n^{l - 1}}\Bigg\}.
\end{aligned}
\end{equation}
Also, if $\{\gamma_{i,j}\}_{i\neq j,(i,j)\in[m]^2}$ is a collection of deterministic coefficients, then we have
\begin{equation}\label{result:concentration:res3}
\begin{aligned}
&\sum_{i \neq j\atop (i,j)\in[m]^2} \gamma_{ij}\big\{\widetilde{E}_i^l - \expect\big(\widetilde{E}_i^l\big)\big\}\big\{\widetilde{E}_j^l - \expect\big(\widetilde{E}_j^l\big)\big\}
\\
&\quad = \widetilde{O}_{\prob}\Bigg\{(\log m)^{2\xi}\Bigg(\frac{\max_{i\neq j}|\gamma_{ij}|}{q_n} + \sqrt{\frac{1}{m^2}\sum_{i \neq j}|\gamma_{ij}|^2}\Bigg)\frac{mn^{l-1}\rho_n^{l}}{q_n^{2l - 2}}\Bigg\}
\\
&\quad = \widetilde{O}_{\prob}\Bigg\{(\log n)^{2\xi}\max_{i\neq j}|\gamma_{ij}|\frac{mn^{l-1}\rho_n^{l}}{q_n^{2l - 2}}\Bigg\}.
\end{aligned}
\end{equation}
\end{result}
\begin{proof}
The first assertion follows from Lemma~3.7 in \cite{erdos2013} directly. In particular, for any $(i,j)\in[n]^2$, by high-order Markov inequality, we have that with $p_n = a(\log n)^{\xi}$ for any fixed $\xi > 1$ and $a > 0$, and a sufficiently large $n$,
\begin{align}\nonumber
\prob \Bigg(|E_{ij}| \geq 2C\frac{\sqrt{n\rho_n}}{q_n} \Bigg)&\leq \frac{q_n^{p_n}\expect |E_{ij}|^{p_n}}{(2C)^{p_n}(n\rho_n)^{p_n/2}} \leq \frac{q_n^{2}}{2^{p_n}n},
\end{align}  
where the second inequality is by Assumption~\ref{assumption:Noise_matrix_distribution}. Since the above probability vanishes superpolynomially, an uniform argument over all $(i,j)\in[n]^2$ yields \eqref{result:concentration:res1} immediately. 
\par
For the second assertion, we use Lemma~3.8 in \cite{erdos2013}. 
We set 
$$
a_i = \frac{n^{1/2}q_n^{l - 1}}{m^{1/2}(n\rho_n)^{l/2}}\{\widetilde{E}^{l}_{i} - \expect(\widetilde{E}^{l}_{i})\}.
$$
Then we have for any $2 \leq p \leq (\log n)^{A_0\log\log m}$ and $n \geq \exp\big(l^{1/(A_0\log 2)}\big)$, 
\begin{align*}
\expect|a_i|^{p} &\leq \frac{n^{p/2}q_n^{lp - p}}{m^{p/2}(n\rho_n)^{lp/2}}2^p \expect\big(|\widetilde{E}_{i}|^{lp}\big) 
\\
&\leq  \frac{(2C^l\sqrt{\eta})^{p}}{mq_n^{p - 2}}\Big(\frac{m}{\eta n}\Big)^{1 - p/2 } \leq \frac{(2C^l\sqrt{\eta})^{p}}{mq_n^{p - 2}},
\end{align*}
which verifies the Condition (3.18) in \cite{erdos2013}; Here the first inequality is by Jensen's inequality, the second inequality is by Assumption~\ref{assumption:Noise_matrix_distribution} and $0< \eta \leq 1$, and the third inequality is due to the fact that $p\geq 2$ and $m/(\eta n) \geq 1$. Thus Lemma~3.8, Equation~(3.19) in \cite{erdos2013} implies that
\begin{align*}
\sum_{i = 1}^m\gamma_i a_i = \widetilde{O}_{\prob}\Bigg\{(\log m)^{\xi}\Bigg(\frac{\max_{i\in[m]}|\gamma_i|}{q_n} + \sqrt{\frac{1}{m}\sum_{i = 1}^m|\gamma_i|^2}\Bigg)\Bigg\},
\end{align*}
and Lemma~3.8, Equation~(3.21) in \cite{erdos2013} implies that
\begin{align*}
\sum_{i \neq j\atop (i,j)\in[m]^2}\gamma_{i,j} a_ia_j = \widetilde{O}_{\prob}\Bigg\{(\log m)^{2\xi}\Bigg(\frac{\max_{i\neq j}|\gamma_{ij}|}{q_n} + \sqrt{\frac{1}{m^2}\sum_{i \neq j}|\gamma_{ij}|^2}\Bigg)\Bigg\},
\end{align*}
which directly yield the desired results.
\end{proof}
\begin{lemma}\label{lemma:Rowwise_higher_order_concentration}
Under Assumption \ref{assumption:Noise_matrix_distribution} of the manuscript, there exists a constant $C > 0$ such that for each fixed $i\in [n]$, any deterministic vector $\bv\in\mathbb{R}^n$, and any $\xi > 1$,
\[
\prob\left[\bigcup_{k = 1}^{\log n}\left\{|\be_i\transpose\bE^k\bv|  >  C^k(n\rho_n)^{k/2}(\log n)^{\xi k}\|\bv\|_\infty\right\}\right]\lesssim \exp\{-\nu(\log n)^\xi\},
\]
where $\nu > 0$ is a constant. 
\end{lemma}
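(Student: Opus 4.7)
The plan is to combine a high-moment tail bound for each fixed $k$ with a union bound over $k\in\{1,\ldots,\log n\}$. The per-$k$ bound will follow from a moment-method argument exploiting the vertex-anchored structure of $\be_i$, which is the crucial refinement beyond what Result~\ref{result:Noise_matrix_moment_bound} provides for generic unit vectors: the latter carries an extraneous $\sqrt{n}$ factor that would be fatal here.

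The core technical step is to establish the combinatorial moment bound: for any even integer $p\geq 2$ and any $k\in\{1,\ldots,\log n\}$,
\[
\expect\bigl|\be_i\transpose\bE^k\bv\bigr|^{p} \leq \bigl\{C_0^k(n\rho_n)^{k/2}\|\bv\|_\infty\bigr\}^{p}\cdot p^{C_1 k p}
\]
for absolute constants $C_0,C_1>0$ independent of $k$ and $p$. This is the row-anchored analogue of Lemma~6.5 of \cite{erdos2013} and extends the $k=1$ case of Result~\ref{result:Noise_matrix_rowwise_concentration}. To derive it, I would expand
\[
(\be_i\transpose\bE^k\bv)^{p} \;=\; \sum_{P_1,\ldots,P_{p}} \prod_{m=1}^{p}E_{P_m}\, v_{j^{(m)}_k},
\]
where $P_m=(i\to j^{(m)}_1\to\cdots\to j^{(m)}_k)$ ranges over length-$k$ walks starting at vertex $i$ and $E_{P_m}$ denotes the product of edge weights along $P_m$. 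Taking expectations, non-trivial contributions come only from multigraphs in which every edge is traversed at least twice, and the contribution of each equivalence class is controlled via the moment estimates in \eqref{eqn:noise__moment_condition}. The key observation that removes the $\sqrt{n}$ factor present in the generic case is that the common starting vertex $i$ is fixed across all $p$ walks, eliminating one degree of freedom in the vertex-counting step compared with the generic bilinear form $\bx\transpose\bE^k\by$.

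With this moment bound in hand, I would apply Markov's inequality with $p=\lceil(\log n)^{\xi}\rceil$ and the threshold $t_k = C^k(n\rho_n)^{k/2}(\log n)^{\xi k}\|\bv\|_\infty$ for $C$ sufficiently large relative to $C_0,C_1$. This yields, uniformly over $k\in\{1,\ldots,\log n\}$,
\[
\prob\bigl\{|\be_i\transpose\bE^k\bv| > C^k(n\rho_n)^{k/2}(\log n)^{\xi k}\|\bv\|_\infty\bigr\} \leq \exp\bigl\{-\nu_0(\log n)^{\xi}\bigr\}
\]
for some $\nu_0>0$ independent of $k$. A union bound over the $\lfloor\log n\rfloor$ values of $k$ introduces an extra factor of $\log n$, which is absorbed via $\log n\cdot\exp\{-\nu_0(\log n)^\xi\}\leq \exp\{-\nu(\log n)^\xi\}$ for any $\nu\in(0,\nu_0)$ and all sufficiently large $n$, completing the proof.

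The main obstacle is the uniform-in-$k$ moment bound with constants $C_0,C_1$ independent of $k$ while saving the $\sqrt{n}$ factor that a direct application of Result~\ref{result:Noise_matrix_moment_bound} would cost. Since $k$ may be as large as $\log n$, the combinatorial factor $p^{C_1 k p}$ must be tracked carefully, as crude constants would blow up after the Markov step. The argument adapts the walk-counting roadmap of Sections~6--7 of \cite{erdos2013} to the row-anchored setting, in the same spirit in which Lemma~3.8 of \cite{erdos2013} sharpens Lemma~6.5 there in the $k=1$ case underlying Result~\ref{result:Noise_matrix_rowwise_concentration}.
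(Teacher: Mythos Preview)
Your proposal is correct and follows essentially the same approach as the paper. The paper's proof simply invokes Lemma~7.10 of \cite{erdos2013}, which already contains precisely the row-anchored moment bound you plan to derive (with $\bv=\one_n/\sqrt{n}$ in place of a general $\bv$); the only modifications needed are the straightforward replacement of $\one_n/\sqrt{n}$ by a general $\bv$ with $\|\bv\|_\infty$ tracked, followed by the union bound over $k\in\{1,\ldots,\log n\}$ that you also describe.
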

\begin{proof}
The proof is almost the same as that of Lemma 7.10 in \cite{erdos2013}. Also see Lemma B.1 in \cite{xie2019efficient}, Lemma 5.4 in \cite{doi:10.1080/01621459.2020.1751645}, and Lemma B.3 in \cite{tang2017}. The only difference is that rather than using the vector $\one_n/\sqrt{n}$, where, $\one_n$ is the $n$-dimensional vector of all ones, and considering $\be_i\transpose\bE\one_n/\sqrt{n}$, we consider a more general vector $\bv$. The remaining part of the proof follows exactly the same as that of Lemma 7.10 in \cite{erdos2013}, together with a union bound over $k = 1,\ldots,\log n$. 
\end{proof}

\begin{lemma}\label{lemma:Eigenvector_angle_analysis}
Suppose Assumptions \ref{assumption:Signal_strength}, \ref{assumption:Eigenvalue_separation}, and \ref{assumption:Noise_matrix_distribution} in the manuscript hold. Then
\[
\|\bU_\bP\transpose\bU_\bA - \bW^*\|_2 = 
\widetilde{O}_{\prob}\left(\frac{\alpha_n}{\Delta_n}  + \frac{n\rho_n}{\Delta_n^2}\right)
,
\]
where $\bW^* = \mathrm{diag}\{\mathrm{sgn}(\widehat{\bu}_1\transpose\bu_1),\ldots,\mathrm{sgn}(\widehat{\bu}_d\transpose\bu_d)\}$.  
\end{lemma}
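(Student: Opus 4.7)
The plan is to control the diagonal and off-diagonal entries of $M := \bU_\bP\transpose\bU_\bA - \bW^*$ separately and then assemble the spectral-norm bound; since $d$ is a fixed constant, $\|M\|_2 \leq d\max_{m,k\in[d]}|M_{mk}|$, so entrywise control is enough.

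For the diagonal, I will use the algebraic identity
\[
\|\widehat{\bu}_k - \mathrm{sgn}(\widehat{\bu}_k\transpose\bu_k)\bu_k\|_2^2 = 2 - 2|\bu_k\transpose\widehat{\bu}_k|,
\]
which immediately rearranges to
\[
\bu_k\transpose\widehat{\bu}_k - \mathrm{sgn}(\widehat{\bu}_k\transpose\bu_k) = -\tfrac{1}{2}\mathrm{sgn}(\widehat{\bu}_k\transpose\bu_k)\|\widehat{\bu}_k - \mathrm{sgn}(\widehat{\bu}_k\transpose\bu_k)\bu_k\|_2^2.
\]
Combining the Davis--Kahan $\sin\Theta$ theorem with the operator-norm bound $\|\bE\|_2 = \widetilde{O}_{\prob}(\sqrt{n\rho_n})$ from Result \ref{result:Noise_matrix_concentration} and the gap $\Delta_n$ supplied by Assumption \ref{assumption:Eigenvalue_separation} yields $\|\widehat{\bu}_k - \mathrm{sgn}(\widehat{\bu}_k\transpose\bu_k)\bu_k\|_2 = \widetilde{O}_{\prob}(\sqrt{n\rho_n}/\Delta_n)$, so each diagonal entry of $M$ is $\widetilde{O}_{\prob}(n\rho_n/\Delta_n^2)$.

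For the off-diagonal entries I will derive a Sylvester-type identity from the eigenequation $\bA\bU_\bA = \bU_\bA \bS_\bA$ by left-multiplying by $\bU_\bP\transpose$ and substituting $\bU_\bP\transpose\bA = \bS_\bP\bU_\bP\transpose + \bU_\bP\transpose\bE$, which gives
\[
\bU_\bP\transpose\bU_\bA\bS_\bA - \bS_\bP\bU_\bP\transpose\bU_\bA = \bU_\bP\transpose\bE\bU_\bA.
\]
Reading the $(m,k)$-entry for $m\neq k$ produces $\bu_m\transpose\widehat{\bu}_k = (\bu_m\transpose\bE\widehat{\bu}_k)/(\widehat{\lambda}_k - \lambda_m)$, whose denominator is $\gtrsim \Delta_n$ with high probability by Weyl's inequality combined with Assumptions \ref{assumption:Signal_strength} and \ref{assumption:Eigenvalue_separation} (which together give $\|\bE\|_2 \ll \Delta_n$). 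For the numerator, the split
\[
\bu_m\transpose\bE\widehat{\bu}_k = \mathrm{sgn}(\widehat{\bu}_k\transpose\bu_k)\bu_m\transpose\bE\bu_k + \bu_m\transpose\bE\bigl(\widehat{\bu}_k - \mathrm{sgn}(\widehat{\bu}_k\transpose\bu_k)\bu_k\bigr)
\]
bounds the first summand by $\alpha_n = \|\bU_\bP\transpose\bE\bU_\bP\|_2$ and the second by $\|\bE\|_2\cdot\widetilde{O}_{\prob}(\sqrt{n\rho_n}/\Delta_n) = \widetilde{O}_{\prob}(n\rho_n/\Delta_n)$. Dividing by $|\widehat{\lambda}_k - \lambda_m| \gtrsim \Delta_n$ yields the off-diagonal rate $\widetilde{O}_{\prob}(\alpha_n/\Delta_n + n\rho_n/\Delta_n^2)$, which together with the diagonal bound gives the claimed inequality.

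The main obstacle is extracting the $\alpha_n/\Delta_n$ rate rather than the coarser $\sqrt{n\rho_n}/\Delta_n$ that would follow from the naive estimate $|\bu_m\transpose\bE\widehat{\bu}_k| \leq \|\bE\|_2$. The gain comes precisely from the plug-and-subtract decomposition above, whose leading piece collapses to the bilinear form $\bu_m\transpose\bE\bu_k$ controlled by $\alpha_n$, while the residual piece picks up an extra factor of $\|\widehat{\bu}_k - \mathrm{sgn}(\widehat{\bu}_k\transpose\bu_k)\bu_k\|_2$ making it lower order. A minor but necessary care is ensuring that the high-probability event on which the Weyl bound $|\widehat{\lambda}_k - \lambda_m| \gtrsim \Delta_n$ holds is the same event underpinning all the $\widetilde{O}_{\prob}$ estimates, so the Sylvester division is legitimate throughout.
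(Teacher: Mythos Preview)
Your proposal is correct and follows essentially the same argument as the paper's proof: the diagonal entries are handled via the identity $1-|\bu_k\transpose\widehat{\bu}_k|=\tfrac{1}{2}\|\widehat{\bu}_k-\mathrm{sgn}(\widehat{\bu}_k\transpose\bu_k)\bu_k\|_2^2$ together with Davis--Kahan, and the off-diagonal entries are handled via the eigenequation identity $\bu_m\transpose\widehat{\bu}_k(\widehat{\lambda}_k-\lambda_m)=\bu_m\transpose\bE\widehat{\bu}_k$ followed by the same plug-and-subtract split of $\widehat{\bu}_k$ around $\mathrm{sgn}(\widehat{\bu}_k\transpose\bu_k)\bu_k$. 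The only cosmetic difference is that you package the off-diagonal step as a matrix Sylvester identity, whereas the paper writes the scalar eigen-relations $\bu_r\transpose\widehat{\bu}_k\widehat{\lambda}_k=\bu_r\transpose\bA\widehat{\bu}_k$ and $\widehat{\bu}_k\transpose\bu_r\lambda_r=\widehat{\bu}_k\transpose\bP\bu_r$ directly; these are equivalent.
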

\begin{proof}
It suffices to show that 
\begin{align*}
|\bu_k\transpose\widehat{\bu}_k - \mathrm{sgn}(\widehat{\bu}_k\transpose\bu_k)| &= \widetilde{O}_{\prob}\left(\frac{n\rho_n}{\Delta_n^2}\right),\quad
|\bu_r\transpose\widehat{\bu}_k| = \widetilde{O}_{\prob}\left(\alpha_n + \frac{n\rho_n}{\Delta_n^2}\right)
\end{align*}
for all $k, r\in [d]$, $r\neq k$. By the definition of $\Delta_n$, $\lambda_k - \lambda_{k + 1}\geq \Delta_n$ and $\min_{k\in [d]}|\lambda_k|\geq \Delta_n$. Then by Davis-Kahan theorem,
\begin{align*}
|\mathrm{sgn}(\bu_k\transpose\widehat{\bu}_k) - \bu_k\transpose\widehat{\bu}_k|
& = 1 - \mathrm{sgn}(\bu_k\transpose\widehat{\bu}_k)\bu_k\transpose\widehat{\bu}_k
  = \frac{1}{2}\|\bu_k - \mathrm{sgn}(\bu_k\transpose\widehat{\bu}_k)\widehat{\bu}_k\|_2^2 = \widetilde{O}_{\prob}\left(\frac{n\rho_n}{\Delta_n^2}\right)
  .  
\end{align*}
For $k\neq r$, by the definitions of $\bu_r$ and $\widehat{\bu}_k$, we have $\bu_r\transpose\widehat{\bu}_k\widehat{\lambda}_k = \bu_r\transpose\bA\widehat{\bu}_k$ and $\widehat{\bu}_k\transpose\bu_r\lambda_r = \widehat{\bu}_k\transpose\bP\bu_r$, where $\widehat{\lambda}_k$ is the eigenvalue associated with the eigenvector $\widehat{\bu}_k$. Therefore,
\[
\bu_r\transpose\widehat{\bu}_k(\widehat{\lambda}_k - \lambda_r) = \bu_r\transpose\bE\widehat{\bu}_k = \bu_r\transpose\bE\bu_k\mathrm{sgn}(\bu_k\transpose\widehat{\bu}_k) + \bu_r\transpose\bE\{\widehat{\bu}_k - \bu_k\mathrm{sgn}(\bu_k\transpose\widehat{\bu}_k)\}.
\]
By Davis-Kahan theorem and Result \ref{result:Noise_matrix_concentration}, 
\[
|\bu_r\transpose\bE\{\widehat{\bu}_k - \mathrm{sgn}(\bu_k\transpose\widehat{\bu}_k)\}|\leq \|\bE\|_2\|\widehat{\bu}_k - \bu_k\mathrm{sgn}(\bu_k\transpose\widehat{\bu}_k)\|_2 = \widetilde{O}_{\prob}\left(\frac{n\rho_n}{\Delta_n}\right). 
\]
By Weyls' inequality, Result \ref{result:Noise_matrix_concentration} and Assumption \ref{assumption:Signal_strength}, we have that $\sqrt{n\rho_n}/\Delta_n\to0$, 
\[
|\widehat{\lambda}_k - \lambda_r|\geq |\lambda_k - \lambda_r| - |\widehat{\lambda}_k - \lambda_k|\geq \Delta_n - \|\bE\|_2\geq\frac{\Delta_n}{2}\quad\mbox{w.h.p.}
\]
by Assumption \ref{assumption:Signal_strength}.
Hence, we conclude that 
\begin{align*}
|\bu_r\transpose\widehat{\bu}_k|&\leq \frac{|\bu_r\transpose\bE\bu_k| + |\bu_r\transpose\bE\{\widehat{\bu}_k - \mathrm{sgn}(\bu_k\transpose\widehat{\bu}_k)\}|}{|\widehat{\lambda}_k - \lambda_r|}\\
& = \widetilde{O}_{\prob}\left(\frac{\|\bU_\bP\transpose\bE\bU_\bP\|_2}{\Delta_n} + \frac{n\rho_n}{\Delta_n^2}\right).
\end{align*}
The proof is thus completed.
\end{proof}

\begin{lemma}\label{lemma:Utranspose_E_cubic_U_concentration}
Suppose Assumptions \ref{assumption:Signal_strength}--\ref{assumption:Noise_matrix_distribution} in the manuscript hold. Then for any $\xi > 1$,
\begin{align*}
\bU_\bP\transpose\expect\big(\bE^3\big)\bU_\bP &= O\left\{\frac{(n\rho_n)^{3/2}}{q_n}\right\}\\
\bU_\bP\transpose\bE^3\bU_\bP &= \widetilde{O}_{\prob}\left[(n\rho_n)^{3/2}\max\left\{\frac{1}{q_n}, \sqrt{n}(\log n)^{3\xi}\|\bU_\bP\|_{2\to\infty}^2\right\}\right].
\end{align*}
\end{lemma}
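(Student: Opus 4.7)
The plan is to handle the two assertions separately, with the second building directly on the first.

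For the mean bound, I would first compute $\expect(\bE^3)$ entrywise by exploiting the independence of the upper-triangular entries and the mean-zero condition. Writing $(\bE^3)_{ij} = \sum_{a,b\in[n]} E_{ia}E_{ab}E_{bj}$, the expectation $\expect(E_{ia}E_{ab}E_{bj})$ is nonzero only when all three factors coincide as the same random variable: if two factors are equal but the third is a distinct entry of $\bE$, then the distinct entry is independent of the other two (since entries of $\bE$ are either literally identical under symmetry or independent), so the factored mean vanishes; likewise, if all three factors are distinct random variables they are mutually independent and the product again has mean zero. Treating entries of $\bE$ as undirected edges $\{i,a\},\{a,b\},\{b,j\}$, a short case-check forces $(a,b)=(j,i)$ as the unique surviving configuration. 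Hence $\expect(\bE^3)_{ij} = \expect(E_{ij}^3)$, and Assumption~\ref{assumption:Noise_matrix_distribution} with $p=3$ gives $|\expect(E_{ij}^3)| \leq C^3 (n\rho_n)^{3/2}/(nq_n)$. I would then close the first assertion via
\[
\|\bU_\bP\transpose\expect(\bE^3)\bU_\bP\|_2 \leq \|\expect(\bE^3)\|_F \leq n\max_{i,j}|\expect(E_{ij}^3)| = O\bigl((n\rho_n)^{3/2}/q_n\bigr).
\]

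For the fluctuation bound, since $d$ is a fixed constant, the spectral norm of the $d\times d$ matrix $\bU_\bP\transpose\bE^3\bU_\bP$ is controlled (up to a multiplicative constant) by $\max_{k,m\in[d]} |\bu_k\transpose\bE^3\bu_m|$. For each pair $(k,m)$, I would decompose
\[
\bu_k\transpose\bE^3\bu_m = \expect(\bu_k\transpose\bE^3\bu_m) + \bigl\{\bu_k\transpose\bE^3\bu_m - \expect(\bu_k\transpose\bE^3\bu_m)\bigr\}.
\]
The mean term is dominated by the first assertion, so it contributes $O((n\rho_n)^{3/2}/q_n)$. The fluctuation term is controlled by Result~\ref{result:Noise_matrix_moment_bound} with $l=3$, $\bx=\bu_k$, $\by=\bu_m$, yielding
\[
\bigl|\bu_k\transpose\bE^3\bu_m - \expect(\bu_k\transpose\bE^3\bu_m)\bigr| = \widetilde{O}_{\prob}\bigl\{\sqrt{n}(n\rho_n)^{3/2}(\log n)^{3\xi}\|\bU_\bP\|_{2\to\infty}^2\bigr\},
\]
after invoking $\|\bu_k\|_\infty \vee \|\bu_m\|_\infty \leq \|\bU_\bP\|_{2\to\infty}$. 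A union bound over the (fixed) set $(k,m)\in[d]^2$ preserves the $\widetilde{O}_{\prob}$ rate, and combining the two contributions into a single maximum (absorbing the constant factor 2 from the sum-to-max bound) yields the claimed estimate.

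The main piece of technical content is the walk-counting identification in the first assertion — the observation that \emph{only} the single pair $(a,b)=(j,i)$ contributes to $\expect(\bE^3)_{ij}$. The point that needs care is ruling out configurations where two entries coincide while the third is a distinct (hence independent) entry of $\bE$; such configurations would naively suggest contributions like $\expect(E_{ia}^2)\expect(E_{ab})$, which vanish precisely because the third factor has mean zero. Once this structural identity is in hand, the second assertion follows essentially immediately from the already-available concentration inequality for forms $\bx\transpose\bE^l\by$ recorded in Result~\ref{result:Noise_matrix_moment_bound}, so no new probabilistic machinery is required.
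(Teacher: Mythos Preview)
Your proposal is correct and follows essentially the same approach as the paper: both identify via the walk-counting argument that $\expect(\bE^3)_{ij}=\expect(E_{ij}^3)$ (equivalently, $\expect(\bu\transpose\bE^3\bv)=\sum_{a,b}u_av_b\expect E_{ab}^3$), bound using the $p=3$ moment condition, and then obtain the second assertion by adding the fluctuation bound from Result~\ref{result:Noise_matrix_moment_bound}. The only cosmetic difference is that you close the mean bound via $\|\expect(\bE^3)\|_F$ while the paper applies Cauchy--Schwarz directly to the bilinear form, but both yield the same rate.
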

\begin{proof}
It is sufficient to show that $\expect(\bu\transpose\bE^3\bv) = O\{(n\rho_n)^{3/2}/q_n\}$ for any unit vectors $\bu, \bv$. The second assertion then follows directly from Result \ref{result:Noise_matrix_moment_bound}. For the first assertion, we have
\[
\expect\big(\bu\transpose\bE^3\bv\big) = \sum_{a,b,c,d\in [n]}u_av_d\expect (E_{ab}E_{bc}E_{cd}). 
\]
For any $(a,b,c,d)\in [n]^4$, $\expect (E_{ab}E_{bc}E_{cd})\neq 0$ if and only if $|\{\{a,b\},\{b,c\},\{c,d\}\}| = 1$, where $|\cdot|$ denotes the cardinality of a set. This happens only if $a = c$, in which case we have
\[
\expect \big(\bu\transpose\bE^3\bv\big) = \sum_{a,b,c,d\in [n]}u_av_d\expect \big(E_{ab}E_{bc}E_{cd}\big) = \sum_{a,b,d\in[n]}u_av_d\expect \big(E_{ab}^2E_{ad}\big) = \sum_{a,b\in[n]}u_av_d\expect E_{ab}^3.
\]
By Cauchy-Schwarz inequality and Assumption \ref{assumption:Noise_matrix_distribution}, we then have 
\begin{align*}
|\expect(\bu\transpose\bE^3\bv)| &\leq \sum_{a,b\in [n]}|u_a||v_b|\expect |E_{ab}|^3\leq C\sum_{a,b\in[n]}|u_a||v_b|\frac{(n\rho_n)^{3/2}}{nq_n}\\
&\leq C\frac{(n\rho_n)^{3/2}}{nq_n}\left(\sum_{a,b\in[n]}u_a^2\right)^{1/2}\left(\sum_{a,b\in[n]}v_b^2\right)^{1/2} = O\left\{\frac{(n\rho_n)^{3/2}}{q_n}\right\}.
\end{align*}
The proof is thus completed.
\end{proof}

\begin{lemma}\label{lemma:Eigenvector_zero_order_deviation}
Suppose Assumptions \ref{assumption:Signal_strength}--\ref{assumption:Noise_matrix_distribution} in the manuscript hold. Then, for any $\xi > 1$, we have the following decomposition,
\begin{align*}
\bU_\bA - \bU_\bP\bW^* & = \bE\bU_\bP\bS_\bP^{-1}\bW^* + \bR_\bU,
\end{align*}
where $\bW^* = \mathrm{diag}\{\mathrm{sgn}(\bu_1\transpose\widehat{\bu}_1),\ldots,\mathrm{sgn}(\bu_d\transpose\widehat{\bu}_d)\}$ and $\bR_\bU$ satisfies
\begin{equation}\label{eq:decompose}
\begin{aligned}
\|\bR_\bU\|_{2\to\infty} = \widetilde{O}_{\prob}\left\{\|\bU_\bP\|_{2\to\infty}\frac{n\rho_n(\log n)^{2\xi}}{\Delta_n^2} + \|\bU_\bP\|_{2\to\infty}\frac{\alpha_n}{\Delta_n} \right\}.
\end{aligned}
\end{equation}
Furthermore, for any $k\in[p]$,
\begin{equation}
\label{eqn:Frechet_derivative}
\begin{aligned}
&\Bigg\|\widehat{\bu}_k\mathrm{sgn}(\bu_k\transpose\widehat{\bu}_k) - \bu_k
 - \bigg(\eye_n - \bu_k\bu_k\transpose + \sum_{m\in[d]\backslash\{k\}}\frac{\lambda_m\bu_m\bu_m\transpose}{\lambda_k - \lambda_m} \bigg)\frac{\bE\bu_k}{\lambda_k}\Bigg\|_\infty\\
&\quad = \widetilde{O}_{\prob}\left\{\|\bU_\bP\|_{2\to\infty}\frac{n\rho_n(\log n)^{2\xi}}{\Delta_n^2} + \|\bU_\bP\|_{2\to\infty}\frac{\alpha_n}{\Delta_n} \right\}.
 \end{aligned}
 \end{equation}
In addition, we have the following bounds for $\bU_\bA - \bU_\bP\bW^*$,
\begin{equation}\label{equaupw}
\begin{aligned}
\|\bU_\bA - \bU_\bP\bW^*\|_2 & = \widetilde{O}_{\prob}\left(\frac{\sqrt{n\rho_n}}{\Delta_n}\right),\\
\|\bU_\bA - \bU_\bP\bW^*\|_{2\to\infty} & = \widetilde{O}_{\prob}\left\{\|\bU_\bP\|_{2\to\infty}\frac{(n\rho_n)^{1/2}(\log n)^{\xi}}{\Delta_n}\right\}.
\end{aligned}
\end{equation}
\end{lemma}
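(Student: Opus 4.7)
The plan is to prove the three assertions in sequence, starting from the purely algebraic warm-up decomposition~\eqref{eqn:first_order_eigenvector_expansion}. That identity was derived for the positive-eigenvalue block $(\bU_{\bP_+},\bS_{\bP_+})$; applying the same derivation to the pair $(-\bA,-\bP)$ produces the analog for $\bU_{\bA_-}-\bU_{\bP_-}\bW^*_-$. Horizontal concatenation then yields the target form $\bU_\bA-\bU_\bP\bW^* = \bE\bU_\bP\bS_\bP^{-1}\bW^* + \bR_\bU$, with $\bR_\bU$ the blockwise juxtaposition of $\bR_1,\ldots,\bR_4$ from~\eqref{def:RU+} together with two cross-subspace leakage terms of the form $\bU_{\bP_\mp}\bS_{\bP_\mp}\bU_{\bP_\mp}\transpose\bU_{\bA_\pm}\bS_{\bA_\pm}^{-1}$. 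The task then reduces to controlling each of these in $\|\cdot\|_{2\to\infty}$.

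For the cross-subspace term $\bU_{\bP_-}\bS_{\bP_-}\bU_{\bP_-}\transpose\bU_{\bA_+}\bS_{\bA_+}^{-1}$, I would rewrite $\bS_{\bP_-}\bU_{\bP_-}\transpose = \bU_{\bP_-}\transpose\bP = \bU_{\bP_-}\transpose(\bA-\bE)$ and use $\bA\bU_{\bA_+} = \bU_{\bA_+}\bS_{\bA_+}$ to split it as $\bU_{\bP_-}\bU_{\bP_-}\transpose\bU_{\bA_+}-\bU_{\bP_-}\bU_{\bP_-}\transpose\bE\bU_{\bA_+}\bS_{\bA_+}^{-1}$. Using the submultiplicativity $\|\bM_1\bM_2\|_{2\to\infty}\leq\|\bM_1\|_{2\to\infty}\|\bM_2\|_2$, the Sylvester identity $\bU_{\bP_-}\transpose\bU_{\bA_+}\bS_{\bA_+}-\bS_{\bP_-}\bU_{\bP_-}\transpose\bU_{\bA_+} = \bU_{\bP_-}\transpose\bE\bU_{\bA_+}$, and the eigenvalue separation $|\lambda_j(\bA_+)-\lambda_i(\bP_-)|\gtrsim\Delta_n$ (via Weyl and Assumption~\ref{assumption:Eigenvalue_separation}), I can deduce $\|\bU_{\bP_-}\transpose\bU_{\bA_+}\|_2 = \widetilde{O}_{\prob}(\alpha_n/\Delta_n + n\rho_n/\Delta_n^2)$. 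Combined with $\|\bU_{\bP_-}\|_{2\to\infty}\leq\|\bU_\bP\|_{2\to\infty}$, this delivers the target rate. The term $\bR_2$ simplifies to $-\bU_{\bP_+}\bU_{\bP_+}\transpose\bE\bU_{\bA_+}\bS_{\bA_+}^{-1}$ by the same eigen-identities and is handled identically; $\bR_3$ is controlled by Lemma~\ref{lemma:Eigenvector_angle_analysis}; and $\bR_4$ via $\|\bW^*_+\bS_{\bA_+}^{-1} - \bS_{\bP_+}^{-1}\bW^*_+\|_2\lesssim\|\bE\|_2/\Delta_n^2$ from Weyl combined with $\|\bE\bU_{\bP_+}\|_{2\to\infty} = \widetilde{O}_{\prob}(\|\bU_\bP\|_{2\to\infty}\sqrt{n\rho_n}(\log n)^\xi)$ from Result~\ref{result:Noise_matrix_rowwise_concentration}.

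The main technical obstacle is bounding $\bR_1 = \bE(\bU_{\bA_+}-\bU_{\bP_+}\bW^*_+)\bS_{\bA_+}^{-1}$ in $\|\cdot\|_{2\to\infty}$, since $\bE$ and $\bU_{\bA_+}$ are statistically coupled and a naive product bound would overshoot by a factor of $\sqrt{n}$. I plan to invoke the leave-one-out device of \cite{cape2019signal,xie2021entrywise}: for each $i\in[n]$, let $\bA^{(i)}$ be $\bA$ with the $i$-th row and column replaced by their expected values, and let $\bU_{\bA_+}^{(i)}$ denote the leading eigenvectors of $\bA^{(i)}$, which are independent of the $i$-th row of $\bE$. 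Writing
\[
(\bR_1)_{i,:} = \be_i\transpose\bE\bigl(\bU_{\bA_+}-\bU_{\bA_+}^{(i)}\bQ^{(i)}\bigr)\bS_{\bA_+}^{-1} + \be_i\transpose\bE\bU_{\bA_+}^{(i)}\bQ^{(i)}\bS_{\bA_+}^{-1} - \be_i\transpose\bE\bU_{\bP_+}\bW^*_+\bS_{\bA_+}^{-1}
\]
with $\bQ^{(i)}$ an optimal orthogonal alignment between $\bU_{\bA_+}^{(i)}$ and $\bU_{\bA_+}$, the first summand is controlled by the rank-two-perturbation stability of eigenspaces together with $\|\be_i\transpose\bE\|_2 = \widetilde{O}_{\prob}(\sqrt{n\rho_n})$, while the second and third summands reduce to rowwise concentrations of $\be_i\transpose\bE$ against vectors that are either independent of it (for the $\bU_{\bA_+}^{(i)}$ piece, after a uniform control on $\|\bU_{\bA_+}^{(i)}\|_{2\to\infty}$) or deterministic (for the $\bU_{\bP_+}$ piece), both governed by Result~\ref{result:Noise_matrix_rowwise_concentration}. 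The outcome is a self-bounding inequality that closes at the target rate $\widetilde{O}_{\prob}(\|\bU_\bP\|_{2\to\infty}\{n\rho_n(\log n)^{2\xi}/\Delta_n^2 + \alpha_n/\Delta_n\})$.

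Once~\eqref{eq:decompose} is established, the entrywise projection formula~\eqref{eqn:Frechet_derivative} follows by reading off the $k$-th column: the discrepancy $-u_{ik}(\bu_k\transpose\bE\bu_k)/\lambda_k + \sum_{m\in[d]\setminus\{k\}}\lambda_m u_{im}(\bu_m\transpose\bE\bu_k)/[\lambda_k(\lambda_k-\lambda_m)]$ between $\bE\bu_k/\lambda_k$ and the projection expression is bounded entrywise by $C\|\bU_\bP\|_{2\to\infty}\alpha_n/\Delta_n$ via $|\bu_m\transpose\bE\bu_k|\leq\alpha_n$ and the uniformly bounded ratios $\lambda_m/(\lambda_k-\lambda_m)$, so it is already absorbed into the error from \eqref{eq:decompose}. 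For \eqref{equaupw}, the spectral-norm bound is a direct application of the Davis--Kahan $\sin\Theta$ theorem together with $\|\bE\|_2 = \widetilde{O}_{\prob}(\sqrt{n\rho_n})$ from Result~\ref{result:Noise_matrix_concentration}; the $2\to\infty$ bound then follows by the triangle inequality applied to~\eqref{eq:decompose} combined with the rowwise estimate $\max_{i\in[n]}\|(\bE\bU_\bP\bS_\bP^{-1}\bW^*)_{i,:}\|_2 \leq \sqrt{d}\max_{i,k}|\be_i\transpose\bE\bu_k|/|\lambda_k| = \widetilde{O}_{\prob}(\|\bU_\bP\|_{2\to\infty}\sqrt{n\rho_n}(\log n)^\xi/\Delta_n)$ via Result~\ref{result:Noise_matrix_rowwise_concentration}.
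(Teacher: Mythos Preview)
Your proposal takes a genuinely different route from the paper. The paper does \emph{not} use leave-one-out for $\bR_1$; instead it rewrites $\bU_\bA-\bU_\bP\bW^*$ via the Neumann/Sylvester series
\[
\bU_\bA-\bU_\bP\bW^*=\sum_{m\geq 1}\bE^m\bU_\bP\bS_\bP\bU_\bP\transpose\bU_\bA\bS_\bA^{-(m+1)}+\bU_\bP(\bU_\bP\transpose\bU_\bA-\bW^*)+\bU_\bP(\bS_\bP\bU_\bP\transpose\bU_\bA-\bU_\bP\transpose\bU_\bA\bS_\bA)\bS_\bA^{-1},
\]
so that the coupled quantity $\bE(\bU_\bA-\bU_\bP\bW^*)$ never needs to be bounded directly: each $\bE^m\bU_\bP$ hits the \emph{deterministic} $\bU_\bP$ and is controlled rowwise by Lemma~\ref{lemma:Rowwise_higher_order_concentration} uniformly in $m$, while the two angle terms are handled exactly as you do for $\bR_2,\bR_3$. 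The $m\geq 2$ tail gives the $\|\bU_\bP\|_{2\to\infty}n\rho_n(\log n)^{2\xi}/\Delta_n^2$ contribution, and the $m=1$ term is then peeled off to produce~\eqref{eq:decompose}. This sidesteps the decoupling machinery entirely.

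Your leave-one-out plan can also reach the goal, but as written the control of the first summand has a gap. Bounding $\|\bU_{\bA_+}-\bU_{\bA_+}^{(i)}\bQ^{(i)}\|_2$ by the naive rank-two Davis--Kahan estimate $\|\bA-\bA^{(i)}\|_2/\Delta_n=\widetilde O_{\prob}(\sqrt{n\rho_n}/\Delta_n)$ and multiplying by $\|\be_i\transpose\bE\|_2=\widetilde O_{\prob}(\sqrt{n\rho_n})$ yields $n\rho_n/\Delta_n^2$ for that summand, with \emph{no} $\|\bU_\bP\|_{2\to\infty}$ factor; under delocalization this overshoots the target by $\sqrt n$. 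To make the self-bounding close, you must use the sharper Davis--Kahan variant $\|\bU_{\bA_+}-\bU_{\bA_+}^{(i)}\bQ^{(i)}\|_2\lesssim\|(\bA-\bA^{(i)})\bU_{\bA_+}^{(i)}\|_2/\Delta_n$, expand $(\bA-\bA^{(i)})\bU_{\bA_+}^{(i)}$ as $\be_i\be_i\transpose\bE\bU_{\bA_+}^{(i)}+\bE\be_i\be_i\transpose\bU_{\bA_+}^{(i)}+\cdots$, and feed $\|\be_i\transpose\bU_{\bA_+}^{(i)}\|_2\leq\|\bU_{\bA_+}^{(i)}\|_{2\to\infty}$ back into the recursion. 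Only this refined version delivers the $\|\bU_\bP\|_{2\to\infty}$ prefactor. Your remaining pieces ($\bR_2,\bR_3,\bR_4$, the cross-subspace leakage, and the derivation of~\eqref{eqn:Frechet_derivative} and~\eqref{equaupw}) are handled correctly and match the paper's treatment.
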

\begin{proof}
We first prove \eqref{equaupw}. For the first assertion in \eqref{equaupw}, it follows directly from Davis-Kahan theorem that 
\[
\|\widehat{\bu}_k - \bu_k\mathrm{sgn}(\bu_k\transpose\widehat{\bu}_k)\|_2\leq \frac{2\|\bE\|_2}{\Delta_n}\]
and $\|\bE\|_2 = \widetilde{O}_{\prob}(\sqrt{n\rho_n})$ from Result \ref{result:Noise_matrix_concentration}.
For the second assertion in \eqref{equaupw}, the proof is based on a straightforward modification of that of Theorem 2 in \cite{cape2019signal}. Note that the key difference here is that rather than having $\|\bU_\bP\transpose\bU_\bA - \bW^*\|_2 = \widetilde{O}_{\prob}\{1/(n\rho_n)\}$, we only have 
\[
\|\bU_\bP\transpose\bU_\bA - \bW^*\|_2 = 
\widetilde{O}\left(\frac{\alpha_n}{\Delta_n}  + \frac{n\rho_n}{\Delta_n^2}\right)
\]
by Lemma \ref{lemma:Eigenvector_angle_analysis} here, and the signal strength $\Delta_n$ is not necessarily the squared noise level $n\rho_n$. 

\par
Since $\bU_\bA$ satisfies the matrix Sylvester equation $\bU_\bA\bS_\bA - \bE\bU_\bA = \bP\bU_\bA$ and the spectra of $\bE$ and $\bS_\bA$ are disjoint w.h.p., by matrix series expansion of $\mathbf{U}_{\mathbf{A}}$ (See e.g., \cite[Section 7.2]{bhatia2013matrix}), we have
\begin{equation}\label{eqn:Eigenvector_series_expansion}
\begin{aligned}
\bU_\bA - \bU_\bP\bW^*& = \sum_{m = 1}^\infty \bE^m\bU_\bP\bS_\bP\bU_\bP\transpose\bU_\bA\bS_\bA^{-(m + 1)}
\\
&\quad + \bU_\bP(\bU_\bP\transpose\bU_\bA - \bW^*)\\
&\quad + \bU_\bP(\bS_\bP\bU_\bP\transpose\bU_\bA - \bU_\bP\transpose\bU_\bA\bS_\bA)\bS_\bA^{-1}.
\end{aligned}
\end{equation}
The two-to-infinity norm of the third term is 
\[
\widetilde{O}\left(\|\bU_\bP\|_{2\to\infty}\frac{\|\bU_\bP\transpose\bE\bU_\bP\|_2}{\Delta_n}  + \|\bU_\bP\|_{2\to\infty}\frac{n\rho_n}{\Delta_n^2}\right)
\] 
by Result \ref{result:SU_exchange_concentration}. The two-to-infinity norm of the second term is 
\[
\widetilde{O}\left(\|\bU_\bP\|_{2\to\infty}\frac{\|\bU_\bP\transpose\bE\bU_\bP\|_2}{\Delta_n} + \|\bU_\bP\|_{2\to\infty}\frac{n\rho_n}{\Delta_n^2}\right)
\]
 by Lemma \ref{lemma:Eigenvector_angle_analysis}. For the first term, by Lemma \ref{lemma:Rowwise_higher_order_concentration}, we can let $M\in\mathbb{N}_+$ satisfy
\[
\|\bE^m\bU_\bP\|_{2\to\infty} = \widetilde{O}_{\prob}\left\{\|\bU_\bP\|_{2\to\infty}(\log n)^{m\xi}(n\rho_n)^{m/2}\right\}
\]
for all $m \geq 1$ for any  $\xi > 1$. In addition, we have $(\sqrt{n\rho_n}/\Delta_n)^{M}\leq n^{-1/2}\leq \|\bU_\bP\|_{2\to\infty}$ for some sufficiently large $M$; Such a constant $M$ exists because $\Delta_n/\sqrt{n\rho_n} \gtrsim n^{\beta_\Delta}$ for some $\beta_\Delta > 0$ by Assumption \ref{assumption:Signal_strength}. Then, for the first term, we have
\begin{equation}\label{eusuus:series}
\begin{aligned}
&\left\|\sum_{m = 1}^\infty \bE^m\bU_\bP\bS_\bP\bU_\bP\transpose\bU_\bA\bS_\bA^{-(m + 1)}\right\|_{2\to\infty}\\
&\quad\leq \sum_{m = 1}^M\|\bE^m\bU_\bP\|_{2\to\infty}\|\bS_\bP\|_2\|\bS_\bA^{-(m + 1)}\|_2
 + \sum_{m = M + 1}^\infty\|\bE\|_2^m\|\bS_\bP\|_2\|\bS_\bA^{-(m + 1)}\|_2\\
&\quad\leq \|\bU_\bP\|_{2\to\infty}
\sum_{m = 1}^M\left[\widetilde{O}_{\prob}\left\{\frac{c(\log n)^{2\xi}(n\rho_n)}{\Delta_n^2}\right\}\right]^{m/2} + 
\sum_{m = M + 1}^\infty\left[\widetilde{O}_{\prob}\left\{\frac{(n\rho_n)}{\Delta^2_n}\right\}\right]^{m/2}\\
&\quad = \widetilde{O}_{\prob}\left\{\|\bU_\bP\|_{2\to\infty}\frac{(\log n)^{\xi}(n\rho_n)^{1/2}}{\Delta_n}\right\}.
\end{aligned}
\end{equation}
This completes the proof of the second assertion of \eqref{equaupw}. Here, we have used the fact that 
$\|\bE^m\bU_\bP\|_2\leq\|\bE\|_2^m = \{\widetilde{O}_{\prob}(\sqrt{n\rho_n})\}^m$.
\par
For \eqref{eq:decompose}, we follow the proof of Theorem 2 in \cite{cape2019signal} and write
\begin{align*}
\sum_{m = 1}^\infty \bE^m\bU_\bP\bS_\bP\bU_\bP\transpose\bU_\bA\bS_\bA^{-(m + 1)}
& = \bE\bU_\bP\bS_\bP^{-1}\bW^* + \bE\bU_\bP\bS_\bP^{-1}(\bU_\bP\transpose\bU_\bA - \bW^*)\\
&\quad + \bR_2^{(1)} + \bR_2^{(\infty)},
\end{align*}
where
\begin{align*}
\bR_2^{(1)} = \bE\bU_\bP\bS_\bP(\bU_\bP\transpose\bU_\bA\bS_\bA^{-2} - \bS_\bP^{-2}\bU_\bP\transpose\bU_\bA),\quad
\bR_2^{(\infty)} = \sum_{m = 2}^\infty\bE^m\bU_\bP\bS_\bP\bU_\bP\transpose\bU_\bA\bS_\bA^{-(m + 1)}.
\end{align*}
For $\bR_2^{(\infty)}$, similar to \eqref{eusuus:series}, we have
\begin{align*}
\|\bR_2^{(\infty)}\|_{2\to\infty}  = \widetilde{O}_{\prob}\left\{\|\bU_\bP\|_{2\to\infty}\frac{n\rho_n(\log n)^{2\xi}}{\Delta_n^2}\right\}.
\end{align*}
For $\bR_2^{(1)}$, write $\bR_2^{(1)} = \bE\bU_\bP\bS_\bP\bR_2^{(3)}$ where $\bR_2^{(3)} = \bU_\bP\transpose\bU_\bA\bS_\bA^{-2} - \bS_\bP^{-2}\bU_\bP\transpose\bU_\bA$. The $(k, l)$th entry of $\bR_2^{(3)}$ satisfies
\begin{align*}
[\bR_2^{(3)}]_{kl} &= \bu_k\transpose\widehat{\bu}_l(\widehat{\lambda}_l^{-2} - \lambda_k^{-2})
 = \bu_k\transpose\widehat{\bu}_l(\lambda_k^2 - \widehat{\lambda}_l^2)\lambda_k^{-2}\widehat{\lambda}_l^{-2}.
\end{align*}
With $\bH_2$ defined as a $d\times d$ matrix whose $(k, l)$th entry is $\lambda_k^{-2}\widehat{\lambda}_l^{-2}$, we obtain
\[
\bR_2^{(3)} = -\bH_2\circ(\bU_\bP\transpose\bU_\bA\bS_\bA^{2} - \bS_\bP^{2}\bU_\bP\transpose\bU_\bA),
\]
where $\circ$ denotes the entrywise (Hadamard) matrix product. For $\bU_\bP\transpose\bU_\bA\bS_\bA^{2} - \bS_\bP^{2}\bU_\bP\transpose\bU_\bA$, we have
\begin{align*}
\|\bU_\bP\transpose\bU_\bA\bS_\bA^{2} - \bS_\bP^{2}\bU_\bP\transpose\bU_\bA\|_2
& = \|\bU_\bP\transpose(\bA^2 - \bP^2)\bU_\bA\|_2 = \|\bU_\bP\transpose(\bP\bE + \bE\bP + \bE^2)\bU_\bA\|_2\\
& = \widetilde{O}_{\prob}(\sqrt{n\rho_n}\Delta_n).
\end{align*}
It follows that
\begin{align*}
\|\bR_2^{(1)}\|_{2\to\infty}
&\leq d\|\bE\bU_\bP\|_{2\to\infty}\|\bS_\bP\|_2\|\bH_2\|_{\max}\|\bU_\bP\transpose\bU_\bA\bS_\bA^{2} - \bS_\bP^{2}\bU_\bP\transpose\bU_\bA\|_2\\
& = \widetilde{O}_{\prob}\left\{\|\bU_\bP\|_{2\to\infty}\frac{n\rho_n(\log n)^\xi}{\Delta_n^2}\right\},
\end{align*}
where $\|\cdot\|_{\max}$ is the matrix entrywise maximum norm.
The term $\bE\bU_\bP\bS_\bP^{-1}(\bU_\bP\transpose\bU_\bA - \bW^*)$ can be bounded as
\begin{align*}
&\|\bE\bU_\bP\bS_\bP^{-1}(\bU_\bP\transpose\bU_\bA - \bW^*)\|_{2\to\infty}\\
&\quad\leq \|\bE\bU_\bP\|_{2\to\infty}\|\bS_\bP^{-1}\|_2\|\bU_\bP\transpose\bU_\bA - \bW^*\|_2\\
&\quad = \widetilde{O}_{\prob}\left\{\|\bU_\bP\|_{2\to\infty}\frac{(\log n)^{2\xi}(n\rho_n)^{1/2}}{\Delta_n}\left(\frac{n\rho_n}{\Delta_n^2} + \frac{\|\bU_\bP\transpose\bE\bU_\bP\|_2}{\Delta_n} \right)\right\}\\
&\quad = \widetilde{O}_{\prob}\left(\|\bU_\bP\|_{2\to\infty}\frac{n\rho_n}{\Delta_n^2}\right)
\end{align*}
by Lemma \ref{lemma:Eigenvector_angle_analysis}, where we have used the fact that $\|\bU_\bP\transpose\bE\bU_\bP\|_2/\Delta_n = \widetilde{O}_{\prob}(\sqrt{n\rho_n}/\Delta_n)$. This completes the proof of \eqref{eq:decompose}. The proof of \eqref{eqn:Frechet_derivative} follows directly from \eqref{eq:decompose} and the definition of $\alpha_n$. The proof of all results is therefore completed.
\end{proof}

\begin{lemma}\label{lemma:Remainder_analysis_I}
Suppose Assumptions \ref{assumption:Signal_strength}, \ref{assumption:Eigenvalue_separation}, and \ref{assumption:Noise_matrix_distribution} in the manuscript hold. Then for each fixed $i\in [n]$,
\begin{align*}
\|\bW^*\bS_\bA - \bS_\bP\bW^*\|_2 &= 
\widetilde{O}_{\prob}\left(\alpha_n + \frac{n\rho_n}{\Delta_n}\right)
,
\\
\|\bW^*\bS_\bA^{-1} - \bS_\bP^{-1}\bW^*\|_2 &= 
\widetilde{O}_{\prob}\left(\frac{\alpha_n}{\Delta_n^2} + \frac{n\rho_n}{\Delta_n^3}\right)
,
\end{align*}
where $\bW^* = \mathrm{diag}\{\mathrm{sgn}(\bu_1\transpose\widehat{\bu}_1),\ldots,\mathrm{sgn}(\bu_d\transpose\widehat{\bu}_d)\}$.
\end{lemma}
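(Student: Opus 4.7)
The plan is to reduce both bounds to uniform control on $|\widehat{\lambda}_k - \lambda_k|$ for $k \in [d]$. First I would observe that $\bW^*$, $\bS_\bA$, and $\bS_\bP$ are all diagonal, so $\bW^*\bS_\bA - \bS_\bP\bW^*$ is itself diagonal with $k$th entry $\mathrm{sgn}(\bu_k\transpose\widehat{\bu}_k)(\widehat{\lambda}_k - \lambda_k)$, where $\widehat{\lambda}_k$ denotes the corresponding diagonal entry of $\bS_\bA$. Hence $\|\bW^*\bS_\bA - \bS_\bP\bW^*\|_2 = \max_{k\in[d]}|\widehat{\lambda}_k - \lambda_k|$, and the first claim reduces to showing $\max_{k\in[d]}|\widehat{\lambda}_k - \lambda_k| = \widetilde{O}_{\prob}(\alpha_n + n\rho_n/\Delta_n)$.

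Next I would exploit the variational characterization $\widehat{\lambda}_k = \widehat{\bu}_k\transpose\bA\widehat{\bu}_k$ and $\lambda_k = \bu_k\transpose\bP\bu_k$. Writing $s_k = \mathrm{sgn}(\bu_k\transpose\widehat{\bu}_k)$ and $\bdelta_k = \widehat{\bu}_k - s_k\bu_k$, I would expand
\begin{align*}
\widehat{\lambda}_k - \lambda_k
&= \widehat{\bu}_k\transpose\bP\widehat{\bu}_k - \lambda_k + \widehat{\bu}_k\transpose\bE\widehat{\bu}_k \\
&= 2 s_k \lambda_k \bu_k\transpose\bdelta_k + \bdelta_k\transpose\bP\bdelta_k + \bu_k\transpose\bE\bu_k + 2s_k\bu_k\transpose\bE\bdelta_k + \bdelta_k\transpose\bE\bdelta_k.
\end{align*}
Then I would control each piece separately: the angle bound from Lemma \ref{lemma:Eigenvector_angle_analysis} together with Assumption \ref{assumption:Signal_strength} give $|\bu_k\transpose\bdelta_k| = |\bu_k\transpose\widehat{\bu}_k - s_k| = \widetilde{O}_{\prob}(n\rho_n/\Delta_n^2)$, so $|s_k\lambda_k\bu_k\transpose\bdelta_k| = \widetilde{O}_{\prob}(n\rho_n/\Delta_n)$; Davis--Kahan yields $\|\bdelta_k\|_2 = \widetilde{O}_{\prob}(\sqrt{n\rho_n}/\Delta_n)$, giving $|\bdelta_k\transpose\bP\bdelta_k| \lesssim \|\bP\|_2\|\bdelta_k\|_2^2 = \widetilde{O}_{\prob}(n\rho_n/\Delta_n)$ and, combined with Result \ref{result:Noise_matrix_concentration}, $|\bdelta_k\transpose\bE\bdelta_k| = \widetilde{O}_{\prob}((n\rho_n)^{3/2}/\Delta_n^2)$ and $|\bu_k\transpose\bE\bdelta_k| \leq \|\bE\|_2\|\bdelta_k\|_2 = \widetilde{O}_{\prob}(n\rho_n/\Delta_n)$. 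Finally $|\bu_k\transpose\bE\bu_k| \leq \|\bU_\bP\transpose\bE\bU_\bP\|_2 = \alpha_n$. Summing these estimates and noting $(n\rho_n)^{3/2}/\Delta_n^2 = o(n\rho_n/\Delta_n)$ under Assumption \ref{assumption:Signal_strength} yields the first bound.

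For the second bound, I would use the elementary identity
\[
\widehat{\lambda}_k^{-1} - \lambda_k^{-1} = \frac{\lambda_k - \widehat{\lambda}_k}{\widehat{\lambda}_k\,\lambda_k}.
\]
By Weyl's inequality, $|\widehat{\lambda}_k| \geq |\lambda_k| - \|\bE\|_2 \geq \Delta_n/2$ w.h.p., and Assumption \ref{assumption:Eigenvalue_separation} gives $|\lambda_k|\geq \Delta_n$, so the denominator is at least $\Delta_n^2/2$ w.h.p. Combining with the first bound yields $\max_{k\in[d]}|\widehat{\lambda}_k^{-1} - \lambda_k^{-1}| = \widetilde{O}_{\prob}(\alpha_n/\Delta_n^2 + n\rho_n/\Delta_n^3)$, which is exactly the spectral norm of the diagonal matrix $\bW^*\bS_\bA^{-1} - \bS_\bP^{-1}\bW^*$.

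The only delicate step is isolating the $\alpha_n$ term with the correct constant: a naive use of $|\widehat{\bu}_k\transpose\bE\widehat{\bu}_k|\leq \|\bE\|_2$ would lose the benefit of the quadratic form concentration and produce $\sqrt{n\rho_n}$ rather than $\alpha_n$ in the leading term. The split $\widehat{\bu}_k = s_k\bu_k + \bdelta_k$ localises the dominant contribution $\bu_k\transpose\bE\bu_k$ inside $\bU_\bP\transpose\bE\bU_\bP$, and all cross-terms involving $\bdelta_k$ are absorbed into the $n\rho_n/\Delta_n$ bucket via Davis--Kahan; this is the one place where care is needed but no new machinery is required.
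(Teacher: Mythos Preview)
Your proof is correct and takes a genuinely different route from the paper's. The paper never descends to individual eigenvalues; instead it writes the telescoping identity
\[
\bW^*\bS_\bA - \bS_\bP\bW^* = (\bW^* - \bU_\bP\transpose\bU_\bA)\bS_\bA + \bS_\bP(\bU_\bP\transpose\bU_\bA - \bW^*) + (\bU_\bP\transpose\bU_\bA\bS_\bA - \bS_\bP\bU_\bP\transpose\bU_\bA),
\]
bounds the first two terms via Lemma~\ref{lemma:Eigenvector_angle_analysis} (the angle bound $\|\bU_\bP\transpose\bU_\bA-\bW^*\|_2=\widetilde O_\prob(\alpha_n/\Delta_n+n\rho_n/\Delta_n^2)$ times $\|\bS_\bA\|_2$ or $\|\bS_\bP\|_2$), and bounds the third via Result~\ref{result:SU_exchange_concentration}. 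Your approach is more elementary: by observing that all three matrices are diagonal you reduce immediately to $\max_k|\widehat\lambda_k-\lambda_k|$ and then expand the Rayleigh quotient $\widehat\lambda_k=\widehat\bu_k\transpose\bA\widehat\bu_k$ around $\bu_k$. This avoids the matrix telescoping and the separate invocation of Result~\ref{result:SU_exchange_concentration}, at the cost of handling five scalar terms by hand; the paper's version is shorter because it reuses prepackaged lemmas, but yours makes the source of each contribution (in particular the isolation of $\bu_k\transpose\bE\bu_k$ as the $\alpha_n$ term) more transparent. For the second assertion both arguments coincide: divide by $\widehat\lambda_k\lambda_k$ and use Weyl to lower-bound $|\widehat\lambda_k|$.
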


\begin{proof}
The proof is similar to that of Lemma S2.3 and Lemma S2.4 in \cite{xie2021entrywise}. For the first assertion, write
\begin{align*}
\bW^*\bS_\bA - \bS_\bP\bW^*& = (\bW^* - \bU_\bP\transpose\bU_\bA)\bS_\bA + \bS_\bP(\bU_\bP\transpose\bU_\bA - \bW^*)\\
&\quad + (\bU_\bP\transpose\bU_\bA\bS_\bA - \bS_\bP\bU_\bP\transpose\bU_\bA).
\end{align*}
The third term 
is \[
\widetilde{O}_{\prob}\left(\underbrace{\|\bU_\bP\transpose\bE\bU_\bP\|_2}_{\alpha_n} + \frac{n\rho_n}{\Delta_n}\right)
\] 
by Result \ref{result:SU_exchange_concentration}.
The first two terms are 
$
\widetilde{O}_{\prob}\left(\alpha_n + \frac{n\rho_n}{\Delta_n}\right)
$
by Lemma \ref{lemma:Eigenvector_angle_analysis}. This completes the proof of the first assertion. The second assertion follows directly from the first assertion and the fact that the absolute values of the diagonals of $\bS_\bA$ and $\bS_\bP$ are bounded away from $\Delta_n/2$ w.h.p.. 
The proof is thus completed.
\end{proof}

\section{Technical Tools From Random Matrix Theory}
\label{sec:technical_tools_from_RMT}
In this section, we introduce some technical tools from random matrix theory to investigate the asymptotic expansion of the angles between eigenvectors. Specifically, this section establishes the expansion of $\widehat{\bu}_k\transpose\bu_k - \mathrm{sgn}(\widehat{\bu}_k\transpose\bu_k)$ and $\widehat{\bu}_k\transpose\bu_r$ for any $k\in [p], r\in [d]$, $k\neq r$. These tools primarily rely on the Cauchy's residual theorem in complex analysis. The analysis in this section are closely aligned with that in Appendix A of \cite{doi:10.1080/01621459.2020.1840990}. However, because our setup is more general than that considered in \cite{doi:10.1080/01621459.2020.1840990} whereas our probability bounds of the remainders are of the more refined form $\widetilde{O}_{\prob}(\cdot)$ instead of the classical $O_{\prob}(\cdot)$ notation, we present the analysis here for completeness.

The ultimate purpose of this section is to prove the following Proposition \ref{prop:bilinear_form_expansion} that characterizes the asymptotic expansion of the bilinear forms $\bu_k\transpose\widehat{\bu}_k\widehat{\bu}_k\transpose\bu_k$ and $\bu_m\transpose\widehat{\bu}_k\widehat{\bu}_k\transpose\bu_k$ ($k\neq m$) of the projection matrix $\widehat{\bu}_k\widehat{\bu}_k\transpose$. Proposition \ref{prop:eigenvector_angle_expansion} follows from Proposition \ref{prop:bilinear_form_expansion} as an immediate corollary.
\begin{proposition}\label{prop:bilinear_form_expansion}
Suppose the conditions of Theorem \ref{thm:Eigenvector_Expansion} hold. For each fixed $k\in [p]$, $m\in [d]$, $k\neq m$, for any $\xi > 1$, we have
\begin{align*}
\bu_k\transpose\widehat{\bu}_k\widehat{\bu}_k\transpose\bu_k
& = 1 - \frac{\bu_k\transpose\bE^2\bu_k}{\lambda_k^2} + \widetilde{O}_{\prob}\left\{\frac{\alpha_n^2}{\Delta_n^2} + \frac{n^2\rho_n^{3/2}(\log n)^{3\xi}\|\bU_\bP\|_{2\to\infty}^2}{\Delta_n^3} + \frac{(n\rho_n)^{3/2}}{\Delta_n^3}\right\},\\
\bu_m\transpose\widehat{\bu}_k\widehat{\bu}_k\transpose\bu_k
& = \frac{\bu_m\transpose\bE\bu_k}{\lambda_k - \lambda_m} + \frac{\bu_m\transpose\bE^2\bu_k}{\lambda_k(\lambda_k - \lambda_m)}\\
&\quad + \widetilde{O}_{\prob}\left\{\frac{\alpha_n^2}{\Delta_n^2} + \frac{n^2\rho_n^{3/2}(\log n)^{3\xi}\|\bU_\bP\|_{2\to\infty}^2}{\Delta_n^3} + \frac{(n\rho_n)^{3/2}}{\Delta_n^3}\right\}.
\end{align*}
\end{proposition}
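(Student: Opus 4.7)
The plan is to prove Proposition \ref{prop:bilinear_form_expansion} via a Cauchy residue theorem applied to the resolvent of $\bA$, following the roadmap of Appendix A in \cite{doi:10.1080/01621459.2020.1840990} but upgrading the $O_{\prob}(\cdot)$ bounds there to the $\widetilde{O}_{\prob}(\cdot)$ bounds demanded by Theorem \ref{thm:Eigenvector_Expansion}. Specifically, I will replace Chebyshev-style remainder estimates with the concentration inequalities in Result \ref{result:Noise_matrix_moment_bound} and Lemma \ref{lemma:Utranspose_E_cubic_U_concentration}.

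First I will fix a positively oriented circular contour $\gamma_k$ centered at $\lambda_k$ with radius $\Delta_n/2$. By Weyl's inequality together with $\|\bE\|_2 = \widetilde{O}_{\prob}(\sqrt{n\rho_n})$ (Result \ref{result:Noise_matrix_concentration}) and Assumption \ref{assumption:Signal_strength}, $\gamma_k$ encloses $\widehat{\lambda}_k$ and no other eigenvalue of $\bA$ with high probability. On this event, the spectral projector representation gives
\[
\bu_j\transpose\widehat{\bu}_k\widehat{\bu}_k\transpose\bu_k = \frac{1}{2\pi\mathbbm{i}}\oint_{\gamma_k}\bu_j\transpose(z\eye_n - \bA)^{-1}\bu_k\,dz
\]
for both $j=k$ and $j=m\neq k$. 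Writing
\[
G(z) := (z\eye_n - \bP)^{-1} = \frac{1}{z}(\eye_n - \bU_\bP\bU_\bP\transpose) + \sum_{r=1}^{d}\frac{\bu_r\bu_r\transpose}{z - \lambda_r},
\]
the resolvent identity produces the Neumann expansion $(z\eye_n - \bA)^{-1} = \sum_{l=0}^{L-1}[G(z)\bE]^l G(z) + [G(z)\bE]^L(z\eye_n - \bA)^{-1}$, which is absolutely convergent on $\gamma_k$ since $\sup_{z\in\gamma_k}\|G(z)\|_2\cdot\|\bE\|_2 = \widetilde{O}_{\prob}(\sqrt{n\rho_n}/\Delta_n) = o(1)$ by Assumption \ref{assumption:Signal_strength}.

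Next I will extract the $l=0,1,2$ residues at $z=\lambda_k$ explicitly. The $l=0$ integrand $\bu_j\transpose G(z)\bu_k = \delta_{jk}/(z-\lambda_k)$ contributes the constant $1$ in the diagonal case ($j=k$) and $0$ in the off-diagonal case. The $l=1$ integrand reduces to $(z-\lambda_j)^{-1}(z-\lambda_k)^{-1}\bu_j\transpose\bE\bu_k$; a single-pole computation yields $0$ when $j=k$ and $\bu_m\transpose\bE\bu_k/(\lambda_k-\lambda_m)$ when $j=m\neq k$. The $l=2$ integrand is analyzed after the decomposition
\[
\bE G(z)\bE = \sum_{r=1}^{d}\frac{\bE\bu_r\bu_r\transpose\bE}{z-\lambda_r} + \frac{\bE^2 - \bE\bU_\bP\bU_\bP\transpose\bE}{z},
\]
which reduces $\bu_j\transpose G(z)\bE G(z)\bE G(z)\bu_k$ to a sum of rational functions whose poles inside $\gamma_k$ all coincide with $\lambda_k$. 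A direct residue computation (with simple, double, and triple poles appearing) extracts $-\bu_k\transpose\bE^2\bu_k/\lambda_k^2$ in the diagonal case and $\bu_m\transpose\bE^2\bu_k/[\lambda_k(\lambda_k-\lambda_m)]$ in the off-diagonal case as the advertised leading second-order terms, together with quadratic cross-terms of the form $(\bu_r\transpose\bE\bu_s)(\bu_{r'}\transpose\bE\bu_{s'})/\Delta_n^{\kappa}$, $\kappa\in\{2,3\}$, each of which is $\widetilde{O}_{\prob}(\alpha_n^2/\Delta_n^2)$ by the definition of $\alpha_n$.

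The remaining step is to control the $l\geq 3$ tail. The $l=3$ residue at $\lambda_k$ produces a dominant term of the form $\bu_j\transpose\bE^3\bu_k/\lambda_k^3$ (plus smaller cross-terms handled as above), which for $j,k\in[d]$ is an entry of $\bU_\bP\transpose\bE^3\bU_\bP$ and therefore $\widetilde{O}_{\prob}\{(n\rho_n)^{3/2}/q_n + n^2\rho_n^{3/2}(\log n)^{3\xi}\|\bU_\bP\|_{2\to\infty}^2\}$ by Lemma \ref{lemma:Utranspose_E_cubic_U_concentration}; dividing by $\Delta_n^3$ and using $q_n\geq 1$ yields exactly the $(n\rho_n)^{3/2}/\Delta_n^3 + n^2\rho_n^{3/2}(\log n)^{3\xi}\|\bU_\bP\|_{2\to\infty}^2/\Delta_n^3$ portion of the stated remainder. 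For $l\geq 4$, the crude bound $\|[G(z)\bE]^l G(z)\|_2 \leq (C\sqrt{n\rho_n}/\Delta_n)^l/\Delta_n$ on $\gamma_k$, combined with the length $\Theta(\Delta_n)$ of $\gamma_k$, gives a geometric tail $\widetilde{O}_{\prob}\{(n\rho_n)^2/\Delta_n^4\}$, negligible relative to $(n\rho_n)^{3/2}/\Delta_n^3$. The main obstacle, in my view, is the bookkeeping for the $l=3$ residue: one must carefully track which cross-terms collapse into $\bU_\bP\transpose\bE^3\bU_\bP$-type quantities amenable to Lemma \ref{lemma:Utranspose_E_cubic_U_concentration} and which collapse into products of $\bu_r\transpose\bE\bu_s$ terms bounded via $\alpha_n$ or Result \ref{result:Noise_matrix_moment_bound}; a secondary subtlety is ensuring all remainder bounds hold \emph{with high probability} uniformly in $z\in\gamma_k$, which is what forces the use of Result \ref{result:Noise_matrix_moment_bound} rather than Chebyshev's inequality and upgrades the $O_{\prob}(\cdot)$ framework of \cite{doi:10.1080/01621459.2020.1840990} to the $\widetilde{O}_{\prob}(\cdot)$ framework needed downstream for the Edgeworth expansion.
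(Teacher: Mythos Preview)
Your proposal is correct and takes a genuinely different route from the paper's proof. The paper works with the resolvent of the \emph{noise} matrix, $\bG(z)=(\bE-z\eye_n)^{-1}$, and handles the low-rank signal $\bP=\bU_\bP\bS_\bP\bU_\bP\transpose$ via a Woodbury-type reduction (the function $\bF_k(z)$ in Table~\ref{def:sum}); this collapses the contour integral to the single-point evaluation formula \eqref{eqn:bilinear_form_residual_formula} at the random root $\widehat{t}_k$ of a secular equation, after which the expansions of $\widehat{t}_k-\lambda_k$ (Lemma~\ref{lemma:tkhat_concentration}) and of the numerator and denominator (Lemmas~\ref{lemma:uk_G_minus_Fk_uk} and~\ref{lemma:uk_G'_minus_Fk'_uk}) are assembled algebraically. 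You instead use the resolvent of the \emph{signal} matrix, $G(z)=(z\eye_n-\bP)^{-1}$, whose explicit spectral form permits a direct Neumann expansion in $\bE$ and term-by-term residue calculus on $\gamma_k$. The paper's route is more modular and also yields the eigenvalue expansion $\widehat{t}_k\approx\lambda_k$ as a by-product; your route bypasses the secular-equation analysis (Lemma~\ref{lemma:tkhat_concentration}) altogether and reduces everything to elementary pole arithmetic, at the price of heavier cross-term bookkeeping at $l=2,3$. Your remark that uniformity in $z\in\gamma_k$ is essentially free---because after the decomposition all randomness sits in $z$-independent forms $\bu_r\transpose\bE^l\bu_s$ handled by Result~\ref{result:Noise_matrix_moment_bound} and Lemma~\ref{lemma:Utranspose_E_cubic_U_concentration}, while the $z$-dependence is purely rational---is exactly why the $\widetilde{O}_{\prob}$ upgrade goes through without additional work.
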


\subsection{Notation and preliminary results}
\label{sub:preliminary_results}

\begin{table}[tp]
\caption{Table of notation}
\label{def:sum}
\centering
\begin{tabular}{l|l} 
\hline
 Notation & Definition 
 \\ 
 \hline
$\bG(z)$ &$(\bE - z\eye_n)^{-1}$\\[1.5ex]
$\widetilde{\bG}(z)$ & $(\bA - z\eye_n)^{-1}$ \\[1.5ex]
$\bV_k$ & Submatrix of $\bU_\bP$ formed by removing $\bu_k$\\[1.5ex] 
$\bS_k$&Submatrix of $\bS_\bP$ obtained by removing its $k$th row and $k$th column\\[1.5ex]
$\alpha_n$ & $\big\|\bU_\bP\transpose\bE\bU_\bP\big\|_2$\\[1.5ex]
$L$ & $\inf\{l\in\mathbb{N}_+:L\geq 3,(\sqrt{n\rho_n})^{l + 1}/\Delta_n^{l - 2}\lesssim \|\bU_\bP\|_{2\to\infty}\}$ guaranteed by Assumption \ref{assumption:Signal_strength}\\[1.5ex]
 $M$ & $\inf\{m\in\mathbb{N}_+:m\geq 3, (\sqrt{n\rho_n}/\Delta_n)^{m - 2}\lesssim \|\bU_\bP\|_{2\to\infty}\}$ guaranteed by Assumption \ref{assumption:Signal_strength}\\[1.5ex]
 $c_0$ & A positive constant such that $\min\{|\lambda_k|/|\lambda_l|:1\leq k < l \leq d + 1,\lambda_k\neq-\lambda_l\} \geq 1 + c_0$\\[1.5ex]
$\calR(\bM_1, \bM_2, t)$ &$ -\sum_{l = 0,l\neq 1}^L t^{-(l + 1)}\bM_1\transpose\expect\big(\bE^l\big)\bM_2$\\[1.5ex]  
$\calP(\bM_1, \bM_2, t)$ &$ t\calR(\bM_1, \bM_2, t)$\\[1.5ex]
$\bb(\bv, k, t)$ &$ \bv - \bV_k\big\{\bS_k^{-1} + \calR(\bV_k, \bV_k, t)\big\}^{-1}\calR\big(\bV_k, \bv, t\big)$\\[1.5ex]
$\calA(\bv, k, t)$ &$ \calP(\bv, \bu_k, t) - \calP(\bv, \bV_k, t)\big\{t\bS_k^{-1} + \calP(\bV_k, \bV_k, t)\big\}^{-1}\calP(\bV_k, \bu_k, t)$\\[1.5ex]
$\widetilde{\calP}(k, t)$ &$ \left[t^2\frac{\partial}{\partial t}\left\{\frac{\calA(\bu_k, k, t)}{t}\right\}\right]^{-1}$\\[1.5ex]
$f_k(t)$ &$ 1 + \lambda_k\calR(\bu_k, \bu_k, t) - \lambda_k\calR(\bu_k, \bV_k, t)\{\bS_k^{-1} + \calR(\bV_k, \bV_k, t)\}^{-1}\calR(\bV_k, \bu_k, t)$\\[1.5ex]
$\bF_k(z)$ & $\bG(z)\bV_k\{\bS_k^{-1} + \bV_k\transpose\bG(z)\bV_k\}^{-1}\bV_k\transpose\bG(z)$\\[1.5ex]
\hline
\end{tabular}
\end{table}

Let $\bM_1,\bM_2$ be any matrices of appropriate dimensions, $t\in\mathbb{R}$, $\bv\in\mathbb{R}^n$, and $k\in [p]$. Then we define several  functions that will be used throughout the paper in Table~\ref{def:sum}. More specifically, we let $\bG(\cdot):\mathbb{C}\to\mathbb{C}^{n\times n}$ be the Green function associated with $\bE$ defined by $\bG(z) = (\bE - z\eye_n)^{-1}$, and $\widetilde{\bG}(z)$ denote the matrix-valued function $\widetilde{\bG}(z) = (\bA - z\eye_n)^{-1}$ for any $z\in\mathbb{C}$. 
 With the convention that $\lambda_d/0 = +\infty$, the existence of $c_0 > 0$ is guaranteed by Assumption \ref{assumption:Eigenvalue_separation}. For all $k\in [d]$, define
\bea\label{akbkdef}
a_k = \left\{\begin{array}{ll}
\lambda_k/(1 + c_0/2),&\quad\lambda_k > 0\\
\lambda_k(1 + c_0/2),&\quad\lambda_k < 0
\end{array}\right.,\quad
b_k = \left\{\begin{array}{ll}
\lambda_k(1 + c_0/2),&\quad\lambda_k > 0\\
\lambda_k/(1 + c_0/2),&\quad\lambda_k < 0
\end{array}\right..
\eae 
\par
In the following, we let $\bx,\by\in\{\bu_1,\ldots,\bu_d\}$. 
We focus on the asymptotic expansion of the bilinear form $\bx\transpose\widehat{\bu}_k\widehat{\bu}_k\transpose\by$ of the projection matrix $\widehat{\bu}_k\widehat{\bu}_k\transpose$. By Cauchy's residual theorem (See e.g., (A.26) in \cite{stein2010complex}), with high probability, we have
\begin{equation}
\label{eqn:Cauchy_residual_formula}
\begin{aligned}
\bx\transpose\widehat{\bu}_k\widehat{\bu}_k\transpose\by
& = -\frac{1}{2\pi\mathbbm{i}}\oint_{\Omega_k}\bx\transpose\widetilde{\bG}(z)\by\mathrm{d}z\\
& = -\frac{1}{2\pi\mathbbm{i}}\oint_{\Omega_k}\bx\transpose\left(\bE + \sum_{r = 1}^d\lambda_r\bu_r\bu_r\transpose - z\eye_n\right)^{-1}\by\mathrm{d}z\\
& = \frac{1}{2\pi\mathbbm{i}}\oint_{\Omega_k}\frac{\lambda_k\bx\transpose\{\bG(z) - \bF_k(z)\}\bu_k\bu_k\transpose\{\bG(z) - \bF_k(z)\}\by}{1 + \lambda_k\bu_k\transpose\{\bG(z) - \bF_k(z)\}\bu_k}\mathrm{d}z,
\end{aligned}
\end{equation}
where $\Omega_k\subset\mathbb{C}$ is the complex contour centered at $(a_k + b_k)/2$ with radius $|b_k - a_k|/2$, recalling that,
\bea\label{def:Fk}
\bF_k(z) = \bG(z)\bV_k\{\bS_k^{-1} + \bV_k\transpose\bG(z)\bV_k\}^{-1}\bV_k\transpose\bG(z).
\eae
Also see Equations (A.35)--(A.38) in Appendix A of \cite{doi:10.1080/01621459.2020.1840990}. 

In the following, we enumerate a collection of equations and high-probability upper bounds that will greatly facilitate our subsequent theoretical analysis. In this section, we assume that $z\in[a_k, b_k]$. First, observe that by Result \ref{result:Noise_matrix_moment_bound} and the fact that $n\rho_n^{1/2}\|\bU_\bP\|_{2\to\infty}/\Delta_n \to 0$ and $|z| = \Theta(\Delta_n)$, for $\bu\in\{\bx,\by\}$, we have
\begin{align}
\label{eqn:R(Vk,Vk,z)}
\calR(\bV_k, \bV_k, z) &= -\frac{\eye_{d - 1}}{z} - \sum_{l = 2}^L\frac{\bV_k\transpose\expect\bE^l\bV_k}{z^{l + 1}}= O\left(\frac{1}{|z|}\right)
,\\
\label{eqn:R'(Vk,Vk,z)}
\calR'(\bV_k, \bV_k, z) &= \frac{\eye_{d - 1}}{z^2} + \sum_{l = 2}^L\frac{(l + 1)\bV_k\transpose\expect\bE^l\bV_k}{z^{l + 2}} = O\left(\frac{1}{|z|^2}\right)
,\\
\label{eqn:R(Vk,uk,z)}
\calR(\bV_k, \bu_k, z) &= - \sum_{l = 2}^L\frac{\bV_k\transpose\expect\bE^l\bu_k}{z^{l + 1}}= O\left(\frac{n\rho_n}{|z|^3}\right),\\
\label{eqn:R'(Vk,uk,z)}
\calR'(\bV_k, \bu_k, z) &= \sum_{l = 2}^L\frac{(l + 1)\bV_k\transpose\expect\bE^l\bu_k}{z^{l + 2}}= O\left(\frac{n\rho_n}{|z|^4}\right),\\
\label{eqn:R(Vk,u,z)}
\calR(\bV_k, \bu, z) & = -\frac{\bV_k\transpose\bu}{z} - \sum_{l = 2}^L\frac{\bV_k\transpose\expect\bE^l\bu}{z^{l + 1}} = O\left(\frac{1}{|z|}\right)
,\\
\label{eqn:R'(Vk,u,z)}
\calR'(\bV_k, \bu, z) & = \frac{\bV_k\transpose\bu}{z^2} + \sum_{l = 2}^L\frac{(l + 1)\bV_k\transpose\expect(\bE^l)\bu}{z^{l + 2}} = O\left(\frac{1}{|z|^2}\right)
,
\end{align}
where $\calR'(\bM_1,\bM_2,z) = \mathrm{d}\calR(\bM_1,\bM_2,z)/\mathrm{d}z$ for any matrices $\bM_1,\bM_2$ of appropriate dimensions.
By Results \ref{result:Noise_matrix_moment_bound} and \ref{result:Noise_matrix_concentration}, the following equations hold w.h.p.:
\begin{equation}
\label{eqn:x_G_y}
\begin{aligned}
\bx\transpose\bG(z)\by
& = - \frac{\bx\transpose\bE\by}{z^2} - \frac{\bx\transpose(\bE^2 - \expect\bE^2)\by}{z^3} + \calR(\bx, \by, z)\\
&\quad + \widetilde{O}_{\prob}\left\{\frac{n^2\rho_n^{3/2}(\log n)^{3\xi}\|\bU_\bP\|_{2\to\infty}^2}{|z|^4}\right\},
\end{aligned}
\end{equation}
\begin{equation}
\label{eqn:x_G_Vk}
\begin{aligned}
\bx\transpose\bG(z)\bV_k& = -\frac{\bx\transpose\bE\bV_k}{z^2}  -\frac{\bx\transpose(\bE^2 - \expect\bE^2)\bV_k}{z^3} + \calR(\bx, \bV_k, z)\\
&\quad + \widetilde{O}_{\prob}\left\{\frac{n^2\rho_n^{3/2}(\log n)^{3\xi}\|\bU_\bP\|_{2\to\infty}^2}{|z|^4}\right\},
\end{aligned}
\end{equation}
\begin{equation}
\label{eqn:Vk_G_Vk}
\begin{aligned}
\bV_k\transpose\bG(z)\bV_k& = -\frac{\bV_k\transpose\bE\bV_k}{z^2}  -\frac{\bV_k\transpose(\bE^2 - \expect\bE^2)\bV_k}{z^3} + \calR(\bV_k, \bV_k, z)\\
&\quad + \widetilde{O}_{\prob}\left\{\frac{n^2\rho_n^{3/2}(\log n)^{3\xi}\|\bU_\bP\|_{2\to\infty}^2}{|z|^4}\right\}.
\end{aligned}
\end{equation}
{\color{black}To see above bounds more clearly, we will take \eqref{eqn:x_G_y} as an example. By Result~\ref{result:Noise_matrix_moment_bound}, we have
\begin{align*}
&\Bigg|\bx\transpose\bG(z)\by -\Bigg[ -\frac{\bx\transpose\bE\by}{z^2}  -\frac{\bx\transpose(\bE^2 - \expect\bE^2)\by}{z^3} + \calR(\bx,\by, z)\Bigg]\Bigg|
\\
& = \Bigg|\sum_{\ell = 3}^{L}- \frac{\bx\transpose(\bE^\ell - \expect\bE^\ell)\by}{z^{\ell + 1}} + \sum_{\ell = L + 1}^{\infty}- \frac{\bx\transpose\bE^\ell\by}{z^{\ell + 1}} \Bigg|
\\
& \leq \sum_{\ell = 3}^{L} \frac{\big| \bx\transpose(\bE^\ell - \expect\bE^\ell)\by\big|}{|z| ^{\ell + 1}}  + \sum_{\ell = L + 1}^\infty \frac{\|\bE\|_2^{\ell}}{|z|^{\ell+1}}
\\
& = \widetilde{O}_{\prob}\left\{\frac{n^2\rho_n^{3/2}(\log n)^{3\xi}\|\bU_\bP\|_{2\to\infty}^2}{|z|^4}\right\}  +  \widetilde{O}_{\prob}\left\{ \frac{(\sqrt{n\rho_n})^{L + 1}}{\Delta_n^{L + 2}}\sum_{\ell = 0}^{\infty}\Bigg(\frac{\sqrt{n\rho_n}}{\Delta_n}\Bigg)^\ell\right\}
\\
& = \widetilde{O}_{\prob}\left\{\frac{n^2\rho_n^{3/2}(\log n)^{3\xi}\|\bU_\bP\|_{2\to\infty}^2}{|z|^4}\right\},
\end{align*}
where the last equality is because
\begin{align*}
 \frac{(\sqrt{n\rho_n})^{L + 1}}{\Delta_n^{L + 2}}\sum_{\ell = 0}^{\infty}\Bigg(\frac{\sqrt{n\rho_n}}{\Delta_n}\Bigg)^\ell &\lesssim   \frac{(\sqrt{n\rho_n})^{L + 1}}{\Delta_n^{L + 2}} \lesssim \frac{1}{\Delta_n^{4 }}\|\bU_{\bP}\|_{2\rightarrow \infty}
\lesssim
\frac{\sqrt{n}}{\Delta_n^{4 }}\|\bU_{\bP}\|^2_{2\rightarrow \infty}
\\
 &=o\left\{  \frac{\sqrt{n}(n\rho_n)^{3/2}(\log n)^{3\xi}}{|z|^4}\|\bU_{\bP}\|^2_{2\rightarrow \infty}\right\};
\end{align*}
Note here we use the following facts by definitions: (i) $(\sqrt{n\rho_n})^{L + 1} \lesssim \|\bU_{\bP}\|_{2\rightarrow\infty}\Delta_n^{L - 2}$, (ii) $|z| = \Theta(\Delta_n)$, (iii) $n\rho_n\rightarrow \infty$, (iv) $\sqrt{n}\|\bU_{\bP}\|_{2\rightarrow\infty}\gtrsim 1$.}
\par
\par
In particular, for $\bu\in\{\bx,\by\}$, by \eqref{eqn:R(Vk,uk,z)}, \eqref{eqn:R(Vk,u,z)},  \eqref{eqn:x_G_Vk}, and Result~\ref{result:Noise_matrix_moment_bound},
\begin{equation}
\label{eqn:u_G_Vk_and_uk_G_Vk}
\begin{aligned}
\bu\transpose\bG(z)\bV_k & = 
 \widetilde{O}_{\prob}\left(\frac{1}{|z|}\right)
,
\\
\bu_k\transpose\bG(z)\bV_k 
&= 
\widetilde{O}_{\prob}\left(\frac{\alpha_n}{|z|^2} + \frac{\|\bU_\bP\transpose(\bE^2 - \expect\bE^2)\bU_\bP\|_2 + n\rho_n}{|z|^3}\right)
\\
&= 
\widetilde{O}_{\prob}\left(\frac{\alpha_n}{|z|^2} + \frac{\|\bE\|^2 + \|\expect\bE^2\|_2 + n\rho_n}{|z|^3}\right)
\\
&= \widetilde{O}_{\prob}\left(\frac{\alpha_n}{|z|^2} + \frac{n\rho_n}{|z|^3}\right).
\end{aligned}
\end{equation}
For $\{\bS_k^{-1} + \bV_k\transpose\bG(z)\bV_k\}^{-1}$, we have by \eqref{eqn:Vk_G_Vk},
\begin{align*}
\bS_k^{-1} + \bV_k\transpose\bG(z)\bV_k& = \bS_k^{-1} + \calR(\bV_k, \bV_k, z)
  + \widetilde{O}_{\prob}\left(\frac{\alpha_n}{|z|^2} 
+ \frac{n\rho_n}{|z|^3}\right)
.
\end{align*}
Since the eigenvalues of 
\begin{align}\label{boundSRVV}\bS_k^{-1} + \calR(\bV_k, \bV_k, z)=\Theta(1/|z|)
\end{align} by Assumptions \ref{assumption:Signal_strength} and \ref{assumption:Eigenvalue_separation},  \eqref{eqn:R(Vk,Vk,z)}, and recalling that $z\in[a_k,b_k]$ and \eqref{akbkdef}. Then we have,
\begin{equation}
\label{eqn:Sk_plus_VkGVk_inverse}
\begin{aligned}
\{\bS_k^{-1} + \bV_k\transpose\bG(z)\bV_k\}^{-1}
&
 = \{\bS_k^{-1} + \calR(\bV_k, \bV_k, z)\}^{-1}
 + \widetilde{O}_{\prob}\left(\alpha_n + \frac{n\rho_n}{|z|}\right)
 = \widetilde{O}_{\prob}(|z|),
\end{aligned}
\end{equation}
where we have used the fact $\alpha_n = \|\bU_\bP\transpose\bE\bU_\bP\|_2\leq\|\bE\|_2 = \widetilde{O}_{\prob}(\sqrt{n\rho_n})$ and $z\in[a_k,b_k ]$. 
Furthermore, recalling that $\bG(z) = -\sum_{m = 0}^\infty\bE^m/(z^{m + 1})$ w.h.p., by the same reasoning, we also obtain,
\begin{equation}
\label{eqn:x_G'_y}
\begin{aligned}
\bx\transpose\bG'(z)\by
& = \frac{2\bx\transpose\bE\by}{z^3} + \frac{3\bx\transpose(\bE^2 - \expect\bE^2)\by}{z^4} + \calR'(\bx, \by, z)\\
&\quad + \widetilde{O}_{\prob}\left\{\frac{n^2\rho_n^{3/2}(\log n)^{3\xi}\|\bU_\bP\|_{2\to\infty}^2}{|z|^5}\right\},
\end{aligned}
\end{equation}
\begin{equation}
\label{eqn:uk_G'_uk}
\begin{aligned}
\bu_k\transpose\bG'(z)\bu_k
& = \frac{2\bu_k\transpose\bE\bu_k}{z^3} + \frac{3\bu_k\transpose(\bE^2 - \expect\bE^2)\bu_k}{z^4} + \calR'(\bu_k, \bu_k, z)\\
&\quad + \widetilde{O}_{\prob}\left\{\frac{n^2\rho_n^{3/2}(\log n)^{3\xi}\|\bU_\bP\|_{2\to\infty}^2}{|z|^5}\right\}\\
& = \widetilde{O}_{\prob}\left(\frac{1}{|z|^2}\right),
\end{aligned}
\end{equation}
\begin{equation}
\label{eqn:Vk_G'_Vk}
\begin{aligned}
\bV_k\transpose\bG'(z)\bV_k
& = \frac{2\bV_k\transpose\bE\bV_k}{z^3} + \frac{3\bV_k\transpose(\bE^2 - \expect\bE^2)\bV_k}{z^4} + \calR'(\bV_k, \bV_k, z)\\
&\quad + \widetilde{O}_{\prob}\left\{\frac{n^2\rho_n^{3/2}(\log n)^{3\xi}\|\bU_\bP\|_{2\to\infty}^2}{|z|^5}\right\}\\
& = \widetilde{O}_{\prob}\left(\frac{1}{|z|^2}\right),
\end{aligned}
\end{equation}
\begin{equation}
\label{eqn:uk_G'_Vk}
\begin{aligned}
\bu_k\transpose\bG'(z)\bV_k
& = \frac{2\bu_k\transpose\bE\bV_k}{z^3} + \frac{3\bu_k\transpose(\bE^2 - \expect\bE^2)\bV_k}{z^4} + \calR'(\bu_k, \bV_k, z)\\
&\quad + \widetilde{O}_{\prob}\left\{\frac{n^2\rho_n^{3/2}(\log n)^{3\xi}\|\bU_\bP\|_{2\to\infty}^2}{|z|^5}\right\}\\
& = \widetilde{O}_{\prob}\left(\frac{\alpha_n}{|z|^3} + \frac{n\rho_n}{|z|^4}\right);
\end{aligned}
\end{equation}
Note here $f'(z) = df(z)/dz$ represents the complex derivative of $f(z)$ with respect to $z$.

\subsection{Technical preparations}
\label{sub:technical_preparation}
Now let $\widehat{t}_k$ be the solution to the equation
\[
1 + \lambda_k\bu_k\transpose\{\bG(z) - \bF_k(z)\}\bu_k = 0
\]
for $z\in [a_k, b_k]$; Recall the definition of $\bF_k(z)$ in \eqref{def:Fk}. Then Cauchy's residual theorem entails that
\begin{align}\label{eqn:bilinear_form_residual_formula}
\bx\transpose\widehat{\bu}_k\widehat{\bu}_k\transpose\by = \frac{\widehat{t}_k^2\bx\transpose\{\bG(\widehat{t}_k) - \bF_k(\widehat{t}_k)\}\bu_k\bu_k\transpose\{\bG(\widehat{t}_k) - \bF_k(\widehat{t}_k)\}\by}{\widehat{t}_k^2\bu_k\transpose\{\bG'(\widehat{t}_k) - \bF_k'(\widehat{t}_k)\}\bu_k},
\end{align}
where
\[
\bG'(\widehat{t}_k) = \frac{\mathrm{d}\bG(z)}{\mathrm{d}z}\mathrel{\bigg|}_{z = \widehat{t}_k},\quad
\bF_k'(\widehat{t}_k) = \frac{\mathrm{d}\bF_k(z)}{\mathrm{d}z}\mathrel{\bigg|}_{z = \widehat{t}_k}.
\]
Also see (A.64) in Appendix A of \cite{doi:10.1080/01621459.2020.1840990}. Below, our analysis of the asymptotic expansion of $\bx\transpose\widehat{\bu}_k\widehat{\bu}_k\transpose\by$ will be centered around the formula \eqref{eqn:bilinear_form_residual_formula}. 

\begin{lemma}\label{lemma:uk_G_minus_Fk_uk}
Suppose the conditions of Theorem \ref{thm:Eigenvector_Expansion} hold. For each fixed $k\in [p]$, $z\in [a_k, b_k]$, any $\xi > 1$, and $\bu\in\mathbb{R}^n$ with $\|\bu\|_\infty = O(1/\sqrt{n})$, recalling the definition in Table~\ref{def:sum}, we have
\begin{align*}
&z\bu\transpose\{\bG(z) - \bF_k(z)\}\bu_k\\
&\quad = \calA(\bu, k, z) - \frac{\bb(\bu, k, z)\transpose\bE\bu_k}{z} - \frac{\bb(\bu, k, z)\transpose(\bE^2 - \expect\bE^2)\bu_k}{z^2}\\
&\quad\quad  + \widetilde{O}_{\prob}\left\{\frac{\alpha_n^2}{|z|^2} + \frac{n^2\rho_n^{3/2}(\log n)^{3\xi}\|\bU_\bP\|_{2\to\infty}^2}{|z|^3} + \frac{(n\rho_n)^2}{|z|^4}\right\}.
\end{align*}
\end{lemma}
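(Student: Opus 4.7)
The plan is to expand $z\bu\transpose\{\bG(z)-\bF_k(z)\}\bu_k$ by substituting the definition $\bF_k(z) = \bG(z)\bV_k\{\bS_k^{-1}+\bV_k\transpose\bG(z)\bV_k\}^{-1}\bV_k\transpose\bG(z)$, expanding each factor via the preliminary bounds in Section~\ref{sub:preliminary_results}, and grouping the resulting terms by their polynomial order in $\bE$. The goal is to show that the deterministic part recombines to $\calA(\bu,k,z)$, the linear-in-$\bE$ part to $-\bb(\bu,k,z)\transpose\bE\bu_k/z$, the centered-quadratic-in-$\bE$ part to $-\bb(\bu,k,z)\transpose(\bE^2-\expect\bE^2)\bu_k/z^2$, and that all remaining contributions fall within the stated $\widetilde{O}_{\prob}(\cdot)$ budget.

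First I would apply \eqref{eqn:x_G_y} to write $z\bu\transpose\bG(z)\bu_k$ as $z\calR(\bu,\bu_k,z)-\bu\transpose\bE\bu_k/z-\bu\transpose(\bE^2-\expect\bE^2)\bu_k/z^2$ plus a remainder of order $n^2\rho_n^{3/2}(\log n)^{3\xi}\|\bU_\bP\|_{2\to\infty}^2/|z|^3$. I would analogously expand each of the three factors of $z\bu\transpose\bF_k(z)\bu_k$ via \eqref{eqn:x_G_Vk} and \eqref{eqn:Vk_G_Vk}, and apply a Neumann expansion to the middle inverse $\{\bS_k^{-1}+\bV_k\transpose\bG(z)\bV_k\}^{-1}$ around its deterministic leading part $\bM_0:=\bS_k^{-1}+\calR(\bV_k,\bV_k,z)$. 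The bound $\|\bM_0^{-1}\|_2 = O(|z|)$ from \eqref{eqn:Sk_plus_VkGVk_inverse}, together with $\|\bE\|_2=\widetilde{O}_{\prob}(\sqrt{n\rho_n})$, lets us truncate the Neumann series after the quadratic term with a geometric tail.

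Collecting the deterministic contributions yields $z\calR(\bu,\bu_k,z) - z\calR(\bu,\bV_k,z)\bM_0^{-1}\calR(\bV_k,\bu_k,z)$. Using $z\calR(\cdot,\cdot,z)=\calP(\cdot,\cdot,z)$ and $z\bM_0 = z\bS_k^{-1}+\calP(\bV_k,\bV_k,z)$, this rewrites as $\calP(\bu,\bu_k,z)-\calP(\bu,\bV_k,z)\{z\bS_k^{-1}+\calP(\bV_k,\bV_k,z)\}^{-1}\calP(\bV_k,\bu_k,z)=\calA(\bu,k,z)$, as desired. For the linear-in-$\bE$ part, the direct contribution $-\bu\transpose\bE\bu_k/z$ combines with $-z\calR(\bu,\bV_k,z)\bM_0^{-1}(-\bV_k\transpose\bE\bu_k/z^2)$ to produce $-(\bu-\bV_k\bM_0^{-1}\calR(\bV_k,\bu,z))\transpose\bE\bu_k/z=-\bb(\bu,k,z)\transpose\bE\bu_k/z$, using the symmetry $\calR(\bu,\bV_k,z)\transpose=\calR(\bV_k,\bu,z)$, which follows from $\expect\bE^\ell$ being symmetric. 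The analogous argument with $\bE^2-\expect\bE^2$ in place of $\bE$ identifies the centered-quadratic leading term $-\bb(\bu,k,z)\transpose(\bE^2-\expect\bE^2)\bu_k/z^2$.

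The main obstacle will be the careful bookkeeping of cross terms and verification that each falls within the stated error budget. The dominant errors arise from three sources: (i) the second-order Neumann terms such as $\calR(\bu,\bV_k,z)\bM_0^{-1}(\bV_k\transpose\bE\bV_k)\bM_0^{-1}(\bV_k\transpose\bE\bu_k)/z^3$, whose sizes collapse via telescoping to $\widetilde{O}_{\prob}(\alpha_n^2/|z|^2)$ (these yield the first term of the remainder); (ii) the truncation beyond $\ell=2$ in each $\calR$-expansion, which via Result~\ref{result:Noise_matrix_moment_bound} and $\|\bu\|_\infty=O(1/\sqrt{n})$ contributes $\widetilde{O}_{\prob}(n^2\rho_n^{3/2}(\log n)^{3\xi}\|\bU_\bP\|_{2\to\infty}^2/|z|^3)$; and (iii) the crude tail of the Neumann expansion controlled by $\|\bE\|_2$, giving $\widetilde{O}_{\prob}((n\rho_n)^2/|z|^4)$. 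In particular, I would crucially invoke the small size $\calR(\bV_k,\bu_k,z)=O(n\rho_n/|z|^3)$ from \eqref{eqn:R(Vk,uk,z)} (reflecting the absence of the $\ell=0,1$ terms against $\bu_k\in\bU_\bP$) to show that cross terms such as $(\bu\transpose\bE\bV_k/z)\bM_0^{-1}\calR(\bV_k,\bu_k,z)$ and $(\bu\transpose(\bE^2-\expect\bE^2)\bV_k/z^2)\bM_0^{-1}\calR(\bV_k,\bu_k,z)$, which at first glance appear to contaminate the linear and quadratic leading parts, are in fact of higher order and can be absorbed into the remainder.
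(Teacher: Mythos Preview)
Your proposal is correct and follows essentially the same route as the paper. The only organizational difference is that the paper does not redo a Neumann expansion of the inverse but instead invokes the pre-established approximation \eqref{eqn:Sk_plus_VkGVk_inverse}, namely $\{\bS_k^{-1}+\bV_k\transpose\bG(z)\bV_k\}^{-1}=\bM_0^{-1}+\widetilde{O}_{\prob}(\alpha_n+n\rho_n/|z|)$, and then peels off the stochastic parts of the outer factors one side at a time, exploiting the small size of $\bV_k\transpose\bG(z)\bu_k$ from \eqref{eqn:u_G_Vk_and_uk_G_Vk}; you instead expand all three factors simultaneously and collect by polynomial degree in $\bE$, which is slightly more bookkeeping but lands on the same identifications via the same key input $\calR(\bV_k,\bu_k,z)=O(n\rho_n/|z|^3)$.
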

\begin{proof}
By \eqref{eqn:Sk_plus_VkGVk_inverse}, we have
\begin{align*}
\{\bS_k^{-1} + \bV_k\bG(z)\bV_k\transpose\}^{-1}& = \{\bS_k^{-1} + \calR(\bV_k, \bV_k, z)\}^{-1}
+ \widetilde{O}_{\prob}\left(\alpha_n + \frac{n\rho_n}{|z|}\right).
\end{align*}
Recall that $\alpha_n = \|\bU_\bP\transpose\bE\bU_\bP\|_2\leq\|\bE\|_2 = \widetilde{O}(\sqrt{n\rho_n})$. 
Then by \eqref{eqn:x_G_Vk} and \eqref{eqn:u_G_Vk_and_uk_G_Vk}, we obtain
\begin{align*}
\bu\transpose\bF_k(z)\bu_k
& = \bu\transpose\bG(z)\bV_k\{\bS_k^{-1} + \calR(\bV_k, \bV_k, z)\}^{-1}\bV_k\transpose\bG(z)\bu_k\\
&\quad +  \bu\transpose\bG(z)\bV_k\widetilde{O}_{\prob}\left(\alpha_n + \frac{n\rho_n}{|z|}\right)\bV_k\transpose\bG(z)\bu_k\\
& = \bu\transpose\bG(z)\bV_k\{\bS_k^{-1} + \calR(\bV_k, \bV_k, z)\}^{-1}\bV_k\transpose\bG(z)\bu_k
 + \widetilde{O}_{\prob}\left\{\frac{\alpha_n^2}{|z|^3} + \frac{(n\rho_n)^2}{|z|^5}\right\}\\
& = \left\{-\frac{\bu\transpose\bE\bV_k}{z^2} - \frac{\bu\transpose(\bE^2 - \expect\bE^2)\bV_k}{z^3} + \calR(\bu, \bV_k, z)\right\}\\
&\quad\times\{\bS_k^{-1} + \calR(\bV_k, \bV_k, z)\}^{-1}\\
&\quad\times \left\{-\frac{\bV_k\transpose\bE\bu_k}{z^2} - \frac{\bV_k\transpose(\bE^2 - \expect\bE^2)\bu_k}{z^3} + \calR(\bV_k, \bu_k, z)\right\}
\\&\quad
 + \widetilde{O}_{\prob}\left\{\frac{\alpha_n^2}{|z|^3} + \frac{n^2\rho_n^{3/2}(\log n)^{3\xi}\|\bU_\bP\|_{2\to\infty}^2}{|z|^4} + \frac{(n\rho_n)^2}{|z|^5}\right\}\\
& = \calR(\bu, \bV_k, z)\{\bS_k^{-1} + \calR(\bV_k, \bV_k, z)\}^{-1}\\
&\quad\times\left\{-\frac{\bV_k\transpose\bE\bu_k}{z^2} - \frac{\bV_k\transpose(\bE^2 - \expect\bE^2)\bu_k}{z^3} + \calR(\bV_k, \bu_k, z)\right\}
\\&\quad
 + \widetilde{O}_{\prob}\left\{\frac{\alpha_n^2}{|z|^3} + \frac{n^2\rho_n^{3/2}(\log n)^{3\xi}\|\bU_\bP\|_{2\to\infty}^2}{|z|^4} + \frac{(n\rho_n)^2}{|z|^5}\right\}\\
& = -\frac{1}{z^2}\calR(\bu, \bV_k, z)\{\bS_k^{-1} + \calR(\bV_k, \bV_k, z)\}^{-1}\bV_k\transpose\bE\bu_k\\
&\quad - \frac{1}{z^3}\calR(\bu, \bV_k, z)\{\bS_k^{-1} + \calR(\bV_k, \bV_k, z)\}^{-1}\bV_k\transpose(\bE^2 - \expect\bE^2)\bu_k\\
&\quad + \calR(\bu, \bV_k, z)\{\bS_k^{-1} + \calR(\bV_k, \bV_k, z)\}^{-1}\calR(\bV_k, \bu_k, z)\\ 
&\quad+ \widetilde{O}_{\prob}\left\{\frac{\alpha_n^2}{|z|^3} + \frac{n^2\rho_n^{3/2}(\log n)^{3\xi}\|\bU_\bP\|_{2\to\infty}^2}{|z|^4} + \frac{(n\rho_n)^2}{|z|^5}\right\}.
\end{align*}
By \eqref{eqn:x_G_y}, we have
\begin{align*}
\bu\transpose\bG(z)\bu_k
& = - \frac{\bu\transpose\bE\bu_k}{z^2} - \frac{\bu\transpose(\bE^2 - \expect\bE^2)\bu_k}{z^3} + \calR(\bu, \bu_k, z)\\
&\quad + \widetilde{O}_{\prob}\left\{\frac{n^2\rho_n^{3/2}(\log n)^{3\xi}\|\bU_\bP\|_{2\to\infty}^2}{|z|^4}\right\}.
\end{align*}
Hence, we conclude that
\begin{align*}
&z\bu\transpose\{\bG(z) - \bF_k(z)\}\bu_k\\
&\quad =  - \frac{\bu\transpose\bE\bu_k}{z} - \frac{\bu\transpose(\bE^2 - \expect\bE^2)\bu_k}{z^2}\\
&\quad\quad + z\calR(\bu, \bu_k, z) - z\calR(\bu, \bV_k, z)\{\bS_k^{-1} + \calR(\bV_k, \bV_k, z)\}^{-1}\calR(\bV_k, \bu_k, z)\\
&\quad\quad + \frac{1}{z}\calR(\bu, \bV_k, z)\{\bS_k^{-1} + \calR(\bV_k, \bV_k, z)\}^{-1}\bV_k\transpose\bE\bu_k\\
&\quad\quad + \frac{1}{z^2}\calR(\bu, \bV_k, z)\{\bS_k^{-1} + \calR(\bV_k, \bV_k, z)\}^{-1}\bV_k\transpose(\bE^2 - \expect\bE^2)\bu_k\\
&\quad\quad + \widetilde{O}_{\prob}\left\{\frac{\alpha_n^2}{|z|^2} + \frac{n^2\rho_n^{3/2}(\log n)^{3\xi}\|\bU_\bP\|_{2\to\infty}^2}{|z|^3} + \frac{(n\rho_n)^2}{|z|^4}\right\}\\
&\quad = \calA(\bu, k, z) - \frac{\bb(\bu, k, z)\transpose\bE\bu_k}{z} - \frac{\bb(\bu, k, z)\transpose(\bE^2 - \expect\bE^2)\bu_k}{z^2}
\\&\quad\quad
 + \widetilde{O}_{\prob}\left\{\frac{\alpha_n^2}{|z|^2} + \frac{n^2\rho_n^{3/2}(\log n)^{3\xi}\|\bU_\bP\|_{2\to\infty}^2}{|z|^3} + \frac{(n\rho_n)^2}{|z|^4}\right\}.
\end{align*}
\end{proof}

\begin{lemma}\label{lemma:uk_G'_minus_Fk'_uk}
Suppose the conditions of Theorem \ref{thm:Eigenvector_Expansion} hold. For each fixed $k\in [p]$, any $\xi > 1$, and $z\in [a_k, b_k]$, we have
\begin{align*}
\bu_k\transpose\{\bG'(z) - \bF'_k(z)\}\bu_k
& = \frac{2\bu_k\transpose\bE\bu_k}{z^3} + \frac{3\bu_k\transpose(\bE^2 - \expect\bE^2)\bu_k}{z^4} + \calR'(\bu_k, \bu_k, z)\\
&\quad + \widetilde{O}_{\prob}\left\{\frac{\alpha_n^2}{|z|^4} + \frac{(n\rho_n)^2}{|z|^6} + \frac{n^2\rho_n^{3/2}(\log n)^{3\xi}\|\bU_\bP\|_{2\to\infty}^2}{|z|^5}\right\}.
\end{align*}
\end{lemma}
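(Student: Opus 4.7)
\medskip
\noindent\textbf{Proof plan for Lemma \ref{lemma:uk_G'_minus_Fk'_uk}.}
The plan is to mirror the recipe used in the proof of Lemma \ref{lemma:uk_G_minus_Fk_uk}, but replace the quantities there by their $z$-derivatives. The main term $\bu_k\transpose\bG'(z)\bu_k$ is handled directly by \eqref{eqn:uk_G'_uk}, which already reads
\[
\bu_k\transpose\bG'(z)\bu_k = \tfrac{2\bu_k\transpose\bE\bu_k}{z^3} + \tfrac{3\bu_k\transpose(\bE^2-\expect\bE^2)\bu_k}{z^4} + \calR'(\bu_k,\bu_k,z) + \widetilde{O}_{\prob}\bigl\{n^2\rho_n^{3/2}(\log n)^{3\xi}\|\bU_\bP\|_{2\to\infty}^2/|z|^5\bigr\}.
\]
All the work is therefore in showing that $\bu_k\transpose\bF_k'(z)\bu_k$ is swept into the remainder, i.e., of order $\alpha_n^2/|z|^4 + (n\rho_n)^2/|z|^6 + n^2\rho_n^{3/2}(\log n)^{3\xi}\|\bU_\bP\|_{2\to\infty}^2/|z|^5$.

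For this, set $\bM(z):=\{\bS_k^{-1}+\bV_k\transpose\bG(z)\bV_k\}^{-1}$ and apply the product rule together with the matrix-inverse derivative identity $\frac{d}{dz}\bM(z) = -\bM(z)\,\bV_k\transpose\bG'(z)\bV_k\,\bM(z)$. This decomposes
\[
\bu_k\transpose\bF_k'(z)\bu_k = T_1 - T_2 + T_3,
\]
where
\begin{align*}
T_1 &= \bu_k\transpose\bG'(z)\bV_k\,\bM(z)\,\bV_k\transpose\bG(z)\bu_k,\\
T_2 &= \bu_k\transpose\bG(z)\bV_k\,\bM(z)\,\bV_k\transpose\bG'(z)\bV_k\,\bM(z)\,\bV_k\transpose\bG(z)\bu_k,\\
T_3 &= \bu_k\transpose\bG(z)\bV_k\,\bM(z)\,\bV_k\transpose\bG'(z)\bu_k.
\end{align*}
Each outer factor is small by \eqref{eqn:u_G_Vk_and_uk_G_Vk} and \eqref{eqn:uk_G'_Vk}: namely $\bu_k\transpose\bG(z)\bV_k = \widetilde{O}_{\prob}(\alpha_n/|z|^2 + n\rho_n/|z|^3)$ and $\bu_k\transpose\bG'(z)\bV_k = \widetilde{O}_{\prob}(\alpha_n/|z|^3+n\rho_n/|z|^4)$, while $\bM(z)=\widetilde{O}_{\prob}(|z|)$ by \eqref{eqn:Sk_plus_VkGVk_inverse} and $\bV_k\transpose\bG'(z)\bV_k = \widetilde{O}_{\prob}(1/|z|^2)$ by \eqref{eqn:Vk_G'_Vk}. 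Multiplying these rates yields $T_1, T_3 = \widetilde{O}_{\prob}(\alpha_n^2/|z|^4 + n\rho_n\alpha_n/|z|^5 + (n\rho_n)^2/|z|^6)$, and a single extra factor of $\bM(z)\bV_k\transpose\bG'(z)\bV_k\bM(z) = \widetilde{O}_{\prob}(1)$ in $T_2$ does not worsen the rate. The cross term $n\rho_n\alpha_n/|z|^5$ is then dominated by the half-sum $\alpha_n^2/|z|^4+(n\rho_n)^2/|z|^6$ via the AM-GM inequality, leaving precisely the stated remainder.

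The main obstacle is the bookkeeping in $T_2$: because it contains two copies of $\bM(z)$ and an intermediate quadratic form in $\bG'(z)$, one has to check that the derivative never produces a factor whose size is worse than $1/|z|^2$, and that the product of the two "sandwich" pieces $\bu_k\transpose\bG(z)\bV_k$ and $\bV_k\transpose\bG(z)\bu_k$ already guarantees the $\alpha_n^2/|z|^4$ rate. Aside from this, the argument is mechanical. Combining the bound on $\bu_k\transpose\bF_k'(z)\bu_k$ with the expansion of $\bu_k\transpose\bG'(z)\bu_k$ from \eqref{eqn:uk_G'_uk} and absorbing the common $n^2\rho_n^{3/2}(\log n)^{3\xi}\|\bU_\bP\|_{2\to\infty}^2/|z|^5$ contribution yields the claim.
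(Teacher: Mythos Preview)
Your proposal is correct and essentially identical to the paper's proof: both expand $\bu_k\transpose\bG'(z)\bu_k$ via \eqref{eqn:uk_G'_uk} and then show $\bu_k\transpose\bF_k'(z)\bu_k=\widetilde{O}_{\prob}\{\alpha_n^2/|z|^4+(n\rho_n)^2/|z|^6\}$ by the product-rule decomposition you write down, with the same size estimates for each factor. The only cosmetic difference is that the paper first bounds $\tfrac{d}{dz}\bM(z)$ by comparing it to the deterministic derivative $\tfrac{d}{dz}\{\bS_k^{-1}+\calR(\bV_k,\bV_k,z)\}^{-1}$ before arriving at $\widetilde{O}_{\prob}(1)$, whereas you obtain this more directly from $\bM'(z)=-\bM(z)\bV_k\transpose\bG'(z)\bV_k\bM(z)$ and the bounds $\bM(z)=\widetilde{O}_{\prob}(|z|)$, $\bV_k\transpose\bG'(z)\bV_k=\widetilde{O}_{\prob}(1/|z|^2)$.
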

\begin{proof}
From \eqref{eqn:Vk_G'_Vk} and Result \ref{result:Noise_matrix_moment_bound}, it is immediate that
\begin{align*}
&\bV_k\transpose\bG'(z)\bV_k - \calR'(\bV_k, \bV_k, z)\\
&\quad = \frac{2\bu_k\transpose\bE\bV_k}{z^3} + \frac{3\bu_k\transpose(\bE^2 - \expect\bE^2)\bV_k}{z^4} + \widetilde{O}_{\prob}\left\{\frac{n^2\rho_n^{3/2}(\log n)^{3\xi}\|\bU_\bP\|_{2\to\infty}^2}{|z|^5}\right\}
\\
&\quad = \widetilde{O}_{\prob}\left(\frac{
\alpha_n}{|z|^3} + \frac{n\rho_n}{|z|^4}\right)
.
\end{align*}
Then by  \eqref{eqn:R'(Vk,Vk,z)}  and \eqref{eqn:Sk_plus_VkGVk_inverse}, we obtain
\bea\label{diff:sgsr}
&\left\|\frac{\mathrm{d}}{\mathrm{d}z}\{\bS_k^{-1} + \bV_k\transpose\bG(z)\bV_k\}^{-1} - \frac{\mathrm{d}}{\mathrm{d}z}\{\bS_k^{-1} + \calR(\bV_k, \bV_k, z)\}^{-1}\right\|_2\\
&\quad = \|- \{\bS_k^{-1} + \bV_k\transpose\bG(z)\bV_k\}^{-1}\bV_k\transpose\bG'(z)\bV_k\{\bS_k^{-1} + \bV_k\transpose\bG(z)\bV_k\}^{-1}\\
&\quad\quad + \{\bS_k^{-1} + \calR(\bV_k, \bV_k, z)\}^{-1}\calR'(\bV_k, \bV_k, z)\{\bS_k^{-1} + \calR(\bV_k, \bV_k, z)\}^{-1}\|_2\\
&\quad\leq \|\{\bS_k^{-1} + \bV_k\transpose\bG(z)\bV_k\}^{-1}\|_2^2\|\bV_k\transpose\bG'(z)\bV_k - \calR'(\bV_k, \bV_k, z)\|_2\\
&\quad\quad + \|\{\bS_k^{-1} + \bV_k\transpose\bG(z)\bV_k\}^{-1}\|_2\|\calR'(\bV_k, \bV_k, z)\|_2\\
&\quad\quad\quad\times \|\{\bS_k^{-1} + \bV_k\transpose\bG(z)\bV_k\}^{-1} - \{\bS_k^{-1} + \calR(\bV_k, \bV_k, z)\}^{-1}\|_2\\
&\quad\quad + \|\{\bS_k^{-1} + \bV_k\transpose\bG(z)\bV_k\}^{-1} - \{\bS_k^{-1} + \calR(\bV_k, \bV_k, z)\}^{-1}\|_2\\
&\quad\quad\quad\times \|\calR'(\bV_k, \bV_k, z)\|_2\|\{\bS_k^{-1} + \calR(\bV_k, \bV_k, z)\}^{-1}\|_2\\
&\quad
 = \widetilde{O}_{\prob}\left(\frac{\alpha_n}{|z|} + \frac{n\rho_n}{|z|^2}\right)
\eae
and
\begin{equation}\label{bound:sv}
\begin{aligned}
&\left\|\frac{\mathrm{d}}{\mathrm{d}z}\{\bS_k^{-1} + \calR(\bV_k, \bV_k, z)\}^{-1}\right\|_2\\
&\quad\leq \left\|\{\bS_k^{-1} + \calR(\bV_k, \bV_k, z)\}^{-1}\right\|_2^2\|\calR'(\bV_k, \bV_k, z)\|_2 = O(1).
\end{aligned}
\end{equation}
Combing \eqref{diff:sgsr} and \eqref{bound:sv} and using the triangle inequality, we have,
\begin{align*}
\frac{\mathrm{d}}{\mathrm{d}z}\{\bS_k^{-1} + \bV_k\transpose\bG(z)\bV_k\}^{-1}
& = \frac{\mathrm{d}}{\mathrm{d}z}\{\bS_k^{-1} + \calR(\bV_k, \bV_k, z)\}^{-1} + \widetilde{O}_{\prob}\left(\frac{\alpha_n}{|z|} + \frac{n\rho_n}{|z|^2}\right)
 = \widetilde{O}_{\prob}(1).
\end{align*}
We then further have
\begin{align*}
\bu_k\transpose\bF_k'(z)\bu_k
& = 2\bu_k\transpose\bG'(z)\bV_k\{\bS_k^{-1} + \bV_k\transpose\bG(z)\bV_k\}^{-1}\bV_k\transpose\bG(z)\bu_k
\\&\quad
 + \bu_k\transpose\bG(z)\bV_k\Big[\frac{\mathrm{d}}{\mathrm{d}z}\{\bS_k^{-1} + \bV_k\transpose\bG(z)\bV_k\}^{-1}\Big]\bV_k\transpose\bG(z)\bu_k\\
& = \widetilde{O}_{\prob}\left(\frac{\alpha_n}{|z|^3} + \frac{n\rho_n}{|z|^4}\right)\widetilde{O}(|z|)\widetilde{O}_{\prob}\left(\frac{\alpha_n}{|z|^2} + \frac{n\rho_n}{|z|^3}\right)\\
&\quad + \widetilde{O}_{\prob}\left(\frac{\alpha_n}{|z|^2} + \frac{n\rho_n}{|z|^3}\right)\widetilde{O}(1)\widetilde{O}_{\prob}\left(\frac{\alpha_n}{|z|^2} + \frac{n\rho_n}{|z|^3}\right)\\
& = \widetilde{O}_{\prob}\left\{\frac{\alpha_n^2}{|z|^4} + \frac{(n\rho_n)^2}{|z|^6}\right\},
\end{align*}
by \eqref{eqn:uk_G'_Vk}, \eqref{eqn:Sk_plus_VkGVk_inverse}, and \eqref{eqn:u_G_Vk_and_uk_G_Vk}. Thus, combining the above result with \eqref{eqn:uk_G'_uk}, we obtain
\begin{align*}
\bu_k\transpose\{\bG'(z) - \bF_k'(z)\}\bu_k
& = \frac{2\bu_k\transpose\bE\bu_k}{z^3} + \frac{3\bu_k\transpose(\bE^2 - \expect\bE^2)\bu_k}{z^4} + \calR'(\bu_k, \bu_k, z)\\
&\quad + \widetilde{O}_{\prob}\left\{\frac{\alpha_n^2}{|z|^4} + \frac{(n\rho_n)^2}{|z|^6} + \frac{n^2\rho_n^{3/2}(\log n)^{3\xi}\|\bU_\bP\|_{2\to\infty}^2}{|z|^5}\right\}.
\end{align*}
The proof is thus completed.
\end{proof}

\begin{lemma}\label{lemma:tkhat_concentration}
Suppose Assumptions \ref{assumption:Signal_strength}--\ref{assumption:Noise_matrix_distribution} in the manuscript hold. For each fixed $k\in [p]$, let $\widehat{t}_k$ be the solution to the (random) equation
\[
1 + \lambda_k\bu_k\transpose\{\bG(z) - \bF_k(z)\}\bu_k = 0
\]
over $z\in [a_k, b_k]$. Then $\widehat{t}_k$ exists, is unique w.h.p., and satisfies
\[
\widehat{t}_k = \lambda_k + \widetilde{O}_{\prob}\left(\alpha_n + \frac{n\rho_n}{\Delta_n}\right).
\]
\end{lemma}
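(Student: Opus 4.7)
}
The plan is to identify $\widehat t_k$ as the $k$th eigenvalue of $\bA$ via a Schur-complement factorization, and then refine the localization using the expansions of $\bu_k\transpose\{\bG(z)-\bF_k(z)\}\bu_k$ and its derivative furnished by Lemmas~\ref{lemma:uk_G_minus_Fk_uk} and~\ref{lemma:uk_G'_minus_Fk'_uk}.

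First, I would establish existence and uniqueness. For $z\notin\sigma(\bE)$, Sylvester's determinant identity gives
\[
\det(\bA - z\eye_n) = \det(\bE - z\eye_n)\,\det\{\eye_d + \bS_\bP\bU_\bP\transpose\bG(z)\bU_\bP\}.
\]
A Schur-complement expansion of the $d\times d$ matrix on the right along the block structure $\bU_\bP = [\bu_k,\bV_k]$, $\bS_\bP = \mathrm{diag}(\lambda_k,\bS_k)$, combined with the algebraic identity $(\eye_{d-1}+\bS_k M)^{-1}\bS_k = (\bS_k^{-1}+M)^{-1}$ applied to $M = \bV_k\transpose\bG(z)\bV_k$, factors this into $\det\{\eye_{d-1}+\bS_k\bV_k\transpose\bG(z)\bV_k\}\cdot h_k(z)$, where
\[
h_k(z) := 1 + \lambda_k\bu_k\transpose\{\bG(z)-\bF_k(z)\}\bu_k.
\]
Thus the roots of $h_k$ in $[a_k,b_k]\setminus\sigma(\bE)$ coincide with eigenvalues of $\bA$ in that interval (the companion determinant is nonzero on $[a_k,b_k]$ w.h.p.\ by the computation underlying \eqref{boundSRVV}--\eqref{eqn:Sk_plus_VkGVk_inverse}). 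Weyl's inequality together with Assumption~\ref{assumption:Signal_strength} yields $|\lambda_k(\bA)-\lambda_k|\leq\|\bE\|_2 = \widetilde O_\prob(\sqrt{n\rho_n}) = o(\Delta_n)$, so w.h.p.\ $\lambda_k(\bA)\in(a_k,b_k)\setminus\sigma(\bE)$, and Assumption~\ref{assumption:Eigenvalue_separation} forces $\lambda_k(\bA)$ to be the only eigenvalue of $\bA$ in $[a_k,b_k]$. Hence $\widehat t_k := \lambda_k(\bA)$ is the unique root of $h_k$ in $[a_k,b_k]$.

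Next, I would anchor at $z=\lambda_k$ and quantify $h_k(\lambda_k)$ via Lemma~\ref{lemma:uk_G_minus_Fk_uk} with $\bu=\bu_k$. Dividing the resulting identity by $\lambda_k$ and adding $1$ gives
\[
h_k(\lambda_k) = f_k(\lambda_k) - \frac{\bb(\bu_k,k,\lambda_k)\transpose\bE\bu_k}{\lambda_k} - \frac{\bb(\bu_k,k,\lambda_k)\transpose(\bE^2-\expect\bE^2)\bu_k}{\lambda_k^2} + \widetilde O_\prob\!\left(\frac{\alpha_n^2}{\Delta_n^2} + \frac{(n\rho_n)^2}{\Delta_n^4}\right).
\]
Because $\bb(\bu_k,k,\lambda_k)$ lies in the column span of $\bU_\bP$ by construction, the linear-in-$\bE$ term is $\widetilde O_\prob(\alpha_n/\Delta_n)$, and the quadratic-in-$\bE$ term is controlled by Result~\ref{result:Noise_matrix_moment_bound} and Lemma~\ref{lemma:Utranspose_E_cubic_U_concentration}. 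A direct computation from the series defining $\calR$ in Table~\ref{def:sum} gives $f_k(\lambda_k) = -\bu_k\transpose\expect(\bE^2)\bu_k/\lambda_k^2 + O((n\rho_n)^{3/2}/(q_n\Delta_n^3)) = O(n\rho_n/\Delta_n^2)$, the Schur-complement correction in $f_k$ being absorbed into this order by \eqref{eqn:R(Vk,uk,z)}. Altogether, $h_k(\lambda_k) = \widetilde O_\prob(\alpha_n/\Delta_n + n\rho_n/\Delta_n^2)$.

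Finally, I would use Lemma~\ref{lemma:uk_G'_minus_Fk'_uk} to show $h_k'(z) = \lambda_k\calR'(\bu_k,\bu_k,z) + o_\prob(1/\Delta_n) = 1/\lambda_k + o_\prob(1/\Delta_n)$, so that $|h_k'(z)|\asymp 1/\Delta_n$ w.h.p. This pointwise-in-$z$ bound must be upgraded to a uniform-in-$z$ bound on $[a_k,b_k]$ so that the mean value theorem can be applied; this is done by a polynomial-size $\epsilon$-net, which works because, on the high-probability event $\|\bE\|_2\leq\Delta_n/4$, both $\bG$ and $\bG'$ are analytic on $[a_k,b_k]$ with operator-norm derivatives of at most polynomial size in $1/\Delta_n$. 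Then $h_k(\widehat t_k) = 0$ and the mean value theorem give
\[
\widehat t_k - \lambda_k = -\frac{h_k(\lambda_k)}{h_k'(\tilde z)} = \widetilde O_\prob\!\left(\alpha_n + \frac{n\rho_n}{\Delta_n}\right),
\]
which is the claimed bound. The main obstacle is precisely the uniformization step: Lemmas~\ref{lemma:uk_G_minus_Fk_uk}--\ref{lemma:uk_G'_minus_Fk'_uk} are pointwise in $z$, whereas to invoke the mean value theorem along the random segment connecting $\lambda_k$ to $\widehat t_k$ one needs uniform control of the stochastic fluctuations, which the net-and-Lipschitz argument supplies with only logarithmic loss.
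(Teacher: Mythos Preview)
Your proposal is correct and reaches the same conclusion, but the route differs from the paper's in two places worth noting.

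For existence and uniqueness, you identify $\widehat t_k$ with the eigenvalue $\lambda_k(\bA)$ via a Sylvester--Schur factorization of $\det(\bA-z\eye_n)$ together with Weyl's inequality. This is clean and yields the identification $\widehat t_k=\lambda_k(\bA)$ for free. The paper instead argues directly that $h_k$ is monotone on $[a_k,b_k]$ (from the derivative expansion in Lemma~\ref{lemma:uk_G'_minus_Fk'_uk}) and changes sign at the endpoints; this avoids the determinant algebra but does not name the root. For the localization step, you apply the mean value theorem to the random function $h_k$ itself, anchoring at $\lambda_k$; the paper instead introduces the deterministic intermediate root $t_k$ of $f_k(z)=0$, proves $|t_k-\lambda_k|=O(n\rho_n/\Delta_n)$, shows $h_k(z)=f_k(z)+\widetilde O_\prob(\alpha_n/|z|+n\rho_n/|z|^2)$ uniformly, and applies the mean value theorem to the \emph{deterministic} $f_k$. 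The paper's detour buys exactly what you flagged as the obstacle: the MVT intermediate point enters only through $f_k'$, so no uniformization over random $z$ is needed. Your net-and-Lipschitz argument would also work but is in fact unnecessary here, because the remainders in Lemmas~\ref{lemma:uk_G_minus_Fk_uk}--\ref{lemma:uk_G'_minus_Fk'_uk} are already uniform in $z\in[a_k,b_k]$ on a single high-probability event: inspecting the expansions \eqref{eqn:x_G_y}--\eqref{eqn:uk_G'_Vk}, the randomness is carried by the finitely many scalars $\bu\transpose\bE^\ell\bv$ for $\ell\le L$ and by $\|\bE\|_2$, while the $z$-dependence enters only through the deterministic factors $1/z^{\ell+1}$, which are uniformly bounded on the interval. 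So your direct approach goes through without the net; the paper's version simply makes the non-issue explicit by never needing uniformity of $h_k'$.
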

\begin{proof}
Let $t_k$ be the solution to the (deterministic) equation $f_k(z) = 0$ over $z\in [a_k, b_k]$; Recall the definition of $f_k(z)$ in Table~\ref{def:sum}. Following the proof of Lemma 3 in \cite{doi:10.1080/01621459.2020.1840990}, we know that $t_k$ exists and is unique. Furthermore, $f_k'(z) = \lambda_k z^{-2}\{1 + o(1)\}$
for $z\in [a_k, b_k]$ and
\begin{align*}
f_k(\lambda_k)
& = 1 + \lambda_k\calR(\bu_k, \bu_k, \lambda_k) - \lambda_k\calR(\bu_k, \bV_k, \lambda_k)\{\bS_k^{-1} + \calR(\bV_k, \bV_k, \lambda_k)\}^{-1}\\
&\quad\times \calR(\bV_k, \bu_k, \lambda_k)\\
& = 1 + \lambda_k\calR(\bu_k, \bu_k, \lambda_k) - \lambda_kO\left(\frac{n\rho_n}{|\lambda_k|^3}\right)O(|\lambda_k|)O\left(\frac{n\rho_n}{|\lambda_k|^3}\right)\\
& = 1 - \sum_{l = 0}^L\frac{\bu_k\transpose\expect\big(\bE^l\big)\bu_k}{\lambda_k^l} + O\left(\frac{n^2\rho_n^2}{|\lambda_k|^4}\right)
= -\sum_{l = 2}^L\frac{\bu_k\transpose\expect\big(\bE^l\big)\bu_k}{\lambda_k^l} + O\left(\frac{n^2\rho_n^2}{|\lambda_k|^4}\right)
\\&
= O\left(\frac{n\rho_n}{\Delta_n^2}\right)
\end{align*}
by Result \ref{result:Noise_matrix_moment_bound}, \eqref{eqn:R(Vk,uk,z)}, and \eqref{eqn:Sk_plus_VkGVk_inverse}. It follows from mean-value theorem that
\[
f_k(\lambda_k) = f_k(\lambda_k) - f_k(t_k) = f_k'(\tilde{z})(\lambda_k - t_k),
\]
where $\tilde{z}$ is between $\lambda_k$ and $t_k$. Clearly, 
\bea\label{f'kz}
|f_k'(\tilde{z})| = \Theta(|\lambda_k|/|\tilde{z}|^2) = \Theta(1/\Delta_n),
\eae noting that $t_k\in[a_k,b_k]$ and thus $|\tilde{z}| = \Theta(\Delta_n)$. It then follows that,
\[
|t_k - \lambda_k| = \left|\frac{f_k(\lambda_k)}{f_k'(\tilde{z})}\right| = O\left(\frac{n\rho_n}{\Delta_n^2}\right)\Theta(\Delta_n) = O\left(\frac{n\rho_n}{\Delta_n}\right).
\]
Therefore, it is now sufficient to show that $\widehat{t}_k = t_k + \widetilde{O}_{\prob}\{\alpha_n + (n\rho_n)/\Delta_n\}$. 

The argument here is similar to that in (A.56)--(A.60) in Appendix A of \cite{doi:10.1080/01621459.2020.1840990} with slight modifications. For convenience, we let $h_k(\cdot)$ denote $z\mapsto 1 + \lambda_k\bu_k\transpose\{\bG(z) - \bF_k(z)\}\bu_k$. By Lemma \ref{lemma:uk_G'_minus_Fk'_uk} and Result \ref{result:Noise_matrix_moment_bound},
\begin{align*}
h'_k(z) & = \lambda_k\bu_k\transpose\{\bG'(z) - \bF_k'(z)\}\bu_k\\
& = \frac{2\lambda_k\bu_k\transpose\bE\bu_k}{z^3} + \frac{3\lambda_k\bu_k\transpose(\bE^2 - \expect\bE^2)\bu_k}{z^4} + \lambda_k\calR'(\bu_k, \bu_k, z)\\
&\quad + \frac{\lambda_k}{z^2}\widetilde{O}_{\prob}\left\{\frac{\alpha_n^2}{|z|^2} + \frac{(n\rho_n)^2}{|z|^4} + \frac{n^2\rho_n^{3/2}(\log n)^{3\xi}\|\bU_\bP\|_{2\to\infty}^2}{|z|^3}\right\}\\
& = \lambda_k\calR'(\bu_k, \bu_k, z) + \frac{\lambda_k}{z^2}\widetilde{O}_{\prob}\left(\frac{\alpha_n}{|z|}\right) + \frac{\lambda_k}{z^2}\widetilde{O}_{\prob}\left(\frac{\|\bu_k\transpose(\bE^2 - \expect\bE^2)\bu_k\|}{|z|^2}\right)\\
&\quad+ \frac{\lambda_k}{z^2}\widetilde{O}_{\prob}\left\{\frac{\alpha_n^2}{|z|^2} + \frac{(n\rho_n)^2}{|z|^4} + \frac{n^2\rho_n^{3/2}(\log n)^{3\xi}\|\bU_\bP\|_{2\to\infty}^2}{|z|^3}\right\}\\
& = \lambda_k\sum_{l = 0,l\neq 1}^L\frac{(l + 1)\bu_k\transpose\expect\bE^l\bu_k}{z^{l + 2}} + \frac{\lambda_k}{z^2}\widetilde{O}_{\prob}\left\{\frac{\alpha_n}{|z|} + \frac{n\rho_n}{|z|^2} + \frac{n^2\rho_n^{3/2}(\log n)^{3\xi}\|\bU_\bP\|_{2\to\infty}^2}{|z|^3}\right\}\\
& = \frac{\lambda_k}{z^2} + \frac{\lambda_k}{z^2}O\left(\frac{n\rho_n}{|z|^2}\right) +  \frac{\lambda_k}{z^2}\sqrt{n}\|\bU_\bP\|_{2\to\infty}\sum_{l = 3}^L(l + 1)\left[O\left\{\frac{(n\rho_n)^{1/2}}{z}\right\}\right]^l\\
&\quad + \frac{\lambda_k}{z^2}\widetilde{O}_{\prob}\left\{\frac{\alpha_n^2}{|z|^2} + \frac{(n\rho_n)^2}{|z|^4} + \frac{n^2\rho_n^{3/2}(\log n)^{3\xi}\|\bU_\bP\|_{2\to\infty}^2}{|z|^3}\right\}\\
& = \frac{\lambda_k}{z^2}\{1 + o(1)\} + \frac{\lambda_k}{z^2}\widetilde{O}_{\prob}\left\{\frac{\alpha_n^2}{|z|^2} + \frac{(n\rho_n)^2}{|z|^4} + \frac{n^2\rho_n^{3/2}(\log n)^{3\xi}\|\bU_\bP\|_{2\to\infty}^2}{|z|^3}\right\}.
\end{align*}
This shows that $h_k(z)$ is a monotone function over $z\in [a_k, b_k]$ w.h.p.. By Lemma \ref{lemma:uk_G_minus_Fk_uk}, we have
\begin{align*}
h_k(z)
& = 1 + \frac{\lambda_k}{z}z\bu_k\transpose\{\bG(z) - \bF_k(z)\}\bu_k\\
& = 1 + \frac{\lambda_k}{z}\calA(\bu_k, k, z) - \frac{\lambda_k}{z}\frac{\bb(\bu_k, k, z)\transpose\bE\bu_k}{z} - \frac{\lambda_k}{z}\frac{\bb(\bu_k, k, z)\transpose(\bE^2 - \expect\bE^2)\bu_k}{z^2}\\
&\quad  + \frac{\lambda_k}{z}\widetilde{O}_{\prob}\left\{\frac{\alpha_n^2}{|z|^2} + \frac{n^2\rho_n^{3/2}(\log n)^{3\xi}\|\bU_\bP\|_{2\to\infty}^2}{|z|^3} + \frac{(n\rho_n)^2}{|z|^4}\right\}.
\end{align*}
By Result \ref{result:Noise_matrix_concentration}, \eqref{boundSRVV}, Assumption \ref{assumption:Eigenvector_delocalization}, and Lemma \ref{lemma:Utranspose_E_cubic_U_concentration},
\begin{equation}
\label{eqn:A(uk,k,z)}
\begin{aligned}
\calA(\bu_k, k, z)
& = -1 - \sum_{l = 2}^L\frac{\bu_k\transpose\expect \bE^l\bu_k}{z^l}\\
&\quad - \left(\sum_{l = 2}^L\frac{\bu_k\transpose\expect \bE^l\bV_k}{z^l}\right)
\{\bS_k^{-1} + \calR(\bV_k, \bV_k, z)\}^{-1}
\left(\sum_{l = 2}^L\frac{\bV_k\transpose\expect \bE^l\bu_k}{z^{l + 1}}\right)\\
& = - 1 -\frac{\bu_k\transpose\expect \bE^2\bu_k}{z^2} + O\left\{\frac{(n\rho_n)^{3/2}}{|z|^3}\right\}
\end{aligned}
\end{equation}
and
\begin{equation}
\label{eqn:b(uk,k,z)}
\begin{aligned}
\bb(\bu_k, k, z)& = \bu_k - \bV_k\{\bS_k^{-1} + \calR(\bV_k, \bV_k, z)\}^{-1}\calR(\bV_k, \bu_k, z)\\
& = \bu_k - \bV_k\times O\left(\frac{n\rho_n}{|z|^2}\right).
\end{aligned}
\end{equation}
Therefore, over $z\in [a_k, b_k]$, we have
\begin{align*}
h_k(z)
& = 1 - \frac{\lambda_k}{z} -\frac{\lambda_k\bu_k\transpose\expect \bE^2\bu_k}{z^3} + \frac{\lambda_k}{z}O\left\{\frac{(n\rho_n)^{3/2}}{|z|^3}\right\}\\
&\quad - \frac{\lambda_k}{z}\left\{\bu_k - \bV_k\times O\left(\frac{n\rho_n}{|z|^2}\right)\right\}\transpose\frac{\bE\bu_k}{z}
\\&\quad
 - \frac{\lambda_k}{z}\left\{\bu_k - \bV_k\times O\left(\frac{n\rho_n}{|z|^2}\right)\right\}\transpose\frac{(\bE^2 - \expect\bE^2)\bu_k}{z^2}
 \\&\quad
 + \frac{\lambda_k}{z}\widetilde{O}_{\prob}\left\{\frac{\alpha_n^2}{|z|^2} + \frac{n^2\rho_n^{3/2}(\log n)^{3\xi}\|\bU_\bP\|_{2\to\infty}^2}{|z|^3} + \frac{(n\rho_n)^2}{|z|^4}\right\}\\
& = \frac{z - \lambda_k}{z} + \frac{\lambda_k}{z}\widetilde{O}_{\prob}\left\{\frac{\alpha_n}{|z|} + \frac{n\rho_n}{|z|^2} + \frac{n^2\rho_n^{3/2}(\log n)^{3\xi}\|\bU_\bP\|_{2\to\infty}^2}{|z|^3}\right\}
\end{align*}
This shows that $h_k(a_k)h_k(b_k) < 0$ w.h.p., and hence, there exists a unique $\widehat{t}_k\in [a_k, b_k]$ such that $h_k(\widehat{t}_k) = 0$. Following the above derivation, we also have
\begin{align*}
h_k(z)& = \frac{z - \lambda_k}{z} -\frac{\lambda_k\bu_k\transpose\expect \bE^2\bu_k}{z^3} + \frac{\lambda_k}{z} O\left\{\frac{(n\rho_n)^{3/2}}{|z|^3}\right\}
-\frac{\lambda_k}{z}\frac{\bu_k\transpose\bE\bu_k}{z}\\
&\quad + \frac{\lambda_k}{z}\widetilde{O}_{\prob}\left(\frac{\alpha_nn\rho_n}{|z|^3}\right)
 - \frac{\lambda_k}{z}\frac{\bu_k\transpose(\bE^2 - \expect\bE^2)\bu_k}{z^2}
 + \frac{\lambda_k}{z}\widetilde{O}_{\prob}\left\{\frac{(n\rho_n)^2}{|z|^4}\right\}\\
&\quad + \frac{\lambda_k}{z}\widetilde{O}_{\prob}\left\{\frac{\alpha_n^2}{|z|^2} + \frac{n^2\rho_n^{3/2}(\log n)^{3\xi}\|\bU_\bP\|_{2\to\infty}^2}{|z|^3} + \frac{(n\rho_n)^2}{|z|^4}\right\}\\
& = 1 - \frac{\lambda_k}{z} -\frac{\lambda_k\bu_k\transpose\bE\bu_k}{z^2}\\
&\quad +  \frac{\lambda_k}{z}\widetilde{O}_{\prob}\left\{\frac{\alpha_n^2}{|z|^2} + \frac{n\rho_n}{|z|^2} + \frac{n^2\rho_n^{3/2}(\log n)^{3\xi}\|\bU_\bP\|_{2\to\infty}^2}{|z|^3} + \frac{(n\rho_n)^2}{|z|^4}\right\}.
\end{align*}
Alternatively, for $f_k(z)$, by \eqref{eqn:R(Vk,uk,z)} and Result \ref{result:Noise_matrix_moment_bound}, we have
\begin{align*}
f_k(z)& = 1 + \lambda_k\calR(\bu_k, \bu_k, z) - \lambda_k\calR(\bu_k, \bV_k, z)\{\bS_k^{-1} + \calR(\bV_k, \bV_k, z)\}^{-1}\calR(\bV_k, \bu_k, z)\\
& = 1 - \frac{\lambda_k}{z}\sum_{l = 0,l\neq 1}^L\frac{\bu_k\transpose\expect\bE^l\bu_k}{z^l}\\
&\quad - \lambda_k\calR(\bu_k, \bV_k, z)\{\bS_k^{-1} + \calR(\bV_k, \bV_k, z)\}^{-1}\calR(\bV_k, \bu_k, z)\\
& = 1 - \frac{\lambda_k}{z} - \frac{\lambda_k}{z}\sum_{l = 2}^L\frac{\bu_k\transpose\expect\bE^l\bu_k}{z^l} + O\left\{\frac{(n\rho_n)^2\Delta_n}{|z|^5}\right\} = 1 - \frac{\lambda_k}{z} + O\left\{\frac{(n\rho_n)\Delta_n}{|z|^3}\right\}.
\end{align*}
Namely, for $z\in [a_k, b_k]$,
\begin{align*}
h_k(z) &= f_k(z) - \frac{\lambda_k}{z^2}\bu_k\transpose\bE\bu_k + \widetilde{O}_{\prob}\left(\frac{\alpha_n^2}{|z|^2} + \frac{n\rho_n}{|z|^2}\right) = f_k(z) + \widetilde{O}_{\prob}\left(\frac{\alpha_n}{|z|} + \frac{n\rho_n}{|z|^2}\right)
\end{align*}
by Result \ref{result:Noise_matrix_moment_bound}. Recall from the earlier part of the proof that $|f_k'(z)| = \Theta(|\lambda_k|/|z|^2)$ for any $z\in [a_k, b_k]$ by \eqref{f'kz}. It follows from the definitions of $\widehat{t}_k$, $t_k$, and mean-value theorem that
\begin{align*}
0& = h_k(\widehat{t}_k) = f_k(\widehat{t_k}) + \widetilde{O}_{\prob}\left(\frac{\alpha_n}{\Delta_n} + \frac{n\rho_n}{\Delta_n^2}\right)\\
& = f_k(t_k) + f_k'(\bar{t}_k)(\widehat{t}_k - t_k) + \widetilde{O}_{\prob}\left(\frac{\alpha_n}{\Delta_n} + \frac{n\rho_n}{\Delta_n^2}\right),
\end{align*}
where $\bar{t}_k$ is a random variable between $t_k$ and $\widehat{t}_k$. 
Since $f_k(t_k) = 0$, the above equation entails that
\begin{align*}
\widehat{t}_k - t_k = \frac{-1}{f_k'(\bar{t}_k)}\widetilde{O}_{\prob}\left(\frac{\alpha_n}{\Delta_n} + \frac{n\rho_n}{\Delta_n^2}\right) = \widetilde{O}_{\prob}\left(\alpha_n + \frac{n\rho_n}{\Delta_n}\right).
\end{align*}
The proof is thus completed.
\end{proof}

\subsection{Proof of Proposition \ref{prop:bilinear_form_expansion}}
\label{sub:proof_of_prop_bilinear_form_expansion}

By Lemma \ref{lemma:tkhat_concentration}, we know that 
\[
\widehat{t}_k = \lambda_k + \widetilde{O}_{\prob}\left(\alpha_n + \frac{n\rho_n}{\Delta_n}\right) = \lambda_k \{1 + \widetilde{o}_{\prob}(1)\}.
\]
By \eqref{eqn:bilinear_form_residual_formula}, we know that
\begin{align*}
\bu_k\transpose\widehat{\bu}_k\widehat{\bu}_k\transpose\bu_k
& = 
\frac{\widehat{t}_k^2\bu_k\transpose\{\bG(\widehat{t}_k) - \bF_k(\widehat{t}_k)\}\bu_k\bu_k\transpose\{\bG(\widehat{t}_k) - \bF_k(\widehat{t}_k)\}\bu_k}{\widehat{t}_k^2\bu_k\transpose\{\bG'(\widehat{t}_k) - \bF_k'(\widehat{t}_k)\}\bu_k},\\
\bu_m\transpose\widehat{\bu}_k\widehat{\bu}_k\transpose\bu_k
& = 
\frac{\widehat{t}_k^2\bu_m\transpose\{\bG(\widehat{t}_k) - \bF_k(\widehat{t}_k)\}\bu_k\bu_k\transpose\{\bG(\widehat{t}_k) - \bF_k(\widehat{t}_k)\}\bu_k}{\widehat{t}_k^2\bu_k\transpose\{\bG'(\widehat{t}_k) - \bF_k'(\widehat{t}_k)\}\bu_k}.
\end{align*}
Note that when $z\in[a_k,b_k]$ is of the same order as $\lambda_k$,
\begin{align*}
\frac{\mathrm{d}}{\mathrm{d}z}\{z^2\calR'(\bu_k, \bu_k, z)\}& = -\sum_{l = 2}^L\frac{(l + 1)l\bu_k\transpose\expect\bE^l\bu_k}{z^{l + 1}} = O\left(\frac{n\rho_n}{|z|^3}\right).
\end{align*}
 It follows from mean-value theorem that there exists some random variable $\bar{z}_k$ between $\lambda_k$ and $\widehat{t}_k$, such that
\begin{align*}
\widehat{t}_k^2\calR'(\bu_k, \bu_k, \widehat{t}_k)
& = \lambda_k^2\calR'(\bu_k, \bu_k, \lambda_k) + \frac{\mathrm{d}}{\mathrm{d}z}\{z^2\calR(\bu_k, \bu_k, z)\}\mathrel{\bigg|}_{z = \bar{z}_k}(\widehat{t}_k - \lambda_k)\\
& = \lambda_k^2\calR'(\bu_k, \bu_k, \lambda_k) + \widetilde{O}_{\prob}\left\{\frac{\alpha_nn\rho_n}{\Delta_n^3} + \frac{(n\rho_n)^2}{\Delta_n^4}\right\}.
\end{align*}
Now we apply Lemma \ref{lemma:uk_G'_minus_Fk'_uk} with $z = \widehat{t}_k$ and obtain
\begin{align*}
&\widehat{t}_k^2\bu_k\transpose\{\bG'(\widehat{t}_k) - \bF_k'(\widehat{t}_k)\}\bu_k\\
&\quad = 
\frac{2\bu_k\transpose\bE\bu_k}{\widehat{t}_k} + \frac{3\bu_k\transpose(\bE^2 - \expect\bE^2)\bu_k}{\widehat{t}_k^2} + \widehat{t}_k^2\calR'(\bu_k, \bu_k, \widehat{t}_k)
\\&\quad\quad
 + \widetilde{O}_{\prob}\left\{\frac{\alpha_n^2}{\Delta_n^2} + \frac{(n\rho_n)^2}{\Delta_n^4} + \frac{n^2\rho_n^{3/2}(\log n)^{3\xi}\|\bU_\bP\|_{2\to\infty}^2}{\Delta_n^3}\right\}\\
&\quad = \frac{2\bu_k\transpose\bE\bu_k}{\widehat{t}_k} + \frac{3\bu_k\transpose(\bE^2 - \expect\bE^2)\bu_k}{\widehat{t}_k^2} + \lambda_k^2\calR'(\bu_k, \bu_k, \lambda_k) + \widetilde{O}_{\prob}\left\{\frac{\alpha_n(n\rho_n)}{\Delta_n^3} + \frac{(n\rho_n)^2}{\Delta_n^4}\right\}
\\&\quad\quad
 + \widetilde{O}_{\prob}\left\{\frac{\alpha_n^2}{\Delta_n^2} + \frac{(n\rho_n)^2}{\Delta_n^4} + \frac{n^2\rho_n^{3/2}(\log n)^{3\xi}\|\bU_\bP\|_{2\to\infty}^2}{\Delta_n^3}\right\}\\
&\quad = \frac{2\bu_k\transpose\bE\bu_k}{\widehat{t}_k^3} + \frac{3\bu_k\transpose(\bE^2 - \expect\bE^2)\bu_k}{\widehat{t}_k^2} + \lambda_k^2\sum_{l = 0,l\neq 1}^L\frac{(l + 1)\bu_k\transpose\expect\bE^l\bu_k}{\lambda_k^{l + 2}}\\
&\quad\quad + \widetilde{O}_{\prob}\left\{\frac{\alpha_n^2}{\Delta_n^2} + \frac{(n\rho_n)^{3/2}}{q_n\Delta_n^3} + \frac{(n\rho_n)^2}{\Delta_n^4} + \frac{n^2\rho_n^{3/2}(\log n)^{3\xi}\|\bU_\bP\|_{2\to\infty}^2}{\Delta_n^3}\right\}\\
&\quad = \frac{2\bu_k\transpose\bE\bu_k}{\widehat{t}_k^3} + \frac{3\bu_k\transpose(\bE^2 - \expect\bE^2)\bu_k}{\widehat{t}_k^2} + 1 + \frac{3\bu_k\transpose\expect\bE^2\bu_k}{\lambda_k^2}\\
&\quad\quad + \widetilde{O}_{\prob}\left\{\frac{\alpha_n^2}{\Delta_n^2} + \frac{(n\rho_n)^{3/2}}{q_n\Delta_n^3} + \frac{(n\rho_n)^2}{\Delta_n^4} + \frac{n^2\rho_n^{3/2}(\log n)^{3\xi}\|\bU_\bP\|_{2\to\infty}^2}{\Delta_n^3}\right\}\\
&\quad = \frac{2\bu_k\transpose\bE\bu_k}{\lambda_k}\left(1 + \frac{\lambda_k - \widehat{t}_k}{\widehat{t}_k}\right) + \frac{3\bu_k\transpose(\bE^2 - \expect\bE^2)\bu_k}{\lambda_k^2}\left\{1 + \frac{(\lambda_k - \widehat{t}_k)(\widehat{t}_k + \lambda_k)}{\widehat{t_k}^2}\right\}\\
&\quad\quad + 1 + \frac{3\bu_k\transpose\expect\bE^2\bu_k}{\lambda_k^2} + \widetilde{O}_{\prob}\left\{\frac{\alpha_n^2}{\Delta_n^2} +\frac{(n\rho_n)^{3/2}}{q_n\Delta_n^3} + \frac{(n\rho_n)^2}{\Delta_n^4} + \frac{n^2\rho_n^{3/2}(\log n)^{3\xi}\|\bU_\bP\|_{2\to\infty}^2}{\Delta_n^3}\right\}\\
&\quad = 1 + \frac{2\bu_k\transpose\bE\bu_k}{\lambda_k} + \frac{3\bu_k\transpose\bE^2\bu_k}{\lambda_k^2}\\
&\quad\quad + \widetilde{O}_{\prob}\left\{\frac{\alpha_n^2}{\Delta_n^2} +  \frac{(n\rho_n)^{3/2}}{q_n\Delta_n^3} +  \frac{(n\rho_n)^2}{\Delta_n^4} + \frac{n^2\rho_n^{3/2}(\log n)^{3\xi}\|\bU_\bP\|_{2\to\infty}^2}{\Delta_n^3}\right\}
\end{align*}
by Result \ref{result:Noise_matrix_moment_bound} and Lemma \ref{lemma:Utranspose_E_cubic_U_concentration}; Note we consider the following decomposition in the fourth equality above,  
$$
\frac{(l + 1)\bu_k\transpose\expect\bE^l\bu_k}{\lambda_k^{l + 2}} = \frac{(l + 1)\bu_k\transpose\{\expect(\bE^l) - \bE^l\}\bu_k}{\lambda_k^{l + 2}} + \frac{(l + 1)\bu_k\transpose\bE^l\bu_k}{\lambda_k^{l + 2}}.
$$ In particular, we have 
\[
\widehat{t}_k^2\bu_k\transpose\{\bG'(\widehat{t}_k) - \bF_k'(\widehat{t}_k)\}\bu_k - 1 = \widetilde{O}_{\prob}\left(\frac{\alpha_n}{\Delta_n} + \frac{n\rho_n}{\Delta_n^2}\right)
\]
by Result \ref{result:Noise_matrix_concentration}. Using the formula
\[
\frac{1}{x} - \frac{1}{y} = -\frac{x - y}{y} + \frac{(x - y)^2}{xy^2},
\]
we further obtain
\begin{align*}
\frac{1}{\widehat{t}_k^2\bu_k\transpose\{\bG'(\widehat{t}_k) - \bF_k'(\widehat{t}_k)\}\bu_k}& = 1 - \frac{2\bu_k\transpose\bE\bu_k}{\lambda_k} - \frac{3\bu_k\transpose\bE^2\bu_k}{\lambda_k^2} + \widetilde{O}_{\prob}\left\{\frac{\alpha_n^2}{\Delta_n^2} + \frac{(n\rho_n)^2}{\Delta_n^4}\right\}.
\end{align*}
Applying Lemma \ref{lemma:uk_G_minus_Fk_uk} with $z = \widehat{t}_k$ and $\bu = \bu_k$, and invoking \eqref{eqn:A(uk,k,z)} and \eqref{eqn:b(uk,k,z)}, we also obtain
\begin{align*}
&\widehat{t}_k\bu_k\transpose\{\bG(\widehat{t}_k) - \bF_k(\widehat{t}_k)\}\bu_k\\
&\quad = \calA(\bu_k, k, \widehat{t}_k) - \frac{\bb(\bu_k, k, \widehat{t}_k)\transpose\bE\bu_k}{\widehat{t}_k} - \frac{\bb(\bu_k, k, \widehat{t}_k)\transpose(\bE^2 - \expect\bE^2)\bu_k}{\widehat{t}_k^2}\\
&\quad\quad  + \widetilde{O}_{\prob}\left\{\frac{\alpha_n^2}{|\widehat{t}_k|^2} + \frac{n^2\rho_n^{3/2}(\log n)^{3\xi}\|\bU_\bP\|_{2\to\infty}^2}{|\widehat{t}_k|^3} + \frac{(n\rho_n)^2}{|\widehat{t}_k|^4}\right\}\\
&\quad =  - 1 -\frac{\bu_k\transpose\expect \bE^2\bu_k}{\widehat{t}_k^2} + O\left\{\frac{(n\rho_n)^{3/2}}{q_n|\widehat{t}_k|^3} + \frac{(n\rho_n)^2}{|\widehat{t}_k|^4}\right\} - \left\{\bu_k - \bV_k\times O\left(\frac{n\rho_n}{|\widehat{t}_k|^2}\right)\right\}\transpose\frac{\bE\bu_k}{\widehat{t}_k}\\
&\quad\quad - \left\{\bu_k - \bV_k\times O\left(\frac{n\rho_n}{|\widehat{t}_k|^2}\right)\right\}\transpose\frac{(\bE^2 - \expect\bE^2)\bu_k}{\widehat{t}_k^2}\\
&\quad\quad  + \widetilde{O}_{\prob}\left\{\frac{\alpha_n^2}{|\widehat{t}_k|^2} + \frac{n^2\rho_n^{3/2}(\log n)^{3\xi}\|\bU_\bP\|_{2\to\infty}^2}{|\widehat{t}_k|^3} + \frac{(n\rho_n)^{3/2}}{|\widehat{t}_k|^3}\right\}\\
&\quad = - 1 - \frac{\bu_k\transpose\bE\bu_k}{\widehat{t}_k} - \frac{\bu_k\transpose\bE^2\bu_k}{\widehat{t}_k^2} + \widetilde{O}_{\prob}\left\{\frac{\alpha_n^2}{|\widehat{t}_k|^2} + \frac{n^2\rho_n^{3/2}(\log n)^{3\xi}\|\bU_\bP\|_{2\to\infty}^2}{|\widehat{t}_k|^3} + \frac{(n\rho_n)^{3/2}}{|\widehat{t}_k|^3}\right\}\\
&\quad = - 1 - \frac{\bu_k\transpose\bE\bu_k}{\lambda_k}\left(1 + \frac{\lambda_k - \widehat{t}_k}{\widehat{t}_k}\right) - \frac{\bu_k\transpose\bE^2\bu_k}{\lambda_k^2}\left\{1 + \frac{(\lambda_k - \widehat{t}_k)(\lambda_k + \widehat{t}_k)}{\widehat{t}_k^2}\right\}
\\&\quad\quad
+ \widetilde{O}_{\prob}\left\{\frac{\alpha_n^2}{|\widehat{t}_k|^2} + \frac{n^2\rho_n^{3/2}(\log n)^{3\xi}\|\bU_\bP\|_{2\to\infty}^2}{|\widehat{t}_k|^3} + \frac{(n\rho_n)^{3/2}}{|\widehat{t}_k|^3}\right\}\\
&\quad = - 1 - \frac{\bu_k\transpose\bE\bu_k}{\lambda_k} - \frac{\bu_k\transpose\bE^2\bu_k}{\lambda_k^2} + \widetilde{O}_{\prob}\left\{\frac{\alpha_n^2}{\Delta_n^2} + \frac{n^2\rho_n^{3/2}(\log n)^{3\xi}\|\bU_\bP\|_{2\to\infty}^2}{\Delta_n^3} + \frac{(n\rho_n)^{3/2}}{\Delta_n^3}\right\}
\end{align*}
by \eqref{eqn:A(uk,k,z)} and \eqref{eqn:b(uk,k,z)}. 
We therefore obtain from \eqref{eqn:bilinear_form_residual_formula} that
\begin{align*}
\bu_k\transpose\widehat{\bu}_k\widehat{\bu}_k\transpose\bu_k
& = \frac{\widehat{t}_k^2\bu_k\transpose\{\bG(\widehat{t}_k) - \bF_k(\widehat{t}_k)\}\bu_k\bu_k\transpose\{\bG(\widehat{t}_k) - \bF_k(\widehat{t}_k)\}\bu_k}{\widehat{t}_k^2\bu_k\transpose\{\bG'(\widehat{t}_k) - \bF_k'(\widehat{t}_k)\}\bu_k}\\
& = \left(1 - \frac{2\bu_k\transpose\bE\bu_k}{\lambda_k} - \frac{3\bu_k\transpose\bE^2\bu_k}{\lambda_k^2} + \mbox{rem}\right)
\left(1 + \frac{2\bu_k\transpose\bE\bu_k}{\lambda_k} + \frac{2\bu_k\transpose\bE^2\bu_k}{\lambda_k^2} + \mbox{rem}\right)\\
& = 1 - \frac{\bu_k\transpose\bE^2\bu_k}{\lambda_k^2} + \mbox{rem},
\end{align*}
where
\[
\mbox{rem} = \widetilde{O}_{\prob}\left\{\frac{\alpha_n^2}{\Delta_n^2} + \frac{n^2\rho_n^{3/2}(\log n)^{3\xi}\|\bU_\bP\|_{2\to\infty}^2}{\Delta_n^3} + \frac{(n\rho_n)^{3/2}}{\Delta_n^3}\right\}.
\]
This completes the proof of the first assertion.
\par
The case with $\bu = \bu_m$, $m\in [d]$, $m\neq k$ is slightly different. Observe that by Result \ref{result:Noise_matrix_moment_bound} and Lemma \ref{lemma:Utranspose_E_cubic_U_concentration}, uniformly over $z\in [a_k, b_k]$, we have
\begin{align*}
\calA(\bu_m, k, z)& = z\calR(\bu_m, \bu_k, z) - z\calR(\bu_m, \bV_k, z)\{\bS_k^{-1} + \calR(\bV_k, \bV_k, z)\}^{-1}\calR(\bV_k, \bu_k, z)\\
& = -\sum_{l = 2}^L\frac{\bu_m\transpose\expect \bE^l\bu_k}{z^l} - \left(\bu_m\transpose\bV_k + \sum_{l = 2}^L\frac{\bu_m\transpose\expect \bE^l\bV_k}{z^l}\right)\\
&\quad\times \mathrm{diag}\left(\frac{\lambda_rz}{z - \lambda_r}:r\neq k\right)\left\{\eye_{d - 1} + O\left(\frac{n\rho_n}{|z|^2}\right)\right\}\left(\sum_{l = 2}^L\frac{\bV_k\transpose\expect \bE^l\bu_k}{z^{l + 1}}\right)\\
& = -\frac{\bu_m\transpose\expect \bE^2\bu_k}{z^2} + O\left\{\frac{(n\rho_n)^{3/2}}{q_n|z|^3}\right\} + O\left\{\frac{\sqrt{n}(n\rho_n)^2\|\bU_\bP\|_{2\to\infty}}{|z|^4}\right\}\\
&\quad - \left\{\bu_m\transpose\bV_k + O\left(\frac{n\rho_n}{|z|^2}\right)\right\} \mathrm{diag}\left(\frac{\lambda_rz}{z - \lambda_r}:r\neq k\right)
\\
&\quad\times\left[\frac{\bV_k\transpose\expect \bE^2\bu_k}{z^3} + O\left\{\frac{(n\rho_n)^{3/2}}{q_n|z|^4}\right\} + O\left(\frac{n^{5/2}\rho_n^{2}}{|z|^5}\|\bU_\bP\|_{2\to\infty}\right)\right]\\
&\quad + O\left\{\frac{(n\rho_n)^2}{|z|^4}\right\}\\
& = -\frac{\bu_m\transpose\expect \bE^2\bu_k}{z^2} - \bu_m\transpose\bV_k\mathrm{diag}\left(\frac{\lambda_r}{z - \lambda_r}:r\neq k\right)
\left(\frac{\bV_k\transpose\expect \bE^2\bu_k}{z^2}\right)\\
&\quad + O\left\{\frac{(n\rho_n)^{3/2}}{q_n|z|^3}\right\} + O\left\{\frac{(n\rho_n)^2}{|z|^4}\max(1, \sqrt{n}\|\bU_\bP\|_{2\to\infty})\right\}\\
& = -\frac{\bu_m\transpose\expect \bE^2\bu_k}{z^2} - \frac{1}{z^2}\frac{\lambda_m}{z - \lambda_m}\bu_m\transpose\expect\bE^2\bu_k\\
&\quad + O\left\{\frac{(n\rho_n)^{3/2}}{|z|^3}\max\left(\frac{1}{q_n}, \frac{\sqrt{n\rho_n}}{|z|}, \frac{n\rho_n^{1/2}\|\bU_\bP\|_{2\to\infty}}{|z|}\right)\right\}\\
& = -\frac{\bu_m\transpose\expect \bE^2\bu_k}{z(z - \lambda_m)} + O\left\{\frac{(n\rho_n)^{3/2}}{|z|^3}\max\left(\frac{1}{q_n}, \frac{\sqrt{n\rho_n}}{|z|}, \frac{n\rho_n^{1/2}\|\bU_\bP\|_{2\to\infty}}{|z|}\right)\right\}
\end{align*}
and
\begin{align*}
&\bb(\bu_m, k, z)\\
&\quad = \bu_m - \bV_k\{\bS_k^{-1} + \calR(\bV_k, \bV_k, z)\}^{-1}\calR(\bV_k, \bu_m, z)\\
&\quad = \bu_m - \bV_k\mathrm{diag}\left(\frac{\lambda_rz}{z - \lambda_r}:r\neq k\right)\left\{\eye_{d - 1} + O\left(\frac{n\rho_n}{|z|^2}\right)\right\}\left(\frac{\bV_k\transpose\bu_m}{z} + \sum_{l = 2}^L\frac{\bV_k\transpose\expect \bE^l\bu_m}{z^{l + 1}}\right)\\
&\quad = \bu_m - \bV_k\mathrm{diag}\left(\frac{\lambda_rz}{z - \lambda_r}:r\neq k\right)\left(\frac{\bV_k\transpose\bu_m}{z} + \sum_{l = 2}^L\frac{\bV_k\transpose\expect \bE^l\bu_m}{z^{l + 1}}\right) + \bV_k\times O\left(\frac{n\rho_n}{|z|^2}\right)\\
&\quad = \bu_m - \bV_k\mathrm{diag}\left(\frac{\lambda_r}{z - \lambda_r}:r\neq k\right)\left\{\bV_k\transpose\bu_m + O\left(\frac{n\rho_n}{|z|^2}\right)\right\} +\bV_k\times O\left(\frac{n\rho_n}{|z|^2}\right)\\
&\quad = \bu_m -  \bV_k\mathrm{diag}\left(\frac{\lambda_r}{z - \lambda_r}:r\neq k\right)\bV_k\transpose\bu_m + \bV_k\times O\left(\frac{n\rho_n}{|z|^2}\right)\\
&\quad = \frac{z}{z - \lambda_m}\bu_m + \bV_k\times O\left(\frac{n\rho_n}{|z|^2}\right).
\end{align*}
We thus obtain
\begin{align*}
&\widehat{t}_k\bu_m\transpose\{\bG(\widehat{t}_k) - \bF_k(\widehat{t}_k)\}\bu_k\\
&\quad = - \frac{\bu_m\transpose\expect\bE^2\bu_k}{\widehat{t}_k(\widehat{t}_k - \lambda_m)} - \frac{1}{\widehat{t}_k}\left\{\frac{\widehat{t}_k}{\widehat{t}_k - \lambda_m}\bu_m + \bV_k\times \widetilde{O}_{\prob}\left(\frac{n\rho_n}{|\widehat{t}_k|^2}\right)\right\}\transpose\bE\bu_k\\
&\quad\quad + O\left\{\frac{(n\rho_n)^{3/2}}{|\widehat{t}_k|^3}\max\left(\frac{1}{q_n}, \frac{\sqrt{n\rho_n}}{|\widehat{t}_k|}, \frac{n\rho_n^{1/2}\|\bU_\bP\|_{2\to\infty}}{|\widehat{t}_k|}\right)\right\}\\
&\quad\quad - \frac{1}{\widehat{t}_k^2}\left\{\frac{\widehat{t}_k}{\widehat{t}_k - \lambda_m}\bu_m + \bV_k\times O\left(\frac{n\rho_n}{|\widehat{t}_k|^2}\right)\right\}\transpose(\bE^2 - \expect\bE^2)\bu_k\\
&\quad\quad + \widetilde{O}_{\prob}\left\{\frac{\alpha_n^2}{|\widehat{t}_k|^2} + \frac{n^2\rho_n^{3/2}(\log n)^{3\xi}\|\bU_\bP\|_{2\to\infty}^2}{|\widehat{t}_k|^3} + \frac{(n\rho_n)^2}{|\widehat{t}_k|^4}\right\}\\
&\quad = - \frac{\bu_m\transpose\bE\bu_k}{\widehat{t}_k - \lambda_m} - \frac{\bu_m\transpose\bE^2\bu_k}{\widehat{t}_k(\widehat{t}_k - \lambda_m)} +  \widetilde{O}_{\prob}\left\{\frac{\alpha_n^2}{|\widehat{t}_k|^2} + \frac{n^2\rho_n^{3/2}(\log n)^{3\xi}\|\bU_\bP\|_{2\to\infty}^2}{|\widehat{t}_k|^3} + \frac{(n\rho_n)^{3/2}}{|\widehat{t}_k|^3}\right\}\\
&\quad = - \frac{\bu_m\transpose\bE\bu_k}{\lambda_k - \lambda_m}\left(1 + \frac{\lambda_k - \widehat{t}_k}{\widehat{t}_k - \lambda_m}\right) - \frac{\bu_m\transpose\bE^2\bu_k}{\lambda_k(\lambda_k - \lambda_m)}\left\{1 + \frac{(\lambda_k - \widehat{t}_k)(\lambda_k + \widehat{t}_k - \lambda_m)}{\widehat{t}_k(\widehat{t}_k - \lambda_m)}\right\}\\
&\quad\quad + \widetilde{O}_{\prob}\left\{\frac{\alpha_n^2}{|\widehat{t}_k|^2} + \frac{n^2\rho_n^{3/2}(\log n)^{3\xi}\|\bU_\bP\|_{2\to\infty}^2}{|\widehat{t}_k|^3} + \frac{(n\rho_n)^{3/2}}{|\widehat{t}_k|^3}\right\}\\
&\quad = - \frac{\bu_m\transpose\bE\bu_k}{\lambda_k - \lambda_m} - \frac{\bu_m\transpose\bE^2\bu_k}{\lambda_k(\lambda_k - \lambda_m)}+  \widetilde{O}_{\prob}\left\{\frac{\alpha_n^2}{\Delta_n^2} + \frac{n^2\rho_n^{3/2}(\log n)^{3\xi}\|\bU_\bP\|_{2\to\infty}^2}{\Delta_n^3} + \frac{(n\rho_n)^{3/2}}{\Delta_n^3}\right\}.
\end{align*}
Thus, we finally have
\begin{align*}
\bu_m\transpose\widehat{\bu}_k\widehat{\bu}_k\transpose\bu_k
& = \frac{\widehat{t}_k^2\bu_m\transpose\{\bG(\widehat{t}_k) - \bF_k(\widehat{t}_k)\}\bu_k\bu_k\transpose\{\bG(\widehat{t}_k) - \bF_k(\widehat{t}_k)\}\bu_k}{\widehat{t}_k^2\bu_k\transpose\{\bG'(\widehat{t}_k) - \bF_k'(\widehat{t}_k)\}\bu_k}\\
& = \left(1 - \frac{2\bu_k\transpose\bE\bu_k}{\lambda_k} - \frac{3\bu_k\transpose\bE^2\bu_k}{\lambda_k^2} + \mbox{rem}\right)
\left(-\frac{\bu_m\transpose\bE\bu_k}{\lambda_k - \lambda_m} - \frac{\bu_m\transpose\bE^2\bu_k}{\lambda_k(\lambda_k - \lambda_m)} + \mbox{rem}\right)\\
&\quad\times \left( - 1 - \frac{\bu_k\transpose\bE\bu_k}{\lambda_k} - \frac{\bu_k\transpose\bE^2\bu_k}{\lambda_k^2} +\mbox{rem}\right)\\
& = \left(1 - \frac{2\bu_k\transpose\bE\bu_k}{\lambda_k} - \frac{3\bu_k\transpose\bE^2\bu_k}{\lambda_k^2} + \mbox{rem}\right)
\left(\frac{\bu_m\transpose\bE\bu_k}{\lambda_k - \lambda_m} + \frac{\bu_m\transpose\bE^2\bu_k}{\lambda_k(\lambda_k - \lambda_m)} + \mbox{rem}\right)
\\&
= \frac{\bu_m\transpose\bE\bu_k}{\lambda_k - \lambda_m} + \frac{\bu_m\transpose\bE^2\bu_k}{\lambda_k(\lambda_k - \lambda_m)} + \mbox{rem}.
\end{align*}
where
\[
\mbox{rem} = \widetilde{O}_{\prob}\left\{\frac{\alpha_n^2}{\Delta_n^2} + \frac{n^2\rho_n^{3/2}(\log n)^{3\xi}\|\bU_\bP\|_{2\to\infty}^2}{\Delta_n^3} + \frac{(n\rho_n)^{3/2}}{\Delta_n^3}\right\}.
\]
The proof is thus completed.

\section{Proofs for Section \ref{sec:eigenvector_expansion}}
\label{sec:proof_of_eigenvector_expansion}
In this section, we complete the proofs for Section \ref{sec:eigenvector_expansion}. We  list several technical lemmas that will be frequently used. Recall our proof architecture introduced in Section~\ref{sub:proof_sketch}. The key strategy of our proof is to expand $\bR_1$ through $\bR_4$ of $\bR_{\bU_+}$ in \eqref{def:RU+}, into the first, second and higher-order terms. Specifically, Lemma~\ref{lemma:Second_order_expansion_I} expands $\bR_2$. Lemma~\ref{lemma:Remainder_analysis_II} provides a high-probability bound for $\|\bE\bR_{\bU_+}\|_{2\rightarrow\infty}$, which will be used to bound several quantities including showing the negligibility of $\bR_4$. Lemma~\ref{lemma:Second_order_expansion_II} expands $\bR_1$ to the second and higher-order terms. The remaining term of $\bR_{\bU_+}$, i.e., $\bR_3$, is expanded by Proposition~\ref{prop:eigenvector_angle_expansion} as discussed in the manuscript.
 

\begin{lemma}\label{lemma:Second_order_expansion_I}
Suppose Assumptions \ref{assumption:Signal_strength}--\ref{assumption:Noise_matrix_distribution} in the main paper hold. Then
\begin{align*}
\bR_2
& = -\bU_{\bP_+}\bU_{\bP_+}\transpose\bE\bU_{\bP_+}\bS_{\bP_+}^{-1}\bW^*_+
 - \bU_{\bP_+}\bU_{\bP_+}\transpose\bE^2\bU_{\bP_+}\bS_{\bP_+}^{-2}\bW^*_+
 + \bT^{(1)},
\end{align*}
where $\bW^*_+ = \mathrm{diag}\{\mathrm{sgn}(\bu_1\transpose\widehat{\bu}_1),\ldots,\mathrm{sgn}(\bu_p\transpose\widehat{\bu}_p)\}$ and 
\[
\|\bT^{(1)}\|_{2\to\infty} = \widetilde{O}_{\prob}\left[\|\bU_\bP\|_{2\to\infty}\left\{\frac{(n\rho_n)^{3/2}}{\Delta_n^3} + \frac{\alpha_n^2}{\Delta_n^2}\right\}\right].
\] 
\end{lemma}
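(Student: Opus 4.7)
The plan is to first rewrite $\bR_2$ in a transparent form using the defining eigen-equations, and then substitute the first-order stochastic expansion of $\bU_{\bA_+}$ from Lemma \ref{lemma:Eigenvector_zero_order_deviation} to read off the $\bE$ and $\bE^2$ leading terms, bundling everything else into $\bT^{(1)}$. Since $\bU_{\bP_+}\bS_{\bP_+} = \bP\bU_{\bP_+}$ and $\bA\bU_{\bA_+} = \bU_{\bA_+}\bS_{\bA_+}$, the parenthesized factor in the definition of $\bR_2$ telescopes as
\[
\bS_{\bP_+}\bU_{\bP_+}\transpose\bU_{\bA_+} - \bU_{\bP_+}\transpose\bU_{\bA_+}\bS_{\bA_+} = \bU_{\bP_+}\transpose(\bP - \bA)\bU_{\bA_+} = -\bU_{\bP_+}\transpose\bE\bU_{\bA_+},
\]
so $\bR_2 = -\bU_{\bP_+}\bU_{\bP_+}\transpose\bE\bU_{\bA_+}\bS_{\bA_+}^{-1}$.

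Restricting the decomposition in Lemma \ref{lemma:Eigenvector_zero_order_deviation} to the first $p$ columns produces
$\bU_{\bA_+} = \bU_{\bP_+}\bW^*_+ + \bE\bU_{\bP_+}\bS_{\bP_+}^{-1}\bW^*_+ + \bR''$,
where $\bR''$ inherits from $\bR_\bU$ both the two-to-infinity bound $\|\bR''\|_{2\to\infty} = \widetilde{O}_{\prob}\{\|\bU_\bP\|_{2\to\infty}(n\rho_n(\log n)^{2\xi}/\Delta_n^2 + \alpha_n/\Delta_n)\}$ and the spectral bound $\|\bR''\|_2 = \widetilde{O}_{\prob}(\sqrt{n\rho_n}/\Delta_n)$ following from \eqref{equaupw}. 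Substituting splits $\bR_2$ into three pieces:
\[
\bR_2 = -\bU_{\bP_+}\bU_{\bP_+}\transpose\bE\bU_{\bP_+}\bW^*_+\bS_{\bA_+}^{-1} - \bU_{\bP_+}\bU_{\bP_+}\transpose\bE^2\bU_{\bP_+}\bS_{\bP_+}^{-1}\bW^*_+\bS_{\bA_+}^{-1} - \bU_{\bP_+}\bU_{\bP_+}\transpose\bE\bR''\bS_{\bA_+}^{-1}.
\]

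In each of the first two summands I would apply Lemma \ref{lemma:Remainder_analysis_I} to swap $\bW^*_+\bS_{\bA_+}^{-1}$ for $\bS_{\bP_+}^{-1}\bW^*_+$; since $\bS_{\bP_+}^{-1}$ and $\bW^*_+$ commute, this produces the two claimed main terms plus a swap error of operator norm $\widetilde{O}_{\prob}(\alpha_n/\Delta_n^2 + n\rho_n/\Delta_n^3)$. The resulting residuals are of size $\widetilde{O}_{\prob}\{\|\bU_\bP\|_{2\to\infty}\alpha_n(\alpha_n/\Delta_n^2 + n\rho_n/\Delta_n^3)\}$ and $\widetilde{O}_{\prob}\{\|\bU_\bP\|_{2\to\infty}(n\rho_n/\Delta_n)(\alpha_n/\Delta_n^2 + n\rho_n/\Delta_n^3)\}$ respectively, both of which fit inside $\bT^{(1)}$ after invoking $\alpha_n = \widetilde{O}_{\prob}(\sqrt{n\rho_n})$ and $\sqrt{n\rho_n}/\Delta_n = o(1)$ from Assumption \ref{assumption:Signal_strength}.

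The main obstacle is the residual summand $\bU_{\bP_+}\bU_{\bP_+}\transpose\bE\bR''\bS_{\bA_+}^{-1}$: the naive factorization $\|\bU_\bP\|_{2\to\infty}\|\bU_{\bP_+}\transpose\bE\|_2\|\bR''\|_2/\Delta_n$ only yields $\widetilde{O}_{\prob}(\|\bU_\bP\|_{2\to\infty}n\rho_n/\Delta_n^2)$, which exceeds the target $(n\rho_n)^{3/2}/\Delta_n^3$ by a factor of $\Delta_n/\sqrt{n\rho_n}$. A sharper two-to-infinity bound on $\bE\bR''$ is required, which I would obtain from the forthcoming Lemma \ref{lemma:Remainder_analysis_II}; structurally, this relies on rewriting $\bE\bU_{\bA_+} = \bU_{\bA_+}\bS_{\bA_+} - \bP\bU_{\bA_+}$, so that $\bE\bR''$ is re-expressed through $\bU_{\bA_+}\bS_{\bA_+} - \bP\bU_{\bA_+} - \bE\bU_{\bP_+}\bW^*_+ - \bE^2\bU_{\bP_+}\bS_{\bP_+}^{-1}\bW^*_+$, each piece of which admits a sharper two-to-infinity control through the rank-$d$ signal $\bP$ and the concentration inequalities in Results \ref{result:Noise_matrix_moment_bound} and \ref{result:Noise_matrix_rowwise_concentration}. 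I expect this delicate two-to-infinity transfer through $\bE$ to be the technically heaviest step, as extracting an extra $\sqrt{n\rho_n}/\Delta_n$ factor beyond the naive bound is precisely what allows the residual to be absorbed into the target $\bT^{(1)}$.
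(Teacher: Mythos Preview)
Your opening decomposition matches the paper's: writing $\bR_2 = -\bU_{\bP_+}\bU_{\bP_+}\transpose\bE\bU_{\bA_+}\bS_{\bA_+}^{-1}$, substituting the first-order expansion of $\bU_{\bA_+}$, and swapping $\bW^*_+\bS_{\bA_+}^{-1}$ for $\bS_{\bP_+}^{-1}\bW^*_+$ via Lemma~\ref{lemma:Remainder_analysis_I}. The two leading terms are extracted correctly and the swap residuals fit inside $\bT^{(1)}$.

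The gap is in the residual piece. You propose to control $\|\bU_{\bP_+}\bU_{\bP_+}\transpose\bE\bR''\bS_{\bA_+}^{-1}\|_{2\to\infty}$ through the two-to-infinity bound on $\bE\bR''$ from Lemma~\ref{lemma:Remainder_analysis_II}. But the natural factorization is $\|\bU_\bP\|_{2\to\infty}\|\bU_{\bP_+}\transpose\bE\bR''\|_2/\Delta_n$, so what is actually needed is the \emph{operator norm} of the $p\times p$ matrix $\bU_{\bP_+}\transpose\bE\bR''$. Passing from a row-wise bound to this operator norm costs $\|\bU_{\bP_+}\transpose\bM\|_2\leq\sqrt{pn}\,\|\bM\|_{2\to\infty}$; combined with the $\|\bU_\bP\|_{2\to\infty}$ already present inside the Lemma~\ref{lemma:Remainder_analysis_II} bound, your route is off by a factor $\sqrt{n}\,\|\bU_\bP\|_{2\to\infty}(\log n)^{3\xi}$. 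This would require the delocalization Assumption~\ref{assumption:Eigenvector_delocalization}, which the present lemma does not assume, and even then leaves spurious logarithmic factors that the stated bound for $\bT^{(1)}$ does not permit.

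The paper does not invoke Lemma~\ref{lemma:Remainder_analysis_II} here at all. It bounds $\|\bU_{\bP_+}\transpose\bE\bR_{\bU_+}\|_2$ directly by inserting the four-term definition \eqref{def:RU+} of $\bR_{\bU_+}$. The point is that three of the four summands ($\bR_2,\bR_3,\bR_4$) have the form $\bU_{\bP_+}(\cdots)$, so left-multiplying by $\bU_{\bP_+}\transpose\bE$ produces factors of $\alpha_n=\|\bU_\bP\transpose\bE\bU_\bP\|_2$ or $\|\bU_{\bP_+}\transpose\bE^2\bU_{\bP_+}\|_2=\widetilde{O}_{\prob}(n\rho_n)$ rather than the crude $\|\bE\|_2$; the remaining summand $\bR_1=\bE(\bU_{\bA_+}-\bU_{\bP_+}\bW^*_+)\bS_{\bA_+}^{-1}$ is handled by $\|\bE\|_2^2$ times the Davis--Kahan bound $\widetilde{O}_{\prob}(\sqrt{n\rho_n}/\Delta_n)$, which is exactly where the extra $\sqrt{n\rho_n}/\Delta_n$ you identified as missing comes from. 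The rewriting $\bE\bU_{\bA_+}=\bU_{\bA_+}\bS_{\bA_+}-\bP\bU_{\bA_+}$ that you sketch is the machinery behind Lemma~\ref{lemma:Remainder_analysis_II} itself, but that lemma is used only later (in Lemma~\ref{lemma:Second_order_expansion_II}); here the projection by $\bU_{\bP_+}\transpose$ does all the work.
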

\begin{proof}
Write
\begin{align*}
\bR_2
& = \bU_{\bP_+}\bS_{\bP_+}(\bU_{\bP_+}\transpose\bU_{\bA_+}\bS_{\bA_+}^{-1} - \bS_{\bP_+}^{-1}\bU_{\bP_+}\transpose\bU_{\bA_+})\\
& = \bU_{\bP_+}(\bS_{\bP_+}\bU_{\bP_+}\transpose\bU_{\bA_+} - \bU_{\bP_+}\transpose\bU_{\bA_+}\bS_{\bA_+})\bS_{\bA_+}^{-1}\\
& = \bU_{\bP_+}(\bS_{\bP_+}\bU_{\bP_+}\transpose\bU_{\bA_+} - \bU_{\bP_+}\transpose\bU_{\bA_+}\bS_{\bA_+})(\bS_{\bA_+}^{-1} - \bS_{\bP_+}^{-1})
\\&\quad
 + \bU_{\bP_+}(\bS_{\bP_+}\bU_{\bP_+}\transpose\bU_{\bA_+} - \bU_{\bP_+}\transpose\bU_{\bA_+}\bS_{\bA_+})\bS_{\bP_+}^{-1}.
\end{align*}
By Result \ref{result:SU_exchange_concentration} and Lemma \ref{lemma:Remainder_analysis_I},
the first term in the decomposition has the following bound:
\begin{align*}
&\bU_{\bP_+}(\bS_{\bP_+}\bU_{\bP_+}\transpose\bU_{\bA_+} - \bU_{\bP_+}\transpose\bU_{\bA_+}\bS_{\bA_+})(\bS_{\bA_+}^{-1} - \bS_{\bP_+}^{-1})\\
&\quad = \bU_{\bP_+}\times O\left(
\|
\bS_{\bP_+}\bU_{\bP_+}\transpose\bU_{\bA_+} - \bU_{\bP_+}\transpose\bU_{\bA_+}\bS_{\bA_+}\|_2\right)\times O\left(\|\bW^*_+\bS_{\bA_+}^{-1} - \bS_{\bP_+}^{-1}\bW^*_+
\|_2\right)\\
&\quad = \bU_{\bP_+}\times \widetilde{O}_{\prob}\left\{\frac{\alpha_n^2}{\Delta_n^2} + \frac{(n\rho_n)^2}{\Delta_n^4}\right\}.
\end{align*}
We next focus on the second term. Following the derivation in Result \ref{result:SU_exchange_concentration}, we have
\begin{align*}
&\bU_{\bP_+}(\bS_{\bP_+}\bU_{\bP_+}\transpose\bU_{\bA_+} - \bU_{\bP_+}\transpose\bU_{\bA_+}\bS_{\bA_+})\bS_{\bP_+}^{-1}\\
&\quad = -\bU_{\bP_+}\{\bU_{\bP_+}\transpose\bE\bU_{\bP_+}\bW^*_+ + \bU_{\bP_+}\transpose\bE(\bU_{\bA_+} - \bU_{\bP_+}\bW^*_+)\}\bS_{\bP_+}^{-1}\\
&\quad = -\bU_{\bP_+}\bU_{\bP_+}\transpose\bE\bU_{\bP_+}\bS_{\bP_+}^{-1}\bW^*_+\\
&\quad\quad - \bU_{\bP_+}\bU_{\bP_+}\transpose\bE(\bE\bU_{\bP_+}\bS_{\bP_+}^{-1}\bW^*_+ + \bR_{\bU_+} + \bU_{\bP_-}\bS_{\bP_-}\bU_{\bP_-}\transpose\bU_{\bA_+}\bS_{\bA_+}^{-1})\bS_{\bP_+}^{-1}\\
&\quad = -\bU_{\bP_+}\bU_{\bP_+}\transpose\bE\bU_{\bP_+}\bS_{\bP_+}^{-1}\bW^*_+ - \bU_{\bP_+}\bU_{\bP_+}\transpose\bE^2\bU_{\bP_+}\bS_{\bP_+}^{-2}\bW^*_+ - \bU_{\bP_+}\bU_{\bP_+}\transpose\bE\bR_{\bU_+}\bS_{\bP_+}^{-1}\\
&\quad\quad - \bU_{\bP_+}\bU_{\bP_+}\transpose\bE\bU_{\bP_-}\bS_{\bP_-}\bU_{\bP_-}\transpose\bU_{\bA_+}\bS_{\bA_+}^{-1}\bS_{\bP_+}^{-1}.
\end{align*}
By Result \ref{result:Noise_matrix_concentration} and Lemma \ref{lemma:Eigenvector_angle_analysis}, the last term is
\begin{align*}
&\bU_{\bP_+}\bU_{\bP_+}\transpose\bE\bU_{\bP_-}\bS_{\bP_-}\bU_{\bP_-}\transpose\bU_{\bA_+}\bS_{\bA_+}^{-1}\bS_{\bP_+}^{-1}\\
&\quad = 
\bU_{\bP_+}\times O(\|\bU_{\bP_+}\transpose\bE\bU_{\bP_-}\|_2)\times O(\|\bS_{\bP_-}\|_2\|\bS_\bA^{-1}\|_2\|\bS_\bP^{-1}\|_2)\times O(\|\bU_{\bP_-}\transpose\bU_{\bA_+}\|_2)\\
&\quad = \bU_{\bP_+}\times \widetilde{O}_{\prob}\left\{\frac{\alpha_n^2}{\Delta_n^2} + \frac{(n\rho_n)^2}{\Delta_n^4}\right\}.
\end{align*}
It is sufficient to show that $\bU_{\bP_+}\transpose\bE\bR_{\bU_+}\bS_{\bP_+}^{-1} = \widetilde{O}_{\prob}\{\alpha_n^2/\Delta_n^2 + (n\rho_n)^{3/2}/\Delta_n^3\}$. We apply the formula in \eqref{eqn:first_order_eigenvector_expansion} for $\bR_{\bU_+}$ and write
\begin{align*}
\bU_{\bP_+}\transpose\bE\bR_{\bU_+}\bS_{\bP_+}^{-1}
& = \bU_{\bP_+}\transpose\bE\{\bU_{\bP_+}(\bS_{\bP_+}\bU_{\bP_+}\transpose\bU_{\bA_+} - \bU_{\bP_+}\transpose\bU_{\bA_+}\bS_{\bA_+})\bS_{\bA_+}^{-1}
\}\bS_{\bP_+}^{-1}\\
&\quad + \bU_{\bP_+}\transpose\bE\bU_{\bP_+}(\bU_{\bP_+}\transpose\bU_{\bA_+} - \bW^*_+)\bS_{\bP_+}^{-1}\\
&\quad + \bU_{\bP_+}\transpose\bE^2\bU_{\bP_+}(\bW^*_+\bS_{\bA_+}^{-1} - \bS_{\bP_+}^{-1}\bW^*_+)\bS_{\bP_+}^{-1}\\
&\quad + \bU_{\bP_+}\transpose\bE^2(\bU_{\bA_+} - \bU_{\bP_+}\bW^*_+)\bS_{\bA_+}^{-1}\bS_\bP^{-1}
\end{align*}
Observe that
\begin{align*}
&\|\bU_{\bP_+}\transpose\bE\bU_{\bP_+}(\bS_{\bP_+}\bU_{\bP_+}\transpose\bU_{\bA_+} - \bU_{\bP_+}\transpose\bU_{\bA_+}\bS_{\bA_+})\bS_{\bA_+}^{-1}\bS_{\bP_+}^{-1}\|_2\\
&\quad \leq \|\bU_{\bP_+}\transpose\bE\bU_{\bP_+}\|_2\|\bS_{\bP_+}\bU_{\bP_+}\transpose\bU_{\bA_+} - \bU_{\bP_+}\transpose\bU_{\bA_+}\bS_{\bA_+}\|_2\|\bS_{\bA_+}^{-1}\|_2\|\bS_{\bP_+}^{-1}\|_2
\\&\quad
 = \widetilde{O}_{\prob}\left\{\frac{\alpha_n^2}{\Delta_n^2} + \frac{(n\rho_n)^2}{\Delta_n^4}\right\}
\end{align*}
by Result \ref{result:Noise_matrix_concentration} and Result \ref{result:SU_exchange_concentration},
\begin{align*}
&\|\bU_{\bP_+}\transpose\bE^2\bU_{\bP_+}(\bW^*_+\bS_{\bA_+}^{-1} - \bS_{\bP_+}^{-1}\bW^*_+)\bS_{\bP_+}^{-1}\|_2\\
&\quad\leq\|\bU_{\bP_+}\transpose\bE^2\bU_{\bP_+}\|_2\|\bW^*_+\bS_{\bA_+}^{-1} - \bS_{\bP_+}^{-1}\bW^*_+\|_2\|\bS_{\bP_+}^{-1}\|_2
 = \widetilde{O}_{\prob}\left\{\frac{\alpha_n^2}{\Delta_n^2} + \frac{(n\rho_n)^2}{\Delta_n^4}\right\}
\end{align*}
by Lemma \ref{lemma:Remainder_analysis_I} and Result \ref{result:Noise_matrix_moment_bound}, 
\begin{align*}
\|\bU_{\bP_+}\transpose\bE\bU_{\bP_+}(\bU_{\bP_+}\transpose\bU_{\bA_+} - \bW^*_+)\bS_{\bP_+}^{-1}\|_2
&\leq \|\bU_{\bP_+}\transpose\bE\bU_{\bP_+}\|_2\|\bU_{\bP_+}\transpose\bU_{\bA_+} - \bW^*_+\|_2\|\bS_{\bP_+}^{-1}\|_2
\\&
 = \widetilde{O}_{\prob}\left\{\frac{\alpha_n^2}{\Delta_n^2} + \frac{(n\rho_n)^2}{\Delta_n^4}\right\}
\end{align*}
by Result \ref{result:Noise_matrix_concentration} and Lemma \ref{lemma:Eigenvector_angle_analysis}, and
\begin{align*}
\|\bU_{\bP_+}\transpose\bE^2(\bU_{\bA_+} - \bU_{\bP_+}\bW^*_+)\bS_{\bA_+}^{-1}\bS_{\bP_+}^{-1}\|_2
&\leq \|\bE\|_2^2\|\bU_{\bA_+} - \bU_{\bP_+}\bW^*_+\|_2\|\bS_{\bA_+}^{-1}\|_2\|\bS_{\bP_+}^{-1}\|_2
\\&
 = \widetilde{O}_{\prob}\left\{\frac{(n\rho_n)^{3/2}}{\Delta_n^3}\right\}
\end{align*}
by Result \ref{result:Noise_matrix_concentration} and Lemma \ref{lemma:Eigenvector_zero_order_deviation}.
The proof is thus completed.
\end{proof}

\begin{lemma}\label{lemma:Remainder_analysis_II}
Suppose Assumptions \ref{assumption:Signal_strength}--\ref{assumption:Noise_matrix_distribution} in the manuscript hold. Then for any $\xi > 1$,
\begin{align*}
&\|\bE\bR_{\bU_+}\|_{2\to\infty}\\
&\quad = \widetilde{O}_{\prob}\left[\|\bU_\bP\|_{2\to\infty}\left\{\frac{(n\rho_n)^{3/2}(\log n)^{3\xi}}{\Delta_n^2} + \frac{(n\rho_n)^{1/2}(\log n)^\xi}{\Delta_n}\left(\alpha_n + \frac{n\rho_n}{\Delta_n}\right)\right\}\right].
\end{align*}
\end{lemma}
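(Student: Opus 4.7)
The plan is to decompose $\bE\bR_{\bU_+}=\bE\bR_1+\bE\bR_2+\bE\bR_3+\bE\bR_4$ as in \eqref{def:RU+} and bound each piece in two-to-infinity norm using the inequality $\|\bM_1\bM_2\|_{2\to\infty}\leq \|\bM_1\|_{2\to\infty}\|\bM_2\|_2$ together with the row-wise concentration of $\bE\bU_\bP$ and $\bE^2\bU_\bP$ supplied by Lemma~\ref{lemma:Rowwise_higher_order_concentration}, and the spectral-norm bounds on the various ``near-commutator'' terms collected in Lemmas~\ref{lemma:Eigenvector_angle_analysis}, \ref{lemma:Eigenvector_zero_order_deviation}, and \ref{lemma:Remainder_analysis_I}, as well as Result~\ref{result:SU_exchange_concentration}.

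For $\bE\bR_2$ and $\bE\bR_3$, I would factor out $\bE\bU_{\bP_+}$ on the left and use $\|\bE\bU_{\bP_+}\|_{2\to\infty}=\widetilde{O}_{\prob}\{\|\bU_\bP\|_{2\to\infty}(n\rho_n)^{1/2}(\log n)^{\xi}\}$; the remaining factor is $(\bS_{\bP_+}\bU_{\bP_+}\transpose\bU_{\bA_+}-\bU_{\bP_+}\transpose\bU_{\bA_+}\bS_{\bA_+})\bS_{\bA_+}^{-1}$ and $(\bU_{\bP_+}\transpose\bU_{\bA_+}-\bW_+^*)$ respectively. By Result~\ref{result:SU_exchange_concentration} the spectral norm of the former is $\widetilde{O}_{\prob}\{(\alpha_n+n\rho_n/\Delta_n)/\Delta_n\}$, and by Lemma~\ref{lemma:Eigenvector_angle_analysis} the latter is $\widetilde{O}_{\prob}(\alpha_n/\Delta_n+n\rho_n/\Delta_n^2)$. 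Both contributions are dominated by the second bracket in the target bound. For $\bE\bR_4$, the factorisation $\bE\bR_4=\bE^2\bU_{\bP_+}(\bW_+^*\bS_{\bA_+}^{-1}-\bS_{\bP_+}^{-1}\bW_+^*)$ gives $\|\bE^2\bU_{\bP_+}\|_{2\to\infty}=\widetilde{O}_{\prob}\{\|\bU_\bP\|_{2\to\infty}(n\rho_n)(\log n)^{2\xi}\}$ times Lemma~\ref{lemma:Remainder_analysis_I}, and the resulting bound is dominated by the $(n\rho_n)^{1/2}(\log n)^\xi(\alpha_n+n\rho_n/\Delta_n)/\Delta_n$ term once we invoke $\Delta_n\gtrsim n^{\beta_\Delta}\sqrt{n\rho_n}$ from Assumption~\ref{assumption:Signal_strength} to absorb an extra $(\log n)^\xi\sqrt{n\rho_n}/\Delta_n$ factor.

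The only piece that requires genuine work is $\bE\bR_1=\bE^2(\bU_{\bA_+}-\bU_{\bP_+}\bW_+^*)\bS_{\bA_+}^{-1}$, since na\"ively using $\|\bU_{\bA_+}-\bU_{\bP_+}\bW_+^*\|_{2\to\infty}$ from Lemma~\ref{lemma:Eigenvector_zero_order_deviation} together with $\|\bE\|_2^2=\widetilde{O}_\prob(n\rho_n)$ gives a rate that is too crude. The key is to re-apply Lemma~\ref{lemma:Eigenvector_zero_order_deviation} to split
\[
\bE\bR_1=\bE^3\bU_{\bP_+}\bS_{\bP_+}^{-1}\bW_+^*\bS_{\bA_+}^{-1}+\bE^2\bR_\bU\bS_{\bA_+}^{-1},
\]
so that the first summand is handled by the $l=3$ case of Lemma~\ref{lemma:Rowwise_higher_order_concentration}, yielding exactly $\widetilde{O}_\prob\{\|\bU_\bP\|_{2\to\infty}(n\rho_n)^{3/2}(\log n)^{3\xi}/\Delta_n^2\}$, matching the first term in the advertised bound. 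For the residual summand I would use $\|\bE^2\bR_\bU\bS_{\bA_+}^{-1}\|_{2\to\infty}\leq \|\bE\|_2^2\|\bR_\bU\|_{2\to\infty}/\Delta_n$ and the estimate \eqref{eq:decompose} for $\|\bR_\bU\|_{2\to\infty}$, verifying afterwards via Assumption~\ref{assumption:Signal_strength} that the resulting expression is again absorbed by the two stated terms.

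The main obstacle, as just indicated, is that any bound for $\bE\bR_1$ obtained by treating $\bE^2$ through its spectral norm $n\rho_n$ is insufficient; one must peel off an additional copy of $\bE$ and apply the $2\to\infty$ row-wise concentration of $\bE^3\bU_{\bP_+}$ from Lemma~\ref{lemma:Rowwise_higher_order_concentration}. Once this is done, each remaining piece reduces to a routine application of the product inequality $\|\bM_1\bM_2\|_{2\to\infty}\leq \|\bM_1\|_{2\to\infty}\|\bM_2\|_2$, and a final comparison using $\Delta_n\gtrsim n^{\beta_\Delta}\sqrt{n\rho_n}$ confirms that all error terms fit into the two-term bound in the statement.
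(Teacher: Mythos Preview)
Your treatment of $\bE\bR_2$, $\bE\bR_3$, and $\bE\bR_4$ is fine, and your idea to peel off one more $\bE$ inside $\bR_1$ to expose $\bE^3\bU_{\bP_+}$ is exactly right. The problem is the residual term: the inequality you invoke,
\[
\|\bE^2\bR_\bU\bS_{\bA_+}^{-1}\|_{2\to\infty}\leq \|\bE\|_2^2\,\|\bR_\bU\|_{2\to\infty}\,\|\bS_{\bA_+}^{-1}\|_2,
\]
is \emph{false} as a matrix norm inequality. In general $\|\bA\bB\|_{2\to\infty}\leq \|\bA\|_{2\to\infty}\|\bB\|_2$ holds, but $\|\bA\bB\|_{2\to\infty}\leq \|\bA\|_2\|\bB\|_{2\to\infty}$ does not (take $\bA=\begin{psmallmatrix}1&1\\0&0\end{psmallmatrix}$, $\bB=\begin{psmallmatrix}1\\1\end{psmallmatrix}$). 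The only universally valid route through the spectral norm gives $\|\bE^2\bR_\bU\bS_{\bA_+}^{-1}\|_{2\to\infty}\leq \|\bE\|_2^2\|\bR_\bU\|_2/\Delta_n=\widetilde O_{\prob}\{(n\rho_n)^{3/2}/\Delta_n^2\}$, which lacks the crucial factor $\|\bU_\bP\|_{2\to\infty}$ and therefore does not fit inside the stated bound under Assumptions \ref{assumption:Signal_strength}--\ref{assumption:Noise_matrix_distribution} alone.

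The underlying difficulty is that $\bR_\bU$ is random and depends on $\bE$, so you cannot treat $\bE^2\bR_\bU$ row by row via Lemma~\ref{lemma:Rowwise_higher_order_concentration}. The paper resolves this by using the series representation \eqref{eqn:eigenvector_first_order_remainder} of $\bR_{\bU_+}$, in which every summand has the form $\bE^m\bU_\bP$ (for deterministic $\bU_\bP$) multiplied on the right by a matrix of controlled spectral norm. After multiplying by $\bE$, each term becomes $\bE^{m+1}\bU_\bP\times(\text{small})$, and Lemma~\ref{lemma:Rowwise_higher_order_concentration} applies directly to $\|\bE^{m+1}\bU_\bP\|_{2\to\infty}$. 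In particular, the tail $\sum_{m\geq 3}\bE^m\bU_\bP\bS_\bP\bU_\bP^{\mathrm T}\bU_{\bA_+}\bS_{\bA_+}^{-m}$ produces exactly the $(n\rho_n)^{3/2}(\log n)^{3\xi}/\Delta_n^2$ term. To repair your argument you would have to unroll $\bR_\bU$ into such pieces as well, which amounts to the paper's approach.
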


\begin{proof}
Recall the definition of $\mathbf{R}_{\mathbf{U}_+}$ in \eqref{def:RU+}. We first introduce another representation of the remainder term $\mathbf{R}_{\mathbf{U}_+}$ based on the derivation in \cite{cape2019signal}:
\begin{align}
\label{eqn:eigenvector_first_order_remainder}
\begin{aligned}
&\bR_{\bU_+} = \bR_2  + \bR_3 + \bR_2^{(1)} + \bR_{2,\bW}^{(2)}   + \bE\bU_{\bP_-}\bS_{\bP_-}\bU_{\bP_-}\transpose \bU_{\bA_+}\bS_{\bA_+}^{-2}+ \bR_2^{(\infty)},\\
&\bR_2^{(1)} = \bE\bU_{\bP_+}\bS_{\bP_+}(\bU_{\bP_+}\transpose\bU_{\bA_+}\bS_{\bA_+}^{-2} - \bS_{\bP_+}^{-2}\bU_{\bP_+}\transpose\bU_{\bA_+}),\\
&{\color{black}\bR_{2,\bW}^{(2)} = \bE\bU_{\bP_+}\bS_{\bP_+}^{-1}(\bU_{\bP_+}\transpose\bU_{\bA_+} - \bW_+^*)} ,\\
&\bR_2^{(\infty)} = \sum_{m = 2}^\infty\bE^m\bP\bU_{\bA_+}\bS_{\bA_+}^{-(m + 1)}.
\end{aligned}
\end{align}
To be more specific, following a similar argument as \cite[$\mathsection$3.5]{cape2019signal}, we have the following facts:
\bea\label{ftsfrombmk}
\bU_{\bA_+} &= \sum_{m = 0 }^\infty \bE^m \bP \bU_{\bA_+}\bS_{\bA_+}^{-(m + 1)},
\\
\bE\bU_{\bA_+}\bS_{\bA_+}^{-1} &= \bE \bU_{\bP_+}\bS_{\bP_+}\bU_{\bP_+}\transpose \bU_{\bA_+}\bS_{\bA_+}^{-2}  +\bE \bU_{\bP_-}\bS_{\bP_-}\bU_{\bP_-}\transpose \bU_{\bA_+}\bS_{\bA_+}^{-2} 
\\
&\quad + \sum_{m = 2 }^\infty \bE^m \bP \bU_{\bA_+}\bS_{\bA_+}^{-(m + 1)}
\\
& = \bE \bU_{\bP_-}\bS_{\bP_-}\bU_{\bP_-}\transpose \bU_{\bA_+}\bS_{\bA_+}^{-2} + \bR_2^{(1)} +\bR_{2,\bW}^{(2)} + \bR_2^{(\infty)}  +  \bE\bU_{\bP_+}\bS_{\bP_+}^{-1}\bW_+^*.
\eae
Then by definitions in \eqref{def:RU+} and \eqref{ftsfrombmk}, we have
\bea\nonumber
\bR_{\bU_+} &= \bR_1 + \bR_2 + \bR_3 + \bR_4 = \bR_2 + \bR_3 +  \bE\bU_{\bA_+}\bS_{\bA_+}^{-1} - \bE\bU_{\bP_+}\bS_{\bP_+}^{-1}\bW_+^*
\\
& =  \bR_2 + \bR_3 + \bE \bU_{\bP_-}\bS_{\bP_-}\bU_{\bP_-}\transpose \bU_{\bA_+}\bS_{\bA_+}^{-2} + \bR_2^{(1)} +\bR_{2,\bW}^{(2)} + \bR_2^{(\infty)}  ,
\eae
and thus \eqref{eqn:eigenvector_first_order_remainder} holds.
\par
Applying the alternative form of $\bR_{\bU_+}$ in \eqref{eqn:eigenvector_first_order_remainder} with $\bW = \bW^*_+$, we have
\begin{align*}
\bE\bR_{\bU_+}
& = \bE\bU_{\bP_+}(\bS_{\bP_+}\bU_{\bP_+}\transpose\bU_{\bA_+} - \bU_{\bP_+}\transpose\bU_{\bA_+}\bS_{\bA_+})\bS_{\bA_+}^{-1}
 + \bE\bU_{\bP_+}(\bU_{\bP_+}\transpose\bU_{\bA_+} - \bW^*_+)\\
&\quad + \bE^2\bU_{\bP_+}\bS_{\bP_+}^{-1}(\bS_{\bP_+}^2\bU_{\bP_+}\transpose\bU_{\bA_+} - \bU_{\bP_+}\transpose\bU_{\bA_+}\bS_{\bA_+}^2)\bS_{\bA_+}^{-2}
\\&\quad
 + \bE^2\bU_{\bP_+}\bS_{\bP_+}^{-1}(\bU_{\bP_+}\transpose\bU_{\bA_+} - \bW^*_+) + \bE^2\bU_{\bP_-}\bS_{\bP_-}\bU_{\bP_-}\transpose\bU_{\bA_+}\bS_{\bA_+}^{-2}\\
&\quad + \sum_{m = 3}^\infty\bE^m\bU_{\bP}\bS_{\bP}\bU_{\bP}\transpose\bU_{\bA_+}\bS_{\bA_+}^{-m}.
\end{align*}
By Result \ref{result:Noise_matrix_rowwise_concentration}, Lemma \ref{lemma:Rowwise_higher_order_concentration}, and a union bound, we have 
\begin{align*}
\|\bE\bU_{\bP_+}\|_{2\to\infty} &= \widetilde{O}_{\prob}\{\|\bU_\bP\|_{2\to\infty}(n\rho_n)^{1/2}(\log n)^\xi\},\\
\|\bE^2\bU_{\bP_+}\|_{2\to\infty} &= \widetilde{O}_{\prob}\{\|\bU_\bP\|_{2\to\infty}(n\rho_n)(\log n)^{2\xi}\}.
\end{align*}
Then by Result \ref{result:SU_exchange_concentration} and Lemma \ref{lemma:Eigenvector_angle_analysis} and recalling $\alpha_n = \|\bU_{\bP}\transpose\bE\bU_{\bP}\|$, we have
\begin{align*}
&\|\bE\bU_{\bP_+}(\bS_{\bP_+}\bU_{\bP_+}\transpose\bU_{\bA_+} - \bU_{\bP_+}\transpose\bU_{\bA_+}\bS_{\bA_+})\bS_{\bA_+}^{-1}\|_{2\to\infty} \\
&\quad = \widetilde{O}_{\prob}\left\{\|\bU_\bP\|_{2\to\infty}\frac{(n\rho_n)^{1/2}(\log n)^{\xi} }{\Delta_n}\left(\alpha_n + \frac{n\rho_n}{\Delta_n}\right)\right\},\\
&\|\bE\bU_{\bP_+}(\bU_{\bP_+}\transpose\bU_{\bA_+} - \bW^*_+)\|_{2\to\infty} \\
&\quad = \widetilde{O}_{\prob}\left\{\|\bU_\bP\|_{2\to\infty}\frac{(n\rho_n)^{1/2}(\log n)^{\xi} }{\Delta_n}\left(\alpha_n + \frac{n\rho_n}{\Delta_n}\right)\right\},\\
&\|\bE^2\bU_{\bP_+}\bS_{\bP_+}^{-1}(\bU_{\bP_+}\transpose\bU_{\bA_+} - \bW^*_+)\|_{2\to\infty}\\
&\quad = \widetilde{O}_{\prob}\left\{\|\bU_\bP\|_{2\to\infty}\frac{(n\rho_n)(\log n)^{2\xi} }{\Delta_n^2}\left(\alpha_n + \frac{n\rho_n}{\Delta_n}\right)\right\},
\\
&\|\bE^2\bU_{\bP_-}\bS_{\bP_-}\bU_{\bP_-}\transpose\bU_{\bA_+}\bS_{\bA_+}^{-2}\|_{2\to\infty}\\
&\quad = \widetilde{O}_{\prob}\left\{\|\bU_\bP\|_{2\to\infty}\frac{(n\rho_n)(\log n)^{2\xi} }{\Delta_n^2}\left(\alpha_n + \frac{n\rho_n}{\Delta_n}\right)\right\}.
\end{align*}
Here we also use some preliminary properties of matrix perturbation and two-to-infinity norm; See e.g. \cite{cape2019signal}. By definition and Result \ref{result:Noise_matrix_concentration},
\begin{align*}
&\|\bS_{\bP_+}^2\bU_{\bP_+}\transpose\bU_{\bA_+} - \bU_{\bP_+}\transpose\bU_{\bA_+}\bS_{\bA_+}^2\|_2\\
&\quad = \|\bU_{\bP_+}\transpose\bP^2\bU_{\bA_+} - \bU_{\bP_+}\transpose\bA^2\bU_{\bA_+}\|_2\\
&\quad = \|\bU_{\bP_+}\transpose\bP^2\bU_{\bA_+} - \bU_{\bP_+}\transpose(\bP^2 + \bP\bE + \bE\bP + \bE^2)\bU_{\bA_+}\|_2\\
&\quad\leq 2\|\bP\|_2\|\bE\|_2 + \|\bE\|_2^2 = \widetilde{O}_{\prob}\{\Delta_n(n\rho_n)^{1/2}\}.
\end{align*}
It follows that 
\begin{align*}
&\|\bE^2\bU_{\bP_+}\bS_{\bP_+}^{-1}(\bS_{\bP_+}^2\bU_{\bP_+}\transpose\bU_{\bA_+} - \bU_{\bP_+}\transpose\bU_{\bA_+}\bS_{\bA_+}^2)\bS_{\bA_+}^{-2}\|_{2\to\infty}\\
&\quad\leq \|\bE^2\bU_{\bP_+}\|_{2\to\infty}\|\bS_{\bP_+}^{-1}\|_2\|\bS_{\bP_+}^2\bU_{\bP_+}\transpose\bU_{\bA_+} - \bU_{\bP_+}\transpose\bU_{\bA_+}\bS_{\bA_+}^2\|_2
\|\bS_{\bA_+}^{-2}\|_2
\\
&\quad = \widetilde{O}_{\prob}\left\{\|\bU_\bP\|_{2\to\infty}\frac{(n\rho_n)^{3/2}(\log n)^{2\xi}}{\Delta_n^2}\right\}.
\end{align*}
We now focus on the last term. Recalling Table~\ref{def:sum}, let $M$ be the smallest integer no less than $3$ such that $(\sqrt{n\rho_n}/\Delta_n)^{M - 2}\lesssim \|\bU_\bP\|_{2\to\infty}$. This is possible because there exists some $\beta_\Delta > 0$ such that $\sqrt{n\rho_n}/\Delta_n = O(n^{-\beta_\Delta})$ by Assumption \ref{assumption:Signal_strength}. Then by Lemma \ref{lemma:Rowwise_higher_order_concentration}, there exists some constant $C > 0$, such that
\begin{align*}
&\left\|\sum_{m = 3}^\infty\bE^m\bU_\bP\bS_\bP\bU_\bP\transpose\bU_{\bA_+}\bS_{\bA_+}^{-m}\right\|_{2\to\infty}\\
&\quad\leq \sum_{m = 3}^M\|\bE^m\bU_\bP\|_{2\to\infty}\|\bS_\bP\|_2\|\bS_{\bA_+}^{-1}\|_2^m + \|\bS_\bP\|_2\sum_{m = M + 1}^\infty \|\bE\|_2^m\|\bS_{\bA_+}^{-1}\|_2^m\\
&\quad= \|\bU_\bP\|_{2\to\infty}\sum_{m = 3}^M\widetilde{\mathcal{O}}_{\mathbb{P}}\left\{\frac{(n\rho_n)^{m/2}(\log n)^{m\xi}}{\Delta_n^{m - 1}}\right\} + \Delta_n\widetilde{\mathcal{O}}_{\mathbb{P}}\left[\sum_{m = M + 1}^\infty\left\{\frac{C(n\rho_n)^{1/2}}{\Delta_n}\right\}^m\right]\\
&\quad = \widetilde{O}_{\prob}\left\{\|\bU_\bP\|_{2\to\infty}\frac{(n\rho_n)^{3/2}(\log n)^{3\xi}}{\Delta_n^2}\right\} + \widetilde{O}_{\prob}\left\{
\frac{(n\rho_n)^{(M + 1)/2}}{\Delta_n^M}
\right\}\\
&\quad = \widetilde{O}_{\prob}\left\{\|\bU_\bP\|_{2\to\infty}\frac{(n\rho_n)^{3/2}(\log n)^{3\xi}}{\Delta_n^2}\right\}.
\end{align*}
The proof is thus completed.
\end{proof}

\begin{lemma}\label{lemma:Second_order_expansion_II}
Suppose Assumptions \ref{assumption:Signal_strength}--\ref{assumption:Noise_matrix_distribution} in the manuscript hold. Then for any $\xi > 1$,
\begin{align*}
\bR_1 = \bE(\bU_{\bA_+} - \bU_{\bP_+}\bW^*_+)\bS_{\bA_+}^{-1}
& = \bE^2\bU_{\bP_+}\bS_{\bP_+}^{-2}\bW^*_+ + \bT^{(2)},
\end{align*}
where $\bT^{(2)}$ satisfies
\begin{align*}
&\|\bT^{(2)}\|_{2\to\infty}\\
&\quad = \widetilde{O}_{\prob}\left[\|\bU_\bP\|_{2\to\infty}\left\{\frac{(n\rho_n)^{1/2}(\log n)^\xi}{\Delta_n^2}\left(\alpha_n + \frac{n\rho_n}{\Delta_n}\right)^2 + \frac{(n\rho_n)^{3/2}(\log n)^{3\xi}}{\Delta_n^3}\right\}\right].
\end{align*}
\end{lemma}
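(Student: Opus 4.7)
The plan is to substitute the eigenvector decomposition~\eqref{eqn:first_order_eigenvector_expansion} into $\bR_1 = \bE(\bU_{\bA_+} - \bU_{\bP_+}\bW_+^*)\bS_{\bA_+}^{-1}$, producing the three summands
\[
\bR_1 = \bE^2\bU_{\bP_+}\bS_{\bP_+}^{-1}\bW_+^*\bS_{\bA_+}^{-1} + \bE\bR_{\bU_+}\bS_{\bA_+}^{-1} + \bE\bU_{\bP_-}\bS_{\bP_-}\bU_{\bP_-}\transpose\bU_{\bA_+}\bS_{\bA_+}^{-2},
\]
and then to extract the target leading term $\bE^2\bU_{\bP_+}\bS_{\bP_+}^{-2}\bW_+^*$ from the first summand by adding and subtracting it, so that the first-summand residue becomes $\bE^2\bU_{\bP_+}\bS_{\bP_+}^{-1}(\bW_+^*\bS_{\bA_+}^{-1} - \bS_{\bP_+}^{-1}\bW_+^*)$. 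What remains is to estimate this residue together with the other two summands in two-to-infinity norm and to verify that they combine into the claimed form of $\bT^{(2)}$.

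For the first residue I would use sub-multiplicativity of $\|\cdot\|_{2\to\infty}$ combined with three estimates: the row-wise higher-order concentration $\|\bE^2\bU_{\bP_+}\|_{2\to\infty} = \widetilde{O}_{\prob}\{\|\bU_\bP\|_{2\to\infty}(n\rho_n)(\log n)^{2\xi}\}$ following from Lemma~\ref{lemma:Rowwise_higher_order_concentration} and a union bound, the eigenvalue-separation bound $\|\bS_{\bP_+}^{-1}\|_2 = O(1/\Delta_n)$ from Assumptions~\ref{assumption:Signal_strength}--\ref{assumption:Eigenvalue_separation}, and the swap-error estimate $\|\bW_+^*\bS_{\bA_+}^{-1} - \bS_{\bP_+}^{-1}\bW_+^*\|_2 = \widetilde{O}_{\prob}(\alpha_n/\Delta_n^2 + n\rho_n/\Delta_n^3)$ from Lemma~\ref{lemma:Remainder_analysis_I}. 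Multiplying these bounds and applying the elementary inequality $(n\rho_n/\Delta_n)(\alpha_n + n\rho_n/\Delta_n) \leq (\alpha_n + n\rho_n/\Delta_n)^2$ produces the squared factor $(\alpha_n + n\rho_n/\Delta_n)^2$ that appears in the claim. The second summand $\bE\bR_{\bU_+}\bS_{\bA_+}^{-1}$ is controlled directly by Lemma~\ref{lemma:Remainder_analysis_II} together with $\|\bS_{\bA_+}^{-1}\|_2 = \widetilde{O}_{\prob}(1/\Delta_n)$, which follows from Weyl's inequality and Result~\ref{result:Noise_matrix_concentration}. The third summand is factored as $\bE\bU_{\bP_-}\cdot \bS_{\bP_-}\cdot \bU_{\bP_-}\transpose\bU_{\bA_+}\cdot\bS_{\bA_+}^{-2}$ and bounded via $\|\bE\bU_{\bP_-}\|_{2\to\infty} = \widetilde{O}_{\prob}\{\|\bU_\bP\|_{2\to\infty}(n\rho_n)^{1/2}(\log n)^\xi\}$, $\|\bS_{\bP_-}\|_2 = O(\Delta_n)$, and $\|\bU_{\bP_-}\transpose\bU_{\bA_+}\|_2 = \widetilde{O}_{\prob}(\alpha_n/\Delta_n + n\rho_n/\Delta_n^2)$; the last bound follows because $\bU_{\bP_-}\transpose\bU_{\bA_+}$ is an off-diagonal block of $\bU_\bP\transpose\bU_\bA - \bW^*$, so Lemma~\ref{lemma:Eigenvector_angle_analysis} applies directly.

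The main technical difficulty is already encapsulated in Lemma~\ref{lemma:Remainder_analysis_II}, whose proof relied on the alternative representation~\eqref{eqn:eigenvector_first_order_remainder} of $\bR_{\bU_+}$ and a careful truncation of the Neumann tail $\bR_2^{(\infty)}$ using the row-wise higher-order bounds on $\bE^m\bU_\bP$. Once that lemma is in place, the proof of the present lemma becomes a norm-accounting exercise: tracking the exponents of $\Delta_n$, $n\rho_n$, $(\log n)^\xi$, and of $(\alpha_n + n\rho_n/\Delta_n)$ in each of the three pieces, and verifying that they collapse into the two-summand structure of the claim. Because the squared factor $(\alpha_n + n\rho_n/\Delta_n)^2$ is naturally produced only by the first residue, this final step also requires the elementary observations $n\rho_n/\Delta_n \leq \alpha_n + n\rho_n/\Delta_n$ and $(\log n)^\xi \lesssim (n\rho_n)^{1/2}$ (valid under Assumption~\ref{assumption:Signal_strength}, which guarantees polynomial growth of $n\rho_n$) to absorb the linear-in-$(\alpha_n + n\rho_n/\Delta_n)$ contributions from the second and third summands into either the squared factor or the explicit $(n\rho_n)^{3/2}(\log n)^{3\xi}/\Delta_n^3$ summand.
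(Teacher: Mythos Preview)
Your proposal is correct and follows essentially the same route as the paper: substitute the decomposition~\eqref{eqn:first_order_eigenvector_expansion} into $\bR_1$, isolate the leading term $\bE^2\bU_{\bP_+}\bS_{\bP_+}^{-2}\bW_+^*$, and bound each residue via Lemma~\ref{lemma:Rowwise_higher_order_concentration}, Lemma~\ref{lemma:Remainder_analysis_I}, Lemma~\ref{lemma:Remainder_analysis_II}, and Lemma~\ref{lemma:Eigenvector_angle_analysis}. The only cosmetic difference is that the paper first replaces $\bS_{\bA_+}^{-1}$ by $\bS_{\bP_+}^{-1}$ and then substitutes the expansion into both the main part and the correction, producing five residue terms rather than your three; since $\|\bS_{\bA_+}^{-1}\|_2=\widetilde O_{\prob}(1/\Delta_n)$ directly, your coarser grouping works just as well, and the paper absorbs the $\bE^2\bU_{\bP_+}\bS_{\bP_+}^{-1}(\bW_+^*\bS_{\bA_+}^{-1}-\bS_{\bP_+}^{-1}\bW_+^*)$ residue into the $(n\rho_n)^{3/2}(\log n)^{3\xi}/\Delta_n^3$ summand via $\alpha_n+n\rho_n/\Delta_n=\widetilde O_{\prob}(\sqrt{n\rho_n})$ rather than into the squared factor as you do, but both absorptions are valid.
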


\begin{proof}
By Lemma \ref{lemma:Remainder_analysis_I} and Lemma \ref{lemma:Rowwise_higher_order_concentration}, we have
\begin{align*}
\bE(\bU_{\bA_+} - \bU_{\bP_+}\bW_+^*)\bS_{\bA_+}^{-1}
& = \bE^2\bU_{\bP_+}\bS_{\bP_+}^{-2}\bW^*_+ + \bE\bR_{\bU_+}\bS_{\bP_+}^{-1}\\
&\quad + \bE\bU_{\bP_-}\bS_{\bP_-}\bU_{\bP_-}\transpose\bU_{\bA_+}\bS_{\bA_+}^{-1}\bS_{\bP_+}^{-1}\\
&\quad + \bE(\bU_{\bA_+} - \bU_{\bP_+}\bW^*_+)(\bS_{\bA_+}^{-1} - \bS_{\bP_+}^{-1})\\
& = \bE^2\bU_{\bP_+}\bS_{\bP_+}^{-2}\bW^*_+ + \bE\bR_{\bU_+}\bS_{\bP_+}^{-1}\\
&\quad + \bE\bU_{\bP_-}\bS_{\bP_-}\bU_{\bP_-}\transpose\bU_{\bA_+}\bS_{\bA_+}^{-1}\bS_{\bP_+}^{-1}\\
&\quad + \bE^2\bU_{\bP_+}\bS_{\bP_+}^{-1}(\bW^*_+\bS_{\bA_+}^{-1} - \bS_{\bP_+}^{-1}\bW_+^*)\\
&\quad + \bE\bR_{\bU_+}(\bW_+^*\bS_{\bA_+}^{-1} - \bS_{\bP_+}^{-1}\bW_+^*)\bW_+^*\\
&\quad + \bE\bU_{\bP_-}\bS_{\bP_-}\bU_{\bP_-}\transpose\bU_{\bA_+}\bS_{\bA_+}^{-1}(\bW^*_+\bS_{\bA_+}^{-1} - \bS_{\bP_+}^{-1}\bW_+^*)\bW_+^*.
\end{align*}
For the second term, we have
\begin{align*}
&\|\bE\bR_{\bU_+}\bS_{\bP_+}^{-1}\|_{2\to\infty}\\
&\quad\leq \|\bE\bR_{\bU_+}\|_{2\to\infty}\|\bS_{\bP_+}^{-1}\|_2\\
&\quad = \widetilde{O}_{\prob}\left[\|\bU_\bP\|_{2\to\infty}\left\{
\frac{(n\rho_n)^{1/2}(\log n)^\xi}{\Delta_n^2}\left(\alpha_n + \frac{n\rho_n}{\Delta_n}\right) + \frac{(n\rho_n)^{3/2}(\log n)^{3\xi}}{\Delta_n^3}
\right\}\right]
\end{align*}
by Lemma \ref{lemma:Remainder_analysis_II}. For the third term, we have
\begin{align*}
&\|\bE\bU_{\bP_-}\bS_{\bP_-}\bU_{\bP_-}\transpose\bU_{\bA_+}\bS_{\bA_+}^{-1}\bS_{\bP_+}^{-1}\|_{2\to\infty}\\
&\quad\leq \|\bE\bU_{\bP_-}\|_{2\to\infty}\|\bS_{\bP_-}\|_2\|\bU_{\bP_-}\transpose\bU_{\bA_+}\|_2\|\bS_{\bA_+}^{-1}\|_2\|\bS_{\bP_+}^{-1}\|_2\\
&\quad = \widetilde{O}_{\prob}\left\{\|\bU_\bP\|_{2\to\infty}\frac{(n\rho_n)^{1/2}(\log n)^{\xi}}{\Delta_n^2}\left(\alpha_n + \frac{n\rho_n}{\Delta_n}\right)\right\}
\end{align*}
by Lemma \ref{lemma:Eigenvector_angle_analysis} and Result \ref{result:Noise_matrix_rowwise_concentration}.
For the fourth term, we have
\begin{align*}
&\|\bE^2\bU_{\bP_+}\bS_{\bP_+}^{-1}(\bW^*_+\bS_{\bA_+}^{-1} - \bS_{\bP_+}^{-1}\bW_+^*)\|_{2\to\infty}\\
&\quad\leq \|\bE^2\bU_{\bP_+}\|_{2\to\infty}\|\bS_{\bP_+}^{-1}\|_2\|\bW^*_+\bS_{\bA_+}^{-1} - \bS_{\bP_+}^{-1}\bW_+^*\|_2\\
&\quad = \widetilde{O}_{\prob}\left\{\|\bU_\bP\|_{2\to\infty}\frac{(n\rho_n)(\log n)^{2\xi}}{\Delta_n^3}\left(\alpha_n + \frac{n\rho_n}{\Delta_n}\right)\right\}\\
&\quad = \widetilde{O}_{\prob}\left\{\|\bU_\bP\|_{2\to\infty}\frac{(n\rho_n)(\log n)^{2\xi}}{\Delta_n^3}\|\bE\|_2 + \|\bU_\bP\|_{2\to\infty}\frac{(n\rho_n)^{3/2}(\log n)^{2\xi}}{\Delta_n^3}\times\frac{(n\rho_n)^{1/2}}{\Delta_n}\right\}\\
&\quad = \widetilde{O}_{\prob}\left\{\|\bU_\bP\|_{2\to\infty}\frac{(n\rho_n)^{3/2}(\log n)^{2\xi}}{\Delta_n^3}\right\}
\end{align*}
by Lemma \ref{lemma:Rowwise_higher_order_concentration} and Lemma \ref{lemma:Eigenvector_angle_analysis}. For the fifth term, we have
\begin{align*}
&\|\bE\bR_{\bU_+}(\bW^*_+\bS_{\bA_+}^{-1} - \bS_{\bP_+}^{-1}\bW^*_+)\bW^*_+\|_{2\to\infty}\\
&\quad\leq \|\bE\bR_{\bU_+}\|_{2\to\infty}\|\bW^*_+\bS_{\bA_+}^{-1} - \bS_{\bP_+}^{-1}\bW^*_+\|_2\\
&\quad = \widetilde{O}_{\prob}\left[\|\bU_\bP\|_{2\to\infty}\left\{
\frac{(n\rho_n)^{1/2}(\log n)^\xi}{\Delta_n^3}\left(\alpha_n + \frac{n\rho_n}{\Delta_n}\right) + \frac{(n\rho_n)^{3/2}(\log n)^{3\xi}}{\Delta_n^3}
\right\}\right]
\end{align*}
by Lemma \ref{lemma:Remainder_analysis_I} and Lemma \ref{lemma:Remainder_analysis_II}. For the last term, we have
\begin{align*}
&\|\bE\bU_{\bP_-}\bS_{\bP_-}\bU_{\bP_-}\transpose\bU_{\bA_+}\bS_{\bA_+}^{-1}(\bW^*_+\bS_{\bA_+} - \bS_{\bP_+}\bW_+^*)\bW_+^*\|_{2\to\infty}\\
&\quad\leq \|\bE\bU_{\bP_-}\|_{2\to\infty}\|\bS_{\bP_-}\|_2\|\bU_{\bP_-}\transpose\bU_{\bA_+}\|_2\|\bS_{\bA_+}^{-1}\|_2\|\bW^*_+\bS_{\bA_+}^{-1} - \bS_{\bP_+}^{-1}\bW_+^*\|_2\\
&\quad = \widetilde{O}_{\prob}\left\{\|\bU_\bP\|_{2\to\infty}\frac{(n\rho_n)^{1/2}(\log n)^\xi}{\Delta_n^2}\left(\alpha_n + \frac{n\rho_n}{\Delta_n}\right)^2\right\}.
\end{align*}
The proof is thus completed. 
\end{proof}

\begin{proof}[Proof of Theorem \ref{thm:Eigenvector_Expansion}]
By the decomposition \eqref{eqn:first_order_eigenvector_expansion}, we have
\begin{align*}
\bU_{\bA_+} - \bU_{\bP_+}\bW_+^*
& = \bE\bU_{\bP_+}\bS_{\bP_+}^{-1}\bW_+^* + \bR_{\bU_+} + \bU_{\bP_-}\bS_{\bP_-}\bU_{\bP_-}\transpose\bU_{\bA_+}\bS_{\bA_+}^{-1}\\
& = \bE\bU_{\bP_+}\bS_{\bP_+}^{-1}\bW_+^* + \bR_{\bU_+} + \bU_{\bP_-}\bS_{\bP_-}\bU_{\bP_-}\transpose\bU_{\bA_+}\bS_{\bP_+}^{-1} + \bT^{(3)},
\end{align*}
where, recalling that $\bW_+^*$ is a diagonal matrix and $(\bW_+^*) ^2$ is an identity matrix
, we have 
\begin{align*}
\bT^{(3)} = \bU_{\bP_-}\bS_{\bP_-}\bU_{\bP_-}\transpose\bU_{\bA_+}(\bW^*_+\bS_{\bA_+}^{-1} - \bS_{\bP_+}^{-1}\bW_+^*)\bW_+^*
\end{align*}
and by Lemma \ref{lemma:Eigenvector_angle_analysis} and Lemma \ref{lemma:Remainder_analysis_I},
\[
\|\bT^{(3)}\|_{2\to\infty} = \widetilde{O}_{\prob}\left\{\|\bU_\bP\|_{2\to\infty}\frac{1}{\Delta_n^2}\left(\alpha_n + \frac{n\rho_n}{\Delta_n}\right)^2\right\}.
\]
By Lemma \ref{lemma:Remainder_analysis_I}, Lemma \ref{lemma:Second_order_expansion_I}, and Lemma \ref{lemma:Second_order_expansion_II}, recalling \eqref{def:RU+}, we have
\begin{align*}
\bR_{\bU_+}& = \underbrace{\bE^2\bU_{\bP_+}\bS_{\bP_+}^{-2}\bW_+^* + \bT^{(2)}}_{\bR_1}\\
&\quad \underbrace{- \bU_{\bP_+}\bU_{\bP_+}\transpose\bE\bU_{\bP_+}\bS_{\bP_+}^{-1}\bW_+^* - \bU_{\bP_+}\bU_{\bP_+}\transpose\bE^2\bU_{\bP_+}\bS_{\bP_+}^{-2}\bW_+^* + \bT^{(1)}}_{\bR_2}\\
&\quad + \bR_{3} + \bR_{4},
\end{align*}
 and 
\begin{align*}
&\|\bT^{(1)} + \bT^{(2)} + \bR_4\|_{2\to\infty}\\
&\quad = \widetilde{O}_{\prob}\left[\|\bU_\bP\|_{2\to\infty}\left\{\frac{(n\rho_n)^{1/2}(\log n)^\xi}{\Delta_n^2}\left(\alpha_n + \frac{n\rho_n}{\Delta_n}\right)^2 + \frac{(n\rho_n)^{3/2}(\log n)^{3\xi}}{\Delta_n^3}\right\}\right].
\end{align*}
For $\be_k\in\mathbb{R}^d$ the $k$th standard basis vector in $\mathbb{R}^p$, we have, by Proposition \ref{prop:eigenvector_angle_expansion},
\begin{align*}
&\bU_{\bP_-}\bS_{\bP_-}\bU_{\bP_-}\transpose\bU_{\bA_+}\bS_{\bP_+}^{-1}\be_k\\
&\quad = \sum_{m = p + 1}^d\frac{\lambda_m}{\lambda_k}\bu_m\bu_m\transpose\widehat{\bu}_k\\
&\quad = \sum_{m = p + 1}^d\frac{\mathrm{sgn}(\bu_k\transpose\widehat{\bu}_k)\lambda_m\bu_m\bu_m\transpose\bE\bu_k}{\lambda_k(\lambda_k - \lambda_m)} + \sum_{m = p + 1}^d\frac{\mathrm{sgn}(\bu_k\transpose\widehat{\bu}_k)\lambda_m\bu_m\bu_m\transpose\bE^2\bu_k}{\lambda_k^{2}(\lambda_k - \lambda_m)} + \bT^{(5)},
\end{align*}and
\begin{align*}
\bR_3\be_k
& = \bu_{k}\{\bu_k\transpose\widehat{\bu}_k - \mathrm{sgn}(\bu_k\transpose\widehat{\bu}_k)\} + \sum_{m\in [p]\backslash\{k\}}\bu_{m}\bu_m\transpose\widehat{\bu}_k\\
& = -\mathrm{sgn}(\bu_k\transpose\widehat{\bu}_k)\frac{\bu_{k}\bu_k\transpose\bE^2\bu_k}{2\lambda_k^{2}} + \sum_{m\in [p]\backslash\{k\}}\mathrm{sgn}(\bu_k\transpose\widehat{\bu}_k)\frac{\bu_{m}\bu_m\transpose\bE\bu_k}{\lambda_k - \lambda_m}
\\&\quad
 + \sum_{m\in [p]\backslash\{k\}}\mathrm{sgn}(\bu_k\transpose\widehat{\bu}_k)\frac{\bu_{m}\bu_m\transpose \bE^2\bu_k}{\lambda_k(\lambda_k - \lambda_m)} + \bT^{(4)}
\end{align*}
for some $n\times d$ matrices $\bT^{(4)},\bT^{(5)}$, where, also by Proposition \ref{prop:eigenvector_angle_expansion}, we have
\[
\|\bT^{(4)} + \bT^{(5)}\|_{2\to\infty} = \widetilde{O}_{\prob}\left[\|\bU_\bP\|_{2\to\infty}\left\{\frac{\alpha_n^2}{\Delta_n^2} + \frac{n^2\rho_n^{3/2}(\log n)^{3\xi}\|\bU_\bP\|_{2\to\infty}^2}{\Delta_n^3} + \frac{(n\rho_n)^{3/2}}{\Delta_n^3}\right\}\right].
\] 
Let $[\bM]_{\star,k}$ denote the $k$th column of any matrix $\bM$ and summarize the above result. We conclude that
\begin{align*}
&\widehat{\bu}_k - \bu_k\mathrm{sgn}(\bu_k\transpose\widehat{\bu}_k)\\
&\quad = (\eye_n  - \bU_{\bP_+}\bU_{\bP_+})\bE\bU_{\bP_+}\bS_{\bP_+}^{-1}\bW_+^*\be_k +(\eye_n  - \bU_{\bP_+}\bU_{\bP_+}) \bE^2\bU_{\bP_+}\bS_{\bP_+}^{-2}\bW_+^*\be_k
\\
&\quad\quad + \bU_{\bP_-}\bS_{\bP_-}\bU_{\bP_-}\transpose\bU_{\bA_+}\bS_{\bP_+}^{-1}\be_k  + \bR_{3}\be_k + \bt'_k
\\
&\quad = (\eye_n  - \bu_k\bu_k\transpose)\bE\bU_{\bP_+}\bS_{\bP_+}^{-1}\bW_+^*\be_k +(\eye_n  - \bu_k\bu_k\transpose) \bE^2\bU_{\bP_+}\bS_{\bP_+}^{-2}\bW_+^*\be_k
\\
&\quad\quad - \Bigg(\sum_{m\in[p]/\{k\}}\bu_m\bu_m\transpose\Bigg)\bE\bU_{\bP_+}\bS_{\bP_+}^{-1}\bW_+^*\be_k -\Bigg(\sum_{m\in[p]/\{k\}}\bu_m\bu_m\transpose\Bigg) \bE^2\bU_{\bP_+}\bS_{\bP_+}^{-2}\bW_+^*\be_k
\\
&\quad\quad + \bU_{\bP_-}\bS_{\bP_-}\bU_{\bP_-}\transpose\bU_{\bA_+}\bS_{\bP_+}^{-1}\be_k  + \bR_{3}\be_k + \bt'_k
\\
&\quad = \frac{(\eye_n - \bu_k\bu_k\transpose)\bE\bu_k\mathrm{sgn}(\bu_k\transpose\widehat{\bu}_k)}{\lambda_k} + \frac{(\eye_n - \bu_k\bu_k\transpose)\bE^2\bu_k\mathrm{sgn}(\bu_k\transpose\widehat{\bu}_k)}{\lambda_k^{2}}\\
&\quad\quad + \sum_{m = p + 1}^d\frac{\mathrm{sgn}(\bu_k\transpose\widehat{\bu}_k)\lambda_m\bu_m\bu_m\transpose\bE\bu_k}{\lambda_k(\lambda_k - \lambda_m)} + \sum_{m = p + 1}^d\frac{\mathrm{sgn}(\bu_k\transpose\widehat{\bu}_k)\lambda_m\bu_m\bu_m\transpose\bE^2\bu_k}{\lambda_k^{2}(\lambda_k - \lambda_m)}
\\
&\quad\quad -\mathrm{sgn}(\bu_k\transpose\widehat{\bu}_k)\frac{\bu_{k}\bu_k\transpose\bE^2\bu_k}{2\lambda_k^{2}} + \sum_{m\in [p]\backslash\{k\}}\mathrm{sgn}(\bu_k\transpose\widehat{\bu}_k)\bu_{m}\bu_m\transpose\bE\bu_k\Big(\frac{1}{\lambda_k - \lambda_m} - \frac{1}{\lambda_k}\Big)
\\&\quad\quad
 + \sum_{m\in [p]\backslash\{k\}}\mathrm{sgn}(\bu_k\transpose\widehat{\bu}_k)\bu_{m}\bu_m\transpose \bE^2\bu_k\Big(\frac{1}{\lambda_k(\lambda_k - \lambda_m)} - \frac{1}{\lambda_k^2}\Big) + \mathbf{t}_k
\\
&\quad = \frac{(\eye_n - \bu_k\bu_k\transpose)\bE\bu_k\mathrm{sgn}(\bu_k\transpose\widehat{\bu}_k)}{\lambda_k} + \frac{(\eye_n - \bu_k\bu_k\transpose)\bE^2\bu_k\mathrm{sgn}(\bu_k\transpose\widehat{\bu}_k)}{\lambda_k^{2}}\\
&\quad\quad - \frac{\bu_k\bu_k\transpose\bE^2\bu_k\mathrm{sgn}(\bu_k\transpose\widehat{\bu}_k)}{2\lambda_k^{2}}  + \sum_{m\in [d]\backslash\{k\}}\frac{\lambda_m\bu_m\bu_m\transpose\bE\bu_k\mathrm{sgn}(\bu_k\transpose\widehat{\bu}_k)}{\lambda_k(\lambda_k - \lambda_m)}\\
&\quad\quad + \sum_{m\in[d]\backslash\{k\}}\frac{\lambda_m\bu_m\bu_m\transpose\bE^2\bu_k\mathrm{sgn}(\bu_k\transpose\widehat{\bu}_k)}{\lambda_k^{2}(\lambda_k - \lambda_m)} + \bt_k,
\end{align*}
where 
\begin{align*}
&\max\{\|\bt'_k\|_{2\to\infty},\|\bt_k\|_{2\to\infty}\}\\
&\quad = \widetilde{O}_{\prob}\left[\|\bU_\bP\|_{2\to\infty}\left\{\frac{(n\rho_n)^{1/2}(\log n)^\xi}{\Delta_n^2}\left(\alpha_n + \frac{n\rho_n}{\Delta_n}\right)^2 + \frac{(n\rho_n)^{3/2}(\log n)^{3\xi}}{\Delta_n^3}\right\}\right]\\
&\quad\quad + \widetilde{O}_{\prob}\left\{\|\bU_\bP\|_{2\to\infty}\frac{n^2\rho_n^{3/2}(\log n)^{3\xi}\|\bU_\bP\|_{2\to\infty}^2}{\Delta_n^3}\right\}.
\end{align*}
The proof is thus completed.
\end{proof}

\begin{proof}[Proof of Theorem \ref{thm:bias_corrected_ASE}]
It is sufficient to show that for any $m\in [d]$,
\begin{align*}
&\left\|\frac{\widehat{\bD}\widehat{\bu}_k\mathrm{sgn}(\bu_k\transpose\widehat{\bu}_k)}{\widehat{\lambda}_k^2} - \frac{\bD\bu_k}{\lambda_k^{2}}\right\|_\infty = \widetilde{O}_{\prob}\left\{\frac{(n\rho_n)(\log n)^{\xi}}{\sqrt{n}\Delta_n^2}\max\left(\frac{1}{q_n},\frac{\sqrt{n\rho_n}}{\Delta_n}\right)\right\},\\
&\left\|\frac{\widehat{\bu}_m\widehat{\bu}_m\transpose\widehat{\bD}\widehat{\bu}_k\mathrm{sgn}(\bu_k\transpose\widehat{\bu}_k)}{\widehat{\lambda}_k^2} - \frac{\bu_m\bu_m\transpose\bD\bu_k}{\lambda_k^{2}}\right\|_\infty = \widetilde{O}_{\prob}\left\{\frac{(n\rho_n)(\log n)^{\xi}}{\sqrt{n}\Delta_n^2}\max\left(\frac{1}{q_n},\frac{\sqrt{n\rho_n}}{\Delta_n}\right)\right\},\\
&\left\|\frac{\widehat{\lambda}_m\widehat{\bu}_m\widehat{\bu}_m\transpose\widehat{\bD}\widehat{\bu}_k\mathrm{sgn}(\bu_k\transpose\widehat{\bu}_k)}{\widehat{\lambda}_k^2(\widehat{\lambda}_k - \widehat{\lambda}_m)} - \frac{\lambda_m\bu_m\bu_m\transpose\bD\bu_k}{\lambda_k^{2}(\lambda_k - \lambda_m)}\right\|_\infty
\\&\quad
 = \widetilde{O}_{\prob}\left\{\frac{(n\rho_n)(\log n)^{\xi}}{\sqrt{n}\Delta_n^2}\max\left(\frac{1}{q_n}, \frac{\sqrt{n\rho_n}}{\Delta_n}\right)\right\}.
\end{align*}
We first argue that $\|\widehat{\bD} - \bD\|_\infty = \widetilde{O}_{\prob}\{(n\rho_n)(\log n)^{\xi}/q_n\}$. We claim that
\begin{align}\label{eqn:pij_zeroth_order_deviation}
\max_{i,j\in [n]}|\widehat{p}_{ij} - p_{ij}| = \widetilde{O}_{\prob}\left\{(\log n)^{\xi}\sqrt{\frac{\rho_n}{n}}\right\}.
\end{align}
In fact, by definition of $\widehat{\bP}$, Lemma \ref{lemma:Eigenvector_zero_order_deviation}, and Lemma \ref{lemma:Remainder_analysis_I}, we have
\begin{align*}
|\widehat{p}_{ij} - p_{ij}|
&\leq |\be_i\transpose(\bU_{\bA} - \bU_\bP\bW^*)\bS_\bA\bU_\bA\transpose\be_j| + |\be_i\transpose\bU_\bP\transpose(\bW^*\bS_\bA - \bS_\bP\bW^*)\bU_\bA\transpose\be_j|\\
&\quad + |\be_i\transpose\bU_\bP\bS_\bP(\bW^*\bU_\bA\transpose - \bU_\bP\transpose)\be_j|\\
&\leq \|\bU_{\bA} - \bU_{\bP}\bW^*\|_{2\to\infty}(\|\bS_\bA\|_2\|\bU_\bA\|_{2\to\infty} + \|\bS_\bP\|_2\|\bU_\bP\|_{2\to\infty})\\
&\quad + \|\bU_\bP\|_{2\to\infty}\|\bU_\bA\|_{2\to\infty}\|\bW^*\bS_\bA - \bS_\bP\bW^*\|_2\\
& = \widetilde{O}_{\prob}\left\{(\log n)^{\xi}\sqrt{\frac{\rho_n}{n}}\right\}.
\end{align*}
It follows from a union bound that 
\[
\max_{i,j\in [n]}|\widehat{p}_{ij} - p_{ij}| = \widetilde{O}_{\prob}\left\{(\log n)^{\xi}\sqrt{\frac{\rho_n}{n}}\right\}.
\]
Therefore, by triangle inequality and Cauchy-Schwarz inequality,
\begin{align*}
\|\widehat{\bD} - \bD\|_\infty
& = \max_{i\in [n]}\left|\sum_{j = 1}^n(A_{ij} - \widehat{p}_{ij})^2 - \sum_{j = 1}^n\sigma_{ij}^2\right|\\
&\leq \max_{i\in [n]}\left|\sum_{j = 1}^n(E_{ij}^2 - \sigma_{ij}^2)\right| + 2\max_{i\in [n]}\left|\sum_{j = 1}^nE_{ij}(p_{ij} - \widehat{p}_{ij})\right| + \max_{i\in [n]}\sum_{j = 1}^n(p_{ij} - \widehat{p}_{ij})^2\\
&\leq \max_{i\in [n]}\left|\sum_{j = 1}^n(E_{ij}^2 - \sigma_{ij}^2)\right| + 2\max_{i\in [n]}\left(\sum_{j = 1}^nE_{ij}^2\right)^{1/2}\left\{\sum_{j = 1}^n(p_{ij} - \widehat{p}_{ij})^2\right\}^{1/2}\\
&\quad + n\max_{i,j\in[n]}(p_{ij} - \widehat{p}_{ij})^2\\
& = \max_{i\in [n]}\left|\sum_{j = 1}^n(E_{ij}^2 - \sigma_{ij}^2)\right| + 2\|\bE\|_2\sqrt{n}\max_{i,j\in [n]}|p_{ij} - \widehat{p}_{ij}| + n\max_{i,j\in[n]}(p_{ij} - \widehat{p}_{ij})^2\\
& = \max_{i\in [n]}\left|\sum_{j = 1}^n(E_{ij}^2 - \sigma_{ij}^2)\right| + \widetilde{O}_{\prob}\left\{\sqrt{n}\rho_n(\log n)^\xi\right\}.
\end{align*}
The first term is $\widetilde{O}_{\prob}\{(n\rho_n)(\log n)^\xi/q_n\}$ by Result~\ref{result:concentration}, Equation~\eqref{result:concentration:res2}. This concludes the concentration bound $\|\widehat{\bD} - \bD\|_\infty = \widetilde{O}_{\prob}\{(n\rho_n)(\log n)^\xi/q_n\}$ because $q_n = O(\sqrt{n})$ by Assumption \ref{assumption:Noise_matrix_distribution}.

For the first claim, by Lemma \ref{lemma:Eigenvector_zero_order_deviation} and Lemma \ref{lemma:Remainder_analysis_I}, we have
\begin{align*}
\left\|\frac{\widehat{\bD}\widehat{\bu}_k\mathrm{sgn}(\bu_k\transpose\widehat{\bu}_k)}{\widehat{\lambda}_k^2} - \frac{\bD\bu_k}{\lambda_k^{2}}\right\|_\infty
&\leq \left\|\frac{\widehat{\bD}}{\widehat{\lambda}_k^2}\right\|_\infty \|\widehat{\bu}_k\mathrm{sgn}(\bu_k\transpose\widehat{\bu}_k) - \bu_k\|_\infty + \left\|\frac{\widehat{\bD} - \bD}{\widehat{\lambda}_k^2}\right\|_\infty\|\bu_k\|_\infty\\
&\quad + \|\bD\|_\infty\|\bu_k\|_\infty\left|\frac{\lambda_k + \widehat{\lambda}_k}{\widehat{\lambda}_k^2\lambda_k^{2}}\right|\|\bW^*\bS_\bA - \bS_\bP\bW^*\|_2\\
& = \widetilde{O}_{\prob}\left\{\frac{(n\rho_n)(\log n)^{\xi}}{\sqrt{n}\Delta_n^2}\max\left(\frac{1}{q_n},\frac{\sqrt{n\rho_n}}{\Delta_n}\right)\right\}.
\end{align*}
For the second claim, by Lemma \ref{lemma:Eigenvector_zero_order_deviation}, Assumption \ref{assumption:Eigenvector_delocalization}, and the first claim, we have
\begin{align*}
&\left\|\frac{\widehat{\bu}_m\widehat{\bu}_m\transpose\widehat{\bD}\widehat{\bu}_k\mathrm{sgn}(\bu_k\transpose\widehat{\bu}_k)}{\widehat{\lambda}_k^2} - \frac{\bu_m\bu_m\transpose\bD\bu_k}{\lambda_k^{2}}\right\|_\infty\\
&\quad\leq \left\|\widehat{\bu}_m\widehat{\bu}_m\transpose - \bu_m\bu_m\transpose\right\|_\infty\left\|\frac{\widehat{\bD}\widehat{\bu}_k\mathrm{sgn}(\bu_k\transpose\widehat{\bu}_k)}{\widehat{\lambda}_k^2}\right\|_\infty
\\&\quad\quad
 + \left\|\bu_m\bu_m\transpose\right\|_\infty\left\|\frac{\widehat{\bD}\widehat{\bu}_k\mathrm{sgn}(\bu_k\transpose\widehat{\bu}_k)}{\widehat{\lambda}_k^2} - \frac{\bD\bu_k}{\lambda_k^{2}}\right\|_\infty\\
&\quad\leq \Big\{\|\widehat{\bu}_m\mathrm{sgn}(\bu_m\transpose\widehat{\bu}_m) - \bu_m\|_\infty\|\widehat{\bu}_m\|_1 + \sqrt{n}\|\bu_m\|_\infty\|\widehat{\bu}_m\mathrm{sgn}(\bu_m\transpose\widehat{\bu}_m) - \bu_m\|_2\Big\}\\ 
&\quad\quad\times \frac{\|\widehat{\bD}\|_\infty\|\widehat{\bu}_k\|_\infty}{\widehat{\lambda}_k^2}
 + \|\bu_m\|_\infty\|\bu_m\|_1\left\|\frac{\widehat{\bD}\widehat{\bu}_k\mathrm{sgn}(\bu_k\transpose\widehat{\bu}_k)}{\widehat{\lambda}_k^2} - \frac{\bD\bu_k}{\lambda_k^{2}}\right\|_\infty\\
&\quad = \widetilde{O}_{\prob}\left\{\frac{(n\rho_n)(\log n)^{\xi}}{\sqrt{n}\Delta_n^2}\max\left(\frac{1}{q_n},\frac{\sqrt{n\rho_n}}{\Delta_n}\right)\right\}.
\end{align*}
Finally, for the third claim, by Lemma \ref{lemma:Eigenvector_zero_order_deviation}, Lemma \ref{lemma:Remainder_analysis_I} and the second claim, we have
\begin{align*}
&\left\|\frac{\widehat{\lambda}_m\widehat{\bu}_m\widehat{\bu}_m\transpose\widehat{\bD}\widehat{\bu}_k\mathrm{sgn}(\bu_k\transpose\widehat{\bu}_k)}{\widehat{\lambda}_k^2(\widehat{\lambda}_k - \widehat{\lambda}_m)} - \frac{\lambda_m\bu_m\bu_m\transpose\bD\bu_k}{\lambda_k^{2}(\lambda_k - \lambda_m)}\right\|_\infty\\
&\quad\leq \left|\frac{\widehat{\lambda}_m}{\widehat{\lambda}_k - \widehat{\lambda}_m} - \frac{\lambda_m}{\lambda_k - \lambda_m}\right|\left\|\frac{\widehat{\bu}_m\widehat{\bu}_m\transpose\widehat{\bD}\widehat{\bu}_k\mathrm{sgn}(\bu_k\transpose\widehat{\bu}_k)}{\widehat{\lambda}_k^2}\right\|_\infty\\
&\quad\quad + \left|\frac{\lambda_m}{\lambda_k - \lambda_m}\right|\left\|\frac{\widehat{\bu}_m\widehat{\bu}_m\transpose\widehat{\bD}\widehat{\bu}_k\mathrm{sgn}(\bu_k\transpose\widehat{\bu}_k)}{\widehat{\lambda}_k^2} - \frac{\bu_k\bu_k\transpose\bD\bu_k}{\lambda_k^{2}}\right\|_\infty\\
&\quad\leq \frac{2\|\bW^*\bS_\bA - \bS_\bP\bW^*\|_2\|\bS_\bP\|_2}{|(\widehat{\lambda}_k - \widehat{\lambda}_m)(\lambda_k - \lambda_m)|}\frac{\sqrt{n}\|\widehat{\bu}_m\|_\infty\|\widehat{\bD}\|_\infty\|\widehat{\bu}_k\|_\infty}{\widehat{\lambda}_k^2}\\
&\quad\quad + \widetilde{O}_{\prob}\left\{\frac{(n\rho_n)(\log n)^{\xi}}{\sqrt{n}\Delta_n^2}\max\left(\frac{1}{q_n},\frac{\sqrt{n\rho_n}}{\Delta_n}\right)\right\}\\
&\quad = \widetilde{O}_{\prob}\left\{\frac{(n\rho_n)(\log n)^{\xi}}{\sqrt{n}\Delta_n^2}\max\left(\frac{1}{q_n},\frac{\sqrt{n\rho_n}}{\Delta_n}\right)\right\}.
\end{align*}
The proof is thus completed.
\end{proof}

\section{Proofs of Theorem \ref{thm:edgeworth_expansion}}
\label{sec:proof_of_edgeworth_expansion}

This section elaborates on the proof of Theorem \ref{thm:edgeworth_expansion}, which is somewhat technical. We first establish the variance expansion result, namely, Lemma \ref{lemma:variance_expansion}, in Section \ref{sub:proof_of_variance_expansion}. Section \ref{sub:proof_sketch_edgeworth_expansion} then outlines the proof sketch of Theorem \ref{thm:edgeworth_expansion}. We provide some preparatory technical lemmas in Section \ref{sub:technical_lemmas_for_edgeworth_expansion} and present the complete proof in Section \ref{sub:proof_of_edgeworth_expansion_smoothed_version} and Section \ref{sub:finishing_proof_of_edgeworth_expansion} via Esseen's smoothing lemma (Theorem \ref{thm:esseen_smoothing_lemma}, \cite{10.1007/BF02392223}) and delicate analyses of characteristic functions of related random variables. 

\subsection{Proof of Lemma \ref{lemma:variance_expansion}}
\label{sub:proof_of_variance_expansion}

We first claim the following useful expansion for $\widehat{p}_{ij} - p_{ij}$, where $\hat{p}_{ij}$ and $p_{ij}$ are $ij$th entries of $\bU_\bA\bS_\bA\bU_\bA\transpose$ and $\bP$, respectively. In particular, we have,
\begin{align*}
&\widehat{p}_{ij} - p_{ij}\\
&\quad = \be_i\transpose(\bU_\bA\bS_\bA\bU_\bA\transpose - \bU_\bP\bS_\bP\bU_\bP\transpose)\be_j\\
&\quad = \be_i\transpose(\bU_\bA - \bU_\bP\bW^*)\bS_\bA\bU_\bA\transpose\be_j + \be_i\transpose\bU_\bP(\bW^*\bS_\bA - \bS_\bP\bW^*)\bU_\bA\transpose\be_j\\
&\quad\quad + \be_i\transpose\bU_\bP\bS_\bP(\bW^*\bU_\bA\transpose - \bU_\bP\transpose)\be_j\\
&\quad = \be_i\transpose(\bU_\bA - \bU_\bP\bW^*)(\bS_\bA - \bS_\bP)\bU_\bA\transpose\be_j + \be_i\transpose(\bU_\bA - \bU_\bP\bW^*)\bS_\bP(\bU_\bA - \bU_\bP\bW^*)\transpose\be_j\\
&\quad\quad + \be_i\transpose(\bU_\bA - \bU_\bP\bW^*)\bS_\bP\bW^*\bU_\bP\transpose\be_j + \be_i\transpose\bU_\bP(\bW^*\bS_\bA - \bS_\bP\bW^*)\bU_\bA\transpose\be_j\\
&\quad\quad + \be_i\transpose\bU_\bP\bS_\bP(\bW^*\bU_\bA\transpose - \bU_\bP\transpose)\be_j\\
&\quad = \be_i\transpose(\bU_\bA - \bU_\bP\bW^*)(\bS_\bA - \bS_\bP)\bU_\bA\transpose\be_j + \be_i\transpose(\bU_\bA - \bU_\bP\bW^*)\bS_\bP(\bU_\bA - \bU_\bP\bW^*)\transpose\be_j\\
&\quad\quad + \be_i\transpose(\bE\bU_\bP\bS_\bP^{-1}\bW^* + \bR_\bU)\bS_\bP\bW^*\bU_\bP\transpose\be_j + \be_i\transpose\bU_\bP(\bW^*\bS_\bA - \bS_\bP\bW^*)\bU_\bA\transpose\be_j\\
&\quad\quad + \be_i\transpose\bU_\bP\bS_\bP(\bE\bU_\bP\bS_\bP^{-1} + \bR_\bU\bW^*)\transpose\be_j\\
&\quad = \be_i\transpose\bE\bU_\bP\bU_\bP\transpose\be_j + \be_i\transpose\bU_\bP\bU_\bP\transpose\bE\be_j + r_{p_{ij}},
\end{align*}
where the fourth equality is by Lemma \ref{lemma:Eigenvector_zero_order_deviation}, and we denote
\begin{equation}\label{defrij}
\begin{aligned}
r_{p_{ij}}& = \be_i\transpose(\bU_\bA - \bU_\bP\bW^*)(\bS_\bA - \bS_\bP)\bU_\bA\transpose\be_j + \be_i\transpose(\bU_\bA - \bU_\bP\bW^*)\bS_\bP(\bU_\bA - \bU_\bP\bW^*)\transpose\be_j\\
&\quad + \be_i\transpose\bU_\bP(\bW^*\bS_\bA - \bS_\bP\bW^*)\bU_\bA\transpose\be_j + \be_i\transpose\bR_\bU\bS_\bP\bW^*\bU_\bP\transpose\be_j + \be_i\transpose\bU_\bP\bS_\bP\bW^*\bR_\bU\transpose\be_j.
\end{aligned}
\end{equation}
Now by Lemma \ref{lemma:Eigenvector_zero_order_deviation} and Lemma \ref{lemma:Remainder_analysis_I}, we have
\begin{align*}
\max_{i,j\in [n]}|r_{p_{ij}}|&\leq \|\bU_\bA - \bU_\bP\bW^*\|_{2\to\infty}\|\bW^*\bS_\bA - \bS_\bP\bW^*\|_2\|\bU_\bA\|_{2\to\infty}
\\
&\quad + \|\bU_\bA - \bU_\bP\bW^*\|_{2\to\infty}^2\|\bS_\bP\|_2 + \|\bU_\bP\|_{2\to\infty}{\|\bW^*\bS_\bA - \bS_\bP\bW^*\|_2}\|\bU_\bA\|_{2\to\infty}\\
&\quad + 2\|\bR_\bU\|_{2\to\infty}\|\bS_\bP\|_2\|\bU_\bP\|_{2\to\infty}\\
& = \widetilde{O}_{\prob}\left\{\frac{\rho_n^{1/2}(\log n)^\xi}{\Delta_n}\right\}\widetilde{O}_{\prob}\left\{(\log n)^\xi + \frac{n\rho_n}{\Delta_n}\right\}\widetilde{O}_{\prob}\left(\frac{1}{\sqrt{n}}\right) + \widetilde{O}_{\prob}\left\{\frac{\rho_n(\log n)^{2\xi}}{\Delta_n}\right\}\\
&\quad + O\left(\frac{1}{\sqrt{n}}\right)\widetilde{O}_{\prob}\left\{(\log n)^\xi + \frac{n\rho_n}{\Delta_n}\right\}\widetilde{O}_{\prob}\left(\frac{1}{\sqrt{n}}\right)\\
&\quad + \widetilde{O}_{\prob}\left\{\frac{\rho_n(\log n)^{2\xi}}{\Delta_n} + \frac{(\log n)^\xi}{n}\right\}\\
& = \widetilde{O}_{\prob}\left\{\frac{\rho_n(\log n)^{2\xi}}{\Delta_n} + \frac{(\log n)^\xi}{n}\right\}.
\end{align*}
Recall that by Result \ref{result:Noise_matrix_concentration}, under Assumption \ref{assumption:Eigenvector_delocalization}, $\alpha_n = \widetilde{O}_{\prob}\{(\log n)^\xi\}$. Recalling the definition in \eqref{eqn:plug_in_estimate_variance}, we use $(A_{ij} - \widehat{p}_{ij})^2 = E_{ij}^2 - 2E_{ij}(\hat{p}_{ij} - p_{ij}) + (\hat{p}_{ij} - p_{ij})^2$ and we can write
\bea\label{ssdecompose}
\widehat{s}_{ik}^2 - s_{ik}^2
&
= \sum_{j = 1}^n\left(\frac{E_{ij}^2\widehat{u}_{jk}^2}{\widehat{\lambda}_k^2} - \frac{\sigma_{ij}^2u_{jk}^2}{\lambda_k^{2}}\right) - 2\sum_{j = 1}^n\frac{E_{ij}(\widehat{p}_{ij} - p_{ij})\widehat{u}_{jk}^2}{\widehat{\lambda}_k^2} + \sum_{j = 1}^n\frac{(\widehat{p}_{ij} - p_{ij})^2\widehat{u}_{jk}^2}{\widehat{\lambda}_k^2}
\\
&\quad = \sum_{j = 1}^n\left(E_{ij}^2 - \sigma_{ij}^2\right)\frac{u_{jk}^2}{\lambda_k^{2}} + \sum_{j = 1}^nE_{ij}^2\left(\frac{\widehat{u}_{jk}^2}{\widehat{\lambda}_k^2} - \frac{u_{jk}^2}{\lambda_k^{2}}\right) - 2\sum_{j = 1}^n\frac{E_{ij}(\widehat{p}_{ij} - p_{ij})\widehat{u}_{jk}^2}{\widehat{\lambda}_k^2}\\
&\quad\quad  + \sum_{j = 1}^n\frac{(\widehat{p}_{ij} - p_{ij})^2\widehat{u}_{jk}^2}{\widehat{\lambda}_k^2}
\\
&\quad = \sum_{j = 1}^n\left(E_{ij}^2 - \sigma_{ij}^2\right)\frac{u_{jk}^2}{\lambda_k^{2}} + \sum_{j = 1}^nE_{ij}^2\left(\frac{\widehat{u}_{jk}^2}{\widehat{\lambda}_k^2} - \frac{u_{jk}^2}{\lambda_k^{2}}\right) - 2\sum_{j = 1}^n\frac{E_{ij}(\widehat{p}_{ij} - p_{ij})\widehat{u}_{jk}^2}{\widehat{\lambda}_k^2}\\
&\quad\quad  +  \widetilde{O}_{\prob}\left\{ \frac{\rho_n(\log n)^{2\xi}}{n\Delta_n^2}\right\},
\eae
where the last equality is by \eqref{eqn:pij_zeroth_order_deviation}. For the second term on the right-hand side of \eqref{ssdecompose}, by Lemma \ref{lemma:Eigenvector_zero_order_deviation}, we have
\begin{align*}
\widehat{u}_{jk}^2 - u_{jk}^2& = \{\widehat{u}_{jk}\mathrm{sgn}(\widehat{\bu}_k\transpose\bu_k) - u_{jk}\}\{\widehat{u}_{jk}\mathrm{sgn}(\widehat{\bu}_k\transpose\bu_k) + u_{jk}\}\\
& = \{\widehat{u}_{jk}\mathrm{sgn}(\widehat{\bu}_k\transpose\bu_k) - u_{jk}\}\{\widehat{u}_{jk}\mathrm{sgn}(\widehat{\bu}_k\transpose\bu_k) - u_{jk} + 2u_{jk}\}\\
& = 2u_{jk}\{\widehat{u}_{jk}\mathrm{sgn}(\widehat{\bu}_k\transpose\bu_k) - u_{jk}\} + \{\widehat{u}_{jk}\mathrm{sgn}(\widehat{\bu}_k\transpose\bu_k) - u_{jk}\}^2\\
& = \frac{2u_{jk}\be_j\transpose\bE\bu_k}{\lambda_k} + 2u_{jk}[\bR_\bU]_{jk}\mathrm{sgn}(\widehat{\bu}_k\transpose\bu_k) + \{\widehat{u}_{jk}\mathrm{sgn}(\widehat{\bu}_k\transpose\bu_k) - u_{jk}\}^2\\
& = \frac{2u_{jk}\be_j\transpose\bE\bu_k}{\lambda_k} + \widetilde{O}_{\prob}\left\{\frac{\rho_n(\log n)^{2\xi}}{\Delta_n^2} + \frac{(\log n)^\xi}{n\Delta_n}\right\},
\end{align*}
where $[\bR_\bU]_{jk}$ is the $(j, k)$th element of $\bR_\bU = \bU_\bA - \bU_\bP\bW^* - \bE\bU_\bP\bS_\bP^{-1}\bW^*$. This entails,
\begin{align*}
\frac{\widehat{u}_{jk}^2}{\widehat{\lambda}_k^2} - \frac{u_{jk}^2}{\lambda_k^2}
& = \frac{\widehat{u}_{jk}^2 - u_{jk}^2}{\lambda_k^2} + \widehat{u}_{jk}^2\left(\frac{1}{\widehat{\lambda}_k^2} - \frac{1}{\lambda_k^2}\right)\\
& = \frac{2u_{jk}\be_j\transpose\bE\bu_k}{\lambda_k^3} + \widehat{u}_{jk}^2\left(\frac{1}{\widehat{\lambda}_k} + \frac{1}{\lambda_k}\right)\left(\frac{\lambda_k - \widehat{\lambda}_k}{\widehat{\lambda}_k\lambda_k}\right)\\
&\quad + \widetilde{O}_{\prob}\left\{\frac{\rho_n(\log n)^{2\xi}}{\Delta_n^4} + \frac{(\log n)^\xi}{n\Delta_n^3}\right\}\\
& = \frac{2u_{jk}\be_j\transpose\bE\bu_k}{\lambda_k^3} + \widetilde{O}_{\prob}\left\{\frac{\rho_n(\log n)^{2\xi}}{\Delta_n^4} + \frac{(\log n)^\xi}{n\Delta_n^3} + \frac{\rho_n}{\Delta_n^4}\right\}
\\
& = \frac{2u_{jk}\be_j\transpose\bE\bu_k}{\lambda_k^3} + \widetilde{O}_{\prob}\left\{\frac{(\log n)^\xi}{n\Delta_n^3} + \frac{\rho_n(\log n)^{2\xi}}{\Delta_n^4}\right\};
\end{align*}
Note here we have $|\lambda_k - \hat{\lambda}_k|\leq\|\bW^*\bS_\bA - \bS_\bP\bW^*\|_2 = \widetilde{O}_{\prob}\{\rho_n^{1/2}(\log n)^\xi + n\rho_n/\Delta_n\}$ by Result \ref{result:Noise_matrix_concentration} and Lemma \ref{lemma:Remainder_analysis_I}. 

Subsequently, uniformly over $i\in [n]$, we have
\begin{align*}
\sum_{j = 1}^nE_{ij}^2\left(\frac{\widehat{u}_{jk}^2}{\widehat{\lambda}_k^2} - \frac{u_{jk}^2}{\lambda_k^{2}}\right)
& = \sum_{j = 1}^nE_{ij}^2\frac{2u_{jk}\be_j\transpose\bE\bu_k}{\lambda_k^3} + \|\bE\|_{2\to\infty}^2\widetilde{O}_{\prob}\left\{\frac{(\log n)^\xi}{n\Delta_n^3} + \frac{\rho_n (\log n)^{2\xi}}{\Delta_n^4}\right\}\\
& = 2\sum_{j = 1}^n\sum_{a = 1}^n\frac{E_{ij}^2E_{ja}u_{ak}u_{jk}}{\lambda_k^3} +\widetilde{O}_{\prob}\left\{
\frac{\rho_n(\log n)^{\xi}}{\Delta_n^3} + \frac{n\rho_n^2 (\log n)^{2\xi}}{\Delta_n^4}
\right\};
\end{align*}
Here, we use  $\|\bE\|_{2\to\infty}\leq \|\bE\|_2 = \widetilde{O}_{\prob}(\sqrt{n\rho_n})$ by Result \ref{result:Noise_matrix_concentration}. 
For the third term of \eqref{ssdecompose}, we write it as follows using \eqref{defrij},
\begin{align*}
&\sum_{j = 1}^nE_{ij}(\widehat{p}_{ij} - p_{ij})\frac{\widehat{u}_{jk}^2}{\widehat{\lambda}_k^{2}}
\\
&\quad = \sum_{j = 1}^nE_{ij}(\widehat{p}_{ij} - p_{ij})\frac{{u}_{jk}^2}{\lambda_k^{2}} + \sum_{j = 1}^nE_{ij}(\widehat{p}_{ij} - p_{ij})\left(\frac{\widehat{u}_{jk}^2}{\widehat{\lambda}_k^{2}} - \frac{u_{jk}^2}{\lambda_k^{2}}\right)\\
&\quad = \sum_{j = 1}^nE_{ij}\left(\be_i\transpose\bU_\bP\bU_\bP\transpose\bE\be_j + \be_i\transpose\bE\bU_\bP\bU_\bP\transpose\be_j + r_{p_{ij}}\right)\frac{u_{jk}^2}{\lambda_k^{2}}\\
&\quad\quad + \sum_{j = 1}^nE_{ij}(\widehat{p}_{ij} - p_{ij})\left(\frac{\widehat{u}_{jk}^2}{\widehat{\lambda}_k^{2}} - \frac{u_{jk}^2}{\lambda_k^{2}}\right)\\
&\quad = \sum_{j = 1}^n\sum_{a = 1}^n\frac{E_{ij}E_{ja}u_{jk}^2\be_i\transpose\bU_\bP\bU_\bP\transpose\be_a}{\lambda_k^{2}} + \left(\sum_{j = 1}^n\frac{E_{ij}u_{jk}^2\be_j\transpose\bU_\bP}{\lambda_k^2}\right)\left(\sum_{a = 1}^nE_{ia}\bU_\bP\transpose\be_a\right)\\
&\quad\quad + \sum_{j = 1}^n\frac{E_{ij}r_{p_{ij}}u_{jk}^2}{\lambda_k^{2}} + \sum_{j = 1}^nE_{ij}(\widehat{p}_{ij} - p_{ij})\left(\frac{\widehat{u}_{jk}^2}{\widehat{\lambda}_k^{2}} - \frac{u_{jk}^2}{\lambda_k^{2}}\right).
\end{align*}
 The first term above is $\widetilde{O}_{\prob}\{\rho_n(\log n)^{2\xi}/(n\Delta_n^2)\}$, uniformly over $i\in[n]$, by following the argument for Lemma \ref{lemma:Rowwise_higher_order_concentration} and a union bound over $i\in[n]$. The second term above is $\widetilde{O}_{\prob}\{\rho_n(\log n)^{2\xi}/(n\Delta_n^2)\}$, uniformly over $i\in[n]$, by Result~\ref{result:concentration}, Equation~\eqref{result:concentration:res2} and a union bound over $i\in[n]$. For the third term,
we apply Cauchy-Schwarz inequality, Result \ref{result:Noise_matrix_concentration}, and the expansion of $\widehat{p}_{ij} - p_{ij}$ to obtain
\begin{align*}
\left|\sum_{j = 1}^n\frac{E_{ij}r_{p_{ij}}{u}_{jk}^2}{\widehat{\lambda}_k^2}\right|
&\leq \left(\sum_{j = 1}^nE_{ij}^2\right)^{1/2}\left(\sum_{j = 1}^n\frac{r_{p_{ij}}^2u_{jk}^4}{\lambda_k^{4}}\right)^{1/2}
 \\
&\quad\leq \|\bE\|_{2\to\infty}\sqrt{n}\max_{i,j\in [n]}|r_{p_{ij}}|\|\bu_k\|_\infty^2\frac{1}{\lambda_k^{2}}\\
&\quad
= \widetilde{O}_{\prob}\left\{\frac{\rho_n^{1/2}(\log n)^{\xi}}{n\Delta_n^2} + \frac{\rho_n^{3/2}(\log n)^{2\xi}}{\Delta_n^3}\right\},
\end{align*}
uniformly over $i\in[n]$. 
The fourth term can also be bounded using Cauchy-Schwarz inequality, Result \ref{result:Noise_matrix_concentration}, and \eqref{eqn:pij_zeroth_order_deviation}:
\begin{align*}
\left|\sum_{j = 1}^nE_{ij}(\widehat{p}_{ij} - p_{ij})\left(\frac{\widehat{u}_{jk}^2}{\widehat{\lambda}_k^2} - \frac{u_{jk}^2}{\widehat{\lambda}_k^2}\right)\right|
&\leq \left(\sum_{j = 1}^nE_{ij}^2\right)^{1/2}\sqrt{n}\max_{i,j\in [n]}|\widehat{p}_{ij} - p_{ij}|\max_{i,j\in [n]}\left|\frac{\widehat{u}_{jk}^2}{\widehat{\lambda}_k^2} - \frac{u_{jk}^2}{\widehat{\lambda}_k^2}\right|\\
&\leq \|\bE\|_{2\to\infty}\sqrt{n}\max_{i,j\in [n]}|\widehat{p}_{ij} - p_{ij}|\max_{i,j\in [n]}\left|\frac{\widehat{u}_{jk}^2}{\widehat{\lambda}_k^2} - \frac{u_{jk}^2}{\widehat{\lambda}_k^2}\right|\\
&\leq \|\bE\|_{2}\sqrt{n}\max_{i,j\in [n]}|\widehat{p}_{ij} - p_{ij}|\max_{i,j\in [n]}\left|\frac{\widehat{u}_{jk}^2}{\widehat{\lambda}_k^2} - \frac{u_{jk}^2}{\widehat{\lambda}_k^2}\right|\\
& = \widetilde{O}_{\prob}\left\{\frac{\rho_n^{3/2}(\log n)^{2\xi}}{\Delta_n^3}\right\}
.
\end{align*}
Summarizing all results above and by our standard assumptions, we conclude that
\begin{align*}\color{black}
&\widehat{s}_{ik}^2 - s_{ik}^2\\
&\quad = \sum_{j = 1}^n\frac{(E_{ij}^2 - \sigma_{ij}^2)u_{jk}^2}{\lambda_k^{2}} + \sum_{a = 1}^n\sum_{b = 1}^n\frac{2E_{ia}^2E_{ab}u_{ak}u_{bk}}{\lambda_k^3}\\
&\quad\quad + \widetilde{O}_{\prob}\Bigg\{
\frac{\rho_n(\log n)^{\xi}}{\Delta_n^3} + \frac{n\rho_n^2 (\log n)^{2\xi}}{\Delta_n^4}
+ {\frac{\rho_n(\log n)^{2\xi}}{n\Delta_n^2}}  + \frac{\rho_n^{1/2}(\log n)^{\xi}}{n\Delta_n^2} +{ \frac{\rho_n^{3/2}(\log n)^{2\xi}}{\Delta_n^3}}\Bigg\}
\\
&\quad\quad + \widetilde{O}_{\prob}\Bigg\{
\frac{\rho_n^{3/2}(\log n)^{2\xi}}{\Delta_n^3}
+{\frac{\rho_n(\log n)^{2\xi}}{n\Delta_n^3} }\Bigg\}
\\
&\quad = \sum_{j = 1}^n\frac{(E_{ij}^2 - \sigma_{ij}^2)u_{jk}^2}{\lambda_k^{2}} + \sum_{a = 1}^n\sum_{b = 1}^n\frac{2E_{ia}^2E_{ab}u_{ak}u_{bk}}{\lambda_k^3}\\
&\quad\quad + 
\widetilde{O}_{\prob}\Bigg\{
\frac{\rho_n^{1/2}(\log n)^{2\xi}}{n\Delta_n^2} + \frac{\rho_n(\log n)^{2\xi}}{\Delta_n^3} + \frac{n\rho_n^2(\log n)^{2\xi}}{\Delta_n^4}
\Bigg\},
\end{align*}
uniformly over $i\in[n]$. 
The proof is thus completed by dividing the above equation by $s_{ik}^2$ and observing that $\min_{i\in[n]}s_{ik}^2 = \Theta(\rho_n/\Delta_n^2)$.  

\subsection{Proof Sketch of Theorem \ref{thm:edgeworth_expansion}}
\label{sub:proof_sketch_edgeworth_expansion}
Fix $\xi > 1$. For simplicity, we denote the following quantities, 
\begin{align*}
\eps_n &:=\frac{(\log n)^{3\xi}}{\Delta_n}\max\left\{1,\frac{\sqrt{n\rho_n}}{q_n},\frac{n\rho_n}{\Delta_n},\frac{(n\rho_n)^2}{\Delta_n^2}\right\},\\
\vartheta_n &:= 
\frac{(\log n)^{2\xi}}{\Delta_n} + \frac{n\rho_n(\log n)^{2\xi}}{\Delta_n^2}
+ \frac{(\log n)^{2\xi}}{n\rho_n^{1/2}} + \frac{(\log n)^{2\xi}}{q_n^2}.
\end{align*}
By Results \ref{result:Noise_matrix_moment_bound} and \ref{result:concentration}, and Lemma \ref{lemma:Rowwise_higher_order_concentration}, we have the following results,
\begin{equation}
\label{eqn:sik_decomposition_error_bound}
\begin{aligned}
\sum_{j = 1}^n\frac{(E_{ij}^2 - \sigma_{ij}^2)u_{jk}^2}{\lambda_k^2s_{ik}^2} & = \widetilde{O}_{\prob}\left\{\frac{(\log n)^\xi}{q_n}\right\},\\
\sum_{a, b = 1}^n\frac{E_{ia}^2E_{ab}u_{ak}u_{bk}}{\lambda_k^3s_{ik}^2} & = \widetilde{O}_{\prob}\left\{\frac{(n\rho_n)^{1/2}(\log n)^\xi}{\Delta_n}\right\},\\
\frac{\bv_{ik}\transpose\bE\bu_k}{s_{ik}\lambda_k} & = \widetilde{O}_{\prob}\left\{\frac{(\log n)^\xi}{\sqrt{n}}\right\},\\
\frac{\be_i\transpose(\bE^2 - \expect\bE^2)\bu_k}{\lambda_k^2s_{ik}}& = \widetilde{O}_{\prob}\left\{\frac{(n\rho_n)^{1/2}(\log n)^{2\xi}}{\Delta_n}\right\}.
\end{aligned}
\end{equation}
Note that the second equation in \eqref{eqn:sik_decomposition_error_bound} can be derived as follows:
\begin{align*}
\sum_{a, b = 1}^n\frac{E_{ia}^2E_{ab}u_{ak}u_{bk}}{\lambda_k^3s_{ik}^2}
& = \sum_{a,b\in[n]\backslash\{i\}}^n\frac{E_{ia}^2E_{ab}u_{ak}u_{bk}}{\lambda_k^3s_{ik}^2} + \sum_{b\in[n]/\{i\}}\frac{E_{ii}^2E_{ib}u_{ik}u_{bk}}{\lambda_k^3s_{ik}^2} + \sum_{a = 1}^n\frac{E_{ia}^3u_{ak}u_{ik}}{\lambda_k^3s_{ik}^2},
\end{align*}
where, by Result~\ref{result:concentration}, Equations~\eqref{result:concentration:res1} and \eqref{result:concentration:res2}, we have
\begin{align*}
\sum_{b\in[n]/\{i\}}\frac{E_{ii}^2E_{ib}u_{ik}u_{bk}}{\lambda_k^3s_{ik}^2} &= \widetilde{O}_{\prob}\left\{\frac{(n\rho_n)^{1/2}(\log n)^\xi}{q_n^2\Delta_n}\right\} = \widetilde{O}_{\prob}\left\{\frac{(n\rho_n)^{1/2}}{q_n\Delta_n}\right\},\\
\left|\sum_{a = 1}^n\frac{E_{ia}^3u_{ak}u_{ik}}{\lambda_k^3s_{ik}^2}\right|
& \leq \max_{i,j\in[n]}|E_{ij}|\left\{\sum_{a = 1}^n\frac{(E_{ia}^2 - \expect E_{ia}^2)\|\bu_k\|_\infty^2}{\lambda_k^3s_{ik}^2} + \sum_{a = 1}^n\frac{\expect E_{ia}^2\|\bu_k\|_\infty^2}{\lambda_k^3s_{ik}^2}\right\}\\
& = \widetilde{O}_{\prob}\left\{\frac{(n\rho_n)^{1/2}}{q_n}\right\}\left[O\left\{\frac{(\log n)^\xi}{\Delta_n q_n} + \frac{1}{\Delta_n}\right\}\right]
 = \widetilde{O}_{\prob}\left\{\frac{(n\rho_n)^{1/2}}{q_n\Delta_n}\right\},
\end{align*}
and by the independence between $(E_{ia}:a\in[n]\backslash\{i\})$ and $(E_{ab}:a,b\in[n]\backslash\{i\})$, Result \ref{result:Noise_matrix_concentration}, Result~\ref{result:concentration}, Equations~\eqref{result:concentration:res1} and \eqref{result:concentration:res2} (with $l=1$ and $m = (1+n)n/2$ therein), and the condition that $\sqrt{n}/q_n^2 = n^{1/2 - 2\eps} = o(1)$, we have
\begin{align*}
\sum_{a,b\in[n]\backslash\{i\}}\frac{E_{ia}^2E_{ab}u_{ak}u_{bk}}{\lambda_k^3s_{ik}^2}
& = \widetilde{O}_{\prob}\left\{\frac{(\log n)^\xi}{\rho_n^{1/2}\Delta_n}\max_{i,j\in[n]}E_{ij}^2\right\} = \widetilde{O}_{\prob}\left\{\frac{(n\rho_n)^{1/2}(\log n)^\xi}{\Delta_n}\right\}.
\end{align*}
For the fourth equation in \eqref{eqn:sik_decomposition_error_bound}, 
we used the result $\expect\bE^2$ is a diagonal matrix with $\|\expect\bE^2\|_2 = \|\expect\bE^2\|_\infty = O(n\rho_n)$. 
By Theorem \ref{thm:bias_corrected_ASE} and Lemma \ref{lemma:variance_expansion}, we have
\begin{align*}
T_{ik}& = \left\{\frac{\be_i\transpose\bE\bu_k}{s_{ik}\lambda_k} + \frac{\bv_{ik}\transpose\bE\bu_k}{s_{ik}\lambda_k} + \frac{\be_i\transpose(\bE^2 -\expect\bE^2)\bu_k}{s_{ik}\lambda_k^{2}} + \widetilde{O}_{\prob}(\eps_n)\right\}
\left(1 + \frac{\widehat{s}_{ik}^2 - s_{ik}^2}{s_{ik}^2}\right)^{-1/2}\\
& = \left\{\frac{\be_i\transpose\bE\bu_k}{s_{ik}\lambda_k} + \frac{\bv_{ik}\transpose\bE\bu_k}{s_{ik}\lambda_k} + \frac{\be_i\transpose(\bE^2 -\expect\bE^2)\bu_k}{s_{ik}\lambda_k^{2}} + \widetilde{O}_{\prob}(\eps_n)\right\}
\\
&\quad\times\left[1 - \frac{\widehat{s}_{ik}^2 - s_{ik}^2}{2s_{ik}^2} + \widetilde{O}_{\prob}\left\{\frac{n\rho_n(\log n)^{2\xi}}{\Delta_n^2} + \frac{(\log n)^{2\xi}}{q_n^2}\right\}\right]\\
& = \left\{\frac{\be_i\transpose\bE\bu_k}{s_{ik}\lambda_k} + \frac{\bv_{ik}\transpose\bE\bu_k}{s_{ik}\lambda_k} + \frac{\be_i\transpose(\bE^2 -\expect\bE^2)\bu_k}{s_{ik}\lambda_k^{2}} + \widetilde{O}_{\prob}(\eps_n)\right\}\\
&\quad\times\left\{1 - \frac{1}{2}\sum_{j = 1}^n\frac{(E_{ij}^2 - \sigma_{ij}^2)u_{jk}^2}{s_{ik}^2\lambda_k^{2}} - \sum_{a, b = 1}^n\frac{E_{ia}^2E_{ab}u_{ak}u_{bk}}{\lambda_k^3s_{ik}^2} + \widetilde{O}_{\prob}(\vartheta_n)\right\}\\
& = \frac{\be_i\transpose\bE\bu_k}{s_{ik}\lambda_k} + \frac{\bv_{ik}\transpose\bE\bu_k}{s_{ik}\lambda_k} + \frac{\be_i\transpose(\bE^2 -\expect\bE^2)\bu_k}{s_{ik}\lambda_k^{2}} - \frac{1}{2}\frac{\be_i\transpose\bE\bu_k}{s_{ik}\lambda_k}\sum_{j = 1}^n\frac{(E_{ij}^2 - \sigma_{ij}^2)u_{jk}^2}{s_{ik}^2\lambda_k^{2}}\\
&\quad - \frac{\be_i\transpose\bE\bu_k}{s_{ik}\lambda_k}\sum_{a, b = 1}^n\frac{E_{ia}^2E_{ab}u_{ak}u_{bk}}{\lambda_k^3s_{ik}^2}\\
&\quad + \widetilde{O}_{\prob}\left\{\frac{(\log n)^\xi}{\sqrt{n}}\right\}\widetilde{O}_{\prob}\left\{\frac{(n\rho_n)^{1/2}(\log n)^\xi}{\Delta_n} + \frac{(\log n)^\xi}{q_n}\right\}\\
&\quad + \widetilde{O}_{\prob}\left\{\frac{(n\rho_n)^{1/2}(\log n)^{2\xi}}{\Delta_n}\right\}\widetilde{O}_{\prob}\left\{\frac{(n\rho_n)^{1/2}(\log n)^\xi}{\Delta_n} + \frac{(\log n)^\xi}{q_n}\right\}\\
&\quad + \widetilde{O}_{\prob}\{\eps_n + (\log n)^\xi\vartheta_n\}\\
&= \frac{\be_i\transpose\bE\bu_k}{s_{ik}\lambda_k} + \frac{\bv_{ik}\transpose\bE\bu_k}{s_{ik}\lambda_k} + \frac{\be_i\transpose(\bE^2 -\expect\bE^2)\bu_k}{s_{ik}\lambda_k^{2}} - \frac{\be_i\transpose\bE\bu_k}{s_{ik}\lambda_k}\sum_{a, b = 1}^n\frac{E_{ia}^2E_{ab}u_{ak}u_{bk}}{\lambda_k^3s_{ik}^2}\\
&\quad - \frac{1}{2}\frac{\be_i\transpose\bE\bu_k}{s_{ik}\lambda_k}\sum_{j = 1}^n\frac{(E_{ij}^2 - \sigma_{ij}^2)u_{jk}^2}{s_{ik}^2\lambda_k^{2}} + \widetilde{O}_{\prob}\{\eps_n + (\log n)^\xi\vartheta_n\}.
\end{align*}
We write $\bv = \bv_{ik}$ for simplicity and let $v_j$ be the $j$th entry of $\bv$. Then we have
\begin{align*}
&\frac{\bv\transpose\bE\bu_k}{s_{ik}\lambda_k}\\
&\quad = \sum_{a, b \in [n]\backslash\{i\}}\frac{E_{ab}v_au_{bk}}{s_{ik}\lambda_k} + \left(\sum_{b\in [n]\backslash\{i\}}\frac{E_{ib}v_iu_{bk}}{s_{ik}\lambda_k} + \sum_{a\in [n]\backslash\{i\}}\frac{E_{ai}v_au_{ik}}{s_{ik}\lambda_k} + \frac{v_iE_{ii}u_{ik}}{s_{ik}\lambda_k}\right)\\
&\quad
= \sum_{a, b \in [n]\backslash\{i\}}\frac{E_{ab}v_au_{bk}}{s_{ik}\lambda_k} + \widetilde{O}_{\prob}\left\{\frac{(\log n)^\xi}{n}\right\}
\end{align*}
by Result \ref{result:concentration}, Equation \eqref{result:concentration:res2} since $\max_{a,b\in [n]}|v_au_{bk}|/|s_{ik}\lambda_k| = O(n^{-3/2}\rho_n^{-1/2})$ and the second term in the parenthesis is a sum of independent random variables. Also, we have
\begin{align*}
\frac{\be_i\transpose(\bE^2 - \expect\bE^2)\bu_k}{s_{ik}\lambda_k^{2}}
& = \sum_{a,b = 1}^n\frac{E_{ia}E_{ab}u_{bk}}{s_{ik}\lambda_k^{2}} - \sum_{a = 1}^n\frac{ \expect E_{ia}^2 u_{ik}}{s_{ik}\lambda_k^2}\\
& = \sum_{a,b\in [n]\backslash\{i\}}\frac{E_{ia}E_{ab}u_{bk}}{s_{ik}\lambda_k^{2}} + \sum_{b\neq i}\frac{E_{ii}E_{ib}u_{bk}}{s_{ik}\lambda_k^{2}} + \sum_{a = 1}^n\frac{(E_{ia}^2 - \expect E_{ia}^2)u_{ik}}{s_{ik}\lambda_k^2}\\
& = \sum_{a,b\in [n]\backslash\{i\}}\frac{E_{ia}E_{ab}u_{bk}}{s_{ik}\lambda_k^{2}} + \sum_{b\neq i}\frac{E_{ii}E_{ib}u_{bk}}{s_{ik}\lambda_k^{2}} + \widetilde{O}_{\prob}\left\{\frac{(n\rho_n)^{1/2}(\log n)^\xi}{q_n\Delta_n}\right\}
\\
& = \sum_{a,b\in [n]\backslash\{i\}}\frac{E_{ia}E_{ab}u_{bk}}{s_{ik}\lambda_k^{2}}  + \widetilde{O}_{\prob}\left\{\frac{(n\rho_n)^{1/2}(\log n)^\xi}{q_n\Delta_n}\right\}
\end{align*}
where the third equality is by Equation \eqref{result:concentration:res2} of Result~\ref{result:concentration}, and for the last equality, since $|E_{ii}| = \widetilde{O}(\sqrt{n\rho_n}/q_n)$ by Equation \eqref{result:concentration:res1} of Result~\ref{result:concentration}, it follows from Equation \eqref{result:concentration:res2} of Result~\ref{result:concentration} that $\sum_{b\neq i}E_{ii}E_{ib}u_{bk}/(s_{ik}\lambda_k^{2}) = \widetilde{O}_{\prob}\{\sqrt{n\rho_n}(\log n)^{\xi}/(q_n\Delta_n)\}$. Also, 
\begin{align*}
&\frac{\be_i\transpose\bE\bu_k}{s_{ik}\lambda_k}\sum_{a, b = 1}^n\frac{E_{ia}^2E_{ab}u_{ak}u_{bk}}{\lambda_k^3s_{ik}^2}\\
&\quad = \frac{\be_i\transpose\bE\bu_k}{s_{ik}\lambda_k}\left(\sum_{a = 1}^n\sum_{b\in[n]/\{i\}}\frac{E_{ia}^2E_{ab}u_{ak}u_{bk}}{\lambda_k^3s_{ik}^2} + \sum_{a = 1}^n\frac{E_{ia}^3u_{ak}u_{ik}}{\lambda_k^3s_{ik}^2}\right)\\
&\quad = \frac{\be_i\transpose\bE\bu_k}{s_{ik}\lambda_k}\left(\sum_{a,b\in[n]/\{i\}}\frac{E_{ia}^2E_{ab}u_{ak}u_{bk}}{\lambda_k^3s_{ik}^2} + \sum_{b\in[n]/\{i\}}\frac{E_{ii}^2E_{ib}u_{ik}u_{bk}}{\lambda_k^3s_{ik}^2} + \sum_{a = 1}^n\frac{E_{ia}^3u_{ak}u_{ik}}{\lambda_k^3s_{ik}^2}\right).
\end{align*}
By Result~\ref{result:concentration}, we have
\begin{align*}
\sum_{b\in[n]/\{i\}}\frac{E_{ii}^2E_{ib}u_{ik}u_{bk}}{\lambda_k^3s_{ik}^2} &= \widetilde{O}_{\prob}\left\{\frac{(n\rho_n)^{1/2}(\log n)^\xi}{q_n^2\Delta_n}\right\} = \widetilde{O}_{\prob}\left\{\frac{(n\rho_n)^{1/2}}{q_n\Delta_n}\right\},
\end{align*} 

\begin{align*}
\left|\sum_{a = 1}^n\frac{E_{ia}^3u_{ak}u_{ik}}{\lambda_k^3s_{ik}^2}\right|
& \leq \max_{i,j\in[n]}|E_{ij}|\left\{\sum_{a = 1}^n\frac{(E_{ia}^2 - \expect E_{ia}^2)\|\bu_k\|_\infty^2}{\lambda_k^3s_{ik}^2} + \sum_{a = 1}^n\frac{\expect E_{ia}^2\|\bu_k\|_\infty^2}{\lambda_k^3s_{ik}^2}\right\}\\
& = \widetilde{O}_{\prob}\left\{\frac{(n\rho_n)^{1/2}}{q_n}\right\}\left[O\left\{\frac{(\log n)^\xi}{\Delta_n q_n}\right\} + O\left(\frac{1}{\Delta_n}\right)\right]
 = \widetilde{O}_{\prob}\left\{\frac{(n\rho_n)^{1/2}}{q_n\Delta_n}\right\}.
\end{align*}

This entails that
\begin{align*}
&\frac{\be_i\transpose\bE\bu_k}{s_{ik}\lambda_k}\sum_{a, b = 1}^n\frac{E_{ia}^2E_{ab}u_{ak}u_{bk}}{\lambda_k^3s_{ik}^2}\\
&\quad = \left(\sum_{a,b\in[n]/\{i\}}\frac{\be_i\transpose\bE\bu_kE_{ia}^2E_{ab}u_{ak}u_{bk}}{\lambda_k^4s_{ik}^3}\right) + \widetilde{O}_{\prob}\left\{\frac{(n\rho_n)^{1/2}(\log n)^\xi}{q_n\Delta_n}\right\},
\end{align*}
 where ${\be_i\transpose\bE\bu_k}/({s_{ik}\lambda_k})$ is bounded by Result~\ref{result:Noise_matrix_rowwise_concentration}.  Recall that
\begin{align*}
T_{ik}^\sharp &= \frac{\be_i\transpose\bE\bu_k}{s_{ik}\lambda_k} = \sum_{j = 1}^n\frac{E_{ij}u_{jk}}{s_{ik}\lambda_k},\quad
\delta_{ik} = \sum_{j = 1}^n\frac{(E_{ij}^2 - \sigma_{ij}^2)u_{jk}^2}{s_{ik}^2\lambda_k^{2}},\\
\Delta_{ik} &= \sum_{a, b\in [n]\backslash\{i\}}\left\{\frac{(E_{ia}/\lambda_k {+} v_a)u_{bk}}{s_{ik}\lambda_k} -
\frac{\be_i\transpose\bE\bu_k E_{ia}^2u_{ak}u_{bk}}{\lambda_k^4s_{ik}^3}\right\}E_{ab}.
\end{align*}
We then obtain
\begin{align}\label{eqn:Tik_decomposition}
T_{ik}
& = T_{ik}^\sharp + \Delta_{ik} - \frac{1}{2}T_{ik}^\sharp\delta_{ik} + \widetilde{O}_{\prob}\left\{\eps_n + (\log n)^\xi\vartheta_n\right\}.
\end{align}
Observe that $T_{ik}^\sharp$ and $\delta_{ik}$ are functions of $\be_i\transpose\bE$, and given $\be_i\transpose\bE$, $\Delta_{ik}$ is a sum of independent mean-zero random variables. Write
\begin{align*}
\Delta_{ik}& = \sum_{a < b,a,b\neq i}\left(v_a + \frac{E_{ia}}{\lambda_k} 
-
\frac{\be_i\transpose\bE\bu_k E_{ia}^2u_{ak}}{\lambda_k^3s_{ik}^2}\right)\frac{E_{ab}u_{bk}}{s_{ik}\lambda_k}\\
&\quad + \sum_{a < b,a,b\neq i}\left(v_b + \frac{E_{ib}}{\lambda_k} 
-
\frac{\be_i\transpose\bE\bu_k E_{ib}^2u_{bk}}{\lambda_k^3s_{ik}^2}\right)\frac{E_{ab}u_{ak}}{s_{ik}\lambda_k}\\
&\quad + \sum_{a = 1,a\neq i}^n\left(v_a + \frac{E_{ia}}{\lambda_k} 
-
\frac{\be_i\transpose\bE\bu_k E_{ia}^2u_{ak}}{\lambda_k^3s_{ik}^2}\right)\frac{E_{aa}u_{ak}}{s_{ik}\lambda_k}\\
& = \sum_{a < b,a,b\neq i}\left\{v_au_{bk} + v_bu_{ak} + \frac{E_{ia}u_{bk} + E_{ib}u_{ak}}{\lambda_k}
-
\frac{\be_i\transpose\bE\bu_k (E_{ia}^2 + E_{ib}^2)u_{ak}u_{bk}}{\lambda_k^3s_{ik}^2}\right\}\frac{E_{ab}}{s_{ik}\lambda_k}\\
&\quad + \sum_{a\neq i}\left(v_a + \frac{E_{ia}}{\lambda_k} 
-
\frac{\be_i\transpose\bE\bu_k E_{ia}^2u_{ak}}{\lambda_k^3s_{ik}^2}\right)\frac{E_{aa}u_{ak}}{s_{ik}\lambda_k}.
\end{align*}
We further define
\begin{equation}\label{def:tildeT}
\widetilde{T}_{ik} = T_{ik}^\sharp - \frac{1}{2}T_{ik}^\sharp\delta_{ik},\quad \sigma_{ik}^2(\be_i\transpose\bE) = \var(\Delta_{ik}\mid\be_i\transpose\bE),
\end{equation}
and an independent random variable 
$$
\widetilde{\Delta}_{ik}\sim\mathrm{N}\big\{0, \sigma_{ik}^2(\be_i\transpose\bE)\big\}\mid \mathrm{given}\,\,\be_i\transpose\bE.
$$ We thus have
\begin{equation}\label{decom:T}
T_{ik}
 = \widetilde{T}_{ik} +  \Delta_{ik} + \widetilde{O}_{\prob}\left\{\eps_n + (\log n)^\xi\vartheta_n\right\},
\end{equation}whose main part's distribution (i.e., the distribution of $\tilde{T}_{ik} + \Delta_{ik}$) will be connected to the distribution of $\tilde{T}_{ik} + \tilde{\Delta}_{ik}$ (c.f.~Section~\ref{sub:finishing_proof_of_edgeworth_expansion}). Now let
\begin{align*}
\bar{\sigma}_{ik}^2(\be_i\transpose\bE)
& = \sum_{a < b,a,b\neq i}\left(v_au_{bk} + v_bu_{ak} + \frac{E_{ia}u_{bk} + E_{ib}u_{ak}}{\lambda_k}\right)^2\frac{\var(E_{ab})}{s_{ik}^2\lambda_k^2}
\\&\quad
 + \sum_{a\neq i}\left(v_a + \frac{E_{ia}}{\lambda_k}\right)^2\frac{\var(E_{aa})u_{ak}^2}{s_{ik}^2\lambda_k^2}.
\end{align*}
By the inequality $3a^2/4 - 3b^2 \leq (a + b)^2\leq 2a^2 + 2b^2$ for any $a, b\in\mathbb{R}$, we obtain
\begin{align*}
\sigma_{ik}^2(\be_i\transpose\bE)
&\leq 2\sum_{a < b,a,b\neq i}\left(v_au_{bk} + v_bu_{ak} + \frac{E_{ia}u_{bk} + E_{ib}u_{ak}}{\lambda_k}\right)^2\frac{\var(E_{ab})}{s_{ik}^2\lambda_k^2}
\\&\quad
 + 2\sum_{a\neq i}\left(v_a + \frac{E_{ia}}{\lambda_k}\right)^2\frac{\var(E_{aa})u_{ak}^2}{s_{ik}^2\lambda_k^2}\\
&\quad + 2\sum_{a < b,a,b\neq i}\left\{\frac{\be_i\transpose\bE\bu_k (E_{ia}^2 + E_{ib}^2)u_{ak}u_{bk}}{\lambda_k^3s_{ik}^2}\right\}^2\frac{\var(E_{ab})}{s_{ik}^2\lambda_k^2}\\
&\quad + 2\sum_{a\neq i}\left(\frac{\be_i\transpose\bE\bu_k E_{ia}^2u_{ak}}{\lambda_k^3s_{ik}^2}\right)^2\frac{\var(E_{aa})u_{ak}^2}{s_{ik}^2\lambda_k^2}\\
& = 2\bar{\sigma}_{ik}^2(\be_i\transpose\bE) + 2\sum_{a < b,a,b\neq i}\left\{\frac{\be_i\transpose\bE\bu_k (E_{ia}^2 + E_{ib}^2)u_{ak}u_{bk}}{\lambda_k^3s_{ik}^2}\right\}^2\frac{\var(E_{ab})}{s_{ik}^2\lambda_k^2}\\
&\quad + 2\sum_{a\neq i}\left(\frac{\be_i\transpose\bE\bu_k E_{ia}^2u_{ak}}{\lambda_k^3s_{ik}^2}\right)^2\frac{\var(E_{aa})u_{ak}^2}{s_{ik}^2\lambda_k^2},
\end{align*}
and
\begin{align*}
\sigma_{ik}^2(\be_i\transpose\bE)
&\geq \frac{3}{4}\bar{\sigma}_{ik}(\be_i\transpose\bE) - 3\sum_{a < b,a,b\neq i}\left\{\frac{\be_i\transpose\bE\bu_k (E_{ia}^2 + E_{ib}^2)u_{ak}u_{bk}}{\lambda_k^3s_{ik}^2}\right\}^2\frac{\var(E_{ab})}{s_{ik}^2\lambda_k^2}\\
&\quad - 3\sum_{a\neq i}\left(\frac{\be_i\transpose\bE\bu_k E_{ia}^2u_{ak}}{\lambda_k^3s_{ik}^2}\right)^2\frac{\var(E_{aa})u_{ak}^2}{s_{ik}^2\lambda_k^2}.
\end{align*}
By Result \ref{result:Noise_matrix_rowwise_concentration},  Result~\ref{result:concentration}, Lemma 3.7, we have
\begin{align*}
&\sum_{a < b,a,b\neq i}\left\{\frac{\be_i\transpose\bE\bu_k (E_{ia}^2 + E_{ib}^2)u_{ak}u_{bk}}{\lambda_k^3s_{ik}^2}\right\}^2\frac{\var(E_{ab})}{s_{ik}^2\lambda_k^2}
 + \sum_{a\neq i}\left(\frac{\be_i\transpose\bE\bu_k E_{ia}^2u_{ak}}{\lambda_k^3s_{ik}^2}\right)^2\frac{\var(E_{aa})u_{ak}^2}{s_{ik}^2\lambda_k^2}\\
&\quad\lesssim (\be_i\transpose\bE\bu_k)^2\sum_{a = 1}^n\sum_{b = 1}^n\frac{\rho_n(E_{ia}^4 + E_{ib}^4)\|\bu_k\|_{\infty}^4}{s_{ik}^6\lambda_k^8}\\
&\quad\lesssim (\be_i\transpose\bE\bu_k)^2\max_{i,j\in[n]}E_{ij}^2\left\{\sum_{a = 1}^n\frac{(E_{ia}^2 - \expect E_{ia}^2)}{\Delta_n^2n\rho_n^2} + \sum_{a = 1}^n\frac{\expect E_{ia}^2}{\Delta_n^2n\rho_n^2}\right\}\\
&\quad = \widetilde{O}_{\prob}\left\{\frac{(\log n)^{2\xi}}{\Delta_n^2q_n^2}\right\}\left\{\sum_{a = 1}^n(E_{ia}^2 - \expect E_{ia}^2) + O(n\rho_n)\right\} = \widetilde{O}_{\prob}\left\{\frac{(n\rho_n)(\log n)^{2\xi}}{\Delta_n^2q_n^2}\right\}.
\end{align*}
Therefore,
\begin{align}\label{bound:sigmaik}
\frac{3}{4}\bar{\sigma}_{ik}^2(\be_i\transpose\bE) - \left|\widetilde{O}_{\prob}\left\{\frac{(n\rho_n)(\log n)^{2\xi}}{\Delta_n^2q_n^2}\right\}\right|\leq \sigma_{ik}^2(\be_i\transpose\bE)\leq 2\bar{\sigma}_{ik}^2(\be_i\transpose\bE) + \widetilde{O}_{\prob}\left\{\frac{(n\rho_n)(\log n)^{2\xi}}{\Delta_n^2q_n^2}\right\}
\end{align}
Also, we have the following expression for $T_{ik}^\sharp\delta_{ik}$:
\[
T_{ik}^\sharp\delta_{ik} = \sum_{a < b}g_{ab}^{(ik)} + \sum_{a = 1}^ng_{aa}^{(ik)},
\]
where
\begin{equation}
\label{eqn:gab_formula}
\begin{aligned}
g_{ab}^{(ik)} = \left\{\begin{aligned}
&\frac{E_{ia}\{E_{ib}^2 - \var(E_{ib})\}u_{ak}u_{bk}^2 + E_{ib}\{E_{ia}^2 - \var(E_{ia})\}u_{ak}^2u_{bk}}{s_{ik}^3\lambda_k^3},&\quad&\mbox{if }a\neq b,\\
&\frac{E_{ia}\{E_{ia}^2 - \var(E_{ia})\}u_{ak}^3}{s_{ik}^3\lambda_k^3},&\quad&\mbox{if }a = b.
\end{aligned}\right.
\end{aligned}
\end{equation}
\begin{theorem}[Esseen's Smoothing Lemma \cite{10.1007/BF02392223}]
\label{thm:esseen_smoothing_lemma}
Let $F(\cdot)$ be the cumulative distribution function of a random variable and $G(\cdot)$ be a function of bounded variation over $\mathbb{R}$ with $G(-\infty) = 0$, $G(+\infty) = 1$. Then there exists constants $C_1, C_2 > 0$, such that for any $T > 0$, we have
\begin{align*}
\|F(x) - G(x)\|_\infty\leq C_1\int_{-T}^T\left|\frac{f(t) - g(t)}{t}\right|\mathrm{d}t + \frac{C_2\sup_{x\in\mathbb{R}}|G'(x)|}{T},
\end{align*}
where $f$ and $g$ are the Fourier-Stieltjes transform of $F$ and $G$, i.e.,
\[
f(t) = \int_{\mathbb{R}}e^{\mathbbm{i}tx}\mathrm{d}F(x),\quad
g(t) = \int_{\mathbb{R}}e^{\mathbbm{i}tx}\mathrm{d}G(x).
\]
\end{theorem}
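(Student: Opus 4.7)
The plan is to follow the classical Fourier-analytic argument of Esseen: introduce a smoothing density whose characteristic function is compactly supported on $[-T,T]$, evaluate the convolution $(F-G)*K_T$ at a point where $F-G$ nearly attains its supremum, and compare the resulting lower bound (coming from monotonicity of $F$ and the Lipschitz estimate on $G$) against the upper bound obtained by Fourier inversion.

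First, I would take $K_T$ to be the Fej\'er density
\[
K_T(x) = \frac{T}{2\pi}\left(\frac{\sin(Tx/2)}{Tx/2}\right)^{2},
\]
whose characteristic function $\hat K_T(t)=\max\{1-|t|/T,\,0\}$ vanishes outside $[-T,T]$; the $1/(Tx^2)$ tail decay of $K_T$ will be exploited below. Set $\Delta = \|F-G\|_\infty$ and $M=\sup_x|G'(x)|$, and fix $x_0$ satisfying $|F(x_0)-G(x_0)|\geq \Delta-\varepsilon$ for arbitrary $\varepsilon>0$; without loss of generality assume $F(x_0)-G(x_0)\geq \Delta-\varepsilon$. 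Monotonicity of $F$ together with the Lipschitz bound on $G$ gives $F(x)-G(x)\geq \Delta-\varepsilon-M(x-x_0)$ for all $x\geq x_0$. Evaluating $((F-G)*K_T)(y)$ at $y=x_0+\Delta/(2M)$, splitting the convolution into the interval on which the pointwise lower bound is at least $\Delta/2-\varepsilon$ and the complementary tail, and using $\int_{|z|>\Delta/(2M)}K_T(z)\,dz = O(M/(T\Delta))$ to control the tail, I would obtain a lower bound of the form $((F-G)*K_T)(y)\geq c_1\Delta - c_2 M/T$.

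For the upper bound I would use the Fourier representation. The signed measure $dF-dG$ has Fourier--Stieltjes transform $f-g$, so after convolution with $K_T$ (whose characteristic function is smooth and compactly supported) the resulting object is a Schwartz function and the inversion theorem applies without ambiguity. A single Stieltjes integration by parts yields
\[
((F-G)*K_T)(y) = \frac{1}{2\pi}\int_{-T}^{T} \hat K_T(t)\,\frac{g(t)-f(t)}{-\mathbbm{i}t}\,e^{-\mathbbm{i}ty}\,dt,
\]
so $|((F-G)*K_T)(y)|\leq (2\pi)^{-1}\int_{-T}^{T}|f(t)-g(t)|/|t|\,dt$. Combining the two bounds, sending $\varepsilon\downarrow 0$, and choosing numerical constants completes the proof, with a symmetric argument handling the case $F(x_0)-G(x_0)\leq -(\Delta-\varepsilon)$.

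The main obstacle is justifying the Fourier inversion rigorously when $F$ and $G$ need not be absolutely continuous, so that $F-G$ is merely of bounded variation rather than differentiable. The clean workaround is to work at the level of the finite signed measure $dF-dG$: its Fourier--Stieltjes transform is $f(t)-g(t)$, and after convolution with the rapidly decaying density $K_T$ the result becomes a genuine Schwartz function for which the inversion formula holds. Once this structural point is settled, establishing the displayed identity is routine, and optimizing the resulting constants $C_1$ and $C_2$ is bookkeeping.
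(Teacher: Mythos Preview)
The paper does not prove this theorem; it is quoted as a classical result of Esseen and used as a black box in the proof of Theorem~\ref{thm:edgeworth_expansion}. Your outline is the standard Fej\'er-kernel argument and is essentially correct.

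One small technical point: the convolution $(F-G)*K_T$ is not a Schwartz function, since the Fej\'er kernel only decays like $1/(Tx^2)$ and $F-G$ is merely bounded. The clean justification of the inversion identity is instead to observe that both sides of your displayed formula have the same derivative in $y$ (namely $(\mu*K_T)(y)$, for which inversion is unproblematic because $(f-g)\hat K_T$ is compactly supported and integrable), and that both sides tend to $0$ as $y\to-\infty$: the left side because $F-G\to 0$ and $K_T\in L^1$, the right side by Riemann--Lebesgue applied to the $L^1$ integrand $t\mapsto \hat K_T(t)(f(t)-g(t))/t$ on $[-T,T]$ (integrability of this function may be assumed, since otherwise the conclusion is vacuous). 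With that adjustment your argument goes through.
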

Below, we will primarily focus on establishing 
\begin{align}\label{eqn:edgeworth_expansion_self_smoothed_version}
\left\|\prob\left(\widetilde{T}_{ik} + \widetilde{\Delta}_{ik}\leq x\right) - G_n^{(ik)}(x)\right\|_\infty = O\left\{\frac{1}{n} + \frac{n\rho_n\log n}{\Delta_n^2} + \frac{(\log n)^4}{q_n^2}\right\},
\end{align}
with
\begin{align}\label{def:Gnikx}
G_n^{(ik)}(x) = \Phi(x) + \frac{(2x^2 + 1)}{6}\phi(x)\sum_{j = 1}^n\frac{\expect E_{ij}^3u_{jk}^3}{s_{ik}^3\lambda_k^3},
\end{align}
using Esseen's smoothing lemma above, which is the most challenging part. 

\subsection{Technical Lemmas for Theorem \ref{thm:edgeworth_expansion}}
\label{sub:technical_lemmas_for_edgeworth_expansion}

\begin{lemma}\label{lemma:Delta_conditional_variance}
Suppose Assumptions \ref{assumption:Signal_strength}--\ref{assumption:Eigenvector_delocalization} hold and $\min_{i\in [n]}s_{ik}^2 = \Theta(\rho_n/\Delta_n^2)$. Then for any $\xi > 1$,
\begin{align*}
\bar{\sigma}_{ik}^2(\be_i\transpose\bE)
& =  \sum_{a = 1}^n\sum_{b = 1}^n\frac{\sigma_{ia}^2\sigma^2_{ab}u_{bk}^2}{s_{ik}^2\lambda_k^{4}}
 + \sum_{a\leq b,a,b\neq i}\frac{(v_au_{bk} + v_bu_{ak})^2z_{ab}^2\sigma^2_{ab}}{s_{ik}^2\lambda_k^{2}}
\\&\quad
  + \widetilde{O}_{\prob}\left\{\frac{\rho_n^{1/2}(\log n)^\xi}{\sqrt{n}\Delta_n} + \frac{(n\rho_n)(\log n)^\xi}{q_n\Delta_n^2}\right\},
\end{align*}
where $z_{ab} = 1$ if $a\neq b$ and $z_{ab} = 1/2$ if $a = b$.
\end{lemma}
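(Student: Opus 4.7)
The plan is to expand the two squared expressions defining $\bar{\sigma}_{ik}^2(\be_i\transpose\bE)$ as
\begin{align*}
\Big(v_au_{bk}+v_bu_{ak}+\tfrac{E_{ia}u_{bk}+E_{ib}u_{ak}}{\lambda_k}\Big)^{\!2}
&= (v_au_{bk}+v_bu_{ak})^2 + \tfrac{2(v_au_{bk}+v_bu_{ak})(E_{ia}u_{bk}+E_{ib}u_{ak})}{\lambda_k}\\
&\quad + \tfrac{(E_{ia}u_{bk}+E_{ib}u_{ak})^2}{\lambda_k^2},
\end{align*}
and analogously for the $a=b$ diagonal contribution. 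This partitions $\bar{\sigma}_{ik}^2(\be_i\transpose\bE)$ into a deterministic piece, a linear-in-$E$ cross piece, and a quadratic-in-$E$ piece, each to be handled separately.

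First I would verify that the deterministic piece exactly reproduces the second sum in the claim. Combining the $a<b$ contribution with the diagonal $(v_au_{ak}+v_au_{ak})^2=4v_a^2u_{ak}^2$ via the convention $z_{ab}=1$ for $a\neq b$ and $z_{aa}=1/2$ gives the stated expression $\sum_{a\leq b,\,a,b\neq i}(v_au_{bk}+v_bu_{ak})^2 z_{ab}^2\sigma_{ab}^2/(s_{ik}^2\lambda_k^2)$ with no remainder. Next, for the quadratic-in-$E$ piece, expanding $(E_{ia}u_{bk}+E_{ib}u_{ak})^2$ and symmetrizing the indices $a,b\in[n]\setminus\{i\}$ gives the on-diagonal part $\sum_{a,b\neq i} E_{ia}^2\sigma_{ab}^2 u_{bk}^2/(s_{ik}^2\lambda_k^4)$ plus an off-diagonal cross form $2\sum_{a\neq b,\,a,b\neq i} E_{ia}E_{ib}u_{ak}u_{bk}\sigma_{ab}^2/(s_{ik}^2\lambda_k^4)$. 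I then replace $E_{ia}^2$ by $\sigma_{ia}^2$ via Result~\ref{result:concentration}, Equation~\eqref{result:concentration:res2} applied with $l=2$ and coefficients $\gamma_a=\sum_{b\neq i}u_{bk}^2\sigma_{ab}^2/(s_{ik}^2\lambda_k^4)=O(1/\Delta_n^2)$; this yields the target double sum (extended to all $a,b\in[n]$, with boundary terms $a=i$ or $b=i$ bounded separately at order $O(\rho_n/\Delta_n^2)$ and absorbed into the error) with fluctuation $\widetilde{O}_{\prob}\{n\rho_n(\log n)^\xi/(q_n\Delta_n^2)\}$. The off-diagonal quadratic form $\sum_{a\neq b}E_{ia}E_{ib}\cdot\gamma_{ab}$ with $\gamma_{ab}=u_{ak}u_{bk}\sigma_{ab}^2/(s_{ik}^2\lambda_k^4)=O(\rho_n/(n\Delta_n^2))$ will be handled via Result~\ref{result:concentration}, Equation~\eqref{result:concentration:res3} with $l=1$, producing a contribution of strictly lower order.

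Finally, for the linear-in-$E$ cross piece, observe that its coefficient of $E_{ia}$ after summing over $b$ has magnitude $O(\rho_n/(n^{3/2}\Delta_n))$ under Assumption~\ref{assumption:Eigenvector_delocalization} (since $\|\bv_{ik}\|_\infty=O(1/n)$ and $|u_{jk}|=O(1/\sqrt n)$), so Result~\ref{result:concentration}, Equation~\eqref{result:concentration:res2} with $l=1$ gives a bound of order $\rho_n^{1/2}(\log n)^\xi/(\sqrt n\Delta_n)$, matching the first error term in the claim. The diagonal $a=b$ contributions (involving $\var(E_{aa})$) are handled identically, contributing to the main term after the same $E_{ia}^2\to\sigma_{ia}^2$ replacement and to the remainders in lower order.

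The main obstacle is making sure the indices line up so that the claimed range $\sum_{a=1}^n\sum_{b=1}^n$ (which includes $a=i$ or $b=i$) is correctly reached from the restricted range $a,b\neq i$ of $\bar{\sigma}_{ik}^2$; this requires separately bounding the boundary contributions (where $a=i$ or $b=i$) and showing they are dominated by $n\rho_n(\log n)^\xi/(q_n\Delta_n^2)$ using $q_n\leq\sqrt n$ and $s_{ik}^2=\Theta(\rho_n/\Delta_n^2)$. A secondary technical point is ensuring that the concentration rate from Equation~\eqref{result:concentration:res2} applied to the $E_{ia}^2-\sigma_{ia}^2$ sum, which is $\widetilde{O}_{\prob}((\log n)^\xi\max|\gamma_a|\cdot n\rho_n/q_n)$, does indeed coincide with the stated error.
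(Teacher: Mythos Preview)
Your proposal is correct and follows essentially the same approach as the paper's proof: the same three-way decomposition into the deterministic $(v_au_{bk}+v_bu_{ak})^2$ piece, the linear-in-$E$ cross piece, and the quadratic-in-$E$ piece, each handled with the same concentration tools (Result~\ref{result:concentration}, Equations~\eqref{result:concentration:res2} and~\eqref{result:concentration:res3}) and producing the same error rates. Your identification of the boundary-index issue (extending from $a,b\neq i$ to all $a,b\in[n]$) and its resolution also match the paper's treatment.
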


\begin{proof}
Recall that
\begin{align*}
\bar{\sigma}_{ik}^2(\be_i\transpose\bE)
& = \sum_{a < b,a,b\neq i}\left(v_au_{bk} + v_bu_{ak} + \frac{E_{ia}u_{bk} + E_{ib}u_{ak}}{\lambda_k}\right)^2\frac{\sigma^2_{ab}}{s_{ik}^2\lambda_k^{2}}\\
&\quad + \sum_{a\neq i}\left(v_au_{ak} + \frac{E_{ia}u_{ak}}{\lambda_k}\right)^2\frac{\sigma^2_{aa}}{s_{ik}^2\lambda_k^{2}}\\
& = \sum_{a \leq b,a,b\neq i}\left(v_au_{bk} + v_bu_{ak} + \frac{E_{ia}u_{bk} + E_{ib}u_{ak}}{\lambda_k}\right)^2z_{ab}^2\frac{\sigma^2_{ab}}{s_{ik}^2\lambda_k^{2}},
\end{align*}
where we define $z_{ab} = 1$ if $a < b$ and $z_{aa} = 1/2$ for all $a,b\in [n]\backslash\{i\},a < b$. Expanding $\sigma_{ik}^2(\be_i\transpose\bE)$, we obtain
\begin{align*}
\bar{\sigma}_{ik}^2(\be_i\transpose\bE)
& = \sum_{a\leq b,a,b\neq i}\frac{z_{ab}^2\sigma^2_{ab}}{\lambda_k^{2}}\left(\frac{E_{ia}^2u_{bk}^2 + E_{ib}^2u_{ak}^2 + 2E_{ia}E_{ib}u_{ak}u_{bk}}{s_{ik}^2\lambda_k^{2}}\right)\\
&\quad + 2\sum_{a\leq b,a,b\neq i}\frac{z_{ab}^2\sigma^2_{ab}}{\lambda_k^{2}}\left\{\frac{(E_{ia}u_{bk} + E_{ib}u_{ak})(v_au_{bk} + v_bu_{ak})}{s_{ik}^2\lambda_k^{}}\right\}\\
&\quad + \sum_{a\leq b,a,b\neq i}\frac{(v_au_{bk} + v_bu_{ak})^2z_{ab}^2\sigma^2_{ab}}{s_{ik}^2\lambda_k^{2}};
\end{align*}
Here we note $\|\mathbf{v}\|_{\infty} = O(n^{-1})$ recalling the definition of $\mathbf{v}$ in \eqref{def:v}. 
For the second term, we can re-write it as
\begin{align*}
&2\sum_{a\leq b,a,b\neq i}\frac{z_{ab}^2\sigma^2_{ab}}{\lambda_k^{2}}\left\{\frac{(E_{ia}u_{bk} + E_{ib}u_{ak})(v_au_{bk} + v_bu_{ak})}{s_{ik}^2\lambda_k}\right\}\\
&\quad = \sum_{a \neq b,a,b\neq i}\frac{\sigma^2_{ab}}{\lambda_k^{2}}\left\{\frac{(E_{ia}u_{bk} + E_{ib}u_{ak})(v_au_{bk} + v_bu_{ak})}{s_{ik}^2\lambda_k}\right\} + 2\sum_{a\neq i}\frac{\sigma^2_{aa}}{\lambda_k^{2}}\frac{E_{ia}v_au_{ak}^2}{s_{ik}^2\lambda_k}\\
&\quad = \sum_{a,b\neq i}\frac{\sigma^2_{ab}}{\lambda_k^{2}}\left\{\frac{(E_{ia}u_{bk} + E_{ib}u_{ak})(v_au_{bk} + v_bu_{ak})}{s_{ik}^2\lambda_k}\right\} - 2\sum_{a\neq i}\frac{\sigma^2_{aa}}{\lambda_k^{2}}\frac{E_{ia}v_au_{ak}^2}{s_{ik}^2\lambda_k^{}}\\
&\quad = \sum_{a\neq i}E_{ia}\sum_{b\neq i}\frac{\sigma^2_{ab}}{\lambda_k^{2}}\left\{\frac{u_{bk}(v_au_{bk} + v_bu_{ak})}{s_{ik}^2\lambda_k}\right\} + \sum_{b\neq i}E_{ib}\sum_{a\neq i}\frac{\sigma^2_{ab}}{\lambda_k^{2}}\left\{\frac{u_{ak}(v_au_{bk} + v_bu_{ak})}{s_{ik}^2\lambda_k}\right\}\\
&\quad\quad - 2\sum_{a\neq i}\frac{\sigma^2_{aa}}{\lambda_k^{2}}\frac{E_{ia}v_au_{ak}^2}{s_{ik}^2\lambda_k}\\
&\quad = 
\widetilde{O}_{\prob}\left\{\sqrt{n\rho_n}(\log n)^\xi\max_{a\in [n]}\left|\sum_{b\neq i}\frac{\sigma^2_{ab}}{\lambda_k^{2}}\left\{\frac{u_{bk}(v_au_{bk} + v_bu_{ak})}{s_{ik}^2\lambda_k}\right\}\right|\right\}\\
&\quad\quad + \widetilde{O}_{\prob}\left\{\sqrt{n\rho_n}(\log n)^\xi\max_{a\in [n]}\left|\frac{\sigma^2_{aa}v_au_{ak}^2}{s_{ik}^2\lambda_k^3}\right|\right\}\\
&\quad = \widetilde{O}_{\prob}\left\{\frac{\rho_n^{1/2}(\log n)^\xi}{\sqrt{n}\Delta_n}\right\},
\end{align*}
where the fourth equality holds by Equation \eqref{result:concentration:res2} of Result~\ref{result:concentration}. 
We now focus on the first term and write
\begin{align*}
&\sum_{a\leq b,a,b\neq i}\frac{z_{ab}^2\sigma^2_{ab}}{\lambda_k^{2}}\left(\frac{E_{ia}^2u_{bk}^2 + E_{ib}^2u_{ak}^2 + 2E_{ia}E_{ib}u_{ak}u_{bk}}{s_{ik}^2\lambda_k^{2}}\right)\\
&\quad = \sum_{a < b,a,b\neq i}\frac{z_{ab}^2\sigma^2_{ab}}{2\lambda_k^{2}}\left(\frac{E_{ia}^2u_{bk}^2 + E_{ib}^2u_{ak}^2 + 2E_{ia}E_{ib}u_{ak}u_{bk}}{s_{ik}^2\lambda_k^{2}}\right)\\
&\quad\quad + \sum_{a > b,a,b\neq i}\frac{z_{ab}^2\sigma^2_{ab}}{2\lambda_k^{2}}\left(\frac{E_{ia}^2u_{bk}^2 + E_{ib}^2u_{ak}^2 + 2E_{ia}E_{ib}u_{ak}u_{bk}}{s_{ik}^2\lambda_k^{2}}\right)
 + \sum_{a\neq i}\frac{z_{aa}^2\sigma^2_{aa}}{\lambda_k^{2}}\left(\frac{4E_{ia}^2u_{ak}^2}{s_{ik}^2\lambda_k^{2}}\right)\\
&\quad = \sum_{a,b\neq i}\frac{z_{ab}^2\sigma^2_{ab}}{2\lambda_k^{2}}\left(\frac{E_{ia}^2u_{bk}^2 + E_{ib}^2u_{ak}^2 + 2E_{ia}E_{ib}u_{ak}u_{bk}}{s_{ik}^2\lambda_k^{2}}\right) + \sum_{a\neq i}\frac{z_{aa}^2\sigma^2_{aa}}{2\lambda_k^{2}}\left(\frac{4E_{ia}^2u_{ak}^2}{s_{ik}^2\lambda_k^{2}}\right)\\
&\quad = 2\sum_{a\neq i}E_{ia}^2\sum_{b\neq i}\frac{z_{ab}^2\sigma^2_{ab}u_{bk}^2}{2s_{ik}^2\lambda_k^{4}} + \sum_{a,b\neq i}\frac{z_{ab}^2\sigma^2_{ab}}{\lambda_k^{2}}\left(\frac{E_{ia}E_{ib}u_{ak}u_{bk}}{s_{ik}^2\lambda_k^{2}}\right)  + \sum_{a\neq i}\frac{z_{aa}^2\sigma^2_{aa}}{2\lambda_k^{2}}\left(\frac{4E_{ia}^2u_{ak}^2}{s_{ik}^2\lambda_k^{2}}\right)\\
&\quad = \sum_{a\neq i}\expect E_{ia}^2\sum_{b\neq i}\frac{z_{ab}^2\sigma^2_{ab}u_{bk}^2}{s_{ik}^2\lambda_k^{4}}  + \sum_{a\neq i}\frac{z_{aa}^2\sigma^2_{aa}}{\lambda_k^{2}}\left(\frac{2\expect E_{ia}^2u_{ak}^2}{s_{ik}^2\lambda_k^{2}}\right)\\
&\quad\quad + \sum_{a\neq i}(E_{ia}^2 - \expect E_{ia}^2)\sum_{b\neq i}\frac{z_{ab}^2\sigma^2_{ab}u_{bk}^2}{s_{ik}^2\lambda_k^{4}} + \sum_{a\neq i}\frac{z_{aa}^2\sigma^2_{aa}}{\lambda_k^{2}}\left\{\frac{2(E_{ia}^2 - \expect E_{ia}^2)u_{ak}^2}{s_{ik}^2\lambda_k^{2}}\right\}\\
&\quad\quad + \sum_{a,b\neq i}\frac{z_{ab}^2\sigma^2_{ab}}{\lambda_k^{2}}\left(\frac{E_{ia}E_{ib}u_{ak}u_{bk}}{s_{ik}^2\lambda_k^{2}}\right)\\
&\quad = \sum_{a\neq i}\expect E_{ia}^2\sum_{b\neq i}\frac{z_{ab}^2\sigma^2_{ab}u_{bk}^2}{s_{ik}^2\lambda_k^{4}}  + \sum_{a\neq i}\frac{z_{aa}^2\sigma^2_{aa}}{\lambda_k^{2}}\left(\frac{2\expect E_{ia}^2u_{ak}^2}{s_{ik}^2\lambda_k^{2}}\right)\\
&\quad\quad + \widetilde{O}_{\prob}\left\{\frac{(n\rho_n)(\log n)^\xi}{q_n}\max_{a\in[n]}\left|\sum_{b\neq i}\frac{z_{ab}^2\sigma^2_{ab}u_{bk}^2}{s_{ik}^2\lambda_k^{4}}\right|\right\}\\
&\quad\quad + \widetilde{O}_{\prob}\left\{\frac{(n\rho_n)(\log n)^\xi}{q_n}\max_{a\in[n]}\left|
\frac{z_{aa}^2\sigma^2_{aa}u_{ak}^2}{s_{ik}^2\lambda_k^{4}}
\right|\right\}\\
&\quad\quad + \sum_{a,b\neq i}\frac{z_{ab}^2\sigma^2_{ab}}{\lambda_k^{2}}\left(\frac{E_{ia}E_{ib}u_{ak}u_{bk}}{s_{ik}^2\lambda_k^{2}}\right)\\
&\quad = \sum_{a\neq i}\expect E_{ia}^2\sum_{b\neq i}\frac{z_{ab}^2\sigma^2_{ab}u_{bk}^2}{s_{ik}^2\lambda_k^{4}}  + \sum_{a\neq i}\frac{z_{aa}^2\sigma^2_{aa}}{\lambda_k^{2}}\left(\frac{2\expect E_{ia}^2u_{ak}^2}{s_{ik}^2\lambda_k^{2}}\right) + \sum_{a,b\neq i}\frac{z_{ab}^2\sigma^2_{ab}}{\lambda_k^{2}}\left(\frac{E_{ia}E_{ib}u_{ak}u_{bk}}{s_{ik}^2\lambda_k^{2}}\right)\\
&\quad\quad
 + \widetilde{O}_{\prob}\left\{\frac{(n\rho_n)(\log n)^\xi}{q_n\Delta_n^2}\right\}\\
&\quad = \sum_{a\neq i} \sigma_{ia}^2 \sum_{b\neq i}\frac{z_{ab}^2\sigma^2_{ab}u_{bk}^2}{s_{ik}^2\lambda_k^{4}} + \sum_{a,b\neq i}\frac{z_{ab}^2\sigma^2_{ab}}{\lambda_k^{2}}\left(\frac{E_{ia}E_{ib}u_{ak}u_{bk}}{s_{ik}^2\lambda_k^{2}}\right)
 + \widetilde{O}_{\prob}\left\{\frac{(n\rho_n)(\log n)^\xi}{q_n\Delta_n^2}\right\},
\end{align*}
where the fifth equality above holds by Equation \eqref{result:concentration:res2} of Result~\ref{result:concentration}. 
By Assumption \ref{assumption:Eigenvector_delocalization} and Assumption \ref{assumption:Noise_matrix_distribution}, we can write the third term above as
\begin{align*}
&\sum_{a,b\neq i}\frac{z_{ab}^2\sigma^2_{ab}}{\lambda_k^{2}}\left(\frac{E_{ia}E_{ib}u_{ak}u_{bk}}{s_{ik}^2\lambda_k^{2}}\right)\\
&\quad = \sum_{a\neq b,a,b\neq i}\frac{\sigma^2_{ab}}{\lambda_k^{2}}\left(\frac{E_{ia}E_{ib}u_{ak}u_{bk}}{s_{ik}^2\lambda_k^{2}}\right)+ \sum_{a\neq i}\frac{\sigma^2_{aa}}{4\lambda_k^{2}}\left(\frac{\expect E_{ia}^2u_{ak}^2}{s_{ik}^2\lambda_k^{2}}\right)
\\&\quad\quad
 + \sum_{a\neq i}\frac{\sigma^2_{aa}}{4\lambda_k^{2}}\left\{\frac{(E_{ia}^2 - \expect E_{ia}^2)u_{ak}^2}{s_{ik}^2\lambda_k^{2}}\right\}\\
&\quad = \widetilde{O}_{\prob}\left\{(n\rho_n)(\log n)^{2\xi}\max_{a,b\in[n]}\left|\frac{\sigma^2_{ab}}{\lambda_k^{2}}\left(\frac{u_{ak}u_{bk}}{s_{ik}^2\lambda_k^{2}}\right)\right|\right\}+ \sum_{a\neq i}\frac{\sigma^2_{aa}}{4\lambda_k^{2}}\left(\frac{\expect E_{ia}^2u_{ak}^2}{s_{ik}^2\lambda_k^{2}}\right)\\
&\quad\quad + \widetilde{O}_{\prob}\left\{\frac{(n\rho_n)(\log n)^\xi}{q_n}\max_{a\in[n]}\left|\frac{\sigma^2_{aa}}{4\lambda_k^{2}}\frac{u_{ak}^2}{s_{ik}^2\lambda_k^{2}}\right|\right\}\\
&\quad = \sum_{a\neq i}\frac{\sigma^2_{aa}}{4\lambda_k^{2}}\left(\frac{\expect E_{ia}^2u_{ak}^2}{s_{ik}^2\lambda_k^{2}}\right) + \widetilde{O}_{\prob}\left\{\frac{\rho_n(\log n)^{2\xi}}{\Delta_n^2}\right\} = \widetilde{O}_{\prob}\left\{\frac{\rho_n(\log n)^{2\xi}}{\Delta_n^2}\right\},
\end{align*}
where the second equality above is due to Equations \eqref{result:concentration:res2} and \eqref{result:concentration:res3} of Result~\ref{result:concentration}. 
Hence, we conclude that
\begin{align}\label{lemmaD3:final}
\bar{\sigma}
_{ik}^2(\be_i\transpose\bE)
& = \sum_{a\leq b,a,b\neq i}\frac{(v_au_{bk} + v_bu_{ak})^2z_{ab}^2\sigma^2_{ab}}{s_{ik}^2\lambda_k^{2}}
 + \sum_{a\neq i}\sum_{b\neq i}\frac{z_{ab}^2\sigma_{ia}^2\sigma^2_{ab}u_{bk}^2}{s_{ik}^2\lambda_k^{4}}
 \\&\nonumber\quad 
  + \widetilde{O}_{\prob}\left\{\frac{(n\rho_n)(\log n)^\xi}{q_n\Delta_n^2} + \frac{\rho_n^{1/2}(\log n)^\xi}{\sqrt{n}\Delta_n}\right\}.
\end{align}
We then observe that 
\begin{align*}
&\sum_{a\neq i}\frac{z_{aa}^2\sigma_{ia}^2\sigma_{aa}^2u_{ak}^2}{s_{ik}^2\lambda_k^4} = O\left(\frac{\rho_n}{\Delta_n^2}\right) = O\left(\frac{\rho^{1/2}_n(\log n)^\xi}{\sqrt{n}\Delta_n}\right),
\\
&\sum_{a\in [n]}\frac{\sigma_{ia}^2\sigma_{ai}^2u_{ik}^2}{s_{ik}^2\lambda_k^4} = O\left(\frac{\rho_n}{\Delta_n^2}\right)= O\left(\frac{\rho^{1/2}_n(\log n)^\xi}{\sqrt{n}\Delta_n}\right),
\\&
\sum_{b\neq i}\frac{\sigma_{ii}^2\sigma_{ib}^2u_{bk}^2}{s_{ik}^2\lambda_k^4} = O\left(\frac{\rho_n}{\Delta_n^2}\right)= O\left(\frac{\rho^{1/2}_n(\log n)^\xi}{\sqrt{n}\Delta_n}\right).
\end{align*}
This entails that 
\begin{align*}
&\sum_{a\neq i}\sum_{b\neq i}\frac{z_{ab}^2\sigma_{ia}^2\sigma^2_{ab}u_{bk}^2}{s_{ik}^2\lambda_k^{4}} 
\\
&= \sum_{a\neq i}\sum_{b\neq i, b\neq a}\frac{\sigma_{ia}^2\sigma^2_{ab}u_{bk}^2}{s_{ik}^2\lambda_k^{4}} + \sum_{a\neq i}\frac{z_{aa}^2\sigma_{ia}^2\sigma^2_{aa}u_{ak}^2}{s_{ik}^2\lambda_k^{4}}
\\
&= \sum_{a\neq i}\sum_{b\neq i}\frac{\sigma_{ia}^2\sigma^2_{ab}u_{bk}^2}{s_{ik}^2\lambda_k^{4}} - \sum_{a\neq i}\frac{3\sigma_{ia}^2\sigma^2_{aa}u_{ak}^2}{4s_{ik}^2\lambda_k^{4}}
\\
&= \sum_{a\in[n]}\sum_{b\in [n]}\frac{\sigma_{ia}^2\sigma^2_{ab}u_{bk}^2}{s_{ik}^2\lambda_k^{4}} - \sum_{a\neq i}\frac{3\sigma_{ia}^2\sigma^2_{aa}u_{ak}^2}{4s_{ik}^2\lambda_k^{4}} - \sum_{b\neq i}\frac{\sigma_{ii}^2\sigma^2_{ib}u_{bk}^2}{s_{ik}^2\lambda_k^{4}} - \sum_{a\in [n]}\frac{\sigma_{ia}^2\sigma_{ai}^2u_{ik}^2}{s_{ik}^2\lambda_k^4}  
\\
& = \sum_{a\in[n]}\sum_{b\in [n]}\frac{\sigma_{ia}^2\sigma^2_{ab}u_{bk}^2}{s_{ik}^2\lambda_k^{4}} + O\left(\frac{\rho^{1/2}_n(\log n)^\xi}{\sqrt{n}\Delta_n}\right),
\end{align*}
which combines with \eqref{lemmaD3:final} results in our target result.
\end{proof}

Lemma \ref{lemma:conditional_Berry_Esseen} below connects the Edgeworth expansion of $\widetilde{T}_{ik} + \Delta_{ik}$ with that of $\widetilde{T}_{ik} + \widetilde{\Delta}_{ik}$. 
\begin{lemma}\label{lemma:conditional_Berry_Esseen}
Suppose Assumptions \ref{assumption:Signal_strength}--\ref{assumption:Eigenvector_delocalization} hold and $\min_{i\in[n]}s_{ik}^2 = \Theta(\rho_n/\Delta_n^2)$. 
If $\beta_\Delta\leq 1/2$, then with $\sigma_{ik}^2(\be_i\transpose\bE) = \var(\Delta_{ik}\mid\be_i\transpose\bE)$, we have
\begin{align*}
\left\|\prob(\Delta_{ik}\leq z\mid\be_i\transpose\bE) - \prob\{\sigma_{ik}(\be_i\transpose\bE)Z\leq z\}\right\|_\infty &= \widetilde{O}_{\prob}\left(\frac{1}{q^2_n}\right),\\
\|F_{\Delta_{ik}}(z) - F_{\widetilde{\Delta}_{ik}}(z)\|_\infty &= O\left(\frac{1}{q_n^2}\right),
\end{align*}
where $Z\sim\mathrm{N}(0, 1)$ is a random variable independent of $\bE$. 
\end{lemma}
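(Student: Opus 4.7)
The plan is to apply a non-identically distributed Berry--Esseen inequality conditionally on $\be_i\transpose\bE$ and then integrate. The key observation is that $\be_i\transpose\bE=(E_{i1},\ldots,E_{in})$ involves only the $i$th row of $\bE$, so by the independence of the upper triangular entries and the symmetry of $\bE$, the collection $\{E_{ab}:a\leq b,\,a,b\neq i\}$ is independent of $\be_i\transpose\bE$. Consequently, conditional on $\be_i\transpose\bE$, one can write $\Delta_{ik}=\sum_{a\leq b,\,a,b\neq i}c_{ab}E_{ab}$ for coefficients $c_{ab}=c_{ab}(\be_i\transpose\bE)$ read off from the definition of $\Delta_{ik}$, and this representation has precisely the conditional variance $\sigma_{ik}^2(\be_i\transpose\bE)$ of the Gaussian $\widetilde\Delta_{ik}=\sigma_{ik}(\be_i\transpose\bE)Z$, so the problem reduces to a classical Berry--Esseen estimate for sums of independent mean-zero random variables.

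To execute the estimate I would first truncate via Result~\ref{result:concentration}, equation~\eqref{result:concentration:res1}: the event $\calE_n=\{\max_{a,b}|E_{ab}|\leq M\}$ with $M=K_c\sqrt{n\rho_n}/q_n$ has probability at least $1-n^{-c}$ for any $c>0$ and some large $K_c$. On $\calE_n$, $|E_{ab}|^3\leq M\,E_{ab}^2$, so (absorbing the $O(n^{-c})$ truncation recentering bias)
\[
\frac{\sum_{a\leq b,\,a,b\neq i}|c_{ab}|^3\expect|E_{ab}|^3}{\sigma_{ik}^3(\be_i\transpose\bE)}\;\leq\;\frac{M\,\max_{a,b}|c_{ab}|\cdot\sum_{a\leq b,\,a,b\neq i}c_{ab}^2\sigma_{ab}^2}{\sigma_{ik}^3(\be_i\transpose\bE)}\;=\;\frac{M\,\max_{a,b}|c_{ab}|}{\sigma_{ik}(\be_i\transpose\bE)},
\]
and the classical Berry--Esseen bound controls the conditional Kolmogorov distance between the law of $\Delta_{ik}$ and that of $\sigma_{ik}(\be_i\transpose\bE)Z$ by an absolute constant times this ratio.

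The remaining task is to plug in size estimates for $\max_{a,b}|c_{ab}|$ and $\sigma_{ik}(\be_i\transpose\bE)$. Inspecting the four types of terms defining $c_{ab}$ and using Assumption~\ref{assumption:Eigenvector_delocalization} ($\|\bu_k\|_\infty=O(n^{-1/2})$), $\|\bv_{ik}\|_\infty=O(n^{-1})$ from~\eqref{def:v}, $s_{ik}\lambda_k=\Theta(\sqrt{\rho_n})$, the truncation $|E_{ia}|\leq M$ on $\calE_n$, and Result~\ref{result:Noise_matrix_rowwise_concentration} for $|\be_i\transpose\bE\bu_k|$, one obtains $\max_{a,b}|c_{ab}|=\widetilde O_{\prob}\{1/(q_n\Delta_n)\}$. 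For the denominator, Lemma~\ref{lemma:Delta_conditional_variance} identifies $\sigma_{ik}^2(\be_i\transpose\bE)$ as the deterministic quantity $\sum_{a,b}\sigma_{ia}^2\sigma_{ab}^2 u_{bk}^2/(s_{ik}^2\lambda_k^4)$ plus negligible remainders, which one rewrites as $\sum_a\sigma_{ia}^2\,(s_{ak}^2\lambda_k^2)/(s_{ik}^2\lambda_k^4)$; together with $\sum_a\sigma_{ia}^2\asymp n\rho_n$ and the hypothesis $\min_i s_{ik}^2=\Theta(\rho_n/\Delta_n^2)$, this yields $\sigma_{ik}^2(\be_i\transpose\bE)\gtrsim n\rho_n/\Delta_n^2$ with high probability. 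Substituting gives
\[
\frac{M\,\max_{a,b}|c_{ab}|}{\sigma_{ik}(\be_i\transpose\bE)}\;=\;\widetilde O_{\prob}\!\left(\frac{\sqrt{n\rho_n}/q_n\cdot 1/(q_n\Delta_n)}{\sqrt{n\rho_n}/\Delta_n}\right)\;=\;\widetilde O_{\prob}\!\left(\frac{1}{q_n^2}\right),
\]
which is the first assertion.

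For the second assertion I would write $F_{\Delta_{ik}}(z)-F_{\widetilde\Delta_{ik}}(z)=\expect[\prob(\Delta_{ik}\leq z\mid\be_i\transpose\bE)-\prob(\widetilde\Delta_{ik}\leq z\mid\be_i\transpose\bE)]$, pass the supremum inside by monotonicity of expectation, and split on the high-probability event where the conditional bound above is at most $K/q_n^2$; since $q_n\leq\sqrt n$ by Assumption~\ref{assumption:Noise_matrix_distribution} forces $1/q_n^2\geq 1/n$, the complementary event of probability at most $n^{-c}$ is absorbed into $O(1/q_n^2)$ by choosing $c$ large. The main obstacle I anticipate is the uniform lower bound $\sigma_{ik}^2(\be_i\transpose\bE)\gtrsim n\rho_n/\Delta_n^2$: Lemma~\ref{lemma:Delta_conditional_variance} only furnishes an asymptotic expansion plus a $\widetilde O_{\prob}$ remainder, so one must verify both that the leading deterministic term genuinely has the stated order (which implicitly requires a matching lower bound on $\sum_a\sigma_{ia}^2$, not merely the upper bound in Assumption~\ref{assumption:Noise_matrix_distribution}) and that it strictly dominates the remainder under $\beta_\Delta\leq 1/2$; a secondary subtlety is keeping $\max|c_{ab}|$ at order $1/(q_n\Delta_n)$ without stray logarithmic factors, which requires careful treatment of the cubic $\be_i\transpose\bE\bu_k$ contribution.
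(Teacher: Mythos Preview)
Your approach is essentially the paper's: a conditional Berry--Esseen bound on $\Delta_{ik}$ given $\be_i\transpose\bE$, followed by integration. The one organizational difference is how you control the Lyapunov ratio. The paper expands $|c_{ab}|^3$ term by term and sums each piece against the third-moment bound $\expect|E_{ab}|^3\leq C(n\rho_n)^{3/2}/(nq_n)$ from Assumption~\ref{assumption:Noise_matrix_distribution} directly, whereas you factor $\sum|c_{ab}|^3\expect|E_{ab}|^3\leq M\max|c_{ab}|\cdot\sigma_{ik}^2$ via truncation. Your route is a bit cleaner and lands on the same $\widetilde O_{\prob}(1/q_n^2)$; just note that the parenthetical ``truncation recentering bias'' is really a tail third-moment contribution $\expect[|E_{ab}|^3;|E_{ab}|>M]$, which is super-polynomially small by the high-moment bound in Assumption~\ref{assumption:Noise_matrix_distribution} and remains negligible after summing over $O(n^2)$ indices.

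The obstacles you flagged are genuine and are resolved exactly as you guessed. First, Lemma~\ref{lemma:Delta_conditional_variance} describes $\bar\sigma_{ik}^2$, not $\sigma_{ik}^2$; the transfer is via the sandwich inequality~\eqref{bound:sigmaik}, whose correction $\widetilde O_{\prob}\{n\rho_n(\log n)^{2\xi}/(\Delta_n^2q_n^2)\}$ is $o(n\rho_n/\Delta_n^2)$. Second, the lower bound $\sum_a\sigma_{ia}^2\gtrsim n\rho_n$ need not be assumed separately: from $s_{ik}^2\lambda_k^2=\sum_a\sigma_{ia}^2u_{ak}^2\leq\|\bu_k\|_\infty^2\sum_a\sigma_{ia}^2$ together with $s_{ik}^2=\Theta(\rho_n/\Delta_n^2)$ and $\|\bu_k\|_\infty^2=O(1/n)$ one reads off $\sum_a\sigma_{ia}^2\gtrsim n\rho_n$. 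The paper runs precisely this chain, obtaining $\bar\sigma_{ik}^2\geq\min_a s_{ak}^2/\|\bu_k\|_\infty^2=\Theta(n\rho_n/\Delta_n^2)$, and under $\beta_\Delta\leq 1/2$ this strictly dominates both remainders in Lemma~\ref{lemma:Delta_conditional_variance}. Third, the dominant piece of $\max|c_{ab}|$ is the $E_{ia}u_{bk}/(\lambda_k s_{ik}\lambda_k)$ term, which on $\{\max|E_{ij}|\leq K\sqrt{n\rho_n}/q_n\}$ is exactly $O(1/(q_n\Delta_n))$ with no logarithm; the cubic $\be_i\transpose\bE\bu_k$ contribution carries a $(\log n)^\xi$ but is of order $(\log n)^\xi/(q_n^2\Delta_n)=o(1/(q_n\Delta_n))$, so it does not spoil the final rate.
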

\begin{proof}
Recalling that given $\be_i\transpose\bE$, $\Delta_{ik}$ can be written as a sum of independent mean-zero random variables
\begin{align*}
\Delta_{ik} & = \sum_{a < b,a,b\neq i}\left\{v_au_{bk} + v_bu_{ak} + \frac{E_{ia}u_{bk} + E_{ib}u_{ak}}{\lambda_k} - \frac{\be_i\transpose\bE\bu_k (E_{ia}^2 + E_{ib}^2)u_{ak}u_{bk}}{\lambda_k^3s_{ik}^2}\right\}\frac{E_{ab}}{s_{ik}\lambda_k}\\
&\quad + \sum_{a\neq i}\left(v_a + \frac{E_{ia}}{\lambda_k} - \frac{\be_i\transpose\bE\bu_k E_{ia}^2u_{ak}}{\lambda_k^3s_{ik}^2}\right)\frac{E_{aa}u_{ak}}{s_{ik}\lambda_k}.
\end{align*}
Now we consider
\begin{align*}
\frac{\Delta_{ik}}{\sigma_{ik}(\be_i\transpose\bE)}
& = \sum_{a < b,a,b\neq i}\left\{v_au_{bk} + v_bu_{ak} + \frac{E_{ia}u_{bk} + E_{ib}u_{ak}}{\lambda_k} - \frac{\be_i\transpose\bE\bu_k (E_{ia}^2 + E_{ib}^2)u_{ak}u_{bk}}{\lambda_k^3s_{ik}^2}\right\}\\
&\quad\times \frac{E_{ab}}{\sigma_{ik}(\be_i\transpose\bE)s_{ik}\lambda_k}
 + \sum_{a\neq i}\left(v_a + \frac{E_{ia}}{\lambda_k} -\frac{\be_i\transpose\bE\bu_k E_{ia}^2u_{ak}}{\lambda_k^3s_{ik}^2}\right)\frac{E_{aa}u_{ak}}{\sigma_{ik}(\be_i\transpose\bE)s_{ik}\lambda_k}.
\end{align*}
Observe that by Result~\ref{result:concentration}, 
\begin{align*}
\Gamma_n^{(ik)}
& = \sum_{a < b,a,b\neq i}\left|v_au_{bk} + v_bu_{ak} + \frac{E_{ia}u_{bk} + E_{ib}u_{ak}}{\lambda_k} - \frac{\be_i\transpose\bE\bu_k (E_{ia}^2 + E_{ib}^2)u_{ak}u_{bk}}{\lambda_k^3s_{ik}^2}\right|^3
\\&\quad\times 
\expect\left|\frac{E_{ab}}{\sigma_{ik}(\be_i\transpose\bE)s_{ik}\lambda_k}\right|^3
\\&\quad + \sum_{a\neq i}\left|v_a + \frac{E_{ia}}{\lambda_k} - \frac{\be_i\transpose\bE\bu_k E_{ia}^2u_{ak}}{\lambda_k^3s_{ik}^2}\right|^3\expect\left|\frac{E_{aa}u_{ak}}{\sigma_{ik}(\be_i\transpose\bE)s_{ik}\lambda_k}\right|^3\\
&\lesssim 
\sum_{a,b = 1}^n\left\{\|\bv\|_\infty^3\|\bu_k\|_\infty^3 + \frac{(|E_{ia}|^3 + |E_{ib}|^3)\|\bu_k\|_\infty^3}{\lambda_k^3} + |\be_i\transpose\bE\bu_k|^3\frac{(E_{ia}^6 + E_{ib}^6)\|\bu_k\|_\infty^6}{\lambda_k^9s_{ik}^6}\right\}\\
&\quad\times
\frac{1}{\sigma_{ik}(\be_i\transpose\bE)^3}\expect\left|\frac{E_{ab}}{s_{ik}\lambda_k}\right|^3\\
&\lesssim\Bigg\{\frac{n^{5/2}(\|\bv\|_\infty\|\bu_k\|_\infty)^3}{q_n} + \sum_{a,b = 1}^n\frac{|E_{ia}|^3}{\Delta_n^3q_n} + |\be_i\transpose\bE\bu_k|^3\sum_{a,b = 1}^n\frac{E_{ia}^6}{\Delta_n^3\rho_n^3n^{5/2}q_n}\Bigg\}\frac{1}{\sigma_{ik}(\be_i\transpose\bE)^3}\\
& = O\left(\frac{1}{n^2q_n}\right)\frac{1}{\sigma_{ik}(\be_i\transpose\bE)^3} + \frac{\max_{i,j\in[n]}|E_{ij}|}{\sigma_{ik}(\be_i\transpose\bE)^3}\left\{\sum_{a = 1}^n\frac{(E_{ia}^2 - \expect E_{ia}^{2})}{\Delta_n^3q_n} + \sum_{a = 1}^n\frac{\expect E_{ia}^2}{\Delta_n^3q_n}\right\}\\
&\quad + \frac{\max_{i,j\in[n]}|E_{ij}|^4|\be_i\transpose\bE\bu_k|^3}{\sigma_{ik}(\be_i\transpose\bE)^3}\left\{\sum_{a = 1}^n\frac{(E_{ia}^2 - \expect E_{ia}^2)}{\Delta_n^3\rho_n^3n^{5/2}q_n} + \sum_{a = 1}^n\frac{\expect E_{ia}^2}{\Delta_n^3\rho_n^3n^{5/2}q_n}\right\}\\
& = 
O\left\{\frac{(n\rho_n)^{3/2}}{\Delta_n^3}\times\left(\frac{\Delta_n^2}{n^2\rho_n}\right)^{3/2}\times\frac{1}{\sqrt{n}q_n}\right\}
\frac{1}{\sigma_{ik}(\be_i\transpose\bE)^3}\\
&\quad + \widetilde{O}_{\prob}\left\{\frac{(n\rho_n)^{1/2}}{q_n}\times \frac{n\rho_n}{\Delta_n^3q_n}\right\}\frac{1}{\sigma_{ik}(\be_i\transpose\bE)^3}\\
&\quad + \widetilde{O}_{\prob}\left\{\frac{(n\rho_n)^2}{q_n^4}\times\rho_n^{3/2}(\log n)^{3\xi}\times \frac{n\rho_n}{\Delta_n^3\rho_n^3n^{5/2}q_n}\right\}\frac{1}{\sigma_{ik}(\be_i\transpose\bE)^3}\\
& = \widetilde{O}_{\prob}\left\{\frac{(n\rho_n)^{3/2}}{\Delta_n^3q_n^2}\right\}\frac{1}{\sigma_{ik}(\be_i\transpose\bE)^3}.
\end{align*}
By Lemma \ref{lemma:Delta_conditional_variance}, 
\begin{align*}
\bar{\sigma}_{ik}^2(\be_i\transpose\bE)&\geq\sum_{a,b = 1}^n\frac{\sigma_{ia}^2\sigma_{ab}^2u_{bk}^2}{s_{ik}^2\lambda_k^4} + \widetilde{O}_{\prob}\left\{\frac{\rho_n^{1/2}(\log n)^\xi}{\sqrt{n}\Delta_n} + \frac{n\rho_n(\log n)^{\xi}}{q_n\Delta_n^2}\right\}\\
& = \sum_{a = 1}^n\frac{\sigma_{ia}^2}{\lambda_k^2}\sum_{b = 1}^n\frac{\sigma_{ab}^2u_{bk}^2}{s_{ik}^2\lambda_k^2} + \widetilde{O}_{\prob}\left\{\frac{\rho_n^{1/2}(\log n)^\xi}{\sqrt{n}\Delta_n} + \frac{n\rho_n(\log n)^{\xi}}{q_n\Delta_n^2}\right\}\\
& = \sum_{a = 1}^n\frac{\sigma_{ia}^2}{\lambda_k^2}\frac{s_{ak}^2}{s_{ik}^2} + \widetilde{O}_{\prob}\left\{\frac{\rho_n^{1/2}(\log n)^\xi}{\sqrt{n}\Delta_n} + \frac{n\rho_n(\log n)^{\xi}}{q_n\Delta_n^2}\right\}\\
& \geq \min_{a\in [n]}\frac{s_{ak}^2}{s_{ik}^2}\sum_{a = 1}^n\frac{\sigma_{ia}^2}{\lambda_k^2} + \widetilde{O}_{\prob}\left\{\frac{\rho_n^{1/2}(\log n)^\xi}{\sqrt{n}\Delta_n} + \frac{n\rho_n(\log n)^{\xi}}{q_n\Delta_n^2}\right\}\\
& \geq \min_{a\in [n]}\frac{s_{ak}^2}{s_{ik}^2\|\bu_k\|_\infty^2}\sum_{a = 1}^n\frac{\sigma_{ia}^2u_{ak}^2}{\lambda_k^2} + \widetilde{O}_{\prob}\left\{\frac{\rho_n^{1/2}(\log n)^\xi}{\sqrt{n}\Delta_n} + \frac{n\rho_n(\log n)^{\xi}}{q_n\Delta_n^2}\right\}\\
& = \min_{a\in [n]}\frac{s_{ak}^2}{\|\bu_k\|_\infty^2} + \widetilde{O}_{\prob}\left\{\frac{\rho_n^{1/2}(\log n)^\xi}{\sqrt{n}\Delta_n} + \frac{n\rho_n(\log n)^{\xi}}{q_n\Delta_n^2}\right\}\\
& = \Theta\left(\frac{n\rho_n}{\Delta_n^2}\right) + \widetilde{O}_{\prob}\left\{\frac{\rho_n^{1/2}(\log n)^\xi}{\sqrt{n}\Delta_n} + \frac{n\rho_n(\log n)^{\xi}}{q_n\Delta_n^2}\right\}\\
& = \Theta\left(\frac{n\rho_n}{\Delta_n^2}\right) + \widetilde{O}_{\prob}\left\{\frac{n\rho_n}{\Delta_n^2}\left(\frac{\Delta_n^2}{n^2\rho_n}\right)^{1/2}\frac{(\log n)^\xi}{\sqrt{n}} + \frac{n\rho_n(\log n)^{\xi}}{q_n\Delta_n^2}\right\}\\
& = \Theta\left(\frac{n\rho_n}{\Delta_n^2}\right) \quad\mbox{w.h.p.}.
\end{align*}
Therefore, by the Berry-Esseen theorem, we have
\[
\left\|\prob\left\{\frac{\Delta_{ik}}{\sigma_{ik}(\be_i\transpose\bE)}\leq x{\mathrel{\Big|}}\be_i\transpose\bE\right\} - \prob(Z\leq x)\right\|_\infty\leq C\Gamma_n^{(ik)}\quad\mbox{w.h.p.}.
\]
Since 
\[
\Gamma_n^{(ik)} = \widetilde{O}_{\prob}\left\{\frac{(n\rho_n)^{3/2}}{\sigma_{ik}(\be_i\transpose\bE)^3\Delta_n^3q_n^4}\right\} = \widetilde{O}_{\prob}\left(\frac{1}{q_n^2}\right),
\] 
 the proof of the first assertion is completed by replacing $x$ with $z/\sigma_{ik}(\be_i\transpose\bE)$. For the second assertion, we have
\begin{align*}
&\|F_{\Delta_{ik}}(z) - F_{\widetilde{\Delta}_{ik}}(z)\|_\infty\\
&\quad = \sup_{z\in\mathbb{R}}\left|\prob(\Delta_{ik}\leq z) - \prob(\widetilde{\Delta}_{ik}\leq z)\right|\\
&\quad = \sup_{z\in\mathbb{R}}\left|\int\{\prob(\Delta_{ik}\leq z\mid\be_i\transpose\bE)  - \prob(\widetilde{\Delta}_{ik}\leq z\mid\be_i\transpose\bE)\}\prob(\mathrm{d}\be_i\transpose\bE)\right|\\
&\quad\leq \int \sup_{z\in\mathbb{R}}\left|\{\prob(\Delta_{ik}\leq z\mid\be_i\transpose\bE)  - \prob(\widetilde{\Delta}_{ik}\leq z\mid\be_i\transpose\bE)\}\right|\prob(\mathrm{d}\be_i\transpose\bE) = O\left(\frac{1}{q_n^2}\right).
\end{align*}
The proof is therefore completed.
\end{proof}

\begin{lemma}\label{lemma:CHF_local_expansion}
Suppose Assumptions \ref{assumption:Signal_strength}--\ref{assumption:Noise_matrix_distribution} hold and $\min_{i\in [n]}s_{ik}^2 = \Theta(\rho_n/\Delta_n^2)$. Let 
\[
\kappa_n^{(ik)} = \sum_{j = 1}^n\frac{\expect E_{ij}^3u_{jk}^3}{(s_{ik}\lambda_k)^3},\quad
\Xi_{ijk} = \frac{E_{ij}u_{jk}}{s_{ik}\lambda_k},
\]
and $\mathfrak{S} = \{\calJ\subset[n]:|\calJ|\geq n - 4\}$ be the collection of subsets of $[n]$ whose cardinalities are at least $n - 4$. Then given $i$ and $k$, for every $t\in [-n^{\eps/4}, n^{\eps/4}]$ (recall that $\eps$ is given in Assumption \ref{assumption:Signal_strength} and Assumption \ref{assumption:Noise_matrix_distribution}),
\begin{align*}
\max_{\calJ\in \mathfrak{S}}\left|\prod_{j\in \calJ}\expect e^{\mathbbm{i}t\Xi_{ijk}} - e^{-t^2/2}\left(1 - \frac{\mathbbm{i}t^3}{6}\kappa_n^{(ik)}\right)\right| = e^{-t^2/4}(t^6 + t^4 + t^2)O\left(\frac{1}{q_n^2}\right).
\end{align*}
In particular, when $n$ is sufficiently large,
\[
\max_{\calJ\in \mathfrak{S}}\left|\prod_{j\in \calJ}\expect e^{\mathbbm{i}t\Xi_{ijk}}\right|\leq 2e^{-t^2/4}.
\]
\end{lemma}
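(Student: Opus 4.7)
The plan is to expand the log-characteristic function factor by factor. I will write $\phi_{ij}(t):=\expect e^{\mathbbm{i}t\Xi_{ijk}}$ and apply the standard fourth-order Taylor bound
\[
\Bigl|\phi_{ij}(t) - 1 + \tfrac{t^2}{2}\var(\Xi_{ijk}) - \tfrac{(\mathbbm{i}t)^3}{6}\expect\Xi_{ijk}^3\Bigr| \leq \tfrac{|t|^4}{24}\expect|\Xi_{ijk}|^4,
\]
using $\expect\Xi_{ijk}=0$. The key preparatory step is to combine Assumption \ref{assumption:Noise_matrix_distribution}, Assumption \ref{assumption:Eigenvector_delocalization}, and the identity $s_{ik}^2\lambda_k^2=\Theta(\rho_n)$ (which follows from $s_{ik}^2=\Theta(\rho_n/\Delta_n^2)$ together with Assumption \ref{assumption:Signal_strength}) to establish the moment estimates $\var(\Xi_{ijk}) = O(1/n)$, $\expect|\Xi_{ijk}|^3 = O(1/(nq_n))$, and $\expect\Xi_{ijk}^4 = O(1/(nq_n^2))$. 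Summing over $j\in[n]$ will yield $\sum_j\var(\Xi_{ijk})=1$ by the very definition of $s_{ik}^2$, $\sum_j\expect\Xi_{ijk}^3=\kappa_n^{(ik)}=O(1/q_n)$, and $\sum_j\expect|\Xi_{ijk}|^4=O(1/q_n^2)$. Since $|\calJ|\geq n-4$, removing the missing indices will perturb the first two sums by $O(1/n)$ and $O(1/(nq_n))$ respectively, both of which I will absorb into the target error using $1/n\leq 1/q_n^2$.

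Next, for $|t|\leq n^{\eps/4}$ with $q_n\geq n^{\eps}$, I will verify the uniform bound $|\phi_{ij}(t)-1|\leq 1/2$ for large $n$, take logarithms via $\log(1+z)=z-z^2/2+O(|z|^3)$, and sum over $j\in\calJ$ to obtain
\[
\sum_{j\in\calJ}\log\phi_{ij}(t) = -\frac{t^2}{2} - \frac{\mathbbm{i}t^3}{6}\kappa_n^{(ik)} + E(t),
\]
where $E(t)$ collects four contributions: (a) the missing-index correction, (b) the fourth-moment Taylor remainder $\tfrac{|t|^4}{24}\sum_j\expect|\Xi_{ijk}|^4=O(t^4/q_n^2)$, (c) the quadratic correction $-\tfrac{1}{2}\sum_j(\phi_{ij}(t)-1)^2$, whose leading piece $\tfrac{t^4}{8}\sum_j\var(\Xi_{ijk})^2=O(t^4/n)$ and whose cross/cubic pieces are similarly handled, and (d) the log-remainder $O(\sum_j|\phi_{ij}(t)-1|^3)=O(t^6/n^2)$. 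I will show each summand is $O((t^6+t^4+t^2)/q_n^2)$, so that $|E(t)|=O((t^6+t^4+t^2)/q_n^2)$. Finally, I will exponentiate, use $e^{-\mathbbm{i}t^3\kappa_n^{(ik)}/6}=1-\mathbbm{i}t^3\kappa_n^{(ik)}/6+O(t^6/q_n^2)$ and $e^{E(t)}=1+O(|E(t)|)$ (valid since $|E(t)|$ stays bounded on $|t|\leq n^{\eps/4}$), and absorb the prefactor through $e^{-t^2/2}\leq e^{-t^2/4}$ to produce the claimed approximation.

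The main obstacle will be the polynomial-in-$t$ bookkeeping inside $E(t)$: seemingly anomalous error terms, such as $|t|^3$ times the missing-index tail of $\sum_j\expect\Xi_{ijk}^3$, the mixed term $t^2\var(\Xi_{ijk})\cdot t^3\expect\Xi_{ijk}^3$ arising in $(\phi_{ij}(t)-1)^2$, and the cube $|\phi_{ij}(t)-1|^3$, must all be regrouped into the three allowed powers $t^2,t^4,t^6$ against the denominator $q_n^2$. I will rely on the inequalities $|t|^3\leq(t^2+t^4)/2$ and $|t|^5\leq(t^4+t^6)/2$ together with $n\geq q_n^2$ to absorb spurious factors of $1/n$ and $1/(nq_n)$. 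Once the main assertion is in place, the second assertion follows immediately: $|\kappa_n^{(ik)}|=O(1/q_n)$ combined with $|t|^3/q_n\leq n^{3\eps/4-\eps}\to 0$ gives $|1-\mathbbm{i}t^3\kappa_n^{(ik)}/6|\to 1$ uniformly on $|t|\leq n^{\eps/4}$, so the main bound yields
\[
\Bigl|\prod_{j\in\calJ}\phi_{ij}(t)\Bigr|\leq (1+o(1))e^{-t^2/2}+o(e^{-t^2/4})\leq 2e^{-t^2/4}
\]
for all sufficiently large $n$.
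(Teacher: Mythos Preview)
Your proposal is correct and follows essentially the same approach as the paper: Taylor-expand each factor $\phi_{ij}(t)$ to fourth order, take logarithms via $\log(1+z)=z+O(|z|^2)$, sum to obtain $-t^2/2-\mathbbm{i}t^3\kappa_n^{(ik)}/6+(t^2+t^4)O(1/q_n^2)$, handle the at most four missing indices, and re-exponentiate. Both your argument and the paper's proof rely on Assumption~\ref{assumption:Eigenvector_delocalization} for the per-term moment bounds $\expect\Xi_{ijk}^2=O(1/n)$, $\expect|\Xi_{ijk}|^3=O(1/(nq_n))$, $\expect\Xi_{ijk}^4=O(1/(nq_n^2))$, even though the lemma as stated lists only Assumptions~\ref{assumption:Signal_strength}--\ref{assumption:Noise_matrix_distribution}.
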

\begin{proof}
By Taylor's theorem, there exists a complex number $\theta_{ijk}$ with $|\theta_{ijk}|\leq 1$, such that
\[
\expect e^{\mathbbm{i}t\Xi_{ijk}} = 1 - \frac{t^2}{2}\expect \Xi_{ijk}^2 - \frac{\mathbbm{i}t^3}{6}\expect \Xi_{ijk}^3 + \frac{t^4}{24}\theta_{ijk}\expect\Xi_{ijk}^4.
\]
Note that 
\bea\nonumber
\max_{j\in [n]}\expect \Xi_{ijk}^2 &= O(1/n)
\\
\max_{j\in [n]}\expect \Xi_{ijk}^3 &= O\{1/(nq_n)\}
\\
\max_{j\in [n]}\expect \Xi_{ijk}^4 &= O\{1/(nq_n^2)\}
\eae by Assumption \ref{assumption:Eigenvector_delocalization} and Assumption \ref{assumption:Noise_matrix_distribution}. For any $z\in\mathbb{C}$ with $|z|\leq 1/2$, we have $$\log(1 + z) = z + \theta_z|z|^2,$$ where $\log(\cdot):\mathbb{C}\to\mathbb{C}$ is the complex logarithm. Here for $\theta_z$, an angle of $\theta_z$ is decided in $(-\pi, \pi]$ depending on $z$ with $|\theta_z|\leq C$ for some absolute constant $C > 0$; See, e.g., Equation~(8) on page~208 of \cite{chung2001course}. Then with
\[
z = -\frac{t^2}{2}\expect\Xi_{ijk}^2 - \frac{\mathbbm{i}t^3}{6}\expect\Xi_{ijk}^3 + \frac{t^4}{24}\theta_{ijk}\expect\Xi_{ijk}^4
\]
and $t\leq n^{\eps/4} = o\{(n\rho_n)^{1/4}\}$ and $t\leq n^{\eps/4}\leq q_n^{1/4}$ (see Assumptions \ref{assumption:Signal_strength} and \ref{assumption:Noise_matrix_distribution}),
we obtain
\[
|z|\leq \frac{t^2}{2}O\left(\frac{1}{n}\right) + \frac{t^3}{6}O\left(\frac{1}{nq_n}\right) + \frac{t^4}{24}O\left(\frac{1}{nq_n^2}\right)\leq 1/2
\]
as $n\rightarrow \infty$, and
\begin{align*}
\log\expect e^{\mathbbm{i}t\Xi_{ijk}}& = -\frac{t^2}{2}\expect\Xi_{ijk}^2 - \frac{\mathbbm{i}t^3}{6}\expect\Xi_{ijk}^3 + \frac{t^4}{24}\theta_{ijk}\expect\Xi_{ijk}^4\\
&\quad + \theta_z\left|-\frac{t^2}{2}\expect\Xi_{ijk}^2 - \frac{\mathbbm{i}t^3}{6}\expect\Xi_{ijk}^3 + \frac{t^4}{24}\theta_{ijk}\expect\Xi_{ijk}^4\right|^2.
\end{align*}
Furthermore,
\begin{align*}
&\max_{j\in [n]}\left|-\frac{t^2}{2}\expect\Xi_{ijk}^2 - \frac{\mathbbm{i}t^3}{6}\expect\Xi_{ijk}^3 + \frac{t^4}{24}\theta_{ijk}\expect\Xi_{ijk}^4\right|\\
&\quad = t^2O\left(\frac{1}{n}\right) + t^3O\left(\frac{1}{nq_n}\right) + t^4O\left(\frac{1}{nq_n^2}\right)\\
&\quad = \left(t^2 + \frac{t^3}{q_n} + \frac{t^4}{q_n^2}\right)O\left(\frac{1}{n}\right)\leq t^2O\left(\frac{1}{n}\right).
\end{align*}
Similarly, we can also show that,
\begin{align*}
&\max_{j\in [n]}\left|\log\expect e^{\mathbbm{i}t\Xi_{ijk}} - \left(-\frac{t^2}{2}\expect\Xi_{ijk}^2 - \frac{\mathbbm{i}t^3}{6}\expect\Xi_{ijk}^3\right)\right|\leq t^4O\left(\frac{1}{nq_n^2}\right),\\
&\max_{j\in [n]}\left|\log\expect e^{\mathbbm{i}t\Xi_{ijk}}\right| \leq \max_{j\in [n]}\left|\frac{t^2}{2}\expect\Xi_{ijk}^2 + \frac{\mathbbm{i}t^3}{6}\expect\Xi_{ijk}^3\right| + t^4O\left(\frac{1}{nq_n^2}\right)\leq t^2O\left(\frac{1}{n}\right).
\end{align*}
It follows that
\begin{align*}
&
\sum_{j = 1}^n\log\expect e^{\mathbbm{i}t\Xi_{ijk}} - \left(-\frac{t^2}{2}\sum_{j = 1}^n \expect \Xi_{ijk}^2- \frac{\mathbbm{i}t^3}{6}\kappa_n^{(ik)}\right) 
\\
&= \sum_{j = 1}^n\log\expect e^{\mathbbm{i}t\Xi_{ijk}} - \left(-\frac{t^2}{2}- \frac{\mathbbm{i}t^3}{6}\kappa_n^{(ik)}\right)
\\ & = t^4O\left(\frac{1}{q_n^2}\right);
\end{align*}
Here we note, by definition, 
$$
\sum_{j = 1}^n \expect \Xi_{ijk}^2 =  \sum_{ j =1}^n \expect\left(\frac{E^2_{ij}u^2_{jk}}{s^2_{ik}\lambda^2_k}\right)\Big/s_{ik}^2 = 1.
$$
This shows that
\begin{align*}
&\max_{\calJ\in\mathfrak{S}}\left|\sum_{j\in\calJ}\log\expect e^{\mathbbm{i}t\Xi_{ijk}} - \left(-\frac{t^2}{2} - \frac{\mathbbm{i}t^3}{6}\kappa_n^{(ik)}\right)\right|\\
&\quad\leq \left|\sum_{j = 1}^n\log\expect e^{\mathbbm{i}t\Xi_{ijk}} - \left(-\frac{t^2}{2} - \frac{\mathbbm{i}t^3}{6}\kappa_n^{(ik)}\right)\right| + \max_{\calJ\in\mathfrak{S}}\sum_{j\notin\calJ}\left|\log\expect e^{\mathbbm{i}t\Xi_{ijk}}\right|\\
&\quad\leq t^4O\left(\frac{1}{q_n^2}\right) + 4\max_{j\in[n]}\left|\log\expect e^{\mathbbm{i}t\Xi_{ijk}}\right|\\
&\quad\leq t^4O\left(\frac{1}{q_n^2}\right) + t^2O\left(\frac{1}{n}\right)\\
&\quad\leq (t^2 + t^4) O\left(\frac{1}{q_n^2}\right).
\end{align*}
This is equivalent to
\begin{align*}
\prod_{j \in\calJ}\expect e^{\mathbbm{i}t\Xi_{ijk}} = e^{-t^2/2}\exp\left\{ - \frac{\mathbbm{i}t^3}{6}\kappa_n^{(ik)} + (t^2 + t^4)O\left(\frac{1}{q_n^2}\right)\right\},
\end{align*}
uniformly over $\calJ\in\mathfrak{S}$. 
Since
\[
\left| - \frac{\mathbbm{i}t^3}{6}\kappa_n^{(ik)} + (t^2 + t^4)O\left(\frac{1}{q_n^2}\right)\right| = t^3\left(\frac{1}{q_n}\right) + (t^2 + t^4)\left(\frac{1}{q_n^2}\right)  = o(1)
\]
because $t\leq q_n^{1/4}$, it follows from Taylor's theorem that
\begin{align*}
&\exp\left\{ - \frac{\mathbbm{i}t^3}{6}\kappa_n^{(ik)} + (t^2 + t^4)O\left(\frac{1}{q_n^2}\right)\right\}\\
&\quad = 1  - \frac{\mathbbm{i}t^3}{6}\kappa_n^{(ik)} + (t^2 + t^4)O\left(\frac{1}{q_n^2}\right)
 + \frac{\theta}{2}\left| - \frac{\mathbbm{i}t^3}{6}\kappa_n^{(ik)} + (t^2 + t^4)O\left(\frac{1}{q_n^2}\right)\right|^2\\
&\quad = 1  - \frac{\mathbbm{i}t^3}{6}\kappa_n^{(ik)} + (t^2 + t^4)O\left(\frac{1}{q_n^2}\right) + t^6O\left(\frac{1}{q_n^2}\right) + (t^2 + t^4)^2O\left(\frac{1}{q_n^4}\right)\\
&\quad = 1  - \frac{\mathbbm{i}t^3}{6}\kappa_n^{(ik)} + \left(t^6 + t^4 + t^2\right)O\left(\frac{1}{q_n^2}\right),
\end{align*}
where $\theta\in\mathbb{C}$ and $|\theta|\leq C$ for some absolute constant $C > 0$. Hence, we obtain
\begin{align*}
\max_{J\in\mathfrak{S}}\left|\prod_{j \in\calJ}\expect e^{\mathbbm{i}t\Xi_{ijk}} - e^{-t^2/2}\left(1  - \frac{\mathbbm{i}t^3}{6}\kappa_n^{(ik)}\right)\right|\leq e^{-t^2/2}\left(t^6 + t^4 + t^2\right)O\left(\frac{1}{q_n^2}\right).
\end{align*}
The proof of the first assertion is therefore completed. The second assertion follows from the observations that $(t^6 + t^4 + t^2)/q_n^2 = o(1)$ and $t^3\kappa_n^{(ik)} = O(n^{3\eps/4}/q_n) = o(1)$. 
\end{proof}

\subsection{Proof of the Self-Smoothed Edgeworth Expansion \eqref{eqn:edgeworth_expansion_self_smoothed_version}}
\label{sub:proof_of_edgeworth_expansion_smoothed_version}

In this subsection, we prove the Edgeworth expansion \eqref{eqn:edgeworth_expansion_self_smoothed_version} for $\widetilde{T}_{ik} + \widetilde{\Delta}_{ik}$ using Esseen's smoothing lemma (Theorem \ref{thm:esseen_smoothing_lemma}). Specifically, we are going to show that
\begin{align*}
&\int_{-\Delta_n^2/(n\rho_n)}^{\Delta_n^2/(n\rho_n)}\left|\frac{\expect e^{\mathbbm{i}t(\widetilde{T}_{ik} + \widetilde{\Delta}_{ik})} - \mathrm{ch.f.}(t, G_n^{(ik)})}{t}\right|\mathrm{d}t + \frac{\sup_{x\in\mathbb{R}}|\mathrm{d}G_n^{(ik)}(x)/\mathrm{d}x|}{\Delta_n^2/(n\rho_n)} 
\\
&\quad= O\left\{\frac{1}{n} + \frac{n\rho_n\log n}{\Delta_n^2} + \frac{(\log n)^4}{q_n^2}\right\}; 
 \end{align*}
Here $\mathrm{ch.f.}(t,G_n^{(ik)})$ is the characteristic function of $G_n^{(ik)}$  in \eqref{def:Gnikx}, which is defined as,
$$
\mathrm{ch.f.}(t,G_n^{(ik)}) = \int_{\mathbb{R}}e^{\mathbbm{i}tx}dG_n^{(ik)}(x).
$$
It is clear that $\sup_x|\mathrm{d}G_n^{(ik)}(x)/\mathrm{d}x| = O(1)$ with a similar argument as before. Therefore, it is sufficient to establish the following results regarding the first integral:
\begin{align}
\label{eqn:CHF_II}
\int_{\{t:M_n(n^{2\beta_\Delta}\log n)^{1/2}\leq|t|\leq n^{2\beta_\Delta}\}}\left|\frac{\expect \exp\{\mathbbm{i}t(\widetilde{T}_{ik} + \widetilde{\Delta}_{ik})\}}{t}\right|\mathrm{d}t &= O\left(\frac{1}{n}\right),\\
\label{eqn:CHF_III}
\int_{\{t:n^\gamma\leq|t|\leq M_n(n^{2\beta_\Delta}\log n)^{1/2}\}}\left|\frac{\expect\exp\{\mathbbm{i}t(\widetilde{T}_{ik} + \widetilde{\Delta}_{ik})\}}{t}\right|\mathrm{d}t &= O\left\{\frac{(\log n)^4}{q_n^2} + \frac{n\rho_n\log n}{\Delta_n^2}\right\},\\
\label{eqn:CHF_IV}
\int_{-n^\gamma}^{n^\gamma}\left|\frac{\expect e^{\mathbbm{i}t(\widetilde{T}_{ik} + \widetilde{\Delta}_{ik})} - \mathrm{ch.f.}(t; G_n^{(ik)})}{t}\right|\mathrm{d}t &= O\left(\frac{n\rho_n}{\Delta_n^2} + \frac{1}{q_n^2}\right),
\end{align}
where $M_n\to\infty$ is a slowly growing sequence and $\gamma > 0$ is a constant to be decided later. Below, Lemmas \ref{lemma:CHF_II}, \ref{lemma:CHF_III}, and \ref{lemma:CHF_IV} will establish \eqref{eqn:CHF_II}, \eqref{eqn:CHF_III}, and \eqref{eqn:CHF_IV}, respectively.

\begin{lemma}\label{lemma:CHF_II}
Suppose Assumptions \ref{assumption:Signal_strength}--\ref{assumption:Noise_matrix_distribution} hold and $\min_{i\in[n]}s_{ik}^2 = \Theta(\rho_n/\Delta_n^2)$. If $\beta_\Delta\leq 1/2$, then for any sequence $(M_n)_{n = 1}^\infty$, $M_n\to\infty$,
\[
\int_{\{t:M_n(n^{2\beta_\Delta}\log n)^{1/2}\leq|t|\leq n^{2\beta_\Delta}}\}\left|\frac{\expect \exp\{\mathbbm{i}t(\widetilde{T}_{ik} + \widetilde{\Delta}_{ik})\}}{t}\right|\mathrm{d}t = O\left(\frac{1}{n}\right).
\]
\end{lemma}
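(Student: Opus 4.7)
The plan is to exploit the fact that $\widetilde{\Delta}_{ik}$ is conditionally Gaussian given $\be_i\transpose\bE$, which produces a strong Gaussian damping factor in the characteristic function. Since $\widetilde{T}_{ik} = T_{ik}^\sharp - (1/2)T_{ik}^\sharp\delta_{ik}$ is a function of $\be_i\transpose\bE$, and $\widetilde{\Delta}_{ik}\mid\be_i\transpose\bE \sim \mathrm{N}\{0,\sigma_{ik}^2(\be_i\transpose\bE)\}$, conditioning first on $\be_i\transpose\bE$ yields
\[
\expect e^{\mathbbm{i}t(\widetilde{T}_{ik}+\widetilde{\Delta}_{ik})}
 = \expect\left[e^{\mathbbm{i}t\widetilde{T}_{ik}}\exp\left\{-\tfrac{1}{2}t^2\sigma_{ik}^2(\be_i\transpose\bE)\right\}\right],
\]
so the modulus of the integrand is bounded by $|t|^{-1}\expect\exp\{-t^2\sigma_{ik}^2(\be_i\transpose\bE)/2\}$.

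Next I would introduce the favorable event $\calE_n = \{\sigma_{ik}^2(\be_i\transpose\bE)\geq c_0 n\rho_n/\Delta_n^2\}$ for a small constant $c_0 > 0$. The lower bound $\bar\sigma_{ik}^2(\be_i\transpose\bE)\gtrsim n\rho_n/\Delta_n^2$ already derived inside the proof of Lemma \ref{lemma:conditional_Berry_Esseen}, combined with \eqref{bound:sigmaik} that transfers this bound from $\bar\sigma_{ik}^2$ to $\sigma_{ik}^2$, implies $\prob(\calE_n^c) = O(n^{-c})$ for every $c > 0$. I would then split the integral by $\mathbbm{1}_{\calE_n} + \mathbbm{1}_{\calE_n^c}$. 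On $\calE_n^c$, trivially bounding the characteristic function by $1$ and integrating $|t|^{-1}$ over $|t|\leq n^{2\beta_\Delta}$ produces at most $\prob(\calE_n^c)\log(n^{2\beta_\Delta}) = O(n^{-c}\log n)$, which is $O(1/n)$ for $c$ chosen sufficiently large.

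On $\calE_n$, the damping factor is at most $\exp\{-c_0 t^2 n\rho_n/(2\Delta_n^2)\}$. Using Assumption \ref{assumption:Signal_strength}, $\Delta_n^2 \asymp n^{2\beta_\Delta}(n\rho_n)$, so $n\rho_n/\Delta_n^2 \asymp n^{-2\beta_\Delta}$. The substitution $u = t\sqrt{n\rho_n}/\Delta_n$ converts the remaining contribution into
\[
2\int_{c_1 M_n\sqrt{\log n}}^{c_2 n^{\beta_\Delta}} u^{-1}e^{-c_0 u^2/2}\,du
 \leq \frac{2}{c_1 M_n\sqrt{\log n}}\int_{c_1 M_n\sqrt{\log n}}^{\infty} e^{-c_0 u^2/2}\,du
 = O\!\left(\frac{n^{-c_3 M_n^2}}{M_n^2\log n}\right),
\]
using the standard Gaussian tail bound. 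Since $M_n\to\infty$, this quantity is $o(n^{-1})$, completing the bound.

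The main obstacle is justifying the uniform lower bound $\sigma_{ik}^2(\be_i\transpose\bE)\gtrsim n\rho_n/\Delta_n^2$ w.h.p., which is a prerequisite for the Gaussian damping argument. This relies on the explicit decomposition in Lemma \ref{lemma:Delta_conditional_variance}, the identification of the dominant term $\sum_{a,b}\sigma_{ia}^2\sigma_{ab}^2 u_{bk}^2/(s_{ik}^2\lambda_k^4) = \Theta(n\rho_n/\Delta_n^2)$ (as already carried out in the proof of Lemma \ref{lemma:conditional_Berry_Esseen}), and the control of the discrepancy $|\sigma_{ik}^2 - \bar\sigma_{ik}^2|$ via \eqref{bound:sigmaik}. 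The condition $\beta_\Delta\leq 1/2$ merely ensures that the integration range $[M_n(n^{2\beta_\Delta}\log n)^{1/2}, n^{2\beta_\Delta}]$ is eventually non-empty, and no additional delicate probabilistic estimates are needed beyond what the preceding lemmas already provide.
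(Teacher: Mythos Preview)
Your proposal is correct and follows essentially the same route as the paper: condition on $\be_i\transpose\bE$ to extract the Gaussian factor $\exp\{-t^2\sigma_{ik}^2(\be_i\transpose\bE)/2\}$, invoke the high-probability lower bound $\sigma_{ik}^2(\be_i\transpose\bE)\gtrsim n\rho_n/\Delta_n^2\asymp n^{-2\beta_\Delta}$ (coming from Lemma~\ref{lemma:Delta_conditional_variance} together with \eqref{bound:sigmaik}), and split into the good and bad events. The only cosmetic difference is that the paper bounds the integrand pointwise by $\exp(-cM_n^2\log n/2)+n^{-2}$ and then integrates $|t|^{-1}$ over the range, whereas you substitute $u=t\sqrt{n\rho_n}/\Delta_n$ and apply a Gaussian tail bound; both yield the $O(1/n)$ conclusion. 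One small remark: your closing comment that $\beta_\Delta\le 1/2$ ``merely ensures the integration range is non-empty'' understates its role---it is actually used in the moment computations (cf.\ \eqref{firstorder:h}) that feed into the variance lower bound you cite---but since you correctly invoke those lemmas this does not affect the argument.
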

\begin{proof}
By Lemma \ref{lemma:Delta_conditional_variance}, we know that $\sigma_{ik}^2(\be_i\transpose\bE) \geq cn^{-2\beta_\Delta}$
with probability at least $1 - n^{-2}$ for some constant $c > 0$. It follows that for any $t$, $|t|\geq M_n(n^{2\beta_{\Delta}}\log n)^{1/2}$,
\begin{align*}
&|\expect \exp\{\mathbbm{i}t(\widetilde{T}_{ik} + \widetilde{\Delta}_{ik})\}|\\
&\quad\leq \left|\expect \exp\{\mathbbm{i}t(\widetilde{T}_{ik} + \widetilde{\Delta}_{ik})\}\mathbbm{1}\left\{\sigma_{ik}^2(\be_i\transpose\bE) \geq \frac{c}{n^{2\beta_\Delta}}\right\}\right|+ \prob\left\{\sigma_{ik}^2(\be_i\transpose\bE) < \frac{c}{n^{2\beta_\Delta}}\right\}\\
&\quad\leq \left|\expect \exp\left\{\mathbbm{i}t\widetilde{T}_{ik} - \frac{t^2}{2}\sigma_{ik}^2(\be_i\transpose\bE)\right\}\mathbbm{1}\left\{\sigma_{ik}^2(\be_i\transpose\bE) \geq \frac{c}{n^{2\beta_\Delta}}\right\}\right| + \frac{1}{n^2}\\
&\quad\leq \exp\left(-\frac{ct^2}{2n^{2\beta_\Delta}}\right) +  \frac{1}{n^2}\leq \exp\left(-
\frac{c}{2}M_n^2\log n\right) + \frac{1}{n^2}.
\end{align*}
Therefore,
\begin{align*}
&\int_{\{t:M_n(n^{2\beta_\Delta}\log n)^{1/2}\leq|t|\leq n^{2\beta_\Delta}\}}\left|\frac{\expect \exp\{\mathbbm{i}t(\widetilde{T}_{ik} + \widetilde{\Delta}_{ik})\}}{t}\right|\mathrm{d}t\\
&\quad\leq\left\{\exp\left(-\frac{c}{2}M_n^2\log n\right) + \frac{1}{n^2}\right\}\int_{\{t:M_n(n^{2\beta_\Delta}\log n)^{1/2}\leq|t|\leq n^{2\beta_\Delta}\}}\frac{\mathrm{d}t}{|t|}\\
&\quad\leq \frac{4\beta_\Delta\log n}{n^{cM_n^2/2}} + \frac{4\beta_\Delta\log n}{n^2} = O\left(\frac{1}{n}\right).
\end{align*}
The proof is thus completed.
\end{proof}

\begin{lemma}\label{lemma:CHF_intermediate}
Suppose Assumptions \ref{assumption:Signal_strength}--\ref{assumption:Eigenvector_delocalization} hold and $\min_{i\in [n]}s_{ik}^2 = \Theta(\rho_n/\Delta_n^2)$. Let $\beta = \max_{a,b\in [n]}\expect|E_{ij}|^3$. If $\beta_\Delta \in [1/2 - \eps,1/2]$, then there exists a sufficiently small constants $\bar{\eps} > 0$, such that for any $\gamma\in (0, \eps)$ (recall that $\eps > 0$ is given by Assumption \ref{assumption:Noise_matrix_distribution}), 
\begin{align*}
&\int_{\{t:{n^\gamma}\leq|t|\leq \bar{\eps}\sqrt{n\rho_n}(\rho_n/\beta)\}}\left|\frac{\expect\exp\{\mathbbm{i}t(\widetilde{T}_{ik} + \widetilde{\Delta}_{ik})\}}{t}\right|\mathrm{d}t
 = O\left\{\frac{(\log n)^2}{q_n^2} + \frac{n\rho_n(\log n)}{\Delta_n^2}
\right\}.
\end{align*}

\end{lemma}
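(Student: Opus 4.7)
Because $\widetilde{T}_{ik}$ is a function of $\be_i\transpose\bE$ alone while $\widetilde{\Delta}_{ik}\mid \be_i\transpose\bE\sim \mathrm{N}(0,\sigma_{ik}^2(\be_i\transpose\bE))$ is independent of it, integrating out $\widetilde{\Delta}_{ik}$ first yields
\[
\expect e^{\mathbbm{i}t(\widetilde{T}_{ik}+\widetilde{\Delta}_{ik})}
=\expect\bigl[e^{\mathbbm{i}t\widetilde{T}_{ik}}\,e^{-t^{2}\sigma_{ik}^{2}(\be_i\transpose\bE)/2}\bigr].
\]
I then work on the good event
\[
\calG=\bigl\{\sigma_{ik}^2(\be_i\transpose\bE)\geq c_0 n^{-2\beta_\Delta}\bigr\}\cap\bigl\{\max_{j\in[n]}|E_{ij}|\leq C\sqrt{n\rho_n}/q_n\bigr\},
\]
whose complement has $\prob(\calG^c)\leq n^{-L}$ for any fixed $L>0$ by Lemma~\ref{lemma:Delta_conditional_variance} and Result~\ref{result:concentration}. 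On $\calG$ the Gaussian factor is bounded by $e^{-c_0 t^2/(2n^{2\beta_\Delta})}$ and each summand $\Xi_{ijk}=E_{ij}u_{jk}/(s_{ik}\lambda_k)$ is uniformly bounded by $C/q_n$; contributions from $\calG^c$ are absorbed into the error after integration.

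Next I factor $e^{\mathbbm{i}t\widetilde{T}_{ik}}=e^{\mathbbm{i}tT_{ik}^\sharp}\bigl(1-\tfrac{\mathbbm{i}t}{2}T_{ik}^\sharp\delta_{ik}+r_t\bigr)$ with $|r_t|\leq t^2(T_{ik}^\sharp\delta_{ik})^2/8$ and handle the three pieces separately. For the leading $T_{ik}^\sharp$ contribution, a fourth-order Taylor expansion of $\log\expect e^{\mathbbm{i}t\Xi_{ijk}}$ --- now using the uniform truncation $|\Xi_{ijk}|\leq C/q_n$ on $\calG$ in place of the polynomial-moment machinery of Lemma~\ref{lemma:CHF_local_expansion} --- extends the inequality $\prod_{j}|\expect e^{\mathbbm{i}t\Xi_{ijk}}|\leq 2 e^{-t^2/4}$ throughout the intermediate range $|t|\leq\bar{\eps}\sqrt{n\rho_n}\rho_n/\beta\lesssim q_n$, so the leading term integrates to an exponentially small quantity that is easily absorbed. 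For the quadratic remainder, Cauchy--Schwarz paired with $\expect(T_{ik}^\sharp\delta_{ik})^2\lesssim q_n^{-2}$ --- itself a consequence of $\var(T_{ik}^\sharp)=1$ together with $\var(\delta_{ik})\lesssim q_n^{-2}$ computed directly from Assumption~\ref{assumption:Noise_matrix_distribution} --- bounds the integrand by $t\,q_n^{-2}e^{-c_0 t^2/(2n^{2\beta_\Delta})}$; the Gaussian factor effectively truncates the range at $|t|\asymp n^{\beta_\Delta}\sqrt{\log n}$, producing the two advertised pieces $O((\log n)^2/q_n^2)$ and, via $n\rho_n/\Delta_n^2=n^{-2\beta_\Delta}$, $O(n\rho_n(\log n)/\Delta_n^2)$.

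The principal obstacle is the first-order perturbation
\[
-\tfrac{\mathbbm{i}t}{2}\,\expect\bigl[T_{ik}^\sharp\,\delta_{ik}\,e^{\mathbbm{i}tT_{ik}^\sharp}\,e^{-t^2\sigma_{ik}^2(\be_i\transpose\bE)/2}\bigr].
\]
Because $T_{ik}^\sharp$ and $\delta_{ik}$ are built from the same row $\be_i\transpose\bE$, a direct Cauchy--Schwarz estimate loses a factor of $n^{\beta_\Delta}$ and yields only $O(n^{\beta_\Delta}/q_n)$, which is too weak. To close the bound I expand $T_{ik}^\sharp\delta_{ik}=\sum_{j,l}\Xi_{ijk}(\Xi_{ilk}^2-\expect\Xi_{ilk}^2)$, exploit independence across coordinates in $\prod_m\expect e^{\mathbbm{i}t\Xi_{imk}}$, and identify the surviving coefficients with the third-cumulant sum $\sum_j\expect E_{ij}^3 u_{jk}^3/(s_{ik}^3\lambda_k^3)$ that also governs the Edgeworth correction of $T_{ik}^\sharp$. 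After this coordinate-wise cumulant bookkeeping the residual is of order $\beta|t|/\rho_n^{3/2}$ times the Gaussian-decayed amplitude, which integrates to the claimed rate; establishing this cancellation uniformly over the full intermediate $t$-range is the most delicate step of the argument.
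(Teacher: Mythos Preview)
Your plan has a structural gap in the treatment of the leading piece. After conditioning on $\be_i\transpose\bE$ and integrating out $\widetilde{\Delta}_{ik}$ you obtain
\[
\expect e^{\mathbbm{i}t(\widetilde{T}_{ik}+\widetilde{\Delta}_{ik})}
=\expect\bigl[e^{\mathbbm{i}tT_{ik}^\sharp}\,e^{-t^2\sigma_{ik}^2(\be_i\transpose\bE)/2}\,(1-\tfrac{\mathbbm{i}t}{2}T_{ik}^\sharp\delta_{ik}+r_t)\bigr],
\]
but $T_{ik}^\sharp$ and $\sigma_{ik}^2(\be_i\transpose\bE)$ are \emph{both} functions of the same row $\be_i\transpose\bE$ and are therefore correlated. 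Consequently the leading contribution $\expect[e^{\mathbbm{i}tT_{ik}^\sharp}e^{-t^2\sigma_{ik}^2/2}]$ does \emph{not} factor as $\prod_j\expect e^{\mathbbm{i}t\Xi_{ijk}}$ times anything, and the bound $|\prod_j\expect e^{\mathbbm{i}t\Xi_{ijk}}|\leq 2e^{-t^2/4}$ is simply not applicable. The Gaussian factor alone gives decay only on the scale $|t|\gtrsim n^{\beta_\Delta}$; on the sub-range $n^\gamma\leq|t|\ll n^{\beta_\Delta}$ it contributes nothing, and the integral $\int_{n^\gamma}^{n^{\beta_\Delta}}|t|^{-1}\mathrm{d}t$ is already of order $\log n$. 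The same issue makes your quadratic-remainder estimate too weak: using only the Gaussian damping, $\int |t|\,q_n^{-2}e^{-c_0t^2/(2n^{2\beta_\Delta})}\mathrm{d}t=O(n^{2\beta_\Delta}/q_n^2)$, which for $\beta_\Delta$ near $1/2$ dominates both target terms.

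The paper resolves the correlation by an explicit \emph{diagonal/off-diagonal} decomposition rather than by relying on the Gaussian smoother. It writes
\[
\mathbbm{i}t\widetilde{T}_{ik}-\tfrac{t^2}{2}\sigma_{ik}^2(\be_i\transpose\bE)=\sum_{j}\Lambda_{i\chi(j)k}(t)-\sum_{a<b}\Bigl(\tfrac{\mathbbm{i}t}{2}g_{\chi(a)\chi(b)}^{(ik)}+\tfrac{t^2}{2}h_{\chi(a)\chi(b)}^{(ik)}\Bigr),
\]
where each $\Lambda_{ijk}(t)=\mathbbm{i}t\Xi_{ijk}-\tfrac{\mathbbm{i}t}{2}g_{jj}^{(ik)}-\tfrac{t^2}{2}h_{jj}^{(ik)}$ depends on $E_{ij}$ only, so the diagonal part genuinely factorizes into $\prod_j f_{i\chi(j)k}(t)$ with $f_{ijk}(t)=\expect e^{\Lambda_{ijk}(t)}$. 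A third-order Taylor expansion then gives $|f_{i\chi(j)k}(t)|\leq 1-ct^2/(3n)$ uniformly for $|t|\leq\bar\eps\sqrt{n\rho_n}\rho_n/\beta$ (this is precisely where $\bar\eps$ and $\beta$ enter), and choosing an adaptive $m=m(t)\asymp n\log n/t^2$ makes $\prod_{j\leq m}|f_{i\chi(j)k}(t)|\leq n^{-2}$. The off-diagonal remainder $\Gamma_{ik}(m)$ is handled by a second-order Taylor expansion and direct moment bounds $\expect\Gamma_{ik}(m)^2=O(m/(nq_n^2))\,t^2+O(m^2\rho_n^2/\Delta_n^4)\,t^4$, which after substituting $mt^2=O(n\log n)$ produce exactly the two stated rates. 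Your cumulant bookkeeping for the linear perturbation is in the right spirit, but without first isolating the diagonal factors you cannot get the $e^{-ct^2/n}$-type product decay that drives the whole argument.
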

\begin{proof}
First note that $\beta = O(\sqrt{n}\rho_n^{3/2}/q_n)$ so that $\bar{\eps}\sqrt{n}\rho_n^{3/2}/\beta = \Omega(q_n) = \omega(n^\gamma)$ for any $\gamma\in (0, \eps)$. 
Since $s_{ik}^2 = \Theta(\rho_n/\Delta_n^2)$ and $|\lambda_k| = \Theta(\Delta_n)$ by Assumption \ref{assumption:Signal_strength}, then there exists some constant $c_1 \in(0,1)$ such that $s_{ik}^2\lambda_k^2/\rho_n\geq c_1$. Now let 
\[
\calJ_{ik} = \left\{j\in [n]:\frac{\var(E_{ij})u_{jk}^2}{s_{ik}^2\lambda_k^2}\geq \frac{c_1\rho_n}{2ns_{ik}^2\lambda_k^2}\right\}.
\]
By Assumption \ref{assumption:Noise_matrix_distribution}, $\var(E_{ij})\leq \rho_n$ for all $i,j\in [n]$. By Assumption \ref{assumption:Eigenvector_delocalization}, $\|\bu_k\|_\infty^2\leq C/n$ for some constant $C \geq 1$.  This entails that
\begin{align*}
1 &= \sum_{j = 1}^n\frac{\var(E_{ij})u_{jk}^2}{s_{ik}^2\lambda_k^2} = \frac{1}{s_{ik}^2}\sum_{j\in\calJ_{ik}}\frac{\var(E_{ij})u_{jk}^2}{\lambda_k^2} + \frac{1}{s_{ik}^2}\sum_{j\notin \calJ_{ik}}\frac{\var(E_{ij})u_{jk}^2}{\lambda_k^2}\\
&\leq \frac{\rho_nC^2|\calJ_{ik}|}{ns_{ik}^2\lambda_k^2} + \frac{c_1\rho_n(n - |\calJ_{ik}|)}{2ns_{ik}^2\lambda_k^2}\leq \frac{C^2|\calJ_{ik}|}{c_1n} + \frac{{1}}{2},
\end{align*}
where the first equality is by $s_{ik}^2 = \var\left({\be_i\transpose\bE\bu_k}/{\lambda_k}\right)$. Namely, $|\calJ_{ik}|\geq c_1n/(2C^2)$. This implies that there exists a permutation $\chi:[n]\to[n]$, such that 
\[
\min_{j\in 1,\ldots,\lfloor c_1n/(2C^2)\rfloor } \frac{\var(E_{i\chi(j)})u_{\chi(j)k}^2}{s_{ik}^2\lambda_k^2}\geq \frac{c_1\rho_n}{2ns_{ik}^2\lambda_k^2}\geq \frac{c}{n}
\] 
for some constant $c > 0$ because $\rho_n/(s_{ik}^2\lambda_k^2) = \Theta(1)$. 
For any $a,b\in[n]$, let
\begin{equation}
\label{eqn:hab_formula}
\begin{aligned}
h_{ab}^{(ik)}
&=
\left\{\left(v_au_{bk} + v_bu_{ak} + \frac{E_{ia}u_{bk} + E_{ib}u_{ak}}{\lambda_k}\right)z_{ab} + \frac{\be_i\transpose\bE\bu_k(E_{ia}^2 + E_{ib}^2)z_{ab}u_{ak}u_{bk}}{\lambda_k^3s_{ik}^2}\right\}^2\\
&\quad\times\frac{\var(E_{ab})}{s_{ik}^2\lambda_k^2}
\end{aligned}
\end{equation}
This allows us to write
\begin{align*}
\sigma_{ik}^2(\be_i\transpose\bE)& = \sum_{a < b}h_{\chi(a)\chi(b)}^{(ik)} + \sum_{a = 1}^nh_{\chi(a)\chi(a)}^{(ik)},\\
\expect \exp\{\mathbbm{i}t(\widetilde{T}_{ik} + \widetilde{\Delta}_{ik})\} &= \expect\big[\expect\big\{ \exp\{\mathbbm{i}t(\widetilde{T}_{ik})\}\mid \be_i\transpose\bE\big\}\big] \cdot \big[\expect \exp(\mathbbm{i}t\widetilde{\Delta}_{ik})\big]
\\
& = \expect\exp\left\{\sum_{j = 1}^n\Lambda_{i\chi(j)k}(t) - \frac{\mathbbm{i}t}{2}\sum_{a < b}g_{\chi(a)\chi(b)}^{(ik)} - \frac{t^2}{2}\sum_{a < b}h_{\chi(a)\chi(b)}^{(ik)}\right\},
\end{align*}
where 
\[
\Lambda_{ijk}(t) = \frac{\mathbbm{i}tE_{ij}u_{jk}}{s_{ik}\lambda_k} - \frac{\mathbbm{i}t}{2}g_{jj}^{(ik)} - \frac{t^2}{2}h_{jj}^{(ik)}
\]
and $g_{ab}^{(ik)}$ is defined in \eqref{eqn:gab_formula}. 
By Lemma A.1 in \cite{erdos2013} and Assumption \ref{assumption:Eigenvector_delocalization}, we know that $$\expect |\be_i\transpose\bE\bu_k|^p = O(\rho_n^{p/2}),$$ for any finite $p$.
By Assumption \ref{assumption:Eigenvector_delocalization} and Assumption \ref{assumption:Noise_matrix_distribution}, we have
\bea\label{firstorder:h}
\max_{a,b\in [n]}\expect h_{ab}^{(ik)}
&\lesssim \max_{a,b\in[n]}\frac{\sigma_{ab}^2}{s_{ik}^2\lambda_k^2}\left\{\|\bv\|_\infty^2\|\bu_k\|_\infty^2 + \frac{\expect E_{ia}^2\|\bu_k\|_\infty^2}{\lambda_k^2} + \frac{\expect(\be_i\transpose\bE\bu_k)^2E_{ia}^4\|\bu_k\|_\infty^4}{s_{ik}^4\lambda_k^6}\right\}\\
&\lesssim \max_{a\in[n]}\left[\frac{1}{n^3} + \frac{\rho_n}{n\Delta_n^2} + \frac{\left\{\expect(\be_i\transpose\bE\bu_k)^4\right\}^{1/2}\expect(E_{ia}^8)^{1/2}}{n^2\Delta_n^2\rho_n^2}\right]\\
& = O\left\{\frac{\rho_n}{\Delta_n^2}\left(\frac{1}{n} + \frac{1}{\sqrt{n}q_n^3}\right)\right\},
\eae
noting here, we use the fact that $\beta_{\Delta} \leq 1/2$. Similarly, we have 
\begin{align*}
\max_{a,b\in [n]}\expect (h_{ab}^{(ik)})^2
&\lesssim \max_{a,b\in[n]}\frac{\sigma_{ab}^4}{s_{ik}^4\lambda_k^4}\left\{\|\bv\|_\infty^4\|\bu_k\|_\infty^4 + \frac{\expect E_{ia}^4\|\bu_k\|_\infty^4}{\lambda_k^4} + \frac{\expect(\be_i\transpose\bE\bu_k)^4E_{ia}^8\|\bu_k\|_\infty^8}{s_{ik}^8\lambda_k^{12}}\right\}\\
&\lesssim \max_{a\in[n]}\left[\frac{1}{n^6} + \frac{\rho_n^2}{nq_n^2\Delta_n^4} + \frac{\left\{\expect(\be_i\transpose\bE\bu_k)^8\right\}^{1/2}\expect(E_{ia}^{16})^{1/2}}{n^4\Delta_n^4\rho_n^4}\right]\\
& = O\left\{\frac{\rho_n^2}{\Delta_n^4}\left(\frac{1}{nq_n^2} + \frac{1}{\sqrt{n}q_n^7}\right)\right\}\\
\max_{a,b\in [n]}\expect |h_{ab}^{(ik)}|^3
&\lesssim \max_{a,b\in[n]}\frac{\sigma_{ab}^6}{s_{ik}^6\lambda_k^6}\left\{\|\bv\|_\infty^6\|\bu_k\|_\infty^6 + \frac{\expect E_{ia}^6\|\bu_k\|_\infty^6}{\lambda_k^6} + \frac{\expect(\be_i\transpose\bE\bu_k)^6E_{ia}^{12}\|\bu_k\|_\infty^{12}}{s_{ik}^{12}\lambda_k^{18}}\right\}\\
&\lesssim \max_{a\in[n]}\left[\frac{1}{n^9} + \frac{\rho_n^3}{nq_n^4\Delta_n^6} + \frac{\left\{\expect(\be_i\transpose\bE\bu_k)^{12}\right\}^{1/2}\expect(E_{ia}^{24})^{1/2}}{n^6\Delta_n^6\rho_n^6}\right]\\
& =  O\left\{\frac{\rho_n^3}{\Delta_n^6}\left(\frac{1}{nq_n^4} + \frac{1}{\sqrt{n}q_n^{11}}\right)\right\}.
\end{align*}
Now let $m\in [n - 1]$ be an integer to be selected later and define
\[
\Gamma_{ik}(m) = -\frac{it}{2}\sum_{a = 1}^m\sum_{b = a + 1}^ng_{\chi(a)\chi(b)}^{(ik)} - \frac{t^2}{2}\sum_{a = 1}^m\sum_{b = a + 1}^nh_{\chi(a)\chi(b)}^{(ik)}. 
\]
By Taylor's theorem, for any $z\in\mathbb{C}$ with $\mathbf{Re}(z) \leq 0$, $|e^z - 1 - z|\leq |z|^2/2$. This entails that
\[
\exp\{\Gamma_{ik}(m)\} = 1 + \Gamma_{ik}(m) + \frac{\theta}{2}\Gamma_{ik}(m)^2,
\]
where $\theta$ is a complex-valued random variable depending on $\Gamma_{ik}(m)$ such that $|\theta| \leq 1$ with probability one, as $\mathbf{Re}\{\Gamma_{ik}(m)\} \leq 0$. It follows that
\begin{align}
&\expect \exp\{\mathbbm{i}t(\widetilde{T}_{ik} + \widetilde{\Delta}_{ik})\}\nonumber\\
&\quad = \expect \exp\left\{\sum_{j = 1}^n\Lambda_{i\chi(j)k}(t) - \frac{it}{2}\sum_{a < b}g_{\chi(a)\chi(b)}^{(ik)} - \frac{t^2}{2}\sum_{a < b}h_{\chi(a)\chi(b)}^{(ik)}\right\}\nonumber\\
&\quad = \expect \exp\left\{\sum_{j = 1}^n\Lambda_{i\chi(j)k}(t) - \sum_{a = m + 1}^{n - 1}\sum_{b = a + 1}^n\left(\frac{\mathbbm{i}t}{2}g_{\chi(a)\chi(b)}^{(ik)} + \frac{t^2}{2}h_{\chi(a)\chi(b)}^{(ik)}\right)\right\}\exp\{\Gamma_{ik}(m)\}\nonumber\\
\label{eqn:CHF_intermediate_term_I}
&\quad = \expect \exp\left\{\sum_{j = 1}^n\Lambda_{i\chi(j)k}(t) - \sum_{a = m + 1}^{n - 1}\sum_{b = a + 1}^n\left(\frac{\mathbbm{i}t}{2}g_{\chi(a)\chi(b)}^{(ik)} + \frac{t^2}{2}h_{\chi(a)\chi(b)}^{(ik)}\right)\right\}\\
\label{eqn:CHF_intermediate_term_II}
&\quad\quad + \expect \exp\left\{\sum_{j = 1}^n\Lambda_{i\chi(j)k}(t) - \sum_{a = m + 1}^{n - 1}\sum_{b = a + 1}^n\left(\frac{\mathbbm{i}t}{2}g_{\chi(a)\chi(b)}^{(ik)} + \frac{t^2}{2}h_{\chi(a)\chi(b)}^{(ik)}\right)\right\}\Gamma_{ik}(m)\\
\label{eqn:CHF_intermediate_term_III}
&\quad\quad + \frac{1}{2}\expect \exp\left\{\sum_{j = 1}^n\Lambda_{i\chi(j)k}(t) - \sum_{a = m + 1}^{n - 1}\sum_{b = a + 1}^n\left(\frac{\mathbbm{i}t}{2}g_{\chi(a)\chi(b)}^{(ik)} + \frac{t^2}{2}h_{\chi(a)\chi(b)}^{(ik)}\right)\right\}\theta\Gamma_{ik}(m)^2.
\end{align}
Note that the modulus of the term in line \eqref{eqn:CHF_intermediate_term_III} is bounded above by $(1/2)\expect \Gamma_{ik}(m)^2$ because the real part of the exponent inside the expected value is negative and $|e^z| = e^{\mathbf{Re}(z)}$ for any $z\in\mathbb{C}$. 
\par
Observe that for any $a_1,b_1,a_2,b_2\in [n]$ with $a_1 < b_1$, $a_2 < b_2$, it is clear that $\expect g_{a_1b_1}^{(ik)}g_{a_2b_2}^{(ik)}\neq 0$ occurs only if $a_1 = a_2$ and $b_1 = b_2$. By Assumption \ref{assumption:Noise_matrix_distribution}, this entails that
\begin{align*}
\expect\left\{\sum_{a = 1}^m\sum_{b = a + 1}^ng_{\chi(a)\chi(b)}^{(ik)}\right\}^2
& = \sum_{a = 1}^m\sum_{b = a + 1}^n\expect (g_{\chi(a)\chi(b)}^{(ik)})^2
 = mO\left(\frac{1}{nq_n^2}\right).
\end{align*}
The analysis of the second moment of $\sum_{a = 1}^m\sum_{b = a + 1}^nh_{\chi(a)\chi(b)}^{(ik)}$ is more involved. First observe that for any $a,b\in [n]$, 
\[
h_{ab}^{(ik)}\leq C\left\{\frac{1}{n^3} + \frac{1}{n\Delta_n^2}(E_{ia}^2 + E_{ib}^2) + \frac{1}{n^2\rho_n^2\Delta_n^2}(\be_i\transpose\bE\bu_k)^2(E_{ia}^4 + E_{ib}^4)\right\}.
\]
It follows that
\begin{align*}
\expect\left\{\sum_{a = 1}^m\sum_{b = a + 1}^nh_{\chi(a)\chi(b)}^{(ik)}\right\}^2
& = \sum_{a_1 = 1}^m\sum_{b_1 = a_1 + 1}^n\sum_{a_2 = 1}^m\sum_{b_2 = a_2 + 1}^n
\expect h_{\chi(a_1)\chi(b_1)}^{(ik)}h_{\chi(a_2)\chi(b_2)}^{(ik)}\\
&\leq CH_1 + CH_2 + CH_3,
\end{align*}
where
\begin{align*}
H_1 & = \sum_{a_1 = 1}^m\sum_{b_1 = a_1 + 1}^n\sum_{a_2 = 1}^m\sum_{b_2 = a_2 + 1}^n
\expect \left(\frac{1}{n^3} + \frac{E_{ia_1}^2 + E_{ib_1}^2}{n\Delta_n^2}\right)\left(\frac{1}{n^3} + \frac{E_{ia_2}^2 + E_{ib_2}^2}{n\Delta_n^2}\right),\\
H_2 & = \sum_{a_1 = 1}^m\sum_{b_1 = a_1 + 1}^n\sum_{a_2 = 1}^m\sum_{b_2 = a_2 + 1}^n
\expect \left(\frac{1}{n^3} + \frac{E_{ia_1}^2 + E_{ib_1}^2}{n\Delta_n^2}\right)\left\{\frac{(\be_i\transpose\bE\bu_k)^2(E_{ia_2}^4 + E_{ib_2}^4)}{n^2\rho_n^2\Delta_n^2}\right\},\\
H_3 & = \sum_{a_1 = 1}^m\sum_{b_1 = a_1 + 1}^n\sum_{a_2 = 1}^m\sum_{b_2 = a_2 + 1}^n
\expect \left\{\frac{(\be_i\transpose\bE\bu_k)^4(E_{ia_1}^4 + E_{ib_1}^4)(E_{ia_2}^4 + E_{ib_2}^4)}{n^4\rho_n^4\Delta_n^4}\right\}.
\end{align*}
Note that $2H_2\leq H_1 + H_3$ by Cauchy-Schwarz inequality, it is therefore sufficient to bound $H_1$ and $H_3$. 
 For $H_1$, we have
\begin{align*}
H_1 & \leq \frac{m^2}{n^4} + \sum_{a_1 = 1}^m\sum_{b_1 = a_1 + 1}^n\sum_{a_2 = 1}^m\sum_{b_2 = a_2 + 1}^n
\expect \left(\frac{E_{ia_1}^2 + E_{ib_1}^2 + E_{ia_2}^2 + E_{ib_2}^2}{n^4\Delta_n^2}\right)\\
&\quad + \sum_{a_1 = 1}^m\sum_{b_1 = a_1 + 1}^n\sum_{a_2 = 1}^m\sum_{b_2 = a_2 + 1}^n\expect\left\{\frac{(E_{ia_1}^2 + E_{ib_1}^2)(E_{ia_2}^2 + E_{ib_2}^2)}{n^2\Delta_n^4}\right\}\\
&\leq \frac{m^2}{n^4} + O\left(\frac{m^2\rho_n}{n^2\Delta_n^2}\right)
+ \sum_{a_1 = 1}^m\sum_{b_1 = a_1 + 1}^n\sum_{a_2 = 1}^m\sum_{b_2 = a_2 + 1}^n\expect\left\{\frac{(E_{ia_1}^2E_{ib_2}^2 + E_{ib_1}^2E_{ia_2}^2)}{n^2\Delta_n^4}\right\}
\\
&\quad + \sum_{a_1 = 1}^m\sum_{b_1 = a_1 + 1}^n\sum_{a_2 = 1}^m\sum_{b_2 = a_2 + 1}^n\expect\left\{\frac{(E_{ia_1}^2E_{ia_2}^2 + E_{ib_1}^2E_{ib_2}^2)}{n^2\Delta_n^4}\right\}\\
&\leq \frac{m^2}{n^4} + O\left(\frac{m^2\rho_n}{n^2\Delta_n^2}\right)
+ mn\sum_{a_1 = 1}^m\sum_{b_2 = 1}^n\expect\left(\frac{E_{ia_1}^2E_{ib_2}^2}{n^2\Delta_n^4}\right)
 + mn\sum_{b_1 = 1}^n\sum_{a_2 = 1}^m\expect\left(\frac{E_{ib_1}^2E_{ia_2}^2}{n^2\Delta_n^4}\right)
\\
&\quad + n^2\sum_{a_1 = 1}^m\sum_{a_2 = 1}^m\expect\left(\frac{E_{ia_1}^2E_{ia_2}^2}{n^2\Delta_n^4}\right)
 + m^2\sum_{b_1 = 1}^n\sum_{b_2 = 1}^n\expect\left(\frac{E_{ib_1}^2E_{ib_2}^2}{n^2\Delta_n^4}\right)\\
& = \frac{m^2}{n^4} + O\left(\frac{m^2\rho_n}{n^2\Delta_n^2}\right)
+ 2mn\left\{\sum_{a_1 = 1}^m\sum_{b_2 \neq a_1}^n\expect\left(\frac{E_{ia_1}^2E_{ib_2}^2}{n^2\Delta_n^4}\right) + \sum_{a_1 = 1}^m\expect\left(\frac{E_{ia_1}^4}{n^2\Delta_n^4}\right)\right\}
\\
&\quad + n^2\left\{\sum_{a_1 = 1}^m\sum_{a_2 \neq a_1}^m\expect\left(\frac{E_{ia_1}^2E_{ia_2}^2}{n^2\Delta_n^4}\right) + \sum_{a_1 = 1}^m\expect\left(\frac{E_{ia_1}^4}{n^2\Delta_n^4}\right)\right\}\\
&\quad
 + m^2\left\{\sum_{b_1 = 1}^n\sum_{b_2 \neq b_1}^n\expect\left(\frac{E_{ib_1}^2E_{ib_2}^2}{n^2\Delta_n^4}\right) + \sum_{b_1 = 1}^n\expect\left(\frac{E_{ib_1}^4}{n^2\Delta_n^4}\right)\right\}\\
& = \frac{m^2}{n^4} + O\left(\frac{m^2\rho_n}{n^2\Delta_n^2}\right) + O\left(\frac{m^2\rho_n^2}{\Delta_n^4}\right) + O\left(\frac{m^2\rho_n^2}{\Delta_n^4q_n^2}\right) + O\left(\frac{m^2\rho_n^2}{\Delta_n^4}\right) + O\left(\frac{mn\rho_n^2}{\Delta_n^4q_n^2}\right)
\\
& = O\left\{\frac{m\rho_n^2}{\Delta_n^4}\left(m + \frac{n}{q_n^2}\right)\right\},
\end{align*}
where we use $\beta_{\Delta}\leq 1/2$ to deduce ${m^2}/{n^4} \lesssim {m^2\rho_n}/({n^2\Delta_n^2})$ in the last equality. For $H_3$, we have
\begin{align*}
H_3 & = \sum_{a_1 = 1}^m\sum_{b_1 = a_1 + 1}^n\sum_{a_2 = 1}^m\sum_{b_2 = a_2 + 1}^n
\expect \left\{\frac{(\be_i\transpose\bE\bu_k)^4(E_{ia_1}^4 + E_{ib_1}^4)(E_{ia_2}^4 + E_{ib_2}^4)}{n^4\rho_n^4\Delta_n^4}\right\}\\
&= \sum_{a_1 = 1}^m\sum_{b_1 = a_1 + 1}^n\sum_{a_2 = 1}^m\sum_{b_2 = a_2 + 1}^n
\frac{\expect (\be_i\transpose\bE\bu_k)^4(E_{ia_1}^4E_{ia_2}^4 + E_{ib_1}^4E_{ia_2}^4 + E_{ia_1}^4E_{ib_2}^4 + E_{ib_1}^4E_{ib_2}^4)}{n^4\rho_n^4\Delta_n^4}\\
& \leq n^2\sum_{a_1 = 1}^m\sum_{a_2 = 1}^m
\expect \left\{\frac{(\be_i\transpose\bE\bu_k)^4E_{ia_1}^4E_{ia_2}^4}{n^4\rho_n^4\Delta_n^4}\right\}
 + m^2\sum_{b_1 = 1}^n\sum_{b_2 = 1}^n
\expect \left\{\frac{(\be_i\transpose\bE\bu_k)^4E_{ib_1}^4E_{ib_2}^4}{n^4\rho_n^4\Delta_n^4}\right\}\\
&\quad + mn\sum_{a_1 = 1}^m\sum_{b_2 = 1}^n
\expect \left\{\frac{(\be_i\transpose\bE\bu_k)^4(E_{ia_1}^4E_{ib_2}^4)}{n^4\rho_n^4\Delta_n^4}\right\}
\\&\quad
 + mn\sum_{b_1 = 1}^n\sum_{a_2 = 1}^m
\expect \left\{\frac{(\be_i\transpose\bE\bu_k)^4(E_{ib_1}^4E_{ia_2}^4)}{n^4\rho_n^4\Delta_n^4}\right\}\\
& = n^2\left[\sum_{a_1 = 1}^m\sum_{a_2 \in[m]/\{a_1\}}
\expect \left\{\frac{(\be_i\transpose\bE\bu_k)^4E_{ia_1}^4E_{ia_2}^4}{n^4\rho_n^4\Delta_n^4}\right\} + \sum_{a_1 = 1}^m
\expect \left\{\frac{(\be_i\transpose\bE\bu_k)^4E_{ia_1}^8}{n^4\rho_n^4\Delta_n^4}\right\}\right]
\\&\quad
 + m^2\left[\sum_{b_1 = 1}^n\sum_{b_2 \in [n]/\{b_1\}}
\expect \left\{\frac{(\be_i\transpose\bE\bu_k)^4E_{ib_1}^4E_{ib_2}^4}{n^4\rho_n^4\Delta_n^4}\right\} + \sum_{b_1 = 1}^n
\expect \left\{\frac{(\be_i\transpose\bE\bu_k)^4E_{ib_1}^8}{n^4\rho_n^4\Delta_n^4}\right\}\right]\\
&\quad + 2mn\left[\sum_{a_1 = 1}^m\sum_{b_2\in[n]/\{a_1\}}
\expect \left\{\frac{(\be_i\transpose\bE\bu_k)^4(E_{ia_1}^4E_{ib_2}^4)}{n^4\rho_n^4\Delta_n^4}\right\} + \sum_{a_1 = 1}^m
\expect \left\{\frac{(\be_i\transpose\bE\bu_k)^4(E_{ia_1}^8)}{n^4\rho_n^4\Delta_n^4}\right\}\right]\\
& \leq n^2\left[\sum_{a_1,a_2 = 1}^m
\frac{\{\expect (\be_i\transpose \bE\bu_k)^8\}^{1/2}(\expect E_{ia_1}^8 \expect E_{ia_2}^8)^{1/2}}{n^4\rho_n^4\Delta_n^4} + \sum_{a_1 = 1}^m
\frac{\{\expect(\be_i\transpose\bE\bu_k)^8\}^{1/2}(\expect E_{ia_1}^{16})^{1/2}}{n^4\rho_n^4\Delta_n^4}\right]
\\&\quad
 + m^2\left[\sum_{b_1,b_2 = 1}^n
\frac{\{\expect(\be_i\transpose\bE\bu_k)^8\}^{1/2}(\expect E_{ib_1}^8\expect E_{ib_2}^8)^{1/2}}{n^4\rho_n^4\Delta_n^4}\right]\\
&\quad + m^2\left[\sum_{b_1 = 1}^n
\frac{\{\expect(\be_i\transpose\bE\bu_k)^8\}^{1/2}(\expect E_{ib_1}^{16})^{1/2}}{n^4\rho_n^4\Delta_n^4}\right]\\
&\quad + mn\left[\sum_{a_1 = 1}^m\sum_{b_2\in[n] /\{a_1\}}
\frac{\{\expect(\be_i\transpose\bE\bu_k)^8\}^{1/2}(\expect E_{ia_1}^8\expect E_{ib_2}^8)^{1/2}}{n^4\rho_n^4\Delta_n^4}\right]\\
&\quad + mn\left[\sum_{a_1 = 1}^m
\frac{\{\expect(\be_i\transpose\bE\bu_k)^8\}^{1/2}(\expect E_{ia_1}^{16})^{1/2}}{n^4\rho_n^4\Delta_n^4}\right]\\
& = O\left(\frac{m^2n\rho_n^2}{\Delta_n^4q_n^6} + \frac{mn^2\rho_n^2}{\Delta_n^4\sqrt{n}q_n^7}\right),
 \end{align*}
where in the last equality, we use the fact that 
$
\big\{\expect(\be_i\transpose\bE\bu_k)^8\big\}^{1/2} = {O}(\rho_n^2)
$. Summarizing above results, we obtain
\begin{align}
  \expect\Gamma^2_{ik}(m)&\leq \frac{t^2}{2}\expect\left\{\sum_{a = 1}^m\sum_{b = a + 1}^ng_{\chi(a)\chi(b)}^{(ik)}\right\}^2 + \frac{t^4}{2}\expect\left\{\sum_{a = 1}^m\sum_{b = a + 1}^nh_{\chi(a)\chi(b)}^{(ik)}\right\}^2\nonumber\\
  & = mt^2O\left(\frac{1}{nq_n^2}\right) + t^4O(H_1 + H_2 + H_3)\nonumber\\
  \label{eqn:Gamma_m_bound}
  & = mt^2O\left(\frac{1}{nq_n^2}\right) + mt^4O\left\{\frac{\rho_n^2}{\Delta_n^4}\left(m + \frac{mn}{q_n^6} + \frac{n}{q_n^2} + \frac{n^{3/2}}{q_n^7}\right)\right\}.
\end{align}
Define
\[
f_{ijk}(t) = \expect\exp\{\Lambda_{ijk}(t)\} = \expect\exp\left\{\left(\frac{\mathbbm{i}tE_{ij}u_{jk}}{s_{ik}\lambda_k} - \frac{\mathbbm{i}t}{2}g_{jj}^{(ik)} - \frac{t^2}{2}h_{jj}^{(ik)}\right)\right\}.
\]
Since $\mathbf{Re}(\Lambda_{ijk}(t)) \leq 0$, by Taylor's theorem applied to the complex exponential, 
\begin{align*}
\exp\{\Lambda_{ijk}(t)\}
& = 1 + \frac{\mathbbm{i}tE_{ij}u_{jk}}{s_{ik}\lambda_k} - \frac{\mathbbm{i}t}{2}g_{jj}^{(ik)} - \frac{t^2}{2}h_{jj}^{(ik)} + \frac{1}{2}\left\{\frac{\mathbbm{i}tE_{ij}u_{jk}}{s_{ik}\lambda_k} - \frac{\mathbbm{i}t}{2}g_{jj}^{(ik)} - \frac{t^2}{2}h_{jj}^{(ik)}\right\}^2\\
&\quad + \frac{\theta_{ijk}}{6}\left|\frac{\mathbbm{i}tE_{ij}u_{jk}}{s_{ik}\lambda_k} - \frac{\mathbbm{i}t}{2}g_{jj}^{(ik)} - \frac{t^2}{2}h_{jj}^{(ik)}\right|^3\\
& = 1 + \frac{\mathbbm{i}tE_{ij}u_{jk}}{s_{ik}\lambda_k} - \frac{\mathbbm{i}t}{2}g_{jj}^{(ik)} - \frac{t^2}{2}h_{jj}^{(ik)} - \frac{t^2}{2}\frac{E_{ij}^2u_{jk}^2}{s_{ik}^2\lambda_k^2}\\
&\quad - \frac{\mathbbm{i}tE_{ij}u_{jk}}{s_{ik}\lambda_k}\left(\frac{\mathbbm{i}t}{2}g_{jj}^{(ik)} + \frac{t^2}{2}h_{jj}^{(ik)}\right) + \frac{1}{2}\left(\frac{\mathbbm{i}t}{2}g_{jj}^{(ik)} + \frac{t^2}{2}h_{jj}^{(ik)}\right)^2\\
&\quad
 + \frac{\theta_{ijk}}{6}\left|\frac{\mathbbm{i}tE_{ij}u_{jk}}{s_{ik}\lambda_k} - \frac{\mathbbm{i}t}{2}g_{jj}^{(ik)} - \frac{t^2}{2}h_{jj}^{(ik)}\right|^3,
\end{align*}
where $\theta_{ijk}$ is a complex-valued random variable depending on $\Lambda_{ijk}(t)$ and $|\theta_{ijk}|\leq 1$ with probability one. By Assumption \ref{assumption:Eigenvector_delocalization} and Assumption  \ref{assumption:Noise_matrix_distribution},
\begin{align*}
\max_{j\in [n]}\expect \big|g_{\chi(j)\chi(j)}^{(ik)}\big|&\leq \frac{\{\expect |E_{i\chi(j)}|^3 + \var(E_{i\chi(j)})\expect|E_{i\chi(j)}|\}\|\bu_k\|_\infty^3}{s_{ik}^3|\lambda_k|^3} = O\left(\frac{1}{nq_n}\right),\\
\max_{j\in [n]}\expect \big|g_{\chi(j)\chi(j)}^{(ik)}\big|^2&\leq \frac{2\{\expect E_{i\chi(j)}^6 + \expect E_{i \chi(j)}^2\var(E_{i \chi(j)})^2\}\|\bu_k\|_\infty^6}{s_{ik}^6|\lambda_k|^6} = O\left(\frac{1}{nq_n^4}\right),\\
\max_{j\in [n]}\expect \big|g_{\chi(j)\chi(j)}^{(ik)}\big|^3&\leq \frac{8\{\expect E_{i \chi(j)}^9 + \expect E_{i \chi(j)}^3\var(E_{i \chi(j)})^3\}\|\bu_k\|_\infty^9}{s_{ik}^9|\lambda_k|^9} = O\left(\frac{1}{nq_n^7}\right),\\
\max_{j\in [n]}\expect h_{jj}^{(ik)}& = O\left\{\frac{\rho_n}{\Delta_n^2}\left(\frac{1}{n} + \frac{1}{\sqrt{n}q_n^3}\right)\right\},\\
 \max_{j\in [n]}\expect(h_{jj}^{(ik)})^2 &= O\left\{\frac{\rho_n^2}{\Delta_n^4}\left(\frac{1}{nq_n^2} + \frac{1}{\sqrt{n}q_n^{7}}\right)\right\},\\
\max_{j\in [n]}\expect(h_{jj}^{(ik)})^3 &= O\left\{\frac{\rho_n^3}{\Delta_n^6}\left(\frac{1}{nq_n^4} + \frac{1}{\sqrt{n}q_n^{11}}\right)\right\}.
\end{align*}
Since $|t| = O(\sqrt{n\rho_n}\rho_n/\beta)$ and 
$$
\beta = \max_{i,j\in [n]}\expect|E_{ij}|^3\geq \max_{i,j\in [n]}(\expect E_{ij}^2)^{3/2} = \Theta(\rho_n^{3/2}),
$$
which can be shown by noticing $\min_{i\in[n]}s_{ik}^2 = \Theta(\rho_n/\Delta_n^2)$ with a contradiction argument, it follows that $|t| = O(\sqrt{n})$. Therefore, for any $t$, $|t|\in [n^\gamma, \bar{\eps}\sqrt{n\rho_n}\rho_n/\beta]$, we obtain
\begin{align*}
&\max_{j\in [n]}\left|\expect \exp\{\Lambda_{ijk}(t)\} - 1 + \frac{t^2}{2}\frac{\var(E_{ij})u_{jk}^2}{s_{ik}^2\lambda_k^2}\right|\\
&\quad\leq \frac{|t|}{2}\max_{j\in [n]}\expect |g_{jj}^{(ik)}| + \frac{t^2}{2}\max_{j\in [n]}\expect h_{jj}^{(ik)} + \frac{t^2}{2}\max_{j\in [n]}\expect\left|\frac{E_{ij}u_{jk}g_{jj}^{(ik)}}{s_{ik}\lambda_k}\right|\\
&\quad\quad + \frac{|t|^3}{2}\max_{j\in [n]}\expect\left|\frac{E_{ij}u_{jk}h_{jj}^{(ik)}}{s_{ik}\lambda_k}\right| + \frac{1}{2}\max_{j\in [n]}\{t^2\expect (g_{jj}^{(ik)})^2 + t^4\expect (h_{jj}^{(ik)})^2\}\\
&\quad\quad + \frac{27}{6}\max_{j\in [n]}\left\{\frac{t^3\expect|E_{ij}|^3\|\bu_k\|_\infty^3}{s_{ik}^3|\lambda_k|^3} + \frac{t^3\expect |g_{jj}^{(ik)}|^3}{8} + \frac{t^6\expect (h_{jj}^{(ik)})^3}{8}\right\}\\
&\quad = tO\left(\frac{1}{nq_n}\right)+ t^2O\left\{\frac{\rho_n}{\Delta_n^2}\left(\frac{1}{n} + \frac{1}{\sqrt{n}q_n^3}\right)\right\} + \frac{t^2}{2}\max_{j\in [n]}\left\{\frac{\expect E_{ij}^2u_{jk}^2}{s_{ik}^2\lambda_k^2}\right\}^{1/2}\{\expect(g_{jj}^{(ik)})^2\}^{1/2}\\
&\quad\quad + \frac{t^3}{2}\max_{j\in [n]}\left\{\frac{\expect E_{ij}^2u_{jk}^2}{s_{ik}^2\lambda_k^2}\right\}^{1/2}\{\expect(h_{jj}^{(ik)})^2\}^{1/2} + t^2O\left(\frac{1}{nq_n^4}\right)\\
&\quad\quad + t^4O\left\{\frac{\rho_n^2}{\Delta_n^4}\left(\frac{1}{nq_n^2} + \frac{1}{\sqrt{n}q_n^7}\right)\right\}
+ t^3O\left\{\frac{\beta}{(n\rho_n)^{3/2}}\right\} + t^3O\left(\frac{1}{nq_n^7}\right)\\
&\quad\quad + t^6O\left\{\frac{\rho_n^3}{\Delta_n^6}\left(\frac{1}{nq_n^4} + \frac{1}{\sqrt{n}q_n^{11}} \right)\right\}\\
&\quad = tO\left(\frac{1}{nq_n}\right)+ t^2O\left\{\frac{\rho_n}{\Delta_n^2}\left(\frac{1}{n} + \frac{1}{\sqrt{n}q_n^{3}}\right)\right\} + t^2O\left(\frac{1}{nq_n^2}\right)\\
&\quad\quad + t^3O\left\{\frac{\rho_n}{\Delta_n^2}\left(\frac{1}{nq_n} + \frac{1}{n^{3/4}q_n^{7/2}}\right)\right\}
+ t^2O\left(\frac{1}{nq_n^4}\right)\\
&\quad\quad + t^4O\left\{\frac{\rho_n^2}{\Delta_n^4}\left(\frac{1}{nq_n^2} + \frac{1}{\sqrt{n}q_n^7}\right) \right\} + t^3O\left\{\frac{\beta}{(n\rho_n)^{3/2}}\right\} + t^3O\left(\frac{1}{nq_n^7}\right)\\
&\quad\quad + t^6O\left\{\frac{\rho_n^3}{\Delta_n^6}\left(\frac{1}{nq_n^4} + \frac{1}{\sqrt{n}q_n^{11}}\right)\right\}\\
&\quad = \frac{t^2}{n}O\left(\frac{1}{q_n}\right)+ \frac{t^2}{n}O\left\{\frac{n\rho_n}{\Delta_n^2}\left(\frac{1}{n} + \frac{1}{\sqrt{n}q_n^3}\right)\right\} + \frac{t^2}{n}O\left(\frac{1}{q_n^2}\right)
\\
&\quad\quad + \frac{t^2}{n}O\left(\frac{\sqrt{n}\rho_n}{\Delta_n^2q_n} + \frac{n^{3/4}\rho_n}{\Delta_n^2q_n^{7/2}}\right) + \frac{t^2}{n}O\left(\frac{1}{q_n^4}\right)\\
&\quad\quad + \frac{t^2}{n}O\left(\frac{n\rho_n^2}{\Delta_n^4q_n^2} + \frac{n^{3/2}\rho_n^2}{\Delta_n^4q_n^7}\right) + \frac{t^2}{n}\frac{\bar{\eps}\sqrt{n}\rho_n^{3/2}}{\beta}O\left(\frac{\beta}{\sqrt{n}\rho_n^{3/2}}\right) + \frac{t^2}{n}O\left(\frac{\sqrt{n}}{q_n^7}\right)\\
&\quad\quad + \frac{t^2}{n}O\left(\frac{n^2\rho_n^3}{\Delta_n^6q_n^4} + \frac{n^{5/2}\rho_n^3}{\Delta_n^6q_n^{11}}\right)\\
&\quad \leq \frac{M\bar{\eps}t^2}{n}
\end{align*}
for some constant $M > 0$. 
For $j = 1,\ldots,\lfloor c_1n/(2C^2)\rfloor$, we have
\[
\frac{c}{n}\leq \frac{\var(E_{i\chi(j)})u_{\chi(j)k}}{s_{ik}^2\lambda_k^2}\leq \frac{C^2\rho_n}{ns_{ik}^2\lambda_k^2} = O\left(\frac{1}{n}\right).
\]
This further implies that for $j = 1,\ldots,\lfloor c_1n/(2C^2)\rfloor$,
\[
f_{i\chi(j)k}(t) = \expect \exp\{\Lambda_{i\chi(j)k}(t)\} = 1 - \frac{t^2}{2}\frac{\var(E_{i\chi(j)})u_{\chi(j)k}}{s_{ik}^2\lambda_k^2} + \frac{M\theta_{ijk}'\bar{\eps}t^2}{n},
\]
where $\theta_{ijk}'\in\mathbb{C}$ with $|\theta_{ijk}'|\leq 1$. By setting $\bar{\eps} > 0$ to be sufficiently small, we obtain
\[
\max_{j = 1,\ldots,\lfloor c_1n/(2C^2)\rfloor}|f_{i\chi(j)k}(t)|\leq 1 - \frac{ct^2}{3n}\leq \exp\left(- \frac{ct^2}{3n}\right). 
\]
Now let $m = m(t) = \lceil(6n\log n)/(ct^2)\rceil + 2$. Clearly, $mt^2 = O(n\log n)$ and $mt^2\geq (m - 2)t^2\geq (6n\log n)/c$. Furthermore, since $|t|\geq n^\gamma$ for some small $\gamma > 0$, we see that $m\leq c_1n/(2C^2)$ is satisfied. 
The modulus of the first term in line \eqref{eqn:CHF_intermediate_term_I} can be written as
\begin{align*}
&\left|\expect \exp\left\{\sum_{j = 1}^n\Lambda_{i\chi(j)k}(t) - \sum_{a = m + 1}^{n - 1}\sum_{b = a + 1}^n\left(\frac{\mathbbm{i}t}{2}g_{\chi(a)\chi(b)}^{(ik)} + \frac{t^2}{2}h_{\chi(a)\chi(b)}^{(ik)}\right)\right\}\right|\\
&\quad = \left|\expect\exp\left\{\sum_{j = 1}^m\Lambda_{i\chi(j)k}(t)\right\}\right|
\\&\quad\quad\times 
\left|\expect \exp\left\{\sum_{j = m + 1}^n\Lambda_{i\chi(j)k}(t) - \sum_{a = m + 1}^{n - 1}\sum_{b = a + 1}^n\left(\frac{\mathbbm{i}t}{2}g_{\chi(a)\chi(b)}^{(ik)} + \frac{t^2}{2}h_{\chi(a)\chi(b)}^{(ik)}\right)\right\}\right|\\
&\quad = \left|\prod_{j = 1}^m f_{i\chi(j)k}(t)\right|
\\&\quad\quad\times \left|\expect \exp\left\{\sum_{j = m + 1}^n\Lambda_{i\chi(j)k}(t) - \sum_{a = m + 1}^{n - 1}\sum_{b = a + 1}^n\left(\frac{\mathbbm{i}t}{2}g_{\chi(a)\chi(b)}^{(ik)} + \frac{t^2}{2}h_{\chi(a)\chi(b)}^{(ik)}\right)\right\}\right|\\
&\quad\leq  \left|\prod_{j = 1}^m f_{i\chi(j)k}(t)\right|\leq \exp\left(-\frac{cmt^2}{3n}\right)\leq \exp(-2\log n) = \frac{1}{n^2}.
\end{align*}
Now we analyze the term in line \eqref{eqn:CHF_intermediate_term_II}. Write
\begin{align*}
&\left|\expect \exp\left\{\sum_{j = 1}^n\Lambda_{i\chi(j)k} - \sum_{a = m + 1}^{n - 1}\sum_{b = a + 1}^n\left(\frac{\mathbbm{i}t}{2}g_{\chi(a)\chi(b)}^{(ik)} + \frac{t^2}{2}h_{\chi(a)\chi(b)}^{(ik)}\right)\right\}\Gamma_{ik}(m)\right|\\
&\quad\leq\sum_{a = 1}^m\sum_{b = m + 1}^n\left|\expect \exp\left\{\sum_{j = 1,j\notin\{a,b\}}^m\Lambda_{i\chi(j)k}(t)\right\}\right|\\
&\quad\quad\times\expect\left[\left|\exp\left\{\sum_{j\in([n]\backslash\{m\})\cup\{a,b\}}^m\Lambda_{i\chi(j)k}(t)\right.\right.\right.\\
&\quad\quad\quad\quad 
\left.
\left. - \sum_{a' = m + 1}^{n - 1}\sum_{b' = a + 1}^n\left(\frac{\mathbbm{i}t}{2}g_{\chi(a')\chi(b')}^{(ik)} + \frac{t^2}{2}h_{\chi(a')\chi(b')}^{(ik)}\right)\right\}\right|
\left.\left|\frac{\mathbbm{i}t}{2}g_{\chi(a)\chi(b)}^{(ik)} + \frac{t^2}{2}h_{\chi(a)\chi(b)}^{(ik)}\right|\right]\\
&\quad\leq \sum_{a = 1}^m\sum_{b = m + 1}^n\left|\prod_{j = 1,j\notin\{a,b\}}^mf_{i\chi(j)k}(t)\right|\left(\frac{t}{2}\expect|g_{\chi(a)\chi(b)}^{(ik)}| + \frac{t^2}{2}\expect h_{\chi(a)\chi(b)}^{(ik)}\right)\\
&\quad\leq \sum_{a = 1}^m\sum_{b = m + 1}^n\left|\prod_{j = 1,j\notin\{a,b\}}^mf_{i\chi(j)k}(t)\right|\\
&\quad\quad\times\left(\frac{t}{2}\frac{\{\expect|E_{i\chi(a)}|\expect|E_{i\chi(b)}^2 - \var(E_{i\chi(b)})| + \expect|E_{i\chi(b)}|\expect|E_{i\chi(a)}^2 - \var(E_{i\chi(a)})|\}\|\bu_k\|_\infty^3}{s_{ik}^3|\lambda_k|^3}\right.\\
&\quad\quad\quad\quad\left. + \frac{t^2}{2}\expect h_{\chi(a)\chi(b)}^{(ik)}\right)\\
&\quad = \max_{a,b\in [n]}\left|\prod_{j = 1,j\notin\{a,b\}}^mf_{i\chi(j)k}(t)\right|\left\{mtO\left(\frac{1}{\sqrt{n}}\right) + mt^2O\left(\frac{\rho_n}{\Delta_n^2} + \frac{\sqrt{n}\rho_n}{\Delta_n^2q_n^3}\right)\right\}\\
&\quad\leq \exp\left\{-\frac{(m - 2)ct^2}{3n}\right\}O\left\{\frac{n\log  n}{\sqrt{n}} + n\log n\left(\frac{\rho_n}{\Delta_n^2} + \frac{\sqrt{n}\rho_n}{\Delta_n^2q_n^3}\right)\right\} = O\left(\frac{1}{n} + \frac{n\rho_n}{\Delta_n^2}\right)\\
&\quad = O\left(\frac{n\rho_n}{\Delta_n^2}\times \frac{\Delta_n^2}{n^2\rho_n} + \frac{n\rho_n}{\Delta_n^2}\right) = O\left(\frac{n\rho_n}{\Delta_n^2}\right).
\end{align*}
By \eqref{eqn:Gamma_m_bound}, the modulus of the term in line \eqref{eqn:CHF_intermediate_term_III} is bounded by
\begin{align*}
 &mt^2O\left(\frac{1}{nq_n^2}\right) + mt^4O\left\{\frac{\rho_n^2}{\Delta_n^4}\left(m + \frac{mn}{q_n^6} + \frac{n}{q_n^2} + \frac{n^{3/2}}{q_n^7}\right)\right\}\\
 &\quad = O\left(\frac{n\log n}{nq_n^2}\right) + O\left\{\frac{n^2\rho_n^2(\log n)^2}{\Delta_n^4}\left(1 + \frac{n}{q_n^6}\right)\right\} + t^2O\left\{\frac{n^2\rho_n^2\log n}{\Delta_n^4}\left(\frac{1}{q_n^2} + \frac{\sqrt{n}}{q_n^7}\right)\right\}.
\end{align*}
Combining the upper bounds for lines \eqref{eqn:CHF_intermediate_term_I}-\eqref{eqn:CHF_intermediate_term_III}, we see that for $t\in [n^\gamma, \bar{\eps}\sqrt{n}\rho_n^{3/2}/\beta]$ for sufficiently small $\bar{\eps} > 0$,
\begin{align*}
&\int_{n^\gamma}^{\bar{\eps}\sqrt{n}\rho_n^{3/2}/\beta}\left|\frac{\expect \exp\{\mathbbm{i}t(\widetilde{T}_{ik} + \widetilde{\Delta}_{ik})\}}{t}\right|\mathrm{d}t\\
&\quad = O\left\{\frac{(\log n)}{q_n^2} + \frac{n\rho_n}{\Delta_n^2} + \frac{(n\rho_n)^2}{\Delta_n^4}\frac{(\log n)^2n}{q_n^6}\right\}\int_{n^\gamma}^{\bar{\eps}\sqrt{n}\rho_n^{3/2}/\beta}\frac{\mathrm{d}t}{t}\\
&\quad\quad + O\left\{\frac{n^2\rho_n^2\log n}{\Delta_n^4}\left(\frac{1}{q_n^2} + \frac{\sqrt{n}}{q_n^7}\right)\right\}\int_{n^\gamma}^{\bar{\eps}\sqrt{n}\rho_n^{3/2}/\beta}t\mathrm{d}t\\
&\quad = O\left\{\frac{(\log n)^2}{q_n^2} + \frac{n\rho_n(\log n)}{\Delta_n^2} + \frac{(n\rho_n)^2}{\Delta_n^4}\frac{(\log n)^3n}{q_n^6}
+ \frac{n^2\rho_n^2\log n}{\Delta_n^4}\left(\frac{1}{q_n^2} + \frac{\sqrt{n}}{q_n^7}\right)\frac{n\rho_n^3}{\beta^2}\right\}\\
&\quad = O\left\{\frac{(\log n)^2}{q_n^2} + \frac{n\rho_n(\log n)}{\Delta_n^2} + \frac{(n\rho_n)}{\Delta_n^2}\frac{(\log n)^3}{n^{2\beta_\Delta - 1 + 6\eps}}
\right\}\\
&\quad\quad + O\left\{
\frac{n\rho_n}{\Delta_n^2}\frac{(\log n)}{n^{2\beta_\Delta - 1 - 2\eps}}
+ \frac{\sqrt{n\rho_n}}{\Delta_nq_n}\frac{\log n}{n^{3\beta_\Delta - 1}}\frac{1}{n^{6\eps - 1/2}}\right\}\\
&\quad = O\left\{\frac{(\log n)^2}{q_n^2} + \frac{n\rho_n(\log n)}{\Delta_n^2} + \frac{\sqrt{n\rho_n}}{\Delta_nq_n}
\right\}\\
&\quad = O\left\{\frac{(\log n)^2}{q_n^2} + \frac{n\rho_n(\log n)}{\Delta_n^2}\right\}.
\end{align*}
The same upper bound also holds for the integral over $[-\bar{\eps}\sqrt{n}\rho_n^{3/2}/\beta,-n^\gamma]$. 
The proof is thus completed. 
\end{proof}

\begin{lemma}\label{lemma:CHF_III}
Suppose Assumptions \ref{assumption:Signal_strength}--\ref{assumption:Eigenvector_delocalization} hold and $\min_{i\in[n]}s_{ik}^2 = \Theta(\rho_n/\Delta_n^2)$. If $\beta_\Delta\in(1/2 - \eps, 1/2]$ and $\eps \geq 1/3$, then there exists a sequence $M_n\to\infty$, such that for any $\gamma\in(0, \eps)$ (recall that $\eps$ satisfies Assumption \ref{assumption:Noise_matrix_distribution}),
\begin{align*}
&\int_{\{t:n^\gamma\leq|t|\leq M_n(n^{2\beta_\Delta}\log n)^{1/2}\}}\left|\frac{\expect\exp\{\mathbbm{i}t(\widetilde{T}_{ik} + \widetilde{\Delta}_{ik})\}}{t}\right|\mathrm{d}t
 = O\left\{\frac{(\log n)^4}{q_n^2} + \frac{n\rho_n\log n}{\Delta_n^2}\right\}
.
\end{align*}
\end{lemma}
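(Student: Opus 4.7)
The plan is to split the integration domain at the cutoff $t_{1}:=\bar{\eps}\sqrt{n\rho_{n}}(\rho_{n}/\beta)$ that appears as the upper endpoint in Lemma~\ref{lemma:CHF_intermediate}. On the subrange $\{n^{\gamma}\le |t|\le t_{1}\}$, Lemma~\ref{lemma:CHF_intermediate} directly yields a contribution of order $O\{(\log n)^{2}/q_{n}^{2}+n\rho_{n}(\log n)/\Delta_{n}^{2}\}$, which already fits inside the target bound. The remaining subrange $\{t_{1}\le |t|\le M_{n}(n^{2\beta_\Delta}\log n)^{1/2}\}$ is the nontrivial one; it is empty unless $\beta=\Omega(n\rho_{n}^{2}/(\Delta_{n}\sqrt{\log n}))$, in which case I would invoke the Gaussian self-smoothing carried by $\widetilde{\Delta}_{ik}$.

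For the upper subrange, since $\widetilde{T}_{ik}$ is measurable with respect to $\be_i\transpose\bE$ and $\widetilde{\Delta}_{ik}\mid\be_i\transpose\bE\sim\mathrm{N}(0,\sigma_{ik}^{2}(\be_i\transpose\bE))$, conditioning yields the pointwise modulus bound
\[
\bigl|\expect e^{\mathbbm{i}t(\widetilde{T}_{ik}+\widetilde{\Delta}_{ik})}\bigr|=\bigl|\expect\{e^{\mathbbm{i}t\widetilde{T}_{ik}}e^{-t^{2}\sigma_{ik}^{2}(\be_i\transpose\bE)/2}\}\bigr|\le \expect e^{-t^{2}\sigma_{ik}^{2}(\be_i\transpose\bE)/2}.
\]
Arguing exactly as in the proof of Lemma~\ref{lemma:CHF_II} (using Lemma~\ref{lemma:Delta_conditional_variance} together with $\min_{i}s_{ik}^{2}=\Theta(\rho_{n}/\Delta_{n}^{2})$), one obtains $\sigma_{ik}^{2}(\be_i\transpose\bE)\ge c\,n^{-2\beta_\Delta}$ with probability at least $1-n^{-2}$. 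On this high-probability event the integrand is at most $e^{-ct^{2}/(2n^{2\beta_\Delta})}/|t|$, and on its complement at most $1/|t|$, so the upper-range integral is bounded by
\[
\int_{t_{1}}^{M_{n}\sqrt{n^{2\beta_\Delta}\log n}}\frac{e^{-ct^{2}/(2n^{2\beta_\Delta})}}{t}\,\mathrm{d}t+O\!\left(\frac{\log n}{n^{2}}\right).
\]

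Substituting $s=t/n^{\beta_\Delta}$ transforms the main term into $\int_{s_{1}}^{M_{n}\sqrt{\log n}}s^{-1}e^{-cs^{2}/2}\,\mathrm{d}s$ with $s_{1}=t_{1}/n^{\beta_\Delta}$, which I would split at $s=1$: the portion $s\ge 1$ is an exponentially small Gaussian tail, while $s_{1}\le s\le 1$ (relevant only when $s_{1}<1$) contributes at most $\log(1/s_{1})$. The main obstacle, and the step I expect to be the most delicate, is the bookkeeping of this last piece: under $\eps\ge 1/3$ and $\beta_\Delta\in[1/2-\eps,1/2]$ one has $s_{1}=\bar{\eps}\sqrt{n}\rho_{n}^{3/2}/(\beta\,n^{\beta_\Delta})$, so I must carefully compare $\log(1/s_{1})$ (together with the $\log(M_{n}\sqrt{\log n})$ overhead from choosing a slowly growing $M_{n}\to\infty$) against the $(\log n)^{4}/q_{n}^{2}$ and $n\rho_{n}(\log n)/\Delta_{n}^{2}$ budget. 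Because the regime in which the upper subrange is nontrivial forces $\beta/\rho_{n}^{3/2}\gtrsim\sqrt{n}/q_{n}$, an extra factor of $q_{n}^{-2}$ enters naturally through this comparison; reconciling these factors with a small amount of logarithmic slack is precisely in the spirit of the final computation at the end of the proof of Lemma~\ref{lemma:CHF_intermediate}, and completes the argument after choosing $M_{n}$ diverging sufficiently slowly.
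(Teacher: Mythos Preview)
Your split at $t_1 = \bar{\eps}\sqrt{n\rho_n}(\rho_n/\beta)$ and the use of Lemma~\ref{lemma:CHF_intermediate} on $\{n^\gamma \le |t| \le t_1\}$ is correct and matches the paper; under condition~(i) of Theorem~\ref{thm:edgeworth_expansion} one simply chooses $M_n$ so that $M_n(n^{2\beta_\Delta}\log n)^{1/2} = t_1$, the upper subrange is empty, and there is nothing more to do.

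The gap is in your treatment of the upper subrange $\{t_1 \le |t| \le M_n(n^{2\beta_\Delta}\log n)^{1/2}\}$. Your modulus bound $|\expect e^{\mathbbm{i}t(\widetilde{T}_{ik}+\widetilde{\Delta}_{ik})}| \le e^{-ct^2/(2n^{2\beta_\Delta})} + O(n^{-2})$ is valid, but after the substitution $s = t/n^{\beta_\Delta}$ the resulting integral $\int_{s_1}^{M_n\sqrt{\log n}} s^{-1}e^{-cs^2/2}\,\mathrm{d}s$ is not small enough. The portion $s \ge 1$ is \emph{not} ``an exponentially small Gaussian tail'': $\int_{\max(s_1,1)}^{\infty} s^{-1}e^{-cs^2/2}\,\mathrm{d}s$ is a fixed positive constant whenever $s_1 = O(1)$. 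Since $s_1 = \Theta(n\rho_n^2/(\beta\Delta_n))$ and condition~(ii) only forces $s_1 = O(\sqrt{\log n})$, the case $s_1 = \Theta(1)$ (e.g.\ $\beta = \Theta(n\rho_n^2/\Delta_n)$) is admissible, and there your bound is $\Theta(1)$, far outside the target $O\{(\log n)^4/q_n^2 + n\rho_n\log n/\Delta_n^2\} = o(1)$. Your claim that this regime ``forces $\beta/\rho_n^{3/2} \gtrsim \sqrt{n}/q_n$'' is also incorrect: Assumption~\ref{assumption:Noise_matrix_distribution} gives the \emph{upper} bound $\beta = O(\sqrt{n}\rho_n^{3/2}/q_n)$, not a lower bound, so no $q_n^{-2}$ factor materializes from that comparison.

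The paper handles the upper subrange by a completely different mechanism: it invokes Cram\'er's condition (assumed under~(ii) in Theorem~\ref{thm:edgeworth_expansion}) to show that the individual factors satisfy $|f_{i\chi(j)k}(t)| \le 1-\eta'/2$ for a positive fraction of indices $j$ whenever $|t| \ge t_1$, and then reuses the product/Taylor decomposition from the proof of Lemma~\ref{lemma:CHF_intermediate} with $m = \Theta(\log n)$ independent factors to drive the characteristic function down to $O\{(\log n)^3/q_n^2\}$. The Gaussian smoothing from $\widetilde{\Delta}_{ik}$ alone, with variance only $\Theta(n^{-2\beta_\Delta})$, cannot control the intermediate scale $|t| \asymp n^{\beta_\Delta}$; Cram\'er's condition is essential here.
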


\begin{proof}
The proof under condition (i) (i.e., $\beta = o(n\rho_n^2/(\Delta_n\sqrt{\log n}))$) follows directly from Lemma \ref{lemma:CHF_intermediate} by setting $M_n = \bar{\eps}\rho_n/(\beta\sqrt{\log n})$. It is sufficient to show that under condition (ii) (i.e., $\beta = \Omega(n\rho_n^2/(\Delta_n\sqrt{\log n}))$ but the Cram\'er's condition holds), we have
\begin{align*}
&\int_{\{t:\bar{\eps}\sqrt{n}\rho_n^{3/2}/\beta\leq|t|\leq M_n(n^{2\beta_\Delta}\log n)^{1/2}\}}\left|\frac{\expect\exp\{\mathbbm{i}t(\widetilde{T}_{ik} + \widetilde{\Delta}_{ik})\}}{t}\right|\mathrm{d}t
= O\left\{\frac{(\log n)^4}{q_n^2}\right\}.
\end{align*}
Let $\calJ_{ik} = \{j\in [n]:|u_{jk}|\geq 1/\sqrt{2n}\}$. By Assumption \ref{assumption:Eigenvector_delocalization}, there exists a constant $C\geq 1$ such that $\|\bu_k\|_\infty\leq C/\sqrt{n}$. It follows that
\[
1 = \sum_{j\in\calJ_{ik}}u_{jk}^2 + \sum_{j\notin \calJ_{ik}}u_{jk}^2\leq \frac{C^2|\calJ_{ik}|}{n} + \frac{(n - |\calJ_{ik}|)}{2n}\leq \frac{C^2|\calJ_{ik}|}{n} + \frac{1}{2}.
\]
This implies that $|\calJ_{ik}|\geq n/(2C^2)$. Therefore, there exists a permutation $\chi:[n]\to[n]$, such that for all $j = 1,\ldots,\lfloor n/(2C^2)\rfloor$, $|u_{\chi(j)k}|\geq 1/\sqrt{2n}$. 
Following the notations in the proof of Lemma \ref{lemma:CHF_intermediate}, we let
\[
\Lambda_{ijk}(t) = \frac{\mathbbm{i}tE_{ij}u_{jk}}{s_{ik}\lambda_k} - \frac{\mathbbm{i}t}{2}g_{jj}^{(ik)} - \frac{t^2}{2}h_{jj}^{(ik)}\quad\mbox{and}\quad f_{ijk}(t) = \expect \exp\{\Lambda_{ijk}(t)\},
\]
where $g_{jj}^{(ik)}$ is defined in \eqref{eqn:gab_formula} and $h_{jj}^{(ik)}$ is defined in \eqref{eqn:hab_formula}. 
Then following the proof of Lemma \ref{lemma:CHF_intermediate}, for any integer $m\in [n - 1]$, we have 
\begin{align*}
|\expect \exp\{\mathbbm{i}t(\widetilde{T}_{ik} + \widetilde{\Delta}_{ik})\}|
& = \prod_{j = 1}^m|f_{i\chi(j)k}(t)| + \max_{a,b\in[n]}\prod_{j = 1,j\notin\{a,b\}}^m\left|
f_{i\chi(j)k}(t)
\right|\\
&\quad\times \left\{m|t|O\left(\frac{1}{\sqrt{n}}\right) + mt^2O\left(\frac{\rho_n}{\Delta_n^2} + \frac{\sqrt{n}\rho_n}{\Delta_n^2q_n^3}\right)\right\}\\
&\quad + mt^2O\left(\frac{1}{nq_n^2}\right) + mt^4O\left\{\frac{\rho_n^2}{\Delta_n^4}\left(m + \frac{mn}{q_n^6} + \frac{n}{q_n^2} + \frac{n^{3/2}}{q_n^7}\right)\right\}.
\end{align*}
Now we analyze $f_{i\chi(j)t}$ for $j\in\{1,\ldots,\lfloor n/(2C^2)\rfloor\}$. 
Let $\psi_{ij}(t) = \expect\exp\{\mathbbm{i}t\rho_nE_{ij}/\beta\}$. 
Because $\psi_{ij}(t)$ is the characteristic function of $\rho_nE_{ij}/\beta$ and $\limsup_{t\to\pm\infty}|\psi_{ij}(t)|\leq 1 - \eta$, by Theorem 1 in Section 2 of Chapter I in \cite{petrov2012sums}, we see that $|\psi_{ij}(t)| \leq 1 - \eta'$ when $|t| \geq c$ for some constant $\eta' \in (0, 1)$. 
Since
\[
\mathbf{Re}\left( - \frac{\mathbbm{i}t}{2}g_{jj}^{(ik)} - \frac{t^2}{2}h_{jj}^{(ik)}\right)\leq 0, 
\]
it follows that
\[
\exp\left\{- \frac{\mathbbm{i}t}{2}g_{jj}^{(ik)} - \frac{t^2}{2}h_{jj}^{(ik)}\right\} = 1 + \theta_{ijk}\left|\frac{\mathbbm{i}t}{2}g_{jj}^{(ik)} + \frac{t^2}{2}h_{jj}^{(ik)}\right|,
\]
where $\theta_{ijk}$ is a complex-valued random variable with $|\theta_{ijk}|\leq 1$ with probability one. This entails that
\begin{align*}
|f_{i\chi(j)k}(t)|& = \left|\expect\exp\left\{\frac{\mathbbm{i}tE_{i\chi(j)}u_{\chi(j)k}}{s_{ik}\lambda_k}\right\}\left\{1 + \theta_{i\chi(j)k}\left|\frac{\mathbbm{i}t}{2}g_{\chi(j)\chi(j)}^{(ik)} + \frac{t^2}{2}h_{\chi(j)\chi(j)}^{(ik)}\right|\right\}\right|\\
&\leq \left|\expect\exp\left\{\frac{\mathbbm{i}tE_{i\chi(j)}u_{\chi(j)k}}{s_{ik}\lambda_k}\right\}\right| + \expect\left|\frac{\mathbbm{i}t}{2}g_{\chi(j)\chi(j)}^{(ik)} + \frac{t^2}{2}h_{\chi(j)\chi(j)}^{(ik)}\right|\\
&\leq \left|\expect\exp\left\{\frac{\mathbbm{i}tE_{i\chi(j)}u_{\chi(j)k}}{s_{ik}\lambda_k}\right\}\right| + tO\left(\frac{1}{nq_n}\right) + t^2O\left\{\frac{\rho_n}{\Delta_n^2}\left(\frac{1}{n} + \frac{1}{\sqrt{n}q_n^4}\right)\right\}.
\end{align*}
By picking a slowing growing sequence $M_n\to\infty$, we can obtain that $tO\{1/(nq_n)\} + t^2O\{1/(n^3q_n)\}\leq \eta'/2$. Then we immediately have
\[
\left|\expect\exp\left\{\frac{\mathbbm{i}tE_{i\chi(j)}u_{\chi(j)k}}{s_{ik}\lambda_k}\right\}\right| = \left|\expect\exp\left\{\mathbbm{i}\left(\frac{\beta u_{\chi(j)k}t}{\rho_ns_{ik}\lambda_k}\right)\frac{\rho_nE_{i\chi(j)}}{\beta}\right\}\right| = \psi_{i\chi(j)}\left(\frac{\beta u_{\chi(j)k}t}{\rho_ns_{ik}\lambda_k}\right).
\]
For $j \leq m\leq n/(2C^2)$ and $t\geq \bar{\eps}\sqrt{n}\rho_n^{3/2}/\beta$, we have
\[
\left|\frac{\beta u_{\chi(j)k}t}{\rho_ns_{ik}\lambda_k}\right|\geq c
\]
for some constant $c > 0$. Therefore, we obtain $|f_{i\chi(j)k}(t)|\leq 1 - \eta'/2$ for $j\leq m\leq n/(2C^2)$ and $|t|\geq\bar{\eps}\sqrt{n}\rho_n^{3/2}/\beta$. Taking $m = \lceil-2\log n/\log(1 - \eta'/2)\rceil + 2$ and a slowing growing sequence $M_n\to\infty$ entails that
\begin{align*}
&|\expect \exp\{\mathbbm{i}t(\widetilde{T}_{ik} + \widetilde{\Delta}_{ik})\}|\\
&\quad\leq\exp\left\{m\log\left(1 - \frac{\eta'}{2}\right)\right\} + \exp\left\{(m - 2)\log\left(1 - \frac{\eta'}{2}\right)\right\}\\
&\quad\quad\times \left\{m|t|O\left(\frac{1}{\sqrt{n}}\right) + mt^2O\left(\frac{\rho_n}{\Delta_n^2} + \frac{\sqrt{n}\rho_n}{\Delta_n^2q_n^3}\right)\right\}\\
&\quad\quad + mt^2O\left(\frac{1}{nq_n^2}\right) + mt^4O\left\{\frac{\rho_n^2}{\Delta_n^4}\left(m + \frac{mn}{q_n^6} + \frac{n}{q_n^2} + \frac{n^{3/2}}{q_n^7}\right)\right\}\\
&\quad = O\left(\frac{1}{n^2}\right) + O\left(\frac{1}{n}\right) + O\left\{\frac{(\log n)^3}{q_n^2}\right\} + 
O\left(\frac{1}{q_n^2}\right)
 = O\left\{\frac{(\log n)^3}{q_n^2}\right\}.
\end{align*}
Hence, we conclude that
\begin{align*}
&\int_{\{t:\bar{\eps}\sqrt{n}\rho_n^{3/2}/\beta\leq|t|\leq M_n(n^{2\beta_\Delta}\log n)^{1/2}\}}\left|\frac{\expect \exp\{\mathbbm{i}t(\widetilde{T}_{ik} + \widetilde{\Delta}_{ik})\}}{t}\right|\mathrm{d}t\\
&\quad = O\left\{\frac{(\log n)^3}{q_n^2}\right\}\int_{\{t:\bar{\eps}\sqrt{n}\rho_n^{3/2}/\beta\leq|t|\leq M_n(n^{2\beta_\Delta}\log n)^{1/2}\}}\frac{\mathrm{d}t}{|t|}
= O\left\{\frac{(\log n)^4}{q_n^2}\right\}.
\end{align*}
The proof is thus completed.
\end{proof}

\begin{lemma}\label{lemma:CHF_IV}
Suppose Assumption \ref{assumption:Signal_strength}--\ref{assumption:Eigenvector_delocalization} hold and $\min_{i\in[n]}s_{ik}^2 = \Theta(\rho_n/\Delta_n^2)$. If $\beta_\Delta\in[1/2 - \eps, 1/2]$ and $\eps \geq 1/3$, then there exists some sufficiently small $\gamma\in(0, \eps)$, such that
\[
\int_{-n^\gamma}^{n^\gamma}\left|\frac{\expect e^{\mathbbm{i}t(\widetilde{T}_{ik} + \widetilde{\Delta}_{ik})} - \mathrm{ch.f.}(t; G_n^{(ik)})}{t}\right|\mathrm{d}t = O\left(\frac{n\rho_n}{\Delta_n^2} + \frac{1}{q_n^2}\right),
\]
where $\mathrm{ch.f.}(t; G_n^{(ik)}) = \int_{-\infty}^{+\infty}e^{\mathbbm{i}tx}\mathrm{d}G_n^{(ik)}(x)$ is the Fourier-Stieltjes transform of $G_n^{(ik)}(x)$.
\end{lemma}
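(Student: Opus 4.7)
The plan is to compute the characteristic function $\psi(t):=\expect e^{\mathbbm{i}t(\widetilde T_{ik}+\widetilde\Delta_{ik})}$ on the window $|t|\leq n^\gamma$ (for a sufficiently small $\gamma>0$, in particular with $\gamma<\min(\eps,\beta_\Delta/4)$) and match it to $\mathrm{ch.f.}(t;G_n^{(ik)})=e^{-t^2/2}(1+\mathbbm{i}\kappa_n^{(ik)}(2t^3-3t)/6)$ up to error terms that integrate against $dt/|t|$ to the claimed bound; the explicit form of the target Fourier–Stieltjes transform is obtained by writing $(2x^2+1)\phi(x)/6 = $ linear combination of $x^j\phi(x)$ and using $\int x^j\phi(x)e^{\mathbbm{i}tx}dx=\mathbbm{i}^j(d^j/dt^j)e^{-t^2/2}$. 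Conditioning on $\be_i\transpose\bE$ and using that $\widetilde\Delta_{ik}\mid\be_i\transpose\bE\sim\mathrm{N}(0,\sigma_{ik}^2(\be_i\transpose\bE))$ gives the starting identity
\[
\psi(t)=\expect\!\left[e^{\mathbbm{i}tT_{ik}^\sharp}\exp\!\left(-\tfrac{\mathbbm{i}t}{2}T_{ik}^\sharp\delta_{ik}\right)\exp\!\left(-\tfrac{t^2}{2}\sigma_{ik}^2(\be_i\transpose\bE)\right)\right].
\]
On the chosen window, the arguments of both non-$T_{ik}^\sharp$ exponentials are $o(1)$ with high probability by Lemma~\ref{lemma:Delta_conditional_variance} and Result~\ref{result:concentration}, so a first-order Taylor expansion of each is legitimate and yields $\psi(t)=(\mathrm{I})-(\mathbbm{i}t/2)(\mathrm{II})-(t^2/2)(\mathrm{III})+R(t)$, where $(\mathrm{I})=\expect e^{\mathbbm{i}tT_{ik}^\sharp}$, $(\mathrm{II})=\expect[e^{\mathbbm{i}tT_{ik}^\sharp}T_{ik}^\sharp\delta_{ik}]$, $(\mathrm{III})=\expect[e^{\mathbbm{i}tT_{ik}^\sharp}\sigma_{ik}^2(\be_i\transpose\bE)]$, and $R(t)$ collects the quadratic Taylor remainders.

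Term $(\mathrm{I})$ is handled immediately by Lemma~\ref{lemma:CHF_local_expansion} with $\calJ=[n]$, which produces $(\mathrm{I})=e^{-t^2/2}(1-\mathbbm{i}t^3\kappa_n^{(ik)}/6)+e^{-t^2/4}(t^6+t^4+t^2)O(q_n^{-2})$. For $(\mathrm{II})$ I expand $T_{ik}^\sharp\delta_{ik}=\sum_{a,b=1}^n\Xi_{iak}\widetilde\Xi_{ibk}^2$ with $\widetilde\Xi_{ibk}^2:=(E_{ib}^2-\sigma_{ib}^2)u_{bk}^2/(s_{ik}\lambda_k)^2$, and use independence of the $\Xi_{ijk}$'s across $j$ to factor $\expect[e^{\mathbbm{i}tT_{ik}^\sharp}\Xi_{iak}\widetilde\Xi_{ibk}^2]$ as $\expect[\Xi_{iak}e^{\mathbbm{i}t\Xi_{iak}}]\expect[\widetilde\Xi_{ibk}^2e^{\mathbbm{i}t\Xi_{ibk}}]\prod_{j\notin\{a,b\}}\expect e^{\mathbbm{i}t\Xi_{ijk}}$ for $a\neq b$, and as $\expect[\Xi_{iak}\widetilde\Xi_{iak}^2e^{\mathbbm{i}t\Xi_{iak}}]\prod_{j\neq a}\expect e^{\mathbbm{i}t\Xi_{ijk}}$ for $a=b$. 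Taylor-expanding these weighted single-coordinate CFs and invoking the identities $\sum_a\expect\Xi_{iak}^2=1$ and $\sum_b\expect E_{ib}^3u_{bk}^3/(s_{ik}\lambda_k)^3=\kappa_n^{(ik)}$ collapses the double sum to $(\mathrm{II})=(1-t^2)\kappa_n^{(ik)}e^{-t^2/2}+e^{-t^2/4}\,p(|t|)\,O(q_n^{-2})$ for some universal polynomial $p$. For $(\mathrm{III})$ I split $\sigma_{ik}^2(\be_i\transpose\bE)=\expect\sigma_{ik}^2+\{\sigma_{ik}^2-\expect\sigma_{ik}^2\}$: the deterministic part factors to $\expect\sigma_{ik}^2\cdot(\mathrm{I})=O(n\rho_n/\Delta_n^2)\cdot e^{-t^2/2}$ by Lemma~\ref{lemma:Delta_conditional_variance}, while the centered piece is bounded via Cauchy--Schwarz by $\{\var(\sigma_{ik}^2(\be_i\transpose\bE))\}^{1/2}=O(n\rho_n/\Delta_n^2)$, using the same second-moment computations that appear in the proof of Lemma~\ref{lemma:CHF_intermediate}.

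Combining the three leading contributions, the algebraic identity $-\mathbbm{i}t^3\kappa_n^{(ik)}/6-(\mathbbm{i}t/2)(1-t^2)\kappa_n^{(ik)}=\mathbbm{i}\kappa_n^{(ik)}(2t^3-3t)/6$ exactly reproduces the Edgeworth coefficient in $\mathrm{ch.f.}(t;G_n^{(ik)})$, leaving three remainder families to integrate: (a) an $e^{-t^2/4}\,p(|t|)\,O(q_n^{-2})$ contribution from $(\mathrm{I})$ and $(\mathrm{II})$, whose integral against $|t|^{-1}$ over all of $\mathbb{R}$ is finite and yields $O(q_n^{-2})$; (b) a $t^2e^{-t^2/2}\,O(n\rho_n/\Delta_n^2)$ contribution from $(\mathrm{III})$, integrating against $|t|^{-1}$ over $\mathbb{R}$ to $O(n\rho_n/\Delta_n^2)$ because $|t|e^{-t^2/2}$ is absolutely integrable; and (c) the quadratic Taylor remainders $R(t)$ bounded pointwise by $|t|^4\sigma_{ik}^4+|t|^2(T_{ik}^\sharp\delta_{ik})^2$ plus cross-products, whose expectations (via Cauchy--Schwarz and the moment tables $\expect\sigma_{ik}^4\lesssim(n\rho_n/\Delta_n^2)^2$ and $\expect(T_{ik}^\sharp\delta_{ik})^2\lesssim q_n^{-2}$) carry a factor of order $(n\rho_n/\Delta_n^2)^2+q_n^{-2}$. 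Integrating (c) against $|t|^{-1}$ over the bounded window $[-n^\gamma,n^\gamma]$ produces a harmless $n^{O(\gamma)}$ penalty, and choosing $\gamma$ sufficiently small forces the total contribution of (c) to be absorbed into $O(n\rho_n/\Delta_n^2+q_n^{-2})$.

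The main obstacle is the moment bookkeeping in $(\mathrm{II})$: the two factors of $\mathbbm{i}t$ arising from the Taylor expansions of the weighted single-coordinate CFs $\expect[\Xi_{iak}e^{\mathbbm{i}t\Xi_{iak}}]$ and $\expect[\widetilde\Xi_{ibk}^2e^{\mathbbm{i}t\Xi_{ibk}}]$ combine to produce a $-t^2$ factor that precisely reshapes the non-studentized $-\mathbbm{i}t^3\kappa_n^{(ik)}/6$ of $(\mathrm{I})$ into the studentized Edgeworth coefficient $\mathbbm{i}\kappa_n^{(ik)}(2t^3-3t)/6$, and getting this cancellation right requires cleanly separating the diagonal $a=b$ slice (which produces the $\kappa_n^{(ik)}e^{-t^2/2}$ piece driving the $-\mathbbm{i}t\kappa_n^{(ik)}/2$ correction) from the off-diagonal $a\neq b$ slice (which produces the $-t^2\kappa_n^{(ik)}e^{-t^2/2}$ piece) and tracking the next-order moment contributions carefully enough to confine them to the $O(q_n^{-2})$ bucket without contaminating the leading coefficients; a secondary difficulty is simultaneously choosing $\gamma$ small enough that both the $t^2\sigma_{ik}^2$ Taylor remainder and the non-$e^{-t^2/2}$-damped integral of $R(t)$ over $[-n^\gamma,n^\gamma]$ stay within the target bound.
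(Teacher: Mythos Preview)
Your overall architecture matches the paper's—Taylor-expand the two non-$T_{ik}^\sharp$ exponentials, handle $(\mathrm{I})$ via Lemma~\ref{lemma:CHF_local_expansion}, and recover the Edgeworth coefficient from the algebraic combination of $(\mathrm{I})$ and $(\mathrm{II})$—and your computation of $(\mathrm{II})=(1-t^2)\kappa_n^{(ik)}e^{-t^2/2}+\text{err}$ is correct. The gap is in how you dispose of the remaining pieces.

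For $(\mathrm{III})$, your Cauchy--Schwarz bound on the centered part yields $|\expect[e^{\mathbbm{i}tT_{ik}^\sharp}(\sigma_{ik}^2-\expect\sigma_{ik}^2)]|\leq\{\var(\sigma_{ik}^2)\}^{1/2}$ \emph{without} any $e^{-t^2/4}$ factor, so your claim in (b) that the full $(\mathrm{III})$-contribution is $t^2e^{-t^2/2}O(n\rho_n/\Delta_n^2)$ does not follow from what you wrote. More seriously, in (c) you bound the quadratic Taylor remainder $\tfrac{t^2}{8}\expect(T_{ik}^\sharp\delta_{ik})^2=O(t^2/q_n^2)$ crudely; integrating $|t|/q_n^2$ over $[-n^\gamma,n^\gamma]$ gives $O(n^{2\gamma}/q_n^2)$, and since $\gamma>0$ is a \emph{fixed} constant you cannot make $n^{2\gamma}=O(1)$. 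The ``harmless $n^{O(\gamma)}$ penalty'' is not harmless: e.g.\ when $\beta_\Delta=1/2$, $q_n=n^{1/3}$, the bound $n^{2\gamma-2/3}$ strictly exceeds both $1/q_n^2=n^{-2/3}$ and $n\rho_n/\Delta_n^2$ for every $\gamma>0$, so it cannot be absorbed.

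The paper fixes both issues by extracting the Gaussian damping rather than relying on the window being short. For the $(T_{ik}^\sharp\delta_{ik})^2$ term it expands to \emph{second} order (so the crude moment bound is applied only to the cubic remainder $|t|^3\expect|T_{ik}^\sharp\delta_{ik}|^3=O(|t|^3/q_n^3)$, which does integrate to $O(1/q_n^2)$ once $3\gamma\le\eps$) and then writes $(T_{ik}^\sharp\delta_{ik})^2=\sum_{a\le b}\sum_{a'\le b'}g_{ab}^{(ik)}g_{a'b'}^{(ik)}$, factoring each summand as $\prod_{j\notin\{a,b,a',b'\}}\expect e^{\mathbbm{i}t\Xi_{ijk}}$ times a local expectation; Lemma~\ref{lemma:CHF_local_expansion} then supplies the $e^{-t^2/4}$ factor. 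A careful case split on $|\{a,b\}\cap\{a',b'\}|$ and Taylor expansion of the local factors (using $\expect g_{ab}=\expect\Xi_{iak}g_{ab}=\cdots=0$) shows each case is $e^{-t^2/4}\mathrm{poly}(|t|)O(q_n^{-2})$. The $\sigma_{ik}^2$ term is handled the same way via $\sigma_{ik}^2=\sum_{a\le b}h_{ab}^{(ik)}$. Without this factorization step, a first-order expansion with crude remainder bounds cannot reach the target rate.
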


\begin{proof}
Let 
\bea\label{def:gamma}
\gamma = \min(\beta_\Delta/2, \eps/4).
\eae
Recall that
\[
\expect\exp\{\mathbbm{i}t(\widetilde{T}_{ik} + \widetilde{\Delta}_{ik})\} = \expect\exp\left\{\mathbbm{i}t\widetilde{T}_{ik} - \frac{t^2}{2}\sigma_{ik}^2(\be_i\transpose\bE)\right\},
\]
By Taylor's theorem, 
\[
\exp\left\{-\frac{t^2}{2}\sigma_{ik}^2(\be_i\transpose\bE)\right\} = 1 - \frac{t^2}{2}\sigma_{ik}^2(\be_i\transpose\bE) + \frac{t^4}{8}\theta_{ik}\sigma_{ik}^4(\be_i\transpose),
\]
where $\theta_{ik}$ is a random variable depending on $\sigma_{ik}^2(\be_i\transpose\bE)$ and $|\theta_{ik}|\leq 1$ with probability one. This entails that
\begin{equation}
\label{eqn:CHF_IV_decomposition_I}
\begin{aligned}
\expect\exp\{\mathbbm{i}t(\widetilde{T}_{ik} + \widetilde{\Delta}_{ik})\}
& = \expect\exp(\mathbbm{i}t\widetilde{T}_{ik}) - \frac{t^2}{2} \expect\exp(\mathbbm{i}t\widetilde{T}_{ik})\sigma_{ik}^2(\be_i\transpose\bE)\\
&\quad + \frac{t^4}{8}\expect\exp(\mathbbm{i}t\widetilde{T}_{ik})\theta_{ik}\sigma_{ik}^4(\be_i\transpose\bE).
\end{aligned}
\end{equation}
The remaining proof is lengthy and we decompose it into the following three steps:
\begin{itemize}
  \item In \textbf{Step 1}, we establish that
  \[
  \int_{-n^\gamma}^{n^\gamma}\left|\frac{1}{t}\frac{t^4}{8}\expect\exp(\mathbbm{i}t\widetilde{T}_{ik})\theta_{ik}\sigma_{ik}^4(\be_i\transpose\bE)\right|\mathrm{d}t = O\left(\frac{n\rho_n}{\Delta_n^2}\right).
  \]
  \item In \textbf{Step 2}, we establish that
  \[
  \int_{-n^\gamma}^{n^\gamma}\left|\frac{\expect\exp(\mathbbm{i}t\widetilde{T}_{ik}) - \mathrm{ch.f.}(t; G_n^{(ik)})}{t}\right|\mathrm{d}t = O\left(\frac{1}{q_n^2}\right).
  \]
  \item In \textbf{Step 3}, we establish that 
  \[
  \int_{-n^\gamma}^{n^\gamma}\left|\frac{1}{t}\frac{t^2}{2}\expect\exp(\mathbbm{i}t\widetilde{T}_{ik})\sigma_{ik}^2(\be_i\transpose\bE)\right|\mathrm{d}t = O\left(\frac{n\rho_n}{\Delta_n^2} \right).
  \]
\end{itemize}
$\blacksquare$ \textbf{Step 1.} Recall that
\[
\sigma_{ik}^2(\be_i\transpose\bE) = \sum_{a < b}h_{ab}^{(ik)} + \sum_{a = 1}^nh_{aa}^{(ik)},
\]
where $h_{ab}^{(ik)}$ is given in \eqref{eqn:hab_formula}. Since $\rho_n/(s_{ik}^2\lambda_k^2) = \Theta(1)$, by Assumption \ref{assumption:Eigenvector_delocalization} and Assumption \ref{assumption:Noise_matrix_distribution}, there exists a constant $C > 0$ such that $h_{ab}^{(ik)}\leq C\bar{h}_{ab}^{(ik)}$, where
\begin{align*}
\bar{h}_{ab}^{(ik)} =
\left\{\begin{aligned}
&\frac{1}{n^3} + \frac{(E_{ia}^2 + E_{ib}^2)}{n\Delta_n^2} + \frac{(\be_i\transpose\bE\bu_k)^2(E_{ia}^4 + E_{ib}^4)}{n^2\rho_n^2\Delta_n^2}
,&\quad&\mbox{if }a\neq b,\\
& \frac{1}{n^3} + \frac{E_{ia}^2}{n\Delta_n^2} + \frac{(\be_i\transpose\bE\bu_k)^2E_{ia}^4}{n^2\rho_n^2\Delta_n^2},&\quad&\mbox{if }a\neq b.
\end{aligned}
\right.
\end{align*}
By \eqref{firstorder:h}, it is clear that 
\begin{align*}
\max_{a,b\in[n]}\expect \bar{h}_{ab}^{(ik)} = O\left\{\frac{\rho_n}{\Delta_n^2}\left(\frac{1}{n} + \frac{1}{\sqrt{n}q_n^3}\right)\right\},\;
\max_{a,b\in[n]}\expect (\bar{h}_{ab}^{(ik)})^2 = O\left\{\frac{\rho_n^2}{\Delta_n^4}\left(\frac{1}{nq_n^2} + \frac{1}{\sqrt{n}q_n^7}\right)\right\}.
\end{align*}
Furthermore,  the proof of Lemma \ref{lemma:CHF_intermediate} entails that
\begin{align}
\label{eqn:sigma_ik_fourth_moment_upper_bound}
\expect\sigma_{ik}^4(\be_i\transpose\bE)
&\leq C^2\expect\left(\sum_{a < b}\bar{h}_{ab}^{(ik)}+ \sum_{a = 1}^n\bar{h}_{aa}^{(ik)}\right)^2 = O\left\{\frac{(n\rho_n)^2}{\Delta_n^4}\left(1 + \frac{n}{q_n^6} + \frac{\sqrt{n}}{q_n^7}\right)\right\}\\
& = O\left\{\frac{(n\rho_n)}{\Delta_n^2}\left(\frac{1}{n^{2\beta_\Delta}} + \frac{1}{n^{2\beta_\Delta - 1 + 6\eps}} + \frac{1}{n^{2\beta_\Delta - 1/2 + 7\eps}}\right)\right\}\nonumber
\end{align}
The integral of the modulus of the third term on the right-hand side of \eqref{eqn:CHF_IV_decomposition_I} divided by $t$ can be bounded as
\begin{align*}
&\int_{-n^\gamma}^{n^\gamma}\left|\frac{1}{t}\frac{t^4}{8}\expect\exp(\mathbbm{i}t\widetilde{T}_{ik})\theta_{ik}\sigma_{ik}^4(\be_i\transpose\bE)\right|\mathrm{d}t\\
&\quad \leq \int_{-n^\gamma}^{n^\gamma}\frac{|t|^3}{8}\expect\sigma_{ik}^4(\be_i\transpose\bE)\mathrm{d}t\\
&\quad = O\left\{\frac{(n\rho_n)}{\Delta_n^2}\left(\frac{n^{4\gamma}}{n^{2\beta_\Delta}} + \frac{n^{4\gamma}}{n^{2\beta_\Delta - 1 + 6\eps}} + \frac{n^{4\gamma}}{n^{2\beta_\Delta - 1/2 + 7\eps}}\right)\right\}\\
&\quad = O\left\{\frac{(n\rho_n)}{\Delta_n^2}\left(1 + \frac{1}{n^{2\beta_\Delta - 1 + 2\eps}} + \frac{1}{n^{2\beta_\Delta - 1/2 + 3\eps}}\right)\right\}\\
&\quad = O\left( \frac{n\rho_n}{\Delta_n^2}\right).
\end{align*}
$\blacksquare$ \textbf{Step 2.} Recall that 
\bea\nonumber
T_{ik}^\sharp = \frac{\be_i\transpose\bE\bu_k}{s_{ik}\lambda_k}&,\quad \delta_{ik} = \sum_{j = 1}^n\frac{(E_{ij}^2 - \sigma_{ij}^2)u_{jk}^2}{s_{ik}^2\lambda_k^{2}},
\\
\expect \exp(\mathbbm{i}t\widetilde{T}_{ik}) &= \expect\exp\left(\mathbbm{i}tT_{ik}^\sharp - \frac{\mathbbm{i}t}{2}T_{ik}^\sharp\delta_{ik}\right). 
\eae 
By Taylor's theorem, 
\begin{align*}
\exp\left(-\frac{\mathbbm{i}t}{2}T_{ik}^\sharp\delta_{ik}\right)
& = 1 -\frac{\mathbbm{i}t}{2}T_{ik}^\sharp\delta_{ik} - \frac{t^2}{8}(T_{ik}^\sharp\delta_{ik})^2 + \frac{\theta_{ik}t^3}{24}|T_{ik}^\sharp\delta_{ik}|^3,
\end{align*}
where $\theta_{ik}$ is a complex-valued random variable depending on $T_{ik}^\sharp$ and $\delta_{ik}$ and $|\theta_{ik}|\leq 1$ with probability one. It follows that
\begin{equation}
\label{eqn:CHF_IV_decomposition_II}
\begin{aligned}
\expect e^{\mathbbm{i}t\widetilde{T}_{ik}}
& = \expect e^{\mathbbm{i}tT_{ik}^\sharp} - \frac{\mathbbm{i}t}{2}\expect e^{\mathbbm{i}tT_{ik}^\sharp}(T_{ik}^\sharp\delta_{ik})
 - \frac{t^2}{8}\expect e^{\mathbbm{i}tT_{ik}^\sharp}(T_{ik}^\sharp\delta_{ik})^2\\&\quad
 + \frac{t^3}{24}\expect e^{\mathbbm{i}tT_{ik}^\sharp}\theta_{ik}|T_{ik}^\sharp\delta_{ik}|^3
\end{aligned}
\end{equation} 
By Lemma A.1 in \cite{erdos2013}, we know that
\begin{align}
\label{eqn:Tik_sharp_moment_bound}
\expect |T_{ik}^\sharp|^6 &= O\left\{(n\rho_n)^3\left(\frac{\|\bu_k\|_\infty}{s_{ik}\lambda_k}\right)^6\right\} = O(1),\\
\label{eqn:delta_ik_moment_bound}
\expect |\delta_{ik}|^6 &= O\left\{(n\rho_n)^6\left(\frac{\|\bu_k\|_\infty^2}{s_{ik}^2\lambda_k^2q_n}\right)^6\right\} = O\left(\frac{1}{q_n^6}\right).
\end{align}
Therefore, we see that the fourth term in \eqref{eqn:CHF_IV_decomposition_II} satisfies
\begin{align*}
\int_{-n^\gamma}^{n^\gamma}\frac{1}{|t|}\left|\frac{t^3}{24}\expect e^{\mathbbm{i}t\widetilde{T}_{ik}}\theta_{ik}|T_{ik}^\sharp\delta_{ik}|^3\right|\mathrm{d}t&\leq \expect|T_{ik}^\sharp\delta_{ik}|^3\int_{-n^\gamma}^{n^\gamma}\frac{t^2}{24}\mathrm{d}t \leq \frac{n^{3\gamma}}{36}(\expect|T_{ik}^\sharp|^6)^{1/2}(\expect\delta_{ik}^6)^{1/2}\\
& = O\left(\frac{n^{3\eps/4}}{q_n^3}\right) = O\left(\frac{1}{q_n^2}\right).
\end{align*}
The remaining parts of Step 2 breakdowns into the following mini-steps:
\begin{itemize}
  \item \textbf{Step 2 (I):} Show that
  \[
  \int_{-n^\gamma}^{n^\gamma}\frac{1}{|t|}\left|\frac{t^2}{8}\expect e^{\mathbbm{i}tT_{ik}^\sharp}(T_{ik}^\sharp\delta_{ik})^2\right|\mathrm{d}t = O\left(\frac{1}{q_n^2}\right).
  \]
  \item \textbf{Step 2 (II):} Show that
  \[
  \int_{-n^\gamma}^{n^\gamma}\frac{1}{|t|}\left|\expect e^{\mathbbm{i}tT_{ik}^\sharp} - \frac{\mathbbm{i}t}{2}\expect e^{\mathbbm{i}T_{ik}^\sharp}(T_{ik}^\sharp\delta_{ik}) - \mathrm{ch.f.}(t; G_n^{(ik)})\right|\mathrm{d}t = O\left(\frac{1}{q_n^2}\right).
  \]
\end{itemize}
Recall the definitions in \eqref{eqn:gab_formula}. For notation convenience, we simply write $g_{ab} = g_{ab}^{(ik)}$ and $g_a = g_{aa}^{(ik)}$ in the remaining proof of Lemma \ref{lemma:CHF_IV}. Let $\Xi_{ijk} = E_{ij}u_{jk}/(s_{ik}\lambda_k)$. 

\vspace*{1ex}
\noindent $\blacksquare$ \textbf{Step 2 (I).} 
By Lemma \ref{lemma:CHF_local_expansion}, we have
\begin{align}
\label{eqn:CHF_IV_characteristic_function_bound}
\max_{\calS\subset[n]:|\calS|\geq n - 4}\left|\prod_{j \in \calS}\expect e^{it\Xi_{ijk}}\right|\leq 2e^{-t^2/4}.
\end{align}
Next, we write
\begin{equation}
\label{eqn:CHF_IV_Step2_I_decomposition_I}
\begin{aligned}
&\expect e^{\mathbbm{i}T_{ik}^\sharp}(T_{ik}^\sharp\delta_{ik})^2\\
&\quad = \expect e^{\mathbbm{i}T_{ik}^\sharp}\left(\sum_{a < b}g_{ab} + \sum_{a = 1}^ng_a\right)^2\\
&\quad = \sum_{a < b}\sum_{a' < b'}\expect e^{\mathbbm{i}T_{ik}^\sharp}g_{ab}g_{a'b'} + 2\sum_{a < b}\sum_{c = 1}^n\expect e^{\mathbbm{i}T_{ik}^\sharp}g_{ab}g_c + \sum_{a, b = 1}^n\expect e^{\mathbbm{i}T_{ik}^\sharp}g_ag_b.
\end{aligned}
\end{equation}
By Assumption \ref{assumption:Eigenvector_delocalization} and Assumption \ref{assumption:Noise_matrix_distribution}, we have 
\begin{align*}
\max_{a\in [n]}\expect |g_a|& = O\left(\frac{1}{nq_n}\right),\quad\max_{a\in [n]}\expect g_a^2 = O\left(\frac{1}{nq_n^4}\right).
\end{align*}
Clearly, for any $a,b\in [n]$, $a < b$,
  \begin{align*}
  \expect g_{ab} &= \frac{\expect E_{ia}\{E_{ib}^2 - \var(E_{ib})\}u_{ak}u_{bk}^2 + \expect E_{ib}\{E_{ia}^2 - \var(E_{ia})\}u_{ak}^2u_{bk}}{s_{ik}^3\lambda_k^3} = 0,\\
  \expect \Xi_{iak}g_{ab} &= \frac{\expect E_{ia}^2\{E_{ib}^2 - \var(E_{ib})\}u_{ak}^2u_{bk}^2 + \expect E_{ib}\{E_{ia}^3 - E_{ia}\var(E_{ia})\}u_{ak}^3u_{bk}}{s_{ik}^4\lambda_k^4} = 0,\\
  \expect \Xi_{iak}^2g_{ab} &= \frac{\expect E_{ia}^3\{E_{ib}^2 - \var(E_{ib})\}u_{ak}^3u_{bk}^2 + \expect E_{ib}\{E_{ia}^4 - E_{ia}^2\var(E_{ia})\}u_{ak}^4u_{bk}}{s_{ik}^5\lambda_k^5} = 0,\\
  \expect \Xi_{ibk}g_{ab} &= \frac{\expect E_{ia}\{E_{ib}^3 - E_{ib}\var(E_{ib})\}u_{ak}u_{bk}^3 + \expect E_{ib}^2\{E_{ia}^2 - \var(E_{ia})\}u_{ak}^2u_{bk}^2}{s_{ik}^4\lambda_k^4} = 0,\\
  \expect \Xi_{ibk}^2g_{ab} &= \frac{\expect E_{ia}\{E_{ib}^4 - E_{ib}^2\var(E_{ib})\}u_{ak}u_{bk}^4 + \expect E_{ib}^3\{E_{ia}^2 - \var(E_{ia})\}u_{ak}^2u_{bk}^3}{s_{ik}^5\lambda_k^5} = 0,\\
  \expect g_{ab}g_a &=\frac{\expect E_{ia}^2(E_{ia}^2 - \sigma_{ia}^2)(E_{ib}^2 - \sigma^2_{ib})u_{ak}^4u_{bk}^2 + \expect E_{ia}(E_{ia}^{2} - \sigma_{ia}^2)^2E_{ib}u_{ak}^5u_{bk}}{s_{ik}^4\lambda_k^4} = 0,\\
  \expect g_{ab}g_b &=\frac{\expect E_{ia}E_{ib}(E_{ib}^2 - \sigma_{ib}^2)u_{ak}u_{bk}^5 + \expect (E_{ia}^2 - \sigma_{ia}^2)E_{ib}^2(E_{ib}^2 - \sigma_{ib}^2)u_{ak}^2u_{bk}^4}{s_{ik}^4\lambda_k^4} = 0.
  \end{align*}
  Then Taylor's theorem yields that
  \begin{align*}
  \expect e^{\mathbbm{i}t(\Xi_{iak} + \Xi_{ibk})}g_{ab} &= \expect \left\{ - t^2\Xi_{iak}\Xi_{ibk} + \frac{8t^3}{6}\theta_{iabk}(|\Xi_{iak}|^3 + |\Xi_{ibk}|^3)\right\}g_{ab},\\
  \expect e^{\mathbbm{i}t(\Xi_{iak} + \Xi_{ibk})}g_{ab}(g_a + g_b)&
  = \expect\{\theta_{iabk}'\mathbbm{i}t(\Xi_{iak} + \Xi_{ibk})g_{ab}(g_a + g_b)\},
  \end{align*}
  where $\theta_{iabk},\theta_{iabk}'$ are complex-valued random variables depending on $\Xi_{iak}$ and $\Xi_{ibk}$ with $|\theta_{iabk}|\wedge|\theta_{iabk}'|\leq 1$, where $a\wedge b:=\min(a, b)$. Furthermore, by Assumption \ref{assumption:Eigenvector_delocalization} and Assumption \ref{assumption:Noise_matrix_distribution}, we have
  \begin{align*}
  &\max_{a,b\in [n],a\neq b}\expect|\Xi_{iak}\Xi_{ibk}g_{ab}|\\
  &\quad\leq \max_{a,b\in [n],a\neq b}\expect\left(\frac{E_{ia}^2|E_{ib}^3 - E_{ib}\expect E_{ib}^2| + E_{ib}^2|E_{ia}^3 - E_{ia}\expect E_{ia}^2|}{s_{ik}^5|\lambda_k|^5}\right)\|\bu_k\|_\infty^5
  = O\left(\frac{1}{n^2q_n}\right),\\
  &\max_{a,b\in [n],a\neq b}\expect|\Xi_{iak}^3g_{ab}|\\
  &\quad\leq \max_{a,b\in [n],a\neq b}\expect\left(\frac{E_{ia}^4|E_{ib}^2 - \expect E_{ib}^2| + |E_{ib}||E_{ia}^5 - E_{ia}^3\expect E_{ia}^2|}{s_{ik}^6|\lambda_k|^6}\right)\|\bu_k\|_\infty^6
  = O\left(\frac{\sqrt{n}}{n^2q_n^3}\right),\\
  &\max_{a,b\in[n],a\neq b}\expect|\Xi_{iak}g_{ab}g_a|\\
  &\quad\leq \max_{a,b\in[n],a\neq b}\frac{\expect|E_{ia}^3(E_{ia}^2 - \sigma_{ia}^2)|\expect|E_{ib}^2 - \sigma_{ib}^2| + \expect|E_{ia}^2(E_{ia}^{2} - \sigma_{ia}^2)^2|\expect|E_{ib}|}{s_{ik}^7|\lambda_k|^7}\|\bu_k\|_\infty^7\\
  &\quad = O\left(\frac{1}{n^{3/2}q_n^4}\right) = O\left(\frac{1}{n^2q_n^2}\right),\\
  &\max_{a,b\in[n],a\neq b}\expect|\Xi_{ibk}g_{ab}g_a|\\
  &\quad \leq \max_{a,b\in[n],a\neq b}\frac{\expect|E_{ia}^2(E_{ia}^2 - \sigma_{ia}^2)|\expect|E_{ib}^3 - E_{ib}\sigma_{ib}^2| + \expect|E_{ia}(E_{ia}^{2} - \sigma_{ia}^2)^2|\expect|E_{ib}^2|}{s_{ik}^7|\lambda_k|^7}\|\bu_k\|_\infty^7\\
  &\quad = O\left(\frac{1}{n^2q_n^3}\right).
  \end{align*}
  Since $\sqrt{n}\leq n^{2\eps} \leq q_n^2$, we see that 
  \begin{align*}
  \max_{a,b\in [n],a\neq b}\left|\expect e^{\mathbbm{i}t(\Xi_{iak} + \Xi_{ibk})}g_{ab}\right| &= (t^2 + |t|^3)O\left(\frac{1}{n^2q_n}\right),\\
  \max_{a,b\in [n],a\neq b}\left|\expect e^{\mathbbm{i}t(\Xi_{iak} + \Xi_{ibk})}g_{ab}(g_a + g_b)\right|&
   = |t|O\left(\frac{1}{n^2q_n^2}\right).
  \end{align*}
By \eqref{eqn:CHF_IV_characteristic_function_bound}, the modulus of the second term in \eqref{eqn:CHF_IV_Step2_I_decomposition_I} can be bounded as
\begin{equation}
\label{eqn:CHF_IV_Step2_I_decomposition_I_term_I}
\begin{aligned}
\left|2\sum_{a < b}\sum_{c = 1}^n\expect e^{\mathbbm{i}T_{ik}^\sharp}g_{ab}g_c\right|
&\leq 2\sum_{a < b}\sum_{c \notin \{a,b\}}^n\left|\prod_{j = 1, j\notin \{a,b,c\}}^n\expect e^{\mathbbm{i}t\Xi_{ijk}}\right|\expect \left|e^{\mathbbm{i}t(\Xi_{iak} + \Xi_{ibk})}g_{ab}\right|\expect|g_c|\\
&\quad + 2\sum_{a < b}\left|\prod_{j = 1, j\notin \{a,b\}}^n\expect e^{\mathbbm{i}t\Xi_{ijk}}\right|\left|\expect e^{\mathbbm{i}t(\Xi_{iak} + \Xi_{ibk})}g_{ab}(g_a + g_b)\right|\\
&\leq \left\{e^{-t^2/4}(|t| + t^2 + t^3)\right\}O\left(\frac{1}{q_n^2}\right).
\end{aligned}
\end{equation}
By \eqref{eqn:CHF_IV_characteristic_function_bound}, the modulus of the third term in \eqref{eqn:CHF_IV_Step2_I_decomposition_I} can be bounded as
\begin{equation}
\label{eqn:CHF_IV_Step2_I_decomposition_I_term_II}
\begin{aligned}
&\left|\sum_{a, b = 1}^n\expect e^{\mathbbm{i}T_{ik}^\sharp}g_ag_b\right|\\
&\quad\leq \sum_{a\neq b}\left|\prod_{j = 1,j\notin\{a,b\}}^n\expect e^{\mathbbm{i}t\Xi_{ijk}}\right|\expect|g_a|\expect|g_b| + \sum_{a = 1}^n\left|\prod_{j = 1,j\neq a}^n\expect e^{\mathbbm{i}t\Xi_{ijk}}\right|\expect g_a^2\\
&\quad\leq e^{-t^2/4}O\left(\frac{1}{q_n^2}\right) + e^{-t^2/4}O\left(\frac{1}{q_n^4}\right) = e^{-t^2/4}O\left(\frac{1}{q_n^2}\right).
\end{aligned}
\end{equation}
It remains bound bound the modulus of the first term in \eqref{eqn:CHF_IV_Step2_I_decomposition_I}. Note that 
\begin{align}
\label{eqn:CHF_IV_Step2_I_decomposition_II_term_I}
\sum_{a < b}\sum_{a' < b'}\expect e^{\mathbbm{i}T_{ik}^\sharp}g_{ab}g_{a'b'}& = \sum_{a < b}\sum_{a' < b'}\mathbbm{1}(\{a,b\}\cap\{a',b'\} = \varnothing)\expect e^{\mathbbm{i}T_{ik}^\sharp}g_{ab}g_{a'b'}\\
\label{eqn:CHF_IV_Step2_I_decomposition_II_term_II}
&\quad + \sum_{a < b}\sum_{a' < b'}\mathbbm{1}(\{a,b\} = \{a',b'\} )\expect e^{\mathbbm{i}T_{ik}^\sharp}g_{ab}g_{a'b'}\\
\label{eqn:CHF_IV_Step2_I_decomposition_II_term_III}
&\quad + \sum_{a < b}\sum_{a' < b'}\mathbbm{1}(|\{a,b\}\cap\{a',b'\}| = 1)\expect e^{\mathbbm{i}T_{ik}^\sharp}g_{ab}g_{a'b'}.
\end{align}
We consider the terms \eqref{eqn:CHF_IV_Step2_I_decomposition_II_term_I}-\eqref{eqn:CHF_IV_Step2_I_decomposition_II_term_III} separately. 
\begin{itemize}
  \item Term \eqref{eqn:CHF_IV_Step2_I_decomposition_II_term_I}. If $\{a,b\}\cap\{a',b'\} = \varnothing$, then
  \begin{align*}
  \expect e^{\mathbbm{i}tT_{ik}^\sharp}g_{ab}g_{a'b'}& = \prod_{j = 1,j\notin\{a,b,a',b'\}}\expect e^{\mathbbm{i}t\Xi_{ijk}}\left(\expect e^{\mathbbm{i}t(\Xi_{iak} + \Xi_{ibk})}g_{ab}\right)\left(\expect e^{\mathbbm{i}t(\Xi_{ia'k} + \Xi_{ib'k})}g_{a'b'}\right).
  \end{align*}
  Because we have derived that
  \[
  \max_{a,b\in[n]}\left|\expect e^{\mathbbm{i}t(\Xi_{iak} + \Xi_{ibk})}g_{ab}\right|
  = (t^2 + |t|^3)O\left(\frac{1}{n^2q_n}\right)
  \]
  earlier in this proof, then invoking \eqref{eqn:CHF_IV_characteristic_function_bound}, we see that the modulus of the term in line \eqref{eqn:CHF_IV_Step2_I_decomposition_II_term_I} can be bounded as
  \begin{align*}
  &\left|\sum_{a < b}\sum_{a' < b'}\mathbbm{1}(\{a,b\}\cap\{a',b'\} = \varnothing)\expect e^{\mathbbm{i}tT_{ik}^\sharp}g_{ab}g_{a'b'}\right|\\
  &\quad = \sum_{a < b}\sum_{a' < b'}\mathbbm{1}(\{a,b\}\cap\{a',b'\} = \varnothing)\left|\prod_{j = 1,j\notin\{a,b,a',b'\}}\expect e^{\mathbbm{i}t\Xi_{ijk}}\right|\left|\expect e^{\mathbbm{i}t(\Xi_{iak} + \Xi_{ibk})}g_{ab}\right|\\
  &\quad\quad\times \left|\expect e^{\mathbbm{i}t(\Xi_{ia'k} + \Xi_{ib'k})}g_{a'b'}\right|\\
  &\quad\leq e^{-t^2/4}(t^2 + |t|^3)^2O\left(\frac{1}{q_n^2}\right).
  \end{align*}

  \item Term \eqref{eqn:CHF_IV_Step2_I_decomposition_II_term_II}. If $\{a,b\} = \{a',b'\}$, then this occurs only if $a = a'$ and $b = b'$ because of the constraints $a < b$ and $a' < b'$, in which case
  \begin{align*}
  &\sum_{a < b}\sum_{a' < b'}\mathbbm{1}(\{a,b\} = \{a',b'\})\expect e^{\mathbbm{i}tT_{ik}^\sharp}g_{ab}g_{a'b'}\\
  &\quad = \sum_{a < b}\left(\prod_{j = 1,j\notin\{a,b\}}^n\expect e^{\mathbbm{i}t\Xi_{ijk}}\right)\left(\expect e^{\mathbbm{i}t\Xi_{iak} + \mathbbm{i}t\Xi_{ibk}}g_{ab}^2\right).
  \end{align*}
  Because $\max_{a,b\in [n],a<b}\expect g_{ab}^2 = O\{1/(n^2q_n^2)\}$ by Assumption \ref{assumption:Eigenvector_delocalization} and Assumption \ref{assumption:Noise_matrix_distribution}, it follows directly from \eqref{eqn:CHF_IV_characteristic_function_bound} that
  \begin{align*}
  &\left|\sum_{a < b}\sum_{a' < b'}\mathbbm{1}(\{a,b\} = \{a',b'\})\expect e^{\mathbbm{i}tT_{ik}^\sharp}g_{ab}g_{a'b'}\right|\\
  &\quad\leq \sum_{a < b}\max_{a,b\in [n],a < b}\left|\prod_{j = 1,j\notin\{a,b\}}^n\expect e^{\mathbbm{i}t\Xi_{ijk}}\right|\expect g_{ab}^2 = e^{-t^2/4}O\left(\frac{1}{q_n^2}\right)
  \end{align*}

  \item Term \eqref{eqn:CHF_IV_Step2_I_decomposition_II_term_III}. If $|\{a,b\}\cap \{a',b'\}| = 1$, then we may assume that $b = b'$ and $a\neq a'$ without loss of generality because $g_{ab} = g_{ba}$. Then
  \begin{align*}
  \expect e^{\mathbbm{i}tT_{ik}^\sharp}g_{ab}g_{a'b}& = \prod_{j = 1,j\notin\{a,b,a'\}}\expect e^{\mathbbm{i}t\Xi_{ijk}}\left(\expect e^{\mathbbm{i}t(\Xi_{iak} + \Xi_{ibk} + \Xi_{ia'k})}g_{ab}g_{a'b}\right).
  \end{align*}
  Simple computation shows that for $a < b$, $a' < b$, $a\neq a'$, and any $\alpha_1,\alpha_2,\alpha_3\in\mathbb{N}$,
  \begin{align*}
  &\expect \Xi_{iak}^{\alpha_1}\Xi_{ibk}^{\alpha_2}\Xi_{ia'k}^{\alpha_3}g_{ab}g_{a'b}\\
  &\quad = \frac{\expect E_{ia}^{\alpha_1 + 1}E_{ib}^{\alpha_2}\{E_{ib}^2 - \var(E_{ib})\}^2E_{ia'}^{\alpha_3 + 1}u_{ak}^{\alpha_1 + 1}u_{bk}^{\alpha_2 + 4}u_{a'k}^{\alpha_3 + 1}}{(s_{ik}\lambda_k)^{\alpha_1 + \alpha_2 + \alpha_3 + 6}}\\
  &\quad\quad + \frac{\expect E_{ia}^{\alpha_1}\{E_{ia}^2 - \var(E_{ia})\}E_{ib}^{\alpha_2 + 2}E_{ia'}^{\alpha_3}\{E_{ia'}^2 - \var(E_{ia'})\}u_{ak}^{\alpha_1 + 2}u_{bk}^{\alpha_2 + 2}u_{a'k}^{\alpha_3 + 2}}{(s_{ik}\lambda_k)^{\alpha_1 + \alpha_2 + \alpha_3 + 6}}\\
  &\quad\quad + \frac{\expect E_{ia}^{\alpha_1 + 1}E_{ib}^{\alpha_2}\{E_{ib}^3 - E_{ib}\var(E_{ib})\}E_{ia'}^{\alpha_3}\{E_{ia'}^2 - \var(E_{ia'})\}u_{ak}^{\alpha_1 + 1}u_{bk}^{\alpha_2 + 3}u_{a'k}^{\alpha_3 + 2}}{(s_{ik}\lambda_k)^{\alpha_1 + \alpha_2 + \alpha_3 + 6}}\\
  &\quad\quad + \frac{\expect E_{ia}^{\alpha_1}\{E_{ia}^2 - \var(E_{ia})\}E_{ib}^{\alpha_2}\{E_{ib}^3 - E_{ib}\var(E_{ib})\}E_{ia'}^{\alpha_3 + 1}u_{ak}^{\alpha_1 + 2}u_{bk}^{\alpha_2 + 3}u_{a'k}^{\alpha_3 + 1}}{(s_{ik}\lambda_k)^{\alpha_1 + \alpha_2 + \alpha_3 + 6}}.
  \end{align*}
  It is clear that
  \begin{align*}
  \expect \Xi_{iak}g_{ab}g_{a'b}& = \expect \Xi_{ibk}g_{ab}g_{a'b} = \expect \Xi_{ia'k}g_{ab}g_{a'b} = \expect \Xi_{iak}^2g_{ab}g_{a'b} = \expect \Xi_{ibk}^2g_{ab}g_{a'b}\\
  & = \expect \Xi_{ia'k}^2g_{ab}g_{a'b} = \expect \Xi_{iak}\Xi_{ibk}g_{ab}g_{a'b} = \expect \Xi_{ia'k}\Xi_{ibk}g_{ab}g_{a'b} = 0.
  \end{align*}
  Then Taylor's theorem entails that 
  \begin{align*}
  \expect e^{\mathbbm{i}t(\Xi_{iak} + \Xi_{ibk} + \Xi_{ia'k})}g_{ab}g_{a'b}
  & = -t^2\expect \Xi_{iak}\Xi_{ia'k}g_{ab}g_{a'b}\\
  &\quad + \frac{9t^3}{2}\expect\theta_{iaba'k}'\left(|\Xi_{iak}|^3 + |\Xi_{ibk}|^3 + |\Xi_{ia'k}|^3\right)g_{ab}g_{a'b},
  \end{align*}
  where $\theta_{iaba'k}'$ is a complex-valued random variable such that $|\theta_{iaba'k}'|\leq 1$ with probability one.
  Furthermore, by Assumption \ref{assumption:Eigenvector_delocalization} and Assumption \ref{assumption:Noise_matrix_distribution}, uniformly over $a,b,a'\in[n]$,
  \begin{align*}
  \expect |\Xi_{iak}\Xi_{ia'k}g_{ab}g_{a'b}|
  &\leq \frac{\expect E_{ia}^2\expect \{E_{ib}^2 - \var(E_{ib})\}^2 \expect E_{ia'}^2 \|\bu_k\|_\infty^8}{s_{ik}^8\lambda_k^8}\\
  &\quad + \frac{\expect |E_{ia}^3 - E_{ia}\var(E_{ia})|\expect E_{ib}^2 \expect |E_{ia'}^3 - E_{ia'}\var(E_{ia'})| \|\bu_k\|_\infty^8}{s_{ik}^8\lambda_k^8}\\
  &\quad + \frac{\expect E_{ia}^2\expect |E_{ib}^3 - E_{ib}\var(E_{ib})| \expect |E_{ia'}^3 - E_{ia'}\var(E_{ia'})| \|\bu_k\|_\infty^8}{s_{ik}^8\lambda_k^8}\\
  &\quad + \frac{\expect |E_{ia}^3 - E_{ia}\var(E_{ia})|\expect |E_{ib}^3 - E_{ib}\var(E_{ib})| \expect E_{ia'}^2 \|\bu_k\|_\infty^8}{s_{ik}^8\lambda_k^8}\\
  & = O\left[\frac{1}{n^4\rho_n^4}\left\{\rho_n^2\frac{(n\rho_n)^2}{nq_n^2} + \rho_n\left(\frac{(n\rho_n)^{3/2}}{nq_n} + \rho_n^{3/2}\right)^2\right\}\right]\\
  & = O\left(\frac{1}{n^3q_n^2}\right),\\
  \expect |\Xi_{iak}^3g_{ab}g_{a'b}|
  &\leq \frac{\expect E_{ia}^4\expect \{E_{ib}^2 - \var(E_{ib})\}^2 \expect |E_{ia'}| \|\bu_k\|_\infty^9}{s_{ik}^9\lambda_k^9}\\
  &\quad + \frac{\expect |E_{ia}^5 - E_{ia}^3\var(E_{ia})|\expect E_{ib}^2 \expect |E_{ia'}^2 - \var(E_{ia'})| \|\bu_k\|_\infty^9}{s_{ik}^9\lambda_k^9}\\
  &\quad + \frac{\expect E_{ia}^4\expect |E_{ib}^3 - E_{ib}\var(E_{ib})| \expect |E_{ia'}^2 -\var(E_{ia'})| \|\bu_k\|_\infty^9}{s_{ik}^9\lambda_k^9}\\
  &\quad + \frac{\expect |E_{ia}^5 - E_{ia}^3\var(E_{ia})|\expect |E_{ib}^3 - E_{ib}\var(E_{ib})| \expect |E_{ia'}| \|\bu_k\|_\infty^9}{s_{ik}^9\lambda_k^9}\\
  & = O\left(\frac{1}{n^{5/2}q_n^4}\right) = O\left(\frac{1}{n^3q_n^2}\right),\\
  \expect |\Xi_{ibk}^3g_{ab}g_{a'b}|
  &\leq \frac{\expect |E_{ia}|\expect |E_{ib}^3\{E_{ib}^2 - \var(E_{ib})\}^2| \expect |E_{ia'}| \|\bu_k\|_\infty^9}{s_{ik}^9\lambda_k^9}\\
  &\quad + \frac{\expect |E_{ia}^2 - \var(E_{ia})|\expect |E_{ib}^5| \expect |E_{ia'}^2 - \var(E_{ia'})| \|\bu_k\|_\infty^9}{s_{ik}^9\lambda_k^9}\\
  &\quad + \frac{\expect |E_{ia}|\expect |E_{ib}^6 - E_{ib}^4\var(E_{ib})| \expect |E_{ia'}^2 -\var(E_{ia'})| \|\bu_k\|_\infty^9}{s_{ik}^9\lambda_k^9}\\
  &\quad + \frac{\expect |E_{ia}^2 - \var(E_{ia})|\expect |E_{ib}^6 - E_{ib}^4\var(E_{ib})| \expect |E_{ia'}| \|\bu_k\|_\infty^9}{s_{ik}^9\lambda_k^9}\\
  & = O\left(\frac{1}{n^2q_n^5}\right) = O\left(\frac{1}{n^3q_n^2}\right).
  \end{align*}
  These expected value upper bounds imply
  \[
  \max_{a,b,a'\in [n],a,a'<b,a\neq a'}\left|\expect e^{\mathbbm{i}t(\Xi_{iak} + \Xi_{ibk} + \Xi_{ia'k})}g_{ab}g_{a'b}\right| = (t^2 + |t|^3)O\left(\frac{1}{n^3q_n^2}\right).
  \]
  The exact same reasoning shows that
  \begin{align*}
  &\max_{a,b,b'\in [n],a < b,b\neq b'}\left|\expect e^{\mathbbm{i}t(\Xi_{iak} + \Xi_{ibk} + \Xi_{ib'k})}g_{ab}g_{ab'}\right| = (t^2 + |t|^3)O\left(\frac{1}{n^3q_n^2}\right),
    \end{align*}
  and hence, by \eqref{eqn:CHF_IV_characteristic_function_bound},
  \begin{align*}
  &\left|\sum_{a < b}\sum_{a' < b'}\mathbbm{1}(|\{a,b\}\cap\{a',b'\}| = 1)\expect e^{\mathbbm{i}tT_{ik}^\sharp}g_{ab}g_{a'}\right|\\
  &\quad\leq \sum_{a < b}\sum_{a' < b'}\mathbbm{1}(\{a,b,a',b'\} = \{a,b,b'\})\\
  &\quad\quad\quad\times\left|\prod_{j = 1,j\notin\{a,b,b'\}}\expect e^{\mathbbm{i}t\Xi_{ijk}}\left\{\expect e^{\mathbbm{i}t(\Xi_{iak} + \Xi_{ibk} + \Xi_{ib'k})}g_{ab}g_{ab'}\right\}\right|\\
  &\quad\quad + \sum_{a < b}\sum_{a' < b'}\mathbbm{1}(\{a,b,a',b'\} = \{a,b,a'\})\\
  &\quad\quad\quad\quad\times\left|\prod_{j = 1,j\notin\{a,b,a'\}}\expect e^{\mathbbm{i}t\Xi_{ijk}}\left\{\expect e^{\mathbbm{i}t(\Xi_{iak} + \Xi_{ibk} + \Xi_{ia'k})}g_{ab}g_{a'b}\right\}\right|\\
  &\quad = e^{-t^2/4}\left(t^2 + |t|^3\right)O\left(\frac{1}{q_n^2}\right).
  \end{align*}
\end{itemize}
Combining the above upper bounds entails that the modulus of the first term in \eqref{eqn:CHF_IV_Step2_I_decomposition_I} can be bounded as
\begin{align}
\label{eqn:CHF_IV_Step2_I_decomposition_I_term_III}
\left|\sum_{a < b}\sum_{a' < b'}\expect e^{\mathbbm{i}T_{ik}^\sharp}g_{ab}g_{a'b'}\right|\leq e^{-t^2/4}\mathrm{Poly}(|t|)O\left(\frac{1}{q_n^2}\right),
\end{align}
where $\mathrm{Poly}(t)$ is a generic polynomial of $t$. Now combining \eqref{eqn:CHF_IV_Step2_I_decomposition_I}, \eqref{eqn:CHF_IV_Step2_I_decomposition_I_term_I}, \eqref{eqn:CHF_IV_Step2_I_decomposition_I_term_II}, and \eqref{eqn:CHF_IV_Step2_I_decomposition_I_term_III} leads to
\begin{align*}
\left|\expect e^{\mathbbm{i}T_{ik}^\sharp \delta_{ik}}(T_{ik}^\sharp\delta_{ik})^2\right|\leq e^{-t^2/4}\mathrm{Poly}(|t|)O\left(\frac{1}{q_n^2}\right).
\end{align*} 
Hence, we conclude
\begin{align*}
\int_{-n^\gamma}^{n^\gamma}\left|\frac{1}{t}\frac{t^2}{8}\expect e^{\mathbbm{i}T_{ik}^\sharp \delta_{ik}}(T_{ik}^\sharp\delta_{ik})^2\right|\mathrm{d}t\leq O\left(\frac{1}{q_n^2}\right)\int_{-n^\gamma}^{n^\gamma}e^{-t^2/4}\mathrm{Poly}(|t|)\mathrm{d}t = O\left(\frac{1}{q_n^2}\right),
\end{align*}
thereby finishing Step 2 (I). 

\vspace*{1ex}\noindent
$\blacksquare$ \textbf{Step 2 (II).}
First write
\begin{equation}
\label{eqn:CHF_IV_Step2_II_decomposition}
\begin{aligned}
\expect e^{\mathbbm{i}tT_{ik}^\sharp}(T_{ik}^\sharp\delta_{ik})
& = \expect e^{\mathbbm{i}tT_{ik}^\sharp}\sum_{j = 1}^n\frac{E_{ij}\{E_{ij}^2 - \var(E_{ij})\}u_{jk}^3}{s_{ik}^3\lambda_k^3}\\
&\quad + \expect e^{\mathbbm{i}tT_{ik}^\sharp}\sum_{a\neq b}\frac{E_{ia}\{E_{ib}^2 - \var(E_{ib})\}u_{ak}u_{bk}^2}{s_{ik}^3\lambda_k^3}.
\end{aligned}
\end{equation}
For the second term in \eqref{eqn:CHF_IV_Step2_II_decomposition}, we have
\begin{align*}
&\expect e^{\mathbbm{i}tT_{ik}^\sharp}\sum_{a\neq b}\frac{E_{ia}\{E_{ib}^2 - \var(E_{ib})\}u_{ak}u_{bk}^2}{s_{ik}^3\lambda_k^3}\\
&\quad = \sum_{a\neq b}\prod_{j = 1,j\notin\{a,b\}}^n\expect e^{\mathbbm{i}t\Xi_{ijk}}\left(\expect e^{\mathbbm{i}t\Xi_{iak}}\frac{E_{ia}u_{ak}}{s_{ik}\lambda_k}\right)\left(\expect e^{\mathbbm{i}t\Xi_{ibk}}\frac{\{E_{ib}^2 - \var(E_{ib})\}u_{bk}^2}{s_{ik}^2\lambda_k^2}\right).
\end{align*}
By Taylor's theorem, Assumption \ref{assumption:Eigenvector_delocalization}, and Assumption \ref{assumption:Noise_matrix_distribution},
\begin{align*}
\expect e^{\mathbbm{i}t\Xi_{iak}}\frac{E_{ia}u_{ak}}{s_{ik}\lambda_k}
& = \expect\left(1 + \mathbbm{i}t\frac{E_{ia}u_{ak}}{s_{ik}\lambda_k} + \frac{t^2}{2}\theta_{iak}\left|\frac{E_{ia}u_{ak}}{s_{ik}\lambda_k}\right|^2\right)\frac{E_{ia}u_{ak}}{s_{ik}\lambda_k} \\
& = \mathbbm{i}t\frac{\var(E_{ia})u_{ak}^2}{s_{ik}^2\lambda_k^2} + \frac{t^2}{2}\expect\theta_{iak}\left|\frac{E_{ia}u_{ak}}{s_{ik}\lambda_k}\right|^2\frac{E_{ia}u_{ak}}{s_{ik}\lambda_k}\\
\expect e^{\mathbbm{i}t\Xi_{ibk}}\frac{(E_{ib}^2 - \sigma_{ib}^2)u_{bk}^2}{s_{ik}^2\lambda_k^2}
& = \expect\left(1 + \mathbbm{i}t\frac{E_{ib}u_{bk}}{s_{ik}\lambda_k} + \frac{t^2}{2}\theta_{ibk}\left|\frac{E_{ib}u_{bk}}{s_{ik}\lambda_k}\right|^2\right)\frac{(E_{ib}^2 - \sigma_{ib}^2)u_{bk}^2}{s_{ik}^2\lambda_k^2}\\
& = \mathbbm{i}t\frac{\expect E_{ib}^3u_{bk}^3}{s_{ik}^3\lambda_k^3} + \frac{t^2}{2}\expect \theta_{ibk}\frac{E_{ib}^2(E_{ib}^2 - \sigma_{ib}^2)u_{bk}^4}{s_{ik}^4\lambda_k^4},
\end{align*}
where $\theta_{iak},\theta_{ibk}$ are a complex-valued random variables and $|\theta_{iak}|,|\theta_{ibk}|\leq 1$ with probability one. It follows that
\begin{align*}
&\max_{a\in [n]}\left|\expect e^{\mathbbm{i}t\Xi_{iak}}\frac{E_{ia}u_{ak}}{s_{ik}\lambda_k}\right|\leq t\max_{a\in [n]}\expect\frac{E_{ia}^2u_{ak}^2}{s_{ik}^2\lambda_k^2} = |t|O\left(\frac{1}{n}\right),\\
&\max_{b\in [n]}\left|\expect e^{\mathbbm{i}t\Xi_{ibk}}\frac{(E_{ib}^2 - \sigma_{ib}^2)u_{bk}^2}{s_{ik}^2\lambda_k^2}\right|\leq t\max_{b\in [n]}\frac{\expect |E_{ib}^3 - E_{ib}\sigma_{ib}^2|\|\bu_k\|_\infty^3}{s_{ik}^3|\lambda_k|^3} = |t|O\left(\frac{1}{nq_n}\right),\\
&\max_{a\in [n]}\left|\frac{t^2}{2}\expect\theta_{iak}\left|\frac{E_{ia}u_{ak}}{s_{ik}\lambda_k}\right|^2\frac{E_{ia}u_{ak}}{s_{ik}\lambda_k}\right|\leq \frac{t^2}{2}\expect\left|\frac{E_{ia}u_{ak}}{s_{ik}\lambda_k}\right|^3 = t^2O\left(\frac{1}{nq_n}\right),\\
&\max_{b\in [n]}\left|\frac{t^2}{2}\expect \theta_{ibk}\frac{E_{ib}^2(E_{ib}^2 - \sigma^2_{ib})u_{bk}^4}{s_{ik}^4\lambda_k^4}\right|\leq \max_{b\in [n]}\frac{t^2}{2}\frac{\expect E_{ib}^4 + (\expect E_{ib}^2)^2\|\bu_k\|_\infty^4}{s_{ik}^4\lambda_k^4} = t^2O\left(\frac{1}{nq_n^2}\right).
\end{align*}
It follows that
\begin{align*}
&\max_{a,b\in [n],a\neq b}\left|\left(\expect e^{\mathbbm{i}t\Xi_{iak}}\frac{E_{ia}u_{ak}}{s_{ik}\lambda_k}\right)\left(\expect e^{\mathbbm{i}t\Xi_{ibk}}\frac{\{E_{ib}^2 - \var(E_{ib})\}u_{bk}^2}{s_{ik}^2\lambda_k^2}\right) + t^2\frac{\var(E_{ia})u_{ak}^2}{s_{ik}^2\lambda_k^2}\frac{\expect E_{ib}^3u_{bk}^3}{s_{ik}^3\lambda_k^3}\right|\\
&\quad = |t|^3O\left(\frac{1}{n^2q_n^2}\right).
\end{align*}
Therefore, by Lemma \ref{lemma:CHF_local_expansion}, we further obtain
\begin{align*}
&\left|\expect e^{\mathbbm{i}tT_{ik}^\sharp}\sum_{a\neq b}\frac{E_{ia}(E_{ib}^2 - \sigma_{ib}^2)u_{ak}u_{bk}^2}{s_{ik}^3\lambda_k^3} + e^{-t^2/2}t^2\kappa_n^{(ik)}\right|\\
&\quad \leq \Bigg|\sum_{a\neq b}\prod_{j = 1,j\notin\{a,b\}}^n\expect e^{\mathbbm{i}t\Xi_{ijk}}\left(\expect e^{\mathbbm{i}t\Xi_{iak}}\frac{E_{ia}u_{ak}}{s_{ik}\lambda_k}\right)\left(\expect e^{\mathbbm{i}t\Xi_{ibk}}\frac{\{E_{ib}^2 - \var(E_{ib})\}u_{bk}^2}{s_{ik}^2\lambda_k^2}\right)\\
&\quad\quad + \sum_{a\neq b}\prod_{j = 1,j\notin\{a,b\}}^n\expect e^{\mathbbm{i}t\Xi_{ijk}}t^2\frac{\var(E_{ia})u_{ak}^2}{s_{ik}^2\lambda_k^2}\frac{\expect E_{ib}^3u_{bk}^3}{s_{ik}^3\lambda_k^3}\Bigg|\\
&\quad\quad + \Bigg|-\sum_{a\neq b}\prod_{j = 1,j\notin\{a,b\}}^n\expect e^{\mathbbm{i}t\Xi_{ijk}}t^2\frac{\var(E_{ia})u_{ak}^2}{s_{ik}^2\lambda_k^2}\frac{\expect E_{ib}^3u_{bk}^3}{s_{ik}^3\lambda_k^3} + e^{-t^2/2}t^2\kappa_n^{(ik)}\Bigg|\\
&\quad\leq n^2\max_{\calS\subset[n]:|\calS| = n - 2}\left|\prod_{j\in\calS}\expect e^{\mathbbm{i}t\Xi_{ijk}}\right|\\
&\quad\quad\times \max_{a,b\in[n],a\neq b}\left|\left(\expect e^{\mathbbm{i}t\Xi_{iak}}\frac{E_{ia}u_{ak}}{s_{ik}\lambda_k}\right)\left(\expect e^{\mathbbm{i}t\Xi_{ibk}}\frac{(E_{ib}^2 - \sigma_{ib}^2)u_{bk}^2}{s_{ik}^2\lambda_k^2}\right) + t^2\frac{\sigma_{ia}^2u_{ak}^2}{s_{ik}^2\lambda_k^2}\frac{\expect E_{ib}^3u_{bk}^3}{s_{ik}^3\lambda_k^3}\right|\\
&\quad\quad + \Bigg|\sum_{a,b\in[n]}\prod_{j = 1,j\notin\{a,b\}}^n\expect e^{\mathbbm{i}t\Xi_{ijk}}t^2\frac{\var(E_{ia})u_{ak}^2}{s_{ik}^2\lambda_k^2}\frac{\expect E_{ib}^3u_{bk}^3}{s_{ik}^3\lambda_k^3} - e^{-t^2/2}t^2\kappa_n^{(ik)}\Bigg|\\
&\quad\quad + \Bigg|\sum_{a = 1}^n\prod_{j\in[n]\backslash\{a\}}\expect e^{\mathbbm{i}t\Xi_{ijk}}t^2\frac{\var(E_{ia})u_{ak}^2}{s_{ik}^2\lambda_k^2}\frac{\expect E_{ia}^3u_{ak}^3}{s_{ik}^3\lambda_k^3}\Bigg|\\
&\quad\leq 2n^2e^{-t^2/4}|t|^3O\left(\frac{1}{n^2q_n^2}\right)\\
&\quad\quad + \Bigg|\sum_{a,b\in[n]}\Bigg(\prod_{j = 1,j\notin\{a,b\}}^n\expect e^{\mathbbm{i}t\Xi_{ijk}} - e^{-t^2/2}\Bigg)t^2\frac{\var(E_{ia})u_{ak}^2}{s_{ik}^2\lambda_k^2}\frac{\expect E_{ib}^3u_{bk}^3}{s_{ik}^3\lambda_k^3}\Bigg|\\
&\quad\quad + n\max_{\calS\subset[n]:|\calS| = n - 1}\Bigg|\prod_{j\in[n]\backslash\{a\}}\expect e^{\mathbbm{i}t\Xi_{ijk}}\Bigg|t^2
\max_{a\in[n]}\frac{\var(E_{ia})u_{ak}^2}{s_{ik}^2\lambda_k^2}\left|\frac{\expect E_{ia}^3u_{ak}^3}{s_{ik}^3\lambda_k^3}\right|\\
&\quad\leq 2n^2e^{-t^2/4}|t|^3O\left(\frac{1}{n^2q_n^2}\right)\\
&\quad\quad + n^2\max_{\calS\subset[n]:|\calS|\geq n - 2}\Bigg|\prod_{j = 1,j\notin\{a,b\}}^n\expect e^{\mathbbm{i}t\Xi_{ijk}} - e^{-t^2/2}\Bigg|t^2\max_{a,b\in[n]}\frac{\var(E_{ia})u_{ak}^2}{s_{ik}^2\lambda_k^2}\left|\frac{\expect E_{ib}^3u_{bk}^3}{s_{ik}^3\lambda_k^3}\right|\\
&\quad\quad + n\max_{\calS\subset[n]:|\calS| = n - 1}\Bigg|\prod_{j\in[n]\backslash\{a\}}\expect e^{\mathbbm{i}t\Xi_{ijk}}\Bigg|t^2
\max_{a\in[n]}\frac{\var(E_{ia})u_{ak}^2}{s_{ik}^2\lambda_k^2}\left|\frac{\expect E_{ia}^3u_{ak}^3}{s_{ik}^3\lambda_k^3}\right|\\
&\quad = 2n^2e^{-t^2/4}|t|^3O\left(\frac{1}{n^2q_n^2}\right) + n^2e^{-t^2/2}(t^2 + t^4 + t^6)O\left(\frac{1}{q_n^2}\right)t^2O\left(\frac{1}{n^2q_n}\right)\\
&\quad\quad + 2ne^{-t^2/4}t^2O\left(\frac{1}{n^2q_n}\right)\\
&\quad = e^{-t^2/4}\left(t^2 + |t|^3 + t^4 + t^6 + t^8\right)O\left(\frac{1}{q_n^2}\right).
\end{align*}
This implies that
\begin{align}
&\Bigg|\sum_{a\neq b}\prod_{j = 1,j\notin\{a,b\}}^n\expect e^{\mathbbm{i}t\Xi_{ijk}}\left(\expect e^{\mathbbm{i}t\Xi_{iak}}\frac{E_{ia}u_{ak}}{s_{ik}\lambda_k}\right)\left(\expect e^{\mathbbm{i}t\Xi_{ibk}}\frac{\{E_{ib}^2 - \var(E_{ib})\}u_{bk}^2}{s_{ik}^2\lambda_k^2}\right)\nonumber\\
\label{eqn:CHF_IV_Step2_II_decomposition_term_I}
&\quad + e^{-t^2/2}t^2\kappa_n^{(ik)}\Bigg|\leq e^{-t^2/4}t^2\mathrm{Poly}(|t|)O\left(\frac{1}{q_n^2}\right).
\end{align}
For the first term in \eqref{eqn:CHF_IV_Step2_II_decomposition}, by Taylor's theorem and Lemma \ref{lemma:CHF_local_expansion}, for each $a\in [n]$, there exists some complex-valued random variable $\theta_{iak}\in\mathbb{C}$, $|\theta_{iak}|\leq 1$ with probability one, such that
\begin{align*}
&\left|\expect e^{\mathbbm{i}tT_{ik}^\sharp}\sum_{a = 1}^n\frac{E_{ia}\{E_{ia}^2 - \var(E_{ia})\}u_{ak}^3}{s_{ik}^3\lambda_k^3} - e^{-t^2/2}\kappa_n^{(ik)}\right|\\
&\quad = \left|\sum_{a = 1}^n\prod_{j = 1,j\neq a}^n\expect e^{\mathbbm{i}t\Xi_{ijk}}\expect\left(1 + t\theta_{iak}|\Xi_{iak}|\right)\frac{E_{ia}\{E_{ia}^2 - \var(E_{ia})\}u_{ak}^3}{s_{ik}^3\lambda_k^3} - e^{-t^2/2}\kappa_n^{(ik)}\right|\\
&\quad \leq \left|\sum_{a = 1}^n\prod_{j = 1,j\neq a}^n\expect e^{\mathbbm{i}t\Xi_{ijk}}\frac{\expect E_{ia}^3u_{ak}^3}{s_{ik}^3\lambda_k^3} - e^{-t^2/4}\kappa_{n}^{(ik)}\right|\\
&\quad\quad + \left|\sum_{a = 1}^n\prod_{j = 1,j\neq a}^n\expect e^{\mathbbm{i}t\Xi_{ijk}}t\expect\left\{\theta_{iak}\frac{|\Xi_{iak}|E_{ia}(E_{ia}^2 - \sigma_{ia}^2)u_{ak}^3}{s_{ik}^3\lambda_k^3}\right\}\right|\\
&\quad \leq \sum_{a = 1}^n\left|\prod_{j = 1,j\neq a}^n\expect e^{\mathbbm{i}t\Xi_{ijk}} - e^{-t^2/4}\right|\frac{\expect |E_{ia}|^3|u_{ak}|^3}{s_{ik}^3\lambda_k^3}\\
&\quad\quad + n\max_{\calS\subset[n]:|\calS| = n - 1}\left|\prod_{j = 1,j\neq a}^n\expect e^{\mathbbm{i}t\Xi_{ijk}}\right||t|\max_{a\in[n]}\expect\left|\frac{E_{ia}^2(E_{ia}^2 - \sigma_{ia}^2)u_{ak}^4}{s_{ik}^4\lambda_k^4}\right|\\
&\quad \leq n\max_{\calS\subset[n]:|\calS| = n - 1}\left|\prod_{j\in\calS}\expect e^{\mathbbm{i}t\Xi_{ijk}} - e^{-t^2/4}\right|\max_{a\in[n]}\frac{\expect |E_{ia}|^3|u_{ak}|^3}{s_{ik}^3\lambda_k^3}\\
&\quad\quad + n\max_{\calS\subset[n]:|\calS| = n - 1}\left|\prod_{j = 1,j\neq a}^n\expect e^{\mathbbm{i}t\Xi_{ijk}}\right||t|\max_{a\in[n]}\expect\left|\frac{E_{ia}^2(E_{ia}^2 - \sigma_{ia}^2)u_{ak}^4}{s_{ik}^4\lambda_k^4}\right|\\
&\quad = ne^{-t^2/2}(t^2 + t^4 + t^6)O\left(\frac{1}{q_n^2}\right)O\left(\frac{1}{nq_n}\right) + 2ne^{-t^2/4}|t|O\left(\frac{1}{nq_n^2}\right)\\
&\quad = e^{-t^2/4}(|t| + t^2 + t^4 + t^6)O\left(\frac{1}{q_n^2}\right).
\end{align*}
This implies that
\begin{equation}
\label{eqn:CHF_IV_Step2_II_decomposition_term_II}
\begin{aligned}
&\left|\expect e^{\mathbbm{i}tT_{ik}^\sharp}\sum_{a = 1}^n\frac{E_{ia}\{E_{ia}^2 - \var(E_{ia})\}u_{ak}^3}{s_{ik}^3\lambda_k^3} - e^{-t^2/2}\kappa_n^{(ik)}\right|\\
&\quad\leq e^{-t^2/4}|t|\mathrm{Poly}(|t|)O\left(\frac{1}{q_n^2}\right).
\end{aligned}
\end{equation}
Using the properties of Hermite polynomials and Fourier-Stieltjes transform, we see that
\bea\label{chf:expansion}
\mathrm{ch.f.}(t; G_n^{(ik)}) = e^{-t^2/2} + \frac{\mathbbm{i}t^3}{3}\kappa_n^{(ik)}e^{-t^2/2} - \frac{\mathbbm{i}t}{2}\kappa_n^{(ik)}e^{-t^2/2}.
\eae
By Lemma \ref{lemma:CHF_local_expansion}, we have
\[
\left|\expect e^{\mathbbm{i}tT_{ik}^\sharp} - e^{-t^2/2}\left(1 - \frac{\mathbbm{i}t^3}{6}\kappa_n^{(ik)}\right)\right|\leq e^{-t^2/4}|t|\mathrm{Poly}(|t|)O\left(\frac{1}{q_n^2}\right)
\]
Combining \eqref{eqn:CHF_IV_Step2_II_decomposition}, \eqref{eqn:CHF_IV_Step2_II_decomposition_term_I}, and \eqref{eqn:CHF_IV_Step2_II_decomposition_term_II} yields that
\begin{align*}
&\left|\expect e^{\mathbbm{i}tT_{ik}^\sharp} - \frac{\mathbbm{i}t}{2}\expect e^{\mathbbm{i}tT_{ik}^\sharp}(T_{ik}^\sharp\delta_{ik}) - \mathrm{ch.f.}(t; G_n^{(ik)})\right|\\
&\quad = \Bigg|\expect e^{\mathbbm{i}tT_{ik}^\sharp} - \frac{\mathbbm{i}t}{2}\expect e^{\mathbbm{i}tT_{ik}^\sharp}(T_{ik}^\sharp\delta_{ik}) - e^{-t^2/2}\left(1 - \frac{\mathbbm{i}t^3}{6}\kappa_n^{(ik)}\right) + \frac{\mathbbm{i}t}{2}e^{-t^2/2}\kappa_n^{(ik)}\\
&\quad\quad - \frac{\mathbbm{i}t^3}{2}e^{-t^2/2}\kappa_n^{(ik)}\Bigg|\\
&\quad = e^{-t^2/4}|t|\mathrm{Poly}(|t|)O\left(\frac{1}{q_n^2}\right).
\end{align*}
Hence, we conclude that
\begin{align*}
\int_{-n^\gamma}^{n^\gamma}\frac{1}{|t|}\left|\expect e^{\mathbbm{i}tT_{ik}^\sharp} - \frac{\mathbbm{i}t}{2}\expect e^{\mathbbm{i}T_{ik}^\sharp}(T_{ik}^\sharp\delta_{ik}) - \mathrm{ch.f.}(t; G_n^{(ik)})\right|\mathrm{d}t
& = O\left(\frac{1}{q_n^2}\right)\int_{-n^\gamma}^{n^\gamma}e^{-t^2/4}\mathrm{Poly}(|t|)\mathrm{d}t\\
& =  O\left(\frac{1}{q_n^2}\right).
\end{align*}

\vspace*{1ex}\noindent
$\blacksquare$ \textbf{Step 3.} By Taylor's theorem, there exists a complex-valued random variable $\theta_{ik}$, $|\theta_{ik}|\leq 1$ with probability one, such that
\begin{align*}
&\frac{t^2}{2}|\expect e^{\mathbbm{i}t\widetilde{T}_{ik}}\sigma^2_{ik}(\be_i\transpose\bE)|\\
&\quad = \frac{t^2}{2}\left|\expect e^{\mathbbm{i}tT_{ik}^\sharp}\exp\left(-\frac{\mathbbm{i}t}{2}T_{ik}^\sharp\delta_{ik}\right)\sigma^2_{ik}(\be_i\transpose\bE)\right|\\
&\quad = \frac{t^2}{2}\left|\expect e^{\mathbbm{i}tT_{ik}^\sharp}\left(1 + \frac{t}{2}\theta_{ik}|T_{ik}^\sharp\delta_{ik}|\right)\sigma^2_{ik}(\be_i\transpose\bE)\right|\\
&\quad\leq \frac{t^2}{2}\left|\expect e^{\mathbbm{i}tT_{ik}^\sharp}\sigma_{ik}^2(\be_i\transpose\bE)\right| + \frac{|t|^3}{4}\expect\left|T_{ik}^\sharp\delta_{ik}\right|\sigma_{ik}^2(\be_i\transpose\bE)\\
&\quad\leq \frac{t^2}{2}\left|\expect e^{\mathbbm{i}tT_{ik}^\sharp}\sigma_{ik}^2(\be_i\transpose\bE)\right| + \frac{|t|^3}{4}\left\{\expect(T_{ik}^\sharp\delta_{ik})^2\right\}^{1/2}\left\{\expect\sigma_{ik}^4(\be_i\transpose\bE)\right\}^{1/2}\\
&\quad\leq \frac{t^2}{2}\left|\expect e^{\mathbbm{i}tT_{ik}^\sharp}\sigma_{ik}^2(\be_i\transpose\bE)\right| + \frac{|t|^3}{4}\left\{\expect(T_{ik}^\sharp)^4\right\}^{1/4}\left\{\expect\delta_{ik}^4\right\}^{1/4}\left\{\expect\sigma_{ik}^4(\be_i\transpose\bE)\right\}^{1/2}\\
&\quad\leq \frac{t^2}{2}\left|\expect e^{\mathbbm{i}tT_{ik}^\sharp}\sigma_{ik}^2(\be_i\transpose\bE)\right| + \frac{|t|^3}{4}\left\{\expect(T_{ik}^\sharp)^6\right\}^{1/6}\left\{\expect\delta_{ik}^6\right\}^{1/6}\left\{\expect\sigma_{ik}^4(\be_i\transpose\bE)\right\}^{1/2}\\
&\quad = \frac{t^2}{2}\left|\expect e^{\mathbbm{i}tT_{ik}^\sharp}\sigma_{ik}^2(\be_i\transpose\bE)\right| + |t|^3O\left(\frac{n\rho_n}{\Delta_n^2q_n}\right).
\end{align*}
because $\expect \sigma_{ik}^4(\be_i\transpose\bE) = O\{(n\rho_n)^2/\Delta_n^4\}$ and $\{\expect(T_{ik}^\sharp)^6\}^{1/6}\{\expect\delta_{ik}^6\}^{1/6} = O(1/q_n)$ by \eqref{eqn:sigma_ik_fourth_moment_upper_bound}, \eqref{eqn:Tik_sharp_moment_bound}, and \eqref{eqn:delta_ik_moment_bound}. Now, it is sufficient to focus on the first term on the right-hand side of the preceding display since 
\[
\int_{-n^\gamma}^{n^\gamma}\frac{1}{|t|}\left|t^3O\left(\frac{n\rho_n}{\Delta_n^2q_n}\right)\right|\mathrm{d}t = O\left\{\frac{n^{3\gamma}(n\rho_n)}{\Delta_n^2q_n}\right\} = O\left(\frac{n\rho_n}{\Delta_n^2} \right).
\]
By Lemma \ref{lemma:CHF_local_expansion} and the fact that $\max_{a,b\in [n]}\expect h_{ab}^{(ik)} = O\{\rho_n/(n\Delta_n^2)\}$, we have
\begin{align*}
&\frac{t^2}{2}\left|\expect e^{\mathbbm{i}tT_{ik}^\sharp}\sigma_{ik}^2(\be_i\transpose\bE)\right|\\
&\quad \leq \frac{t^2}{2}\sum_{a < b}\left|\prod_{j = 1,j\notin\{a,b\}}^n\expect e^{\mathbbm{i}t\Xi_{ijk}}\right|\expect h_{ab}^{(ik)} + \frac{t^2}{2}\sum_{a = 1}^n\left|\prod_{j = 1,j\notin\{a\}}^n\expect e^{\mathbbm{i}t\Xi_{ijk}}\right|\expect h_{aa}^{(ik)}\\
&\quad\leq t^2e^{-t^2/4}O\left(\frac{n\rho_n}{\Delta_n^2}\right).
\end{align*}
Hence, we conclude
\[
\int_{-n^\gamma}^{n^\gamma}\frac{1}{|t|}\left|\frac{t^2}{2}\expect e^{\mathbbm{i}tT_{ik}^\sharp}\sigma_{ik}^2(\be_i\transpose\bE)\right|\mathrm{d}t = \int_{-n^\gamma}^{n^\gamma}\frac{1}{|t|}\left|t^2e^{-t^2/4}O\left(\frac{n\rho_n}{\Delta_n^2}\right)\right|\mathrm{d}t = O\left(\frac{n\rho_n}{\Delta_n^2}\right),
\]
thereby completing the proof.
\end{proof}

\subsection{Finishing the Proof of Theorem \ref{thm:edgeworth_expansion}}
\label{sub:finishing_proof_of_edgeworth_expansion}
We are now in a position to finish the proof of Theorem \ref{thm:edgeworth_expansion}. Recall \eqref{def:tildeT}--\eqref{decom:T}. It is sufficient to show that
\begin{equation}
\begin{aligned}
\label{eqn:edgeworth_remainder_I}
&\left\|\prob\Big(T_{ik}\leq x\Big) - \prob\left(\widetilde{T}_{ik} + \Delta_{ik}\leq x\right)\right\|_\infty\\
&\quad = O\left\{
\frac{(\log n)^{2\xi}}{n\rho_n^{1/2}} + \frac{(\log n)^{3\xi}}{\Delta_n} + \frac{(n\rho_n)(\log n)^{3\xi}}{\Delta_n^2} + \frac{(\log n)^{3\xi\vee4}}{q_n^2} + \frac{(n\rho_n)^2(\log n)^{3\xi}}{\Delta_n^3}
\right\},
\end{aligned}
\end{equation}
and
\begin{align}
\label{eqn:edgeworth_remainder_II}
&\left\|\prob\left(\widetilde{T}_{ik} + \Delta_{ik}\leq x\right) - \prob\left(\widetilde{T}_{ik} + \widetilde{\Delta}_{ik}\leq x\right)\right\|_\infty = O\left(\frac{1}{q_n^2}\right).
\end{align}
The above results, combined with \eqref{eqn:edgeworth_expansion_self_smoothed_version}, entail our main result immediately. We first show \eqref{eqn:edgeworth_remainder_II} directly. By Lemma \ref{lemma:conditional_Berry_Esseen}, we have
\[
\sup_{x\in\mathbb{R}}\left|\prob(\Delta_{ik}\leq x\mid \be_i\transpose\bE) - \prob(\widetilde{\Delta}_{ik}\leq x\mid \be_i\transpose\bE)\right| = \widetilde{O}_{\prob}\left(\frac{1}{q_n^2}\right).
\]
Recall that $\widetilde{T}_{ik} = T_{ik}^\sharp - (1/2)T_{ik}^\sharp\delta_{ik}$ is a function of $\be_i\transpose\bE$, and its value is determined given $\be_i\transpose\bE$. It follows that for any $x \in \mathbb{R}$,
\begin{align*}
&\left|\prob(\widetilde{T}_{ik} + \Delta_{ik}\leq x) - \prob(\widetilde{T}_{ik} + \widetilde{\Delta}_{ik}\leq x)\right|\\
&\quad = \left|\int\left\{\prob(\widetilde{T}_{ik} + \Delta_{ik}\leq x\mid \be_i\transpose\bE) - \prob(\widetilde{T}_{ik} + \widetilde{\Delta}_{ik}\leq x\mid \be_i\transpose\bE)\right\}\prob(\mathrm{d}\be_i\transpose\bE)\right|\\
&\quad\leq \int\left|\prob(\widetilde{T}_{ik} + \Delta_{ik}\leq x\mid \be_i\transpose\bE) - \prob(\widetilde{T}_{ik} + \widetilde{\Delta}_{ik}\leq x\mid \be_i\transpose\bE)\right|\prob(\mathrm{d}\be_i\transpose\bE)\\
&\quad\leq \int\left|\prob(\Delta_{ik}\leq x - \widetilde{T}_{ik}\mid \be_i\transpose\bE) - \prob(\widetilde{\Delta}_{ik}\leq x - \widetilde{T}_{ik}\mid \be_i\transpose\bE)\right|\prob(\mathrm{d}\be_i\transpose\bE)\\
&\quad\leq \int\sup_{u\in\mathbb{R}}\left|\prob(\Delta_{ik}\leq u\mid \be_i\transpose\bE) - \prob(\widetilde{\Delta}_{ik}\leq u\mid \be_i\transpose\bE)\right|\prob(\mathrm{d}\be_i\transpose\bE)\\
&\quad = \int\widetilde{O}_{\prob}\left(\frac{1}{q_n^2}\right)\prob(\mathrm{d}\be_i\transpose\bE) = O\left(\frac{1}{q_n^2}\right),
\end{align*}
where we have used the fact that the total variation distance is always upper bounded by $1$. Then \eqref{eqn:edgeworth_remainder_II} follows directly by taking supremum on both sides of the above inequality.

To show \eqref{eqn:edgeworth_remainder_I}, we need the following lemma due to \cite{10.1214/21-AOS2125}. 
\begin{lemma}[Lemma 8.2 in \cite{10.1214/21-AOS2125}]
\label{lemma:edgeworth_remainder}
Suppose we have random variables $X, Y, Z$ satisfying $X = Y + Z$, such that the cumulative distribution function $F_Y(\cdot)$ of $Y$ is smooth, and there exists constants $M\in (0,+\infty)$ such that $F_Y(u + a) - F_Y(u)\leq Ma + O(\zeta_n)$ for any $u\in\mathbb{R}$ and $a > 0$. Also assume that $\prob(|Z|\geq \widetilde{\zeta}_n)\leq n^{-1}$. Then 
\[
\|F_X(u) - F_Y(u)\|_\infty = O(\zeta_n + \widetilde{\zeta}_n + n^{-1}),
\] 
where $F_X(u) = \prob(X\leq u)$. 
\end{lemma}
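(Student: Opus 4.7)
The statement is a classical sandwich/smoothing lemma: since $X = Y+Z$, one expects that the CDF of $X$ is close to that of $Y$ whenever $Z$ is small with high probability and $F_Y$ does not put too much mass on any short interval. My plan is therefore a two‑sided bound based on splitting the event $\{X\le u\}$ according to whether $|Z|$ is smaller or larger than the threshold $\widetilde{\zeta}_n$.

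First, for the upper bound I will write, for any $u\in\mathbb{R}$,
\[
F_X(u)=\prob(Y+Z\le u,\,|Z|<\widetilde{\zeta}_n)+\prob(Y+Z\le u,\,|Z|\ge\widetilde{\zeta}_n).
\]
On the first event, $Y\le u-Z\le u+\widetilde{\zeta}_n$, so this piece is bounded by $F_Y(u+\widetilde{\zeta}_n)$; the second piece is bounded by $\prob(|Z|\ge\widetilde{\zeta}_n)\le n^{-1}$ by hypothesis. Applying the modulus‑of‑continuity hypothesis $F_Y(u+a)-F_Y(u)\le Ma+O(\zeta_n)$ with $a=\widetilde{\zeta}_n$ yields
\[
F_X(u)-F_Y(u)\le M\widetilde{\zeta}_n+O(\zeta_n)+n^{-1}=O(\zeta_n+\widetilde{\zeta}_n+n^{-1}).
\]

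For the matching lower bound I will use the inclusion
\[
\{Y\le u-\widetilde{\zeta}_n\}\cap\{|Z|<\widetilde{\zeta}_n\}\subseteq\{Y+Z\le u\},
\]
which gives $F_X(u)\ge F_Y(u-\widetilde{\zeta}_n)-n^{-1}$. Invoking the Lipschitz‑type estimate a second time with the interval $[u-\widetilde{\zeta}_n,u]$ produces $F_Y(u)-F_Y(u-\widetilde{\zeta}_n)\le M\widetilde{\zeta}_n+O(\zeta_n)$, so that $F_Y(u)-F_X(u)=O(\zeta_n+\widetilde{\zeta}_n+n^{-1})$. Combining the two one‑sided bounds and taking the supremum over $u$ finishes the proof.

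There is really no obstacle here: the argument is purely measure‑theoretic and does not use any of the random matrix machinery developed earlier; the only place where one has to be a little careful is to apply the modulus‑of‑continuity hypothesis with $u$ replaced by $u-\widetilde{\zeta}_n$ in the lower‑bound step, and to note that the $O(\zeta_n)$ term in that hypothesis is uniform in $u$ so that taking $\sup_u$ preserves it. Since the lemma is quoted verbatim from \cite{10.1214/21-AOS2125}, I will keep the proof short and self‑contained rather than reproducing any surrounding context.
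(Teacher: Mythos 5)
Your sandwich argument is correct: the upper bound via $F_X(u)\le F_Y(u+\widetilde{\zeta}_n)+n^{-1}$ and the lower bound via $\{Y\le u-\widetilde{\zeta}_n\}\cap\{|Z|<\widetilde{\zeta}_n\}\subseteq\{X\le u\}$, combined with the uniform modulus-of-continuity hypothesis, give exactly the claimed bound. The paper itself does not prove this lemma but quotes it from \cite{10.1214/21-AOS2125}, and your short self-contained proof is the standard argument behind that cited result, so nothing further is needed.
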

To apply Lemma \ref{lemma:edgeworth_remainder}, we set $Y = \widetilde{T}_{ik} + \Delta_{ik}$ and $X = T_{ik}$. By \eqref{decom:T} in Section \ref{sub:proof_sketch_edgeworth_expansion}, we see that $Z = X - Y$ satisfies 
\[
Z = \widetilde{O}_{\prob}\left\{\frac{(\log n)^{3\xi}}{n\rho_n^{1/2}} + \frac{(\log n)^{3\xi}}{\Delta_n} + \frac{(n\rho_n)(\log n)^{3\xi}}{\Delta_n^2} + \frac{(\log n)^{3\xi}}{q_n^2} + \frac{(n\rho_n)^2(\log n)^{3\xi}}{\Delta_n^3}\right\}
\]
for any $\xi > 1$.
This shows that there exists a constant $C > 0$ such that
\begin{align*}
&\prob\left[|Z|\geq C\left\{\frac{(\log n)^{3\xi}}{n\rho_n^{1/2}} + \frac{(\log n)^{3\xi}}{\Delta_n} + \frac{(n\rho_n)(\log n)^{3\xi}}{\Delta_n^2} + \frac{(\log n)^{3\xi}}{q_n^2} + \frac{(n\rho_n)^2(\log n)^{3\xi}}{\Delta_n^3}\right\}\right]\\
&\quad\leq \frac{1}{n}.
\end{align*}
To show that $F_Y(u):=\prob(Y\leq u)$ has the smoothness property required by Lemma \ref{lemma:edgeworth_remainder}, for any $u\in\mathbb{R}$ and $a > 0$, we apply \eqref{eqn:edgeworth_expansion_self_smoothed_version}, \eqref{eqn:edgeworth_remainder_II}, and the fact that $\sup_{x,i,n}|\mathrm{d}G_n^{(ik)}(x)/\mathrm{d}x|\leq M$ for some constant $M > 0$ to obtain
\begin{align*}
&\prob\left(\widetilde{T}_{ik} + \Delta_{ik}\leq u + a\right) - \prob\left(\widetilde{T}_{ik} + \Delta_{ik}\leq u\right)\\
&\quad\leq \left|\prob\left(\widetilde{T}_{ik} + \Delta_{ik}\leq u + a\right) - \prob\left(\widetilde{T}_{ik} + \widetilde{\Delta}_{ik}\leq u + a\right)\right|\\
&\quad\quad + \left|\prob\left(\widetilde{T}_{ik} + \widetilde{\Delta}_{ik}\leq u + a\right) - G_n^{(ik)}(u + a)\right|\\
&\quad\quad + \left|G_n^{(ik)}(u + a) - G_n^{(ik)}(u)\right| + \left|G_n^{(ik)}(u) - \prob\left(\widetilde{T}_{ik} + \widetilde{\Delta}_{ik}\leq u\right)\right|\\
&\quad\quad + \left|\prob\left(\widetilde{T}_{ik} + \widetilde{\Delta}_{ik}\leq u\right) - \prob\left(\widetilde{T}_{ik} + \Delta_{ik}\leq u\right)\right|\\
&\quad\leq 2\left\|\prob\left(\widetilde{T}_{ik} + \Delta_{ik}\leq x\right) - \prob\left(\widetilde{T}_{ik} + \widetilde{\Delta}_{ik}\leq x\right)\right\|_\infty + \max_{i\in[n]}\sup_{x\in\mathbb{R}}\left|\frac{\mathrm{d}G_n^{(ik)}}{\mathrm{d}x}(x)\right|a\\
&\quad\quad + 2\left\|\prob\left(\widetilde{T}_{ik} + \widetilde{\Delta}_{ik}\leq x\right) - G_n^{(ik)}(x)\right\|_\infty\\
&\quad\leq Ma + O\left\{\frac{1}{n} + \frac{n\rho_n\log n}{\Delta_n^2} + \frac{(\log n)^4}{q_n^2}\right\}.
\end{align*}
Then \eqref{eqn:edgeworth_remainder_I} follows from Lemma \ref{lemma:edgeworth_remainder} with
\begin{align*}
\widetilde{\zeta}_n &= C\left\{\frac{(\log n)^{3\xi}}{n\rho_n^{1/2}} + \frac{(\log n)^{3\xi}}{\Delta_n} + \frac{(n\rho_n)(\log n)^{3\xi}}{\Delta_n^2} + \frac{(\log n)^{3\xi}}{q_n^2} + \frac{(n\rho_n)^2(\log n)^{3\xi}}{\Delta_n^3}\right\}\\
\zeta_n &= \frac{1}{n} + \frac{n\rho_n\log n}{\Delta_n^2} + \frac{(\log n)^4}{q_n^2}.
\end{align*}

\section{Proof of Theorem \ref{thm:matrix_denoising}}
\label{sec:proof_of_matrix_denoising}

The proof of Theorem \ref{thm:matrix_denoising} is simply applying Theorem \ref{thm:edgeworth_expansion} with the ``symmetric dilation'' trick in Remark \ref{remark:symmetric_dilation} of the manuscript. Using our definitions in the main paper, we set
\bea\label{def:ape}
\bA = \begin{bmatrix}\zero_{p_1\times p_1} & \bX \\ \bX\transpose & \zero_{p_2\times p_2}\end{bmatrix},\quad
\bP = \begin{bmatrix}\zero_{p_1\times p_1} & \bM \\ \bM\transpose & \zero_{p_2\times p_2}\end{bmatrix},\quad
\bE = \begin{bmatrix}\zero_{p_1\times p_1} & \bY \\ \bY\transpose & \zero_{p_2\times p_2}\end{bmatrix}.
\eae
The spectral decomposition of $\bP$ is given by $\bP = \bU_\bP\bS_\bP\bU_\bP\transpose$, where
\bea\label{defupsp}
\bU_\bP = \frac{1}{\sqrt{2}}\begin{bmatrix}\bU & \bU\bPi \\ \bV & -\bV\bPi\end{bmatrix},\quad
\bS_\bP = \begin{bmatrix} \bSigma & \zero_{p_1\times p_2} \\ \zero_{p_2\times p_1} & -\bPi\transpose\bSigma\bPi\end{bmatrix},
\eae
$\bU = [\bu_1,\ldots,\bu_r]$, $\bV = [\bv_1,\ldots,\bv_r]$, $\bSigma = \mathrm{diag}(\sigma_1,\ldots,\sigma_r)$, $\bPi$ is a permutation matrix such that $\bPi\transpose\bSigma\bPi = \mathrm{diag}(\sigma_r, \ldots, \sigma_1)$, $\lambda_k = \sigma_k$, and $\lambda_{k + r} = -\sigma_k$ for all $k\in [r]$. Here $d = \mathrm{rank}(\bP) = 2r$. The sample eigenvector of $\bA$ corresponding to its $r$ largest eigenvalues are
\[
\bU_\bA = \frac{1}{\sqrt{2}}\begin{bmatrix}\widehat{\bU} \\ \widehat{\bV} \end{bmatrix},
\]
where $\widehat{\bU} = [\widehat{\bu}_1,\ldots,\widehat{\bu}_r]$ and $\widehat{\bV} = [\widehat{\bv}_1,\ldots,\widehat{\bv}_r]$. Next we verify Assumptions \ref{assumption:Signal_strength}--\ref{assumption:Eigenvector_delocalization}. For Assumption \ref{assumption:Signal_strength}, we have
\begin{align*}
\frac{\min_{k\in [2d]}|\lambda_k|}{\sqrt{n\rho_n}} &= \frac{\sigma_r}{\sqrt{n\rho_n}}\geq C_1\sqrt{n\rho_n} = \Omega(\sqrt{n\rho_n}) = \Omega(n^{\beta_\Delta}),\\
\frac{\max_{k\in [2d]}|\lambda_k|}{\sqrt{n\rho_n}} &= \frac{\sigma_1}{\sqrt{n\rho_n}}\leq C_2\sqrt{n\rho_n} = O(\sqrt{n\rho_n}) = O(n^{\beta_\Delta}).
\end{align*}
Also, $n\rho_n = \omega(n^a)$ for some $a > 0$ and $\beta_\Delta \leq 1$ are satisfied. For Assumption \ref{assumption:Eigenvalue_separation}, by condition (c), we have
\[
\min_{k\in [d - 1]}(\lambda_k - \lambda_{k + 1}) = \min\left\{\min_{k\in [r - 1]}(\sigma_k - \sigma_{k + 1}), 2\sigma_r\right\}\geq \min(2\sigma_r, \delta_0\sigma_r) = \min(2,\delta_0)\sigma_r.
\]
Assumption \ref{assumption:Noise_matrix_distribution} holds with $q_n = \sqrt{n}(2\log n)^{-(3A_0/2)\log\log n}$ and any $\eps \in (0, 1/2)$ by Example \ref{example:sub_exponential_noise} and condition (a). Here we take $\eps$ to be sufficiently close to $1/2$ such that $\beta_\Delta \geq 1/2 - \eps$ and $\eps\geq 1/3$, \emph{e.g.}, $\eps = \max(\beta_\Delta - 1/2, 1/3)$. Assumption \ref{assumption:Eigenvector_delocalization} holds by condition (d). 

Next, we compute $s_{ik}^2$. Recalling the definition \eqref{eqn:plug_in_estimate_variance} and \eqref{def:ape}--\eqref{defupsp}, under the current i.i.d. noise setup, we have
\begin{align*}
\color{black}s_{ik}^2 = \rho_n\sum_{j = 1}^{p_2}\frac{v_{jk}^2}{2\sigma_k^2} {=} \frac{\rho_n}{2\sigma_k^2},\quad
\widehat{s}_{ik}^2 = \sum_{j = 1}^{p_2}(X_{ij} - \widehat{M}_{ij})^2\frac{\widehat{v}_{jk}^2}{2\widehat{\sigma}_k^2}.
\end{align*}
Clearly, $\min_{i\in [n]}s_{ik}^2 = \Theta(\rho_n/\Delta_n^2)$, where $\Delta_n \asymp \sigma_r$. For $\beta = \expect |Y_{ij}|^3$, it is immediate that $\beta = O(\rho_n^{3/2}) = o(n\rho_n^2/(\Delta_n\sqrt{\log n}))$ because $\sigma_r\sqrt{\log n} = o(n\rho_n^{1/2})$. Therefore, the Cram\'er's condition is not required. Then the Edgeworth expansion formula \eqref{eqn:edgeworth_expansion_formula} has the form
\begin{align*}
G_n^{(ik)}(x) & = \Phi(x) + \frac{(2x^2 + 1)}{6}\phi(x)\sum_{j = 1}^{p_2}\frac{\expect Y_{ij}^3u_{jk}^3}{\sqrt{8}s_{ik}^3\sigma_k^3}\\
& = \Phi(x) + \frac{(2x^2 + 1)}{6}\phi(x)\sum_{j = 1}^{p_2}\frac{\expect Y_{ij}^3u_{jk}^3}{\rho_n^{3/2}}.
\end{align*}
Now we compute the second-order bias $\bb_{k} = \bb(\hat{\bu}_k)$ (c.f.~\eqref{def:bias}), which is specified by formulae \eqref{def:bias} and \eqref{eqn:eigenvector_second_order_term}. 
First, we observe that,
\begin{align*}
\bD &= \expect \bE^2 = \expect\begin{bmatrix}\zero_{p_1\times p_1} & \bY\\\bY\transpose & \zero_{p_2\times p_2}\end{bmatrix}^2 = \expect\begin{bmatrix}\bY\bY\transpose & \zero_{p_1\times p_2}\\\zero_{p_2\times p_1} & \bY\transpose\bY\end{bmatrix} = \begin{bmatrix}p_2\rho_n\eye_{p_1} &\\ & p_1\rho_n\eye_{p_2}\end{bmatrix}.
\end{align*}
Then by Theorem \ref{thm:Eigenvector_Expansion}, we have
\begin{align*}
\frac{1}{\sqrt{2}}\bb_{k}& = \left\{\eye_n - \frac{3}{4}\begin{bmatrix}\bu_k\\\bv_k\end{bmatrix}\begin{bmatrix}\bu_k\transpose&\bv_k\transpose\end{bmatrix} + 
\sum_{m\in[r]/\{k\}}\frac{\sigma_m}{2(\sigma_k - \sigma_m)}
\begin{bmatrix}\bu_m\\\bv_m\end{bmatrix}\begin{bmatrix}\bu_m\transpose&\bv_m\transpose\end{bmatrix}
 \right\}\frac{\expect\bE^2}{\sigma_k^2}\frac{1}{\sqrt{2}}\begin{bmatrix}\bu_k\\\bv_k\end{bmatrix}\\
 &\quad - 
 \sum_{m = 1}^r\frac{\sigma_m}{2(\sigma_k + \sigma_m)}
 \begin{bmatrix}\bu_m\\-\bv_m\end{bmatrix}\begin{bmatrix}\bu_m\transpose&-\bv_m\transpose\end{bmatrix}\frac{\expect\bE^2}{\sigma_k^2}\frac{1}{\sqrt{2}}\begin{bmatrix}\bu_k\\\bv_k\end{bmatrix}\\
& = \frac{\rho_n}{\sqrt{2}\sigma_k^2}\begin{bmatrix}p_2\bu_k\\p_1\bv_k\end{bmatrix} - \frac{3(p_2 + p_1)\rho_n}{4\sqrt{2}\sigma_k^2}\begin{bmatrix}\bu_k\\\bv_k\end{bmatrix} - \frac{(p_2 - p_1)\rho_n}{4\sqrt{2}\sigma_k^2}\begin{bmatrix}\bu_k\\-\bv_k\end{bmatrix}
\\
& = -\frac{\rho_n}{2\sqrt{2}\sigma_k^2}\begin{bmatrix}p_1\bu_k\\p_2\bv_k\end{bmatrix}.
\end{align*}
In particular, we have $b_{ik}$ being the $i$th entry of $\bb_k$:
\[
b_{ik} = -\frac{\rho_n}{2\sigma_k^2}p_1 u_{ik}.
\]
Note that
\begin{align*}
\frac{(\widehat{u}_{ik} - u_{ik} - b_{ik})/\sqrt{2}}{\widehat{s}_{ik}} = 
\frac{\widehat{u}_{ik} - u_{ik} - b_{ik}}{\sqrt{2}\widehat{s}_{ik}} = T_{ik}.
\end{align*}
Therefore, Theorem \ref{thm:edgeworth_expansion} entails that
\begin{align*}
\|\prob(T_{ik}\leq x) - G_n^{(ik)}(x)\|_\infty = O\left\{\frac{(\log n)^{3\xi}}{n\rho_n} + \frac{(2\log n)^{3\xi\vee4 + 33\log\log n}}{n}\right\}.
\end{align*}

\section{Proof of Theorem \ref{thm:bootstrap_subgaussian}}
\label{sec:proof_of_bootstrap_subgaussian}
Under our bootstrap setting, we consider the $\bP$ and $\bE$ in the main paper to be $\widehat{\bP}$ and $\bE^*$, respectively, and $\widehat{\beta}=\max_{i,j\in [n]}\expect_*|E^*_{ij}|^3$ in the position of $\beta = \max_{i,j\in[n]}\expect|E_{ij}|^3$, where $\mathbb{E}_*$ is the expected value with regard to the conditional distribution of $\bE^*$ given $\bA$. Then the proof strategy of Theorem \ref{thm:bootstrap_subgaussian} is to show that Assumptions \ref{assumption:Signal_strength}--\ref{assumption:Eigenvector_delocalization} still hold with high probability with respect to the randomness of $\bA$, possibly for different constants $C_1, C_2, a, \delta_0, C_\mu, C$ but for the same $\beta_\Delta$, with $q_n := \sqrt{n}(2\log n)^{-33\log\log n}$. Let 
\begin{align*}
\calE_{1n} &= \left\{\bA:\frac{1}{2}\lambda_{\min}(|\bS_\bP|)\leq \lambda_{\min}(|\bS_\bA|),\lambda_{\max}(|\bS_\bA|)\leq 2\lambda_{\max}(|\bS_\bP|)\right\},\\
\calE_{2n} &= \left\{\bA:\lambda_k(\bS_\bA) - \lambda_{k + 1}(\bS_\bA)\geq \frac{1}{2}\Delta_n,k\in [d - 1]\right\},\\
\calE_{3n} &= \left\{\bA:\max_{k\in [d]}\|\widehat{\bu}_k\|_\infty \leq \frac{2C_\mu}{\sqrt{n}}\right\},
\end{align*}
By Weyl's inequality and Result \ref{result:Noise_matrix_concentration}, $\calE_{1n}$ w.h.p.. For any $k\in [d]$, by Weyl's inequality and Result \ref{result:Noise_matrix_concentration} again, 
\begin{align*}
&\lambda_k(\bS_\bA)\geq \lambda_k(\bS_\bP) - \|\bE\|_2\geq \lambda_k(\bS_\bP) - \frac{1}{4}\Delta_n\quad\mbox{w.h.p.},\\
&\lambda_{k + 1}(\bS_\bA)\leq \lambda_{k + 1}(\bS_\bP) + \|\bE\|_2\leq \lambda_{k + 1}(\bS_\bP) + \frac{1}{4}\Delta_n\quad\mbox{w.h.p.};
\end{align*}
Note that $\Delta_n\gg\|\bE\|$ under the setting of Assumptions \ref{assumption:Signal_strength} and \ref{assumption:Eigenvalue_separation}. Hence, by Assumption \ref{assumption:Eigenvalue_separation}, we obtain
\begin{align*}
\lambda_k(\bS_\bA) - \lambda_{k + 1}(\bS_\bA) \geq \lambda_k(\bS_\bP) - \lambda_{k + 1}(\bS_\bP) - \frac{1}{2}\Delta_n\geq \frac{1}{2}\Delta_n\quad\mbox{w.h.p.}.
\end{align*}
Namely, $\calE_{2n}$ occurs w.h.p.. For $\calE_{3n}$, by Lemma \ref{lemma:Eigenvector_zero_order_deviation}, we have
\[
\|\widehat{\bu}_k\|_\infty\leq \|\widehat{\bu}_k - \bu_k\mathrm{sgn}(\bu_k\transpose\widehat{\bu}_k)\|_\infty + \|\bu_k\|_\infty = \widetilde{O}_{\prob}\left\{\|\bU_\bP\|_{2\to\infty}\frac{(n\rho_n)^{1/2}(\log n)^\xi}{\Delta_n}\right\} + \frac{C_\mu}{\sqrt{n}}.
\]
Namely, $\max_{k\in [d]}\|\widehat{\bu}_k\|_\infty\leq 2C_\mu/\sqrt{n}$ w.h.p.. Hence, we see that $\calE_{1n}\cap\calE_{2n}\cap\calE_{3n}$ occurs w.h.p., over which Assumptions \ref{assumption:Signal_strength}, \ref{assumption:Eigenvalue_separation}, and \ref{assumption:Eigenvector_delocalization} hold with $\bP$, $\bE$ replaced with $\widehat{\bP}$, $\bE^*$. Now consider Assumption \ref{assumption:Noise_matrix_distribution}. For any $p$, $2\leq p\leq (2\log n)^{11\log\log n}$, we have
\begin{align*}
\int_{\mathbb{R}}|x|^p\mathrm{d}\widehat{F}_n(x)
& = \frac{2}{n(n + 1)}\sum_{1\leq i\leq j\leq n}\left|E_{ij} + (p_{ij} - \widehat{p}_{ij}) - \widehat{\mu}_\bE\right|^p\\
&\leq \frac{2}{n(n + 1)}\sum_{1\leq i\leq j\leq n}3^p|E_{ij}|^p + \frac{2}{n(n + 1)}\sum_{1\leq i\leq j\leq n}3^p|p_{ij} - \widehat{p}_{ij}|^p +3^p|\widehat{\mu}_\bE|^p\\
&\leq \frac{2}{n(n + 1)}\sum_{1\leq i\leq j\leq n}3^p|E_{ij}|^p + 3^p\max_{i,j\in[n]}|p_{ij} - \widehat{p}_{ij}|^p\\
&\quad + 3^p\left|\frac{2}{n(n + 1)}\sum_{1\leq i\leq j\leq n}(E_{ij} + p_{ij} - \widehat{p}_{ij})\right|^p\\
&\leq \frac{2}{n(n + 1)}\sum_{1\leq i\leq j\leq n}3^p|E_{ij}|^p + (3^p + 6^p)\max_{i,j\in[n]}|p_{ij} - \widehat{p}_{ij}|^p
\\&\quad
 + 6^p\left|\frac{2}{n(n + 1)}\sum_{1\leq i\leq j\leq n}E_{ij}\right|^p.
\end{align*}
By \eqref{eqn:pij_zeroth_order_deviation}, the second term is
$O\{C^p(\log n)^{p\xi}{\rho_n^{p/2}}/{n^{p/2}}\}$
with probability at least $1 - n^{-2}$, where $C > 0$ is a generic constant. The last term is 
$O\{C^p(\log n)^{p\xi}\rho_n^{p/2}/{n^p}\}$
with probability at least $1 - n^{-2}$ by Result \ref{result:concentration}.
The sum of these two terms is bounded by
\bea\label{generalbound:qn:1}
\frac{C^p\{\rho_n(\log n)^\xi\}^{p/2}}{n^{p/2}}\leq\left\{
\begin{aligned}
&\frac{C^p(n\rho_n)^{p/2}}{nq_n^{p - 2}},&\quad&\mbox{if }p \geq 3,\\
&C\rho_n,&\quad&\mbox{if }p = 2.
\end{aligned}
\right.
\eae
Now, we focus on the first term. Write
\begin{align*}
&\expect\left\{\frac{2}{n(n + 1)}\sum_{1\leq i\leq j\leq n}3^p|E_{ij}|^p\right\}
 = 3^p\expect|E_{ij}|^p = 3^p\left[p^{p}\left\{\frac{1}{p}(\expect|E_{ij}|^p)^{1/p}\right\}^p\right]\\
&\quad\leq (3p)^p\|E_{ij}\|_{\psi_2}^p\leq C^p(\rho_np^2)^{p/2}.
\end{align*}
When $p \geq 3$ and $p\leq (2\log n)^{11\log\log n}$, with $q_n := \sqrt{n}(2\log n)^{-33\log\log n}$, we have
\bea\label{generalbound:qn:2}
\expect\left\{\frac{2}{n(n + 1)}\sum_{1\leq i\leq j\leq n}3^p|E_{ij}|^p\right\}\leq \frac{C^p(n\rho_n)^{p/2}}{nq_n^{p - 2}},
\eae
and when $p = 2$, similarly, we have
\begin{align*}
\expect\left\{\frac{2}{n(n + 1)}\sum_{1\leq i\leq j\leq n}3^2|E_{ij}|^2\right\} = O(\rho_n).
\end{align*}
Now we compute the variance of this term:
\begin{align*}
&\var\left\{\frac{2}{n(n + 1)}\sum_{1\leq i\leq j\leq n}3^p|E_{ij}|^p\right\}
= \frac{2\times 9^p\var(|E_{ij}|^p)}{n(n + 1)}\leq \frac{C^p\expect|E_{ij}|^{2p}}{n^2}\\
&\quad = \frac{C^p(2p)^{2p}}{n^2}\left\{\frac{1}{2p}\left(\expect|E_{ij}|^{2p}\right)^{1/(2p)}\right\}^{2p}\leq \frac{C^p(4\rho_np^2)^p}{n^2},
\end{align*}
recalling that $\sup_{p\geq 1}p^{-1}(\expect|E^p_{ij}|^{1/p}) = \|E_{ij}\|_{\psi_1}\leq C\rho_n^{1/2}$. Then, by Markov's inequality,
\begin{align*}
&\prob\left[\left|\frac{2}{n(n + 1)}\sum_{1\leq i\leq j\leq n}3^p|E_{ij}|^p - \expect\left\{\frac{2}{n(n + 1)}\sum_{1\leq i\leq j\leq n}3^p|E_{ij}|^p\right\}\right| > \frac{C^{p/2}(4\rho_np^2)^{p/2}}{n^{1/4}}\right]\\
&\quad\leq \frac{n^{1/2}C^p(4\rho_np^2)^p}{n^2C^p(4\rho_np^2)^p} = \frac{1}{n^{3/2}}.
\end{align*}
 This shows that, with probability at least $1 - n^{-3/2}$, 
\begin{align*}
\frac{2}{n(n + 1)}\sum_{1\leq i\leq j\leq n}3^p|E_{ij}|^p
&\leq \expect\left\{\frac{2}{n(n + 1)}\sum_{1\leq i\leq j\leq n}3^p|E_{ij}|^p\right\} + \frac{C^{p/2}(4\rho_np^2)^{p/2}}{n^{1/4}}\\
&\leq C^p(\rho_np^2)^{p/2} + C^p(\rho_np^2)^{p/2} \leq C^p(\rho_n p^2)^{p/2}.
\end{align*}
\par
Summarizing all results above, when $p \geq 3$, we conclude that
$
\int_{\mathbb{R}}|x|^p\mathrm{d}\widehat{F}_n(x)
$
is bounded by $C^p(n\rho_n)^{p/2}/(nq_n^{p - 2})$, and when $p = 2$, it is bounded by $O(\rho_n)$ with probability at least $1 - O(n^{-1})$. We also have the corresponding $\widehat{\beta}$ satisfies a slightly sharper bound: with probability at least $1 - O(n^{-1})$,
$$
\widehat{\beta} = \int_{\mathbb{R}}|x|^3\mathrm{d}\widehat{F}_n(x) = O(\rho_n^{3/2}) = o\left(\frac{n\rho_n^2}{\Delta_n\sqrt{\log n}}\right),
$$
recalling $\Delta_n = o(n\rho_n^{1/2}/\sqrt{\log n})$. We note such a sharper bound can be derived by following the same argument as above, yet focusing on the typical case that $p = 3$ and not introducing $q_n$ in \eqref{generalbound:qn:1} and \eqref{generalbound:qn:2}. Then, we conclude that the event
\[
\calE_{4n} = \bigcap_{p = 2}^{\lceil(2\log n)^{11\log\log n}\rceil}
\left\{\bA:\int_{\mathbb{R}}|x|^p\mathrm{d}\widehat{F}_n(x)\leq \frac{C^p(n\rho_n)^{p/2}}{nq_n^{p - 2}},\,\, \widehat{\beta} = o\left(\frac{n\rho_n^2}{\Delta_n\sqrt{\log n}}\right)
\right\}
\]
occurs with probability at least $1 - O(n^{-1})$, over which Assumption \ref{assumption:Noise_matrix_distribution} holds. Let 
\[
\calE_{5n} = \left\{\min_{i\in[n]}\widehat{s}_{ik}^2 \geq \frac{1}{2}\min_{i\in[n]}{s}_{ik}^2\right\}.
\]
By Lemma \ref{lemma:variance_expansion} and Result~\ref{result:concentration}, and recalling $q_n := \sqrt{n}(2\log n)^{-33\log\log n}$, we know that
\begin{align*}
\min_{i\in[n]}\widehat{s}_{ik}^2&\geq \min_{i\in[n]}s_{ik}^2 - \max_{i\in[n]}\left|\sum_{j = 1}^n\frac{(E_{ij}^2 - \expect E_{ij}^2)u_{jk}^2}{\lambda_k^2}\right| - \max_{i\in[n]}\left|\sum_{a,b = 1}^n\frac{2E_{ia}^2E_{ab}u_{ak}u_{bk}}{\lambda_k^3}\right|\\
&\quad - \left|\widetilde{O}_{\prob}\left\{\frac{\rho_n^{1/2}(\log n)^{2\xi}}{\Delta_n^2n} + \frac{\rho_n(\log n)^{2\xi}}{\Delta_n^3} + \frac{n\rho_n^2(\log n)^{2\xi}}{\Delta_n^4}\right\}\right|\\
& = \min_{i\in[n]}s_{ik}^2 - \left|\widetilde{O}_{\prob}\left\{\frac{\rho_n(\log n)^{\xi}}{q_n\Delta_n^2}\right\}\right| - \left|\widetilde{O}_{\prob}\left\{\frac{\rho_n}{\Delta_n^2}\times{\frac{\sqrt{n\rho_n}}{\Delta_n}(\log n)^{\xi}}\right\}\right|\\
&\quad - \left|\widetilde{O}_{\prob}\left\{\frac{\rho_n^{1/2}(\log n)^{2\xi}}{\Delta_n^2n} + \frac{\rho_n(\log n)^{2\xi}}{\Delta_n^3} + \frac{n\rho_n^2(\log n)^{2\xi}}{\Delta_n^4}\right\}\right|\\
&\geq \frac{1}{2}\min_{i\in[n]}s_{ik}^2\quad\mbox{w.h.p.},
\end{align*}
and hence, $\calE_{5n}$ occurs w.h.p..
Applying Theorem \ref{thm:edgeworth_expansion} with regard to the $\prob^*$ probability over the event $\calE_{1n}\cap\calE_{2n}\cap\calE_{3n}\cap\calE_{4n}\cap\calE_{5n}$, we conclude w.h.p.,
\begin{align*}
&\left\|\prob^*\left(T_{ik}^*\leq x\right) - G_n^{(ik)*}(x)\right\|_\infty\\
&\quad = O\left\{
\frac{(\log n)^{2\xi}}{n\rho_n} + \frac{(\log n)^{3\xi}}{\Delta_n} + \frac{(n\rho_n)(\log n)^{3\xi}}{\Delta_n^2} + \frac{(\log n)^{3\xi\vee4}}{q_n^2} + \frac{(n\rho_n)^2(\log n)^{3\xi}}{\Delta_n^3}
\right\},
\end{align*}
where
\begin{align*}
G_n^{(ik)*}(x) = \Phi(x) + \frac{(2x^2 + 1)}{6}\phi(x)\int_{\mathbb{R}}u^3\mathrm{d}\widehat{F}_n(u)\sum_{j = 1}^n\frac{\widehat{u}_{jk}^3}{\widehat{s}_{ik}^3\widehat{\lambda}_k^3}.
\end{align*}
By Theorem \ref{thm:edgeworth_expansion} and the fact that $\beta = O(\rho_n^{3/2}) = o(n\rho_n^2/(\Delta_n\sqrt{\log n}))$, we also have
\begin{align*}
&\left\|\prob\left(T_{ik}\leq x\right) - G_n^{(ik)}(x)\right\|_\infty\\
&\quad = O\left\{
\frac{(\log n)^{2\xi}}{n\rho_n^{1/2}} + \frac{(\log n)^{3\xi}}{\Delta_n} + \frac{(n\rho_n)(\log n)^{3\xi}}{\Delta_n^2} + \frac{(\log n)^{3\xi\vee4}}{q_n^2} + \frac{(n\rho_n)^2(\log n)^{3\xi}}{\Delta_n^3}
\right\},
\end{align*}
where
\begin{align*}
G_n^{(ik)}(x) = \Phi(x) + \frac{(2x^2 + 1)}{6}\phi(x)\expect E_{ij}^3\sum_{j = 1}^n\frac{{u}_{jk}^3}{{s}_{ik}^3{\lambda}_k^3}.
\end{align*}
So to finally show the high-probability bound of  $\left\|\prob^*\left(T_{ik}^*\leq x\right) -\prob\left(T_{ik}\leq x\right)\right\|_\infty$, it is left to bound $\|G_n^{(ik)*}(x) - G_n^{(ik)}(x)\|_\infty$ w.h.p..
\par
By \eqref{eqn:pij_zeroth_order_deviation}, concentration of $\mu_\bE = 2n^{-1}(n + 1)^{-1}\sum_{i\leq j}E_{ij}$, and Result~\ref{result:concentration},
\begin{align*}
&\left|\int_{\mathbb{R}}x^3\mathrm{d}\widehat{F}_n(x) - \expect E_{ij}^3\right|\\
&\quad
= \left|\frac{2}{n(n + 1)}\sum_{i\leq j}\left[\{E_{ij} + (p_{ij} - \widehat{p}_{ij}) - \mu_{\bE}\}^3 - \expect E_{ij}^3\right]\right|\\
&\quad \leq
\left|\frac{2}{n(n + 1)}\sum_{i\leq j}(E_{ij}^3 - \expect E_{ij}^3)\right|
+ \left|\frac{6}{n(n + 1)}\sum_{i\leq j}E_{ij}^2(p_{ij} - \widehat{p}_{ij} - \mu_{\bE})\right|\\
&\quad\quad 
+ \left|\frac{6}{n(n + 1)}\sum_{i\leq j}E_{ij}(p_{ij} - \widehat{p}_{ij} - \mu_{\bE})^2\right|
+ \max_{i,j\in[n]}|p_{ij} - \widehat{p}_{ij} - \mu_{\bE}|^3\\
&\quad\leq \left|\frac{2}{n(n + 1)}\sum_{i\leq j}(E_{ij}^3 - \expect E_{ij}^3)\right| + \left|\frac{6}{n(n + 1)}\sum_{i\leq j}E_{ij}^2\right|\left(\max_{i,j\in[n]}|p_{ij} - \widehat{p}_{ij}| + |\mu_{\bE}|\right)\\
&\quad\quad + \frac{6}{n(n + 1)}\sum_{i\leq j}|E_{ij}|\left(\max_{i,j\in[n]}|p_{ij} - \widehat{p}_{ij}| + |\mu_{\bE}|\right)^2
+ \left(\max_{i,j\in[n]}|p_{ij} - \widehat{p}_{ij}| + |\mu_{\bE}|\right)^3\\
&\quad = \left|\frac{2}{n(n + 1)}\sum_{i\leq j}(E_{ij}^3 - \expect E_{ij}^3)\right| + \widetilde{O}_{\prob}\left\{\frac{\rho_n^{3/2}(\log n)^{3\xi}}{n^{3/2}}\right\}\\
&\quad\quad + \left\{\left|\frac{6}{n(n + 1)}\sum_{i\leq j}(E_{ij}^2- \sigma_{ij}^2)\right| + O(\rho_n)\right\}\widetilde{O}_{\prob}\left\{(\log n)^\xi\sqrt{\frac{\rho_n}{n}}\right\}\\
&\quad\quad + \left\{\left|\frac{6}{n(n + 1)}\sum_{i\leq j}(|E_{ij}|- \expect|E_{ij}|)\right| + O(\rho_n^{1/2})\right\}\widetilde{O}_{\prob}\left\{\frac{\rho_n(\log n)^{2\xi}}{n}\right\}\\
&\quad = \left|\frac{2}{n(n + 1)}\sum_{i\leq j}(E_{ij}^3 - \expect E_{ij}^3)\right| + \widetilde{O}_{\prob}\left\{\frac{\rho_n^{3/2}(\log n)^\xi}{\sqrt{n}}\right\}.
\end{align*}
By Markov's inequality, with probability at least $1 - n^{-1}$, we have
\begin{align*}
\left|\frac{2}{n(n + 1)}\sum_{i\leq j}(E_{ij}^3 - \expect E_{ij}^3)\right| = O\left(\frac{\rho_n^{3/2}}{\sqrt{n}}\right).
\end{align*}
This shows that, with probability at least $1 - n^{-1}$, 
\[
\left|\int_{\mathbb{R}}x^3\mathrm{d}\widehat{F}_n(x) - \expect E_{ij}^3\right| = O\left\{\frac{\rho_n^{3/2}(\log n)^\xi}{\sqrt{n}}\right\}.
\]
By Lemma \ref{lemma:variance_expansion} and Result~\ref{result:concentration}, we know that 
\begin{align*}
|\widehat{s}_{ik}^2 - s_{ik}^2|
 = O\left[\frac{\rho_n}{\Delta_n^2}
 \left\{ \frac{(\log n)^\xi}{q_n} + \frac{\sqrt{n\rho_n}(\log n)^\xi}{\Delta_n} + \frac{(\log n)^{2\xi}}{n\rho_n^{1/2}}\right\}
\right]
\end{align*} 
and $\widehat{s}_{ik}^2 = \Theta(\rho_n/\Delta_n^2)$ with probability at least $1 - n^{-2}$. 
Then we proceed to bound
\begin{align*}
&\|G_n^{(ik)*}(x) - G_n^{(ik)}(x)\|_\infty\\
&\quad \lesssim \left|\int_{\mathbb{R}}u^3\mathrm{d}\widehat{F}_n(u)\sum_{j = 1}^n\frac{\widehat{u}_{jk}^3}{\widehat{s}_{ik}^3\widehat{\lambda}_k^3} - \expect E_{ij}^3\sum_{j = 1}^n\frac{{u}_{jk}^3}{{s}_{ik}^3{\lambda}_k^3}\right|\\
&\quad \leq \left|\int_{\mathbb{R}}u^3\mathrm{d}\widehat{F}_n(u) - \expect E_{ij}^3\right|\sum_{j = 1}^n\frac{|\widehat{u}_{jk}|^3}{\widehat{s}_{ik}^3|\widehat{\lambda}_k|^3} + \expect E_{ij}^3\sum_{j = 1}^n\left|\frac{\widehat{u}_{jk}^3}{\widehat{s}_{ik}^3\widehat{\lambda}_k^3} - \frac{{u}_{jk}^3}{{s}_{ik}^3{\lambda}_k^3}\right|\\
&\quad = O\left\{\frac{(\log n)^\xi}{n}\right\} + O(\rho_n^{3/2})\sum_{j = 1}^n\left|\frac{\widehat{u}_{jk}}{\widehat{s}_{ik}\widehat{\lambda}_k} - \frac{{u}_{jk}}{{s}_{ik}{\lambda}_k}\right|\left(\frac{\widehat{u}_{jk}^2}{\widehat{s}_{ik}^2\widehat{\lambda}_k^2} + \frac{u_{jk}^2}{s_{ik}^2\lambda_k^2}\right)\\
&\quad = O\left\{\frac{(\log n)^\xi}{n}\right\} + O\left(\frac{\rho_n^{1/2}}{n}\right)\sum_{j = 1}^n\left|\frac{\widehat{u}_{jk}}{\widehat{s}_{ik}\widehat{\lambda}_k} - \frac{{u}_{jk}}{{s}_{ik}{\lambda}_k}\right|\\
&\quad = O\left\{\frac{(\log n)^\xi}{n}\right\}\\
&\quad\quad + O\left(\rho_n^{1/2}\right)\left\{\frac{\|\widehat{\bu}_k - \bu_k\|_\infty}{\widehat{s}_{ik}\widehat{\lambda}_k}
+ \frac{\|\bu_k\|_\infty|s_{ik} - \widehat{s}_{ik}||\lambda_k| + \|\bu_k\|_\infty\widehat{s}_{ik}|\lambda_k - \widehat{\lambda}_k|}{|\widehat{s}_{ik}\widehat{\lambda}_ks_{ik}\lambda_k|}
\right\}\\
&\quad = O\left\{\frac{\rho_n^{1/2}(\log n)^\xi}{\Delta_n} + \frac{(\log n)^{2\xi}}{q_n^2}\right\}
\end{align*}
with probability at least $1 - n^{-3/2}$ by Lemma \ref{lemma:Eigenvector_zero_order_deviation}, Weyl's inequality, and Result \ref{result:Noise_matrix_concentration}. The proof is thus completed.

\section{Proofs of Theorems \ref{thm:edgeworth_expansion_smoothed} and \ref{thm:bootstrap_GRDPG}}
\label{sec:proof_of_smoothed_edgeworth_expansion}

\begin{proof}[Proof of Theorem \ref{thm:edgeworth_expansion_smoothed}]
The proof sketch of Theorem \ref{thm:edgeworth_expansion_smoothed} is similar to that of Theorem \ref{thm:edgeworth_expansion}. In light of Esseen's smoothing lemma (Theorem \ref{thm:esseen_smoothing_lemma}), we first show that
\begin{align}
\label{eqn:CHF_smoothed_comparison}
\int_{-n^{2\beta_\Delta}}^{n^{2\beta_\Delta}}\left|\frac{\expect e^{\mathbbm{i}(\widetilde{T}_{ik} + \widetilde{\Delta}_{ik} + w_n)} - \mathrm{ch.f.}(t; G_n^{(ik)})}{t}\right|\mathrm{d}t = O\left\{\frac{n\rho_n\log n}{\Delta_n^2} + \frac{(\log n)^2}{q_n^2}\right\},
\end{align}
where $w_n\sim\mathrm{N}(0, (\tau^2\beta^2\log n)/(n\rho_n^3))$ is independent of $\bA$ and $\tau > 0$ is a sufficiently large constant. This entails that
\begin{align}
\label{eqn:edgeworth_expansion_smoothed_version}
\|\prob(\widetilde{T}_{ik} + \widetilde{\Delta}_{ik} + w_n\leq x) - G_n^{(ik)}(x)\|_\infty = O\left\{\frac{n\rho_n\log n}{\Delta_n^2} + \frac{(\log n)^2}{q_n^2}\right\}.
\end{align}
By Lemma \ref{lemma:CHF_II} and Lemma \ref{lemma:CHF_intermediate}, we have
\begin{align*}
&\int_{\{t:M_n(n^{2\beta_\Delta}\log n)^{1/2}\leq|t|\leq n^{2\beta_\Delta}\}}\left|\frac{\expect \exp\{\mathbbm{i}t(\widetilde{T}_{ik} + \widetilde{\Delta}_{ik} + w_n)\}}{t}\right|\mathrm{d}t\\
&\quad = \int_{\{t:M_n(n^{2\beta_\Delta}\log n)^{1/2}\leq|t|\leq n^{2\beta_\Delta}\}}\left|\frac{\expect \exp\{\mathbbm{i}t(\widetilde{T}_{ik} + \widetilde{\Delta}_{ik})\}}{t}\right|e^{-\frac{\tau^2t^2\beta^2\log n}{2n\rho_n^3}}\mathrm{d}t = O\left(\frac{1}{n}\right),\\
&\int_{\{t:{n^\gamma}\leq|t|\leq \bar{\eps}\sqrt{n\rho_n}(\rho_n/\beta)\}}\left|\frac{\expect\exp\{\mathbbm{i}t(\widetilde{T}_{ik} + \widetilde{\Delta}_{ik} + w_n)\}}{t}\right|\mathrm{d}t\\
&\quad = \int_{\{t:{n^\gamma}\leq|t|\leq \bar{\eps}\sqrt{n\rho_n}(\rho_n/\beta)\}}\left|\frac{\expect\exp\{\mathbbm{i}t(\widetilde{T}_{ik} + \widetilde{\Delta}_{ik})\}}{t}\right|e^{-\frac{\tau^2t^2\beta^2\log n}{2n\rho_n^3}}\mathrm{d}t\\
&\quad =  O\left\{\frac{(\log n)^2}{q_n^2} + \frac{n\rho_n\log n}{\Delta_n^2}\right\},
\end{align*}
where $\gamma = \min(\beta_\Delta/2, \eps/4)$, $\bar{\eps} > 0$ is a sufficiently small constant, and $M_n\to\infty$ sufficiently slow. For the integral over the gap $\{t:\bar{\eps}\sqrt{n\rho_n}(\rho_n/\beta)\leq|t|\leq M_n(n^{2\beta_\Delta}\log n)^{1/2}\}$, we can directly obtain
\begin{align*}
&\int_{\{t:\bar{\eps}\sqrt{n\rho_n}(\rho_n/\beta)\leq|t|\leq M_n(n^{2\beta_\Delta}\log n)^{1/2}\}}\left|\frac{\expect \exp\{\mathbbm{i}t(\widetilde{T}_{ik} + \widetilde{\Delta}_{ik} + w_n)\}}{t}\right|\mathrm{d}t\\
&\quad = \int_{\{t:\bar{\eps}\sqrt{n\rho_n}(\rho_n/\beta)\leq|t|\leq M_n(n^{2\beta_\Delta}\log n)^{1/2}\}}\left|\frac{1}{t}\expect \exp\left\{\mathbbm{i}t\widetilde{T}_{ik} - \frac{t^2}{2}\sigma^2_{ik}(\be_i\transpose\bE)\right\}\right|e^{-\frac{\tau^2t^2\beta^2\log n}{n\rho_n^3}}\mathrm{d}t\\
&\quad \leq \int_{\{t:\bar{\eps}\sqrt{n\rho_n}(\rho_n/\beta)\leq|t|\leq M_n(n^{2\beta_\Delta}\log n)^{1/2}\}}\frac{e^{-\frac{\tau^2t^2\beta^2\log n}{2n\rho_n^3}}}{|t|}\mathrm{d}t\\
&\quad\leq \exp\left(-\frac{\bar{\eps}^2\tau^2\log n}{2}\right)\int_{\{t:\bar{\eps}\sqrt{n\rho_n}(\rho_n/\beta)\leq|t|\leq M_n(n^{2\beta_\Delta}\log n)^{1/2}\}}\frac{\mathrm{d}t}{|t|}\\
&\quad\lesssim \log n\exp\left(-\frac{\bar{\eps}^2\tau^2\log n}{2}\right) = O\left(\frac{1}{n}\right)
\end{align*}
by taking a sufficiently large $\tau > 0$. For the integral over $t\in[-n^\gamma, n^\gamma]$, we have
\begin{align*}
&\int_{-n^\gamma}^{n^\gamma}\left|\frac{\expect e^{\mathbbm{i}t(\widetilde{T}_{ik} + \widetilde{\Delta}_{ik} + w_n)} - \mathrm{ch.f.}(t; G_n^{(ik)})}{t}\right|\mathrm{d}t\\
&\quad = \int_{-n^\gamma}^{n^\gamma}\left|\frac{\expect e^{\mathbbm{i}t(\widetilde{T}_{ik} + \widetilde{\Delta}_{ik})}\exp\{-(\tau^2t^2\beta^2\log n)/(2n\rho_n^3)\} - \mathrm{ch.f.}(t; G_n^{(ik)})}{t}\right|\mathrm{d}t.
\end{align*}
By Taylor's theorem, there exists some $\theta_n(t)\in\mathbb{R}$, $|\theta_n(t)|\leq 1$, such that 
\[
\exp\left(-\frac{\tau^2t^2\beta^2\log n}{2n\rho_n^3}\right) = 1 + \theta_n(t)\frac{\tau^2t^2\beta^2\log n}{2n\rho_n^3}.
\]
By Assumption \ref{assumption:Noise_matrix_distribution}, for any $t\in[-n^\gamma,n^\gamma]$ and $\gamma = \min(\beta_\Delta/2, \eps/4)$,
\[
\frac{|t|\beta^2\log n}{n\rho_n^3}\leq \frac{C^2n^{\eps/4}(n\rho_n)^3\log n}{n^3\rho_n^3q_n^2}\leq \frac{C^2n^{\eps/4}\log n}{q_n^2} = o(1).
\]
It follows that
\begin{align*}
&\int_{-n^\gamma}^{n^\gamma}\left|\frac{\expect e^{\mathbbm{i}t(\widetilde{T}_{ik} + \widetilde{\Delta}_{ik} + w_n)} - \mathrm{ch.f.}(t; G_n^{(ik)})}{t}\right|\mathrm{d}t\\
&\quad = \int_{-n^\gamma}^{n^\gamma}\left|\frac{1}{t}\left[\expect e^{\mathbbm{i}t(\widetilde{T}_{ik} + \widetilde{\Delta}_{ik})}\left\{1 + \theta_n(t)\frac{\tau^2t^2\beta^2\log n}{2n\rho_n^3}\right\}
 - \mathrm{ch.f.}(t; G_n^{(ik)})\right]\right|\mathrm{d}t\\
&\quad\leq \int_{-n^\gamma}^{n^\gamma}\left|\frac{1}{t}\left\{\expect e^{\mathbbm{i}t(\widetilde{T}_{ik} + \widetilde{\Delta}_{ik})}
 - \mathrm{ch.f.}(t; G_n^{(ik)})\right\}\right|\mathrm{d}t
 \\&\quad\quad
 + \int_{-n^\gamma}^{n^\gamma}\left|\left\{\expect e^{\mathbbm{i}t(\widetilde{T}_{ik} + \widetilde{\Delta}_{ik})}{\theta_n(t)}\frac{\tau^2t\beta^2\log n}{2n\rho_n^3}
 \right\}\right|\mathrm{d}t\\
&\quad\leq \int_{-n^\gamma}^{n^\gamma}\left|\frac{1}{t}\left\{\expect e^{\mathbbm{i}t(\widetilde{T}_{ik} + \widetilde{\Delta}_{ik})}
 - \mathrm{ch.f.}(t; G_n^{(ik)})\right\}\right|\mathrm{d}t
 \\&\quad\quad
 + \int_{-n^\gamma}^{n^\gamma}\left|\left[\left\{\expect e^{\mathbbm{i}t(\widetilde{T}_{ik} + \widetilde{\Delta}_{ik})} - \mathrm{ch.f.}(t; G_n^{(ik)})\right\}\theta_n(t)\frac{\tau^2t\beta^2\log n}{2n\rho_n^3}
 \right]\right|\mathrm{d}t
 \\&\quad\quad
 + \int_{-n^\gamma}^{n^\gamma}\left|\mathrm{ch.f.}(t; G_n^{(ik)}){\theta_n(t)}\frac{\tau^2t\beta^2\log n}{2n\rho_n^3}\right|\mathrm{d}t\\
&\quad\leq 2\int_{-n^\gamma}^{n^\gamma}\left|\frac{1}{t}\left\{\expect e^{\mathbbm{i}t(\widetilde{T}_{ik} + \widetilde{\Delta}_{ik})}
 - \mathrm{ch.f.}(t; G_n^{(ik)})\right\}\right|\mathrm{d}t
 + \int_{-n^\gamma}^{n^\gamma}\left|\mathrm{ch.f.}(t; G_n^{(ik)})\frac{\tau^2t\beta^2\log n}{2n\rho_n^3}\right|\mathrm{d}t.
\end{align*}
The first term above is $O\{(n\rho_n)/\Delta_n^2 + 1/q_n^2\}$ by Lemma \ref{lemma:CHF_IV}. The second term above is
\begin{align*}
&\int_{-n^\gamma}^{n^\gamma}\left|\mathrm{ch.f.}(t; G_n^{(ik)})\frac{\tau^2t\beta^2\log n}{2n\rho_n^3}\right|\mathrm{d}t\\
&\quad = \frac{\tau^2\beta^2\log n}{2n\rho_n^3}\int_{-n^\gamma}^{n^\gamma}\left|t\left\{e^{-t^2/2} + \frac{\mathbbm{i}t^3}{3}\kappa_n^{(ik)}e^{-t^2/2} - \frac{\mathbbm{i}t}{2}\kappa_n^{(ik)}e^{-t^2/2}\right\}\right|\mathrm{d}t\\
&\quad\leq \frac{C\tau^2\beta^2\log n}{n\rho_n^3}\int_{-n^\gamma}^{n^\gamma}|t|e^{-t^2/4}\mathrm{d}t = O\left(\frac{\log n}{q_n^2}\right),
\end{align*}
recalling that $\kappa_n^{(ik)} = \sum_{j = 1}^n{\expect E_{ij}^3u_{jk}^3}/{(s_{ik}\lambda_k)^3}$ and \eqref{chf:expansion}. Therefore, we conclude that
\[
\int_{-n^\gamma}^{n^\gamma}\left|\frac{\expect e^{\mathbbm{i}t(\widetilde{T}_{ik} + \widetilde{\Delta}_{ik} + w_n)} - \mathrm{ch.f.}(t; G_n^{(ik)})}{t}\right|\mathrm{d}t = O\left(\frac{n\rho_n}{\Delta_n^2} + \frac{\log n}{q_n^2}\right),
\]
thereby establishing \eqref{eqn:CHF_smoothed_comparison}. To connect $\widetilde{T}_{ik} + \widetilde{\Delta}_{ik} + w_n$ with $T_{ik} + w_n$, it is now sufficient to establish the analogous versions of \eqref{eqn:edgeworth_remainder_I} and \eqref{eqn:edgeworth_remainder_II}. Observe that for any $x\in\mathbb{R}$,
\begin{align*}
&\left|\prob(\widetilde{T}_{ik} + \Delta_{ik} + w_n\leq x) - \prob(\widetilde{T}_{ik} + \widetilde{\Delta}_{ik} + w_n\leq x)\right|\\
&\quad = \left|\expect_{\be_i\transpose\bE,w_n}\left\{\prob(\widetilde{T}_{ik} + \Delta_{ik} + w_n\leq x\mid \be_i\transpose\bE, w_n) - \prob(\widetilde{T}_{ik} + \widetilde{\Delta}_{ik} + w_n\leq x\mid \be_i\transpose\bE, w_n)\right\}\right|\\
&\quad\leq \expect_{\be_i\transpose\bE,w_n}\left|\prob(\widetilde{T}_{ik} + \Delta_{ik} + w_n\leq x\mid \be_i\transpose\bE, w_n) - \prob(\widetilde{T}_{ik} + \widetilde{\Delta}_{ik}\leq x\mid \be_i\transpose\bE, w_n)\right|\\
&\quad\leq \expect_{\be_i\transpose\bE, w_n}\left|\prob(\Delta_{ik}\leq x - \widetilde{T}_{ik} - w_n\mid \be_i\transpose\bE, w_n) - \prob(\widetilde{\Delta}_{ik}\leq x - \widetilde{T}_{ik} - w_n\mid \be_i\transpose\bE, w_n)\right|\\
&\quad\leq \expect_{\be_i\transpose\bE, w_n}\left\{\sup_{u\in\mathbb{R}}\left|\prob(\Delta_{ik}\leq u\mid \be_i\transpose\bE, w_n) - \prob(\widetilde{\Delta}_{ik}\leq u\mid \be_i\transpose\bE, w_n)\right|\right\}\\
&\quad = \expect_{\be_i\transpose\bE, w_n}\left\{\sup_{u\in\mathbb{R}}\left|\prob(\Delta_{ik}\leq u\mid \be_i\transpose\bE) - \prob(\widetilde{\Delta}_{ik}\leq u\mid \be_i\transpose\bE)\right|\right\}\\
&\quad = \int\widetilde{O}_{\prob}\left(\frac{1}{q_n^2}\right)\prob(\mathrm{d}\be_i\transpose\bE) = O\left(\frac{1}{q_n^2}\right),
\end{align*}
by Lemma~\ref{lemma:conditional_Berry_Esseen}, and thereby establishing the analogous version of \eqref{eqn:edgeworth_remainder_II}, i.e.,
\begin{align}
\label{eqn:edgeworth_remainder_smoothed_II}
\left\|\prob(\widetilde{T}_{ik} + \Delta_{ik} + w_n\leq x) - \prob(\widetilde{T}_{ik} + \widetilde{\Delta}_{ik} + w_n\leq x)\right\|_\infty = O\left(\frac{1}{q_n^2}\right).
\end{align}
To establish the analogous version of \eqref{eqn:edgeworth_remainder_I}, namely, 
\begin{equation}
\label{eqn:edgeworth_remainder_smoothed_I}
\begin{aligned}
&\left\|\prob\left(T_{ik} + w_n\leq x\right) - \prob\left(\widetilde{T}_{ik} + \Delta_{ik} + w_n\leq x\right)\right\|_\infty\\
&\quad = O\left\{\frac{(\log n)^{3\xi}}{n\rho_n^{1/2}} + \frac{(\log n)^{3\xi}}{\Delta_n} + \frac{(n\rho_n)(\log n)^{3\xi}}{\Delta_n^2} + \frac{(\log n)^{3\xi}}{q_n^2} + \frac{(n\rho_n)^2(\log n)^{3\xi}}{\Delta_n^3}\right\},
\end{aligned}
\end{equation}
we apply Lemma \ref{lemma:edgeworth_remainder} with $Y = \widetilde{T}_{ik} + w_n + \Delta_{ik}$ and $X = T_{ik} + w_n$. By \eqref{decom:T}, $Z = X - Y$ satisfies
\begin{align*}
&\prob\left[|Z|\geq C\left\{\frac{(\log n)^{3\xi}}{n\rho_n^{1/2}} + \frac{(\log n)^{3\xi}}{\Delta_n} + \frac{(n\rho_n)(\log n)^{3\xi}}{\Delta_n^2} + \frac{(\log n)^{3\xi}}{q_n^2} + \frac{(n\rho_n)^2(\log n)^{3\xi}}{\Delta_n^3}\right\}\right]\\
&\quad\leq \frac{1}{n}
\end{align*}
for some constant $C > 0$. 
To show that $F_Y(u):=\prob(Y\leq u)$ has the smoothness property required by Lemma \ref{lemma:edgeworth_remainder}, for any $u\in\mathbb{R}$ and $a > 0$, we apply \eqref{eqn:edgeworth_expansion_smoothed_version}, \eqref{eqn:edgeworth_remainder_smoothed_II}, and the fact that $\sup_{x,i,n}|\mathrm{d}G_n^{(ik)}(x)/\mathrm{d}x|\leq M$ for some constant $M > 0$ to obtain
\begin{align*}
&\prob\left(\widetilde{T}_{ik} + \Delta_{ik} + w_n\leq u + a\right) - \prob\left(\widetilde{T}_{ik} + \Delta_{ik} + w_n\leq u\right)\\
&\quad\leq \left|\prob\left(\widetilde{T}_{ik} + \Delta_{ik} + w_n\leq u + a\right) - \prob\left(\widetilde{T}_{ik} + \widetilde{\Delta}_{ik} + w_n\leq u + a\right)\right|\\
&\quad\quad + \left|\prob\left(\widetilde{T}_{ik} + \widetilde{\Delta}_{ik} + w_n\leq u + a\right) - G_n^{(ik)}(u + a)\right|\\
&\quad\quad + \left|G_n^{(ik)}(u + a) - G_n^{(ik)}(u)\right| + \left|G_n^{(ik)}(u) - \prob\left(\widetilde{T}_{ik} + \widetilde{\Delta}_{ik} + w_n\leq u\right)\right|\\
&\quad\quad + \left|\prob\left(\widetilde{T}_{ik} + \widetilde{\Delta}_{ik} + w_n\leq u\right) - \prob\left(\widetilde{T}_{ik} + \Delta_{ik} + w_n\leq u\right)\right|\\
&\quad\leq 2\left\|\prob\left(\widetilde{T}_{ik} + \Delta_{ik} + w_n\leq x\right) - \prob\left(\widetilde{T}_{ik} + \widetilde{\Delta}_{ik} + w_n\leq x\right)\right\|_\infty + \max_{i\in[n]}\sup_{x\in\mathbb{R}}\left|\frac{\mathrm{d}G_n^{(ik)}}{\mathrm{d}x}(x)\right|a\\
&\quad\quad + 2\left\|\prob\left(\widetilde{T}_{ik} + \widetilde{\Delta}_{ik} + w_n\leq x\right) - G_n^{(ik)}(x)\right\|_\infty\\
&\quad\leq Ma + O\left\{\frac{n\rho_n\log n}{\Delta_n^2} + \frac{(\log n)^2}{q_n^2}\right\}.
\end{align*}
Then \eqref{eqn:edgeworth_remainder_smoothed_I} follows from Lemma \ref{lemma:edgeworth_remainder} with
\begin{align*}
\widetilde{\zeta}_n& = C\left\{\frac{(\log n)^{3\xi}}{n\rho_n^{1/2}} + \frac{(\log n)^{3\xi}}{\Delta_n} + \frac{(n\rho_n)(\log n)^{3\xi}}{\Delta_n^2} + \frac{(\log n)^{3\xi}}{q_n^2} + \frac{(n\rho_n)^2(\log n)^{3\xi}}{\Delta_n^3}\right\}\\ 
\zeta_n& = \frac{n\rho_n\log n}{\Delta_n^2} + \frac{(\log n)^2}{q_n^2},
\end{align*}
thereby completing the proof.
\end{proof}

\begin{proof}[Proof of Theorem \ref{thm:bootstrap_GRDPG}]
The proof strategy is the same as that of Theorem \ref{thm:bootstrap_subgaussian}.
Suppose $\delta_0 > 0$ is a constant such that $\min_{k\in [d - 1]}(\lambda_k - \lambda_{k + 1}) \geq \delta_0n\rho_n$. 
 Let
\begin{align*}
\calE_{1n} &= \left\{\bA:\frac{1}{2}\lambda_{\min}(|\bS_\bP|)\leq \lambda_{\min}(|\bS_\bA|),\lambda_{\max}(|\bS_\bA|)\leq 2\lambda_{\max}(|\bS_\bP|)\right\},\\
\calE_{2n} &= \left\{\bA:\lambda_k(\bS_\bA) - \lambda_{k + 1}(\bS_\bA)\geq \frac{1}{2}\delta_0n\rho_n,k\in [d - 1]\right\},\\
\calE_{3n} &= \left\{\bA:\max_{k\in [d]}\|\widehat{\bu}_k\|_\infty \leq \frac{2C_\mu}{\sqrt{n}}\right\}.
\end{align*}
From the proof there, we know that $\calE_{1n}\cap\calE_{2n}\cap\calE_{3n}$ occurs w.h.p., over which Assumptions \ref{assumption:Signal_strength}, \ref{assumption:Eigenvalue_separation}, and \ref{assumption:Eigenvector_delocalization} hold with $(\bS_\bP,\lambda_k,\bu_k)$ replaced by $(\bS_\bA,\widehat{\lambda}_k,\widehat{\bu}_k)$. 

Now let
\[
\calE_{4n} = \left\{\bA:\frac{\delta_1}{2}\rho_n\leq \widehat{p}_{ij}\leq 2\delta_2\rho_n\right\}.
\]
Clearly, when $A_{ij}\sim\mathrm{Bernoulli}(p_{ij})$ independently for all $i\leq j$ with $p_{ij} = \Theta(\rho_n)$ uniformly over $i,j\in[n]$, Assumption \ref{assumption:Noise_matrix_distribution} holds with $q_n = \sqrt{n\rho_n}$ and any $\eps > 0$. Therefore, by \eqref{eqn:pij_zeroth_order_deviation}, event $\calE_{4n}$ occurs w.h.p.. It is also clear that $\beta = \max_{i,j\in[n]}\expect |E_{ij}|^3 = \Theta(\rho_n) = \Omega(\rho_n/\sqrt{\log n})$ and
\[
\min_{i\in[n]}s_{ik}^2 = \min_{i\in[n]}\sum_{j = 1}^n\frac{p_{ij}(1 - p_{ij})u_{jk}^2}{\lambda_k^2} = \Theta\left(\frac{1}{n^2\rho_n}\right).
\] 
Applying Theorem \ref{thm:edgeworth_expansion_smoothed} entails
\begin{align*}
\left\|\prob\left(T_{ik} + \tau\sqrt{\frac{\beta^2\log n}{n\rho_n^3}}z\leq x\right) - G_n^{(ik)}(x)\right\| = O\left\{\frac{(\log n)^{3\xi}}{n\rho_n}\right\}
\end{align*}
for a sufficiently large $\tau > 0$, where
\[
G_n^{(ik)}(x) = \Phi(x) + \frac{(2x^2 + 1)}{6}\phi(x)\sum_{j = 1}^n\frac{p_{ij}(1 - p_{ij})(1 - 2p_{ij})u_{jk}^3}{s_{ik}^3\lambda_k^3}.
\]
Furthermore, over the event $\calE_{4n}$, we know that
\[
\expect^*|E_{ij}^*|^p = \widehat{p}_{ij}(1 - \widehat{p}_{ij})^p + (1 - \widehat{p}_{ij})|\widehat{p}_{ij}|^p\leq 2\widehat{p}_{ij}\leq 4\max_{i,j\in[n]}p_{ij}\leq \frac{2^p(n\rho_n)^{p/2}}{n(n\rho_n)^{(p - 2)/2}}
\]
for any $p\geq 2$, so that Assumption \ref{assumption:Noise_matrix_distribution} holds with $(\expect, \bE)$ replaced by $(\expect^*, \bE^*)$ and $q_n:=\sqrt{n\rho_n}$ over the event $\calE_{4n}$. Also, $\widehat{\beta} = \max_{i,j\in[n]}\expect^*|E_{ij}^*|^3 = O(\rho_n) = \Omega(\rho_n/\sqrt{\log n})$ over $\calE_{4n}$. 
Applying Theorem \ref{thm:edgeworth_expansion_smoothed} with regard to the probability $\prob^*$ over the event $\calE_{1n}\cap\calE_{2n}\cap\calE_{3n}\cap\calE_{4n}$, we further obtain
\begin{align*}
\left\|\prob^*\left(T_{ik}^* + \tau\sqrt{\frac{\beta^2\log n}{n\rho_n^3}}z^*\leq x\right) - G_n^{(ik)*}(x)\right\| = O\left\{\frac{(\log n)^{3\xi}}{n\rho_n}\right\},
\end{align*}
where
\[
G_n^{(ik)*}(x) = \Phi(x) +\frac{(2x^2 + 1)}{6}\phi(x)\sum_{j = 1}^n\frac{\widehat{p}_{ij}(1 - \widehat{p}_{ij})(1 - 2\widehat{p}_{ij})\widehat{u}_{jk}^3}{\widehat{s}_{ik}^3\widehat{\lambda}_k^3}.
\]
Let $\psi(x) = x(1 - x)(1 - 2x)$. By mean-value theorem, for each $(i, j)\in[n]\times[n]$, there exists some $\bar{p}_{ij}$ between $p_{ij}$ and $\widehat{p}_{ij}$, such that
\begin{align*}
\max_{i,j\in[n]}|\psi(\widehat{p}_{ij}) - \psi(p_{ij})|
& \leq \max_{i,j\in[n]}|1 - 6\bar{p}_{ij} + 6\bar{p}_{ij}^2||\widehat{p}_{ij} - p_{ij}|
 = \widetilde{O}_{\prob}\left\{\frac{(\log n)^\xi\rho_n^{1/2}}{\sqrt{n}}\right\}
\end{align*}
by \eqref{eqn:pij_zeroth_order_deviation}. From the proof of Theorem \ref{thm:bootstrap_subgaussian}, we know that
\begin{align*}
&\max_{j\in[n]}\left|\frac{\widehat{u}_{jk}^3}{\widehat{s}_{ik}^3\widehat{\lambda}_k^3} - \frac{{u}_{jk}^3}{{s}_{ik}^3{\lambda}_k^3}\right|\\
&\quad\leq 2\max_{j\in[n]}\left|\frac{\widehat{u}_{jk}}{\widehat{s}_{ik}\widehat{\lambda}_k} - \frac{{u}_{jk}}{{s}_{ik}{\lambda}_k}\right|\left(\frac{\widehat{u}_{jk}^2}{\widehat{s}_{ik}^2\widehat{\lambda}_k^2} + \frac{u_{jk}^2}{s_{ik}^2\lambda_k^2}\right)\\
&\quad = \widetilde{O}_{\prob}\left(\frac{1}{n\rho_n}\right)\left\{\frac{\|\widehat{\bu}_k - \bu_k\|_\infty}{\widehat{s}_{ik}\widehat{\lambda}_k}
+ \frac{\|\bu_k\|_\infty|s_{ik} - \widehat{s}_{ik}||\lambda_k| + \|\bu_k\|_\infty\widehat{s}_{ik}|\lambda_k - \widehat{\lambda}_k|}{|\widehat{s}_{ik}\widehat{\lambda}_ks_{ik}\lambda_k|}
\right\}\\
&\quad = \widetilde{O}_{\prob}\left\{\frac{(\log n)^\xi}{(n\rho_n)^2}\right\}.
\end{align*}
Hence, we conclude that
\begin{align*}
&\|G_n^{(ik)}(x) - G_n^{(ik)*}(x)\|_\infty\\
&\quad \lesssim n\max_{j \in[n]}\left|\psi(\widehat{p}_{ij})\frac{\widehat{u}_{jk}^3}{\widehat{s}_{ik}^3\widehat{\lambda}_k^3} - \psi(p_{ij})\frac{{u}_{jk}^3}{{s}_{ik}^3{\lambda}_k^3}\right|\\
&\quad \leq n\max_{j \in[n]}\left|\psi(\widehat{p}_{ij})\left(\frac{\widehat{u}_{jk}^3}{\widehat{s}_{ik}^3\widehat{\lambda}_k^3} - \frac{{u}_{jk}^3}{{s}_{ik}^3{\lambda}_k^3}\right)\right|
 + n\max_{j \in[n]}\left|\{\psi(\widehat{p}_{ij}) - \psi(p_{ij})\}\frac{{u}_{jk}^3}{{s}_{ik}^3{\lambda}_k^3}\right|\\
&\quad = O\left\{\frac{(\log n)^\xi}{n\rho_n}\right\}
\end{align*}
with probability at least $1 - n^{-1}$, thereby completing the proof.
\end{proof}

\bibliographystyle{acm} 
\bibliography{reference1,reference2,reference_RMT}       

\end{document}